\newcommand{\Z}{\mathbb{Z}}
\newcommand{\F}{\mathbb{F}}
\newcommand{\Q}{\mathbb{Q}}
\newcommand{\bk}{\Bbbk}
\newcommand{\Ga}{\mathbb{G}_{\mathrm{a}}}
\newcommand{\Gm}{\mathbb{G}_{\mathrm{m}}}
\newcommand{\Mod}{\mathsf{Mod}}
\newcommand{\Modr}{\mathsf{Mod}_{\mathrm{r}}}
\newcommand{\Kb}{K^{\mathrm{b}}}
\newcommand{\Db}{D^{\mathrm{b}}}
\newcommand{\cE}{\mathcal{E}}
\newcommand{\Perv}{\mathrm{Perv}}
\newcommand{\hatstar}{\mathbin{\widehat{\star}}}
\newcommand{\pstar}{\mathbin{{}^{\mathrm{p}} \hspace{-1pt} \star}}
\newcommand{\phatstar}{\mathbin{{}^{\mathrm{p}} \hspace{-0.5pt} \widehat{\star}}}
\newcommand{\cG}{\mathcal{G}}
\newcommand{\cH}{\mathcal{H}}
\newcommand{\Whit}{\mathrm{Wh}}
\newcommand{\Av}{\mathsf{Av}}
\newcommand{\For}{\mathsf{For}}
\newcommand{\IC}{\mathscr{I}\hspace{-1pt}\mathscr{C}}
\newcommand{\id}{\mathrm{id}}
\newcommand{\simto}{\xrightarrow{\sim}}
\DeclareMathOperator{\Hom}{Hom}
\DeclareMathOperator{\Ext}{Ext}
\DeclareMathOperator{\End}{End}
\newcommand{\Iw}{\mathrm{I}}
\newcommand{\Iwu}{\mathrm{I}_{\mathrm{u}}}
\newcommand{\Fl}{\mathrm{Fl}}
\newcommand{\tFl}{\widetilde{\mathrm{Fl}}}
\newcommand{\Gr}{\mathrm{Gr}}
\newcommand{\Wf}{W_{\mathrm{f}}}
\newcommand{\Sf}{S_{\mathrm{f}}}
\newcommand{\IW}{\mathcal{IW}}
\newcommand{\bG}{\mathbf{G}}
\newcommand{\bH}{\mathbf{H}}
\newcommand{\bK}{\mathbf{K}}
\newcommand{\bA}{\mathbf{A}}
\newcommand{\bB}{\mathbf{B}}
\newcommand{\bU}{\mathbf{U}}
\newcommand{\bT}{\mathbf{T}}
\newcommand{\bW}{\mathbf{W}}
\newcommand{\bWf}{\mathbf{W}_{\mathrm{f}}}
\newcommand{\bS}{\mathbf{S}}
\newcommand{\bSf}{\mathbf{S}_{\mathrm{f}}}
\newcommand{\bSaff}{\mathbf{S}_{\mathrm{aff}}}
\newcommand{\bg}{\mathbf{g}}
\newcommand{\bb}{\mathbf{b}}
\newcommand{\bt}{\mathbf{t}}
\renewcommand{\hat}{\widehat}
\renewcommand{\tilde}{\widetilde}
\def\lotimes{\@ifnextchar_{\@lotimessub}{\@lotimesnosub}}
\def\@lotimessub_#1{\mathchoice{\mathbin{\mathop{\otimes}^L}_{#1}}%
  {\otimes^L_{#1}}{\otimes^L_{#1}}{\otimes^L_{#1}}}
\def\@lotimesnosub{\mathbin{\mathop{\otimes}^L}}
\def\lboxtimes{\@ifnextchar_{\@lboxtimessub}{\@lboxtimesnosub}}
\def\@lboxtimessub_#1{\mathchoice{\mathbin{\mathop{\boxtimes}^L}_{#1}}%
  {\boxtimes^L_{#1}}{\boxtimes^L_{#1}}{\boxtimes^L_{#1}}}
\def\@lboxtimesnosub{\mathbin{\mathop{\boxtimes}^L}}
\newcommand{\scO}{\mathscr{O}}
\newcommand{\cK}{\mathcal{K}}
\newcommand{\cJ}{\mathcal{J}}
\newcommand{\cI}{\mathcal{I}}
\newcommand{\pH}{{}^{\mathrm{p}} \hspace{-1.5pt} \mathscr{H}}
\newcommand{\Rep}{\mathsf{Rep}}
\newcommand{\Coh}{\mathsf{Coh}}
\newcommand{\QCoh}{\mathsf{QCoh}}
\newcommand{\sZ}{\mathscr{Z}}
\newcommand{\tsZ}{\mathscr{Z}^{\wedge}}
\newcommand{\sm}{\mathsf{m}}
\newcommand{\st}{\mathsf{t}}
\newcommand{\su}{\mathsf{u}}
\newcommand{\Wak}{\mathscr{W}}
\newcommand{\sfA}{\mathsf{A}}
\newcommand{\sfB}{\mathsf{B}}
\newcommand{\sfC}{\mathsf{C}}
\newcommand{\sfD}{\mathsf{D}}
\newcommand{\sfE}{\mathsf{E}}
\newcommand{\sfK}{\mathsf{K}}
\newcommand{\sfP}{\mathsf{P}}
\newcommand{\sfT}{\mathsf{T}}
\newcommand{\Ind}{\mathrm{Ind}}
\newcommand{\sfZ}{\mathsf{Z}}
\newcommand{\tsfZ}{\mathsf{Z}^{\wedge}}
\newcommand{\scA}{\mathscr{A}}
\newcommand{\scB}{\mathscr{B}}
\newcommand{\scC}{\mathscr{C}}
\newcommand{\scF}{\mathscr{F}}
\newcommand{\scG}{\mathscr{G}}
\newcommand{\scH}{\mathscr{H}}
\newcommand{\scL}{\mathscr{L}}
\newcommand{\scM}{\mathscr{M}}
\newcommand{\scQ}{\mathscr{Q}}
\newcommand{\scR}{\mathscr{R}}
\newcommand{\scS}{\mathscr{S}}
\newcommand{\scT}{\mathscr{T}}
\newcommand{\scW}{\mathscr{W}}
\newcommand{\St}{\mathbf{St}}
\newcommand{\Stm}{\mathbf{St}_{\mathrm{m}}}
\newcommand{\Stmreg}{\mathbf{St}_{\mathrm{m}, \mathrm{reg}}}
\newcommand{\reg}{\mathrm{reg}}
\newcommand{\can}{\mathrm{can}}
\newcommand{\gr}{\mathrm{gr}}
\newcommand{\ev}{\mathrm{ev}}
\newcommand{\coev}{\mathrm{coev}}
\newcommand{\trL}{\mathrm{tr}^{\mathrm{L}}}
\newcommand{\Spr}{\widetilde{\mathcal{U}}}
\newcommand{\Groth}{\widetilde{\bG}}
\newcommand{\cU}{\mathcal{U}}
\newcommand{\D}{\mathsf{D}}
\newcommand{\pD}{{}^{\mathrm{p}} \hspace{-1pt} \D}
\DeclareMathOperator{\ind}{Ind-}
\DeclareMathOperator{\pro}{Pro-}
\newcommand{\LAS}{\mathscr{L}_{\mathrm{AS}}}
\newcommand{\Mof}{\mathsf{Mod}^{\mathrm{fg}}}
\newcommand{\Mofr}{\mathsf{Mod}^{\mathrm{fg}}_{\mathrm{r}}}
\newcommand{\BSRep}{\mathsf{BSRep}}
\newcommand{\SRep}{\mathsf{SRep}}
\newcommand{\SCoh}{\mathsf{SCoh}}
\newcommand{\Spec}{\mathrm{Spec}}
\newcommand{\bbJ}{\mathbb{J}}
\newcommand{\bbI}{\mathbb{I}}
\newcommand{\Vect}{\mathrm{Vect}}
\newcommand{\op}{\mathrm{op}}
\newcommand{\taut}{\mathrm{taut}}
\newcommand{\BSK}{\mathsf{BSK}}
\newcommand{\fR}{\mathfrak{R}}
\newcommand{\quot}{\hspace{-2.5pt} \mathord{\fatslash} \hspace{3pt}}
\newcommand{\Loop}{\mathrm{L}}
\newcommand{\Sat}{\mathsf{Sat}}
\newcommand{\rmZ}{\mathrm{Z}}
\newcommand{\FN}{\mathrm{FN}}
\numberwithin{equation}{section}
\numberwithin{figure}{section}
\newtheorem{thm}{Theorem}[section]
\newtheorem{lem}[thm]{Lemma}
\newtheorem{prop}[thm]{Proposition}
\newtheorem{cor}[thm]{Corollary}
\theoremstyle{definition}
\theoremstyle{remark}
\newtheorem{rmk}[thm]{Remark}
\title{Modular affine Hecke category and regular centralizer}
\author[R. Bezrukavnikov]{Roman Bezrukavnikov}
\address{Department of Mathematics \\ Massachusetts Institute of Technology \\ Cambridge, MA \\ 02139 \\ USA.}
\email{bezrukav@math.mit.edu}
 \author[S.~Riche]{Simon Riche}
 \address{Universit\'e Clermont Auvergne, CNRS, LMBP, F-63000 Clermont-Ferrand, France.}
 \email{simon.riche@uca.fr}
\begin{document}

\begin{abstract}
In this paper we provide a ``combinatorial'' description of the category of tilting perverse sheaves on the affine flag variety of a reductive algebraic group, and its free-monodromic variant, with coefficients in a field of positive characteristic. This provides a replacement for the familiar ``Soergel theory'' for characteristic-$0$ coefficients, and the second step in our project towards the construction of an equivalence of categories relating the two natural geometric realizations of the associated affine Hecke algebra in the case of positive-characteristic coefficients. 
\end{abstract}

\maketitle

\setcounter{tocdepth}{1}
\tableofcontents

\section{Introduction}

\subsection{Presentation}

The present paper is the second step in our project (initiated in~\cite{brr}, joint with L.~Rider, and concluded in~\cite{br-pt3}) of constructing ``modular tamely ramified local Langlands equivalences'' adapting to positive-characteristic coefficients the constructions of the first author in~\cite{be}. These equivalences relate some categories of Iwahori-constructible or Iwahori-equivariant perverse sheaves on the affine flag variety $\Fl_G$ of a connected reductive algebraic group $G$ (or a natural torsor $\tFl_G$ over $\Fl_G$), with coefficients in an algebraic closure $\bk$ of $\F_\ell$ (or a ``free-monodromic completion''), to some categories of equivariant coherent sheaves on the Steinberg variety of the reductive group $G^\vee_\bk$ over $\bk$ which is Langlands dual to $G$ (or some variants). Our strategy involves a description of both sides of this equivalence in ``Soergel theoretic terms;" the present paper finishes the first half of this construction, by providing such a description on the constructible side of the picture. The second half is completed in~\cite{br-pt3}, based of the localization theory for representations of reductive Lie algebras over fields of positive characteristic developed by the first author with Mirkovi{\'c} and Rumynin, see~\cite{bmr,bm-loc}. (This part of the work is closely related to our work in~\cite{brHecke}.)

The main result of the paper is therefore an equivalence of categories relating an appropriate category of perverse sheaves on $\tFl_G$ to a category of ``Soergel bimodules.'' This equivalence is fundamental for our project explained above, but it is also of independent interest, given the importance of categories of perverse sheaves on affine flag varieties in recent work on representation theory of reductive algebraic groups over fields of positive characteristic (see in particular~\cite{rw,rw2}). See REF for direct applications of this construction.


\subsection{Soergel bimodules}

We need to explain what we mean by ``Soergel bimodules'' in our present context. Ordinarily, Soergel bimodules are attached to a Coxeter system $(\mathcal{W},\mathcal{S})$ and a suitable representation $V$ of $\mathcal{W}$, and defined as a certain category of graded $\scO(V)$-bimodules, see~\cite{soergel}. The case we are interested in here is when $(\mathcal{W},\mathcal{S})$ is the affine Weyl group\footnote{The main player in this paper will rather be the \emph{extended} affine Weyl group, for which we reserve the notation $W$.} attached to $G$ with its standard Coxeter generators, and $V$ is the dual of the Lie algebra of a fixed maximal torus $T^\vee_\bk$ in $G^\vee_\bk$, with the action factoring through the canonical action of the (finite) Weyl group $\Wf$ of $(G^\vee_\bk,T^\vee_\bk)$. This representation does not satisfy the technical conditions imposed in~\cite{soergel}, so that a different definition of Soergel bimodules is required. 

Various alternative definitions of categories of Soergel bimodules have been proposed in the recent years by Fiebig~\cite{fiebig} (involving sheaves on moment graphs), Elias--Williamson~\cite{ew} (in terms of a monoidal category defined by generators and relations involving planar diagrams), Juteau--Mautner--Williamson~\cite{jmw} (involving parity complexes) or Abe~\cite{abe} (involving bimodules with extra structures). Some of these definitions apply in our context, and give rise to essentially equivalent categories; in fact some will be used in the body of the paper. But they all have one drawback, which is that they do not ``see'' an essential ingredient that is specific to the case of affine Weyl groups (rather than a general Coxeter system), namely the relation with the geometric Satake equivalence~\cite{mv} and the Langlands dual group $G^\vee_\bk$. For that reason we will consider a different (although equivalent) realization of the category of Soergel bimodules, in terms of certain representations of the universal centralizer associated with $G^\vee_\bk$. The universal centralizer was introduced by Lusztig (see the last paragraph of~\cite{lusztig-coxeter}); earlier instances of relations between this subject and Soergel bimodules for affine Weyl groups can be found in~\cite{dodd, mr, brHecke}.

\subsection{Categories of perverse sheaves}

Now we can start explaining the definition of the categories we aim at describing. Let $\F$ be an algebraically closed field of characteristic $p>0$, and let $G$ be a connected reductive algebraic group over $\F$, with a choice of (negative) Borel subgroup $B$ and maximal torus $T \subset B$. Let $z$ be an indeterminate, and let $\Loop G$ and $\Loop^+G$ be the associated loop group and arc group respectively, i.e.~the group ind-scheme, resp.~group scheme, over $\bk$ representing the functor
\[
R \mapsto G(R( \hspace{-1pt} (z) \hspace{-1pt} )), \quad \text{resp.} \quad R \mapsto G(R[ \hspace{-1pt} [z] \hspace{-1pt} ]).
\]
Let also $\Iw \subset \Loop^+G$ be the Iwahori subgroup obtained as the preimage of $B$ under the canonical morphism $\Loop^+G \to G$, and let $\Iwu$ be its pro-unipotent radical, namely the preimage of the unipotent radical $U$ of $B$. The affine flag variety $\Fl_G$ is defined as the fppf quotient $\Loop G/\Iw$; it is known to be represented by an ind-projective ind-scheme. For technical reasons we will also consider the fppf quotient $\tFl_G := \Loop G/\Iwu$; this quotient is represented by an ind-scheme of ind-finite type, and the natural morphism $\tFl_G \to \Fl_G$ is a (Zariski locally trivial) $T$-torsor.

Let $\bk$ be an algebraic closure of $\F_\ell$, where $\ell$ is a prime number different from $p$.
We will denote by $\sfD_{\Iw,\Iw}$ and $\sfD_{\Iwu,\Iw}$ the $\Iw$-equivariant and $\Iwu$-equivariant derived categories of constructible \'etale $\bk$-sheaves on $\Fl_G$, respectively. We will also denote by $\sfP_{\Iw,\Iw}$ and $\sfP_{\Iwu,\Iw}$ the hearts of the perverse t-structures on these categories. The category $\sfD_{\Iw,\Iw}$ admits a natural convolution operation $\star_{\Iw}$, which endows it with a monoidal structure.

Let $\Gr_G$ be the affine Grassmannian of $G$, defined as the fppf quotient $\Loop G/\Loop^+ G$ (which is, again, represented by an ind-projective ind-scheme over $\F$). Denote by $\sfD_{\Loop^+G,\Loop^+G}$ the $\Loop^+G$-equivariant derived category of constructible \'etale $\bk$-sheaves on $\Gr_G$, and by $\sfP_{\Loop^+G,\Loop^+G}$ the subcategory of perverse sheaves. This category is the main ingredient in the geometric Satake equivalence. Namely, convolution defines a monoidal product $\star_{\Loop^+ G}$ on $\sfD_{\Loop^+G,\Loop^+G}$, which turns out to be exact with respect to the perverse t-structure, hence to induce a monoidal product on $\sfP_{\Loop^+G,\Loop^+G}$. The geometric Satake equivalence, proved in this setting in~\cite{mv}, provides an equivalence of monoidal categories
\begin{equation}
\label{eqn:intro-Satake}
(\sfP_{\Loop^+G,\Loop^+G}, \star_{\Loop^+G}) \cong (\Rep(G^\vee_\bk), \otimes)
\end{equation}
where $G^\vee_\bk$ is the connected reductive algebraic group over $\bk$ which is Langlands dual to $G$ and $\Rep(G^\vee_\bk)$ is its category of finite-dimensional algebraic representations.

A way to ``upgrade'' $\Loop^+G$-equivariant perverse sheaves on $\Gr_G$ to $\Iw$-equivariant perverse shea\-ves on $\Fl_G$ has been proposed by Gaitsgory~\cite{gaitsgory}. More specifically, this construction provides a canonical t-exact monoidal functor
\[
\sfZ : (\sfD_{\Loop^+G,\Loop^+G},\star_{\Loop^+ G}) \to (\sfD_{\Iw,\Iw}, \star_{\Iw}),
\]
which admits various favorable properties and structures (in particular, a ``monodromy'' automorphism) that will not be recalled in detail here.

Our goal is therefore to provide ``combinatorial'' descriptions of the categories $\sfP_{\Iw,\Iw}$ and $\sfP_{\Iwu,\Iw}$, in a way compatible with the functor $\sfZ$ and the geometric Satake equivalence.

\subsection{The equivariant regular quotient}

A first step in this direction has been obtained (jointly with L. Rider) in~\cite{brr}. 
Contrary to the case of $\sfD_{\Loop^+G, \Loop^+G}$, the monoidal structure on $\sfD_{\Iw,\Iw}$ is unfortunately not compatible with the perverse t-structure, in the sense that $\star_{\Iw}$ is neither left nor right exact. In order to take advantage of this structure while staying in the world of abelian categories, we consider a category that we call the ``regular quotient'' $\sfP^0_{\Iw,\Iw}$ of $\sfP_{\Iw,\Iw}$. Recall that the simple objects in $\sfP_{\Iw,\Iw}$ are in a canonical bijection with the elements in the extended affine Weyl group $W$ associated with $G$. Then $\sfP^0_{\Iw,\Iw}$ is defined as the Serre quotient of $\sfP_{\Iw,\Iw}$ by the Serre subcategory generated by the simple objects whose label has positive length (equivalently, whose support is not a point). The simple objects in $\sfP^0_{\Iw,\Iw}$ are uninteresting (they are labeled by length-$0$ elements in $W$), but the extensions between these objects are more subtle. The convolution product $\star_\Iw$ induces in a natural way a monoidal product $\star^0_\Iw$ on $\sfP^0_{\Iw,\Iw}$, which is exact on both sides.

The main result of~\cite{brr} is the following claim. Here we denote by $\sZ^0$ the composition
\[
\Rep(G^\vee_\bk) \xrightarrow[\sim]{\eqref{eqn:intro-Satake}} \sfP_{\Loop^+G,\Loop^+G} \xrightarrow{\sfZ} \sfP_{\Iw,\Iw} \to \sfP^0_{\Iw,\Iw}
\]
where the third functor is the natural quotient functor. (This composition is easily seen to be monoidal.)

\begin{figure}
 \begin{tabular}{|c|c|c|c|c|c|c|c|c|}
  \hline
  $\mathbf{A}_n$ ($n \geq 1$) & $\mathbf{B}_n$ ($n \geq 2$) & $\mathbf{C}_n$ ($n \geq 3$) & $\mathbf{D}_n$ ($n \geq 4$) & $\mathbf{E}_6$ & $\mathbf{E}_7$ & $\mathbf{E}_8$ & $\mathbf{F}_4$ & $\mathbf{G}_2$ \\
  \hline
  $1$ & $n$ & $2$ & $2$ & $3$ & $19$ & $31$ & $3$ & $3$\\
  \hline
 \end{tabular}
\caption{Bounds on $\ell$}
\label{fig:bounds}
\end{figure}

\begin{thm}
\label{thm:brr-intro}
Assume that the following conditions are satisfied:
\begin{enumerate}
 \item the quotient of $X^*(T)$ by the root lattice of $(G,T)$ is free;
 \item the quotient of $X_*(T)$ by the coroot lattice of $(G,T)$ has no $\ell$-torsion;
 \item for any indecomposable factor in the root system of $(G,T)$, $\ell$ is strictly bigger than the corresponding value in Figure~\ref{fig:bounds}.
\end{enumerate}
Let $\su \in G^\vee_\bk$ be a regular unipotent element. There exists an equivalence of monoidal categories
\[
\Phi_{\Iw,\Iw} : (\sfP^0_{\Iw,\Iw}, \star^0_\Iw) \simto (\Rep(\rmZ_{G^\vee_\bk}(\su)), \otimes) 
\]
such that $\Phi_{\Iw,\Iw} \circ \sZ^0$ identifies with the restriction functor $\Rep(G^\vee_\bk) \to \Rep(\rmZ_{G^\vee_\bk}(\su))$.
\end{thm}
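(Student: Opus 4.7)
The natural strategy is Tannakian: realize $\sfP^0_{\Iw,\Iw}$ as the representation category of an affine group scheme, then identify that group scheme with $Z_{G^\vee_\bk}(\su)$ using the compatibility with $\sZ^0$. My first step would be to verify that $(\sfP^0_{\Iw,\Iw}, \star^0_\Iw)$ is a rigid abelian monoidal $\bk$-linear category with a fiber functor $\omega$ (for instance, total cohomology or restriction to a length-$0$ stratum, suitably normalized). Rigidity follows from the existence of Verdier duality and the interpretation of the unit object as (a sum of) length-$0$ simples; the exactness of $\star^0_\Iw$ is given. Tannakian reconstruction then produces an affine group scheme $H/\bk$ with $\sfP^0_{\Iw,\Iw} \simeq \Rep(H)$, and $\sZ^0$ corresponds to a homomorphism $\varphi \colon H \to G^\vee_\bk$ because $\omega \circ \sZ^0$ matches the standard fiber functor on $\Rep(G^\vee_\bk)$ via the geometric Satake equivalence \eqref{eqn:intro-Satake}.

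The heart of the proof would be to show that $\varphi$ realizes $H$ as $Z_{G^\vee_\bk}(\su)$ for a regular unipotent $\su$. The crucial input is the canonical monodromy automorphism $m_V$ of Gaitsgory's central sheaves $\sfZ(V)$: functoriality in $V$ and compatibility with $\star_{L^+G}$ turn $m$ into a tensor automorphism of the identity functor of $\Rep(G^\vee_\bk)$ that lifts to $\Rep(H)$ via $\sZ^0$. By Tannakian reconstruction, this automorphism is an element $\su \in G^\vee_\bk(\bk)$, which moreover lies in the image of $\varphi$ and is central in $H$. The next step is to argue $\su$ is a regular unipotent element: unipotency comes from the fact that $m_V$ is unipotent on every $\sfZ(V)$ (it is $\mathrm{id} + \text{nilpotent}$ on stalks of nearby cycles), while regularity relies on the explicit description, due to Bezrukavnikov--Finkelberg and refined in our setting, of the logarithm of monodromy as a principal nilpotent; this is where the hypotheses on $\ell$ in Figure~\ref{fig:bounds} are used, both to make this logarithm available and to ensure $Z_{G^\vee_\bk}(\su)$ is smooth of the expected dimension. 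At this point $\varphi$ factors through the closed subgroup $Z_{G^\vee_\bk}(\su) \hookrightarrow G^\vee_\bk$.

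To upgrade this factorization to an isomorphism $H \simto Z_{G^\vee_\bk}(\su)$, I would argue in two parts. On the one hand, the simple objects of $\sfP^0_{\Iw,\Iw}$ are canonically in bijection with $\Omega := W/\Waff$, and via the construction $\varphi$ matches them with the characters of the component group $\pi_0(Z_{G^\vee_\bk}(\su))$, which is naturally identified with the Pontryagin dual of $\Omega$ under the assumptions (1)--(2) of the theorem (these are precisely the conditions that guarantee the centre of $G^\vee_\bk$ and $\pi_0$ of the regular centralizer have the right shape). On the other hand, for the identity component one computes $\End(\mathbf{1}_{\sfP^0_{\Iw,\Iw}})$ via a direct analysis of self-extensions of the length-$0$ simples, using Wakimoto sheaves and the projective cover of the unit in $\sfP_{\Iw,\Iw}$; the answer is $\scO$ of the affine space that is the Kostant slice fiber, yielding exactly the coordinate ring of the unipotent radical of $Z_{G^\vee_\bk}(\su)$. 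Combining these matches $\Rep(H)$ with $\Rep(Z_{G^\vee_\bk}(\su))$ as desired, and compatibility with $\sZ^0$ is tautological from the construction.

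The main obstacle, in my view, is the regularity step: establishing in positive characteristic that the monodromy on Gaitsgory's central functor, passed through the Satake equivalence and the regular quotient, corresponds to a \emph{regular} unipotent element of $G^\vee_\bk$. In characteristic zero this is handled by taking the logarithm of $m_V$ and invoking the classical Ginzburg/Bezrukavnikov--Finkelberg identification with a principal nilpotent; in characteristic $\ell$ one has to be careful that the logarithm makes sense and that the resulting element does not degenerate to a subregular one, which is exactly what forces the quantitative lower bounds on $\ell$ collected in Figure~\ref{fig:bounds}. Everything else (Tannakian reconstruction, identification of components, computation of $\End(\mathbf{1})$) is comparatively routine once this geometric input is in place.
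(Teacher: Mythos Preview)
Your Tannakian strategy is the right one, and it matches the paper's approach in broad outline: construct a fiber functor on $\sfP^0_{\Iw,\Iw}$, get a closed subgroup $H \subset G^\vee_\bk$ from $\sZ^0$, produce $\su$ from the monodromy automorphism, and then identify $H$ with $Z_{G^\vee_\bk}(\su)$. But two of your concrete steps have genuine gaps.

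First, the fiber functor. Neither ``total cohomology'' nor ``restriction to a length-$0$ stratum'' makes sense on the Serre quotient $\sfP^0_{\Iw,\Iw}$, and in fact the paper explicitly remarks that there is \emph{no} canonical fiber functor on this category: choosing one is essentially equivalent to choosing the regular unipotent element $\su$. The actual construction (recalled in \S\ref{ss:regular-quotient-coh}) is much more specific. One takes the ring ind-object $\sZ^0(\scO(G^\vee_\bk))$ in $\sfP^0_{\Iw,\Iw}$, fixes a maximal left ideal $\mathscr{J}$, sets $\scR^0 := \sZ^0(\scO(G^\vee_\bk))/\mathscr{J}$, and defines the fiber functor by $\Psi_{\Iw,\Iw}(\scF) = \Hom(\delta^0, \scR^0 \star^0_\Iw \scF)$. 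The monoidal structure comes from the ring structure on $\scR^0$, and the compatibility with $\sZ^0$ follows from the construction. This is an instance of a general lemma on central functors (first appearing in \cite{bez}) rather than an abstract invocation of Tannaka duality, and it initially only gives an equivalence on a full subcategory; one then has to check separately that this subcategory exhausts $\sfP^0_{\Iw,\Iw}$, a step your outline omits.

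Second, your argument for the identity component is wrong as stated: $\End(\mathbf{1}_{\sfP^0_{\Iw,\Iw}}) = \bk$ since $\delta^0$ is simple, so this endomorphism ring cannot recover the coordinate ring of the unipotent radical of $Z_{G^\vee_\bk}(\su)$. The actual identification of $H$ with $Z_{G^\vee_\bk}(\su)$ (both showing $\su$ is regular unipotent and that the inclusion $H \subset Z_{G^\vee_\bk}(\su)$ is an equality) is, as you correctly flag, the hard part, and occupies most of \cite{brr}; but it is not done by computing $\End$ of the unit. Your intuition about component groups matching via $\Omega$ is on the right track, and your identification of regularity as the crux requiring the bounds on $\ell$ is accurate.
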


\begin{rmk}
\begin{enumerate}
\item
The first two assumptions in Theorem~\ref{thm:brr-intro} are rather standard. The third assumption is an artefact of the method of proof of one of the crucial claims in~\cite{brr}; we expect that this assumption can be weakened at least to the requirement that $\ell$ is good for $G$.
\item
For $V$ in $\Rep(G^\vee_\bk)$, the functor $\Phi_{\Iw,\Iw}$ sends the monodromy automorphism of $\sZ^0(V)$ to the action of $\su$ on $V$.
\end{enumerate}
\end{rmk}

The proof of Theorem~\ref{thm:brr-intro} is based on a general result regarding central functors whose domain is a category of representations of an affine group scheme over a field; see~\cite[Lemma~3.1]{brr} for a precise statement. (This statement first appeared in~\cite{bez}.) This statement immediately provides an equivalence between a certain full subcategory of $\sfP^0_{\Iw,\Iw}$ and the category of representations of a closed subgroup scheme of $G^\vee_\bk$; what remains to be checked in order to obtain the theorem is that this full subcategory is the whole of $\sfP^0_{\Iw,\Iw}$, and that the subgroup is $\rmZ_{G^\vee_\bk}(\su)$ for a certain regular unipotent element $\su$. Note that the freedom in the choice of the element $\su$ is closely related to the fact that there does not exist any ``canonical'' fiber functor on the category $\sfP^0_{\Iw,\Iw}$; in fact, a choice of such a fiber functor is essentially equivalent to a choice of a regular unipotent element $\su$.

\subsection{The monodromic regular quotient}
\label{ss:mon-reg-quot-intro}

Theorem~\ref{thm:brr-intro} has a clean formulation, but it has the drawback that the category $\sfD_{\Iw,\Iw}$ cannot be recovered from the regular quotient $\sfP^0_{\Iw,\Iw}$. In order to solve this issue one might be tempted to play the same game with the category $\sfD_{\Iwu,\Iw}$ instead of $\sfD_{\Iw,\Iw}$. (In fact, standard arguments guarantee that if $\sfP_{\Iwu,\Iw}^0$ is defined by the same procedure as $\sfP_{\Iw,\Iw}^0$, then the quotient functor $\sfP_{\Iwu,\Iw} \to \sfP_{\Iwu,\Iw}^0$ is fully faithful on tilting perverse sheaves, and the category $\sfD_{\Iwu,\Iw}$ identifies with the bounded homotopy category of the category of tilting perverse sheaves. In this sense, $\sfD_{\Iwu,\Iw}$ can be recovered from $\sfP^0_{\Iwu,\Iw}$.) 

However, in this process one looses the monoidal product, which was crucial for the construction of the functor $\Phi_{\Iw,\Iw}$. To remedy this, one needs to work instead with the category $\sfD_{\Iwu,\Iwu}$ which is defined as the full triangulated subcategory of the derived category of constructible complexes of $\bk$-sheaves on $\tFl_G$ generated by complexes obtained by pullback from $\Iwu$-equivariant complexes on $\Fl_G$. This category admits a natural perverse t-structure, whose heart will be denoted $\sfP_{\Iwu,\Iwu}$. Once again the simple objects in $\sfP_{\Iwu,\Iwu}$ are in bijection with $W$, and we can define the monodromic regular quotient $\sfP^0_{\Iwu,\Iwu}$ as the quotient by the Serre subcategory generated by simple objects whose label has positive length. There exists a natural convolution product $\star_{\Iwu}$ on $\sfD_{\Iwu,\Iwu}$, which is not exact, but which induces in a natural way a monoidal product $\star^0_{\Iwu}$ on $\sfP^0_{\Iwu,\Iwu}$. (This product is only right exact on both sides.) The pullback functor $\sfD_{\Iw,\Iw} \to \sfD_{\Iwu,\Iwu}$ induces an exact monoidal functor
\begin{equation}
\label{eqn:functor-P0-intro}
(\sfP^0_{\Iw,\Iw},\star^0_{\Iw}) \to (\sfP^0_{\Iwu,\Iwu}, \star^0_{\Iwu}).
\end{equation}

Compared with $\sfP^0_{\Iw,\Iw}$, the category $\sfP^0_{\Iwu,\Iwu}$ has two new ``directions of deformation,'' corresponding to the replacement of $\Iw$ by $\Iwu$ on both sides. On the geometric side, the corresponding process is the replacement of $\rmZ_{G^\vee_\bk}(\su)$ by a group scheme constructed out of the ``universal centralizer'' over $G^\vee_\bk$. This universal centralizer is an affine group scheme over $G^\vee_\bk$ whose fiber over a closed point $g \in G^\vee_\bk$ is the scheme-theoretic centralizer $\rmZ_{G^\vee_\bk}(g)$. Its restriction to the regular locus in $G^\vee_\bk$ is smooth. We consider a ``Steinberg section'' $\Sigma \subset G^\vee_\bk$, a certain section of the adjoint quotient $G^\vee_\bk \to G^\vee_\bk / G^\vee_\bk \cong T^\vee_\bk / \Wf$ which consists of regular elements. (Here, $T^\vee_\bk$ is the canonical maximal torus in $G^\vee_\bk$, and $\Wf$ is the Weyl group of $(G^\vee_\bk, T^\vee_\bk)$, which identifies with that of $(G,T)$.) The restriction $\bbJ_\Sigma$ of the universal centralizer to $\Sigma$ is therefore a smooth affine group scheme over the affine scheme $\Sigma$. We consider the fiber product $T^\vee_\bk \times_{T^\vee_\bk / \Wf} T^\vee_\bk$, seen as a scheme over $\Sigma$ via the identification $\Sigma \cong T^\vee_\bk / \Wf$, and set
\[
\bbI_\Sigma := (T^\vee_\bk \times_{T^\vee_\bk / \Wf} T^\vee_\bk) \times_\Sigma \bbJ_\Sigma,
\]
a smooth affine group scheme over $T^\vee_\bk \times_{T^\vee_\bk / \Wf} T^\vee_\bk$. We consider the category
\[
\Rep_0(\bbI_{\Sigma})
\]
of representations of 
$\bbI_{\Sigma}$
whose underlying $\scO_{T^\vee_\bk \times_{T^\vee_\bk / \Wf} T^\vee_\bk}$-module is coherent and set-theore\-tically supported on $\{(e,e)\} \subset T^\vee_\bk \times_{T^\vee_\bk / \Wf} T^\vee_\bk$. The tensor product of $\scO(T^\vee_\bk)$-bimodules endows this category with a monoidal product $\circledast$.

The point in $\Sigma$ corresponding to the image of the unit $e \in T^\vee_\bk$ in $T^\vee_\bk / \Wf$ is a regular unipotent element $\su$, and by construction the fiber of 
$\bbI_\Sigma$
over $(e,e)$ is $\rmZ_{G^\vee_\bk}(\su)$. We therefore have a natural exact monoidal functor
\begin{equation}
\label{eqn:functor-Rep-intro}
\Rep(\rmZ_{G^\vee_\bk}(\su)) \to 
\Rep_0(\bbI_{\Sigma})
\end{equation}
induced by pushforward along the embedding $\{(e,e)\} \hookrightarrow T^\vee_\bk \times_{T^\vee_\bk / \Wf} T^\vee_\bk$.

The first main result of the present paper is the following claim.

\begin{thm}
\label{thm:main-intro-1}
Assume that the conditions in Theorem~\ref{thm:brr-intro} are satisfied.
There exists an equivalence of monoidal categories
\[
\Phi_{\Iwu,\Iwu} : \sfP_{\Iwu,\Iwu}^0 \simto 
\Rep_0(\bbI_{\Sigma})
\]
such that the following diagram commutes:
\[
\xymatrix@C=2cm{
\sfP^0_{\Iw,\Iw} \ar[r]^-{\Phi_{\Iw,\Iw}}_-{\sim} \ar[d]_-{\eqref{eqn:functor-P0-intro}} & \Rep(\rmZ_{G^\vee_\bk}(\su)) \ar[d]^-{\eqref{eqn:functor-Rep-intro}} \\
\sfP^0_{\Iwu,\Iwu} \ar[r]^-{\Phi_{\Iwu,\Iwu}}_-{\sim} & 
\Rep_0(\bbI_{\Sigma}).
}
\]
\end{thm}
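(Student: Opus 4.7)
The plan is to follow the Tannakian strategy of~\cite{brr}, but now \emph{relative to the base $\fD$}, using the two ``directions of deformation'' recalled in Subsection~\ref{ss:mon-reg-quot-intro} to endow $\sfP^0_{\Iwu,\Iwu}$ with an $\scO(\fD)$-linear structure. The central functor $\sfZ$ will again serve as the input for a reconstruction argument, whose output should be an equivalence $\sfP^0_{\Iwu,\Iwu} \simto \Rep_0(\bH)$ for some smooth affine group scheme $\bH$ over $\fD$, which we then identify with $\bbJ_\fD$.

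The first step is to equip $\sfP^0_{\Iwu,\Iwu}$ with two commuting pro-unipotent monodromy actions of $\scO(T^\vee_\bk)$, coming respectively from the left and right $\Iw/\Iwu \cong T$-torsor structures on $\tFl_G$, and to check that these actions factor through the center of $\sfP^0_{\Iwu,\Iwu}$, producing a ring map $\scO(T^\vee_\bk) \otimes_\bk \scO(T^\vee_\bk) \to Z(\sfP^0_{\Iwu,\Iwu})$. The crucial point is then to verify that on central objects $\sZ^0(V)$, the two monodromy actions agree via the Satake action of $T^\vee_\bk \subset G^\vee_\bk$, i.e.\ that the ring map factors through the surjection
\[
\scO(T^\vee_\bk) \otimes_\bk \scO(T^\vee_\bk) \twoheadrightarrow \scO(T^\vee_\bk) \otimes_{\scO(T^\vee_\bk)^{\Wf}} \scO(T^\vee_\bk) = \scO(\fD).
\]
This Wakimoto-type compatibility, relating monodromy on nearby cycles to the weight decomposition coming from Satake, is in my view the main technical obstacle: it interlaces both monodromy filtrations with the centrality constraint produced by $\sfZ$.

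With the $\scO(\fD)$-linear structure in place, the second step is to invoke a relative analogue of~\cite[Lemma~3.1]{brr}: the functor $\sZ^0$, extended $\scO(\fD)$-linearly, becomes a central monoidal functor into the $\scO(\fD)$-linear abelian category $\sfP^0_{\Iwu,\Iwu}$, and the simple objects of the target (indexed by length-zero elements of $W$) should be recovered from $\sZ^0$ of the trivial representation twisted by monodromy. The reconstruction theorem then produces a flat affine group scheme $\bH$ over $\fD$ together with an equivalence $\Phi_{\Iwu,\Iwu} : \sfP^0_{\Iwu,\Iwu} \simto \Rep_0(\bH)$, the subscript~$0$ encoding set-theoretic support at $(e,e) \in \fD$, which matches the regular-quotient condition on the source. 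Smoothness of $\bH$ should follow from freeness properties of tilting perverse sheaves over the monodromy ring, in the spirit of the standard principle that tilting objects are flat over the deformation base.

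Finally, to identify $\bH$ with $\bbJ_\fD$: the centrality datum of $\sZ^0$ produces, functorially in $V \in \Rep(G^\vee_\bk)$, an automorphism of $V \otimes_\bk \scO(\fD)$ that commutes with the $G^\vee_\bk$-action, and hence, by the universal property of the universal centralizer, a morphism $\bH \to \bbJ_\fD$ of smooth affine group schemes over $\fD$. By Theorem~\ref{thm:brr-intro} combined with compatibility with~\eqref{eqn:functor-P0-intro}, this morphism is an isomorphism after pullback along $(e,e) \hookrightarrow \fD$; smoothness and flatness then propagate this isomorphism, and a standard dimension-counting or density argument concludes. The commutativity of the square in the statement is built into the construction, since both vertical arrows correspond to base change along $(e,e) \hookrightarrow \fD$ on the reconstructed side.
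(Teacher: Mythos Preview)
Your proposal has a genuine gap at its second step. You plan to ``invoke a relative analogue of~\cite[Lemma~3.1]{brr}'' to obtain a group scheme $\bH$ over $\fD$ from the central functor $\sZ^0$. The paper explicitly addresses this point just after Theorem~\ref{thm:main-intro-1}: the general Tannakian reconstruction result underlying~\cite{brr} \emph{has no version for group schemes over a base}, and this is precisely why the paper abandons the Tannakian approach and instead constructs $\Phi_{\Iwu,\Iwu}$ by hand. There are structural reasons for this failure: the monoidal product $\pstar^0_{\Iwu}$ on $\sfP^0_{\Iwu,\Iwu}$ is only right exact on each side (not exact), and the category has no unit object. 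Tannakian reconstruction over a base requires a fiber functor that is exact and faithful and a well-behaved tensor structure; none of these are available here in the form you would need.

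The paper's route is quite different. It first makes the functor $\Phi_{\Iw,\Iw}$ from~\cite{brr} explicit as $\scF \mapsto \Hom(\delta^0, \scR^0 \star^0_\Iw \scF)$ for a concrete ring ind-object $\scR^0$ built from $\sZ^0(\scO(G^\vee_\bk))$ (see~\S\ref{ss:regular-quotient-coh}). It then deforms each ingredient: it builds a free-monodromic lift $\tsZ$ of Gaitsgory's functor (Section~\ref{sec:fmZ}), a deformed ring object $\scR^\wedge = \tsZ(\scO(G^\vee_\bk)) \otimes_{\scO(G^\vee_\bk)} \scO(\Sigma)$, and a projective pro-object $\Xi^\wedge_!$, and defines $\Phi_{\Iwu,\Iwu}(\scF) = \Hom(\Pi^0_{\Iwu,\Iwu}(\Xi^\wedge_!), \scR^\wedge \phatstar^0 \scF)$. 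The $\bbJ_\fD$-action comes from an explicit coaction morphism on $\scR^\wedge$, not from Tannakian reconstruction. Exactness, full faithfulness, essential surjectivity, and monoidality are then each proved directly, largely by reducing (via $\pi_\dag$ or restriction to $G/U$) to the already-known equivalences $\Phi_{\Iw,\Iw}$ and $\Phi_{U,U}$. Your monodromy observation (the $\scO(\fD)$-action) is correct and is used, but it enters as one ingredient rather than as the backbone of a reconstruction argument.
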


The general result that was the basis for the proof of Theorem~\ref{thm:brr-intro} has no version for group schemes over a base (as far as we know). Our proof of Theorem~\ref{thm:main-intro-1} therefore follows a different, and more specific, approach. We make the functor appearing in Theorem~\ref{thm:brr-intro} more explicit, and then provide an explicit ``deformation'' of this functor by deforming each of its constituents. Precisely defining these deformations requires recalling and generalizing a number of known tools in this domain (in particular, from~\cite{be}), and proving that the functor constructed in this way has the expected properties turns out to be long and technical; these tasks occupy a large part of the paper.

The functor $\Phi_{\Iwu,\Iwu}$ also admits some compatibility properties with an appropriate version of the central functor $\sfZ$. These properties are however more difficult to spell out, and will not be discussed in this introduction.

\begin{rmk}
\begin{enumerate}
\item
The action of $G$ on the base point in $\tFl_G$ induces a closed embedding $G/U \hookrightarrow \tFl_G$. As in the definition of $\sfD_{\Iwu,\Iwu}$, we define the category $\sfD_{U,U}$ as the triangulated subcategory of the derived category of $U$-equivariant constructible $\bk$-sheaves on $G/U$ generated by objects obtained by pullback from $U$-equivariant complexes on $G/B$. This category admits a natural perverse t-structure, and the simple objects in its heart $\sfP_{U,U}$ are in bijection with $\Wf$. We can then define $\sfP^0_{U,U}$ as the Serre quotient of $\sfP_{U,U}$ by the Serre subcategory generated by simple objects whose label has positive length. A variation on the constructions of~\cite{bezr} (explained in Section~\ref{sec:Perv-G/U}) provides an equivalence of monoidal categories between $\sfP^0_{U,U}$ and the category $\Coh_0(T^\vee_\bk \times_{T^\vee_\bk / \Wf} T^\vee_\bk)$ of coherent sheaves on $T^\vee_\bk \times_{T^\vee_\bk / \Wf} T^\vee_\bk$ which are set-theoretically supported on $\{(e,e)\}$. The equivalence $\Phi_{\Iwu,\Iwu}$ is also compatible with this equivalence, using the functor $\sfP^0_{U,U} \to \sfP^0_{\Iwu,\Iwu}$ induced by pushforward along the closed embedding $G/U \hookrightarrow \tFl_G$ and the functor $\Coh_0(T^\vee_\bk \times_{T^\vee_\bk / \Wf} T^\vee_\bk) \to 
\Rep_0(\bbI_{\Sigma})$
sending a coherent sheaf to itself with the trivial structure as a representation.
\item
A variation on the proof of Theorem~\ref{thm:main-intro-1} also provides a version of this theorem describing the similar Serre quotient of the category of $\Iwu$-equivariant perverse sheaves on $\Fl_G$; see Theorem~\ref{thm:main-Fl} for a precise statement.
\end{enumerate}
\end{rmk}

\subsection{Description of tilting perverse sheaves}

As explained in~\S\ref{ss:mon-reg-quot-intro}, one motivation for considering the category $\sfP^0_{\Iwu,\Iwu}$ is that one can reconstruct the category $\sfD_{\Iwu,\Iwu}$ out of it. This can however not be done directly as in the case of $\sfD_{\Iwu,\Iw}$, essentially because there is no appropriate notion of tilting perverse sheaves in this category. For this one needs to use a ``completed'' version $\sfD^\wedge_{\Iwu,\Iwu}$ of $\sfD_{\Iwu,\Iwu}$, constructed following a procedure developed by Yun in an appendix to~\cite{by}. The category $\sfD^\wedge_{\Iwu,\Iwu}$ is triangulated, it contains $\sfD_{\Iwu,\Iwu}$ as a full triangulated subcategory, and it is endowed with a perverse t-structure whose heart is denoted $\sfP^\wedge_{\Iwu,\Iwu}$. In $\sfP^\wedge_{\Iwu,\Iwu}$ we have an appropriate notion of tilting objects; the full subcategory of tilting perverse sheaves will be denoted $\sfT^\wedge_{\Iwu,\Iwu}$. We have a canonical equivalence of categories
\begin{equation}
\label{eqn:equiv-KbT-D-intro}
\Kb \sfT^\wedge_{\Iwu,\Iwu} \simto \sfD^\wedge_{\Iwu,\Iwu},
\end{equation}
so that $\sfD^\wedge_{\Iwu,\Iwu}$ can be reconstructed from $\sfT^\wedge_{\Iwu,\Iwu}$.

The monoidal product $\star_{\Iwu}$ admits a natural ``extension'' to a (triangulated) monoidal product $\hatstar$ on $\sfD^\wedge_{\Iwu,\Iwu}$, which stabilizes the subcategory $\sfT^\wedge_{\Iwu,\Iwu}$. With these structures, \eqref{eqn:equiv-KbT-D-intro} becomes an equivalence of monoidal categories.

On the coherent side, we consider the 
spectrum $\FN_{T^\vee_\bk \times_{T^\vee_\bk / \Wf} T^\vee_\bk}(\{(e,e)\})$ of the completion of $\scO(T^\vee_\bk \times_{T^\vee_\bk / \Wf} T^\vee_\bk)$ with respect to the ideal of the point $(e,e)$. We also set
\[
\bbI_\Sigma^\wedge := \FN_{T^\vee_\bk \times_{T^\vee_\bk / \Wf} T^\vee_\bk}(\{(e,e)\}) \times_{T^\vee_\bk \times_{T^\vee_\bk / \Wf} T^\vee_\bk} 
\bbI_{\Sigma}.
\]
The category $\Rep(\bbI_\Sigma^\wedge)$ of representations of the group scheme $\bbI_\Sigma^\wedge$ whose underlying $\scO_{\FN_{T^\vee_\bk \times_{T^\vee_\bk / \Wf} T^\vee_\bk}(\{(e,e)\})}$-module is coherent admits once again a canonical monoidal product $\circledast$. It also contains natural objects $\scB^\wedge_s$ attached to simple reflections $s \in S$, and natural objects $\scM^\wedge_\omega$ attached to length-$0$ elements $\omega$ in $W$. We define the category $\SRep(\bbI_\Sigma^\wedge)$ of ``Soergel representations'' of $\bbI_\Sigma^\wedge$ as the full subcategory generated under $\circledast$, direct sums and direct summands by the objects $\scB^\wedge_s$ and $\scM^\wedge_\omega$.

The second main result of the paper is the following claim.

\begin{thm}
\label{thm:main-intro-2}
Assume that the conditions in Theorem~\ref{thm:brr-intro} are satisfied.
There exists an equivalence of monoidal categories
\[
(\sfT^\wedge_{\Iwu,\Iwu}, \hatstar) \simto (\SRep(\bbI_\Sigma^\wedge), \circledast).
\]
\end{thm}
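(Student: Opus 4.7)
The strategy is to exploit the fact that both sides of the claimed equivalence are Karoubi envelopes of explicitly described Bott--Samelson subcategories, and to match these by deforming $\Phi_{\Iwu,\Iwu}$ to a free-monodromic, completed version. On the geometric side, $(\sfT^\wedge_{\Iwu,\Iwu},\hatstar)$ is the additive Karoubi envelope of the full monoidal subcategory generated by the tiltings $\cT^\wedge_s$ attached to simple reflections $s \in S$ together with the objects $\cT^\wedge_\omega$ attached to length-$0$ elements $\omega \in W$; this is the standard Bott--Samelson generation statement for tiltings, proved by induction on length using the Bruhat stratification and the compatibility of $\hatstar$ with standard/costandard filtrations. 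By definition, $\SRep(\bbJ^\wedge_\fD)$ is the analogous Karoubi envelope of the subcategory generated by $\scB^\wedge_s$ and $\scM^\wedge_\omega$. Thus the theorem reduces to constructing a fully faithful monoidal functor $\Psi$ between these two Bott--Samelson subcategories, matching generators.

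To construct such a $\Psi$, I would lift $\Phi_{\Iwu,\Iwu}$ to a functor on tiltings after free-monodromic completion. The category $\sfT^\wedge_{\Iwu,\Iwu}$ sits over $\sfP^0_{\Iwu,\Iwu}$ through a pro-limit along two ``monodromy directions'' corresponding to the two copies of $\Iwu$, which under $\Phi_{\Iwu,\Iwu}$ are identified with the two factors of $\fD = T^\vee_\bk \times_{T^\vee_\bk/\Wf} T^\vee_\bk$. Taking $\Phi_{\Iwu,\Iwu}$ and completing the base $\fD$ at the point $(e,e)$ produces the candidate monoidal functor $\Psi$ on Bott--Samelson tiltings. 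The identification $\Psi(\cT^\wedge_\omega) \cong \scM^\wedge_\omega$ is essentially tautological, while $\Psi(\cT^\wedge_s) \cong \scB^\wedge_s$ requires a direct computation, matching the two-step standard filtration of $\cT^\wedge_s$ by $\tD_s$ and $\tD_e$ with the corresponding filtration of $\scB^\wedge_s$.

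The main obstacle, and heart of the proof, is fully faithfulness on Bott--Samelson objects. On the geometric side, one shows that for any two Bott--Samelson expressions $\uw$ and $\uv$ the space $\Hom_{\sfT^\wedge_{\Iwu,\Iwu}}(\cT^\wedge_\uw,\cT^\wedge_\uv)$ is a free graded $\scO(\fD^\wedge)$-module whose rank decomposes along the Bruhat order via the standard filtration: this is the free-monodromic analogue of Soergel's hom formula, adapted from the appendix of~\cite{by}. On the coherent side, $\bbJ_\fD^\wedge$ is pulled back from the universal centralizer along the Steinberg section $\Sigma$, and a parallel combinatorial Hom formula holds in $\SRep(\bbJ_\fD^\wedge)$, with $\scB^\wedge_s$ admitting a compatible standard filtration. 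Matching the two formulas graded piece by graded piece reduces, for the piece labelled by $w=e$, to Theorem~\ref{thm:main-intro-1}, and for higher $w$ to a Weyl-invariant computation on the dual Lie algebra $\ft^\vee_\bk$ identifying the two-sided monodromy action with the $\scO(\fD^\wedge)$-action on $\bbJ^\wedge_\fD$-representations. Once this is in place, passing to Karoubi envelopes produces the required monoidal equivalence, with monoidality and the matching of generators preserved by construction.
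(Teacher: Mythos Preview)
Your overall architecture---reduce to Bott--Samelson generators on both sides, match them under a completed version of $\Phi_{\Iwu,\Iwu}$, then pass to Karoubi envelopes---is correct and is exactly what the paper does. The difference lies in how the ``completed $\Phi_{\Iwu,\Iwu}$'' is actually built and how full faithfulness is established.

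You propose to obtain $\Psi$ by ``completing the base $\fD$ at $(e,e)$'' and to prove full faithfulness by matching Soergel-type Hom formulas graded piece by piece. There are two problems here. First, there is no grading in this setting: the free-monodromic category $\sfT^\wedge_{\Iwu,\Iwu}$ is not graded, and the Hom spaces, while free over $\scO((T^\vee_\bk)^\wedge)$, carry no natural $\Z$-grading that would let you argue ``piece by piece'' in the way you describe. Second, and more seriously, the passage from $\Phi_{\Iwu,\Iwu}$ (which lives on the Serre quotient $\sfP^0_{\Iwu,\Iwu}$) to a functor on $\sfT^\wedge_{\Iwu,\Iwu}$ is the actual content of the argument, and ``complete the base'' does not supply a mechanism for it.

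The paper's mechanism is concrete and rather different from a Hom-formula match. It introduces \emph{truncation functors} $\sfC^0_m : \sfP^\wedge_{\Iwu,\Iwu} \to \sfP^0_{\Iwu,\Iwu}$ (kill the $m$-th power of the monodromy ideal, then pass to the regular quotient) and proves two facts: (i) for tilting $\scT,\scT'$, the natural map
\[
\Hom_{\sfT^\wedge_{\Iwu,\Iwu}}(\scT,\scT') \;\longrightarrow\; \varprojlim_m \Hom_{\sfP^0_{\Iwu,\Iwu}}\bigl(\sfC^0_m(\scT),\sfC^0_m(\scT')\bigr)
\]
is an isomorphism, and (ii) $\sfC^0_m$ is monoidal on tiltings. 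This embeds $\sfT^\wedge_{\Iwu,\Iwu}$ fully faithfully into a ``sequential'' category built from $\sfP^0_{\Iwu,\Iwu}$. An analogous truncation $\sfD_m$ on the coherent side embeds $\SRep(\bbJ^\wedge_\fD)$ into a sequential category built from $\Rep_0(\bbJ_\fD)$; the full faithfulness here requires a separate argument using infinitesimal flatness of the group scheme and distribution algebras. Theorem~\ref{thm:main-intro-1} then gives an equivalence of the two sequential categories, and the remaining work is to check that the projective systems $(\sfC^0_m(\Xi^\wedge_{s,!}))_m$ and $(\sfD_m(\scB^\wedge_s))_m$ match (and similarly for $\Delta^\wedge_\omega$ and $\scM^\wedge_\omega$). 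This last step is itself substantial: for $s \notin \Sf$ it goes through free-monodromic Wakimoto sheaves and a computation of images of truncated standard and costandard objects.

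So your proposal is not wrong in outline, but the step ``complete $\Phi_{\Iwu,\Iwu}$ and prove full faithfulness'' hides exactly the substance of the proof, and your suggested approach to that step (graded Hom formulas) does not apply here.
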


From the description in Theorem~\ref{thm:main-intro-2} one obtains (at least in theory) a description of the category $\sfD^\wedge_{\Iwu,\Iwu}$ and its subcategory $\sfD_{\Iwu,\Iwu}$, and also of the category $\sfD_{\Iwu,\Iw}$. 
Making this description explicit, in terms of coherent sheaves on the Steinberg variety of $G^\vee_\bk$, is the subject of~\cite{br-pt3}.

The proof of Theorem~\ref{thm:main-intro-2} proceeds by describing each side of the equivalence in terms of the categories appearing in Theorem~\ref{thm:main-intro-1}. (This description is similar to the description of finitely generated modules over the completion $A^\wedge$ of a noetherian ring $A$ with respect to an ideal $I$ in terms of sequences of modules over the quotients $A/I^m$ for $m \geq 0$.) On the coherent side, this uses restriction to the various infinitesimal neighborhoods of the preimage of the base point in $T^\vee_\bk / \Wf$. On the perverse side, this uses some ``truncation'' functors which ``kill'' powers of the ideal of this base point acting by monodromy.

\begin{rmk}
In this case also the proof of Theorem~\ref{thm:main-intro-2} can be adapted to provide a description of the category of $\Iwu$-equivariant tilting perverse sheaves on $\Fl_G$; see Theorem~\ref{thm:main-application-Fl} for a precise statement.
\end{rmk}

\subsection{Contents}

In Section~\ref{sec:Coh-St} we prove or recall a number of facts regarding the geometry of the (multiplicative) Steinberg variety. In particular we explain the construction of the Steinberg section and recall some facts about the universal centralizer group scheme. In Section~\ref{sec:Hecke-cat} we explain various incarnations of the ``Hecke category'' attached to an affine Weyl group.

Sections~\ref{ss:const-Fl}--\ref{sec:fmZ} are devoted to reminders on essentially known constructions regarding categories of constructible sheaves on affine flag varieties. More specifically, in Section~\ref{ss:const-Fl} we introduce these categories, recall Gaitsgory's construction relating the Satake category to Iwahori-equivariant perverse sheaves on the affine flag variety, and recall the construction of the main player of~\cite{brr} (the equivariant regular quotient) and the main result of that paper (an equivalence of categories relating this equivariant regular quotient to representations of the centralizer of a regular unipotent element in the dual group). In Section~\ref{sec:construction-mon-reg-quot} we introduce one of the main players of the present paper, the ``monodromic regular quotient'' category. In Section~\ref{sec:fmps} we recall Yun's construction of the ``free-monodromic derived category'' and its perverse t-structure. Finally, in Section~\ref{sec:fmZ} we discuss a free-monodromic variant of Gaitsgory's construction, following a similar construction in~\cite{be}.

Sections~\ref{sec:Perv-G/U}--\ref{sec:Soergel-type} contain our main constructions, and the proofs of our main results. First, in Section~\ref{sec:Perv-G/U} we reinterpret the results of~\cite{bezr} from a slightly different perspective, thereby providing a ``reconstruction'' of the base scheme $T^\vee_\bk \times_{T^\vee_\bk / \Wf} T^\vee_\bk$ from perverse sheaves on $G/U$. In Section~\ref{sec:truncation} we prove some technical results regarding a ``truncation'' operation on perverse sheaves that will be required later. In Section~\ref{sec:construction} we construct our functor $\Phi_{\Iwu,\Iwu}$ and prove a slightly more precise version of Theorem~\ref{thm:main-intro-1} (see Theorem~\ref{thm:main}). Then in Section~\ref{sec:Soergel-type} we explain how to deduce a slightly more precise version of Theorem~\ref{thm:main-intro-2} (see Theorem~\ref{thm:main-application}).

In Section~\ref{sec:Fl} we explain variants of Theorems~\ref{thm:main-intro-1} and~\ref{thm:main-intro-2} which describe the more familiar category $\sfP_{\Iwu,\Iw}$.
The paper finishes with four appendices, each discussing a technical construction which is required in the course of our proofs.

\subsection{Notation and conventions}

Unless otherwise specified, a ``module'' will mean a left module, and a ``comodule'' will mean a right comodule. If $A$ is a ring, we will denote by $\Mod(A)$, resp.~$\Modr(A)$, the category of $A$-modules, resp.~of right $A$-modules. If $A$ is left noetherian, resp.~right noetherian, the subcategory of finitely generated modules will be denoted $\Mof(A)$, resp.~$\Mofr(A)$.

Given an affine group scheme $H$ over a noetherian scheme $X$, we will denote by $\Rep^\infty(H)$ the category of representations of $H$, and by $\Rep(H)$ the full subcategory of representations whose underlying $\scO_X$-module is coherent. In most cases we will encounter below $X$ will be affine; in this case we will identify $\Rep^\infty(H)$ with the category of comodules over the $\scO(X)$-Hopf algebra $\scO(H)$, and $\Rep(H)$ with the subcategory of comodules which are finitely generated as $\scO(X)$-modules.

If $X=\Spec(A)$ is an affine scheme and $Y \subset X$ is a closed subscheme, defined by an ideal $I \subset A$, we will denote by $\FN_X(Y)$ the spectrum of the completion of $A$ with respect to $I$. (Here, ``$\FN$'' stands for ``formal neighborhood.'') In this setting we have a canonical morphism of schemes $\FN_X(Y) \to X$.

In this paper we will make extensive use of the theory of pro-objects and ind-objects in (locally small)\footnote{By default, in the body of the paper, by ``category'' we mean a \emph{locally small} category.} categories, for which we refer to~\cite[Chap.~6]{ks}. (For us, all pro-objects will be parametrized by $\Z_{\geq 0}$ with the standard order. On the other hand, ind-objects will be parametrized by arbitrary filtrant small categories.) In particular, we will repeatedly use the property that any functor ``extends'' in a canonical way to a functor between categories of pro-objects or ind-objects; see~\cite[Proposition~6.1.9]{ks}. We will also use the fact that the category of ind-objects in a locally small category, resp.~in an abelian category, is itself locally small, resp.~abelian; see~\cite[Lemma~6.1.2]{ks}, resp.~\cite[Theorem~8.6.5(i)]{ks}.

\subsection{Acknowledgements}

We thank P. Achar for stimulating discussions, C.~Bonnaf\'e for his help with a reference, A. Bouthier for useful discussions on Steinberg's sections, S. Cotner for enlightening exchanges on centralizers of regular elements and sending a preliminary version of~\cite{cotner}, and P.~Etingof for his explanations on a result from~\cite{egno}.

This paper is the second step of a project we initially started as a joint work with L. Rider. We thank her for her early contributions to this program.

R.B.~was supported by NSF Grant No.~DMS-1601953.
This project has received funding from the European Research Council (ERC) under the European Union's Horizon 2020 research and innovation programme (S.R., grant agreement No.~101002592). 

\section{Coherent sheaves on the Steinberg variety}
\label{sec:Coh-St}

In this section we collect a number of results on the geometry of various schemes associated with a connected reductive algebraic group, and coherent sheaves on such schemes, that will be required in later sections.

\subsection{Notation}
\label{ss:Coh-notation}

We fix an algebraically closed field $\bk$ of characteristic $\ell$, and a connected reductive algebraic group $\bG$ over $\bk$ whose derived subgroup $\mathscr{D}\bG$ is simply connected. We choose a Borel subgroup $\bB \subset \bG$ and a maximal torus $\bT \subset \bB$, and denote by $\bU$ the unipotent radical of $\bB$. The respective Lie algebras of $\bG$, $\bB$, $\bT$ will be denoted $\bg$, $\bb$, $\bt$. The Borel subgroup of $\bG$ opposite to $\bB$ (with respect to $\bT$) will be denoted $\bB^+$, and its unipotent radical will be denoted $\bU^+$. For any torus $\bH$ we will denote by $X^*(\bH)$ its lattice of characters.

We will denote by $\bWf$ the Weyl group of $(\bG,\bT)$, 
by $\fR \subset X^*(\bT)$ the root system of $(\bG,\bT)$, and by $\fR^\vee \subset X_*(\bT)$ the corresponding coroots; for any root $\beta \in \mathfrak{R}$, we will as usual denote by $\beta^\vee$ the associated coroot. The choice of $\bB$ determines a system $\mathfrak{R}_+ \subset \mathfrak{R}$ of positive roots, chosen so that the $\bT$-weights on $\mathrm{Lie}(\bU)$ are the \emph{negative} roots. The associated basis of $\mathfrak{R}$ will be denoted $\mathfrak{R}_{\mathrm{s}}$. We will denote by $X^*_+(\bT) \subset X^*(\bT)$ the submonoid of weights which are dominant with respect to $\fR_+$, and by $\preceq$ the order on $X^*(\bT)$ such that $\lambda \preceq \mu$ iff $\mu-\lambda$ is a sum of positive roots.
The choice of $\mathfrak{R}_+$ also determines a system $\bSf \subset \bWf$ of Coxeter generators; the longest element with respect to this structure will be denoted $w_\circ$. 

The following classical result of Steinberg (after earlier work of Pittie, see~\cite{steinberg-pittie}) will be crucial in later sections.

\begin{thm}
\label{thm:Pittie-Steinberg}
The $\scO(\bT/\bWf)$-module $\scO(\bT)$ is free of rank $\# \bWf$. 
\end{thm}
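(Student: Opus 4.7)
The hypothesis that $\mathscr{D}\bG$ is simply connected is essential: it guarantees the existence of fundamental weights $\varpi_1, \ldots, \varpi_r$ as elements of $X^*(\bT)$ rather than just of $X^*(\bT) \otimes \Q$. My strategy is the classical one going back to Steinberg: reduce to the simply connected semisimple case, identify the invariant subring as a polynomial algebra, and deduce freeness from a miracle-flatness argument. For the reduction, observe that $X^*(Z(\bG)^\circ)$ is free and pointwise $\bWf$-fixed, so the short exact sequence of $\bWf$-modules
\[
0 \to X^*(\bT/Z(\bG)^\circ) \to X^*(\bT) \to X^*(Z(\bG)^\circ) \to 0
\]
splits $\bWf$-equivariantly. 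Passing to group rings yields $\scO(\bT) \cong \scO(\bT_{\mathrm{sc}}) \otimes_\bk \scO(Z(\bG)^\circ)$ as $\bWf$-algebras (with trivial action on the second tensor factor), where $\bT_{\mathrm{sc}}$ is the torus whose character lattice is the weight lattice of $\mathscr{D}\bG$. Taking $\bWf$-invariants reduces the claim to showing that $\scO(\bT_{\mathrm{sc}})$ is free of rank $\#\bWf$ over $\scO(\bT_{\mathrm{sc}})^{\bWf}$.

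In this simply connected semisimple setup, $X^*(\bT_{\mathrm{sc}}) = \bigoplus_{i=1}^r \Z \varpi_i$. A classical theorem of Steinberg, proved integrally and hence valid in arbitrary characteristic, identifies $\scO(\bT_{\mathrm{sc}})^{\bWf}$ with the polynomial algebra $\bk[\chi_{\varpi_1}, \ldots, \chi_{\varpi_r}]$, where $\chi_\lambda$ denotes the Weyl character of the irreducible $\mathscr{D}\bG$-representation of highest weight $\lambda$. In particular $\bT_{\mathrm{sc}}/\bWf$ is affine space, hence smooth of dimension $r$, and the quotient map $\bT_{\mathrm{sc}} \to \bT_{\mathrm{sc}}/\bWf$ is a finite surjection between smooth affine varieties of the same dimension.

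To conclude, since $\bT_{\mathrm{sc}}$ is smooth, $\scO(\bT_{\mathrm{sc}})$ is Cohen--Macaulay as a module over the regular ring $\scO(\bT_{\mathrm{sc}})^{\bWf}$; miracle flatness then yields flatness, a finitely generated flat module over a Noetherian ring is projective, and Quillen--Suslin upgrades projectivity to freeness over the polynomial ring. The rank is computed after passing to fraction fields, where the extension is Galois with group $\bWf$ and hence of degree $\#\bWf$. The main technical input --- and the only step where the simply-connected hypothesis is truly indispensable --- is Steinberg's identification of the invariant ring with a polynomial algebra, which relies on having integral fundamental weights at one's disposal; everything else is formal commutative algebra, valid in any characteristic.
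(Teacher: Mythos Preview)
The paper does not prove this theorem; it attributes the result to Steinberg (for $\bG$ semisimple simply connected) and refers to Gouttard's thesis for the extension to reductive $\bG$ with simply connected derived subgroup. So there is no paper-proof to compare against; I evaluate your argument on its own terms.

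Your treatment of the simply connected semisimple case is fine: miracle flatness plus Quillen--Suslin is a legitimate alternative to Steinberg's explicit basis construction.

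The reduction step, however, has a real gap. You claim that
\[
0 \to X^*(\bT/Z(\bG)^\circ) \to X^*(\bT) \to X^*(Z(\bG)^\circ) \to 0
\]
splits $\bWf$-equivariantly because the right term is free with trivial $\bWf$-action. But an equivariant splitting would require every character of $Z(\bG)^\circ$ to lift to a $\bWf$-invariant character of $\bT$, i.e.\ surjectivity of the restriction map $X^*(\bT)^{\bWf} = X^*(\bG/\mathscr{D}\bG) \to X^*(Z(\bG)^\circ)$. This map arises from the isogeny $Z(\bG)^\circ \to \bG/\mathscr{D}\bG$ and is in general only injective with finite cokernel. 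For $\bG = \mathrm{GL}_2$ it is multiplication by $2$ on $\Z$, so no equivariant splitting exists; correspondingly $X^*(\bT/Z(\bG)^\circ)$ is the root lattice rather than the weight lattice, and your claimed isomorphism $\scO(\bT) \cong \scO(\bT_{\mathrm{sc}}) \otimes_\bk \scO(Z(\bG)^\circ)$ of $\bWf$-algebras fails (compare the $\bWf$-fixed loci: the diagonal $\Gm$ on one side, $\{\pm 1\} \times \Gm$ on the other when $\mathrm{char}(\bk)\neq 2$).

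The passage from simply connected to general reductive is genuinely the delicate point here --- this is why the paper defers to an external reference rather than offering a one-line reduction. One correct route is to run Steinberg's explicit basis construction directly, using the lifted weights $\omega_\alpha$ chosen in \S\ref{ss:Steinberg-section} in place of the fundamental weights.
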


\begin{rmk}
\label{rmk:Pittie-Steinberg}
In~\cite{steinberg-pittie} the author assumes that the group under consideration is semisimple (and simply connected). However the proof applies in above setup, as checked in detail in~\cite[\S 10.1.1]{gouttard-these}.
\end{rmk}

For a connected reductive algebraic group $\mathbf{H}$ over $\bk$ and $g \in \mathbf{H}(\bk)$, we will denote by $\rmZ_{\mathbf{H}}(g)$ the scheme-theoretic centralizer of $g$ in $\mathbf{H}$. We will denote by $\mathbf{H}_{\mathrm{reg}} \subset \mathbf{H}$ the open subscheme of regular elements, i.e.~the unique open subscheme whose $\bk$-points are the elements $g \in \mathbf{H}$ such that $\rmZ_{\mathbf{H}}(g)$ has dimension the rank of $\bG$ (i.e.~the minimal possible dimension). If $\mathbf{H}'$ is another connected reductive group and $\varphi : \mathbf{H}' \to \mathbf{H}$ is a finite central isogeny, then for any $h \in \mathbf{H}'(\bk)$ we have $\dim \rmZ_{\mathbf{H}'}(h) = \dim \rmZ_{\mathbf{H}}(\varphi(h))$ (see e.g.~\cite[Proposition~2.3]{kurtzke}); as a consequence we have
\begin{equation}
\label{eqn:reg-pullback}
\mathbf{H}'_{\reg} = \varphi^{-1}(\mathbf{H}_{\reg}).
\end{equation}

\subsection{The adjoint quotient and Steinberg's section}
\label{ss:Steinberg-section}

Consider the adjoint quotient $\mathbf{G} / \mathbf{G}$. It is a classical fact
that the embedding $\mathbf{T} \subset \mathbf{G}$ induces an isomorphism
\begin{equation}
\label{eqn:adj-quotient}
 \mathbf{T} / \bWf \simto \mathbf{G} / \mathbf{G};
\end{equation}
see e.g.~\cite{lee} for a proof of this theorem over any commutative ring (and for any reductive group admitting a maximal torus). We will denote by
\[
\chi : \mathbf{G} \to \mathbf{T} / \bWf
\]
the composition of the adjoint quotient morphism with the identification~\eqref{eqn:adj-quotient}.

 The algebra $\scO(\bG/\bG)$ admits various bases (as a $\bk$-vector space) parametrized by $X^*_+(\bT) \subset X^*(\bT)$ and defined as follows. For any $M$ in the category $\Rep(\bG)$ of finite-dimensional algebraic $\bG$-modules, we denote by $\mathrm{ch}(M) : \bG \to \mathbb{A}^1_\bk$ the function sending $g \in \bG$ to the trace of its action on $M$. For $\lambda \in X^*_+(\bT)$ we denote by $\mathsf{L}(\lambda)$ the simple $\bG$-module with highest weight $\lambda$, i.e.~the socle of the induced module $\Ind_{\bB}^{\bG}(\lambda)$. Then $(\mathrm{ch}(\mathsf{L}(\lambda)) : \lambda \in X^*_+(\bT))$ is a $\bk$-basis of $\scO(\bG/\bG)$. More generally, for any family $(M_\lambda : \lambda \in X^*_+(\bT))$ of $\bG$-modules such that for any $\lambda$ we have
\begin{equation}
\label{eqn:multiplicities}
[M_\lambda : \mathsf{L}(\lambda)]=1 \quad \text{and} \quad [M_\lambda : \mathsf{L}(\mu)] \neq 0 \Rightarrow \mu \preceq \lambda,
\end{equation}
the family $(\mathrm{ch}(M_\lambda) : \lambda \in X^*(\bT))$ is a $\bk$-basis of $\scO(\bG/\bG)$.

Under our assumption that $\mathscr{D}\bG$ is simply connected, the adjoint quotient can be described more explicitly, as follows. First, recall that (without any assumption) the quotient $\bG/\mathscr{D}\bG$ of $\bG$ by its normal subgroup $\mathscr{D}\bG$ is a torus, whose lattice of characters is determined by the fact that the pullback under the composition
\begin{equation}
\label{eqn:proj-ab-T}
\bT \hookrightarrow \bG \to \bG/\mathscr{D}\bG
\end{equation}
provides an identification
\begin{equation}
\label{eqn:characters-abelianization}
X^*(\bG / \mathscr{D}\bG) \simto \{\lambda \in X^*(\bT) \mid \forall \alpha \in \mathfrak{R}_{\mathrm{s}}, \langle \lambda, \alpha^\vee \rangle = 0\}.
\end{equation}
If $\lambda$ belongs to the right-hand side, then the $\bG$-module $\mathsf{L}(\lambda)$ is one-dimensional, with the $\bG$-action given by the associated character of $\bG$.

On the other hand, the intersection $\bT \cap \mathscr{D}\bG$ is a maximal torus of $\mathscr{D}\bG$, and we have a surjective restriction morphism $X^*(\bT) \to X^*(\bT \cap \mathscr{D}\bG)$. For each $\alpha \in \mathfrak{R}$, the coroot $\alpha^\vee$ takes values in $\bT \cap \mathscr{D}\bG$; the map $\langle -, \alpha^\vee \rangle$ therefore factors through a map $X^*(\bT \cap \mathscr{D}\bG) \to \Z$, which identifies with the similar map for the root $\alpha_{|\bT \cap \mathscr{D}\bG}$ of $\mathscr{D}\bG$. 
For each $\alpha \in \mathfrak{R}_{\mathrm{s}}$ we have the fundamental weight $\varpi_\alpha \in X^*(\bT \cap \mathscr{D}\bG)$, which is characterized by the property that $\langle \varpi_\alpha, \beta^\vee \rangle = \delta_{\alpha,\beta}$ for $\beta \in \mathfrak{R}_s$. Let us fix, for each $\alpha \in \mathfrak{R}_{\mathrm{s}}$, a lift $\omega_\alpha \in X^*(\bT)$ of $\varpi_\alpha$. Then, using the identification~\eqref{eqn:characters-abelianization} we obtain an 
 isomorphism of $\Z$-modules
\begin{equation}
\label{eqn:identification-weights}
\Z^{\mathfrak{R}_{\mathrm{s}}} \times X^*(\bG / \mathscr{D}\bG) \simto X^*(\bT)
\end{equation}
given by
\[
((m_\alpha : \alpha \in \mathfrak{R}_{\mathrm{s}}), \lambda) \mapsto \lambda + \sum_{\alpha \in \mathfrak{R}_{\mathrm{s}}} m_\alpha \omega_\alpha,
\]
which in turn provides an identification
\begin{equation}
\label{eqn:decomposition-T}
\bT \cong (\Gm)^{\mathfrak{R}_{\mathrm{s}}} \times \bG / \mathscr{D}\bG,
\end{equation}
such that $(\Gm)^{\mathfrak{R}_{\mathrm{s}}} \times \{e\}$ corresponds to $\bT \cap \mathscr{D}\bG$ and the projection $\bT \to \bG/\mathscr{D}\bG$ coincides with~\eqref{eqn:proj-ab-T}.

For $\alpha \in \mathfrak{R}_{\mathrm{s}}$, we set $\chi_\alpha:=\mathrm{ch}(\mathsf{L}(\omega_\alpha)) : \bG \to \mathbb{A}^1_\bk$.

\begin{lem}
\label{lem:adj-quotient-sc}
The morphisms $(\chi_\alpha : \alpha \in \mathfrak{R}_{\mathrm{s}})$ and the projection $\bG \to \bG/\mathscr{D}\bG$ induce an isomorphism
\[
\bG/\bG \simto \mathbb{A}_\bk^{\fR_{\mathrm{s}}} \times (\bG/\mathscr{D}\bG).
\]
\end{lem}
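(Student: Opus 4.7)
My plan is to verify that the stated map is an isomorphism by comparing coordinate rings, since both source and target are affine (the source via the isomorphism~\eqref{eqn:adj-quotient} with $\bT/\bWf$). The target ring is
\[
\scO(\mathbb{A}_\bk^{\fR_{\mathrm{s}}}) \otimes_\bk \scO(\bG/\mathscr{D}\bG) = \bk[T_\alpha : \alpha \in \fR_{\mathrm{s}}] \otimes_\bk \bk[X^*(\bG/\mathscr{D}\bG)],
\]
which admits the evident monomial basis $\{(\prod_\alpha T_\alpha^{m_\alpha}) \cdot \mu \mid (m_\alpha) \in \Z_{\geq 0}^{\fR_{\mathrm{s}}}, \ \mu \in X^*(\bG/\mathscr{D}\bG)\}$. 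Our map sends $T_\alpha$ to $\chi_\alpha = \mathrm{ch}(\mathsf{L}(\omega_\alpha))$, and a character $\mu \in X^*(\bG/\mathscr{D}\bG)$ to its pullback along the projection $\bG \to \bG/\mathscr{D}\bG$, which equals $\mathrm{ch}(\mathsf{L}(\mu))$ since $\mathsf{L}(\mu)$ is one-dimensional in that case.

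The key step is to produce a preferred family of $\bG$-modules indexed by $X_+^*(\bT)$ whose characters are mapped to the monomial basis above. Using~\eqref{eqn:identification-weights}, any dominant $\lambda \in X_+^*(\bT)$ decomposes uniquely as $\lambda = \mu + \sum_{\alpha \in \fR_{\mathrm{s}}} m_\alpha \omega_\alpha$ with $\mu \in X^*(\bG/\mathscr{D}\bG)$ and $m_\alpha \in \Z$; since $\langle \omega_\beta, \alpha^\vee \rangle = \delta_{\alpha,\beta}$ and $\mu$ pairs trivially with all simple coroots, dominance of $\lambda$ is equivalent to $m_\alpha \geq 0$ for all $\alpha$. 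I then set
\[
M_\lambda := \mathsf{L}(\mu) \otimes \bigotimes_{\alpha \in \fR_{\mathrm{s}}} \mathsf{L}(\omega_\alpha)^{\otimes m_\alpha},
\]
so that $\mathrm{ch}(M_\lambda) = \mathrm{ch}(\mathsf{L}(\mu)) \cdot \prod_\alpha \chi_\alpha^{m_\alpha}$, whose image under our map is precisely the monomial $\mu \cdot \prod_\alpha T_\alpha^{m_\alpha}$.

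Next, I check that the family $(M_\lambda)_{\lambda \in X_+^*(\bT)}$ satisfies condition~\eqref{eqn:multiplicities}: by the classical analysis of tensor products of simple modules (e.g.\ the extreme weight of a tensor product is the sum of extreme weights, occurring with multiplicity one, and all other $\bT$-weights lie strictly below in the $\preceq$ order), the highest weight of $M_\lambda$ is $\lambda$ with multiplicity one, and every other composition factor $\mathsf{L}(\nu)$ satisfies $\nu \prec \lambda$. Consequently, by the statement recalled just before Lemma~\ref{lem:adj-quotient-sc}, the characters $(\mathrm{ch}(M_\lambda) : \lambda \in X_+^*(\bT))$ form a $\bk$-basis of $\scO(\bG/\bG)$. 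Since our morphism of algebras sends this basis bijectively onto the monomial basis of the target, it is an isomorphism, proving the lemma.

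I do not expect a serious obstacle here: the only substantive ingredient is the triangularity of characters in tensor products of simples, which is standard. The role of the simple-connectedness of $\mathscr{D}\bG$ is entirely encoded in the decomposition~\eqref{eqn:identification-weights} and the fact that the $\omega_\alpha$ can be lifted to genuine characters of $\bT$ (used to form $\mathsf{L}(\omega_\alpha)$); both have already been established in the excerpt.
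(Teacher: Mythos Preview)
Your proof is correct and follows essentially the same approach as the paper: define the family $M_\lambda = \mathsf{L}(\mu) \otimes \bigotimes_\alpha \mathsf{L}(\omega_\alpha)^{\otimes m_\alpha}$ indexed via the bijection~\eqref{eqn:identification-weights}, verify that it satisfies condition~\eqref{eqn:multiplicities}, and conclude that the induced algebra map carries a $\bk$-basis to a $\bk$-basis. Your write-up is slightly more explicit than the paper's about why dominance corresponds to $m_\alpha \geq 0$ and about the triangularity of characters in tensor products, but the argument is the same.
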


\begin{proof}
Since $\bG/\mathscr{D}\bG$ is a torus we have
\[
\scO(\bG/\mathscr{D}\bG)=\bigoplus_{\nu \in X^*(\bG/\mathscr{D}\bG)} \bk \nu,
\]
and our morphism is induced by the morphism
\[
\scO(\bG/\mathscr{D}\bG) \otimes_\bk \bk[X_\alpha : \alpha \in \mathfrak{R}_{\mathrm{s}}] \to \scO(\bG/\bG)
\]
sending $\nu \otimes \prod_{\alpha} X_\alpha^{m_\alpha}$ to
\begin{equation}
\label{eqn:characters-quotient}
\mathrm{ch} \bigl( \mathsf{L}(\nu) \otimes \bigotimes_\alpha \mathsf{L}(\omega_\alpha)^{\otimes m_\alpha} \bigr).
\end{equation}
Now~\eqref{eqn:identification-weights} restricts to a bijection
\[
(\Z_{\geq 0})^{\mathfrak{R}_{\mathrm{s}}} \times X^*(\bG/\mathscr{D}\bG) \simto X_+^*(\bT).
\]
The family of characters~\eqref{eqn:characters-quotient} can therefore be considered as parametrized by $X_+^*(\bT)$. As such, the corresponding family of $\bG$-modules satisfies the conditions spelled out in~\eqref{eqn:multiplicities}, and these characters therefore form a basis of $\scO(\bG/\bG)$. Our algebra morphism sends a $\bk$-basis to a $\bk$-basis, hence is an isomorphism.
\end{proof}

We now explain how to construct a ``Steinberg section'' for $\chi$, i.e.~a closed subscheme
$\mathbf{\Sigma} \subset \bG$ contained in $\bG_\reg$ such that the composition
\[
\mathbf{\Sigma} \hookrightarrow \bG \xrightarrow{\chi} \bT / \bWf
\]
is an isomorphism. (This construction is due to Steinberg~\cite{steinberg} in the case $\bG$ is semisimple; the extension to reductive groups is due to De Concini--Maffei~\cite{dcm}.) Let us fix a numbering $(\alpha_1, \cdots, \alpha_r)$ of $\fR_{\mathrm{s}}$. For $i \in \{1, \cdots, r\}$, we will denote by $\bU_{\alpha_i}$ and $\bU_{-\alpha_i}$ the root subgroups of $\bG$ associated with $\alpha_i$ and $-\alpha_i$ respectively. We will also chose a lift $n_i \in \mathrm{N}_{\bG}(\bT)$ of the simple reflection $s_i \in \bWf$ associated with $\alpha_i$ which belongs to $\mathscr{D}\bG$.
Let us denote by $\bA \subset \bT$ the subtorus given by the image of $\{e\} \times \bG/\mathscr{D}\bG$ under the identification~\eqref{eqn:decomposition-T}. We then set
\[
\mathbf{\Sigma} := \bA \cdot \bU_{\alpha_1} n_1 \cdot (\cdots) \cdot \bU_{\alpha_r} n_r.
\]
Standard properties of the Bruhat decomposition (see e.g.~\cite[\S 4.15]{humphreys} for details) show that the map
\[
(u_1, \cdots, u_r) \mapsto u_1 n_1 u_2 n_2 \cdots u_r n_r \cdot (n_1 \cdots n_r)^{-1}
\]
induces a closed embedding $\bU_{\alpha_1} \times \cdots \times \bU_{\alpha_r} \hookrightarrow \bU^+$; this shows that $\mathbf{\Sigma}$ is a closed subscheme of $\bG$, isomorphic to $\bA \times \mathbb{A}_\bk^r$. The other properties of $\mathbf{\Sigma}$ announced above are proved in the following proposition.

\begin{prop}
\phantomsection
\label{prop:Sigma}
\begin{enumerate}
\item
\label{it:Sigma-reg}
We have $\mathbf{\Sigma} \subset \bG_\reg$.
\item
\label{it:Sigma-section}
The composition
\[
\mathbf{\Sigma} \hookrightarrow \bG \xrightarrow{\chi} \bT / \bWf
\]
is an isomorphism.
\end{enumerate}
\end{prop}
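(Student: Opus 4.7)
The plan is to reduce part~\ref{it:Sigma-section} to an explicit character-theoretic computation, exploiting the isomorphism $\bG/\bG \cong \mathbb{A}_\bk^{\fR_{\mathrm{s}}} \times \bG/\mathscr{D}\bG$ provided by Lemma~\ref{lem:adj-quotient-sc}, and then to deduce part~\ref{it:Sigma-reg} from part~\ref{it:Sigma-section} by a smoothness argument.

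I would first analyze the $(\bG/\mathscr{D}\bG)$-component of $\chi|_\Sigma$. Since each lift $n_i \in \mathscr{D}\bG$ and each root subgroup $\bU_{\alpha_i} \subset \mathscr{D}\bG$, every element of $\Sigma$ writes uniquely as $a \cdot g$ with $a \in \bA$ and $g \in \mathscr{D}\bG$, so the composition $\Sigma \hookrightarrow \bG \twoheadrightarrow \bG/\mathscr{D}\bG$ factors as the projection $\Sigma \twoheadrightarrow \bA$ followed by $\bA \hookrightarrow \bT \twoheadrightarrow \bG/\mathscr{D}\bG$; by construction of $\bA$ as the annihilator in $\bT$ of the sublattice $\sum_\alpha \Z \omega_\alpha \subset X^*(\bT)$, this second map is an isomorphism. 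For the $\mathbb{A}_\bk^{\fR_{\mathrm{s}}}$-component, I would evaluate the functions $\chi_{\alpha_i} = \mathrm{ch}(\mathsf{L}(\omega_{\alpha_i}))$ on a typical element $g := a \cdot u_1(x_1) n_1 \cdots u_r(x_r) n_r \in \Sigma$, with $u_j(x_j)$ parametrizing $\bU_{\alpha_j}$ by $x_j \in \mathbb{A}^1_\bk$. Expanding the trace over a weight basis of $\mathsf{L}(\omega_{\alpha_i})$ and tracking which weights can be reached by successive applications of the operators $u_j(x_j) n_j$, one should obtain an expression $\chi_{\alpha_i}(g) = c_i(a) \cdot x_i + F_i(a, x_1, \dots, x_{i-1})$ with $c_i(a) \in \scO(\bA)^\times$; the leading coefficient arises as a specific matrix coefficient that is nonzero thanks to the relations $\langle \omega_{\alpha_j}, \alpha_i^\vee \rangle = \delta_{ij}$ on $\bT \cap \mathscr{D}\bG$. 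This triangular dependence with invertible diagonal, combined with the analysis of the $(\bG/\mathscr{D}\bG)$-component, shows that $\chi|_\Sigma : \Sigma \to \bG/\bG$ is an isomorphism, proving part~\ref{it:Sigma-section}.

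For part~\ref{it:Sigma-reg}, I would invoke the classical fact that $\bG_\reg$ coincides with the smooth locus of $\chi$, which (since both $\bG$ and $\bG/\bG \cong \bT/\bWf$ are smooth) is the locus where $d\chi$ is surjective. At any $\sigma \in \Sigma$, the restriction of $d_\sigma \chi$ to $T_\sigma \Sigma$ coincides with the differential at $\sigma$ of the isomorphism $\chi|_\Sigma$, hence is surjective; therefore $d_\sigma \chi$ itself is surjective and $\sigma \in \bG_\reg$.

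The main obstacle is the character computation in the second paragraph: one must identify precisely the linear-in-$x_i$ term in $\chi_{\alpha_i}(g)$ and verify that its coefficient $c_i(a)$ is a unit on $\bA$. This requires careful tracking of a specific extremal matrix coefficient in $\mathsf{L}(\omega_{\alpha_i})$ along the chain of operators $u_i(x_i) n_i, u_{i+1}(x_{i+1}) n_{i+1}, \dots, u_r(x_r) n_r$; in particular one must verify that the variables $x_j$ for $j > i$ do not contribute to the linear-in-$x_i$ part of the trace. This is the technical heart of Steinberg's original argument in~\cite{steinberg}; its extension from the semisimple simply-connected to the present reductive setting is where the assumption that $\mathscr{D}\bG$ is simply connected enters essentially, through the existence and pairing properties of the lifts $\omega_\alpha$ encoded in~\eqref{eqn:identification-weights}.
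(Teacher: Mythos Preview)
Your argument for part~\ref{it:Sigma-section} is essentially the paper's: both reduce to the coordinate description $\Sigma \cong \bA \times \mathbb{A}_\bk^r$ and compute $\chi_{\alpha_i}$ on $\Sigma$ by tracking matrix coefficients in $\mathsf{L}(\omega_{\alpha_i})$. One imprecision: the triangular structure is \emph{not} with respect to the chosen linear ordering $1<\dots<r$ of the simple roots. What Humphreys~\cite[\S4.17]{humphreys} (and the paper) actually prove is that the contribution to $\chi_{\alpha_i}(g)$ from a dominant weight $\lambda\neq\omega_i$ is a polynomial in the variables $c_j$ for $j\sqsubset i$, where $\sqsubset$ is a partial order defined intrinsically from the root system and unrelated to the numbering. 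So your $F_i(a,x_1,\dots,x_{i-1})$ should be $F_i(a,(x_j)_{j\sqsubset i})$. This does not affect the conclusion (any partial order can be refined to a total order, giving a genuinely triangular system), but your assertion that ``the variables $x_j$ for $j>i$ do not contribute'' is not what the computation yields.

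The approaches diverge for part~\ref{it:Sigma-reg}. The paper proves \ref{it:Sigma-reg} \emph{first} and independently of \ref{it:Sigma-section}: it observes $\Sigma \subset \bU_{\alpha_1}n_1\cdots\bU_{\alpha_r}n_r\cdot\bT$, pulls back along the isogeny $\mathbf{Z}\times\mathscr{D}\bG\to\bG$ (using~\eqref{eqn:reg-pullback}), and reduces to Steinberg's theorem that the section in the semisimple simply-connected case consists of regular elements. Your route --- deduce \ref{it:Sigma-reg} from \ref{it:Sigma-section} via the identification of $\bG_\reg$ with the smooth locus of $\chi$ --- has a gap that you should not underestimate. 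The implication you need is ``$d_\sigma\chi$ surjective $\Rightarrow \sigma$ regular'', and the naive dimension count does \emph{not} give this: from $\bG$-invariance one gets $T_\sigma\mathcal{O}(\sigma)\subset\ker d_\sigma\chi$, hence $\mathrm{rk}(d_\sigma\chi)\leq\dim\bg^\sigma$, but surjectivity only yields $\dim\bg^\sigma\geq r$, which is always true. The actual proof of this implication in Steinberg's work goes through the structure of the fibers of $\chi$ (irreducibility, identification of the singular locus), and in standard treatments this analysis \emph{uses} the regularity of the Steinberg section --- precisely what you are trying to prove. So your ``classical fact'' risks circularity, and even if you can cite it as a black box for semisimple groups, you still need a reduction to that case for reductive $\bG$, which is exactly what the paper does directly. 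The paper's route is the cleaner one here.
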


\begin{proof}
\eqref{it:Sigma-reg}
It is clear that we have
\[
\mathbf{\Sigma} = \bU_{\alpha_1} n_1 \cdot (\cdots) \cdot \bU_{\alpha_r} n_r \cdot ((n_1 \cdots n_r)^{-1} \bA n_1 \cdots n_r) \subset \bU_{\alpha_1} n_1 \cdot (\cdots) \cdot \bU_{\alpha_r} n_r \cdot \bT,
\]
so that it is enough to prove that the right-hand side is contained in $\bG_\reg$. Now if $\mathbf{Z}$ is the neutral component of the reduced center $\mathrm{Z}(\bG)_\mathrm{red}$, as explained in~\cite[\S 1.18]{jantzen} multiplication induces a finite central isogeny
\[
\varphi : \mathbf{Z} \times \mathscr{D}\bG \to \bG.
\]
By~\eqref{eqn:reg-pullback} we have
\[
\varphi^{-1}(\bG_\reg) = (\mathbf{Z} \times \mathscr{D}\bG)_\reg = \mathbf{Z} \times (\mathscr{D}\bG)_\reg.
\]
Now by Steinberg's results for semisimple groups (see~\cite{steinberg} and~\cite[\S 4.20]{humphreys}) we have
\[
\bU_{\alpha_1} n_1 \cdot (\cdots) \cdot \bU_{\alpha_r} n_r \cdot (\bT \cap \mathscr{D}\bG) \subset (\mathscr{D}\bG)_\reg.
\]
(Steinberg's results involve elements in $\bU_{\alpha_1} n_1 \cdot (\cdots) \cdot \bU_{\alpha_r} n_r$, but the choice of the elements $n_i$ can be arbitrary; for the various choices of these elements, the corresponding ``Steinberg sections'' cover $\bU_{\alpha_1} n_1 \cdot (\cdots) \cdot \bU_{\alpha_r} n_r \cdot (\bT \cap \mathscr{D}\bG)$.)
We deduce that
\[
\varphi^{-1}(\bU_{\alpha_1} n_1 \cdot (\cdots) \cdot \bU_{\alpha_r} n_r \cdot \bT) \subset \varphi^{-1}(\bG_\reg),
\]
and then that $\bU_{\alpha_1} n_1 \cdot (\cdots) \cdot \bU_{\alpha_r} n_r \cdot \bT \subset \bG_\reg$, as desired.

\eqref{it:Sigma-section}
Let us fix, for any $i \in \{1, \cdots, r\}$, an isomorphism $u_{\alpha_i} : \mathbb{A}^1_\bk \simto \bU_{\alpha_i}$. Then we have an isomorphism
\[
\bA \times \mathbb{A}_\bk^r \simto \mathbf{\Sigma},
\]
given by $(a,(c_1, \cdots, c_r)) \mapsto a u_{\alpha_1}(c_1) n_1 u_{\alpha_2}(c_2) n_2 \cdots u_{\alpha_r}(c_r) n_r$. Using this isomorphism and that of Lemma~\ref{lem:adj-quotient-sc}, one can consider the morphism of the proposition as a morphism from $\bA \times \mathbb{A}_\bk^r$ to itself. It is clear from definitions that its composition with the projection $\bA \times \mathbb{A}_\bk^r \to \bA$ coincides with this projection.

Now we consider the composition of our morphism with projection on $\mathbb{A}_\bk^r$. For this, we define a partial order $\sqsubseteq$ on $\{1, \cdots, r\}$ by declaring that $i \sqsubset j$ if there exists a dominant weight $\lambda$ for $(\mathscr{D}\bG, \bT \cap \mathscr{D}\bG)$ such that $\varpi_j-\lambda$ is a sum of positive roots and $\langle \lambda, \alpha_i^\vee \rangle > 0$. (Here, the positive roots for $\mathscr{D}\bG$ are taken as the restrictions of those for $\bG$. For an explanation of why this defines an order, see~\cite[\S 4.16]{humphreys}.) For any $i \in \{1, \cdots, r\}$, $a \in \bA$ and $(c_1, \cdots, c_r) \in \mathbb{A}_\bk^r$, the value of $\chi_{\alpha_i}(a u_{\alpha_1}(c_1) n_1 u_{\alpha_2}(c_2) n_2 \cdots u_{\alpha_r}(c_r) n_r)$ can be computed as the sum (over the weights $\lambda$ of $\mathsf{L}(\omega_i)$) of the traces of the linear maps
\[
\mathsf{L}(\omega_i)_\lambda \hookrightarrow \mathsf{L}(\omega_i) \xrightarrow{a u_{\alpha_1}(c_1) n_1 u_{\alpha_2}(c_2) n_2 \cdots u_{\alpha_r}(c_r) n_r} \mathsf{L}(\omega_i) \twoheadrightarrow \mathsf{L}(\omega_i)_\lambda
\]
where the left, resp.~right, morphism is the embedding of, resp.~projection on, the $\lambda$-weight space of $\mathsf{L}(\omega_i)$ (parallel to other weight spaces). The discussion in~\cite[\S 4.17]{humphreys} shows that this morphism vanishes unless $\lambda$ is dominant, and that
\begin{enumerate}
\item
if $\lambda=\omega_i$, there exists $d_i \in \bk^\times$ such that the trace is $d_i c_i \omega_i(a)$;
\item
otherwise, there exists a polynomial $P_\lambda \in \bk[X_j : j \sqsubset i]$ (depending only on $\lambda$) such that the trace is $\lambda(a) P_\lambda((c_j)_{j \sqsubset i})$.
\end{enumerate}

From this analysis we see that the algebra morphism $\scO(\bA \times \mathbb{A}_\bk^r) \to \scO(\bA \times \mathbb{A}_\bk^r)$ induced by our morphism of schemes $\bA \times \mathbb{A}_\bk^r \to \bA \times \mathbb{A}_\bk^r$ is an isomorphism, so that the latter morphism is an isomorphism too.
\end{proof}

\subsection{Application to smoothness results}
\label{ss:smoothness}

For any closed point $g \in \bG$, we will denote by $\rmZ_\bG(g) \subset \bG$ is the scheme-theoretic centralizer of $g$ in $\bG$.
By~\cite[III, \S 3, Proposition~5.2]{dg},
the morphism $h \mapsto hgh^{-1}$ factors through a locally closed immersion $\bG/\rmZ_{\bG}(g) \to \bG$, whose image is denoted $\mathcal{O}(g)$ (and called the adjoint orbit of $g$); it is a smooth locally closed subscheme in $\bG$, whose set of $\bk$-points is the conjugacy class of $g$ in the usual sense (see~\cite[III, \S 1, Remarque~1.15]{dg}). In particular, this definition coincides with that given e.g.~in~\cite[\S 1.5]{humphreys}.

We will denote by $\chi_{\reg}$ the restriction of $\chi$ to $\mathbf{G}_{\mathrm{reg}}$.
As a first application of the construction of the Steinberg section we prove the following claim.

\begin{prop}
\label{prop:adj-quot-smooth}
The morphism $\chi_{\reg}$ is smooth. Moreover, for any $x \in \bG_\reg$ we have
\[
(\chi_{\reg})^{-1}(\chi(x)) = \mathcal{O}(x).
\]
\end{prop}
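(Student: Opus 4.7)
The strategy is to use the Steinberg section $\sigma := (\chi|_\Sigma)^{-1}$ from Proposition~\ref{prop:Sigma}\eqref{it:Sigma-section}, together with the $\bG$-invariance of $\chi$ under conjugation, to verify surjectivity of the differential of $\chi$ pointwise on $\Sigma$ and then propagate it to all of $\bG_\reg$. Since $\chi \circ \sigma = \id_{\bT/\bWf}$, the differential $d\chi_s$ admits $d\sigma_{\chi(s)}$ as a right inverse, hence is surjective for every $s \in \Sigma$. The equality $\chi \circ c_g = \chi$, where $c_g$ denotes conjugation by $g \in \bG$, gives after differentiation $d\chi_{gsg^{-1}} \circ d(c_g)_s = d\chi_s$; since $d(c_g)_s$ is an isomorphism, $d\chi_x$ is surjective for every $x$ in the conjugation-orbit $\bG \cdot \Sigma \cdot \bG^{-1}$.

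The crux is to prove $\bG \cdot \Sigma \cdot \bG^{-1} = \bG_\reg$, i.e.\ every regular element is conjugate to an element of $\Sigma$. Given $x \in \bG_\reg$, put $s := \sigma(\chi(x)) \in \Sigma \subset \bG_\reg$, so that $\chi(x) = \chi(s)$. Using the Jordan decompositions $x = x_s x_u$, $s = s_s s_u$, together with the classical facts that $\chi$ depends only on the semisimple part of its argument and separates semisimple conjugacy classes, $x_s$ and $s_s$ are $\bG$-conjugate; after conjugating $x$, I may assume $x_s = s_s$. The unipotent parts $x_u$ and $s_u$ then lie in $Z_\bG(x_s)$, which is reductive of rank $r := \dim \bT$, and are both regular unipotent there (since the regularity of $x, s$ in $\bG$ is equivalent to $\dim Z_{Z_\bG(x_s)}(x_u) = r = \dim Z_{Z_\bG(x_s)}(s_u)$). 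The simple-connectedness of $\mathscr{D}\bG$ yields, by Steinberg's connectedness theorem, that $Z_\bG(x_s)$ is connected; as all regular unipotent elements of a connected reductive group are conjugate, $x_u$ and $s_u$ are $Z_\bG(x_s)$-conjugate, so $x$ and $s$ are $\bG$-conjugate.

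Combining the two steps, $d\chi_x$ is surjective at every point of $\bG_\reg$. Since $\bG_\reg$ is open in $\bG$ and $\bT/\bWf$ is smooth (by Lemma~\ref{lem:adj-quotient-sc}), the finite-type morphism $\chi_\reg$ between smooth $\bk$-schemes with everywhere surjective differential is smooth, of relative dimension $\dim \bG - r$. For the fiber description, given $x \in \bG_\reg$, the orbit $\mathcal{O}(x) \subset (\chi_\reg)^{-1}(\chi(x))$ is a smooth locally closed subscheme of dimension $\dim \bG - \dim Z_\bG(x) = \dim \bG - r$, matching the fiber dimension; moreover, by the previous paragraph every $\bk$-point of the fiber lies in $\mathcal{O}(x)$. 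As both $\mathcal{O}(x)$ and the scheme-theoretic fiber (of the smooth morphism $\chi_\reg$) are smooth and hence reduced, this set-theoretic equality upgrades to the scheme-theoretic equality $(\chi_\reg)^{-1}(\chi(x)) = \mathcal{O}(x)$.

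The principal obstacle is the second paragraph: showing that every regular element is conjugate to an element of the Steinberg section. This rests on Jordan decomposition, Steinberg's connectedness theorem for centralizers of semisimple elements (where the simple-connectedness of $\mathscr{D}\bG$ is essential), and the conjugacy of regular unipotents in connected reductive groups; the remainder of the argument is then essentially formal manipulation of the Steinberg section together with standard smoothness criteria.
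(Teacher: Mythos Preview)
Your proof is correct and follows essentially the same architecture as the paper's: surjectivity of $d\chi$ on $\Sigma$ via the section property, propagation to conjugates, the claim that every regular element is $\bG$-conjugate to a point of $\Sigma$, and finally the upgrade from set-theoretic to scheme-theoretic equality of the fiber with the orbit.

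The one point of divergence worth flagging is in how you establish that every regular element is conjugate to an element of $\Sigma$. The paper simply cites the classical fact (Humphreys, \S4.14) that each fiber of $\chi$ contains exactly one regular conjugacy class; since $\Sigma$ meets that fiber in a regular element, the claim follows immediately. You instead argue this from first principles via Jordan decomposition, Steinberg's connectedness theorem for $Z_\bG(x_s)$ (using simple-connectedness of $\mathscr{D}\bG$), and the conjugacy of regular unipotents in a connected reductive group. Your route is a valid reconstruction of what underlies the cited result, but it is more laborious than the paper's direct appeal to the literature. For the fiber equality, the paper is slightly more explicit in invoking that $\mathcal{O}(x) \to (\chi_\reg)^{-1}(\chi(x))$ is a locally closed immersion which is bijective on $\bk$-points between smooth schemes; your dimension-count-plus-reducedness argument reaches the same conclusion but would benefit from making the locally closed immersion explicit.
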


\begin{proof}
By a classical characterization of smooth morphisms (see e.g.~\cite[Proposition~III.10.4]{hartshorne}), to prove smoothness it suffices to prove that the differential $d_g(\chi)$ is surjective for any closed point $g \in \bG_\reg$. This property is true by Proposition~\ref{prop:Sigma}\eqref{it:Sigma-section} if $g \in \mathbf{\Sigma}$, hence if $g$ is a conjugate of an element of $\mathbf{\Sigma}$. Now any fiber of $\chi$ contains exactly one regular conjugacy class (see~\cite[\S 4.14]{humphreys}), and it also contains an element of $\mathbf{\Sigma}$, which is regular by Proposition~\ref{prop:Sigma}\eqref{it:Sigma-reg}. It follows that any regular element in $\bG$ is conjugate to an element of $\mathbf{\Sigma}$, which finishes the proof that $\chi_{\reg}$ is smooth. Once this is known, we know that $\mathcal{O}(g)$ and $(\chi_{\reg})^{-1}(\chi(x))$ are smooth, and that the morphism $\mathcal{O}(g) \to (\chi_{\reg})^{-1}(\chi(x))$, which is a locally closed immersion (see~\cite[\href{https://stacks.math.columbia.edu/tag/07RK}{Tag 07RK}]{stacks-project}) is a bijection on $\bk$-points; it is therefore an equality.
\end{proof}


We now consider smoothness of centralizers of regular elements in $\bG$. For this we will have to assume that 
the (scheme-theoretic) center $\rmZ(\bG)$ is smooth; by~\cite[Lemma~2.1]{brr}, this is equivalent to requiring that $X^*(\bT)/\Z\fR$ has no $\ell$-torsion. The following statement is an immediate consequence of~\cite[Corollary~3.5]{cotner}.

\begin{lem}
\label{lem:smoothness-centralizers}
Assume that 
$\mathrm{Z}(\bG)$ is smooth. Then for any $g \in \bG_\reg$ the centralizer $\rmZ_\bG(g)$ is smooth.
\end{lem}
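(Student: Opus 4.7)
The plan is to reduce the claim, via the Jordan decomposition, to the case of a regular unipotent element, and then to use the explicit structure of the centralizer of such an element in a connected reductive group with simply connected derived subgroup and smooth center. Since smoothness of $Z_\bG(g)$ is preserved under conjugation, and by the argument in the proof of Proposition~\ref{prop:Sigma}\eqref{it:Sigma-reg} every regular element of $\bG$ is conjugate to an element of $\Sigma$, I can reduce to the case $g = \sigma \in \Sigma$. At this stage, Proposition~\ref{prop:adj-quot-smooth} tells us that $\mathcal{O}(\sigma) = \bG/Z_\bG(\sigma)$ is smooth, being the scheme-theoretic fiber of the smooth morphism $\chi_\reg$ over the smooth base $\bT/\bWf$ (whose smoothness is a consequence of Lemma~\ref{lem:adj-quotient-sc}).

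The key point, where the smoothness hypothesis on $Z(\bG)$ will genuinely be used, is that smoothness of $Z_\bG(\sigma)$ does not follow formally from smoothness of the quotient $\bG/Z_\bG(\sigma)$. For instance, $\Gm / \mu_p \simeq \Gm$ in characteristic $p$ shows that $\bG/H$ can be smooth even when $H$ is not reduced, and correspondingly the Lie-algebra computation using the differential of the orbit map at $e$ only yields the trivial inequality $\dim_\bk \mathrm{Lie}(Z_\bG(\sigma)) \geq r$. My strategy to bypass this is the Jordan decomposition: write $\sigma = \sigma_s \sigma_u$ with $\sigma_s$ semisimple and $\sigma_u$ unipotent, commuting; then $Z_\bG(\sigma) = Z_{\bG'}(\sigma_u)$ where $\bG' := Z_\bG(\sigma_s)$. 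By Steinberg's connectedness theorem (applicable since $\mathscr{D}\bG$ is simply connected), $\bG'$ is a connected reductive subgroup, its derived subgroup is again simply connected, and $\sigma_u$ is a regular unipotent element of $\bG'$. Moreover, smoothness of $Z(\bG)$ propagates to $Z(\bG')$.

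I will then be reduced to the central structural statement: in a connected reductive group with simply connected derived subgroup and smooth center, the centralizer of a regular unipotent element is smooth. This is the step I expect to be the main obstacle. The hypothesis on the center is essential here; without it, the centralizer can acquire nonreduced components in small characteristic. To handle it, one can use the explicit form of a regular unipotent provided by the construction of $\Sigma$ (namely an element of the form $u_{\alpha_1}(c_1) n_1 \cdots u_{\alpha_r}(c_r) n_r \cdot (n_1 \cdots n_r)^{-1}$ from the proof of Proposition~\ref{prop:Sigma}) to write its centralizer as the product of a smooth connected unipotent group of dimension $r - \dim Z(\bG)$ with the now-smooth group $Z(\bG)$; alternatively, one can invoke the recent structural results of~\cite{cotner}, which treat exactly this question. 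In either approach, the final Lie-algebra count $\dim_\bk \mathrm{Lie}(Z_\bG(g)) = r = \dim Z_\bG(g)$ is obtained, giving smoothness at the identity and hence, since $Z_\bG(g)$ is a group scheme, smoothness everywhere.
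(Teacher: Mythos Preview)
Your approach via Jordan decomposition is quite different from the paper's. The paper gives a one-line proof: it directly cites~\cite[Theorem~7.13(1)]{cotner}, which asserts that $Z_\bG(g)/Z(\bG)$ is smooth for any regular $g$ (not just unipotent ones); since $Z(\bG)$ is assumed smooth, $Z_\bG(g)$ is then smooth. No reduction to the unipotent case is needed.

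Your reduction has a genuine gap. The assertion that ``smoothness of $Z(\bG)$ propagates to $Z(\bG')$'' (where $\bG'=Z_\bG(\sigma_s)$) is not justified, and it is not obvious. Unwinding the criterion that $Z(H)$ is smooth iff $X^*(\bT)/\Z\fR_H$ has no $\ell$-torsion, what you need is that $\Z\fR/\Z\fR'$ has no $\ell$-torsion, where $\fR'=\{\alpha\in\fR:\alpha(\sigma_s)=1\}$. This can fail for closed subsystems of $\fR$ in the abstract: for instance the long-root $A_2$ inside $G_2$ gives $\Z\fR/\Z\fR'\cong\Z/3$. The saving grace---that such subsystems do not actually arise as centralizers of semisimple elements in the offending characteristic---requires an argument you have not supplied. (In the $G_2$ example: the condition $\alpha(\sigma_s)=1$ for all long roots forces $\alpha_1(\sigma_s)^3=1$, hence $\alpha_1(\sigma_s)=1$ in characteristic~$3$, so no such $\sigma_s\neq e$ exists.) This is a nontrivial structural fact about pseudo-Levi subgroups, not something one can simply assert.

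Moreover, even granting that step, your proposed endgame for the regular unipotent case---writing $Z_{\bG'}(\sigma_u)$ as the product of $Z(\bG')$ with a smooth connected unipotent group---is essentially the content of Cotner's theorem applied to $\bG'$ and $\sigma_u$. So your reduction does not avoid the external input; it only applies it to a smaller group after extra work. The paper's direct citation is both shorter and sidesteps the unjustified propagation step entirely. (Your initial reduction to $\sigma\in\Sigma$ is also unnecessary: centralizers of conjugate elements are conjugate, and you never use any special feature of $\Sigma$ afterward.)
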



\begin{rmk}
Under the additional assumption that $\ell$ is good for $\bG$, Lemma~\ref{lem:smoothness-centralizers} can also be deduced from the results of~\cite{herpel}.
\end{rmk}




The following smoothness result will also be crucial below.

\begin{prop}
\label{prop:smoothness-conjugation}
Assume that 
$\rmZ(\bG)$ is smooth. Then the morphism
\[
\bG \times \mathbf{\Sigma} \to \bG_{\reg}
\]
defined by $(g,s) \mapsto gsg^{-1}$ is smooth and surjective.
\end{prop}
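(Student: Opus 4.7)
The plan is to prove surjectivity directly from the preceding results, and to deduce smoothness via a fiber calculation combined with the miracle flatness criterion.

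For surjectivity, pick $x \in \bG_\reg$. By Proposition~\ref{prop:Sigma}\eqref{it:Sigma-section} there is a unique $s \in \Sigma$ with $\chi(s) = \chi(x)$, and Proposition~\ref{prop:adj-quot-smooth} identifies the fiber $(\chi_\reg)^{-1}(\chi(x))$ with the adjoint orbit $\mathcal{O}(x)$. Since $s$ lies in this fiber, some $g \in \bG$ satisfies $gsg^{-1} = x$, showing that $(g,s) \mapsto gsg^{-1}$ hits $x$.

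For smoothness, the first task is to describe the fibers. Fix $x \in \bG_\reg$, let $s_x \in \Sigma$ be the unique element with $\chi(s_x) = \chi(x)$, and choose $g_x \in \bG$ with $g_x s_x g_x^{-1} = x$. If $(g,s') \in \bG \times \Sigma$ maps to $x$, then $\chi(s') = \chi(x) = \chi(s_x)$ forces $s' = s_x$ by Proposition~\ref{prop:Sigma}\eqref{it:Sigma-section}, and the equation $g s_x g^{-1} = g_x s_x g_x^{-1}$ gives $g \in g_x Z_\bG(s_x)$. Thus each fiber is (non-canonically) isomorphic to $Z_\bG(s_x)$, which by Lemma~\ref{lem:smoothness-centralizers} is smooth (this is where the hypothesis on $Z(\bG)$ enters) of dimension $\dim \bT$, the element $s_x$ being regular.

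From the description $\Sigma \cong \bA \times \mathbb{A}_\bk^r$ obtained in the construction of the Steinberg section, one computes $\dim \Sigma = \dim \bA + r = \dim \bT$, so that $\dim(\bG \times \Sigma) - \dim \bG_\reg = \dim \bT$, matching the fiber dimension. Since $\bG \times \Sigma$ and $\bG_\reg$ are both smooth (hence Cohen--Macaulay and regular, respectively), the miracle flatness criterion applies and shows that our morphism is flat. A flat, locally finite-presentation morphism with geometrically regular fibers is smooth, which concludes the argument.

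The essential (and really only) obstacle is the smoothness of the fibers, which rests on Lemma~\ref{lem:smoothness-centralizers} and hence on the assumption that $Z(\bG)$ is smooth; without this hypothesis, miracle flatness would still yield flatness, but the centralizers $Z_\bG(s_x)$ could fail to be smooth, and the morphism itself would then fail to be smooth.
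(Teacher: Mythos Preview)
Your proof is correct and takes a genuinely different route from the paper's. The paper proves smoothness infinitesimally: by $\bG$-equivariance it reduces to checking surjectivity of the differential at points $(e,s)$, and there it exhibits the decomposition $T_s(\bG) = T_s(\mathcal{O}(s)) \oplus T_s(\Sigma)$ (coming from the fact that $\Sigma$ is a section of $\chi$), then observes that the differential of the orbit map $g \mapsto gsg^{-1}$ surjects onto the first summand precisely because $\bG \to \bG/Z_\bG(s)$ is smooth (which is where Lemma~\ref{lem:smoothness-centralizers} enters). Your approach instead identifies each scheme-theoretic fiber with a coset of $Z_\bG(s_x)$, reads off the constant fiber dimension, and invokes miracle flatness followed by the criterion ``flat with smooth fibers implies smooth.'' Both arguments hinge on exactly the same input, namely the smoothness of $Z_\bG(s)$ for $s$ regular, but package it differently: the paper's tangent-space computation is more hands-on and avoids the general flatness criterion, while your fiber-dimension argument is cleaner and makes the role of the hypothesis on $Z(\bG)$ especially transparent (as you note, flatness survives without it but smoothness does not). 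One small point worth making explicit in your write-up is that the fiber identification is scheme-theoretic, not just set-theoretic: this follows because $\bG \to \bG/Z_\bG(s_x)$ is an fppf torsor, so its fiber over any $\bk$-point is a torsor over $\Spec(\bk)$, hence (as $\bk$ is algebraically closed) isomorphic to $Z_\bG(s_x)$ as a scheme.
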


\begin{proof}
To prove smoothness of our morphism,
as in the proof of Proposition~\ref{prop:adj-quot-smooth}, what we need to show is its differential at any closed point of $\bG \times \mathbf{\Sigma}$ is surjective. By $\bG$-equivariance, it suffices to do so at points of the form $(e,s)$ with $s \in \mathbf{\Sigma}$. For such $s$, from Proposition~\ref{prop:Sigma}\eqref{it:Sigma-reg} and Proposition~\ref{prop:adj-quot-smooth} we obtain that the differential $d_s(\chi)$ is surjective, and that its kernel is the tangent space $T_s(\mathcal{O}(s))$. Now since the composition $\mathbf{\Sigma} \to \bG \to \bT/\bWf$ is an isomorphism (see Proposition~\ref{prop:Sigma}\eqref{it:Sigma-section}), $T_s(\mathbf{\Sigma})$ is a complement to the kernel of $d_s(\chi)$, which implies that
\begin{equation}
\label{eqn:tangent-spaces}
T_s(\bG) = T_s(\mathcal{O}(s)) \oplus T_s(\mathbf{\Sigma}).
\end{equation}

The differential of the morphism in the statement is the sum of the differential at $e$ of the morphism $\bG \to \bG$ given by $g \mapsto gsg^{-1}$ and the embedding $T_s(\mathbf{\Sigma}) \to T_s(\bG)$. The first of these morphisms can be described as the composition
\[
T_e(\bG) \to T_s(\mathcal{O}(s)) \to T_s(\bG),
\]
where the first morphism is the differential of the morphism $\bG \to \mathcal{O}(s)$ given by $g \mapsto gsg^{-1}$. The latter morphism identifies with the quotient morphism $\bG \to \bG/\rmZ_\bG(s)$, which is smooth by Lemma~\ref{lem:smoothness-centralizers} and the comments in~\cite[\S 2.1]{brr}. Its differential is therefore surjective, which finishes the proof in view of~\eqref{eqn:tangent-spaces}.

Once we know that our morphism is smooth, we know that its image is open (see~\cite[\href{https://stacks.math.columbia.edu/tag/01UA}{Tag 01UA}]{stacks-project}), so that to prove surjectivity it suffices to prove that this image contains all closed points of $\bG_{\reg}$. This property was observed in the course of the proof of Proposition~\ref{prop:adj-quot-smooth}.
\end{proof}

\begin{rmk}
Proposition~\ref{prop:smoothness-conjugation} does \emph{not} hold in general if $\rmZ(\bG)$ is not smooth. For instance, explicit computation shows that when $\bG=\mathrm{SL}_{2,\bk}$ the morphism under consideration is not smooth in characteristic $2$.
\end{rmk}

Let us note the following consequence of Proposition~\ref{prop:smoothness-conjugation}, which will be used in Section~\ref{sec:construction}.

\begin{cor}
\label{cor:Tor-vanishing}
Assume that $\rmZ(\bG)$ is smooth.
 Consider the action of $\bG$ on itself by conjugation, and the induced action on the algebra $\scO(\bG)$. For any $\bG$-equivariant $\scO(\bG)$-module $M$ and any $n \in \Z_{> 0}$ we have $\mathrm{Tor}_n^{\scO(\bG)}(M,\scO(\mathbf{\Sigma}))=0$.
\end{cor}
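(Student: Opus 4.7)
The plan is to exploit Proposition~\ref{prop:smoothness-conjugation} together with the $\bG$-equivariance of $M$, reducing the vanishing to a triviality via a derived base change. First I would reduce to the regular locus: since $\Sigma \subset \bG_{\reg}$ by Proposition~\ref{prop:Sigma}\eqref{it:Sigma-reg} and the open embedding $j \colon \bG_{\reg} \hookrightarrow \bG$ is flat, one has $\mathrm{Tor}_n^{\scO(\bG)}(M, \scO(\Sigma)) = \mathrm{Tor}_n^{\scO(\bG_{\reg})}(M|_{\bG_{\reg}}, \scO(\Sigma))$, so everything can be done on $\bG_{\reg}$.

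Next, I would use the equivariant structure. The $\bG$-equivariance of $M$ provides a canonical isomorphism $\mathrm{act}^* M \simeq p_2^* M$ of $\scO(\bG \times \bG)$-modules, where $\mathrm{act}, p_2 \colon \bG \times \bG \to \bG$ are the conjugation action and the second projection. Restricting along the closed embedding $\bG \times \Sigma \hookrightarrow \bG \times \bG$, and observing that $\mathrm{act}$ restricts to the map $a \colon \bG \times \Sigma \to \bG_{\reg}$ of Proposition~\ref{prop:smoothness-conjugation}, yields an isomorphism $a^*(M|_{\bG_{\reg}}) \simeq p_2^*(M|_\Sigma)$ of $\scO(\bG \times \Sigma)$-modules. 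Because $a$ is smooth (Proposition~\ref{prop:smoothness-conjugation}) and $p_2$ is obviously flat, both pullbacks are non-derived, so the isomorphism lives in the category of modules.

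The core step is then to apply the derived pullback $Le^*$ along the section $e \colon \Sigma \to \bG \times \Sigma$, $s \mapsto (e_\bG, s)$, whose compositions satisfy $a \circ e = i_{\reg} \colon \Sigma \hookrightarrow \bG_{\reg}$ and $p_2 \circ e = \mathrm{id}_\Sigma$. Using flatness of $a$ and $p_2$ to exchange $L(-)^*$ with the underived pullbacks, one obtains
\[
Li_{\reg}^*(M|_{\bG_{\reg}}) \;\simeq\; Le^* \, a^*(M|_{\bG_{\reg}}) \;\simeq\; Le^* \, p_2^*(M|_\Sigma).
\]
On the right-hand side, $p_2^*(M|_\Sigma) \cong \scO(\bG) \otimes_\bk (M|_\Sigma)$ is a free, hence flat, module over the first-factor copy of $\scO(\bG)$; since $Le^*$ amounts to $\bk \otimes^L_{\scO(\bG)} (-)$ applied to that first factor, no higher Tor appears, and the outcome is just $M|_\Sigma$, concentrated in degree~$0$. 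Consequently $Li_{\reg}^*(M|_{\bG_{\reg}})$ is concentrated in degree~$0$, which is exactly the claim $\mathrm{Tor}_n^{\scO(\bG_{\reg})}(M|_{\bG_{\reg}}, \scO(\Sigma)) = 0$ for $n > 0$.

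The only thing that requires any care is the identification $Le^* p_2^*(M|_\Sigma) \simeq M|_\Sigma$, which hinges on the elementary observation that $\scO(\bG) \otimes_\bk N$ is $\scO(\bG)$-flat for any $\bk$-vector space $N$; once this is in hand, the rest is formal. The genuinely non-trivial input is Proposition~\ref{prop:smoothness-conjugation} (itself depending on smoothness of $Z(\bG)$), which provides flatness of $a$ and thus allows the equivariant isomorphism to be transported through the derived base change.
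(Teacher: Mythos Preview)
Your proof is correct and follows essentially the same approach as the paper's: factor the inclusion $\Sigma \hookrightarrow \bG$ through the section $e \colon \Sigma \to \bG \times \Sigma$ and the smooth conjugation map $a \colon \bG \times \Sigma \to \bG$, then use the equivariant structure to identify $a^* M$ with a pullback from $\Sigma$. The paper phrases the last step via the equivalence $\QCoh(\Sigma) \simto \QCoh^{\bG}(\bG \times \Sigma)$ given by $q^*$ (so that any equivariant sheaf on $\bG \times \Sigma$ is of the form $q^*\scM$ with $Lj^* q^*\scM = \scM$), whereas you unpack this by restricting the equivariance isomorphism $\mathrm{act}^* M \simeq p_2^* M$ from $\bG \times \bG$; these are the same argument. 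Your preliminary reduction to $\bG_{\reg}$ is harmless but unnecessary, since the factorization $i = a \circ e$ already lands in $\bG$ and the flatness of $a$ is all that is used.
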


\begin{proof}
Since $\bG$ is affine, the category of $\bG$-equivariant $\scO(\bG)$-modules is equivalent to the category $\QCoh^{\bG}(\bG)$ of $\bG$-equivariant quasi-coherent sheaves on $\bG$.
 If we denote by $i : \mathbf{\Sigma} \to \bG$ the embedding, and consider the derived pullback functor
 \[
  Li^* : D^- \QCoh(\bG) \to D^- \QCoh(\mathbf{\Sigma}),
 \]
 the claim we want to prove is therefore equivalent to the statement that $Li^*(\scF)$ is concentrated in degree $0$ for any $\scF$ in $\QCoh^{\bG}(\bG)$. Now the morphism $i$ can be written as a composition
 \[
  \mathbf{\Sigma} \xrightarrow{j} \bG \times \mathbf{\Sigma} \to \bG
 \]
where the first morphism is given by $j(s) = (e,s)$ and the second one is the morphism of Proposition~\ref{prop:smoothness-conjugation}. Since the latter morphism is smooth (hence flat) and $\bG$-equivariant (for the action on $\bG \times \mathbf{\Sigma}$ induced by multiplication on the left on the first factor), to prove the desired statement it suffices to prove that for any $\scG$ in $\QCoh^{\bG}(\bG \times \mathbf{\Sigma})$ the complex $Lj^*(\scG)$ is concentrated in degree $0$. Now if $q : \bG \times \mathbf{\Sigma} \to \mathbf{\Sigma}$ is the morphism of projection on the second factor, the functor $q^*$ induces an equivalence of categories $\QCoh(\mathbf{\Sigma}) \simto \QCoh^{\bG}(\bG \times \mathbf{\Sigma})$; in particular, any object $\scG$ of $\QCoh^{\bG}(\bG \times \mathbf{\Sigma})$ is of the form $q^* \scM$ for some $\scM$ in $\QCoh(\mathbf{\Sigma})$, and moreover since $q$ is flat we have $q^* \scM = Lq^* \scM$. We deduce that
\[
 Lj^*(\scG) \cong Lj^* Lq^* \scM = \scM
\]
since $q \circ j=\id$; in particular, this complex is indeed concentrated in degree $0$.
\end{proof}

\subsection{Multiplicative Grothendieck, Springer and Steinberg varieties}
\label{ss:mult-Groth}

Recall from~\cite[\S 2.3]{brr} that the \emph{multiplicative Springer resolution} is the induced variety
\[
\Spr := \bG \times^\bB \bU,
\]
where $\bB$ acts on $\bU$ via the adjoint action. 
In this paper we will also consider the \emph{multiplicative Grothendieck resolution}
\[
\Groth := \mathbf{G} \times^{\mathbf{B}} \mathbf{B},
\]
where $\mathbf{B}$ acts on itself by conjugation. 
We have a natural projective morphism
\[
\nu : \Groth \to \mathbf{G},
\]
defined by $\nu([g:b])=g b g^{-1}$ for $g \in \mathbf{G}$ and $b \in \mathbf{B}$. Using this morphism we can consider the fiber product
\[
\Stm := \Groth \times_{\bG} \Groth,
\]
which we will call
the \emph{multiplicative Steinberg variety}.
We also have a canonical morphism
\[
\eta : \Groth \to \mathbf{T}
\]
sending a class $[g:b]$ to the image of $b$ in $\bB/\bU \xleftarrow{\sim} \mathbf{T}$. The embedding $\bU \subset \bB$ induces a closed embedding $\Spr \subset \Groth$, which identifies $\Spr$ with $\eta^{-1}(e)$.

If we set
\[
\Groth':= \mathbf{G} \times_{\bT/\bWf} \mathbf{T},
\]
it is a classical observation that the morphisms $\nu$ and $\eta$ combine to give a morphism of schemes
\[
\vartheta : \Groth \to \Groth'.
\]
The morphism $\nu$ obviously factors through $\vartheta$, so that we can consider the fiber product
%
%
\[
\Stm':= \Groth' \times_{\bG} \Groth' = \bG \times_{\bT/\bWf} (\bT \times_{\bT/\bWf} \bT).
\]
Using the morphism $\vartheta$ considered above
we obtain a canonical morphism $\Stm \to \Stm'$.

\subsection{Some coherent sheaves on \texorpdfstring{$\Groth$}{Groth}}
\label{ss:coh-Groth}

Let $\bH$ be an affine $\bk$-group scheme of finite type, and consider the adjoint action of $\bH$ on itself. Recall that for any $V \in \Rep(\bH)$ the $\bH$-equivariant coherent sheaf $V \otimes \scO_\bH$ on $\bH$ (where the equivariant structure is diagonal) admits a canonical ``tautological'' automorphism $\sm^{\taut}_V$ which can be described as follows. Taking global sections induces an equivalence of categories between $\Coh^{\bH}(\bH)$ and the category of $\bH$-equivariant finitely generated $\scO(\bH)$-modules; under this equivalence, $\sm^{\taut}_V$ corresponds to the composition
\[
 V \otimes \scO(\bH) \xrightarrow{\Delta_V \otimes \id} V \otimes \scO(\bH) \otimes \scO(\bH) \xrightarrow{\id \otimes m_{\scO(\bH)}} V \otimes \scO(\bH)
\]
where $\Delta_V : V \to V \otimes \scO(\bH)$ is the coaction morphism and $m_{\scO(\bH)}$ is the multiplication morphism in the ring $\scO(\bH)$. This construction is functorial in $V$, but also in $\bH$, in the sense that if $\bK$ is another affine $\bk$-group scheme and $f : \bK \to \bH$ is a morphism of $\bk$-group schemes, then the canonical isomorphism
\[
f^*(V \otimes \scO_{\bH}) \cong (\For^{\bH}_{\bK} V) \otimes \scO_{\bK}
\]
(where $\For^{\bH}_{\bK} : \Rep(\bH) \to \Rep(\bK)$ is the ``restriction'' functor associated with $f$) intertwines the automorphisms $f^* \sm^{\taut}_V$ and $\sm^{\taut}_{\For^{\bH}_{\bK} V}$.

We will consider in particular this construction in the case of the group schemes $\bG$ and $\bB$. More specifically, for any $V$ in $\Rep(\bG)$ we will consider the automorphism $\nu^*(\sm^{\taut}_V)$ of $V \otimes \scO_{\Groth}$. It is well known that restriction to
\[
\bB=\{e\} \times \bB \subset \bG \times^{\bB} \bB=\Groth
\]
induces an equivalence of categories
\begin{equation}
\label{eqn:induction-Coh-Groth}
 \Coh^{\bG}(\Groth) \simto \Coh^{\bB}(\bB)
\end{equation}
sending $V \otimes \scO_{\Groth}$ to $(\For^{\bG}_{\bB} V) \otimes \scO_{\bB}$;
under this equivalence, $\nu^*(\sm^{\taut}_V)$ identifies with $\sm^{\taut}_{\For^{\bG}_{\bB} V}$. On the other hand, we have a canonical morphism $\Groth \to \bG/\bB$, from which $V \otimes \scO_{\Groth}$ is obtained by pullback of $V \otimes \scO_{\bG/\bB}$. If for $\lambda \in X^*(\bT)$ we denote by $\scO_{\bG/\bB}(\lambda)$ the line bundle on $\bG/\bB$ associated with $\lambda$, and after choosing a completion of $\preceq$ to a total order $\leq$ on $X^*(\bT)$,
$V \otimes \scO_{\bG/\bB}$ admits a canonical filtration indexed by $(X^*(\bT),\leq)$ with associated graded
\[
 \bigoplus_{\mu \in X^*(\bT)} V_\mu \otimes \scO_{\bG/\bB}(\mu).
\]
(Here, $V_\mu$ is the $\bT$-weight space of weight $\mu$ in $V$.) As a consequence, if we denote by $\scO_{\Groth}(\lambda)$ the pullback of $\scO_{\bG/\bB}(\lambda)$ to $\Groth$, then $V \otimes \scO_{\Groth}$ admits a canonical filtration indexed by $X^*(\bT)$ with associated graded
\[
 \bigoplus_{\mu \in X^*(\bT)} V_\mu \otimes \scO_{\Groth}(\mu).
\]
Under the equivalence~\eqref{eqn:induction-Coh-Groth}, $\scO_{\Groth}(\mu)$ corresponds to $\bk_{\bB}(\mu) \otimes \scO_{\bB}$, and the filtration above is induced by the obvious filtration on $\For^{\bG}_{\bB} V$ indexed by $X^*(\bT)$ and with associated graded
\[
 \bigoplus_{\mu \in X^*(\bT)} V_\mu \otimes \bk_{\bB}(\mu).
\]
In particular, this shows that $\nu^*(\sm^{\taut}_V)$ preserves this filtration, and acts on the subquotient $V_\mu \otimes \scO_{\Groth}(\mu)$ by multiplication by the function $(\mu \circ \eta) \in \scO(\Groth)$.

Given $\lambda \in X^*_+(\bT)$,
we will say that a representation $V \in \Rep(\bG)$ has highest weight $\lambda$ if $\dim(V_\lambda)=1$ and moreover all the weights $\mu$ appearing in $V$ satisfy $\mu \preceq \lambda$.

\begin{lem}
\label{lem:kernel-end-free-coh}
 If $\lambda \in X^*_+(\bT)$ and if $V \in \Rep(\bG)$ has highest weight $\lambda$, then we have a canonical embedding
 \[
  V_{w_\circ(\lambda)} \otimes \scO_{\Groth}(w_\circ(\lambda)) \hookrightarrow V \otimes \scO_{\Groth},
 \]
whose image is $\ker(\sm^{\taut}_V - (w_\circ(\lambda) \circ \eta) \cdot \id)$.
\end{lem}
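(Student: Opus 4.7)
The plan is to exploit the filtration on $V \otimes \scO_{\Groth}$ described just before the statement, together with the integrality of $\Groth$.

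The set of weights of any finite-dimensional $\bG$-module is stable under $\bWf$, and since $V$ has highest weight $\lambda$ every weight $\mu$ of $V$ satisfies $w_\circ(\lambda) \preceq \mu \preceq \lambda$, while $w_\circ(\lambda)$ is itself a weight of $V$. In particular $w_\circ(\lambda)$ is the minimum of the weights of $V$ with respect to any total order $\leq$ extending $\preceq$, so the lowest step of the filtration on $V \otimes \scO_{\Groth}$ is a subobject canonically identified with $V_{w_\circ(\lambda)} \otimes \scO_{\Groth}(w_\circ(\lambda))$, which yields the desired canonical embedding. Since $\sm^{\taut}_V$ acts on this submodule by multiplication by $w_\circ(\lambda) \circ \eta$, its image is contained in $\ker(\sm^{\taut}_V - (w_\circ(\lambda) \circ \eta) \cdot \id)$.

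For the reverse inclusion, the key input is that $\Groth = \bG \times^{\bB} \bB$ is a fiber bundle over the smooth irreducible variety $\bG/\bB$ with smooth connected fiber $\bB$, hence is itself integral; so $\scO_{\Groth}$ is a domain and every line bundle on $\Groth$ is torsion-free. Moreover the projection $\bB \twoheadrightarrow \bT$ is surjective, so $\eta : \Groth \to \bT$ is surjective and $\eta^\ast : \scO(\bT) \hookrightarrow \scO(\Groth)$ is injective; in particular, for any two distinct weights $\mu, \nu \in X^\ast(\bT)$ the element $(\mu \circ \eta) - (\nu \circ \eta) \in \scO(\Groth)$ is nonzero. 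Now let $v \in V \otimes \scO_{\Groth}$ be annihilated by $\sm^{\taut}_V - (w_\circ(\lambda) \circ \eta) \cdot \id$, and suppose for contradiction that $v$ is not in $V_{w_\circ(\lambda)} \otimes \scO_{\Groth}(w_\circ(\lambda))$. Let $\mu_0$ be the largest weight (with respect to $\leq$) such that $v$ lies in the step of the filtration indexed by $\mu_0$; then $\mu_0 > w_\circ(\lambda)$ and the image $\overline{v}$ of $v$ in the subquotient $V_{\mu_0} \otimes \scO_{\Groth}(\mu_0)$ is nonzero. Since $\sm^{\taut}_V - (w_\circ(\lambda) \circ \eta) \cdot \id$ preserves the filtration and acts on this subquotient as multiplication by $(\mu_0 \circ \eta) - (w_\circ(\lambda) \circ \eta)$, we obtain $\bigl((\mu_0 \circ \eta) - (w_\circ(\lambda) \circ \eta)\bigr) \cdot \overline{v} = 0$, contradicting torsion-freeness of $\scO_{\Groth}(\mu_0)$.

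The main technical point is the verification that $(\mu_0 \circ \eta) - (w_\circ(\lambda) \circ \eta)$ is a non-zero divisor on the graded pieces of the filtration, which is where integrality of $\Groth$ and surjectivity of $\eta$ enter; once these two geometric facts are in hand, the argument reduces to a routine ``leading term'' induction on the finite-length filtration of $V \otimes \scO_{\Groth}$.
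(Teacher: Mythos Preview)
Your proof is correct and follows essentially the same approach as the paper: both use the filtration by weights, observe that $w_\circ(\lambda)$ is the minimal weight so that the bottom step gives the canonical embedding, and then argue that the endomorphism $\sm^{\taut}_V - (w_\circ(\lambda)\circ\eta)\cdot\id$ acts injectively on every higher subquotient because multiplication by a nonzero global function on the integral scheme $\Groth$ is injective on line bundles. Your write-up is in fact somewhat more explicit than the paper's, which simply asserts that multiplication by $(\mu - w_\circ(\lambda))\circ\eta$ is injective for $\mu \neq w_\circ(\lambda)$ without spelling out the integrality of $\Groth$ or the surjectivity of $\eta$; your ``leading term'' contradiction is just a concrete unwinding of that injectivity claim.
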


\begin{proof}
 The weight $w_\circ(\lambda)$ is minimal among the weights of $V$ (with respect to our choice of order); the desired inclusion is therefore provided by the subobject labelled by $w_\circ(\lambda)$ in our filtration on $V \otimes \scO_{\Groth}$. As explained above $\sm^{\taut}_V - (w_\circ(\lambda) \circ \eta) \cdot \id$ preserves this filtration, and acts trivially on $V_{w_\circ(\lambda)} \otimes \scO_{\Groth}(w_\circ(\lambda))$. For any weight $\mu$ of $V$ the induced action on the subquotient $V_\mu \otimes \scO_{\Groth}(\mu)$ is multiplication by the function $(\mu - w_\circ(\lambda)) \circ \eta$, which is injective if $\mu \neq w_\circ(\lambda)$; this implies that $V_{w_\circ(\lambda)} \otimes \scO_{\Groth}(w_\circ(\lambda))$ identifies with $\ker(\sm^{\taut}_V - (w_\circ(\lambda) \circ \eta) \cdot \id)$.
\end{proof}

\subsection{Regular semisimple elements}
\label{ss:rs}

We will denote by $\mathbf{G}_\circ \subset \mathbf{G}$ the open subscheme of ``\'el\'ements r\'eguliers" in the sense of~\cite[Exp.~XIII, Th\'eor\`eme~2.6]{sga3.2}. (In this case the Cartan subgroup attached to $\bT$ is $\bT$ itself.) We will denote by $\mathrm{N}_\bG(\bT)$ the (scheme-theoretic) normalizer of $\bT$ in $\bG$, which is smooth by~\cite[Exp.~XIII, Lemme~2.0]{sga3.2}. One can then consider the scheme $\mathbf{G} \times^{\mathrm{N}_{\mathbf{G}}(\mathbf{T})} \mathbf{T}$, and the natural morphism
\[
\mathbf{G} \times^{\mathrm{N}_{\mathbf{G}}(\mathbf{T})} \mathbf{T} \to \mathbf{G}.
\]
By definition this morphism restricts to an isomorphism on the preimage of $\mathbf{G}_\circ$. This preimage is $\mathbf{G} \times^{\mathrm{N}_{\mathbf{G}}(\mathbf{T})} \mathbf{T}_\circ$, where $\bT_\circ := \bG_\circ \cap \bT$ is the open subscheme of $\bT$ whose $\bk$-points are the elements $t \in \bT$ such that $\alpha(t) \neq 1$ for any $\alpha \in \mathfrak{R}$. Comparing~\cite[Exp.~XIII, Corollaire~2.5]{sga3.2} with~\cite[\S 2.3]{humphreys}, one sees that the $\bk$-points of $\bG_\circ$ are the regular semisimple elements in the usual terminology of the algebraic groups literature; in particular we have $\mathbf{G}_\circ \subset \mathbf{G}_{\mathrm{reg}}$.

Recall from~\cite[Exp.~XIII, Corollaire~2.7]{sga3.2} that $\bG_\circ$ is the open subscheme in $\bG$ defined by a certain section in $\scO(\bG)$. This section is clearly $\bG$-invariant, hence determines an open subscheme $(\bT / \bWf)_\circ$ in $\bT / \bWf \cong \bG / \bG$ (see~\eqref{eqn:adj-quotient}) such that $\bG_\circ$ is the inverse image of $(\bT / \bWf)_\circ$ in $\bG$. The inverse image of $(\bT / \bWf)_\circ$ under the quotient map $\bT \to \bT/\bWf$ is $\bT_\circ$; $(\bT / \bWf)_\circ$ therefore identifies with the quotient $\bT_\circ / \bWf$ (see~\cite[Exp.~V, Corollaire~1.4]{sga1}). Note that the inertia group (in the sense of~\cite[Exp.~V, \S 2]{sga1}) of each point in $\bT_\circ$ (with respect to the action of $\bWf$) is trivial; in fact, by the analysis at the beginning of~\cite[Exp.~V, \S 2]{sga1}, to justify this claim it suffices to prove that $\bWf$ has no fixed point in $\bT_\circ(\mathbb{K})$ for any algebraically closed extension $\mathbb{K}$ of $\bk$, which follows from the description of centralizers of semisimple elements in~\cite[\S 2.2]{humphreys} together with Steinberg's connected theorem (see~\cite[\S 2.11]{humphreys}), which applies since $\mathscr{D} \bG$ is assumed to be simply connected. From the theory reviewed in~\cite[Exp.~V, \S 2]{sga1}, it follows that we have a natural isomorphism
\begin{equation}
\label{eqn:fiber-prod-T0}
\bWf \times \bT_\circ \simto \bT_\circ \times_{\bT_\circ/\bWf} \bT_\circ
\end{equation}
defined by $(w,t) \mapsto (w \cdot t, t)$.


We set $\mathbf{B}_\circ := \mathbf{G}_\circ \cap \mathbf{B}$.

\begin{lem}
\label{lem:Brs}
The morphism defined by $(u,t) \mapsto utu^{-1}$ induces an isomorphism of schemes
\[
\mathbf{U} \times \mathbf{T}_\circ \simto \mathbf{B}_\circ.
\]
\end{lem}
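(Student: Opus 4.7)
The plan is to factor the morphism through the semidirect product decomposition $\bB = \bU \rtimes \bT$ and then reduce the isomorphism property to a computation on successive root subgroup subquotients of $\bU$.

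Recall that multiplication induces an isomorphism of schemes $m : \bU \times \bT \simto \bB$. Since $\bT$ normalizes $\bU$, we can write
\[
utu^{-1} = (u \cdot tu^{-1}t^{-1}) \cdot t = m(u \cdot tu^{-1}t^{-1},\, t),
\]
so the morphism of the lemma factors as $m \circ \rho$, where $\rho : \bU \times \bT_\circ \to \bU \times \bT_\circ$ is defined by $(u,t) \mapsto (u \cdot tu^{-1}t^{-1},\, t)$. It therefore suffices to show that (a) $m$ restricts to an isomorphism $\bU \times \bT_\circ \simto \bB_\circ$, and (b) $\rho$ is an automorphism of $\bU \times \bT_\circ$.

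For (a), both $m(\bU \times \bT_\circ)$ and $\bB_\circ$ are open subschemes of $\bB$, hence reduced, so it is enough to compare $\bk$-points. Given $(u,t) \in \bU \times \bT_\circ$, the element $ut = t \cdot (t^{-1}ut)$ has characteristic image $\chi(ut) = \chi(t) \in (\bT/\bWf)_\circ$, hence lies in $\bG_\circ \cap \bB = \bB_\circ$. Conversely, any $b \in \bB_\circ$ is regular semisimple, so it lies in some maximal torus of $\bB$; by the $\bU$-conjugacy of maximal tori of $\bB$ we have $b = utu^{-1}$ for some $u \in \bU$ and $t \in \bT$, and regularity forces $t \in \bT_\circ$. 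Hence $b = m(u \cdot tu^{-1}t^{-1},\, t) \in m(\bU \times \bT_\circ)$.

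For (b), choose an ordering $\alpha_1, \dots, \alpha_N$ of the weights of $\bT$ on $\mathrm{Lie}(\bU)$ compatible with heights, so that $\bU_{\ge i} := \bU_{\alpha_i} \cdots \bU_{\alpha_N}$ is a $\bT$-stable normal subgroup of $\bU$ with $\bU_{\ge i}/\bU_{\ge i+1} \cong \bU_{\alpha_i} \cong \Ga$. The morphism $\rho$ preserves the induced filtration on $\bU \times \bT_\circ$ (since conjugation by $t$ preserves each $\bU_{\ge i}$), and on the subquotient $\bU_{\alpha_i} \times \bT_\circ$ it acts, using $t u_{\alpha_i}(c) t^{-1} = u_{\alpha_i}(\alpha_i(t) c)$, by
\[
(u_{\alpha_i}(c),\, t) \mapsto (u_{\alpha_i}((1 - \alpha_i(t)) c),\, t).
\]
Since $\alpha_i(t) \neq 1$ on $\bT_\circ$, the function $1 - \alpha_i(t)$ is a unit on $\bT_\circ$, so this subquotient map is an automorphism; by induction on $N$, $\rho$ itself is an automorphism. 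The most delicate point is (a), where one must reconcile the scheme-theoretic definition of the regular semisimple locus inside $\bB$ with the $\bT$-projection $\bB \to \bT$ coming from the semidirect product decomposition; once this is settled, (b) is a straightforward filtered root subgroup computation.
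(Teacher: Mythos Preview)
Your proof is correct and takes a genuinely different route from the paper. The paper argues by identifying $\bB_\circ$ with the locus of \emph{\'el\'ements r\'eguliers} of $\bB$ in the sense of SGA3 (for the Cartan subgroup $\bT$, whose normalizer in $\bB$ is $\bT$ itself); once that identification is made, the desired isomorphism is exactly the defining property of that locus, together with the observation $\bB \times^{\bT} \bT \cong \bU \times \bT$. Your approach instead factors through the product decomposition $\bB = \bU \cdot \bT$ and reduces to the explicit triangular computation (b), avoiding any reference to SGA3. The paper's argument is shorter and more conceptual but relies on that external machinery; yours is more self-contained and makes the role of the condition $\alpha(t) \neq 1$ completely explicit.

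One point in (b) deserves a word of care. Since $\rho_t$ is not a group homomorphism, the statement ``preserves the filtration and is an isomorphism on each subquotient, hence an isomorphism'' is not a purely formal five-lemma argument. What makes your induction work is that, for a height-compatible ordering, each $\bU_{\ge i}/\bU_{\ge i+1}$ is \emph{central} in $\bU/\bU_{\ge i+1}$; this gives the equivariance $\rho_t(a \cdot u) = \sigma(a) \cdot \rho_t(u)$ for $a$ in the subquotient, so in coordinates $\rho_t$ is lower-triangular and \emph{affine-linear} in each new variable with slope $1-\alpha_i(t)$, hence invertible by back-substitution. It would strengthen the write-up to make this centrality (and the resulting affine-triangular form) explicit.
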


\begin{proof}
We claim that $\bB_\circ$ coincides with the subset of
``\'el\'ements r\'eguliers'' in the sense of~\cite[Exp.~XIII, Th\'eor\`eme~2.6]{sga3.2} applied to the group $\bB$. (Here again the Cartan subgroup associated with $\bT$ is $\bT$ itself, but now its normalizer is again $\bT$.) Indeed, if $b$ is a $\bk$-point in $\bB_\circ$, then $b$ is ``r\'egulier'' in $\bB$ by~\cite[Exp.~XIII, Corollaire~2.8]{sga3.2}. On the other hand, if $b$ is a $\bk$-point of $\bB$ which is ``r\'egulier'' in $\bB$, then there exists $c \in \bB$ such that $cbc^{-1} \in \bT$ and $(\bb/\bt)^{cbc^{-1}}=\{0\}$, i.e.~$\alpha(cbc^{-1}) \neq 1$ for any $\alpha \in -\fR_+$. Then $cbc^{-1} \in \bT_\circ$, hence $b \in \bG_\circ$, which finishes the proof of our claim.

Now that this claim is established, we obtain from the definition that the morphism
\[
\bB \times^{\bT} \bT \to \bB
\]
induced by conjugation
restricts to an isomorphism over the preimage of $\bB_\circ$. The natural embedding $\bU \times \bT \to \bB \times^{\bT} \bT$ is an isomorphism since $\bB = \bU \rtimes \bT$, and this preimage identifies with $\bB \times^{\bT} \bT_\circ$, which finishes the proof.
\end{proof}

\subsection{Restrictions to the regular locus}
\label{ss:restriction-reg}

Recall the schemes and morphisms introduced in~\S\ref{ss:mult-Groth}. We set
\[
\Groth_{\circ}:=\nu^{-1}(\mathbf{G}_{\circ}), \quad \Groth'_\circ := \bG_{\circ} \times_{\bT/\bWf} \bT \cong \bG_{\circ} \times_{\bT_\circ/\bWf} \bT_\circ,
\]
and denote by
\[
\nu_\circ : \Groth_{\circ} \to \bG_{\circ}, \quad \vartheta_\circ : \Groth_{\circ} \to \Groth'_{\circ}
\]
the restrictions of $\nu$ and $\vartheta$ respectively.
Similarly, we set
\[
\Groth_{\mathrm{reg}}:=\nu^{-1}(\mathbf{G}_{\mathrm{reg}}), \quad \Groth'_\reg := \bG_{\mathrm{reg}} \times_{\bT/\bWf} \bT,
\]
and denote by
\[
\nu_\reg : \Groth_{\mathrm{reg}} \to \bG_{\mathrm{reg}}, \quad \vartheta_\reg : \Groth_{\mathrm{reg}} \to \Groth'_{\mathrm{reg}}
\]
the restrictions of $\nu$ and $\vartheta$ respectively.

The following claim is somewhat standard, but no proof appears in the literature in the present generality, to the best of our knowledge.

\begin{prop}
\label{prop:properties-Groth}
The morphism $\vartheta_\reg : \Groth_{\mathrm{reg}} \to \Groth'_{\mathrm{reg}}$ is an isomorphism.
\end{prop}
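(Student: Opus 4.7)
The plan is to show that $\vartheta_\reg$ is a proper, birational, quasi-finite morphism between smooth irreducible schemes of the same dimension, and then invoke the standard argument: proper plus quasi-finite yields finite, and a finite birational morphism to a normal integral scheme is an isomorphism (because the pushforward of the source's structure sheaf is a coherent $\scO_{\Groth'_\reg}$-subalgebra of the constant sheaf of rational functions, which by normality of $\Groth'_\reg$ must equal $\scO_{\Groth'_\reg}$).

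First, both sides are smooth irreducible of dimension $\dim \bG$. The scheme $\Groth$ is a locally trivial $\bB$-bundle over $\bG/\bB$, so it is smooth and irreducible of dimension $\dim \bG$, and its open subscheme $\Groth_\reg$ inherits these properties. The scheme $\Groth'_\reg = \bG_\reg \times_{\bT/\bWf} \bT$ is smooth by base change of the smooth morphism $\chi_\reg$ (Proposition~\ref{prop:adj-quot-smooth}). For properness of $\vartheta$, note that $\nu : \Groth \to \bG$ factors as a closed immersion $\Groth \hookrightarrow \bG/\bB \times \bG$, $[g:b] \mapsto (g\bB, gbg^{-1})$, followed by the projective projection to $\bG$, while $\Groth' \to \bG$ is finite (as base change of $\bT \to \bT/\bWf$) and hence separated. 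It follows that $\vartheta$, and so $\vartheta_\reg$, is proper.

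To establish birationality, we verify directly that $\vartheta$ restricts to an isomorphism $\Groth_\circ \simto \Groth'_\circ$. By Lemma~\ref{lem:Brs}, any element of $\bB_\circ$ can be written uniquely as $u t u^{-1}$ with $u \in \bU$ and $t \in \bT_\circ$; combined with the defining relation $[gu: t] = [g: utu^{-1}]$ in $\bG \times^{\bB} \bB$, this shows that the map $\bG \times \bT_\circ \to \Groth_\circ$, $(g, t) \mapsto [g: t]$, is surjective. Since $Z_\bG(t) = \bT$ for $t \in \bT_\circ$, its fibers are the $\bT$-orbits under right translation on $\bG$ (with $t$ fixed), so $\Groth_\circ \cong (\bG/\bT) \times \bT_\circ$. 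The analogous map $\bG \times \bT_\circ \to \Groth'_\circ$, $(g, t) \mapsto (g t g^{-1}, t)$, is likewise surjective with $\bT$-orbit fibers, identifying $\Groth'_\circ \cong (\bG/\bT) \times \bT_\circ$; under these identifications, $\vartheta|_{\Groth_\circ}$ becomes the identity. As a consequence $\Groth'_\circ$ is irreducible, and since $\Groth'_\reg \to \bG_\reg$ is finite flat with $\bG_\circ$ dense in $\bG_\reg$, irreducibility of $\Groth'_\reg$ follows.

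Quasi-finiteness of $\vartheta_\reg$ then follows from the classical fact that $\nu^{-1}(g)$ is finite for $g \in \bG_\reg$ (the set of Borels containing any regular element has cardinality at most $\# \bWf$): since $\vartheta_\reg$'s fibers inject into those of $\nu_\reg$, this suffices. Combined with the ingredients above, the argument sketched in the first paragraph completes the proof. The main obstacle is the concrete identification of both $\Groth_\circ$ and $\Groth'_\circ$ with $(\bG/\bT) \times \bT_\circ$ in the birationality step: while morally very natural, writing these down precisely as isomorphisms of schemes requires care in tracking how the various $\bB$-, $\bT$-, and $N_\bG(\bT)$-quotient structures on the two sides give rise to the same quotient.
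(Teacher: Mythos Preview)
Your proof is correct and follows essentially the same strategy as the paper: show $\vartheta_\reg$ is finite and birational with normal integral target. The paper isolates the birationality step (the identification $\Groth_\circ \cong \Groth'_\circ \cong \bG/\bT \times \bT_\circ$) as a separate lemma (Lemma~\ref{lem:vartheta-circ-iso}), handling the scheme-theoretic concern you flag at the end via associated-bundle manipulations using $\bG \times^{N_\bG(\bT)} \bT_\circ \simto \bG_\circ$ and the isomorphism~\eqref{eqn:fiber-prod-T0}.
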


As a preparation we prove the following claim.

\begin{lem}
\label{lem:vartheta-circ-iso}
 The morphism $\vartheta_\circ : \Groth_{\circ} \to \Groth'_{\circ}$ is an isomorphism. Moreover, the morphism
 \[
 \bG/\bT \times \bT_\circ \to \Groth_\circ
 \]
 defined by $(g\bT, t) \mapsto [g : t]$ is an isomorphism.
\end{lem}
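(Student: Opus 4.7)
The plan is to prove the second assertion first via a direct computation based on Lemma~\ref{lem:Brs}, and then deduce the first by recognizing both sides of $\vartheta_\circ$ as \'etale $\bWf$-torsors over $\bG_\circ$ and invoking the elementary fact that any equivariant morphism of torsors over a common base is automatically an isomorphism.

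For the second assertion, I will first observe that $\Groth_\circ = \nu^{-1}(\bG_\circ) = \bG \times^{\bB} \bB_\circ$, since $\bG_\circ$ is stable under conjugation. Using Lemma~\ref{lem:Brs} to identify $\bB_\circ$ with $\bU \times \bT_\circ$ via $(u,t) \mapsto utu^{-1}$, I will transport the conjugation action of $\bB$ on $\bB_\circ$ to $\bU \times \bT_\circ$. A short direct calculation using $\bB = \bU \rtimes \bT$ and the commutativity of $\bT$ gives
\[
(u' t')(utu^{-1})(u' t')^{-1} = \bigl(u'(t'ut'^{-1})\bigr)\, t \, \bigl(u'(t'ut'^{-1})\bigr)^{-1},
\]
so that the transported $\bB$-action is trivial on $\bT_\circ$ and, on $\bU$, coincides with the standard left $\bB$-action on $\bU \cong \bB/\bT$. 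Consequently
\[
\Groth_\circ \cong \bG \times^{\bB}(\bU \times \bT_\circ) \cong \bigl(\bG \times^{\bB}(\bB/\bT)\bigr) \times \bT_\circ \cong (\bG/\bT) \times \bT_\circ,
\]
and tracing through the identifications shows that the composite isomorphism sends $(g\bT, t)$ to $[g:t]$.

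For the first assertion, using the identification just obtained, $\vartheta_\circ$ takes the form $(g\bT, t) \mapsto (gtg^{-1}, t)$ from $(\bG/\bT) \times \bT_\circ$ to $\Groth'_\circ = \bG_\circ \times_{\bT_\circ/\bWf} \bT_\circ$. The target $\Groth'_\circ$ is an \'etale $\bWf$-torsor over $\bG_\circ$ via the first projection, being the base change along $\chi : \bG_\circ \to \bT_\circ/\bWf$ of the \'etale $\bWf$-torsor $\bT_\circ \to \bT_\circ/\bWf$ (cf.~\eqref{eqn:fiber-prod-T0}). For the source, the map $\bG \times \bT_\circ \to \bG_\circ$, $(g,t) \mapsto gtg^{-1}$, is an $N_{\bG}(\bT)$-torsor, by the isomorphism $\bG \times^{N_{\bG}(\bT)} \bT_\circ \simto \bG_\circ$ recalled in~\S\ref{ss:rs}. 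Quotienting by the normal subgroup $\bT \subset N_{\bG}(\bT)$ identifies $(\bG \times \bT_\circ)/\bT$ with $(\bG/\bT) \times \bT_\circ$ and exhibits $(\bG/\bT) \times \bT_\circ \to \bG_\circ$, $(g\bT, t) \mapsto gtg^{-1}$, as an \'etale $\bWf$-torsor whose residual $\bWf$-action is $w \cdot (g\bT, t) = (g \tilde w^{-1} \bT, wt)$, for any lift $\tilde w \in N_{\bG}(\bT)$ of $w$.

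A direct verification then shows that $\vartheta_\circ$ intertwines these $\bWf$-actions (with the $\bWf$-action on $\Groth'_\circ$ taken trivial on the first factor) and is compatible with the projections to $\bG_\circ$; hence $\vartheta_\circ$ is a $\bWf$-equivariant morphism of $\bWf$-torsors over $\bG_\circ$, and therefore an isomorphism. The only nontrivial computation in the argument is the transport of the $\bB$-action on $\bB_\circ$ to $\bU \times \bT_\circ$ performed above; the remaining steps are formal consequences of standard torsor properties and of the results collected in~\S\ref{ss:rs}.
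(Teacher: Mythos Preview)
Your proof is correct. For the second assertion your argument is essentially identical to the paper's: the paper writes the same chain of identifications as
\[
\Groth_\circ \cong \bG \times^{\bB} \bB_\circ \cong \bG \times^{\bB} (\bB \times^{\bT} \bT_\circ) \cong \bG \times^{\bT} \bT_\circ,
\]
which amounts to your explicit transport of the conjugation action through Lemma~\ref{lem:Brs}.

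For the first assertion the paper takes a slightly more direct route: rather than invoking the torsor principle, it explicitly identifies $\Groth'_\circ$ with $\bG/\bT \times \bT_\circ$ by writing
\[
\bG_\circ \times_{\bT_\circ/\bWf} \bT_\circ \cong \bG \times^{N_{\bG}(\bT)} (\bT_\circ \times_{\bT_\circ/\bWf} \bT_\circ) \cong \bG \times^{N_{\bG}(\bT)} (N_{\bG}(\bT) \times^{\bT} \bT_\circ) \cong \bG \times^{\bT} \bT_\circ,
\]
using~\eqref{eqn:fiber-prod-T0} for the middle step, and then observes (implicitly) that $\vartheta_\circ$ becomes the identity under these identifications. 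Your torsor argument is a repackaging of the same ingredients: the $N_{\bG}(\bT)$-torsor structure on $\bG \times \bT_\circ \to \bG_\circ$ and the identification~\eqref{eqn:fiber-prod-T0} are exactly what makes the paper's chain of isomorphisms work. The paper's version is marginally more concrete (it hands you the inverse explicitly), while yours avoids tracing the map through the identifications at the cost of checking $\bWf$-equivariance; neither approach is meaningfully shorter or more general.
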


\begin{proof}
Recall from~\S\ref{ss:rs} that the natural morphism
\[
\bG \times^{\mathrm{N}_\bG(\bT)} \bT_\circ \to \bG_\circ
\]
is an isomorphism. We deduce an isomorphism
\[
\bG_{\circ} \times_{\bT_\circ/\bWf} \bT_\circ \cong \bG \times^{\mathrm{N}_\bG(\bT)} (\bT_\circ \times_{\bT_\circ/\bWf} \bT_\circ)
\]
where $\mathrm{N}_\bG(\bT)$ acts on the first factor in $\bT_\circ \times_{\bT_\circ/\bWf} \bT_\circ$. Combining this with the isomorphism~\eqref{eqn:fiber-prod-T0}, we deduce an identification
\[
\bG_{\circ} \times_{\bT_\circ/\bWf} \bT_\circ \cong \bG \times^{\mathrm{N}_\bG(\bT)} (\mathrm{N}_{\bG}(\bT) \times^{\bT} \bT_\circ) \cong \bG \times^{\bT} \bT_\circ.
\]
Here in the right-hand side the action of $\bT$ on $\bT_\circ$ is trivial, so that this scheme identifies with $\bG/\bT \times \bT_\circ$.
On the other hand, using Lemma~\ref{lem:Brs} we obtain an identification
\[
\Groth_\circ \cong \bG \times^{\bB} \bB_\circ \cong \bG \times^{\bB} (\bB \times^{\bT} \bT_\circ) \cong \bG \times^{\bT} \bT_\circ,
\]
which finishes the proof.
\end{proof}

\begin{rmk}
Since $\bG_\circ$ is an affine open subscheme in $\bG$ (or since $\bG/\bT$ is known to be affine),  Lemma~\ref{lem:vartheta-circ-iso} implies in particular that $\Groth_\circ$ is an affine scheme.
\end{rmk}

\begin{proof}[Proof of Proposition~\ref{prop:properties-Groth}]
We follow the proof of the analogous statement for Lie algebras, see~\cite{riche-kostant}: we will prove that $\vartheta_\reg$ is finite and birational, and that its codomain is smooth and irreducible, which will imply the claim since a finite birational morphism $f:X \to Y$ of integral schemes with $Y$ normal is an isomorphism, see~\cite[\href{https://stacks.math.columbia.edu/tag/0AB1}{Tag 0AB1}]{stacks-project}. 

First, since $\chi_\reg$ is smooth (see Proposition~\ref{prop:adj-quot-smooth}) the scheme $\Groth'_{\mathrm{reg}}$ is smooth over $\bT$, hence smooth. Smoothness (hence flatness) of $\Groth'_{\mathrm{reg}}$ over $\bT$ also implies that $\Groth'_\circ = \Groth'_{\reg} \times_{\bT} \bT_\circ$ is dense in $\Groth'_\reg$, see~\cite[\href{https://stacks.math.columbia.edu/tag/081H}{Tag 081H}]{stacks-project}. (The notion of ``scheme theoretic density'' used in this statement is equivalent to density in our present setting, see~\cite[\href{https://stacks.math.columbia.edu/tag/056D}{Tag 056D}]{stacks-project}.) Now, by Lemma~\ref{lem:vartheta-circ-iso}, $\Groth'_\circ$ is isomorphic to an open subscheme in the irreducible scheme $\Groth$, hence is itself irreducible; this implies that $\Groth'_\reg$ is irreducible (see~\cite[\href{https://stacks.math.columbia.edu/tag/004W}{Tag 004W}]{stacks-project}).

By Lemma~\ref{lem:vartheta-circ-iso} again, the restriction of $\vartheta_\reg$ to the preimage of $\bG_\circ$ is an isomorphism. Since both its domain and its codomain are irreducible, this shows that this morphism is birational.

Finally we prove that $\vartheta_\reg$ is finite, i.e.~that it is proper and quasi-finite (see e.g.~\cite[Corollary 12.89]{gw}). In fact, this map is proper by~\cite[\href{https://stacks.math.columbia.edu/tag/01W6}{Tag 01W6}]{stacks-project}, since its composition with the (separated) projection $\mathbf{G}_{\mathrm{reg}} \times_{\mathbf{T}/\bWf} \mathbf{T} \to \mathbf{G}_{\mathrm{reg}}$ is proper. To prove that it is quasi-finite, by~\cite[Remark~12.16]{gw} it suffices to prove that the induced map on $\bk$-points (i.e.~closed points) has finite fibers. Now the map on $\bk$-points induced by $\nu_\reg$ has finite fibers, see~\cite[\S 4.9]{humphreys}, hence the same holds for $\vartheta_\reg$, which finishes the proof.
\end{proof}

We will denote by $\Stmreg$, resp.~$\Stmreg'$, the inverse image of $\bG_{\mathrm{reg}}$ under the canonical morphism $\Stm \to \bG$, resp.~$\Stm' \to \bG$. By Proposition~\ref{prop:properties-Groth} the morphism $\vartheta_\reg$ induces an isomorphism
\begin{equation*}
 \Stmreg \simto \Stmreg';
\end{equation*}
below we will identify these two schemes whenever convenient. (The same comment applies to $\Groth_\reg$ and $\Groth'_\reg$.)

\subsection{Universal centralizer and Steinberg section}
\label{ss:univ-centralizer}




Recall that for any separated $\bk$-scheme $X$ endowed with an action of $\bG$, the associated \emph{universal stabilizer} is the group scheme over $X$ defined as the fiber product
\[
\mathfrak{S}_{\bG,X} := (\bG \times X) \times_{X \times X} X,
\]
where the morphism $\bG \times X \to X \times X$ is defined by $(g,x) \mapsto (g \cdot x,x)$, and the morphism $X \to X \times X$ is the diagonal embedding. The projection $\mathfrak{S}_{\bG,X} \to \bG \times X$ is a closed embedding as a subgroup scheme, so that $\mathfrak{S}_{\bG,X}$ is affine over $X$ (but not flat in general). Its fiber over $x \in X$ is the scheme-theoretic stabilizer of $x$ in $\bG$. Moreover, for any $\bG$-equivariant coherent sheaf $\scF$ on $X$ there exists a canonical action of $\mathfrak{S}_{\bG,X}$ on (the underlying coherent sheaf of) $\scF$; see~\cite[\S 2.2]{mr} for details.

We will consider in particular this construction in the case $X=\bG$ with the adjoint action, and denote by $\bbJ$ the resulting group scheme. (In this case, we will often use the expression ``universal centralizer'' instead of universal stabilizer, for obvious reasons.) We will also denote by $\bbJ_\reg$, resp.~$\bbJ_\circ$, resp.~$\bbJ_{\mathbf{\Sigma}}$, the restriction of $\bbJ$ to $\bG_\reg$, resp.~to $\mathbf{G}_\circ$, resp.~to $\mathbf{\Sigma}$ (where $\mathbf{\Sigma}$ is the Steinberg section studied in~\S\ref{ss:Steinberg-section}).

\begin{rmk}
\label{rmk:section-J}
The group scheme $\bbJ$ admits a canonical section, induced by the diagonal embedding $\bG \to \bG \times \bG$. (In other words, this section sends $g \in \bG$ to the pair $(g,g)$ where the first $g$ is seen in the centralizer of the second $g$.) The identity functor of the category $\Rep^\infty(\bbJ)$ (identified with the category of $\scO(\bbJ)$-comodules) therefore admits a ``tautological'' automorphism, defined on an $\scO(\bbJ)$-comodule $M$ by the composition
\[
M \to M \otimes_{\scO(\bG)} \scO(\bbJ) \to M \otimes_{\scO(\bG)} \scO(\bG)=M
\]
where the first morphism is the coaction and the second one is induced by restriction to the canonical section.
By restriction, we deduce similar structures for $\bbJ_\reg$, $\bbJ_\circ$ and $\bbJ_{\mathbf{\Sigma}}$.
\end{rmk}

In the following statement, of course~\eqref{it:J-smooth-1} is a consequence of~\eqref{it:J-smooth-2}, but the proof will require to prove this claim first. In fact, this is the only claim that will be used in the rest of the paper;~\eqref{it:J-smooth-2} is stated only for completeness.

\begin{lem}
\label{lem:J-smooth}
Assume that $\rmZ(\bG)$ is smooth.
\begin{enumerate}
\item
\label{it:J-smooth-1}
The group scheme $\bbJ_{\mathbf{\Sigma}}$ is smooth (in particular, flat) over $\mathbf{\Sigma}$.
\item
\label{it:J-smooth-2}
The group scheme $\bbJ_\reg$ is smooth (in particular, flat) over $\bG_\reg$.
\end{enumerate}
\end{lem}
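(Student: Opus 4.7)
The plan is to deduce \eqref{it:J-smooth-2} from \eqref{it:J-smooth-1} by smooth descent, and then to prove \eqref{it:J-smooth-1} by a local computation at the identity section of the group scheme $\bbJ_\Sigma \to \Sigma$. For the descent step, Proposition~\ref{prop:smoothness-conjugation} provides a smooth surjective morphism $\pi : \bG \times \Sigma \to \bG_\reg$, $(g,s) \mapsto gsg^{-1}$, and the assignment $(g, s, h) \mapsto (g, s, g^{-1}hg)$ identifies its pullback $\pi^* \bbJ_\reg = (\bG \times \Sigma) \times_{\bG_\reg} \bbJ_\reg$ with $\bG \times \bbJ_\Sigma$ over $\bG \times \Sigma$, under which the natural projection becomes $\mathrm{id}_\bG \times (\bbJ_\Sigma \to \Sigma)$. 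By smooth descent, smoothness of $\bbJ_\reg \to \bG_\reg$ is equivalent to smoothness of $\bbJ_\Sigma \to \Sigma$; since the latter is a group scheme of finite type, by the standard translation argument (left multiplication by a global point being a self-automorphism of the base change) it is smooth as soon as it is smooth at each point $(e, s_0)$ of the identity section.

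Fix $s_0 \in \Sigma$ and set $p = (e, s_0)$. Linearizing the defining equation $z s z^{-1} s^{-1} = e$ of $\bbJ_\Sigma \subset \bG \times \Sigma$ at $p$ yields
\[
T_p \bbJ_\Sigma = \fg^{\mathrm{Ad}(s_0)} \oplus T_{s_0} \Sigma,
\]
the differential being $(\xi, \tau) \mapsto (1 - \mathrm{Ad}(s_0))\xi$. By Lemma~\ref{lem:smoothness-centralizers} the centralizer $Z_\bG(s_0)$ is smooth of dimension $\dim \bT$, so $\dim \fg^{\mathrm{Ad}(s_0)} = \dim \bT$; combined with $\dim T_{s_0} \Sigma = \dim \bT$ this gives $\dim T_p \bbJ_\Sigma = 2 \dim \bT$, and hence $\dim_p \bbJ_\Sigma \leq 2 \dim \bT$. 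For the matching lower bound, the open subset $\Sigma \cap \bG_\circ \subset \Sigma$ of regular semisimple points is nonempty, and over any such point the fiber of $\bbJ_\Sigma \to \Sigma$ is a maximal torus, hence irreducible of dimension $\dim \bT$; the unique irreducible component $Z$ of $\bbJ_\Sigma$ containing this generic torus fiber therefore dominates $\Sigma$ with $\dim Z = 2 \dim \bT$ and contains the identity section (whose generic point lies in the torus fiber), so in particular $p \in Z$ and $\dim_p \bbJ_\Sigma \geq 2 \dim \bT$. Thus $\dim_p \bbJ_\Sigma = \dim T_p \bbJ_\Sigma$, so $\bbJ_\Sigma$ is regular at $p$; with $\Sigma$ regular at $s_0$ and the fiber $Z_\bG(s_0)$ smooth of the expected relative dimension $\dim \bT$, miracle flatness then yields flatness of $\bbJ_\Sigma \to \Sigma$ at $p$, and hence smoothness of the morphism.

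The main delicate step will be the lower bound $\dim_p \bbJ_\Sigma \geq 2 \dim \bT$: neither the identity section alone nor the fiber $Z_\bG(s_0)$ alone has enough dimension, and the required irreducible component of $\bbJ_\Sigma$ of dimension $2 \dim \bT$ passing through every identity-section point must be obtained from the regular semisimple locus, where the fibers are maximal tori.
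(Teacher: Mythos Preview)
Your descent argument for \eqref{it:J-smooth-2} from \eqref{it:J-smooth-1} matches the paper's exactly. For \eqref{it:J-smooth-1}, however, the paper takes a much shorter route that you have missed: it observes that since $\Sigma \to \bT/\bWf$ is an isomorphism, the fiber product defining $\bbJ_\Sigma$ simplifies to
\[
\bbJ_\Sigma \;=\; \Sigma \times_{\bG_\reg} (\bG \times \Sigma),
\]
where $\Sigma \hookrightarrow \bG_\reg$ is the inclusion and $\bG \times \Sigma \to \bG_\reg$ is the conjugation map $(g,s)\mapsto gsg^{-1}$ of Proposition~\ref{prop:smoothness-conjugation}. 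Since the latter is smooth, so is its base change $\bbJ_\Sigma \to \Sigma$, and one is done in one line with no local computation.

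Your local approach has a genuine gap at the ``translation'' step. Over a field, left translation by any rational point is an automorphism of the group, so smoothness at $e$ propagates everywhere. Over a nontrivial base $\Sigma$ this fails: translation by a point $h_0$ in a fiber is an automorphism of $\bbJ_\Sigma$ over $\Sigma$ only if $h_0$ extends to a section over a neighborhood of $s_0$, and such sections need not exist. Your parenthetical (``base change'' to acquire a global point) runs into circularity: base changing along $\bbJ_\Sigma \to \Sigma$ or along the smooth locus $U \to \Sigma$ and then trying to descend smoothness requires exactly the flatness of $\bbJ_\Sigma \to \Sigma$ that you are attempting to prove. The issue is not academic here: the fibers $Z_\bG(s_0)$ can be disconnected (e.g.\ the centralizer of a regular unipotent in $\mathrm{SL}_2$ has two components in characteristic $\neq 2$), so even the refined argument ``lift $h_0 = u_1 u_2$ with $u_i$ in the smooth locus and translate'' only reaches the identity component. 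The paper's fiber-product description sidesteps all of this by establishing smoothness globally in one stroke.

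A secondary point: your lower bound $\dim_p \bbJ_\Sigma \ge 2\dim\bT$ relies on $\bbJ_\Sigma|_{\Sigma\cap\bG_\circ}$ being irreducible, which you assert but do not justify; this requires the explicit torus-bundle description of the universal centralizer over the regular semisimple locus.
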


\begin{proof}
\eqref{it:J-smooth-1}
By definition, we have
\[
\bbJ_{\mathbf{\Sigma}} = \mathbf{\Sigma} \times_{\bG_\reg \times \mathbf{\Sigma}} (\bG \times \mathbf{\Sigma})
\]
where the morphism $\mathbf{\Sigma} \to \bG_\reg \times \mathbf{\Sigma}$ is defined by $s \mapsto (s,s)$, and the morphism $\bG \times \mathbf{\Sigma} \to \bG_\reg \times \mathbf{\Sigma}$ is defined by $(g,s) \mapsto (gsg^{-1},s)$. It is clear that both of these maps factor through $\bG_\reg \times_{\bT/\bWf} \mathbf{\Sigma}$, so that
\[
\bbJ_{\mathbf{\Sigma}} = \mathbf{\Sigma} \times_{\bG_\reg \times_{\bT/\bWf} \mathbf{\Sigma}} (\bG \times \mathbf{\Sigma}).
\]
Now by Proposition~\ref{prop:Sigma}\eqref{it:Sigma-section} the projection $\bG_\reg \times_{\bT/\bWf} \mathbf{\Sigma} \to \bG_\reg$ is an isomorphism, so that
\[
\bbJ_{\mathbf{\Sigma}} = \mathbf{\Sigma} \times_{\bG_\reg} (\bG \times \mathbf{\Sigma})
\]
where the map $\mathbf{\Sigma} \to \bG_\reg$ is the obvious closed embedding and the map $\bG \times \mathbf{\Sigma} \to \bG_\reg$ is defined by $(g,s) \mapsto gsg^{-1}$. The latter map is smooth by Proposition~\ref{prop:smoothness-conjugation}, hence so is the projection $\bbJ_{\mathbf{\Sigma}} \to \mathbf{\Sigma}$, which finishes the proof of our claim.

\eqref{it:J-smooth-2}
Consider the commutative diagram
\[
\xymatrix{
\bG \times \bbJ_{\mathbf{\Sigma}} \ar[r] \ar[d] & \bbJ_\reg \ar[d]\\
\bG \times \mathbf{\Sigma} \ar[r] & \bG_\reg
}
\]
where the vertical maps are induced by the structure morphisms $\bbJ_\reg \to \bG_\reg$ and $\bbJ_{\mathbf{\Sigma}} \to \mathbf{\Sigma}$, the lower horizontal arrow is the morphism of Proposition~\ref{prop:smoothness-conjugation}, and the upper horizontal arrow is defined by $(g,(h,s)) \mapsto (ghg^{-1},gsg^{-1})$ (for $g \in \bG$, $s \in \mathbf{\Sigma}$ and $h \in \rmZ_\bG(s)$). Using $\bG$-equivariance and the corresponding functors of points one checks that this diagram is cartesian. Since the lower horizontal arrow is smooth and surjective by Proposition~\ref{prop:smoothness-conjugation}, so is the upper horizontal arrow. And since the left vertical arrow is smooth by the case treated above, using~\cite[\href{https://stacks.math.columbia.edu/tag/02K5}{Tag 02K5}]{stacks-project} we obtain that the right vertical arrow is smooth, as desired.
\end{proof}

\begin{rmk}
 Lemma~\ref{lem:J-smooth} can also be deduced from~\cite[Theorem~1.3]{cotner} applied to the group scheme $\bG \times \bG_{\reg}$ over $\bG_{\reg}$ and its ``diagonal'' section.
\end{rmk}


Given a separated $\bk$-scheme $S$, a separated morphism $f : X \to S$, and an action of $\bG$ on $X$ such that $f$ is $\bG$-invariant, for any separated scheme $S'$ and any morphism $S' \to S$ we have a canonical identification
\[
\mathfrak{S}_{\bG, X \times_S S'} \simto S' \times_S \mathfrak{S}_{\bG, X}.
\]
Applying this observation in our context, we obtain that
the universal stabilizer for the $\bG$-action on $\Groth'$, resp.~on $\Stm'$, identifies with
\[
\bT \times_{\bT/\bWf} \bbJ, \quad \text{resp.} \quad (\bT \times_{\bT/\bWf} \bT) \times_{\bT/\bWf} \bbJ.
\]
In particular, the universal stabilizer for the $\bG$-action on $\Groth'_\reg$, resp.~on $\Stmreg'$, identifies with
\[
\bT \times_{\bT/\bWf} \bbJ_\reg, \quad \text{resp.} \quad (\bT \times_{\bT/\bWf} \bT) \times_{\bT/\bWf} \bbJ_\reg.
\]
Similarly, the universal stabilizer for the $\mathbf{G}$-action on $\Groth'_\circ$ identifies with
\[
\bT_\circ \times_{\bT_\circ/\bWf} \bbJ_\circ.
\]

\begin{lem}
\label{lem:morph-gp-schemes-Jreg}
There exists a canonical morphism
\[
\bT \times_{\bT/\bWf} \bbJ_\reg \to \Groth'_\reg \times \bT 
\]
of group schemes over $\Groth'_\reg$, which restricts to an isomorphism
\[
\bT_\circ \times_{\bT_\circ/\bWf} \bbJ_\circ \simto \Groth'_\circ \times \bT 
\]
over $\Groth'_\circ$.
\end{lem}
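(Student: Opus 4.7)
The plan is to build the morphism via a canonical construction over $\Groth$, then transport to $\Groth'$ using the identification $\vartheta_\reg : \Groth_\reg \simto \Groth'_\reg$ of Proposition~\ref{prop:properties-Groth}. First, I would introduce the \emph{universal Borel} group scheme
\[
 \widetilde{\bB} := \bG \times^\bB (\bB \times \bB)
\]
(with $\bB$ acting diagonally by conjugation on $\bB \times \bB$), viewed as a closed subgroup scheme of $\bG \times \Groth$ via $[h:(b,c)] \mapsto (hbh^{-1}, [h:c])$; its fiber over $[h:c] \in \Groth$ is the Borel $h\bB h^{-1}$. Its unipotent radical $\widetilde{\bU} := \bG \times^\bB (\bU \times \bB)$ is a normal closed subgroup scheme, and since $\bB$ acts trivially on the abelian quotient $\bT = \bB/\bU$, there is a canonical identification
\[
 \widetilde{\bB}/\widetilde{\bU} \simto \Groth \times \bT
\]
of group schemes over $\Groth$.

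Second, the universal stabilizer $\mathfrak{S}_{\bG,\Groth}$ sits tautologically as a closed subgroup scheme inside $\widetilde{\bB}$: at a point $[h:c]$, any $\bG$-stabilizing element is of the form $hb_0 h^{-1}$ with $b_0 \in Z_\bB(c) \subset \bB$, which lies in the fiber $h\bB h^{-1}$ of $\widetilde{\bB}$ over $[h:c]$. Composing with the quotient to $\widetilde{\bB}/\widetilde{\bU} \cong \Groth \times \bT$ gives a morphism $\mathfrak{S}_{\bG,\Groth} \to \Groth \times \bT$ of group schemes over $\Groth$. Restricting to $\Groth_\reg$, using the $\bG$-equivariant isomorphism $\vartheta_\reg$, and invoking the identification $\mathfrak{S}_{\bG,\Groth'_\reg} \cong \bT \times_{\bT/\bWf} \bbJ_\reg$ already recalled in the text, I obtain the desired morphism
\[
 \bT \times_{\bT/\bWf} \bbJ_\reg \to \Groth'_\reg \times \bT.
\]

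For the second claim, I would use Lemma~\ref{lem:vartheta-circ-iso} to write $\Groth_\circ \cong \bG/\bT \times \bT_\circ$ with $(h\bT, t) \mapsto [h:t]$. At such a point, $g = hth^{-1}$ is regular semisimple with $Z_\bG(g) = h\bT h^{-1}$, and the above construction sends $h\tau h^{-1}$ (for $\tau \in \bT$) to $\tau \in \bT = \bB/\bU$. An explicit inverse is given by $\tau \mapsto h\tau h^{-1}$, well-defined because $h$ is determined up to right multiplication by $\bT$ and $\bT$ is abelian; these assemble into mutually inverse morphisms of group schemes over $\Groth'_\circ$.

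Everything is essentially routine once the universal Borel picture is set up. The only conceptual point is the canonical identification $\widetilde{\bB}/\widetilde{\bU} \simto \Groth \times \bT$, which hinges on the abelian nature of $\bT$; I do not expect any serious obstacle beyond careful bookkeeping of the various induced-scheme descriptions.
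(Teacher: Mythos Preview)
Your proposal is correct and follows essentially the same approach as the paper. The paper's proof is slightly more terse: it simply observes that the universal stabilizer for the $\bG$-action on $\Groth_\reg$ lands in the subgroup of $\bG \times \Groth_\reg$ with fiber $g\bB g^{-1}$ over $[g:u]$ (your $\widetilde{\bB}$ restricted to $\Groth_\reg$), and then uses the canonical quotient of any Borel by its unipotent radical to get the map to $\bT \times \Groth_\reg$; for the isomorphism over $\Groth_\circ$ it likewise uses Lemma~\ref{lem:vartheta-circ-iso} and identifies the universal stabilizer for the $\bG$-action on $\bG/\bT \times \bT_\circ$ directly with $\bG/\bT \times \bT \times \bT_\circ$.
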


\begin{proof}
As explained above, $\bT \times_{\bT/\bWf} \bbJ_\reg$ identifies with the universal stabilizer for the action of $\bG$ on $\Groth'_\reg = \bT \times_{\bT/\bWf} \bG_\reg$. On the other hand, by Proposition~\ref{prop:properties-Groth} we have a $\bG$-equivariant isomorphism $\Groth_\reg \simto \Groth_\reg'$. In view of the natural closed immersion $\Groth \hookrightarrow \bG \times \bG/\bB$, the universal stabilizer for the $\bG$-action on $\Groth_\reg$ is contained in the subgroup of $\bG \times \Groth_\reg$ whose fiber over a point $[g:u] \in \Groth_\reg$ is $g\bB g^{-1}$; moreover, since the torus $\bT$ identifies canonically with the quotient of any Borel subgroup by its derived subgroup, there exists a canonical morphism from the latter subgroup to $\bT \times \Groth_\reg$; we deduce the desired morphism.

Over $\bG_\circ$, by Lemma~\ref{lem:vartheta-circ-iso} the natural morphism $\bG \times^{\bT} \bT \to \bG \times^{\bB} \bB$ induces an isomorphism $\bG/\bT \times \bT_\circ \simto \Groth_\circ$, which is $\bG$-equivariant if $\bG$ acts on the left-hand side via its action on $\bG/\bT$. The universal stabilizer for the action of $\bG$ on $\bG/\bT \times \bT_\circ$ identifies naturally with
\[
(\bG \times^\bT \bT) \times \bT_\circ \cong \bG/\bT \times \bT \times \bT_\circ.
\]
Under this identification, the restriction to $\bG_\circ$ of the morphism considered above identifies with the natural isomorphism
\[
\bG/\bT \times \bT \times \bT_\circ \simto (\bG/\bT \times \bT_\circ) \times \bT,
\]
which finishes the proof.
\end{proof}


We will also consider the natural
closed immersion
\begin{equation}
\label{eqn:section-Stm}
\mathbf{\Sigma} \times_{\bT/\bWf} \bT \to \Groth', \quad \text{resp.} \quad \mathbf{\Sigma} \times_{\bT/\bWf} (\bT \times_{\bT/\bWf} \bT) \to \Stm',
\end{equation}
which factors through $\Groth'_\reg$, resp.~$\Stmreg'$, and whose composition with the natural morphism $\Groth' \to \bT$, resp.~$\Stm' \to \bT \times_{\bT/\bWf} \bT$, is an isomorphism. We will also consider the group schemes
\[
\bbJ_\bT:= \bT \times_{\bT/\bWf} \bbJ_{\mathbf{\Sigma}}, \quad \bbI_{\mathbf{\Sigma}}:= (\bT \times_{\bT/\bWf} \bT) \times_{\bT/\bWf} \bbJ_{\mathbf{\Sigma}}.
\]
Here $\bbJ_\bT$ identifies with the restriction of $\bbI_{\mathbf{\Sigma}}$ to the diagonal copy of $\bT$ in $\bT \times_{\bT/\bWf} \bT$, and also with the restriction of the universal stabilizer for the $\bG$-action on $\Groth'$ to $\mathbf{\Sigma} \times_{\bT/\bWf} \bT$. Restricting the morphism of Lemma~\ref{lem:morph-gp-schemes-Jreg} to the preimage of $\mathbf{\Sigma}$ we obtain a canonical morphism
\begin{equation}
\label{eqn:morph-gp-schemes-Jreg-Sigma}
\bbJ_\bT \to (\mathbf{\Sigma} \times_{\bT/\bWf} \bT) \times \bT
\end{equation}
of group schemes over $\mathbf{\Sigma} \times_{\bT/\bWf} \bT \cong \bT$, whose restriction to $\bT_\circ$ is an isomorphism.

\subsection{Application to coherent sheaves}

The universal stabilizers for the actions of $\bG$ on $\bG_\reg$, $\Groth_\reg$ and $\St_\reg$ encode the categories of equivariant coherent sheaves on these schemes, as explained in the following proposition.

\begin{prop}
\label{prop:rest-Steinberg-section}
Assume that $\rmZ(\bG)$ is smooth.
Restriction to $\mathbf{\Sigma}$, resp.~to $\mathbf{\Sigma} \times_{\bT/\bWf} \bT$, resp.~to $\mathbf{\Sigma} \times_{\bT/\bWf} (\bT \times_{\bT/\bWf} \bT)$, induces an equivalence of abelian categories
\begin{align*}
\Coh^{\bG}(\bG_\reg) &\simto \Rep(\bbJ_{\mathbf{\Sigma}}), \\
\text{resp. } \Coh^{\bG}(\Groth_\reg) &\simto \Rep(\bbJ_\bT), \\
\text{resp. } \Coh^{\bG}(\Stmreg) &\simto \Rep(\bbI_{\mathbf{\Sigma}}).
\end{align*}
\end{prop}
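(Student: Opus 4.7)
The plan is to establish the first equivalence by smooth descent along the morphism of Proposition~\ref{prop:smoothness-conjugation}, and then to deduce the remaining two by base change. For the first equivalence, consider the smooth surjective $\bG$-equivariant morphism
\[
p : \bG \times \Sigma \to \bG_\reg, \quad (g,s) \mapsto gsg^{-1},
\]
where $\bG$ acts on the source by left multiplication on the first factor. Since $p$ is smooth and surjective, smooth descent for coherent sheaves identifies $\Coh^\bG(\bG_\reg)$ with the category of $\bG$-equivariant coherent sheaves on $\bG \times \Sigma$ equipped with descent data along $p$.

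Because $\bG$ acts freely on $\bG \times \Sigma$, restriction to $\{e\} \times \Sigma$ yields an equivalence $\Coh^\bG(\bG \times \Sigma) \simto \Coh(\Sigma)$. To describe the descent data, compute the fiber product $(\bG \times \Sigma) \times_{\bG_\reg} (\bG \times \Sigma)$: using that each $\bG$-orbit in $\bG_\reg$ meets $\Sigma$ in a single point (a consequence of Proposition~\ref{prop:Sigma}\eqref{it:Sigma-section} combined with Proposition~\ref{prop:adj-quot-smooth}), this fiber product identifies $\bG$-equivariantly with $\bG \times \bbJ_\Sigma$ (with $\bG$ acting on the first factor), and restriction to $\{e\} \times \bbJ_\Sigma$ gives an equivalence onto $\Coh(\bbJ_\Sigma)$. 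Under these identifications the descent datum on a coherent sheaf $M$ on $\Sigma$ becomes precisely a coaction $M \to M \otimes_{\scO(\Sigma)} \scO(\bbJ_\Sigma)$ satisfying the comodule axioms, i.e.~a $\bbJ_\Sigma$-representation structure; moreover this datum corresponds exactly to extending $M = \mathcal{F}|_\Sigma$ to a $\bG$-equivariant structure on $\mathcal{F}$, so the functor coincides with restriction.

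For the other two equivalences, use Proposition~\ref{prop:properties-Groth} to identify $\Groth_\reg$ with $\Groth'_\reg = \bG_\reg \times_{\bT/\bWf} \bT$ and $\St_\reg$ with $\St'_\reg = \bG_\reg \times_{\bT/\bWf} \bD$, both $\bG$-equivariantly with $\bG$ acting trivially on the second factor. Base-changing the cover $p$ along $\bT \to \bT/\bWf$, resp.~$\bD \to \bT/\bWf$, and invoking the isomorphism $\Sigma \simto \bT/\bWf$, produces smooth surjective $\bG$-equivariant covers
\[
\bG \times \Sigma_\bT \to \Groth'_\reg, \qquad \bG \times \Sigma_\bD \to \St'_\reg,
\]
whose self-fiber-products over the targets identify with $\bG \times \bbJ_\bT$ and $\bG \times \bbJ_\bD$ respectively. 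Running the same descent argument then yields the equivalences $\Coh^\bG(\Groth_\reg) \simto \Rep(\bbJ_\bT)$ and $\Coh^\bG(\St_\reg) \simto \Rep(\bbJ_\bD)$. The only delicate step is the identification of the various fiber products as $\bG \times \bbJ_?$; once that is granted, the remaining ingredients are formal consequences of the freeness of the $\bG$-action on the first factor together with standard smooth descent.
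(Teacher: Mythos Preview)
Your proof is correct and follows essentially the same approach as the paper: apply faithfully flat descent along the smooth surjective morphisms $\bG \times \Sigma \to \bG_\reg$, $\bG \times \Sigma_\bT \to \Groth'_\reg$, $\bG \times \Sigma_\bD \to \St'_\reg$ (Proposition~\ref{prop:smoothness-conjugation}), then identify $\Groth'_\reg \cong \Groth_\reg$ and $\St'_\reg \cong \St_\reg$ via Proposition~\ref{prop:properties-Groth}. The paper's proof is terse and refers to \cite[Proposition~3.3.11]{riche-kostant} for the analogous Lie-algebra case; you spell out the identification of the descent groupoid with $\bbJ_\Sigma$, which is precisely the computation carried out in the proof of Lemma~\ref{lem:J-smooth} (namely $\Sigma \times_{\bG_\reg} (\bG \times \Sigma) = \bbJ_\Sigma$ scheme-theoretically, using $\bG_\reg \times_{\bT/\bWf} \Sigma \cong \bG_\reg$).
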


\begin{proof}
The proof is similar to that of~\cite[Proposition~3.3.11]{riche-kostant}; the equivalences are obtained by applying descent theory to the natural morphisms
\[
 \bG \times \mathbf{\Sigma} \to \bG_\reg, \ \bG \times (\mathbf{\Sigma} \times_{\bT/\bWf} \bT) \to \Groth'_\reg, \ \bG \times (\mathbf{\Sigma} \times_{\bT/\bWf} (\bT \times_{\bT/\bWf} \bT)) \to \St'_\reg,
\]
which are smooth and surjective (hence faithfully flat and quasi compact) by Proposition~\ref{prop:smoothness-conjugation}, and then identifying $\Groth'_\reg$ with $\Groth_\reg$ and $\Stmreg'$ with $\Stmreg$, see~\S\ref{ss:restriction-reg}.
\end{proof}

\begin{lem}
\label{lem:line-bundles-J}
Assume that $\rmZ(\bG)$ is smooth.
 For any $\lambda \in X^*(\bT)$, the image of the restriction of $\scO_{\Groth}(\lambda)$ to $\Groth_\reg$ 
 under the equivalence
 \[
  \Coh^{\bG}(\Groth_\reg) \simto \Rep(\bbJ_\bT)
 \]
 of Proposition~\ref{prop:rest-Steinberg-section}
 is the pullback along~\eqref{eqn:morph-gp-schemes-Jreg-Sigma} of the $((\mathbf{\Sigma} \times_{\bT/\bWf} \bT) \times \bT)$-module $\scO_{\mathbf{\Sigma} \times_{\bT/\bWf} \bT} \otimes \bk_{\bT}(\lambda)$.
\end{lem}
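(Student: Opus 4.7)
The plan is to unravel the equivalence of Proposition~\ref{prop:rest-Steinberg-section} explicitly for the line bundle $\scO_{\Groth}(\lambda)$, and match the resulting $\bbJ_\bT$-action with the construction of the morphism \eqref{eqn:morph-gp-schemes-Jreg-Sigma} given in Lemma~\ref{lem:morph-gp-schemes-Jreg}.

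First I would recall that by definition $\scO_{\Groth}(\lambda)$ is the pullback of $\scO_{\bG/\bB}(\lambda)$ along the canonical projection $\Groth \to \bG/\bB$, $[g:b] \mapsto g\bB$, and that as a $\bG$-equivariant line bundle $\scO_{\bG/\bB}(\lambda)$ is the associated bundle $\bG \times^{\bB} \bk_{\bB}(\lambda)$. It follows that the scheme-theoretic stabilizer of a point $[g:b] \in \Groth$ (which is contained in $g\bB g^{-1}$) acts on the fiber through the composition of the quotient $g\bB g^{-1} \twoheadrightarrow g\bT g^{-1}$, the canonical identification $g\bT g^{-1} \simto \bT$ (well defined because any two representatives of a given coset $g\bB$ differ by an element of $\bB$, and inner automorphisms of $\bT$ by $\bB$ are trivial), and the character $\lambda : \bT \to \Gm$.

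Next I would revisit the proof of Lemma~\ref{lem:morph-gp-schemes-Jreg}. The morphism \eqref{eqn:morph-gp-schemes-Jreg-Sigma} is constructed in two steps: the universal stabilizer of the $\bG$-action on $\Groth_\reg$ is a closed subgroup of the group scheme over $\Groth_\reg$ whose fiber at $[g:b]$ is $g\bB g^{-1}$, and we then pass to the quotient $g\bB g^{-1} \twoheadrightarrow g\bT g^{-1} \simto \bT$. Thus, by construction, \eqref{eqn:morph-gp-schemes-Jreg-Sigma} is precisely the map classifying, at the level of $\bbJ_\bT$, the action of the stabilizers on the fiber of any $\bG$-equivariant line bundle pulled back from $\bG/\bB$.

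Combining these two observations, the $\bbJ_\bT$-action on the restriction of $\scO_{\Groth}(\lambda)$ to $\Sigma_\bT$ factors through \eqref{eqn:morph-gp-schemes-Jreg-Sigma} followed by the action of $\Sigma_\bT \times \bT$ on $\scO_{\Sigma_\bT} \otimes \bk_\bT(\lambda)$ in which the second factor acts by $\lambda$; this is exactly the description claimed in the lemma. The only non-routine step is checking that the ``$\bT$-component'' of the universal stabilizer intrinsically defined by the $\bG$-equivariant structure on $\scO_{\bG/\bB}(\lambda)$ agrees with the one constructed in Lemma~\ref{lem:morph-gp-schemes-Jreg}; since both arise from the same quotient $\bB \twoheadrightarrow \bT$ (transported by conjugation), this is a straightforward definition-chase rather than a genuine obstacle.
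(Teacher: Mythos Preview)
Your approach is correct and genuinely different from the paper's. The paper first trivializes the restricted line bundle on $\Sigma_\bT$ (using that $\Sigma_\bT \cong \bT$ has trivial Picard group), then writes the $\bbJ_\bT$-action as a character $\varrho' : \bbJ_\bT \to \Sigma_\bT \times \Gm$, and verifies that $\varrho'$ agrees with $(\id \times \lambda) \circ \eqref{eqn:morph-gp-schemes-Jreg-Sigma}$ by restricting to the dense open $\bT_\circ$-locus (where $\Groth_\circ \cong \bG/\bT \times \bT_\circ$ makes everything explicit) and then invoking flatness of $\bbJ_\Sigma$ over $\Sigma$ to extend. Your argument is more direct: you observe that both the $\bbJ_\bT$-action on the line bundle and the morphism \eqref{eqn:morph-gp-schemes-Jreg-Sigma} are assembled from the \emph{same} ingredient, namely the quotient $g\bB g^{-1} \twoheadrightarrow \bT$ of the ``universal Borel'' over $\Groth_\reg$, so the factorization holds globally without any reduction to $\bT_\circ$. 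This is cleaner and avoids the flatness argument entirely.

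There is one small omission. You show that the $\bbJ_\bT$-action on $\scO_{\Groth}(\lambda)|_{\Sigma_\bT}$ factors through \eqref{eqn:morph-gp-schemes-Jreg-Sigma} and the character $\lambda$, but this alone only identifies the representation as $L \otimes \bk_\bT(\lambda)$ for \emph{some} line bundle $L$ on $\Sigma_\bT$; it does not pin down $L \cong \scO_{\Sigma_\bT}$. The paper fills this by noting that $\Sigma_\bT \cong \bT$, so every line bundle on it is trivial. You should add this one-line remark; once you do, your proof is complete.
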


\begin{proof}
The equivalence under consideration is induced by restriction to $\mathbf{\Sigma} \times_{\bT/\bWf} \bT \subset \Groth_\reg$. Now $\mathbf{\Sigma} \times_{\bT/\bWf} \bT$ identifies with $\bT$, hence any line bundle on this scheme is trivial by~\cite[\href{https://stacks.math.columbia.edu/tag/0BDA}{Tag 0BDA}]{stacks-project}. In particular, there exists an isomorphism of coherent sheaves
\[
\scO_{\Groth}(\lambda)_{| \mathbf{\Sigma} \times_{\bT/\bWf} \bT} \cong \scO_{\mathbf{\Sigma} \times_{\bT/\bWf} \bT};
\]
we fix a choice for this isomorphism. Now this line bundle has a canonical structure of representation of $\bbJ_\bT$; in other words it is endowed with a coaction morphism
\begin{multline*}
\Gamma( \mathbf{\Sigma} \times_{\bT/\bWf} \bT, \scO_{\Groth}(\lambda)_{| \mathbf{\Sigma} \times_{\bT/\bWf} \bT}) \\
\to \Gamma( \mathbf{\Sigma} \times_{\bT/\bWf} \bT, \scO_{\Groth}(\lambda)_{| \mathbf{\Sigma} \times_{\bT/\bWf} \bT}) \otimes_{\scO(\mathbf{\Sigma} \times_{\bT/\bWf} \bT)} \scO(\bbJ_\bT).
\end{multline*}
In view of our identification above there exists a group-like element $\varrho \in \scO(\bbJ_\bT)$, or in other words a morphism of group schemes
\[
\varrho' : \bbJ_\bT \to (\mathbf{\Sigma} \times_{\bT/\bWf} \bT) \times \Gm,
\]
such that this coaction morphism is given by $m \mapsto m \otimes \varrho$. To conclude the proof, we have to show that $\varrho'$ is the composition
\[
\bbJ_\bT \xrightarrow{\eqref{eqn:morph-gp-schemes-Jreg-Sigma}} (\mathbf{\Sigma} \times_{\bT/\bWf} \bT) \times \bT \xrightarrow{\id \times \lambda} (\mathbf{\Sigma} \times_{\bT/\bWf} \bT) \times \Gm.
\]
Since $\bbJ_{\mathbf{\Sigma}}$ is flat over $\mathbf{\Sigma}$, to prove this claim it suffices to prove that the two morphisms under consideration coincide on the open subscheme $\bT_\circ \times_{\bT/\bWf} \bbJ_{\mathbf{\Sigma}}$.

Consider the restriction $\scO_{\Groth_\circ}(\lambda)$ of $\scO_{\Groth}(\lambda)$ to $\Groth_\circ$. Under the identification $\bG/\bT \times \bT_\circ \simto \Groth_\circ$ (see Lemma~\ref{lem:vartheta-circ-iso}), this line bundle is the pullback of the line bundle $\scO_{\bG/\bT}(\lambda)$ on the affine scheme $\bG/\bT$ associated with $\lambda$. As in the proof of Lemma~\ref{lem:morph-gp-schemes-Jreg}, the universal stabilizer for the action of $\bG$ on $\bG/\bT$ identifies with $\bG/\bT \times \bT$; under this identification, the action of this group scheme on $\scO_{\bG/\bT}(\lambda)$ is via $\lambda$, or in other words corresponds to the coaction morphism
\[
\Gamma(\bG/\bT, \scO_{\bG/\bT}(\lambda)) \to \Gamma(\bG/\bT, \scO_{\bG/\bT}(\lambda)) \otimes_\bk \scO(\bT)
\]
given by $m \mapsto m \otimes \lambda$. We deduce a similar claim for the pullback of this line bundle to $\bG/\bT \times \bT_\circ$, i.e.~for $\scO_{\Groth_\circ}(\lambda)$, and then for its restriction to
\[
\Groth_\circ \cap (\mathbf{\Sigma} \times_{\bT/\bWf} \bT) = \bT_\circ \times_{\bT/\bWf} \mathbf{\Sigma},
\]
which finishes the proof.
\end{proof}

\section{Some Hecke categories}
\label{sec:Hecke-cat}

We continue 
with the setting of Section~\ref{sec:Coh-St}, assuming in addition that the center
$\rmZ(\bG)$ is smooth. 


\subsection{Affine and extended affine Weyl groups}
\label{ss:Waff}


The extended affine Weyl group of $(\bG,\bT)$ is the semidirect product
\[
\bW := \bWf \ltimes X^*(\bT).
\]
The affine Weyl group of $(\bG,\bT)$ is the subgroup
\[
\bW^{\mathrm{Cox}} := \bWf \ltimes \Z\mathfrak{R}.
\]
For $\lambda \in X^*(\bT)$, we will denote by $\st(\lambda)$ the associated element of $\bW$.
It is a standard fact that there exists a natural subset $\bS \subset \bW^{\mathrm{Cox}}$ containing $\bSf$ and such that $(\bW^{\mathrm{Cox}},\bS)$ is a Coxeter system; more precisely $\bS$ consists of the elements of $\bSf$ together with the products $\st(\beta) s_\beta$ where $\beta$ is a maximal short root. By construction, $\bWf$ is then a parabolic subgroup in $\bW^{\mathrm{Cox}}$.

If we set, for $w \in \bWf$ and $\lambda \in X^*(\bT)$,
\begin{equation}
\label{eqn:formula-length}
\ell(w\st(\lambda))= \sum_{\substack{\alpha \in \mathfrak{R}_+ \\ w(\alpha) \in \mathfrak{R}_+}} |\langle \lambda, \alpha^\vee \rangle| + \sum_{\substack{\alpha \in \mathfrak{R}_+ \\ w(\alpha) \in -\mathfrak{R}_+}} |\langle \lambda, \alpha^\vee \rangle+1|,
\end{equation}
then it is well known that the restriction of $\ell$ to $\bW^{\mathrm{Cox}}$ is the length function associated with our Coxeter generators $\bS$, and that if we set $\mathbf{\Omega} = \{w \in \bW \mid \ell(w)=0\}$ then the natural morphism
\[
\mathbf{\Omega} \ltimes \bW^{\mathrm{Cox}} \to \bW
\]
is a group isomorphism. Moreover, in this semidirect product $\mathbf{\Omega}$ acts on $\bW^{\mathrm{Cox}}$ by Coxeter group automorphisms, i.e.~it stabilizes $\bS$.

\begin{lem}
\label{lem:simple-ref-conjugation}
For any $s \in \bS \smallsetminus \bSf$, there exist $s' \in \bSf$ and $w \in \bW$ such that $\ell(ws')=\ell(w)+1$ and $s=ws'w^{-1}$.
\end{lem}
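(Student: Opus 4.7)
The plan is to exhibit $s$ as an explicit $\bW$-conjugate of a well-chosen finite simple reflection, and then to adjust the conjugator on the right by that same reflection if needed to meet the length condition.

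By the description of $\bS$ recalled just before the statement, any $s \in \bS \smallsetminus \bSf$ has the form $s = \st(\beta) s_\beta$ for some maximal short root $\beta$ (i.e.~the highest short root in an irreducible component of $\fR$). Every root lies in a $\bWf$-orbit of a simple root and $\bWf$ preserves root lengths, so that irreducible component contains at least one short simple root $\alpha \in \fR_{\mathrm{s}}$; by transitivity of the Weyl group of this component on its short roots, I may fix $v \in \bWf$ with $v(\alpha) = \beta$. I propose to take $s' := s_\alpha$ and to start from the candidate
\[
w_0 := v \cdot \st(\omega_\alpha),
\]
where $\omega_\alpha \in X^*(\bT)$ is the lift of the fundamental weight $\varpi_\alpha$ fixed in~\S\ref{ss:Steinberg-section}, so that $\langle \omega_\alpha, \alpha^\vee \rangle = 1$.

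The substantive step is the identity $w_0 s_\alpha w_0^{-1} = s$. This is a direct verification in the semidirect product $\bW = \bWf \ltimes X^*(\bT)$ using the commutation relation $u \st(\lambda) u^{-1} = \st(u(\lambda))$: first $\st(\omega_\alpha) s_\alpha \st(-\omega_\alpha) = \st(\alpha) s_\alpha$, thanks to $\langle \omega_\alpha, \alpha^\vee \rangle = 1$, and then conjugating by $v$ and using $v(\alpha) = \beta$ produces $\st(\beta) s_\beta = s$. (The edge case $\alpha = \beta$, which arises when $\beta$ is itself simple, just amounts to taking $v = e$.)

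Finally, I will arrange the length condition. Since $s_\alpha$ is a Coxeter generator of $(\bW', \bS)$ and the length function on $\bW$ restricts to the Coxeter length on $\bW'$ and vanishes on $\Omega$, the standard property $\ell(ws_\alpha) = \ell(w) \pm 1$ extends from $\bW'$ to every $w \in \bW$. If $\ell(w_0 s_\alpha) = \ell(w_0) + 1$, I take $w := w_0$; otherwise $\ell(w_0 s_\alpha) = \ell(w_0) - 1$ and I replace $w_0$ by $w := w_0 s_\alpha$, which still satisfies $w s_\alpha w^{-1} = w_0 s_\alpha w_0^{-1} = s$ and now obeys $\ell(w s_\alpha) = \ell(w_0) = \ell(w) + 1$. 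The only genuine content is the conjugation identity; the length adjustment is purely formal, so I do not anticipate a significant obstacle.
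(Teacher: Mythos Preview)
Your argument is correct, and it takes a genuinely different route from the paper. The paper's proof does not construct an explicit conjugator; instead it invokes the already-known semisimple case (citing~\cite[Lemma~6.1.2]{riche} or~\cite[Lemma~2.1.1]{bm-loc}), and then passes through the surjection $\bW \twoheadrightarrow \bW_{\mathrm{der}}$ to the extended affine Weyl group of $\mathscr{D}\bG$: one first finds $s'$ and $w$ whose images in $\bW_{\mathrm{der}}$ do the job, and then uses that $\bW' \subset \bW$ is normal and that the surjection is injective on $\bW'$ to conclude $s = w s' w^{-1}$ in $\bW$ itself. Your approach, by contrast, writes down an explicit candidate $w_0 = v\,\st(\omega_\alpha)$ built from a short simple root in the relevant component and the chosen lift of its fundamental weight, verifies the conjugation identity directly in the semidirect product, and then performs a harmless right multiplication by $s_\alpha$ if the length parity is wrong. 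What the paper's proof buys is brevity at the cost of external references; what yours buys is a self-contained and constructive argument that makes the conjugator explicit. Both are fine; your exposition of the length-adjustment step (extending $\ell(ws_\alpha)=\ell(w)\pm 1$ to $\bW$ via $\bW = \Omega \ltimes \bW'$ and the fact that $\Omega$ acts by Coxeter automorphisms) is adequate given what the paper has already recalled about this decomposition.
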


\begin{proof}
This claim is well known in case $\bG$ is semisimple (and simply connected); see \cite[Lemma 6.1.2]{riche} or \cite[Lemma 2.1.1]{bm-loc}. 
We deduce the general case as follows. Fix $s \in \bS \smallsetminus \bSf$, and set $\bW_{\mathrm{der}}:=\bW \ltimes X^*(\bT \cap \mathscr{D}\bG)$. Then we have a surjective group morphism $\bW \twoheadrightarrow \bW_{\mathrm{der}}$ (induced by restriction of characters) which is injective on $\bW^{\mathrm{Cox}}$, and $\bW_{\mathrm{der}}$ is the extended affine Weyl group of the semisimple group $\mathscr{D}\bG$ (and its maximal torus $\bT \cap \mathscr{D}\bG$). The formula recalled above for the lengths shows that this morphism is compatible with the length functions. Using the known case of semisimple groups we obtain that there exist $s' \in \bS$ and $w \in \bW$ such that $\ell(ws')=\ell(w)+1$ and the images of $s$ and $ws'w^{-1}$ in $\bW_{\mathrm{der}}$ coincide. Now $\bW^{\mathrm{Cox}}$ is normal in $\bW$, hence it contains $ws'w^{-1}$. Since the morphism $\bW \to \bW_{\mathrm{der}}$ is injective on $\bW^{\mathrm{Cox}}$, we deduce that $s=ws'w^{-1}$.
\end{proof}

In the rest of the paper we will fix once and for all, for each $s \in \bS \smallsetminus \bSf$, elements $s' \in \bSf$ and $w \in \bW$ such that $\ell(ws')=\ell(w)+1$ and $s=ws'w^{-1}$. (The condition on lengths will not be needed in the present section, but will be used later.)

\subsection{Some representations of \texorpdfstring{$\bbI_{\mathbf{\Sigma}}$}{ISigma}}
\label{ss:representations-Iadj}


Consider the group scheme $\bbI_{\mathbf{\Sigma}}$
over $\bT \times_{\bT/\bWf} \bT$, and its category $\Rep(\bbI_{\mathbf{\Sigma}})$
of representations on coherent $\scO_{\bT \times_{\bT/\bWf} \bT}$-modules, see~\S\ref{ss:univ-centralizer}.
It identifies with the category of comodules over the $\scO(\bT \times_{\bT/\bWf} \bT)$-Hopf algebra
\[
\scO(\bbI_{\mathbf{\Sigma}}) = \scO(\bbJ_{\mathbf{\Sigma}}) \otimes_{\scO(\mathbf{\Sigma})} \scO(\bT \times_{\bT/\bWf} \bT)
\]
which are finitely generated as $\scO(\bT \times_{\bT/\bWf} \bT)$-modules. Since $\scO(\bT \times_{\bT/\bWf} \bT)$ is finite as an $\scO(\mathbf{\Sigma})$-module, this category admits a natural monoidal structure defined by
\[
M \circledast N = M \otimes_{\scO(\bT)} N.
\]
This bifunctor is right exact on each side, and the unit object for this monoidal structure if $\scO(\bT)$, seen as functions on the diagonal copy $\bT \subset \bT \times_{\bT/\bWf} \bT$, and endowed with the trivial structure as a representation of $\bbI_{\mathbf{\Sigma}}$.

We will now define objects $(\mathscr{M}_w : w \in \bW)$ of $\Rep(\bbI_{\mathbf{\Sigma}})$ parametrized by $\bW$ as follows. First, if $w \in \bWf$ then $\mathscr{M}_w$ is defined as the structure sheaf of the closed subscheme
\[
\{(w(t),t) : t \in \bT\} \subset \bT \times_{\bT/\bWf} \bT,
\]
endowed with the trivial structure as a representation.
The projection on the first component induces an isomorphism $\mathscr{M}_w \simto \scO(\bT)$; under this isomorphism, the action of $\scO(\bT \times_{\bT/\bWf} \bT)=\scO(\bT) \otimes_{\scO(\bT/\bWf)} \scO(\bT)$ on $\mathscr{M}_w$ is given by $(f \otimes g) \cdot m = fw(g) m$ for $f,g,m \in \scO(\bT)$. 

If $\lambda \in X^*(\bT)$, then in Lemma~\ref{lem:line-bundles-J} we have considered the pullback to $\bbJ_\bT$ of the representation $\scO(\bT) \otimes \bk_{\bT}(\lambda)$. Pushing this representation forward along the diagonal embedding $\bT \to \bT \times_{\bT/\bWf} \bT$ we obtain an object of $\Rep(\bbI_{\mathbf{\Sigma}})$, which will be denoted $\mathscr{M}_{\st(\lambda)}$.

It is clear that for $w,y \in \bWf$ and $\lambda,\mu \in X^*(\bT)$ we have canonical isomorphisms
\begin{align}
\label{eqn:formula-M-1}
\mathscr{M}_w \circledast \mathscr{M}_y & \simto \mathscr{M}_{wy}, \\
\label{eqn:formula-M-2}
\mathscr{M}_{\st(\lambda)} \circledast \mathscr{M}_{\st(\mu)} &\simto \mathscr{M}_{\st(\lambda+\mu)}.
\end{align}
Next we need to study the interplay between these two classes of objects.


We have a canonical action of $\bWf$ on $\Groth'$ induced by the natural action on $\bT$; this action commutes with the action of $\bG$ and stabilizes $\Groth'_\reg$; we deduce a canonical action on the universal stabilizer $\bT \times_{\bT/\bWf} \bbJ_\reg$, and then (by restriction) on $\bT \times_{\bT/\bWf} \bbJ_{\mathbf{\Sigma}}$. (This action is simply induced by the action on $\bT$.)

\begin{lem}
\label{lem:morphism-J-Wf-equiv}
 The morphism~\eqref{eqn:morph-gp-schemes-Jreg-Sigma} is $\bWf$-equivariant, where $\bWf$ acts on the right-hand side diagonally.
\end{lem}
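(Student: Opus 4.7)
My plan is to reduce the statement to a direct verification on the regular semisimple locus, and then extend by a density argument. First, observe that~\eqref{eqn:morph-gp-schemes-Jreg-Sigma} was obtained as the restriction along the closed embedding $\Sigma_\bT \hookrightarrow \Groth'_\reg$ of the morphism of Lemma~\ref{lem:morph-gp-schemes-Jreg}. Since $\Sigma_\bT = \bT \times_{\bT/\bWf} \Sigma$ sits inside $\Groth'_\reg = \bT \times_{\bT/\bWf} \bG_\reg$ via $\Sigma \subset \bG_\reg$ on the second factor (the $\bWf$-action being on the first factor on both sides), this embedding is manifestly $\bWf$-equivariant. It therefore suffices to prove that the morphism of Lemma~\ref{lem:morph-gp-schemes-Jreg} itself is $\bWf$-equivariant for the diagonal action on the target $\Groth'_\reg \times \bT$.

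For this, I would use that the target is separated, so that the equalizer of any two morphisms into it is closed; hence equivariance on a dense open subscheme is enough. The morphism of Lemma~\ref{lem:morph-gp-schemes-Jreg} is already an isomorphism over $\Groth'_\circ$, and $\Groth'_\circ$ is open and dense in the irreducible scheme $\Groth'_\reg$ (as established in the proof of Proposition~\ref{prop:properties-Groth}). Combining this with smoothness of $\bbJ_\reg \to \bG_\reg$ (Lemma~\ref{lem:J-smooth}), and hence the fact that the induced map $\bT \times_{\bT/\bWf} \bbJ_\reg \to \Groth'_\reg$ is open, I conclude that $\bT_\circ \times_{\bT_\circ/\bWf} \bbJ_\circ$ is dense in $\bT \times_{\bT/\bWf} \bbJ_\reg$, reducing the problem to checking equivariance on the regular semisimple locus.

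There I would proceed by direct computation. Using Lemma~\ref{lem:vartheta-circ-iso}, I identify $\Groth'_\circ$ with $\bG/\bT \times \bT_\circ$ via $(g\bT, t) \mapsto (gtg^{-1}, t)$, and the corresponding universal stabilizer $\bT_\circ \times_{\bT_\circ/\bWf} \bbJ_\circ$ with $\bG/\bT \times \bT \times \bT_\circ$ via $(h;\, g\bT, t) \leftrightarrow (g\bT,\, g^{-1}hg,\, t)$ (well-defined since $h \in g\bT g^{-1}$ when $t$ is regular semisimple, and $\bT$ is commutative). Under these identifications, the morphism of Lemma~\ref{lem:morph-gp-schemes-Jreg} becomes the tautological reordering $(g\bT, z, t) \mapsto ((g\bT, t), z)$. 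Pulling the $\bWf$-action on $\Groth'_\circ$ back through this identification — using uniqueness (modulo $\bT$) of a conjugator from $t$ to $gtg^{-1}$ — yields the twisted action $w \cdot (g\bT, t) = (g\dot{w}^{-1}\bT,\, w \cdot t)$ for any chosen lift $\dot{w}$, and hence $w \cdot (g\bT, z, t) = (g\dot{w}^{-1}\bT,\, w \cdot z,\, w \cdot t)$ on the universal stabilizer side. This exactly matches the diagonal action on $\bG/\bT \times \bT_\circ \times \bT$ under the reordering map.

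The main subtlety I anticipate is unraveling how the $\bWf$-action on $\Groth'_\reg$ — which is trivial on the $\bG_\reg$-factor — acquires a nontrivial component on $\bG/\bT$ once transported through the identification of Lemma~\ref{lem:vartheta-circ-iso}; it is this nontrivial component that produces the compensating $w \cdot z$ on the centralizer side and makes everything match the purely diagonal action on the target. Once this bookkeeping is done, the verification on $\bT_\circ$ is a one-line computation, and the density argument completes the proof.
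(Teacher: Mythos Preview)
Your proof is correct and follows essentially the same approach as the paper: reduce to the regular semisimple locus by a density argument, then verify directly using the identification of Lemma~\ref{lem:vartheta-circ-iso}. The paper's version is slightly more economical---it works directly with the restricted morphism over $\Sigma_\bT$ and invokes flatness of $\bbJ_\bT$ over $\bT$ for the reduction, rather than passing through the larger morphism of Lemma~\ref{lem:morph-gp-schemes-Jreg}---but the substance is identical, and your explicit unwinding of the $\bWf$-action on $\bG/\bT \times \bT_\circ$ matches the paper's description exactly.
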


\begin{proof}
 By flatness it suffices to check this claim over $\bT_\circ$. Now, by Lemma~\ref{lem:vartheta-circ-iso} we have an isomorphism $\bG/\bT \times \bT_\circ \simto \Groth'_\circ$.
  Under this isomorphism the action of $\bWf$ on $\bG/\bT \times \bT_\circ$ is given by $w \cdot (g\bT,t)=(gw^{-1} \bT, w(t))$, where we write $gw^{-1} \bT$ for $g \dot{w}^{-1} \bT$ where $\dot{w}$ is any lift of $w$ to $\mathrm{N}_{\bG}(\bT)$. From this description the equivariance is clear.
\end{proof}

From Lemma~\ref{lem:morphism-J-Wf-equiv} we deduce that for $w \in \bWf$ and $\lambda \in X^*(\bT)$ we have a canonical isomorphism
\[
\mathscr{M}_w \circledast \mathscr{M}_{\st(\lambda)} \circledast \mathscr{M}_{w^{-1}} \simto \mathscr{M}_{\st(w(\lambda))}.
\]
Combining this with~\eqref{eqn:formula-M-1}--\eqref{eqn:formula-M-2} we deduce that if for $w=x\st(\lambda) \in \bWf \ltimes X^*(\bT)=\bW$ we set
\[
\mathscr{M}_w := \mathscr{M}_x \circledast \mathscr{M}_{\st(\lambda)},
\]
then for any $w,y \in \bW$ we have a canonical isomorphism 
\[
\mathscr{M}_w \circledast \mathscr{M}_y \simto \mathscr{M}_{wy}.
\]

We next define some objects $(\mathscr{B}_s : s \in \bSaff)$ associated with simple reflections in $\bW$. First, if $s \in \bSf$ we define $\mathscr{B}_s$ by
\[
\mathscr{B}_s := \scO(\bT \times_{\bT/\{1,s\}} \bT),
\]
which we view as an $\scO(\bT \times_{\bT/\bWf} \bT)$-module via the closed embedding
\[
\bT \times_{\bT/\{1,s\}} \bT \subset \bT \times_{\bT/\bWf} \bT,
\]
and endow with the trivial structure as a representation. If $s \in \bS \smallsetminus \bSf$, recall that in~\S\ref{ss:Waff} we have fixed $s' \in \bSf$ and $w \in \bW$ such that $s=ws'w^{-1}$; we then set
\begin{equation}
\label{eqn:def-Bs}
\mathscr{B}_s := \mathscr{M}_w \circledast \mathscr{B}_{s'} \circledast \mathscr{M}_{w^{-1}}.
\end{equation}
It is easily seen (e.g.~by reduction to the case $s \in \bSf$) that for any $s \in \bS$ there exist exact sequences
\[
\mathscr{M}_e \hookrightarrow \mathscr{B}_s \twoheadrightarrow \mathscr{M}_s, \quad \mathscr{M}_s \hookrightarrow \mathscr{B}_s \twoheadrightarrow \mathscr{M}_e.
\]

 \subsection{Completions}
 \label{ss:completions}

We will denote by $\mathcal{I} \subset \scO(\bT \times_{\bT/\bWf} \bT)$ the ideal of the point $(e,e)$, 
and by $\mathcal{K} \subset \scO(\bT)$ the ideal of the point $e \in \bT$.
Note that
\[
\mathcal{I} = \mathcal{K} \otimes_{\scO(\bT/\bWf)} \scO(\bT) + \scO(\bT) \otimes_{\scO(\bT/\bWf)} \mathcal{K}
\]
where both summands are ideals in $\scO(\bT \times_{\bT/\bWf} \bT)$ since $\scO(\bT)$ is flat over $\scO(\bT/\bWf)$ (by Theorem~\ref{thm:Pittie-Steinberg}). Finally, we will denote by
$\mathcal{J} \subset \scO(\bT/\bWf)$ the ideal of the image of $e \in \bT$ in $\bT/\bWf$ (which, by abuse, will also be denoted $e$).
We will be interested in the ``completions''
\[
 \FN_{\bT/\bWf}(\{e\}), \quad \FN_{\bT}(\{e\}) \quad \text{and} \quad \FN_{\bT \times_{\bT/\bWf} \bT}(\{(e,e)\}).
\]

\begin{lem}
\phantomsection
\label{lem:Dw-fiber-prod}
\begin{enumerate}
\item
\label{it:Dw-fiber-prod-1}
There exist canonical isomorphisms of $\bk$-schemes
\[
\FN_{\bT}(\{e\}) \cong \bT \times_{\bT/\bWf} \FN_{\bT/\bWf}(\{e\})
\]
and
\begin{multline*}
\FN_{\bT \times_{\bT/\bWf} \bT}(\{(e,e)\}) \cong \FN_{\bT}(\{e\}) \times_{\bT} (\bT \times_{\bT/\bWf} \bT) \cong (\bT \times_{\bT/\bWf} \bT) \times_{\bT} \FN_{\bT}(\{e\}) \\
\cong (\bT \times_{\bT/\bWf} \bT) \times_{\bT / \bWf} \FN_{\bT/\bWf}(\{e\}) \cong \FN_{\bT}(\{e\}) \times_{\FN_{\bT/\bWf}(\{e\})} \FN_{\bT}(\{e\})
\end{multline*}
where in the first, resp.~second, fiber product the morphism $\bT \times_{\bT/\bWf} \bT \to \bT$ is induced by projection on the first, resp.~second, factor. Moreover, the algebra $\scO(\FN_{\bT}(\{e\}))$ is finite and free (in particular, flat) over the algebra $\scO(\FN_{\bT/\bWf}(\{e\}))$.
\item
\label{it:Dw-fiber-prod-2}
The natural morphism $\scO(\FN_{\bT/\bWf}(\{e\})) \to \scO(\FN_{\bT}(\{e\}))^{\bWf}$ is an isomorphism.
\end{enumerate}
\end{lem}

\begin{proof}
\eqref{it:Dw-fiber-prod-1}
Since the morphism $\bT \to \bT / \bWf$ is finite, and since $e$ is the only closed point in the preimage of the point corresponding to $\mathcal{J}$, by the structure theory of artinian local rings (see in particular~\cite[\href{https://stacks.math.columbia.edu/tag/00J8}{Tag 00J8}]{stacks-project}) the ideal $\mathcal{J} \cdot \scO(\bT)$ contains a power of $\mathcal{K}$. On the other hand this ideal is contained in $\mathcal{K}$; hence the completions of $\scO(\bT)$ with respect to $\mathcal{K}$ and to $\mathcal{J} \cdot \scO(\bT)$ are canonically isomorphic. By definition the first of these completions is $\scO(\FN_{\bT}(\{e\}))$, and since $\bT$ is finite over $\bT / \bWf$ the second completion identifies with $\scO(\bT \times_{\bT / \bWf} \FN_{\bT/\bWf}(\{e\}))$, proving the first isomorphism. Combined with Theorem~\ref{thm:Pittie-Steinberg}, this implies that $\scO(\FN_{\bT}(\{e\}))$ is finite and free over $\scO(\FN_{\bT/\bWf}(\{e\}))$.

Similar considerations using the morphism $\bT \times_{\bT/\bWf} \bT \to \bT / \bWf$  prove the
isomorphism between the first and fourth schemes in the second series of isomorphisms. Since the ideals $\mathcal{K} \otimes_{\scO(\bT/\bWf)} \scO(\bT)$ and $\scO(\bT) \otimes_{\scO(\bT/\bWf)} \mathcal{K}$ contain $\mathcal{J} \cdot \scO(\bT \times_{\bT/\bWf} \bT)$ and are contained in $\mathcal{I}$, the completions of $\scO(\bT \times_{\bT/\bWf} \bT)$ with respect to these ideals (or, in other words, the algebras of functions on the second and third schemes) also identify with $\scO(\FN_{\bT \times_{\bT/\bWf} \bT}(\{(e,e)\}))$. Finally, 
the last isomorphism follows from the isomorphism $\FN_{\bT}(\{e\}) \cong \bT \times_{\bT/\bWf} \FN_{\bT/\bWf}(\{e\})$ proved above.

\eqref{it:Dw-fiber-prod-2}
%
 Using the first isomorphism in~\eqref{it:Dw-fiber-prod-1} we see that the canonical embedding $\scO(\bT/\bWf) \hookrightarrow \scO(\bT)$ induces an embedding
 \[
 \scO(\FN_{\bT/\bWf}(\{e\})) \hookrightarrow \scO(\FN_{\bT}(\{e\})),
 \]
 which of course factors through an embedding
 \begin{equation}
 \label{eqn:T-mod-W-wedge}
 \scO(\FN_{\bT/\bWf}(\{e\})) \hookrightarrow \scO(\FN_{\bT}(\{e\}))^{\bWf}.
 \end{equation}
 As explained above, any basis of $\scO(\bT)$ as a module over over $\scO(\bT/\bWf)$ provides a basis of $\scO(\FN_{\bT}(\{e\}))$ over $\scO(\FN_{\bT/\bWf}(\{e\}))$. On the other hand, in~\cite[Theorem~8.1]{bezr} it is proved that a specific basis of $\scO(\bT)$ over $\scO(\bT/\bWf)$ provides a basis of $\scO(\FN_{\bT}(\{e\}))$ over $\scO(\FN_{\bT}(\{e\}))^{\bWf}$. The embedding~\eqref{eqn:T-mod-W-wedge} is therefore an equality.
\end{proof}

We set
\[
\bbI_{\mathbf{\Sigma}}^\wedge := \FN_{\bT \times_{\bT/\bWf} \bT}(\{(e,e)\}) \times_{\bT \times_{\bT/\bWf} \bT} \bbI_{\mathbf{\Sigma}} \cong \FN_{\bT \times_{\bT/\bWf} \bT}(\{(e,e)\}) \times_{\bT / \bWf} \mathbb{J}_\Sigma,
\]
a smooth affine group scheme over the affine scheme $\FN_{\bT \times_{\bT/\bWf} \bT}(\{(e,e)\})$. We will consider the category $\Rep(\bbI_{\mathbf{\Sigma}}^\wedge)$ of representations of this group scheme which are of finite type over $\scO(\FN_{\bT \times_{\bT/\bWf} \bT}(\{(e,e)\}))$. 
The isomorphisms in Lemma~\ref{lem:Dw-fiber-prod} show that an $\scO(\FN_{\bT \times_{\bT/\bWf} \bT}(\{(e,e)\}))$-module is the same thing as an $\scO(\FN_{\bT}(\{e\}))$-bimodule on which the left and right actions of $\scO(\FN_{\bT/\bWf}(\{e\}))=\scO(\FN_{\bT}(\{e\}))^{\bWf}$ coincide. (We will use this identification repeatedly and without further notice below.) In particular the category of such modules admits a natural monoidal product, induced by the tensor product for $\scO(\FN_{\bT}(\{e\}))$-bimodules; moreover this product stabilizes the subcategory of finitely generated $\scO(\FN_{\bT \times_{\bT/\bWf} \bT}(\{(e,e)\}))$-modules. Since $\bbI_{\mathbf{\Sigma}}^\wedge$ is the pullback of a group scheme over $\FN_{\bT/\bWf}(\{e\})$, this product induces a monoidal product on the category $\Rep(\bbI_{\mathbf{\Sigma}}^\wedge)$, which will again be denoted $\circledast$.

Recall that a category is called \emph{Krull--Schmidt} if any object admits a decomposition as a (finite) direct sum of objects with local endomorphism ring.

\begin{lem}
\label{lem:Krull-Schmidt-JD}
 The category $\Rep(\bbI_{\mathbf{\Sigma}}^\wedge)$ is Krull--Schmidt.
\end{lem}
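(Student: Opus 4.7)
The plan is to identify $\Rep(\bbJ_\bD^\wedge)$ as a representation category over a complete local noetherian ring, and then apply the standard criterion that an additive idempotent-complete category whose endomorphism rings are module-finite over a complete local noetherian ring is Krull--Schmidt.

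First I would observe that $\scO(\bD^\wedge)$ is a complete local noetherian ring. Indeed, $\scO(\bD)$ is a finite-type $\bk$-algebra, and $\mathcal{I}$ is the ideal of a $\bk$-point of $\bD$, hence a maximal ideal; its $\mathcal{I}$-adic completion is therefore complete local noetherian with residue field $\bk$. Next, using Lemma~\ref{lem:Dw-fiber-prod} one sees that $\bbJ_\bD^\wedge$ is affine and flat over $\bD^\wedge$, so $\Rep(\bbJ_\bD^\wedge)$ is an abelian (in particular additive and idempotent complete) category in the usual way, and forms a full subcategory of the category of finitely generated $\scO(\bD^\wedge)$-modules via the forgetful functor.

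Then I would verify that for any $M$ in $\Rep(\bbJ_\bD^\wedge)$, the endomorphism ring $\End(M)$ is finitely generated as an $\scO(\bD^\wedge)$-module. The point is that $\End(M)$ is an $\scO(\bD^\wedge)$-submodule of $\End_{\scO(\bD^\wedge)}(M)$; the latter is finitely generated over the noetherian ring $\scO(\bD^\wedge)$ (choose a presentation of $M$ to realize $\End_{\scO(\bD^\wedge)}(M)$ as a submodule of $M^n$ for some $n$), so the former is finitely generated by noetherianness of $\scO(\bD^\wedge)$.

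Finally, $\End(M)$ is thus a module-finite algebra over the complete local noetherian commutative ring $\scO(\bD^\wedge)$. Any such algebra is semi-perfect: idempotents lift modulo the Jacobson radical $\mathfrak{m} \cdot \End(M)$ (where $\mathfrak{m} \subset \scO(\bD^\wedge)$ is the maximal ideal) by completeness, and the quotient $\End(M)/\mathfrak{m} \cdot \End(M)$ is a finite-dimensional $\bk$-algebra, hence semi-perfect. By the standard characterization (see e.g.~\cite[Corollary~4.4]{krause} or~\cite[Theorem~1]{cyz}, though any reference to Krull--Schmidt--Remak--Azumaya suffices), an additive idempotent complete category all of whose endomorphism rings are semi-perfect is Krull--Schmidt, yielding the claim. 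This argument presents no serious obstacle; the only point requiring care is correctly invoking the Krull--Schmidt criterion, which is entirely standard.
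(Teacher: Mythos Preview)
Your proposal is correct and follows essentially the same route as the paper: the paper's proof also invokes the criterion from~\cite{cyz} that an additive category is Krull--Schmidt iff it is idempotent complete with semiperfect endomorphism rings, notes that $\Rep(\bbJ_\bD^\wedge)$ is abelian (hence idempotent complete), and appeals to the fact that a module-finite algebra over a complete local noetherian ring is semiperfect (citing~\cite[Example~23.3]{lam}). Your version merely spells out a few more details; one minor quibble is that flatness of $\bbJ_\bD^\wedge$ over $\bD^\wedge$ comes from smoothness of $\bbJ_\Sigma$ (Lemma~\ref{lem:J-smooth}) rather than Lemma~\ref{lem:Dw-fiber-prod}.
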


\begin{proof}
 By~\cite[Theorem~A.1]{cyz}, an additive category is Krull--Schmidt iff it is idempotent complete and the endomorphism ring of any object is semiperfect. Here $\Rep(\bbI_{\mathbf{\Sigma}}^\wedge)$ is idempotent complete because it is abelian, and the endomorphism algebra of any object is semiperfect because it is finite as a module over the noetherian complete local ring $\scO(\FN_{\bT \times_{\bT/\bWf} \bT}(\{(e,e)\}))$, see~\cite[Example~23.3]{lam}.
\end{proof}

Pulling back the representations $(\mathscr{M}_w : w \in \bW)$ and $(\mathscr{B}_s : s \in \bS)$ introduced in~\S\ref{ss:representations-Iadj} along
the natural morphism $\FN_{\bT \times_{\bT/\bWf} \bT}(\{(e,e)\}) \to \bT \times_{\bT/\bWf} \bT$ we obtain objects $(\mathscr{M}^\wedge_w : w \in \bW)$ and $(\mathscr{B}^\wedge_s : s \in \bS)$ in $\Rep(\bbI_{\mathbf{\Sigma}}^\wedge)$. It is clear that for any $w,y \in \bW$ we have a canonical isomorphism 
\begin{equation}
\label{eqn:convolution-D}
\mathscr{M}^\wedge_w \circledast \mathscr{M}^\wedge_y \simto \mathscr{M}^\wedge_{wy},
\end{equation}
and that for $s \in \bS$ we have exact sequences
\begin{equation}
 \label{eqn:exact-seq-Bs}
 \mathscr{M}^\wedge_e \hookrightarrow \mathscr{B}^\wedge_s \twoheadrightarrow \mathscr{M}^\wedge_s, \qquad \mathscr{M}^\wedge_s \hookrightarrow \mathscr{B}^\wedge_s \twoheadrightarrow \mathscr{M}^\wedge_e.
 \end{equation}
 
 The following lemma will be proved in~\S\ref{ss:completed-Hecke-Rep} below, using a different description of (a subcategory of) $\Rep(\bbI_{\mathbf{\Sigma}}^\wedge)$. (In this statement we use the fact that $\omega s \omega^{-1} \in \bS$ for any $s \in \bS$ and $\omega \in \mathbf{\Omega}$, see~\S\ref{ss:Waff}.)

\begin{lem}
\label{lem:conjugation-B-RepJ}
 For any $s \in \bS \smallsetminus \bSf$ the object $\scB^\wedge_s$ is independent of the choices of $w$ and $s'$ as in~\S\ref{ss:Waff} up to canonical isomorphism. Moreover, for any $\omega \in \mathbf{\Omega}$ and $s \in \bS$ we have a canonical isomorphism
 \[
  \scM^\wedge_\omega \circledast \scB^\wedge_s \circledast \scM^\wedge_{\omega^{-1}} \cong \scB^\wedge_{\omega s \omega^{-1}}.
 \]
\end{lem}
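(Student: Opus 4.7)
The plan is to exploit an Abe-style bimodule description of a suitable monoidal subcategory of $\Rep(\bbJ_\bD^\wedge)$, in which each simple reflection $s \in \bS$ has an intrinsically defined Bott--Samelson-type object not relying on the choice of $(w,s')$ in~\S\ref{ss:Waff}.

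First, I would use Lemma~\ref{lem:Dw-fiber-prod} to identify $\scO(\bD^\wedge)$ with the $\scO(\bT^\wedge)$-bimodules on which the two actions of $\scO((\bT/\bWf)^\wedge) = \scO(\bT^\wedge)^{\bWf}$ agree. Next, I would explain that although the extended affine Weyl group $\bW$ does not act on $\bT$ as a whole, it does act on the \emph{formal} neighborhood $\bT^\wedge$ of $e$: for $s \in \bSf$ this is the obvious action, and for $s \in \bS \smallsetminus \bSf$ the translation part $\st(\beta^\vee)$ of $s$ acts on $\bT^\wedge$ via the coroot $\beta^\vee$ pushed into the formal completion (equivalently, one can transport through the exponential from the faithful realization on $\bt$). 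This provides a realization of the Coxeter system $(\bW',\bS)$ (extended by $\Omega$) on $\scO(\bT^\wedge)$, and hence an Abe-type monoidal category of bimodules, together with canonical objects $B_s^{\mathrm{Abe}}$ for all $s \in \bS$ and $R_w^{\mathrm{Abe}}$ for all $w \in \bW$, all defined intrinsically from the realization.

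Second, I would construct a monoidal functor from this Abe category into $\Rep(\bbJ_\bD^\wedge)$ sending $R_w^{\mathrm{Abe}}$ to $\scM_w^\wedge$ and, for $s \in \bSf$, sending $B_s^{\mathrm{Abe}}$ to $\scB_s^\wedge$. The checks reduce to direct calculations: $\scM_w^\wedge$ is by construction the structure sheaf of a graph of $w$ (or a twist of $\scO(\bT^\wedge)$ by a weight), which matches the standard bimodule $R_w^{\mathrm{Abe}}$, and $\scB_s^\wedge$ for $s \in \bSf$ is by definition the pullback of $\scO(\bT \times_{\bT/\{1,s\}} \bT)$ to $\bD^\wedge$, which on the bimodule side is $\scO(\bT^\wedge) \otimes_{\scO(\bT^\wedge)^s} \scO(\bT^\wedge)$, exactly Abe's $B_s^{\mathrm{Abe}}$. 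On the Abe side one has tautological isomorphisms $R_w^{\mathrm{Abe}} \otimes B_t^{\mathrm{Abe}} \otimes R_{w^{-1}}^{\mathrm{Abe}} \cong B_{wtw^{-1}}^{\mathrm{Abe}}$ whenever $wtw^{-1} \in \bS$, coming from the equivariance of the realization under conjugation.

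Third, for $s \in \bS \smallsetminus \bSf$ and a choice $s = ws'w^{-1}$ with $s' \in \bSf$, applying the functor from the previous step to the Abe identity $R_w^{\mathrm{Abe}} \otimes B_{s'}^{\mathrm{Abe}} \otimes R_{w^{-1}}^{\mathrm{Abe}} \cong B_s^{\mathrm{Abe}}$ yields a canonical isomorphism between $\scM_w^\wedge \circledast \scB_{s'}^\wedge \circledast \scM_{w^{-1}}^\wedge = \scB_s^\wedge$ (our ad hoc definition, see~\eqref{eqn:def-Bs}) and an object depending only on $s$, proving independence of the choices. The conjugation formula $\scM_\omega^\wedge \circledast \scB_s^\wedge \circledast \scM_{\omega^{-1}}^\wedge \cong \scB_{\omega s \omega^{-1}}^\wedge$ for $\omega \in \Omega$ then follows from the same intrinsic Abe identity applied to $\omega s \omega^{-1} \in \bS$, together with~\eqref{eqn:convolution-D}.

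The main obstacle is the first step: carefully setting up the Abe-type bimodule category over the completed base $\scO(\bT^\wedge)$ with a realization of the entire affine Coxeter system, and producing the monoidal functor into $\Rep(\bbJ_\bD^\wedge)$. This requires establishing that the universal centralizer group scheme $\bbJ_\bD^\wedge$ is genuinely ``built from'' the formal $\bW$-action on $\bT^\wedge$ at the level of module categories—using smoothness of $\bbJ_\Sigma$ over $\Sigma$ (Lemma~\ref{lem:J-smooth}\eqref{it:J-smooth-1}), the freeness of $\scO(\bT^\wedge)$ over $\scO((\bT/\bWf)^\wedge)$ (Lemma~\ref{lem:Dw-fiber-prod}), and the action of $\bbJ_\bT$ on weight line bundles computed in Lemma~\ref{lem:line-bundles-J}. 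Once this functor is in place, the lemma is essentially formal.
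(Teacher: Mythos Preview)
Your overall strategy—use an Abe-style bimodule category and a monoidal comparison with $\Rep(\bbJ_\bD^\wedge)$—is exactly what the paper does (see Proposition~\ref{prop:Abe-Rep-wedge} and the proof of Lemma~\ref{lem:conjugation-B-RepJ} in~\S\ref{ss:completed-Hecke-Rep}). But there are two concrete problems with your execution.

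First, the $\bW$-action on $\scO(\bT^\wedge)$ that the paper uses is the one factoring through the projection $\bW \to \bWf$; translations act \emph{trivially}. Your claim that $\st(\beta)$ acts nontrivially on $\bT^\wedge$ ``via the coroot pushed into the formal completion'' does not make sense: there is no natural translation action on the formal group $\bT^\wedge$ fixing $e$. The extra $X^*(\bT)$-direction in $\bW$ is not encoded in the ring action at all; it is encoded in the decomposition of $M \otimes_R Q$ indexed by $\bW$ that is part of Abe's data (see~\S\ref{ss:Hecke-a-la-Abe}). On the $\Rep(\bbJ_\bD^\wedge)$-side, this $\bW$-decomposition is recovered by combining the $\bWf$-decomposition of $\bD$ over $\bT_\circ$ with the $X^*(\bT)$-grading coming from the isomorphism $\bbJ_{\bT,\circ} \cong \bT_\circ \times \bT$ of Lemma~\ref{lem:morph-gp-schemes-Jreg}; this is precisely how the functor~\eqref{eqn:functor-Rep-Abe} is built. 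So your ``intrinsic'' $B_s^{\mathrm{Abe}}$ for affine $s$ does exist, but for a different reason than you state: the underlying bimodule of $B_s$ only sees the image of $s$ in $\bWf$, while the label $s \in \bW$ lives in the decomposition.

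Second, and relatedly, the paper's functor goes in the opposite direction from yours: \emph{from} $\Rep_{\mathrm{fl}}(\bbJ^\wedge_{\bD})$ \emph{to} $\sfK^\wedge$, and the crucial step is proving it is fully faithful (this is where flatness of $\bbJ_\Sigma$ and the right-flatness hypothesis are used). Given full faithfulness, the Abe-side conjugation identity of Lemma~\ref{lem:conjugation-Bs} pulls back to the desired isomorphism in $\Rep(\bbJ_\bD^\wedge)$. Your proposed direction would require constructing a functor on morphisms from the Abe category into $\Rep(\bbJ_\bD^\wedge)$, which is awkward because a bare bimodule with a $\bW$-decomposition does not a priori carry a $\bbJ_\bD^\wedge$-coaction. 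In short: right idea, but fix the realization (translations act trivially, the affine grading lives in the $Q$-decomposition) and reverse the arrow, then prove full faithfulness as in Proposition~\ref{prop:Abe-Rep-wedge}.
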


We will denote by $\BSRep(\bbI_{\mathbf{\Sigma}}^\wedge)$ the category with
\begin{itemize}
\item
objects the collections $(\omega, s_1, \cdots, s_i)$ with $\omega \in \mathbf{\Omega}$ and $s_1, \cdots, s_i \in \bS$;
\item
morphisms from $(\omega, s_1, \cdots, s_i)$ to $(\omega', s'_1, \cdots, s'_j)$ given by
\[
\Hom_{\Rep(\bbI_{\mathbf{\Sigma}}^\wedge)}(\scM^\wedge_\omega \circledast \scB^\wedge_{s_1} \circledast \cdots \circledast \scB^\wedge_{s_i}, \scM^\wedge_{\omega'} \circledast \scB^\wedge_{s_1'} \circledast \cdots \circledast \scB^\wedge_{s'_j}).
\]
\end{itemize}
By definition there exists a canonical fully faithful functor
\begin{equation}
\label{eqn:functor-BSRep-Rep}
\BSRep(\bbI_{\mathbf{\Sigma}}^\wedge) \to \Rep(\bbI_{\mathbf{\Sigma}}^\wedge).
\end{equation}
Using Lemma~\ref{lem:conjugation-B-RepJ} we obtain, for any collections $(\omega, s_1, \cdots, s_i)$ and $(\omega', s'_1, \cdots, s'_j)$ as above, a canonical isomorphism
\begin{multline*}
 \bigl( \scM^\wedge_\omega \circledast \scB^\wedge_{s_1} \circledast \cdots \circledast \scB^\wedge_{s_i} \bigr) \circledast \bigl( \scM^\wedge_{\omega'} \circledast \scB^\wedge_{s_1'} \circledast \cdots \circledast \scB^\wedge_{s'_j} \bigr) \\
 \cong \scM^\wedge_{\omega \omega'} \circledast \scB^\wedge_{(\omega')^{-1} s_1 \omega'} \circledast \cdots \circledast \scB^\wedge_{(\omega')^{-1} s_i \omega'} \circledast \scB^\wedge_{s_1'} \circledast \cdots \circledast \scB^\wedge_{s'_j}.
\end{multline*}
This allows us to define a monoidal product (again denoted $\circledast$) on $\BSRep(\bbI_{\mathbf{\Sigma}}^\wedge)$ which is defined on objects by
\[
 (\omega, s_1, \cdots, s_i) \circledast (\omega', s'_1, \cdots, s'_j) = (\omega\omega', (\omega')^{-1} s_1 \omega', \cdots, (\omega')^{-1} s_i \omega', s'_1, \cdots, s'_j)
\]
and such that~\eqref{eqn:functor-BSRep-Rep} is monoidal.

We will denote by
\[
\SRep(\bbI_{\mathbf{\Sigma}}^\wedge)
\]
the Karoubian closure of the additive hull of the category $\BSRep(\bbI_{\mathbf{\Sigma}}^\wedge)$. By the Krull--Schmidt property (see Lemma~\ref{lem:Krull-Schmidt-JD}), this category identifies with the (monoidal) full subcategory of $\Rep(\bbI_{\mathbf{\Sigma}}^\wedge)$ whose objects are direct sums of direct summands of objects of the form
\[
 \scM^\wedge_\omega \circledast \scB^\wedge_{s_1} \circledast \cdots \circledast \scB^\wedge_{s_i}
\]
with $\omega \in \mathbf{\Omega}$ and $s_1, \cdots, s_i \in \bS$. (In these notations, ``$\mathsf{BS}$'' stands for ``Bott--Samelson,'' and ``$\mathsf{S}$'' for ``Soergel,'' since these constructions are very similar to classical constructions related to Bott--Samelson resolutions and Soergel bimodules.)



\subsection{Hecke categories ``\`a la Abe''}
\label{ss:Hecke-a-la-Abe}

We now explain how to construct some categories by following a pattern initiated by Abe~\cite{abe}. We consider a noetherian domain $R$ endowed with an action of $\bW$ (by ring automorphisms), and denote by $Q=\mathrm{Frac}(R)$ the fraction field of $R$. We denote by $\sfK'(R)$ the category defined as follows. The objects are the $R$-bimodules $M$ together with a decomposition
\begin{equation}
\label{eqn:decomp-K-Abe}
M \otimes_{R} Q = \bigoplus_{w \in \bW} M^w_{Q}
\end{equation}
as $(R, Q)$-bimodules such that:
\begin{itemize}
\item there exist only finitely many $w$'s such that $M^w_{Q} \neq 0$;
\item for any $w \in \bW$, $r \in R$ and $m \in M^w_{Q}$ we have $m \cdot r = w(r) \cdot m$.
\end{itemize}
Morphisms in this category are defined as morphisms of $R$-bimodules respecting the decompositions~\eqref{eqn:decomp-K-Abe}. The category $\sfK^{\prime}(R)$ has a natural monoidal structure, with product denoted $\star$ and induced by the tensor product over $R$. (To see this one observes that the conditions above imply that the left $R$-action on $M \otimes_{R} Q$ extends to an action of $Q$, see~\cite[Remark~2.2]{abe}.)

We will also denote by $\sfK(R)$ the full subcategory in $\sfK'(R)$ whose objects are those whose underlying $R$-bimodule is finitely generated, and is flat as a right $R$-module. The latter condition implies that the natural morphism $M \to M \otimes_R Q$ is injective, which (in view of the second condition above) implies in particular that the left and right actions of $R^\bW$ on $M$ coincide. The arguments in~\cite[Lemma~2.6]{abe} show that the underlying bimodule of any object in $\sfK(R)$ is in fact finitely generated as a left $R$-module and as a right $R$-module. Using this property, it is easily seen that $\sfK(R)$ is a monoidal subcategory of $\sfK'(R)$.


We have natural objects in $\sfK(R)$ attached to elements in $\bW$, and constructed as follows. Given $w \in \bW$, we denote by $F_w$ the $R$-bimodule which is isomorphic to $R$ as an abelian group, and endowed with the structure of $R$-bimodule determined by the rule
\[
 r \cdot m \cdot r' = rmw(r')
\]
for $r,r' \in R$ and $m \in F_w$. If we endow this bimodule with the decomposition of $F_{w} \otimes_{R} Q$ such that this module is concentrated in degree $w$, we obtain an object in $\sfK(R)$. It is clear that for any $w,y \in \bW$ we have a canonical isomorphism
\[
F_w \star F_y \simto F_{wy}.
\]

Next, for $s \in \bS$ we will denote by $R^s \subset R$ the subring of $s$-invariants. Assume that 
\begin{equation}
\label{eqn:assumption-deltas}
\text{there exists $\delta _s \in R$ such that $(1,\delta_s)$ is a basis of $R$ as an $R^s$-module.}
\end{equation}
Then we set
\[
B_{s} := R \otimes_{R^s} R.
\]
Our assumption ensures that $B_s$ is finite and free (in particular, flat) as a right $R$-module.
Moreover this objects admits a canonical decomposition~\eqref{eqn:decomp-K-Abe}, hence defines an object in $\sfK(R)$. In fact,
since the action of $s$ on $R$ is nontrivial by our assumption, the decomposition of $B_{s} \otimes_{R} Q = R \otimes_{R^s} Q$ is uniquely determined by the fact that it is concentrated in degrees $\{e,s\} \subset \bW$.
More explicitly, using the formula 
\[
\delta_s \delta_s = \delta_s (\delta_s + s(\delta_s)) - \delta_s s(\delta_s)
\]
one checks that we have
\[
(B_s)_Q^e = (\delta_s \otimes 1 - 1 \otimes s(\delta_s)) \cdot Q, \quad (B_s)_Q^s = (\delta_s \otimes 1 - 1 \otimes \delta_s) \cdot Q.
\]

The following lemma can be checked by explicit computation. (A similar claim in a slightly different setting is proved in~\cite[Lemma~2.4]{brHecke}.)

\begin{lem}
\label{lem:conjugation-Bs}
Let $s,s' \in \bS$, and assume that $w \in \bW$ satisfies $s'=wsw^{-1}$. If~\eqref{eqn:assumption-deltas} holds for $s$, then it also holds for $s'$, and moreover we have a canonical isomorphism
\[
F_w \star B_s \star F_{w^{-1}} \simto B_{s'}.
\]
\end{lem}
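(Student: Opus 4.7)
\medskip

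\noindent\textbf{Proof plan.}
The plan is to produce an explicit isomorphism, so most of the work is identifying the underlying abelian group and tracking the bimodule structure and the $\bW$-grading. First I would address the claim that~\eqref{eqn:assumption-deltas} holds for $s'$: since $s'=wsw^{-1}$, we have $R^{s'}=w(R^s)$ (an element is fixed by $s'$ iff its image under $w^{-1}$ is fixed by $s$), so applying the automorphism $w$ to the $R^s$-basis $(1,\delta_s)$ of $R$ gives the $R^{s'}$-basis $(1,w(\delta_s))$ of $R$. Hence $\delta_{s'}:=w(\delta_s)$ witnesses~\eqref{eqn:assumption-deltas} for $s'$.

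Next I would identify $F_w\star B_s\star F_{w^{-1}}$ with $B_s$ as an abelian group via $1\otimes b\otimes 1\leftrightarrow b$, and compute the bimodule structure transported through this identification. Using that $F_w$ has standard left action and $w$-twisted right action (and symmetrically for $F_{w^{-1}}$), one finds
\[
r\cdot(1\otimes b\otimes 1)\cdot r' \;=\; 1\otimes \bigl(w^{-1}(r)\cdot b\cdot w^{-1}(r')\bigr)\otimes 1,
\]
so the new $R$-bimodule structure on $B_s$ is obtained by pre-composing each side with $w^{-1}$. I would then define
\[
\psi\colon F_w\star B_s\star F_{w^{-1}}\;\longrightarrow\; B_{s'},\qquad 1\otimes(r_1\otimes r_2)\otimes 1\;\longmapsto\; w(r_1)\otimes w(r_2).
\]
Well-definedness over $R^s$ uses $w(R^s)=R^{s'}$, and the map is a bijection because $w$ is an automorphism of $R$; the bimodule compatibility is a one-line check using the formula above for the twisted action.

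It remains to verify that $\psi$ respects the $\bW$-graded decomposition over $Q$, which is the only mildly delicate point. From the defining relation $m\cdot r=y(r)\cdot m$ for $m\in M_Q^y$ one computes that for $M$ in $\sfK'(R)$ and $y\in\bW$,
\[
(F_y\star M)_Q^{yz}=F_y\otimes_R M_Q^{z}\quad\text{and}\quad (M\star F_y)_Q^{zy}=M_Q^{z}\otimes_R F_y,
\]
so the graded pieces of $F_w\star B_s\star F_{w^{-1}}$ live in degrees $w\{e,s\}w^{-1}=\{e,s'\}$, matching $B_{s'}$. Recalling that $(B_s)_Q^e$ and $(B_s)_Q^s$ are generated by $\delta_s\otimes 1-1\otimes s(\delta_s)$ and $\delta_s\otimes 1-1\otimes\delta_s$ respectively, one computes the images under $\psi$:
\[
\psi(\delta_s\otimes 1-1\otimes s(\delta_s))=\delta_{s'}\otimes 1-1\otimes s'(\delta_{s'}),\qquad \psi(\delta_s\otimes 1-1\otimes\delta_s)=\delta_{s'}\otimes 1-1\otimes\delta_{s'},
\]
using $w\circ s=s'\circ w$. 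These are precisely the generators of $(B_{s'})_Q^e$ and $(B_{s'})_Q^{s'}$, so $\psi$ is a morphism in $\sfK(R)$, and hence the desired isomorphism. The only step requiring any care is the grading verification; the bimodule computation and the construction of $\psi$ itself are routine.
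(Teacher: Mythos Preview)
Your proof is correct and takes essentially the same approach as the paper, which simply says the lemma ``can be checked by explicit computation'' and refers to \cite[Lemma~2.4]{brHecke} for a similar computation in a closely related setting. You have carried out precisely this explicit computation: constructing the map $\psi$ by applying $w$ on both tensor factors, verifying the bimodule compatibility via the twisted-action formula, and checking the $\bW$-grading on the explicit generators of $(B_s)_Q^e$ and $(B_s)_Q^s$.
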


\begin{rmk}
\label{rmk:condition-Bs-Sf}
Recall that any element in $\bS$ is conjugate (in $\bW$) to an element in $\bSf$, see Lemma~\ref{lem:simple-ref-conjugation}. In view of Lemma~\ref{lem:conjugation-Bs}, to check condition~\eqref{eqn:assumption-deltas} for all $s \in \bS$ it suffices to do so when $s \in \bSf$.
\end{rmk}

We now assume that~\eqref{eqn:assumption-deltas} is satisfied for any $s \in \bS$. We will then denote by $\BSK(R)$ the category with
\begin{itemize}
\item
objects the collections $(\omega, s_1, \cdots, s_i)$ with $\omega \in \mathbf{\Omega}$ and $s_1, \cdots, s_i \in \bS$;
\item
morphisms from $(\omega, s_1, \cdots, s_i)$ to $(\omega', s'_1, \cdots, s'_j)$ given by
\[
\Hom_{\sfK(R)}(F_\omega \star B_{s_1} \star \cdots \star B_{s_i}, F_{\omega'} \star B_{s_1'} \star \cdots \star B_{s'_j}).
\]
\end{itemize}
By definition there exists a canonical fully faithful functor
\begin{equation}
\label{eqn:functor-HBS-K}
\BSK(R) \to \sfK(R).
\end{equation}
Using the isomorphism in Lemma~\ref{lem:conjugation-Bs} (when $w \in \mathbf{\Omega}$) one sees that there exists a natural convolution product (still denoted $\star$) on $\BSK(R)$ which is defined on objects by
\[
 (\omega, s_1, \cdots, s_i) \star (\omega', s'_1, \cdots, s'_j) = (\omega\omega', (\omega')^{-1} s_1 \omega', \cdots, (\omega')^{-1} s_i \omega', s'_1, \cdots, s'_j),
\]
and such that~\eqref{eqn:functor-HBS-K} is monoidal.

\begin{rmk}
In~\cite{abe}, Abe studies an analogue of the category $\BSK(R)$ in the setting where $\bW$ is replaced by a Coxeter group (so that there are no nontrivial elements of length $0$) and where one considers graded bimodules over a specific choice of graded ring $R$. We do not claim that the results of~\cite{abe} apply in the setting considered above, but only that the main definition makes sense.
\end{rmk}

\subsection{Completed Hecke category and representations of \texorpdfstring{$\bbI_{\mathbf{\Sigma}}^\wedge$}{ISigma}}
\label{ss:completed-Hecke-Rep}

We will apply the construction of~\S\ref{ss:Hecke-a-la-Abe} to the ring $R=\scO(\FN_{\bT}(\{e\}))$, with the action of $\bW$ obtained from the natural action of $\bWf$ by pullback along the projection $\bW \to \bWf$. 
To check conditions~\eqref{eqn:assumption-deltas} in this case, it suffices to do so when $s \in \bSf$ (see Remark~\ref{rmk:condition-Bs-Sf}). In this case the condition can be checked explicitly, or deduced from Lemma~\ref{lem:Dw-fiber-prod} applied to the Levi factor of $\bG$ associated with $s$.
The resulting categories $\sfK(\scO(\FN_{\bT}(\{e\})))$ and $\BSK(\scO(\FN_{\bT}(\{e\})))$ will be denoted
\[
\sfK^\wedge \quad \text{ and } \quad \BSK^{\wedge}
\]
respectively.

Recall the category $\Rep(\bbI_{\mathbf{\Sigma}}^\wedge)$ considered in~\S\ref{ss:completions}. We will denote by
$\Rep_{\mathrm{fl}}(\bbI_{\mathbf{\Sigma}}^\wedge)$ the full subcategory of representations whose underlying coherent sheaf is flat with respect to the projection $\FN_{\bT \times_{\bT/\bWf} \bT}(\{(e,e)\}) \to \FN_{\bT}(\{e\})$ on the second component. It is not difficult to check that $\Rep_{\mathrm{fl}}(\bbI_{\mathbf{\Sigma}}^\wedge)$ is a monoidal subcategory in $\Rep(\bbI_{\mathbf{\Sigma}}^\wedge)$, and that it contains the essential image of~\eqref{eqn:functor-BSRep-Rep}.

The following statement is an analogue of the statements~\cite[Proposition~2.7 and Lemma~2.9]{brHecke}, and its proof is very similar.

\begin{prop}
\label{prop:Abe-Rep-wedge}
There exists a canonical fully faithful monoidal functor
\[
\Rep_{\mathrm{fl}}(\bbI_{\mathbf{\Sigma}}^\wedge) \to \sfK^{\wedge}
\]
sending $\scM_w^\wedge$ to $F_w$ for any $w \in \bW$ and $\scB^\wedge_s$ to $B_s$ for any $s \in \bSf$.
\end{prop}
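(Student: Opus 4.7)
The plan is to attach to each $V\in\Rep_{\mathrm{fl}}(\bbJ_{\bD}^\wedge)$ its underlying $R$-bimodule (where $R:=\scO(\bT^\wedge)$, using the identification of $\scO(\bD^\wedge)$-modules with $R$-bimodules whose left and right actions of $R^{\bWf}$ agree, from Lemma~\ref{lem:Dw-fiber-prod}), together with a canonical $\bW$-grading on the generic fiber $V\otimes_R Q$ (with $Q:=\mathrm{Frac}(R)$) built from two compatible ingredients. First, because $R$ is finite and free over $R^{\bWf}=\scO((\bT/\bWf)^\wedge)$ and the morphism $\bT\to\bT/\bWf$ is étale at the generic point, $Q/Q^{\bWf}$ is a finite Galois extension with group $\bWf$, and the classical isomorphism $Q\otimes_{Q^{\bWf}}Q\simto\prod_{x\in\bWf}Q$ induces a $\bWf$-graded decomposition $V\otimes_R Q=\bigoplus_{x\in\bWf}V^{(x)}_Q$ on which the bimodule identity $m\cdot r=x(r)\cdot m$ holds tautologically. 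Second, the base change of Lemma~\ref{lem:morph-gp-schemes-Jreg} along the second projection $\bD\to\bT\cong\Sigma_{\bT}$ produces a morphism of group schemes $\bbJ_{\bD}^\wedge\to\bD^\wedge\times\bT$ that is an isomorphism over the locus mapping into $\bT_{\circ}$; since the generic point of $R$ factors through $\bT_{\circ}$, the $\bbJ_{\bD}^\wedge$-action on $V\otimes_R Q$ factors through a $\bT$-action, which decomposes each $V^{(x)}_Q$ into $X^*(\bT)$-weight spaces. The weight support is finite since $V\otimes_R Q$ is finite-dimensional over $Q$ (because $V$ is coherent over the finite $R$-algebra $\scO(\bD^\wedge)$). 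I then set
\[
(V\otimes_R Q)^{w}:=\bigl(V^{(x)}_Q\bigr)_\lambda\quad\text{for}\quad w=x\st(\lambda)\in\bW,
\]
and since $\st(\lambda)$ acts trivially on $R$, the bimodule identity $m\cdot r=w(r)\cdot m$ on $(V\otimes_R Q)^w$ reduces to the $\bWf$-graded identity.

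Functoriality is automatic, as any morphism of $\bbJ_{\bD}^\wedge$-representations is $\scO(\bD^\wedge)$-linear (preserving the $\bWf$-grading) and equivariant (preserving the generic $\bT$-action). For monoidality, the $\bWf$-graded decomposition is compatible with tensor products by naturality of Galois descent, and the $X^*(\bT)$-part is compatible because induced $\bT$-actions add weights on tensor products of $\bbJ_{\bD}^\wedge$-representations. The identification on generators is a direct verification: $\scM^\wedge_w$ for $w\in\bWf$ has underlying bimodule $F_w$ and trivial $\bT$-action, while $\scM^\wedge_{\st(\lambda)}$ has underlying bimodule $F_e=R$ with tautological $\bT$-action given by $\lambda$ (see Lemma~\ref{lem:line-bundles-J}); combining these via $\circledast$ yields $\scM^\wedge_w\mapsto F_w$ for all $w\in\bW$. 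For $s\in\bSf$, $\scB^\wedge_s$ is literally $R\otimes_{R^s}R$ as an $R$-bimodule with trivial $\bT$-action, hence maps to $B_s$.

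For fully faithfulness, injectivity on $\Hom$-sets is clear since both sides embed into $\Hom_{\scO(\bD^\wedge)}(V,V')$ via the same underlying bimodule map. The surjectivity is the main technical content: given a bimodule map $f:V\to V'$ respecting the $\bW$-grading on generic fibers, I must show that $f$ is $\bbJ_{\bD}^\wedge$-equivariant. The equivariance is a linear equation in $\Hom_{\scO(\bD^\wedge)}(V,V'\otimes_{\scO(\bD^\wedge)}\scO(\bbJ_{\bD}^\wedge))$, and by right-$R$-flatness of $V,V'$ combined with flatness of $\scO(\bbJ_{\bD}^\wedge)$ over $\scO(\bD^\wedge)$ (Lemma~\ref{lem:J-smooth}), this $\Hom$-space embeds into its base change along $R\to Q$. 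At the generic fiber, $\bbJ_{\bD}^\wedge$ is trivialized to $\bT$, so $\bbJ_{\bD}^\wedge$-equivariance of $f\otimes Q$ reduces to $\bT$-equivariance, equivalently to preservation of $X^*(\bT)$-weights, which is part of the $\bW$-grading assumption on $f$. The main obstacle throughout is the careful bookkeeping ensuring the two sources of grading combine compatibly under the monoidal product; the $\Rep_{\mathrm{fl}}$ hypothesis is crucial precisely to enable the generic-fiber arguments in this last step.
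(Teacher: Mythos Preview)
Your proposal is correct and follows essentially the same approach as the paper: the underlying bimodule is taken as is, the $\bWf$-part of the grading comes from the Galois/\'etale decomposition of $\bD$ over the regular semisimple locus (the paper phrases this via the isomorphism $\bWf\times\bT_\circ\simto\bT_\circ\times_{\bT_\circ/\bWf}\bT_\circ$ rather than directly as $Q\otimes_{Q^{\bWf}}Q\cong\prod_{x\in\bWf}Q$, but these are equivalent), the $X^*(\bT)$-part comes from the trivialization $\bbJ_{\bT,\circ}\cong\bT_\circ\times\bT$ from Lemma~\ref{lem:morph-gp-schemes-Jreg}, and fully faithfulness is checked at the generic fiber using right-$R$-flatness of the target and flatness of $\scO(\bbJ_{\bD}^\wedge)$ over $\scO(\bD^\wedge)$. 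One minor remark: in the fullness step only the flatness of $V'$ (not of $V$) is actually needed, since the equivariance equation lives in the target $V'\otimes_{\scO(\bD^\wedge)}\scO(\bbJ_{\bD}^\wedge)$, which embeds into its tensor with $Q$ by flatness.
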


\begin{proof}
We start by constructing a functor
\begin{equation}
\label{eqn:functor-Rep-Abe}
\Rep(\bbI_{\mathbf{\Sigma}}^\wedge) \to \sfK'(\scO(\FN_{\bT}(\{e\}))).
\end{equation}
Recall the open subscheme $\bT_\circ \subset \bT$, which is defined by the function $\prod_\alpha (\alpha-1)$ where $\alpha$ runs over the roots of $(\bG, \bT)$, see~\S\ref{ss:rs}. We have an open embedding $\bT_\circ / \bWf \subset \bT / \bWf$ and an isomorphism
\begin{equation}
\label{eqn:decomp-Dwedge-circ}
\bWf \times \bT_\circ \simto \bT_\circ \times_{\bT_\circ/\bWf} \bT_\circ,
\end{equation}
see~\S\ref{eqn:fiber-prod-T0}.
Let us denote by 
$\bbJ_{\bT,\circ}$ the restriction of $\bbJ_{\bT}$ to $\bT_\circ$. Then it follows from Lemma~\ref{lem:morph-gp-schemes-Jreg} that we have a canonical isomorphism of group schemes
\begin{equation}
\label{eqn:Jwedge-circ}
\bbJ_{\bT,\circ} \simto \bT_\circ \times \bT.
\end{equation}

We are now ready to explain the construction of the functor~\eqref{eqn:functor-Rep-Abe}. Starting from an object $M$ in $\Rep(\bbI_{\mathbf{\Sigma}}^\wedge)$, the underlying $\scO(\FN_{\bT}(\{e\}))$-bimodule of its image is simply taken as $M$ with its given $\scO(\FN_{\bT \times_{\bT/\bWf} \bT}(\{(e,e)\}))$-module structure. Next, using Lemma~\ref{lem:Dw-fiber-prod} and~\eqref{eqn:decomp-Dwedge-circ} we obtain a canonical isomorphism
\[
\FN_{\bT \times_{\bT/\bWf} \bT}(\{(e,e)\}) \times_{\bT / \bWf} \bT_\circ / \bWf \cong \bWf \times \bigl( \FN_{\bT}(\{e\}) \times_{\bT} \bT_\circ \bigr).
\]
This implies that $M \otimes_{\scO(\bT/\bWf)} \scO(\bT_\circ/\bWf)$ has a canonical decomposition as a direct sum parametrized by $\bWf$. Moreover, each graded component has a natural structure of representation of the group scheme
\[
\FN_{\bT}(\{e\}) \times_{\bT} \bbJ_{\bT,\circ},
\]
which by~\eqref{eqn:Jwedge-circ} identifies with
\[
\bigl( \FN_{\bT}(\{e\}) \times_{\bT} \bT_\circ \bigr) \times \bT.
\]
This component therefore admits a canonical grading by $X^*(\bT)$. We can then obtain a decomposition of $M \otimes_{\scO(\bT/\bWf)} \scO(\bT_\circ / \bWf )$ parametrized by $\bW$ by defining the summand associated with $\st(\lambda) w$ ($\lambda \in X^*(\bT)$, $w \in \bWf$) as the $\lambda$-graded part in the summand associated with $w$. Now the morphism
\[
\scO(\FN_{\bT}(\{e\})) \to \mathrm{Frac}(\scO(\FN_{\bT}(\{e\})))
\]
factors through the morphism $\scO(\FN_{\bT}(\{e\})) \to \scO(\FN_{\bT}(\{e\})) \otimes_{\scO(\bT/\bWf)} \scO(\bT_\circ / \bWf )$, and we have
\begin{multline*}
M \otimes_{\scO(\FN_{\bT}(\{e\}))} \mathrm{Frac}(\scO(\FN_{\bT}(\{e\}))) = \\
\bigl( M \otimes_{\scO(\bT/\bWf)} \scO(\bT_\circ / \bWf ) \bigr) \otimes_{\scO(\FN_{\bT}(\{e\})) \otimes_{\scO(\bT/\bWf)} \scO(\bT_\circ / \bWf )} \mathrm{Frac}(\scO(\FN_{\bT}(\{e\}))).
\end{multline*}
From the decomposition of $M \otimes_{\scO(\bT/\bWf)} \scO(\bT_\circ / \bWf )$ parametrized by $\bW$ we therefore obtain a decomposition of $M \otimes_{\scO(\FN_{\bT}(\{e\}))} \mathrm{Frac}(\scO(\FN_{\bT}(\{e\})))$ parametrized by $\bW$, which
finishes the construction of the functor~\eqref{eqn:functor-Rep-Abe}.

It is clear from construction that our functor~\eqref{eqn:functor-Rep-Abe} has a canonical monoidal structure, and takes values in objects whose underlying $\scO(\FN_{\bT}(\{e\}))$-bimodule is finitely generated and flat as a right $\scO(\FN_{\bT}(\{e\}))$-module. 
It therefore restricts to a monoidal functor
\[
\Rep_{\mathrm{fl}}(\bbI_{\mathbf{\Sigma}}^\wedge) \to \sfK^{\wedge}.
\]

We now need to prove that this functor is fully faithful. Morphisms in both of these categories are by definition certain morphisms of $\scO(\FN_{\bT \times_{\bT/\bWf} \bT}(\{(e,e)\}))$-modules; the functor is therefore faithful. If $M$ and $N$ are in $\Rep_{\mathrm{fl}}(\bbI_{\mathbf{\Sigma}}^\wedge)$, 
a morphism in $\sfK^{\wedge}$ from the image of $M$ to the image of $N$ is a morphism $f : M \to N$ of $\scO(\FN_{\bT \times_{\bT/\bWf} \bT}(\{(e,e)\}))$-modules such that the induced morphism
\[
M \otimes_{\scO(\FN_{\bT}(\{e\}))} \mathrm{Frac}(\scO(\FN_{\bT}(\{e\}))) \to N \otimes_{\scO(\FN_{\bT}(\{e\}))} \mathrm{Frac}(\scO(\FN_{\bT}(\{e\})))
\]
is a morphism of representations of the group scheme
\[
\bbI_{\mathbf{\Sigma}}^\wedge \times_{\FN_{\bT}(\{e\})} \Spec(\mathrm{Frac}(\scO(\FN_{\bT}(\{e\}))))
\]
over $\FN_{\bT \times_{\bT/\bWf} \bT}(\{(e,e)\}) \times_{\FN_{\bT}(\{e\})} \Spec(\mathrm{Frac}(\scO(\FN_{\bT}(\{e\}))))$. To check that $f$ is a morphism of representations of $\bbI_{\mathbf{\Sigma}}^\wedge$ we need to check that the two natural morphisms
\[
M \to N \otimes_{\scO(\FN_{\bT \times_{\bT/\bWf} \bT}(\{(e,e)\}))} \scO(\bbI_{\mathbf{\Sigma}}^\wedge)
\]
constructed out of it
coincide. Now since $N$ is flat over $\scO(\FN_{\bT}(\{e\}))$ (for the action on the right) and $\scO(\bbI_{\mathbf{\Sigma}}^\wedge)$ is flat over $\scO(\FN_{\bT \times_{\bT/\bWf} \bT}(\{(e,e)\}))$, the right-hand side is flat over $\scO(\FN_{\bT}(\{e\}))$ (for the action on the right), so that to check this condition it suffices to prove that the induced morphisms
\begin{multline*}
M \otimes_{\scO(\FN_{\bT}(\{e\}))} \mathrm{Frac}(\scO(\FN_{\bT}(\{e\}))) \to \\
\bigl( N \otimes_{\scO(\FN_{\bT \times_{\bT/\bWf} \bT}(\{(e,e)\}))} \scO(\bbI_{\mathbf{\Sigma}}^\wedge) \bigr) \otimes_{\scO(\FN_{\bT}(\{e\}))} \mathrm{Frac}(\scO(\FN_{\bT}(\{e\})))
\end{multline*}
coincide, which is exactly the condition given by the fact that $f$ is a morphism in $\sfK^\wedge$.

Finally we prove that our functor
sends each $\scM^\wedge_{w}$ to $F_w$ (for $w \in \bW$) and each $\scB^\wedge_s$ to $B_s$ (for $s \in \bSf$). The case of the objects $\scB^\wedge_s$ is clear. It is clear also that this
functor sends $\scM^\wedge_{\st(\lambda)}$ to $F_{\st(\lambda)}$ for any $\lambda \in X^*(\bT)$, and $\scM^\wedge_x$ to $F_x$ for any $x \in \bWf$. By monoidality, it therefore sends $\scM^\wedge_w$ to $F_w$ for any $w \in \bW$. 
\end{proof}

We can now give the proof of Lemma~\ref{lem:conjugation-B-RepJ}.

\begin{proof}[Proof of Lemma~\ref{lem:conjugation-B-RepJ}]
If $(w_1, s_1')$ and $(w_2, s_2')$ are two pairs of elements as in~\S\ref{ss:Waff} for the same element $s \in \bS \smallsetminus \bSf$, then by Lemma~\ref{lem:conjugation-Bs} the images under the functor of Proposition~\ref{prop:Abe-Rep-wedge} of the objects
\[
\mathscr{M}^\wedge_{w_1} \circledast \mathscr{B}_{s_1'}^\wedge \circledast \mathscr{M}^\wedge_{w_1^{-1}} \quad \text{and} \quad \mathscr{M}^\wedge_{w_2} \circledast \mathscr{B}_{s_2'}^\wedge \circledast \mathscr{M}^\wedge_{w_2^{-1}} 
\]
are canonically isomorphic. By fully faithfulness, this implies that these objects are canonically isomorphic, proving that the definition of $\scB^\wedge_s$ is independent of the choice of $(w,s')$.

The proof of the second claim is similar.
\end{proof}

From the proof of Lemma~\ref{lem:conjugation-B-RepJ} we see that the functor of Proposition~\ref{prop:Abe-Rep-wedge} also sends $\scB^\wedge_s$ to $B_s$ for any $s \in \bS \smallsetminus \bSf$.

\section{Constructible sheaves on affine flag varieties}
\label{ss:const-Fl}

From now on we switch to the ``constructible'' side of our constructions.

\subsection{Affine flag varieties}
\label{ss:Fl}

Let $\F$ be an algebraically closed field of characteristic $p>0$, and let $G$ be a connected reductive algebraic group over $\F$. We fix a Borel subgroup $B \subset G$, whose unipotent radical will be denoted $U$, and a maximal torus $T \subset B$.

Recall that the loop group $\Loop G$, resp.~the positive loop group $\Loop^+ G$, is the group ind-scheme, resp.~group scheme, over $\F$ which represents the functor
\[
 R \mapsto G(R (\hspace{-1pt}(z)\hspace{-1pt}) ), \quad \text{resp.} \quad R \mapsto G(R[\hspace{-1pt}[z]\hspace{-1pt}]),
\]
where $z$ is an indeterminate. By definition $\Loop^+G$ is a subgroup scheme of $\Loop G$, hence one can define the affine Grassmannian $\Gr_G$ as the fppf quotient
\[
 \Gr_G = \bigl( \Loop G/\Loop^+G \bigr)_{\mathrm{fppf}}.
\]
It is well known that $\Gr_G$ is an ind-projective ind-scheme over $\F$.

There exists a canonical morphism of group schemes $\Loop^+G \to G$, induced by the assignment $z \mapsto 0$. The Iwahori subgroup $\Iw \subset \Loop^+ G$ is defined as the inverse image of $B$ under this morphism. The pro-unipotent radical of $\Iw$ is the subgroup $\Iwu \subset \Iw$ defined as the preimage of $U$. We can then define the affine flag variety $\Fl_G$ and the canonical $T$-torsor $\tFl_G$ over $\Fl_G$ as the fppf quotients
\[
 \Fl_G := \bigl( \Loop G/\Iw \bigr)_{\mathrm{fppf}}, \quad \tFl_G := \bigl( \Loop G/\Iwu \bigr)_{\mathrm{fppf}}.
\]
Once again these are ind-schemes of ind-finite type, and $\Fl_G$ is ind-projective.
The embeddings $\Iwu \subset \Iw \subset \Loop^+G$ induce natural morphisms
\begin{equation}
\label{eqn:morph-Fl}
 \tFl_G \to \Fl_G \to \Gr_G.
\end{equation}
It is well known that the second morphism is a Zariski locally trivial fibration with fibers $G/B$, and that the natural action of $T$ on $\tFl_G$ (induced by right multiplication on $\Loop G$) exhibits $\tFl_G$ as a Zariski locally trivial $T$-torsor over $\Fl_G$; this map will be denoted $\pi : \tFl_G \to \Fl_G$.

Consider the coweight lattice $X_*(T)$. The choice of the Borel subgroup $B$ determines a system of positive roots for $(G,T)$ (chosen as the set of $T$-weights in $\mathrm{Lie}(G)/\mathrm{Lie}(B)$), which then define a subset $X_*^+(T) \subset X_*(T)$ of dominant coweights. We will denote by $\preceq$ the order on $X_*(T)$ such that $\lambda \preceq \mu$ if and only if $\mu-\lambda$ is a sum of positive coroots.

Recall that the $\Loop^+G$-orbits on $\Gr_G$ (for the action induced by left multiplication on $\Loop G$) are parametrized by $X_*^+(T)$. Namely, any $\lambda \in X_*(T)$ determines a point $z^\lambda \in \Loop G$, and for $\lambda \in X_*^+(T)$ we denote by $\Gr_G^\lambda$ the $\Loop^+G$-orbit of the image of $z^\lambda$ in $\Gr_G$ (with its reduced subscheme structure). We then have
\[
 (\Gr_G)_{\mathrm{red}} = \bigsqcup_{\lambda \in X_*^+(T)} \Gr_G^\lambda.
\]

We will denote by $\Wf=\mathrm{N}_G(T)/T$ the Weyl group of $(G,T)$. The choice of $B$ determines a system $\Sf \subset \Wf$ of simple reflections, such that $(\Wf,\Sf)$ is a Coxeter system.
The orbits of $\Iw$ on $\Fl_G$ are naturally parametrized by the extended affine Weyl group
\[
 W := \Wf \ltimes X_*(T).
\]
Namely, let us fix for any $v \in \Wf$ a lift $\dot{v} \in \mathrm{N}_G(T)$. Let $w \in W$, and write $w = \st(\lambda) v$ with $v \in \Wf$ and $\lambda \in X_*(T)$. (Here and below, $\st(\lambda)$ denotes the image of $\lambda$ in $W$.) Then if we denote by $\Fl_{G,w} \subset \Fl_G$ the $\Iw$-orbit of the image in $\Fl_G$ of $z^{\lambda} \dot{v}$ (again with its reduced subscheme structure), we have
\begin{equation}
\label{eqn:orbits-Fl}
 (\Fl_G)_{\mathrm{red}} = \bigsqcup_{w \in W} \Fl_{G,w}.
\end{equation}
It is well known also that each $\Fl_{G,w}$ is an $\Iwu$-orbit, isomorphic to an affine space.

For $w \in W$ we will set
\[
\ell(w)=\dim(\Fl_{G,w}), \quad \tFl_{G,w} = \pi^{-1}(\Fl_{G,w}).
\]
For $w \in \Wf$, $\ell(w)$ is the length of $w$ for the Coxeter group structure on $\Wf$ considered above. We will also set $\Omega:=\{w \in W \mid \ell(w)=0\}$.

\subsection{\texorpdfstring{$\Iw$}{I}-equivariant sheaves on the affine flag variety and convolution}
\label{ss:DbIw}

Let $\bk$ be an algebraic closure of a finite field of characteristic $\ell \neq p$. We can then consider the $\Iw$-equivariant derived category of \'etale sheaves on $\Fl_G$ with coefficients in $\bk$, which we will denote by
\[
 \mathsf{D}_{\Iw,\Iw}.
\]
(The definition of this category requires a little bit of care but is standard; see~\cite[\S\S4.1--4.2]{brr} for some details. The complexes on ind-schemes that we consider will always be supported on a finite-type subscheme. Similar comments apply to several constructions below, where we will ``pretend'' that some group schemes of infinite type are honest algebraic groups for notational simplicity.) This category admits a natural perverse t-structure, whose heart will be denoted $\mathsf{P}_{\Iw,\Iw}$.
For each $w \in W$, we will denote by $j_w : \Fl_{G,w} \to \Fl_G$ the (locally closed) embedding, and set
\[
 \Delta^{\Iw}_w := (j_w)_! \underline{\bk}_{\Fl_{G,w}}[\ell(w)], \quad \nabla^{\Iw}_w := (j_w)_* \underline{\bk}_{\Fl_{G,w}}[\ell(w)].
\]
These define objects in $\mathsf{D}_{\Iw,\Iw}$, which are in fact perverse sheaves since $j_w$ is an affine morphism.

The simple objects in the category $\mathsf{P}_{\Iw,\Iw}$ are naturally labelled by $W$. Namely, if for $w \in W$ we denote by $\IC_w$ the intersection cohomology complex associated with the constant local system on $\Fl_{G,w}$ (in other words, the image of the unique---up to scalar---nonzero morphism $\Delta^{\Iw}_w \to \nabla^{\Iw}_w$) then the assignment $w \mapsto \IC_w$ induces a bijection between $W$ and the set of isomorphism classes of simple objects in $\mathsf{P}_{\Iw,\Iw}$.

The category $\mathsf{D}_{\Iw,\Iw}$ also admits a natural convolution product, whose definition we briefly recall. First we consider the ind-scheme $\Fl_G \, \widetilde{\times} \, \Fl_G$, defined as the (fppf) quotient of $\Loop G \times \Fl_G$ by the action of $\Iw$ defined by $g \cdot (h,x)=(hg^{-1},g \cdot x)$. The multiplication map in $\Loop G$ induces a proper morphism $m : \Fl_G \, \widetilde{\times} \, \Fl_G \to \Fl_G$. Then, given $\scF,\scG$ in $\mathsf{D}_{\Iw,\Iw}$, there exists a unique object $\scF \, \widetilde{\boxtimes} \, \scG$ in the $\Iw$-equivariant derived category of $\Fl_G \, \widetilde{\times} \, \Fl_G$ whose pullback to $\Loop G \times \Fl_G$ is the exterior product of the pullback of $\scF$ to $\Loop G$ with $\scG$; then we set
\[
 \scF \star_\Iw \scG := m_!(\scF \, \widetilde{\boxtimes} \, \scG).
\]
With this construction the pair $(\mathsf{D}_{\Iw,\Iw},\star_\Iw)$ is a monoidal category, with unit object $\delta_{\Fl} := \IC_e$.

\begin{rmk}
\label{rmk:tilde-times}
 Below, given $X,Y$ some $\Iw$-invariant subschemes in $\Fl_G$, we will also denote by $X \, \tilde{\times} \, Y$ the quotient of $X' \times Y$ by the $\Iw$-action induced by that on $\Loop G \times \Fl_G$ considered above, where $X'$ is the preimage of $X$ in $\Loop G$.
\end{rmk}

Similarly one can consider the $\Iwu$-equivariant derived category of \'etale $\bk$-sheaves on $\Fl_G$, which will be denoted
\[
 \mathsf{D}_{\Iwu,\Iw}.
\]
This category admits a natural perverse t-structure, whose heart will be denoted $\mathsf{P}_{\Iwu,\Iw}$. We have a canonical t-exact ``forgetful'' functor
\begin{equation}
\label{eqn:For-Iw}
 \For^{\Iw}_{\Iwu} : \mathsf{D}_{\Iw,\Iw} \to \mathsf{D}_{\Iwu,\Iw},
\end{equation}
and the simple objects in the category $\mathsf{P}_{\Iwu,\Iw}$ are (up to isomorphism) the objects $\For^{\Iw}_{\Iwu}(\IC_w)$. 
We also have a canonical right action of $\mathsf{D}_{\Iw,\Iw}$ on $\mathsf{D}_{\Iwu,\Iw}$, defined by a bifunctor
\begin{equation}
\label{eqn:conv-Iwu-Iw}
 \mathsf{D}_{\Iwu,\Iw} \times \mathsf{D}_{\Iw,\Iw} \to \mathsf{D}_{\Iwu,\Iw}
\end{equation}
whose construction repeats exactly the definition of $\star_\Iw$; this bifunctor will also be denoted $\star_\Iw$. With this definition we have a canonical isomorphism
\[
 \For^{\Iw}_{\Iwu}(\scF \star_\Iw \scG) \cong \For^{\Iw}_{\Iwu}(\scF) \star_\Iw \scG
\]
for any $\scF,\scG$ in $\mathsf{D}_{\Iw,\Iw}$.

Since each $\Iwu$-orbit on $\Fl_G$ is isomorphic to an affine space, the methods of~\cite[\S\S 3.2--3.3]{bgs} show that $\mathsf{P}_{\Iwu,\Iw}$ has a natural structure of highest weight category in the sense considered e.g.~in~\cite[\S 2.1]{rw}, with underlying poset $W$ (endowed with the Bruhat order) and standard, resp.~costandard, object attached to $w$ the perverse sheaf $\For^{\Iw}_{\Iwu}(\Delta_w^{\Iw})$, resp.~$\For^{\Iw}_{\Iwu}(\nabla_w^{\Iw})$. In particular we have a notion of tilting object in this category (namely, objects which admit both a filtration with subquotients of the form $\For^{\Iw}_{\Iwu}(\Delta_w^{\Iw})$, and a filtration with subquotients of the form $\For^{\Iw}_{\Iwu}(\Delta_w^{\Iw})$), and the isomorphism classes of indecomposable tilting objects are in a natural bijection with $W$. The indecomposable tilting object associated with $w$ will be denoted $\scT_w$.

\subsection{Central sheaves -- properties}
\label{ss:central-sheaves-prop}

We now consider the action of $\Loop^+G$ on $\Gr_G$, and denote by
\[
\mathsf{D}_{\Loop^+G,\Loop^+G}
\]
the $\Loop^+G$-equivariant derived category of \'etale sheaves on $\Gr_G$ with coefficients in $\bk$. As for $\mathsf{D}_{\Iw,\Iw}$ we have a convolution bifunctor $\star_{\Loop^+G}$ on this category, which endows it with a monoidal structure. We also have a perverse t-structure, whose heart will be denoted
\[
 \sfP_{\Loop^+G,\Loop^+G}.
\]
It is a standard but crucial fact that this subcategory is stable under the bifunctor $\star_{\Loop^+G}$;
one can therefore consider the monoidal category $(\sfP_{\Loop^+G,\Loop^+G},\star_{\Loop^+G})$. This category is the main ingredient of the \emph{geometric Satake equivalence} of~\cite{mv}, which provides a canonical connected reductive algebraic group $G^\vee_\bk$ over $\bk$ with a maximal torus $T^\vee_\bk$ such that the root datum of $(G^\vee_\bk, T^\vee_\bk)$ is dual to that of $(G,T)$, and a canonical equivalence of monoidal categories
\[
 \Sat : (\sfP_{\Loop^+G,\Loop^+G},\star_{\Loop^+G}) \simto (\Rep(G^\vee_\bk),\otimes).
\]
(See~\cite[\S 4.1]{brr} for more precise references.)
The unit object in the category $\sfP_{\Loop^+G,\Loop^+G}$ will be denoted $\delta_\Gr$. (This object is the skyscraper sheaf at the base point of $\Gr_G$.) We will also denote by $B^\vee_\bk$ the Borel subgroup of $G^\vee_\bk$ containing $T^\vee_\bk$ such that the $T^\vee_\bk$-weights in the Lie algebra of $B^\vee_\bk$ are the negative coroots.

Below we will apply the constructions of Sections~\ref{sec:Coh-St}--\ref{sec:Hecke-cat} to the group $\bG=G^\vee_\bk$; in that setting, the groups $\Wf$ and $W$ identify with the groups $\bWf$ and $\bW$ considered in those sections, and their structures (in particular, the function $\ell$) also identify. We will denote by $S \subset W$ the subset corresponding to $\bS \subset \bW$.

Recall that the main construction of~\cite{gaitsgory} (reviewed in detail in~\cite{ar-book}) provides a canonical monoidal functor
\[
 \sfZ : \mathsf{D}_{\Loop^+G,\Loop^+G} \to \mathsf{D}_{\Iw,\Iw}.
\]
For $\scA,\scB$ in $\mathsf{D}_{\Loop^+G,\Loop^+G}$ we will denote by
\[
 \phi_{\scA,\scB} : \sfZ(\scA \star_{\Loop^+ G} \scB) \simto \sfZ(\scA) \star_\Iw \sfZ(\scB)
\]
the associated ``monoidality'' isomorphism. 

This functor has a number of favorable properties, which are listed in~\cite[\S 4]{brr}. Among these properties, we note the following for later use.
\begin{enumerate}
\item
The functor $\sfZ$ is t-exact with respect to the perverse t-structures.
\item
For any $\scA$ in $\mathsf{D}_{\Loop^+G,\Loop^+G}$ and $\scF$ in $\mathsf{D}_{\Iw,\Iw}$, there exists a canonical isomorphism
\[
\sigma_{\scA,\scF} : \sfZ(\scA) \star_{\Iw} \scF \simto \scF \star_{\Iw} \sfZ(\scA),
\]
and $\sfZ$, together with the isomorphisms $\phi$ and $\sigma$, define a central functor from $\sfP_{\Loop^+G,\Loop^+G}$ to $\mathsf{D}_{\Iw,\Iw}$ in the sense of~\cite{bez}; in other words these data define a braided monoidal functor from $\sfP_{\Loop^+G,\Loop^+G}$ to the Drinfeld center of $\mathsf{D}_{\Iw,\Iw}$ (with respect to the commutativity constraint on $\sfP_{\Loop^+G,\Loop^+G}$ and the natural braiding on the Drinfeld center).
\item
Since it is defined by nearby cycles, the functor $\sfZ$ comes with a ``monodromy'' automorphism $\sm$, such that for $\scA,\scB$ in $\mathsf{D}_{\Loop^+G,\Loop^+G}$ the isomorphism $\phi_{\scA,\scB}$ intertwines $\sm_{\scA \star_{\Loop^+G} \scG}$ with $\sm_{\scA} \star_{\Iw} \sm_{\scB}$.
 \end{enumerate}


For simplicity, from now on we fix a total order $\leq$ on $X_*(T)$ compatible with the dominance order, i.e.~such that if $\lambda,\mu \in X_*(T)$ are such that $\mu \preceq \lambda$ then $\mu \leq \lambda$. Recall that in $\mathsf{D}_{\Iw,\Iw}$ we have the \emph{Wakimoto sheaves} $(\Wak_\lambda : \lambda \in X_*(T))$, see~\cite[\S 4.5]{brr}, which are perverse sheaves such that for any $\lambda,\mu \in X_*(T)$ we have a canonical isomorphism
\begin{equation}
\label{eqn:convolution-Wak}
\Wak_\lambda \star_\Iw \Wak_\mu \cong \Wak_{\lambda+\mu}.
\end{equation}
 (The construction of these objects is due to Mirkovi{\'c}, and appears in particular in~\cite{ab}.)
Recall that an object $\scF$ of $\mathsf{P}_{\Iw,\Iw}$ (resp.~$\mathsf{P}_{\Iwu,\Iw}$) is said to \emph{admit a Wakimoto filtration} if there exists a finite filtration on $\scF$ such that each subquotient is of the form $\Wak_\lambda$ (resp.~$\For^{\Iw}_{\Iwu}(\Wak_\lambda)$) for some $\lambda \in X_*(T)$. In this case, there exists a unique filtration $(\scF_{\leq \lambda} : \lambda \in X_*(T))$ on $\scF$ such that $\scF_{\leq \lambda}=\{0\}$ for some $\lambda$, $\scF_{\leq \mu}=\scF$ for some $\mu$,  and $\scF_{\leq \lambda} / \scF_{< \lambda}$ is a direct sum of copies of $\Wak_\lambda$ for each $\lambda \in X_*(T)$. (Here, $\scF_{< \lambda}$ means $\scF_{\leq \lambda'}$ where $\lambda'$ is the predecessor of $\lambda$.) Moreover this filtration is functorial: if $\scF,\scG$ admit Wakimoto filtrations and $f : \scF \to \scG$ is any morphism, then $f(\scF_{\leq \lambda}) \subset \scG_{\leq \lambda}$ for any $\lambda \in X_*(T)$; this allows to define the functor $\gr_\lambda$ sending an object $\scF$ which admits a Wakimoto filtration to
\[
\gr_\lambda(\scF):=\scF_{\leq \lambda}/\scF_{< \lambda}.
\]

This notion is relevant in the present context thanks to a result of Arkhipov and the first author (see~\cite[Theorem~4]{ab}; see also~\cite[\S 4.4]{ar-book} for the extension to positive-characteristic coefficients) which claims that $\sfZ(\scA)$ admits a Wakimoto filtration for any $\scA$ in $\sfP_{\Loop^+G,\Loop^+G}$, and that moreover the multiplicity of $\Wak_\lambda$ in $\gr_\lambda(\sfZ(\scA))$ is the dimension of the $\lambda$-weight space of $\Sat(\scA)$.

\subsection{Central sheaves -- construction}
\label{ss:central-sheaves-const}

For later reference, we now briefly recall how the functor $\sfZ$ and the relevant isomorphisms are constructed.
This functor is defined using nearby cycles associated with an ind-scheme
\[
 \Gr_G^{\mathrm{Cen}} \to \mathbb{A}_\F^1
\]
called the central affine Grassmannian,
whose fiber over $0$ identifies canonically with $\Fl_G$, and whose restriction to $\mathbb{A}_\F^1\smallsetminus \{0\}$ identifies (again, canonically) with $\Gr_G \times (\mathbb{A}_\F^1\smallsetminus \{0\})$. 
We have a smooth affine group scheme $\mathcal{G}$ over $\mathbb{A}_\F^1$ whose restriction to $\mathbb{A}^1_\F \smallsetminus \{0\}$ identifies with $G \times (\mathbb{A}^1_\F \smallsetminus \{0\})$, and whose group of $\F[ \hspace{-1pt} [z] \hspace{-1pt} ]$-points is $\Iw$. In~\cite[\S 2.2.3]{ar-book} the construction of this group scheme is explained (following Zhu) using fpqc descent. Following~\cite{mrr}, this group scheme also admits another equivalent description, as the N\'eron blowup of $G \times \mathbb{A}^1_\F$ in $B$ along the divisor $\{0\} \subset \mathbb{A}_\F^1$; see in particular~\cite[Example~3.3]{mrr}. Then $\Gr_G^{\mathrm{Cen}}$ is defined as the $\F$-scheme which represents the functor sending an $\F$-algebra $R$ to the set of isomorphism classes of triples $(y,\mathcal{E},\beta)$ where $y \in \mathbb{A}^1_\F(R)$, $\mathcal{E}$ is a principal $\mathcal{G}$-bundle over $\mathbb{A}^1_R$, and $\beta$ is a trivialization of $\mathcal{E}$ over $\mathbb{A}^1_R \smallsetminus \Gamma_y$ (where $\Gamma_y \subset \mathbb{A}^1_R$ is the graph of $y$). Using the Beauville--Laszlo descent theorem (see~\cite[Remark~2.2.12]{ar-book} for details) one sees that $\Gr_G^{\mathrm{Cen}}(R)$ also classifies isomorphism classes of triples $(y,\mathcal{E}',\beta')$ where $y$ is as above, $\mathcal{E}'$ is a principal $\mathcal{G}$-bundle over the completion $\widehat{\Gamma}_y$ of $\mathbb{A}^1_R$ along $\Gamma_y$, and $\beta'$ is a trivialization on $\widehat{\Gamma}_y \smallsetminus \Gamma_y$. Using this description, the identification
\[
\{0\} \times_{\mathbb{A}_\F^1} \Gr_G^{\mathrm{Cen}} = \Fl_G
\]
simply follows from the fact that $\Fl_G$ represents the functor sending $R$ to isomorphism classes of pairs consisting of a principal $\mathcal{G}_{|\mathrm{Spec}(\F[ \hspace{-1pt} [z] \hspace{-1pt} ])}$-bundle over $\mathrm{Spec}(R[ \hspace{-1pt} [z] \hspace{-1pt} ])$ together with a trivialization over $\mathrm{Spec}(R( \hspace{-1pt} (z) \hspace{-1pt} ))$; see~\cite[Proposition~2.2.6]{ar-book} for details. (Here, $\mathcal{G}_{|\mathrm{Spec}(\F[ \hspace{-1pt} [z] \hspace{-1pt} ])}$ is the Iwahori group scheme attached to $B$.) The identification
\[
(\mathbb{A}^1_\F \smallsetminus \{0\}) \times_{\mathbb{A}^1_\F} \Gr_G^{\mathrm{Cen}} = \Gr_G \times (\mathbb{A}^1_\F \smallsetminus \{0\})
\]
is obtained using the similar moduli description of $\Gr_G$ (in terms of $G$-bundles, see~\cite[Proposition~2.2.2]{ar-book}) and the additive structure on $\mathbb{A}^1_\F$, which allows to identify $\widehat{\Gamma}_y$ with $\mathrm{Spec}(R[ \hspace{-1pt} [z] \hspace{-1pt} ])$.

The study of this functor also involves another scheme over $\mathbb{A}^1_\F$, denoted $\Gr_G^{\mathrm{BD}}$ and called the \emph{Be{\u\i}linson--Drinfeld affine Grassmannian}. This ind-scheme represents the functor sending an $\F$-algebra $R$ to isomorphism classes of triples $(y,\mathcal{E},\beta)$ where $y$ and $\mathcal{E}$ are as above, but now $\beta$ is a trivialization on $\mathbb{A}^1_R \smallsetminus (\Gamma_0 \cup \Gamma_y)$. (Here, $\Gamma_0=\{0\} \times \mathrm{Spec}(R) \subset \mathbb{A}^1_R$ is the graph of the constant point with value $0$.) We still have an identification
\[
\{0\} \times_{\mathbb{A}^1_\F} \Gr_G^{\mathrm{BD}} = \Fl_G,
\]
but now we have
\[
(\mathbb{A}^1_\F \smallsetminus \{0\}) \times_{\mathbb{A}^1_\F} \Gr_G^{\mathrm{BD}} = \Gr_G \times \Fl_G \times (\mathbb{A}^1_\F \smallsetminus \{0\}),
\]
see~\cite[Lemma~2.3.16]{ar-book}. As explained in~\cite[\S 3.2.1]{ar-book}, nearby cycles along $\Gr_G^{\mathrm{BD}} \to \mathbb{A}^1_\F$ define a bifunctor
\[
\mathsf{Y} : \mathsf{D}_{\Loop^+G, \Loop^+G} \times \mathsf{D}_{\Iw,\Iw} \to \mathsf{D}_{\Iw,\Iw}.
\]
By~\cite[Theorem~3.2.3]{ar-book}, for $\scA$ in $D_{\Loop^+G, \Loop^+G}$ and $\scF$ in $D_{\Iw,\Iw}$ we have canonical isomorphisms
\begin{equation}
\label{eqn:isom-C-Z}
\sfZ(\scA) \star_{\Iw} \scF \cong \mathsf{Y}(\scA,\scF) \cong \scF \star_{\Iw} \sfZ(\scA);
\end{equation}
in fact the composition of these isomorphisms is precisely the definition of $\sigma_{\scA,\scF}$.


We will now explain how the functor $\sfZ$, and its various structures, can be entirely described in terms of the bifunctor $\mathsf{Y}$ and some related structures. First,
applying~\eqref{eqn:isom-C-Z} in case $\scF=\delta_{\Fl}$, we see that we have $\sfZ(\scA)=\mathsf{Y}(\scA,\delta_{\Fl})$. This can also be seen more directly (in particular, without using~\eqref{eqn:isom-C-Z}) from the compatibility of nearby cycles with proper pushforward, after we remark that there exists a closed embedding $\Gr_G^{\mathrm{Cen}} \hookrightarrow \Gr_G^{\mathrm{BD}}$: in terms of functors this embedding is obtained by sending a triple $(y,\cE,\beta)$ to the triple $(y,\cE,\beta')$ where $\beta'$ is the restriction of $\beta$ to $\mathbb{A}^1_R \smallsetminus (\Gamma_0 \cup \Gamma_y)$. The restriction of this embedding to $\mathbb{A}^1_\F \smallsetminus \{0\}$ identifies with the natural embedding
\[
\Gr_G \times (\mathbb{A}^1_\F \smallsetminus \{0\})= \Gr_G \times \Fl_{G,e} \times (\mathbb{A}^1_\F \smallsetminus \{0\}) \hookrightarrow \Gr_G \times \Fl_{G} \times (\mathbb{A}^1_\F \smallsetminus \{0\}).
\]

Now we consider the isomorphism $\phi_{\scA,\scB}$. The same arguments as for the construction of this isomorphism (see~\cite[\S 3.4.1]{ar-book}) show that for $\scA,\scB$ in $\mathsf{D}_{\Loop^+G,\Loop^+G}$ and $\scF,\scG$ in $\mathsf{D}_{\Iw,\Iw}$ we have a canonical isomorphism
\begin{equation}
\label{eqn:C-convolution}
\mathsf{Y}(\scA,\scF) \star_{\Iw} \mathsf{Y}(\scB,\scG) \cong \mathsf{Y}(\scA \star_{\Loop^+G} \scB,\scF \star_{\Iw} \scG).
\end{equation}
Using this for $\scF=\scG=\delta_{\Fl}$ and using the identification above we recover the isomorphism $\phi_{\scA,\scB}$.

Finally, we note that the isomorphisms in~\eqref{eqn:isom-C-Z} can be reconstructed from~\eqref{eqn:C-convolution}, using the fact that the functor $\mathsf{Y}(\delta_\Gr,-)$ is the identity. To justify the latter claim one remarks that there exists a natural closed embedding $\Fl_G \times \mathbb{A}^1_\F \hookrightarrow \Gr_G^{\mathrm{BD}}$, obtained using restriction of trivializations as above, and the fact that $\Fl_G \times \mathbb{A}^1_\F$ represents the functor sending an $\F$-algebra $R$ to isomorphism classes of triples $(y,\cE,\beta)$ where $y \in \mathbb{A}^1_\F(R)$, $\cE$ is a $\cG$-bundle on $\mathbb{A}^1_R$, and $\beta$ is a trivialization on $\mathbb{A}^1_R \smallsetminus \Gamma_0$. The restriction of this embedding to $\mathbb{A}^1_\F \smallsetminus \{0\}$ identifies with the natural embedding
\[
\Fl_G \times (\mathbb{A}^1_\F \smallsetminus \{0\}) = \Gr_G^0 \times \Fl_G \times (\mathbb{A}^1_\F \smallsetminus \{0\}) \hookrightarrow \Gr_G \times \Fl_G \times (\mathbb{A}^1_\F \smallsetminus \{0\}).
\]
Since nearby cycles for a constant family identify with the identity functor, one deduces the claim.

Then, applying the isomorphism~\eqref{eqn:C-convolution} with $\scF=\delta_{\Fl}$ and $\scB=\delta_\Gr$ one obtains the first isomorphism in~\eqref{eqn:isom-C-Z}, and applying this isomorphism for $\scA=\delta_\Gr$ and $\scG=\delta_{\Fl}$ one obtains the second one.

\begin{rmk}
\label{rmk:C-noneq}
In order to define the isomorphism $\sigma_{\scA,\scF}$ we need to consider objects in $\mathsf{D}_{\Loop^+G, \Loop^+G}$ and $\mathsf{D}_{\Iw,\Iw}$. But the definition of $\mathsf{Y}$ in terms of nearby cycles makes sense without any equivariant structure. In particular, if we denote by $\Db_c(\Fl_G,\bk)$ the constructible derived category of $\bk$-sheaves on $\Fl_G$, we have a natural bifunctor
\[
 \mathsf{D}_{\Loop^+G, \Loop^+G} \times \mathsf{D}_{\Iwu,\Iw} \to \Db_c(\Fl_G,\bk),
\]
which will again be denoted $\mathsf{Y}$, and which satisfies
\[
\mathsf{Y}(\scA, \For^{\Iw}_{\Iwu}(\scF)) \cong \For^{\Iw}_{\Iwu}(\mathsf{Y}(\scA,\scF))
\]
for $\scA$ in $\mathsf{D}_{\Loop^+G, \Loop^+G}$ and $\scF$ in $\mathsf{D}_{\Iw,\Iw}$. In particular, this bifunctor therefore factors through a bifunctor
\[
 \mathsf{D}_{\Loop^+G, \Loop^+G} \times \mathsf{D}_{\Iwu,\Iw} \to \mathsf{D}_{\Iwu,\Iw},
\]
which will again be denoted $\mathsf{Y}$.
\end{rmk}

Below we will need the following standard property. Recall that we have a ``loop rotation'' action of the multiplicative group $\Gm$ on $\Loop G$.  
(We normalize this action in such a way that for $t \in \bk^\times$ we have $t \cdot z^\lambda=\lambda(t)^{-1} \cdot z^\lambda$.) This action induces actions on $\tFl_G$, $\Fl_G$ and $\Gr_G$ such that the morphisms in~\eqref{eqn:morph-Fl} are equivariant.

\begin{lem}
\label{lem:action-Gm}
There exists a canonical action of $\Gm$ on $\Gr_G^{\mathrm{Cen}}$ such that the morphism $\Gr_G^{\mathrm{Cen}} \to \mathbb{A}^1_\F$ is $\Gm$-equivariant with respect to the standard action on $\mathbb{A}^1_\F$ (by dilation), and whose restriction to $\mathbb{A}^1_\F \smallsetminus \{0\}$, resp.~$\{0\}$, identifies (via the isomorphisms considered above) with the product of the standard action on $\mathbb{A}^1_\F \smallsetminus \{0\}$ and the loop rotation action on $\Gr_G$, resp.~with the loop rotation action on $\Fl_G$.
\end{lem}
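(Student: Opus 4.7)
The plan is to use the moduli-theoretic description of $\Gr_G^{\mathrm{Cen}}$ recalled above, transporting the dilation $\Gm$-action on the base $\mathbb{A}^1$ via a canonical $\Gm$-equivariant structure on the Iwahori group scheme $\mathcal{G}$. First I would construct a canonical $\Gm$-action on $\mathcal{G}$. Denote by $f_t \colon \mathbb{A}^1 \to \mathbb{A}^1$, $z \mapsto tz$, the dilation automorphism associated with $t \in \Gm$. The corresponding $\Gm$-action on $\mathbb{A}^1$ lifts trivially (acting only on the second factor) to an action on $G \times \mathbb{A}^1$ which preserves the closed subscheme $B \times \{0\}$. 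Since $\mathcal{G}$ can be constructed as the N\'eron blowup of $G \times \mathbb{A}^1$ along $B \times \{0\}$ (see~\cite{mrr}), and this operation is functorial with respect to automorphisms of the ambient data preserving the blowup locus, we obtain a canonical $\Gm$-action on $\mathcal{G}$ such that the structural morphism $\mathcal{G} \to \mathbb{A}^1$ is $\Gm$-equivariant. Over $\mathbb{A}^1 \smallsetminus \{0\}$ the identification $\mathcal{G}_{|\mathbb{A}^1 \smallsetminus \{0\}} \cong G \times (\mathbb{A}^1 \smallsetminus \{0\})$ is $\Gm$-equivariant for the trivial action on $G$, while over the formal disc at $0$ the induced $\Gm$-action on the Iwahori group scheme is loop rotation (with the chosen normalization).

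Next I would transport this action to $\Gr_G^{\mathrm{Cen}}$ via its moduli description classifying triples $(y,\mathcal{E},\beta)$. For any $\F$-algebra $R$ and $t \in \Gm(R)$, set
\[
t \cdot (y,\mathcal{E},\beta) := \bigl( ty,\ (f_t)_* \mathcal{E},\ (f_t)_* \beta \bigr),
\]
where $(f_t)_* \mathcal{E}$ is regarded as a $\mathcal{G}$-bundle on $\mathbb{A}^1_R$ using the $\Gm$-equivariant structure on $\mathcal{G}$ produced in the previous step, and where $f_t$ maps $\Gamma_y$ isomorphically to $\Gamma_{ty}$ so that $(f_t)_* \beta$ is indeed a trivialization outside $\Gamma_{ty}$. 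Functoriality in $R$ yields a morphism $\Gm \times \Gr_G^{\mathrm{Cen}} \to \Gr_G^{\mathrm{Cen}}$, and the group axioms follow from those for $t \mapsto f_t$. By construction the projection $\Gr_G^{\mathrm{Cen}} \to \mathbb{A}^1$ is then $\Gm$-equivariant for the dilation action on the target.

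Finally I would verify the compatibility with the identifications of the two strata. Over $\mathbb{A}^1 \smallsetminus \{0\}$, using the equivalent Beauville--Laszlo description in terms of $\mathcal{G}$-bundles on the formal disc $\widehat{\Gamma}_y$, the coordinate change $f_t$ identifies the formal disc around $y$ with the formal disc around $ty$; composed with the translation used to identify the fiber with $\Gr_G$, this induces loop rotation on $\Gr_G$ paired with dilation on $\mathbb{A}^1 \smallsetminus \{0\}$. On the special fiber, where $\mathcal{G}$ restricts to the Iwahori group scheme, the same computation shows that the induced action on $\Fl_G$ coincides with loop rotation. The main obstacle here is purely bookkeeping: carefully matching the sign convention for loop rotation fixed before the lemma (the normalization $t \cdot z^\lambda = \lambda(t)^{-1} z^\lambda$) with the direction of the $\Gm$-action on $z$ used to define $f_t$, and ensuring the identifications $\{0\} \times_{\mathbb{A}^1} \Gr_G^{\mathrm{Cen}} \cong \Fl_G$ and $(\mathbb{A}^1 \smallsetminus \{0\}) \times_{\mathbb{A}^1} \Gr_G^{\mathrm{Cen}} \cong \Gr_G \times (\mathbb{A}^1 \smallsetminus \{0\})$ recalled in~\S\ref{ss:central-sheaves-const} intertwine the two sides exactly as required. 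Once the N\'eron blowup functoriality is invoked, the conceptual content is tautological.
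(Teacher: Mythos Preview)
Your approach is correct and essentially identical to the paper's: both construct the $\Gm$-action on $\mathcal{G}$ from the N\'eron blowup description (the paper phrases this via compatibility of N\'eron blowups with base change along the action map $\Gm \times \mathbb{A}^1 \to \mathbb{A}^1$, which is exactly the functoriality you invoke), and then transport it to $\Gr_G^{\mathrm{Cen}}$ via the moduli description. Your treatment of the fiber identifications is in fact more detailed than the paper's, which leaves that step entirely to the reader.
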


\begin{proof}[Sketch of proof]
First we note that there exists an action of $\Gm$ on $\cG$, compatible with the group structure in the obvious way, such that the projection $\cG \to \mathbb{A}^1_\F$ is $\Gm$-equivariant (with respect to the standard action on $\mathbb{A}^1_\F$), which restricts over $\mathbb{A}^1_\F \smallsetminus \{0\}$ to the action on $G \times (\mathbb{A}^1_\F \smallsetminus \{0\})$ on the second factor, and such that the induced action on $\F[ \hspace{-1pt} [z] \hspace{-1pt} ]$-points is by loop rotation. This action can e.g.~by constructed using the formalism of~\cite{mrr} as follows. By compatibility of N\'eron blowups with base change (see~\cite[Theorem~3.2(6)]{mrr}), the fiber product
\[
\cG \times_{\mathbb{A}^1_\F} (\Gm \times \mathbb{A}^1_\F),
\]
where the morphism $\Gm \times \mathbb{A}_\F^1 \to \mathbb{A}_\F^1$ is the action morphism, is the N\'eron blowup of $(G \times \mathbb{A}_\F^1) \times_{\mathbb{A}^1_\F} (\Gm \times \mathbb{A}^1_\F)$ in $B \times (\Gm \times \{0\})$ along $\Gm \times \{0\}$. Now using the $\Gm$-action on $\mathbb{A}^1_\F$ we obtain an identification of $(G \times \mathbb{A}^1_\F) \times_{\mathbb{A}^1_\F} (\Gm \times \mathbb{A}^1_\F)$
with the similar fiber product where the morphism $\Gm \times \mathbb{A}^1_\F \to \mathbb{A}^1_\F$ is the projection. Again by compatibility of N\'eron blowups with base change, we deduce an isomorphism
\[
\cG \times_{\mathbb{A}^1_\F} (\Gm \times \mathbb{A}^1_\F) \simto \Gm \times \cG
\]
as schemes over $\Gm \times \mathbb{A}^1_\F$.
Composing the inverse isomorphism with the natural projection on $\cG$
defines the desired action.

Once this action is constructed, the $\Gm$-action on $\Gr_G^{\mathrm{Cen}}$ is obtained using pullback of torsors; details are left to the reader.
\end{proof}

For simplicity of notation, below we will set
\[
\sZ := \sfZ \circ \Sat^{-1} : \Rep(G^\vee_\bk) \to \mathsf{D}_{\Iw,\Iw},
\]
and write
\[
 \sm_V:=\sm_{\Sat^{-1}(V)} \in \End(\sZ(V))
\]
for $V$ in $\Rep(G^\vee_\bk)$.

\subsection{Extending the functor \texorpdfstring{$\sZ$}{Z} to coherent sheaves on \texorpdfstring{$G^\vee_\bk$}{G}}
\label{ss:extension-Z}

Below we will use the following general construction. Let $H$ be an affine $\bk$-group scheme of finite type. We consider the category $\QCoh^H(H)$ of $H$-equivariant quasi-coherent sheaves on $H$, where $H$ acts on itself via the adjoint action, which identifies with the category of $H$-equivariant $\scO(H)$-modules. Extending a construction explained in~\S\ref{ss:coh-Groth} slightly, the identity functor of $\QCoh^H(H)$ possesses a canonical automorphism $\sm^{\mathrm{taut}}_{(-)}$, which can be described as follows. Any $H$-equivariant $\scO(H)$-module $M$ admits a canonical automorphism, defined as the composition
\[
M \to M \otimes \scO(H) \to M
\]
where the first morphism is the coaction (with respect to the $H$-module structure on $M$) and the second one is the action morphism. It is easily checked that this morphism is a morphism of $H$-equivariant $\scO(H)$-modules, and defines an automorphism of the object $\scF$ corresponding to $M$ in $\QCoh^H(H)$, which by definition is $\sm^{\mathrm{taut}}_{\scF}$.

The category $\QCoh^H(H)$ admits a monoidal structure, given by tensor product of $\scO_H$-modules. It is easily checked that for $\scF,\scG$ in $\QCoh^H(H)$ we have
\[
\sm^{\mathrm{taut}}_{\scF \otimes_{\scO_H} \scG} = \sm^{\mathrm{taut}}_\scF \otimes_{\scO_H} \sm^{\mathrm{taut}}_\scG.
\]

\begin{rmk}
\label{rmk:taut-autom}
As in~\S\ref{ss:univ-centralizer} one can consider the universal stabiliser $\mathfrak{S}_{H,H}$ associated with the adjoint $H$-action on itself, and we have a canonical (monoidal) functor
\begin{equation}
\label{eqn:functor-Coh-Rep-Stab}
\QCoh^H(H) \to \Rep^{\infty}(\mathfrak{S}_{H,H}).
\end{equation}
As in Remark~\ref{rmk:section-J}, any object in $\Rep^{\infty}(\mathfrak{S}_{H,H})$ admits a tautological automorphism. It is easily checked that the functor~\eqref{eqn:functor-Coh-Rep-Stab} sends $\sm^{\mathrm{taut}}_\scF$ to the tautological automorphism of its image.
\end{rmk}

Now, consider the full subcategory $\Coh^H(H)$ of $H$-equivariant coherent sheaves on $H$, and the category $\Rep(H)$ of finite-dimensional representations of $H$.
We have a canonical monoidal functor
\[
\imath : \Rep(H) \to \Coh^H(H)
\]
defined by $V \mapsto V \otimes \scO_H$, where the $H$-equivariant structure on $V \otimes \scO_H$ is diagonal. Following our convention in~\S\ref{ss:coh-Groth}, for $V$ in $\Rep(H)$ we will write $\sm^{\mathrm{taut}}_V$ for $\sm^{\mathrm{taut}}_{\imath(V)}$, so that $\sm^{\mathrm{taut}}_V$ is an automorphism of $V \otimes \scO_H$ which satisfies
\[
\sm^{\mathrm{taut}}_{V_1 \otimes V_2} = \sm^{\mathrm{taut}}_{V_1} \otimes \sm^{\mathrm{taut}}_{V_2}
\]
for $V_1,V_2 \in \Rep(H)$. We will denote by $\Coh_{\mathrm{fr}}^H(H)$ the full subcategory of $\Coh^H(H)$ whose objects are the coherent sheaves $V \otimes \scO_H$ for $V$ in $\Rep(H)$, so that $\imath$ factors through a functor $\Rep(H) \to \Coh^H_{\mathrm{fr}}(H)$ (still denoted $\imath$) which is the obvious bijection on objects. (Note that $\Coh_{\mathrm{fr}}^H(H)$ is defined as a full, but not strictly full, subcategory of $\Coh^H(H)$.)

The following lemma is a variant of~\cite[Proposition~4(a)]{ab}, and follows from similar arguments.

\begin{lem}
\label{lem:extension-functor}
Let $\mathsf{A}$ be an additive $\bk$-linear monoidal category, and let
\[
F : \Rep(H) \to \mathsf{A}
\]
be a $\bk$-linear monoidal functor. Let $\mathsf{N}_{(-)}$ be an automorphism of $F$ such that for any $V_1,V_2$ in $\Rep(H)$ we have
\[
\mathsf{N}_{V_1 \otimes V_2} = \mathsf{N}_{V_1} \otimes \mathsf{N}_{V_2},
\]
and the diagram
\[
\xymatrix@C=1.5cm{
F(V_1 \otimes V_2) \ar[d] \ar[r]^{\mathsf{N}_{V_1 \otimes V_2}} & F(V_1 \otimes V_2) \ar[d] \\
F(V_2 \otimes V_1) \ar[r]^{\mathsf{N}_{V_2 \otimes V_1}} & F(V_2 \otimes V_1)
}
\]
commutes, where the vertical arrows are the images under $F$ of the commutativity isomorphisms in $\Rep(H)$. Then there exists a unique $\bk$-linear monoidal functor
\[
F^{\Coh} : \Coh^H_{\mathrm{fr}}(H) \to \mathsf{A}
\]
such that $F^{\Coh} \circ \imath = F$, and such that
\[
F^{\Coh}(\sm^{\mathrm{taut}}_V)=\mathsf{N}_V
\]
for any $V$ in $\Rep(H)$.
\end{lem}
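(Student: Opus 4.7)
Plan for the proof. Since $\imath$ is a bijection on objects, the condition $F^{\Coh} \circ \imath = F$ forces $F^{\Coh}(V \otimes \scO_H) := F(V)$. The content of the lemma is thus the construction of $F^{\Coh}$ on morphisms, together with its uniqueness.

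The starting point is Frobenius reciprocity for the adjunction between the forgetful functor $\Coh^H(H) \to \Rep^{\infty}(H)$ (taking global sections) and the functor $(-) \otimes \scO_H$, which yields a canonical bijection
\[
\Hom_{\Coh^H_{\mathrm{fr}}(H)}(V_1 \otimes \scO_H, V_2 \otimes \scO_H) \simto \Hom_{\Rep^{\infty}(H)}(V_1, V_2 \otimes \scO(H)),
\]
where $\scO(H)$ carries the adjoint $H$-action. Since $V_1$ is finite-dimensional and $\scO(H)$ is the filtered colimit of its finite-dimensional $H$-subrepresentations $W$, any such morphism factors through $V_2 \otimes W$ for some such $W$. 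Under this identification, $\sm^{\mathrm{taut}}_V$ corresponds precisely to the coaction morphism $\Delta_V : V \to V \otimes \scO(H)$ (as is clear from the description of $\sm^{\mathrm{taut}}_V$ in the paragraph preceding the lemma).

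The main structural step is to show that every morphism $f$ as above admits a canonical factorization
\[
f = (\mathrm{id}_{V_2} \otimes \mu_W) \circ \imath(\tilde f),
\]
where $\tilde f : V_1 \to V_2 \otimes W$ corresponds to $f$ via Frobenius reciprocity, and $\mu_W : W \otimes \scO_H \to \scO_H$ is the $\scO_H$-linear morphism in $\Coh^H_{\mathrm{fr}}(H)$ coming from the inclusion $W \hookrightarrow \scO(H)$ followed by multiplication in $\scO(H)$. Thus morphisms in $\Coh^H_{\mathrm{fr}}(H)$ are generated (under composition and monoidal product) by $\imath$-morphisms and the multiplication morphisms $\mu_W$. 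Moreover each such $\mu_W$ can itself be expressed in terms of $\imath$-morphisms and components of tautological automorphisms $\sm^{\mathrm{taut}}_{W'}$ attached to suitable finite-dimensional $H$-subrepresentations $W' \subset \scO(H)$, via the relation between $\sm^{\mathrm{taut}}$ and matrix coefficients of representations. This presentation forces the value of $F^{\Coh}$ on morphisms uniquely (proving uniqueness), and simultaneously dictates its definition for existence: apply $F$ to the $\imath$-components and $\mathsf{N}$ to the tautological-automorphism components.

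The main obstacle will be verifying that this prescription is well-defined and functorial: independence of the chosen subrepresentation $W$ and of the decomposition into $\sm^{\mathrm{taut}}$-type factors (which follows from naturality of $\mathsf{N}$), compatibility with composition of morphisms in $\Coh^H_{\mathrm{fr}}(H)$, and compatibility with the monoidal structure. The hypothesis $\mathsf{N}_{V_1 \otimes V_2} = \mathsf{N}_{V_1} \otimes \mathsf{N}_{V_2}$ is precisely what encodes the algebra multiplication on $\scO(H)$ inside $\mathsf{A}$ through $\mathsf{N}$, and is therefore essential for compatibility with composition; the compatibility of $\mathsf{N}$ with the commutativity constraint of $\Rep(H)$ reflects the commutativity of the algebra $\scO(H)$, and is what guarantees that different decompositions yield the same morphism in $\mathsf{A}$.
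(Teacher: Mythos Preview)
The paper gives no proof of this lemma; it just says it ``follows from similar arguments'' to \cite[Proposition~4(a)]{ab}. Your plan is along the right lines and matches that strategy: identify morphisms via Frobenius reciprocity, then express them in terms of $\imath$-morphisms and tautological automorphisms. The factorization $f = (\id_{V_2}\otimes\mu_W)\circ\imath(\tilde f)$ is correct, and your identification of $\sm^{\mathrm{taut}}_V$ with the coaction $\Delta_V$ under Frobenius reciprocity is the key observation.

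The gap is in your step~4. Restricting to $W'\subset\scO(H)$ is a red herring; the relevant identity (which you allude to under ``relation between $\sm^{\mathrm{taut}}$ and matrix coefficients'') is that for \emph{any} $V\in\Rep(H)$,
\[
\imath(\mathrm{ev}_V)\circ(\id_{V^*}\otimes\sm^{\mathrm{taut}}_V):\;V^*\otimes V\otimes\scO_H\to\scO_H
\]
corresponds under Frobenius reciprocity to the matrix-coefficient map $c_V:V^*\otimes V\to\scO(H)^{\mathrm{ad}}$. To exploit this you would need to lift $\tilde f:V_1\to V_2\otimes\scO(H)^{\mathrm{ad}}$ to an $H$-map $\alpha:V_1\to V_2\otimes V^*\otimes V$ with $(\id\otimes c_V)\circ\alpha=\tilde f$. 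But such a lift need not exist: the maps $c_V$ are jointly surjective onto $\scO(H)^{\mathrm{ad}}$, yet $\Hom_H(V_1,V_2\otimes-)$ is only left exact. Concretely, for $H=\Ga$ and $V_1=V_2=\bk$, the $H$-map $1\mapsto x\in\scO(\Ga)^{\mathrm{ad}}$ does not factor $H$-equivariantly through any $c_V$, since $c_V$ restricted to $(V^*\otimes V)^{\Ga}=\End_{\Ga}(V)$ sends $A$ to the function $t\mapsto\mathrm{tr}(A\,\exp(tN_V))$, which is constant (all characters of $\Ga$-representations are constant). The same example shows that ``multiplication by $x$'' on $\scO_{\Ga}$ is not in the $\bk$-linear monoidal subcategory generated by $\imath$-morphisms and the $\sm^{\mathrm{taut}}_V$'s.

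So the presentation you propose---``every morphism is a combination of $\imath$-morphisms and $\sm^{\mathrm{taut}}$'s''---is not available in general, and neither existence nor uniqueness follows from it as stated. The argument in \cite{ab} is carried out in a setting where additional structure is present (a central functor to a triangulated category, with the target admitting further operations); making the general statement precise requires either exploiting the full coend description $\scO(H)^{\mathrm{ad}}\cong\int^V V^*\otimes V$ to \emph{define} $F^{\Coh}$ on Hom-spaces directly (checking the coend relations via naturality of $\mathsf{N}$, rather than choosing a single lift), or restricting to $H$ for which the lifting obstruction vanishes. Your plan correctly identifies where the difficulty lies but does not resolve it.
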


In more concrete terms, this lemma says that the datum of $\mathsf{N}_{(-)}$ allows to ``extend'' in a canonical way the morphisms
\[
\Hom_{\Rep(H)}(V_1,V_2) \to \Hom_{\mathsf{A}}(F(V_1),F(V_2))
\]
to morphisms
\[
\Hom_{\Coh^H(H)}(V_1 \otimes \scO_H,V_2 \otimes \scO_H) \to \Hom_{\mathsf{A}}(F(V_1),F(V_2)),
\]
for any $V_1,V_2 \in \Rep(H)$. (Here, the left-hand side identifies with $(V_1^* \otimes V_2)^H$ in the first case, and with $(V_1^* \otimes V_2 \otimes \scO(H))^H$ in the second case.)

\begin{rmk}
\label{rmk:extension-restriction}
If $K \subset H$ is a closed subgroup scheme and if $h \in H$ commutes with $K$, then we can apply Lemma~\ref{lem:extension-functor} to the restriction functor $\For^H_K : \Rep(H) \to \Rep(K)$, and its automorphism induced by $h$. In this case, the functor
\[
\Coh^H_{\mathrm{fr}}(H) \to \Rep(K)
\]
is induced by restriction of coherent sheaves to $h \in H$.
\end{rmk}

Applying Lemma~\ref{lem:extension-functor} to the monoidal functor $\sZ : \Rep(G^\vee_\bk) \to \mathsf{D}_{\Iw,\Iw}$ and its automorphism $\sm_{(-)}$, we obtain a canonical monoidal functor
\[
\sZ^\Coh : \Coh^{G^\vee_\bk}_{\mathrm{fr}}(G^\vee_\bk) \to \mathsf{D}_{\Iw,\Iw}.
\]
In particular, this provides for any $V$ in $\Rep(G^\vee_\bk)$ a canonical algebra morphism
\begin{equation}
\label{eqn:ZCoh-morph}
\End_{\Coh^{G^\vee_\bk}(G^\vee_\bk)}(V \otimes \scO_{G^\vee_\bk}) \to \End_{\mathsf{P}_{\Iw,\Iw}}(\sZ(V)).
\end{equation}

The $G^\vee_\bk$-module $\scO(G^\vee_\bk)$ (endowed with the action induced by left multiplication of $G^\vee_\bk$ on itself) defines an ind-object in $\Rep(G^\vee_\bk)$ (namely, the functor $V \mapsto \Hom_{G^\vee_\bk}(V,\scO(G^\vee_\bk))$); therefore, applying $\sZ$ we deduce an ind-object $\sZ(\scO(G^\vee_\bk))$ in $\mathsf{P}_{\Iw,\Iw}$. As a special case of~\eqref{eqn:ZCoh-morph} we have a canonical algebra morphism
\[
\End_{{\ind}\Coh^{G^\vee_\bk}(G^\vee_\bk)}(\scO(G^\vee_\bk) \otimes \scO_{G^\vee_\bk}) \to \End_{{\ind}\mathsf{P}_{\Iw,\Iw}}(\sZ(\scO(G^\vee_\bk))).
\]

Note that given a ring object $X$ in a $\bk$-linear monoidal category $(\mathsf{A},\odot)$, with unit object $\mathbf{1}$, the vector space
\[
\Hom_{\mathsf{A}}(\mathbf{1},X)
\]
admits a natural structure of $\bk$-algebra, where the product of two morphisms $f,g : \mathbf{1} \to X$ is the composition
\[
\mathbf{1} = \mathbf{1} \odot \mathbf{1} \xrightarrow{f \odot g} X \odot X \to X,
\]
where the right morphism is the multiplication map for $X$. Moreover, there exists an algebra morphism 
\[
\Hom_{\mathsf{A}}(\mathbf{1},X) \to \End_{\mathsf{A}}(X)^{\mathrm{op}}
\]
sending a morphism $f : \mathbf{1} \to X$ to the morphism
\[
X =  X \odot \mathbf{1} \xrightarrow{\id \odot a} X \odot X \to X,
\]
where again the rightmost morphism is induced by multiplication in $X$; this morphism in fact takes values in endomorphisms of $X$ seen as a left module over itself.

Let us apply this construction to the ring-object
 $\scO(G^\vee_\bk) \otimes \scO_{G^\vee_\bk}$ in ${\ind}\Coh^{G^\vee_\bk}(G^\vee_\bk)$. Then we have
\[
\Hom(\scO_{G^\vee_\bk},\scO(G^\vee_\bk) \otimes \scO_{G^\vee_\bk}) \cong \scO(G^\vee_\bk \times G^\vee_\bk)^{G^\vee_\bk},
\]
where $G^\vee_\bk$ acts on $G^\vee_\bk \times G^\vee_\bk$ via $g \cdot (h_1,h_2) = (gh_1, gh_2g^{-1})$.
Now the morphism $G^\vee_\bk \times G^\vee_\bk \to G^\vee_\bk$ defined by $(g,h) \mapsto g^{-1}hg$ defines an algebra isomorphism
\[
\scO(G^\vee_\bk) \simto \scO(G^\vee_\bk \times G^\vee_\bk)^{G^\vee_\bk},
\]
which therefore provides a canonical algebra morphism
\[
\scO(G^\vee_\bk) \to \End(\scO(G^\vee_\bk) \otimes \scO_{G^\vee_\bk}),
\]
hence finally an algebra morphism
\begin{equation}
\label{eqn:action-OG-Z}
\scO(G^\vee_\bk) \to \End_{{\ind}\mathsf{P}_{\Iw,\Iw}}(\sZ(\scO(G^\vee_\bk))).
\end{equation}
In this way, $\sZ(\scO(G^\vee_\bk))$ becomes an $\scO(G^\vee_\bk)$-module in the category $\ind\mathsf{P}_{\Iw,\Iw}$, in the sense recalled in~\S\ref{ss:mod-cat}.

\subsection{The regular quotient: definition}
\label{ss:regular-quotient}

The next considerations will make intensive use of the notions of Serre quotient of an abelian category and of Verdier quotient of a triangulated category; for a brief reminder on these notions, and references, see~\S\ref{ss:quotient-cat}.

The main player of~\cite{brr} is the abelian category
\[
\mathsf{P}^0_{\Iw,\Iw},
\]
defined as the Serre quotient of the abelian category $\mathsf{P}_{\Iw,\Iw}$ by the Serre subcategory $\mathsf{P}_{\Iw,\Iw}^+$ generated by the simple objects $\IC_w$ for $w \in W$ such that $\ell(w)>0$. If we denote by $\D_{\Iw,\Iw}^+$ the full triangulated subcategory of $\D_{\Iw,\Iw}$ generated by $\mathsf{P}_{\Iw,\Iw}^+$, and by $\D_{\Iw,\Iw}^0$ the Verdier quotient of $\D_{\Iw,\Iw}$ by $\D_{\Iw,\Iw}^+$, then by Lemma~\ref{lem:quotient-t-str} (applied using the perverse t-structure on $\D_{\Iw,\Iw}$) there exists a unique t-structure on $\D_{\Iw,\Iw}^0$ such that the quotient functor $\Pi_{\Iw,\Iw}^0 : \D_{\Iw,\Iw} \to \D_{\Iw,\Iw}^0$ is t-exact, and moreover this t-structure is bounded, and its heart identifies canonically with $\mathsf{P}^0_{\Iw,\Iw}$. This t-structure will be called the perverse t-structure, and the associated cohomology functors will be denoted $\pH^n(-)$. Since $\mathsf{P}_{\Iw,\Iw}$ is a finite-length category, so is $\mathsf{P}_{\Iw,\Iw}^0$, and its simple objects are the objects
\[
 \delta^0_{\omega} := \Pi^0_{\Iw,\Iw}(\IC_\omega)
\]
for $\omega \in \Omega$. (In case $\omega=e$ is the unit, we will write $\delta^0$ for $\delta^0_e$.)

Consider the bifunctor $\D_{\Iw,\Iw} \times \D_{\Iw,\Iw} \to \D_{\Iw,\Iw}^0$ sending a pair $(\scF,\scG)$ to $\Pi^0_{\Iw,\Iw}(\scF \star_{\Iw} \scG)$. By~\cite[Lemma~5.1(1)]{brr} we have $\Pi^0_{\Iw,\Iw}(\scF \star_{\Iw} \scG)=0$ if either $\scF$ or $\scG$ belongs to $\D_{\Iw,\Iw}^+$. By the general properties of Verdier quotients (see~\S\ref{ss:quotient-cat}), it follows that there exists a unique bifunctor
\[
\star_\Iw^0 : \D^0_{\Iw,\Iw} \times \D^0_{\Iw,\Iw} \to \D^0_{\Iw,\Iw}
\]
such that
\[
\Pi_{\Iw,\Iw}^0(\scF) \star^0_\Iw \Pi_{\Iw,\Iw}^0(\scG) = \Pi_{\Iw,\Iw}^0(\scF \star_\Iw \scG)
\]
for any $\scF,\scG$ in $\D_{\Iw,\Iw}$. It is easily seen that this bifunctor equips $\D^0_{\Iw,\Iw}$ with the structure of a monoidal category, with monoidal unit $\delta^0$. 

From the fact that $\IC_\omega \star_{\Iw} \IC_{\omega'} \cong \IC_{\omega \omega'}$ for $\omega,\omega' \in \Omega$ one sees that $\star_\Iw^0$ is t-exact on both sides with respect to the perverse t-structure; it therefore restricts to a bifunctor $\sfP^0_{\Iw,\Iw} \times \sfP^0_{\Iw,\Iw} \to \sfP^0_{\Iw,\Iw}$ which equips $\sfP^0_{\Iw,\Iw}$ with a monoidal structure.
It is clear that
the functor
\[
\sZ^0 := \Pi^0_{\Iw,\Iw} \circ \sZ : \Rep(G^\vee_\bk) \to \sfP^0_{\Iw,\Iw}
\]
has a canonical monoidal structure.

\subsection{Another convolution bifunctor}
\label{ss:another-convolution}

For later use, we now explain a variant of the constructions of~\S\ref{ss:regular-quotient} where $\Iw$-equivariance is replaced by $\Iwu$-equivariance.

Recall the category $\D_{\Iwu,\Iw}$, its perverse t-structure, and the heart $\sfP_{\Iwu,\Iw}$ of this t-structure (see~\S\ref{ss:DbIw}). 
Let us denote by $\sfP_{\Iwu,\Iw}^+$, resp.~$\D_{\Iwu,\Iw}^+$, the Serre subcategory of $\sfP_{\Iwu,\Iw}$, resp.~the full triangulated subcategory of $\D_{\Iwu,\Iw}$, generated by the simple perverse sheaves $\For^{\Iw}_{\Iwu}(\IC_w)$ with $w \in W$ such that $\ell(w)>0$. We will denote by $\sfP_{\Iwu,\Iw}^0$ the Serre quotient of $\sfP_{\Iwu,\Iw}$ by $\sfP^+_{\Iwu,\Iw}$, and by $\D_{\Iwu,\Iw}^0$ the Verdier quotient of $\D_{\Iwu,\Iw}$ by $\D^+_{\Iwu,\Iw}$. Then by Lemma~\ref{lem:quotient-t-str} there exists a unique t-structure on $\D_{\Iwu,\Iw}^0$ such that the quotient functor
\[
\Pi^0_{\Iwu,\Iw} : \D_{\Iwu,\Iw} \to \D^0_{\Iwu,\Iw}
\]
is t-exact; moreover, this t-structure is bounded, and its heart identifies with $\sfP_{\Iwu,\Iw}^0$. This t-structure will be called the perverse t-structure, and the associated cohomology functors will be denoted $\pH^n(-)$.

By the universal property of the Verdier quotient, the composition
\[
\sfD_{\Iw,\Iw} \xrightarrow{\For^{\Iw}_{\Iwu}} \sfD_{\Iwu,\Iw} \xrightarrow{\Pi^0_{\Iwu,\Iw}} \sfD_{\Iwu,\Iw}^0
\]
factors through a triangulated functor
\[
\For^{\Iw,0}_{\Iwu} : \sfD_{\Iw,\Iw}^0 \to \sfD^0_{\Iwu,\Iw}.
\]
This functor is easily seen to be t-exact with respect to the perverse t-structures, and its restriction to the hearts identifies with the functor provided by the universal property of the Serre quotient.
%
%
Recall that $\For^{\Iw}_{\Iwu}$ is fully faithful on perverse sheaves;
from the standard description of morphisms in a Serre quotient category (see~\cite[\S III.1]{gabriel}), and since $\sfP_{\Iw,\Iw}$ is closed under subquotients in $\sfP_{\Iwu,\Iw}$, one sees that the restriction of $\For^{\Iw,0}_{\Iwu}$ to the heart of the perverse t-structure is fully faithful. 

Consider the bifunctor
\[
\D_{\Iwu,\Iw} \times \sfD_{\Iw,\Iw} \to \D^0_{\Iwu,\Iw}
\]
sending a pair $(\scF,\scG)$ to $\Pi^0_{\Iwu,\Iw}(\scF \star_\Iw \scG)$, where $\star_\Iw$ is as in~\eqref{eqn:conv-Iwu-Iw}. It follows again from~\cite[Lemma~5.1(1)]{brr} and the general properties of the Verdier quotient (see~\S\ref{ss:quotient-cat}) that
there exists a unique triangulated bifunctor
\[
\star_\Iw^0 : \D^0_{\Iwu,\Iw} \times \sfD^0_{\Iw,\Iw} \to \D^0_{\Iwu,\Iw}
\]
such that for $\scF$ in $\D_{\Iwu,\Iw}$ and $\scG$ in $\sfD_{\Iw,\Iw}$ we have
\[
\Pi^0_{\Iwu,\Iw}(\scF) \star^0_{\Iw} \Pi^0_{\Iw,\Iw}(\scG) = \Pi^0_{\Iwu,\Iw}(\scF \star^0_\Iw \scG).
\]
This bifunctor defines a right action of the monoidal category $(\sfD^0_{\Iw,\Iw}, \star^0_{\Iw})$ on $\sfD^0_{\Iwu,\Iw}$; for $\scF,\scG$ in $\sfD^0_{\Iw,\Iw}$ we also have a canonical isomorphism
\begin{equation}
\label{eqn:For-conv-0}
 \For^{\Iw,0}_{\Iwu}(\scF \star^0_{\Iw} \scG) \cong \For^{\Iw,0}_{\Iwu}(\scF) \star^0_{\Iw} \scG.
\end{equation}

As in the $\Iw$-equivariant setting in~\S\ref{ss:regular-quotient} one sees that the bifunctor $\star_\Iw^0$ is t-exact on both sides; its restriction to the hearts of the perverse t-structures defines a right action of the monoidal category $\sfP^0_{\Iw,\Iw}$ on $\sfP^0_{\Iwu,\Iw}$, by exact endofunctors.

\subsection{The regular quotient: coherent description}
\label{ss:regular-quotient-coh}

In this subsection we make the following assumptions:
\begin{enumerate}
 \item the quotient of $X^*(T)$ by the root lattice of $(G,T)$ is free;
 \item the quotient of $X_*(T)$ by the coroot lattice of $(G,T)$ has no $\ell$-torsion;
 \item for any indecomposable factor in the root system of $(G,T)$, $\ell$ is strictly bigger than the corresponding value in Figure~\ref{fig:bounds}.
\end{enumerate}
Here the first assumption is equivalent to requiring that $G^\vee_\bk$ has simply connected derived subgroup, and the second one that its scheme-theoretic center is smooth. The third assumption can most probably be weakened; it implies in particular that $\ell$ is good for $G$.

By~\cite[Theorem~5.4]{brr}, there exists a regular unipotent element $\su \in G^\vee_\bk$ and an equivalence of monoidal categories
\[
\Phi_{\Iw,\Iw} : (\mathsf{P}^0_{\Iw,\Iw}, \star_\Iw^0) \simto (\Rep(\rmZ_{G^\vee_\bk}(\su)),\otimes)
\]
such that
\[
\sZ^0 \circ \Phi_{\Iw,\Iw} \cong \For^{G^\vee_\bk}_{\rmZ_{G^\vee_\bk}(\su)}
\]
as monoidal functors, where $\rmZ_{G^\vee_\bk}(\su)$ is the centralizer of $\su$ and
\[
\For^{G^\vee_\bk}_{\rmZ_{G^\vee_\bk}(\su)} : \Rep(G^\vee_\bk) \to \Rep(\rmZ_{G^\vee_\bk}(\su))
\]
is the restriction functor. (Note that by Lemma~\ref{lem:smoothness-centralizers} the scheme-theoretic centralizer of $\su$ is smooth, so the structure we consider on $\rmZ_{G^\vee_\bk}(\su)$ is unambiguous.) This equivalence furthermore satisfies the property that the automorphism
\[
\Phi_{\Iw,\Iw}(\Pi^0_{\Iw,\Iw}(\sm_V))
\]
identifies with the action of $\su$ on $V$, for any $V$ in $\Rep(G^\vee_\bk)$.

Below we will need a more explicit description on this equivalence than what is provided in~\cite{brr}, which we now explain. This description will make use of the notion of tensor product with an $R$-module in a category, 
whose definition is recalled in~\S\ref{ss:mod-cat}.

We first recall the structure of the main construction in~\cite{brr}.
As explained in~\S\ref{ss:extension-Z} the $G^\vee_\bk$-module $\scO(G^\vee_\bk)$ defines an ind-object in $\Rep(G^\vee_\bk)$, which is moreover a \emph{ring} ind-object. The image $\sZ^0(\scO(G^\vee_\bk))$ therefore defines a ring ind-object in the category $\mathsf{P}^0_{\Iw,\Iw}$. 
As explained in~\cite[\S 3.3]{brr} (following~\cite{bez}), any left ideal subobject in $\sZ^0(\scO(G^\vee_\bk))$ is automatically a two-sided ideal; in particular if we fix a maximal left ideal subobject $\mathscr{J} \subset \sZ^0(\scO(G^\vee_\bk))$ then the quotient 
\[
\scR^0:= \sZ^0(\scO(G^\vee_\bk))/\mathscr{J}
\]
has a canonical structure of ring ind-object such that the surjection $\sZ^0(\scO(G^\vee_\bk)) \to \scR^0$ is a ring morphism. One then checks that the assignment
\[
\Psi_{\Iw,\Iw} : \scF \mapsto \Hom_{{\ind}\mathsf{P}^0_{\Iw,\Iw}}(\delta^0, \scR^0 \star^0_\Iw \scF)
\]
defines a functor from $\mathsf{P}^0_{\Iw,\Iw}$ to the category $\Vect_\bk$ of finite-dimensional $\bk$-vector spaces, that this functor
admits a canonical monoidal structure (induced in an appropriate way by the ring structure on $\scR^0$), and that its composition with $\sZ^0$ identifies with the forgetful functor $\For^{G^\vee_\bk} : \Rep(G^\vee_\bk) \to \Vect_\bk$. Using this functor we invoke Tannakian formalism to obtain a closed subgroup scheme $H \subset G^\vee_\bk$ and an equivalence of monoidal categories
\[
(\mathsf{P}^0_{\Iw,\Iw}, \star_\Iw^0) \simto (\Rep(H),\otimes)
\]
whose pre-composition with $\sZ^0$ is the restriction functor $\For^{G^\vee_\bk}_H$, and whose post-composition with the forgetful functor $\For^H$ is $\Psi_{\Iw,\Iw}$. From the automorphism $\sm$ of the functor $\sZ^0$ we obtain an automorphism of the functor $\For^{G^\vee_\bk}$, which defines an element $\su \in G^\vee_\bk$. Most of the content of~\cite{brr} is then devoted to showing that $\su$ is unipotent regular, and that $H=\rmZ_{G_\bk^\vee}(\su)$.\footnote{In fact the equivalence we initially obtain concerns only a full subcategory of $\mathsf{P}^0_{\Iw,\Iw}$, which is then shown to coincide with $\mathsf{P}^0_{\Iw,\Iw}$. This subtlety is irrelevant for the present discussion.}

A posteriori, the ind-object $\sZ^0(\scO(G^\vee_\bk))$ identifies with $\scO(G^\vee_\bk)$ seen as a $\rmZ_{G^\vee_\bk}(\su)$-representation; its maximal left ideals are therefore parametrized by the cosets in $\rmZ_{G^\vee_\bk}(\su) \backslash G^\vee_\bk$. We claim that, if we still denote by $\Phi_{\Iw,\Iw}$ the induced equivalence on ind-objects, we have a canonical identification
\[
\Phi_{\Iw,\Iw}(\scR^0) \cong \scO(\rmZ_{G^\vee_\bk}(\su)).
\]
In fact the ring surjection $\sZ^0(\scO(G^\vee_\bk)) \to \scR^0$ induces a ring map
\begin{equation}
\label{eqn:Phi-R0}
\scO(G^\vee_\bk)=\Phi_{\Iw,\Iw} \bigl( \sZ^0(\scO(G^\vee_\bk)) \bigr) \to \Phi_{\Iw,\Iw}(\scR^0),
\end{equation}
which as explained above identifies the right-hand side with functions on a certain coset in $\rmZ_{G^\vee_\bk}(\su) \backslash G^\vee_\bk$; what remains to be justified is that this coset is $\rmZ_{G^\vee_\bk}(\su)$. To check this it suffices to prove that our coset contains the unit element $e$, i.e.~that the augmentation morphism $\scO(G^\vee_\bk) \to \bk$ factors through our morphism~\eqref{eqn:Phi-R0}. For that we consider the commutative diagram
\[
\xymatrix{
\Hom_{G^\vee_\bk}(\bk, \scO(G^\vee_\bk) \otimes \scO(G^\vee_\bk)) \ar[r] \ar[d] & \Hom_{G^\vee_\bk}(\bk, \scO(G^\vee_\bk)) \ar[d] \\
\Hom_{{\ind}\sfP_{\Iw,\Iw}^0}(\delta^0, \sZ^0(\scO(G^\vee_\bk)) \star^0_\Iw \sZ^0(\scO(G^\vee_\bk))) \ar[r] \ar[rd] & \Hom_{{\ind}\sfP_{\Iw,\Iw}^0}(\delta^0, \sZ^0(\scO(G^\vee_\bk))) \ar[d] \\
 & \Hom_{{\ind}\sfP_{\Iw,\Iw}^0}(\delta^0, \scR^0)
}
\]
where the upper vertical arrows are induced by $\sZ^0$, the lower vertical arrow by the quotient morphism $\sZ^0(\scO(G^\vee_\bk)) \to \scR^0$, and the horizontal ones by multiplication in $\scO(G^\vee_\bk)$. (The $G^\vee_\bk$-action on each copy of $\scO(G^\vee_\bk)$ is the left regular action.) Here the diagonal arrow factors through $\Phi_{\Iw,\Iw}(\scR^0)$, and the upper line identifies with the augmentation morphism $\scO(G^\vee_\bk) \to \bk$, via the morphism $\scO(G^\vee_\bk) \otimes \scO(G^\vee_\bk) \to \scO(G^\vee_\bk)$ induced by $f \otimes g \mapsto f(e) g$ and Frobenius reciprocity. Moreover the composition of the right vertical arrows is an isomorphism in view of the isomorphism $\Hom(\delta^0, \scR^0) \cong \Phi_{\Iw,\Iw}(\delta^0) = \bk$. The desired claim follows.

%

Recall from~\eqref{eqn:action-OG-Z} that $\sZ(\scO(G^\vee_\bk))$ is an $\scO(G^\vee_\bk)$-module; hence $\sZ^0(\scO(G^\vee_\bk))$ has the same structure, and the corresponding morphism
\[
\scO(G^\vee_\bk) \to \End_{{\ind}\Rep(\rmZ_{G^\vee_\bk}(\su))}(\scO(G^\vee_\bk))
\]
(obtained by applying $\Phi_{\Iw,\Iw}$)
is induced by the morphism $G^\vee_\bk \to G^\vee_\bk$ given by $g \mapsto g^{-1} \su g$. It is easily seen from definitions that restriction induces an isomorphism
\[
 \scO(G^\vee_\bk) \otimes_{\scO(G^\vee_\bk)} \scO(\{\su\}) \simto \scO(\rmZ_{G^\vee_\bk}(\su)),
\]
from which we deduce a canonical isomorphism
\[
 \sZ^0(\scO(G^\vee_\bk)) \otimes_{\scO(G^\vee_\bk)} \scO(\{\su\}) \simto \scR^0.
\]
Here, since $\scO(G^\vee_\bk)$ acts on $\sZ^0(\scO(G^\vee_\bk))$ by endomorphisms of left modules, the left-hand side is naturally a left $\sZ^0(\scO(G^\vee_\bk))$-module, and this identification is compatible with this structure; it follows that the multiplication map on $\scR^0$ can be recovered from this description.

The comultiplication morphism of $\scO(G^\vee_\bk)$ defines a morphism of $G^\vee_\bk$-modules
\begin{equation}
\label{eqn:comult-OG}
\scO(G^\vee_\bk) \to \scO(G^\vee_\bk) \otimes \scO(G^\vee_\bk)
\end{equation}
(where the action on the right term in the tensor product is trivial) which we use to obtain a morphism
\[
\scO(G^\vee_\bk) \otimes_\bk \scO_{G^\vee_\bk} \to \bigl( \scO(G^\vee_\bk) \otimes_\bk \scO_{G^\vee_\bk} \bigr) \otimes \scO(G^\vee_\bk)
\]
in ${\ind}\Coh^{G^\vee_\bk}_{\mathrm{fr}}(G^\vee_\bk)$. Here the right-hand side has an action of $\scO(G^\vee_\bk) \otimes \scO(G^\vee_\bk)$ obtained from the $\scO(G^\vee_\bk)$-action on the first term (as in~\S\ref{ss:extension-Z}) and the obvious $\scO(G^\vee_\bk)$-action on the second term. If we restrict this action to $\scO(G^\vee_\bk)$ via the morphism induced by $(g,h) \mapsto h^{-1}gh$, then explicit computation shows that our morphism is $\scO(G^\vee_\bk)$-linear. We now consider the
morphism
\[
\sZ(\scO(G^\vee_\bk)) \to \sZ(\scO(G^\vee_\bk)) \otimes \scO(G^\vee_\bk)
\]
in ${\ind}\mathsf{P}_{\Iw,\Iw}$ obtained by applying (the extension to ind-objects of) $\sZ^\Coh$.
This morphism is again $\scO(G^\vee_\bk)$-linear;
as a consequence the composition
\begin{multline*}
\sZ(\scO(G^\vee_\bk)) \to \sZ(\scO(G^\vee_\bk)) \otimes \scO(G^\vee_\bk) \\
\to \bigl( \sZ(\scO(G^\vee_\bk)) \otimes_{\scO(G^\vee_\bk)} \scO(\{\su\}) \bigr) \otimes \scO(\rmZ_{G^\vee_\bk}(\su))
\end{multline*}
factors (uniquely) through a morphism
\begin{equation*}
\bigl( \sZ(\scO(G^\vee_\bk)) \otimes_{\scO(G^\vee_\bk)} \scO(\{\su\}) \bigr) \to \bigl( \sZ(\scO(G^\vee_\bk)) \otimes_{\scO(G^\vee_\bk)} \scO(\{\su\}) \bigr) \otimes \scO(\rmZ_{G^\vee_\bk}(\su)).
\end{equation*}
Applying $\Pi^0_{\Iw,\Iw}$, we deduce a morphism
\begin{equation}
\label{eqn:comult}
\bigl( \sZ^0(\scO(G^\vee_\bk)) \otimes_{\scO(G^\vee_\bk)} \scO(\{\su\}) \bigr) \to \bigl( \sZ^0(\scO(G^\vee_\bk)) \otimes_{\scO(G^\vee_\bk)} \scO(\{\su\}) \bigr) \otimes \scO(\rmZ_{G^\vee_\bk}(\su)).
\end{equation}

These considerations show that the functor
\begin{equation}
\label{eqn:Phi0}
\Phi_{\Iw,\Iw} : \mathsf{P}^0_{\Iw,\Iw} \simto \Rep(\rmZ_{G^\vee_\bk}(\su))
\end{equation}
can be reconstructed a posteriori as the functor
\[
\mathscr{F} \mapsto \Hom \bigl( \delta^0, (\sZ^0(\scO(G^\vee_\bk)) \otimes_{\scO(G^\vee_\bk)} \scO(\{\su\})) \star_\Iw^0 \mathscr{F}),
\]
with the monoidal structure induced by the product on $\sZ^0(\scO(G^\vee_\bk)) \otimes_{\scO(G^\vee_\bk)} \scO(\{\su\})$ induced by the product on $\sZ^0(\scO(G^\vee_\bk))$, and the $\rmZ_{G^\vee_\bk}(\su)$-action defined by the coaction induced by~\eqref{eqn:comult}. With this description, the regular unipotent element $\su$ can in fact be chosen a priori, and arbitrarily, and the induced functor~\eqref{eqn:Phi0} will be an equivalence in all cases. (As explained above, this choice is equivalent to the choice of a left ideal subobject in $\sZ^0(\scO(G^\vee_\bk))$.)

\begin{rmk}
\begin{enumerate}
\item
We do not claim that we know how to prove that $\Phi_{\Iw,\Iw}$ is an equivalence using the description as above, but only that we can give this description a posteriori, once we know that it provides an equivalence.
\item
These considerations show that the ind-object $\mathscr{R}^0$ is in fact the image under $\Pi^0_{\Iw,\Iw}$ of a canonical ind-object in $\mathsf{P}_{\Iw,\Iw}$, namely the tensor product
\begin{equation}
\label{eqn:defn-R}
\scR := \sZ(\scO(G^\vee_\bk)) \otimes_{\scO(G^\vee_\bk)} \scO(\{\su\}).
\end{equation}
Moreover, the same arguments as for $\mathscr{R}^0$ show that $\scR$ has a natural structure of ring ind-object.
As explained above, the morphism~\eqref{eqn:comult} defining the $\rmZ_{G^\vee_\bk}(\su)$-action is also defined at the level of this object.
\item
One can also describe a variant of the equivalence $\Phi_{\Iw,\Iw}$ which does not require any choice; namely, if $\cU_\reg$ denotes the unique open orbit in the unipotent cone $\cU$ of $G^\vee_\bk$, then as explained in~\cite[Equation~(2.2)]{brr} the map $h\rmZ_{G^\vee_\bk}(\su) \mapsto h \su h^{-1}$ induces an isomorphism of varieties
\[
G^\vee_\bk / \rmZ_{G^\vee_\bk}(\su) \simto \cU_\reg,
\]
hence an equivalence of categories
$\Coh^{G^\vee_\bk}(\cU_\reg) \simto \Rep(\rmZ_{G^\vee_\bk}(\su))$.
One can check that the composition of $\Phi_{\Iw,\Iw}$ with the inverse of this equivalence defines an equivalence
\[
\mathsf{P}_{\Iw,\Iw}^0 \simto \Coh^{G^\vee_\bk}(\cU_\reg)
\]
which is independent of the initial choice of the ideal $\mathscr{J}$ (or, equivalently, of the element $\su$).
\end{enumerate}
\end{rmk}

\section{Construction of the monodromic regular quotient}
\label{sec:construction-mon-reg-quot}

In this section we provisionally come back to the general setting of~\S\ref{ss:Fl}.

\subsection{\texorpdfstring{$\Iwu$}{Iu}-monodromic sheaves on the extended affine flag variety, convolution, and monodromy}
\label{ss:monodromic}

Recall the ind-scheme $\tFl_G$ defined in~\S\ref{ss:Fl}. This ind-scheme admits an action of $\Iwu$, and we can consider the associated equivariant derived category $\Db_{\Iwu}(\tFl_G,\bk)$. We will denote by
\[
 \D_{\Iwu,\Iwu}
\]
the full triangulated subcategory of $\Db_{\Iwu}(\tFl_G,\bk)$ generated by the essential image of the pullback functor $\pi^* : \Db_{\Iwu}(\Fl_G,\bk) \to \Db_{\Iwu}(\tFl_G,\bk)$. 
The perverse t-structure on $\Db_{\Iwu}(\tFl_G,\bk)$ restricts to a t-structure $(\pD^{\leq 0}_{\Iwu,\Iwu},\pD^{\geq 0}_{\Iwu,\Iwu})$ on $\mathsf{D}_{\Iwu,\Iwu}$, whose heart will be denoted $\mathsf{P}_{\Iwu,\Iwu}$.
Since $\pi$ is smooth with connected fibers the functor
\[
 \pi^\dag := \pi^*[\dim(T)] \cong \pi^![-\dim(T)] : \mathsf{D}_{\Iwu,\Iw} \to \mathsf{D}_{\Iwu,\Iwu}
\]
is t-exact,
and the simple objects in the category $\sfP_{\Iwu,\Iwu}$ are the object $\pi^\dag \For^{\Iw}_{\Iwu}(\IC_w)$ with $w \in W$.

%

We define a natural convolution product $- \star_{\Iwu} -$ on the category $\Db_{\Iwu}(\tFl_G,\bk)$ as follows. We denote by $\tFl_G \, \widetilde{\times} \, \tFl_G$ the fppf quotient of $\Loop G \times \tFl_G$ by the action of $\Iwu$ defined by $g \cdot (h,x)=(hg^{-1}, g \cdot x)$; it is easily seen that this functor is represented by an ind-scheme, which we denote in the same way. 
The multiplication map in $\Loop G$ defines a (non proper!) morphism
$\widetilde{m} : \tFl_G \, \widetilde{\times} \, \tFl_G \to \tFl_G$. Then, given $\scF,\scG$ in $\Db_{\Iwu}(\tFl_G,\bk)$, there exists a unique complex $\scF \, \widetilde{\boxtimes} \, \scG$ in $\Db_{\Iwu}(\tFl_G \, \widetilde{\times} \, \tFl_G,\bk)$ whose pullback to $\Loop G \times \tFl_G$ is the exterior product of the pullback of $\scF$ to $\Loop G$ with $\scG$. We set
\[
 \scF \star_{\Iwu} \scG := \widetilde{m}_!(\scF \, \widetilde{\boxtimes} \, \scG)[\dim(T)].
\]
It is not difficult to check that this operation admits a natural associativity constraint. (In this definition we use the $!$-pushforward, which differs from the $*$-pushforward since $\widetilde{m}$ is not proper.)

\begin{rmk}
\label{rmk:tilde-times-2}
As in Remark~\ref{rmk:tilde-times}, given $\Iwu$-stable subschemes $X,Y$ in $\tFl_G$, we will also denote by $X \, \tilde{\times} \, Y$ the quotient of $X' \times Y$ by the $\Iwu$-action induced by that on $\Loop G \times \tFl_G$, where $X'$ is the preimage of $X$ in $\Loop G$.
\end{rmk}

\begin{lem}
\label{lem:convolution-Iw-Iwu}
 For any $\scF,\scG$ in $\mathsf{D}_{\Iw,\Iw}$ we have
 \[
  \bigl( \pi^\dag \For^{\Iw}_{\Iwu}(\scF) \bigr) \star_{\Iwu} \bigl( \pi^\dag \For^{\Iw}_{\Iwu}(\scG) \bigr) \cong \bigl( \pi^\dag \For^{\Iw}_{\Iwu}(\scF \star_\Iw \scG) \bigr) \otimes_\bk \mathsf{H}_c^{[\bullet]}(T;\bk)[2\dim(T)],
 \]
 where we write $\mathsf{H}_c^{[\bullet]}(T;\bk)$ for $\bigoplus_{i \in \Z} \mathsf{H}^i_c(T,\bk)[-i]$.
\end{lem}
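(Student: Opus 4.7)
The plan is to factor $\widetilde m$ through a fiber product, identify the factorizing map as a $T$-torsor, and apply proper base change to pass between $\tFl_G$ and $\Fl_G$. First I introduce the natural morphism $r : \tFl_G \widetilde\times \tFl_G \to \Fl_G \widetilde\times \Fl_G$ sending $[g,x]$ to $[g,\pi(x)]$, which fits into a commutative (but not cartesian) square with $\widetilde m$, $m$ and $\pi$. Setting $Z := \tFl_G \times_{\Fl_G}(\Fl_G \widetilde\times \Fl_G)$ with projections $p_1, p_2$, the map $s : \tFl_G \widetilde\times \tFl_G \to Z$ defined by $[g,x] \mapsto (gx, [g,\pi(x)])$ then satisfies $p_1 \circ s = \widetilde m$ and $p_2 \circ s = r$.

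The key geometric step will be to show that $s$ is a Zariski locally trivial $T$-torsor. For this I exhibit the residual $T$-action on $\tFl_G \widetilde\times \tFl_G$ induced by the action $t\cdot(g,x) = (gt^{-1}, tx)$ on $LG \times \tFl_G$ (which commutes with the $\Iwu$-action used to form the twisted product), and check that both $\widetilde m$ and $r$ are invariant under it. A direct parametrization of the preimage $s^{-1}(y_1, [g_0, y_0])$ by classes $[g_0 i^{-1}, i g_0^{-1} y_1]$ with $i \in \Iw$ shows the fibers are $\Iw/\Iwu \cong T$, with the residual action being left translation; local triviality descends from that of $\pi$.

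Next I compare the two ``twisted external products.'' Pulling back to $LG \times \tFl_G$, the object $\pi^\dag\For^\Iw_\Iwu(\scF) \,\widetilde\boxtimes\, \pi^\dag\For^\Iw_\Iwu(\scG)$ becomes $(\scF_{LG}[\dim T]) \boxtimes (\pi^*\scG[\dim T])$, while $r^*(\scF \,\widetilde\boxtimes\, \scG)$ becomes $\scF_{LG} \boxtimes \pi^*\scG$ (via the map $(g,x) \mapsto (g,\pi(x))$), and $\Iwu$-descent yields a canonical isomorphism
\[
\pi^\dag\For^\Iw_\Iwu(\scF) \,\widetilde\boxtimes\, \pi^\dag\For^\Iw_\Iwu(\scG) \;\cong\; r^*(\scF \,\widetilde\boxtimes\, \scG)[2\dim T].
\]

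Combining everything, proper base change in the cartesian square with corners $Z, \tFl_G, \Fl_G \widetilde\times \Fl_G, \Fl_G$ gives $(p_1)_! p_2^* \cong \pi^* m_!$, while the projection formula for the $T$-torsor $s$ gives $s_! s^* \cong (-) \otimes_\bk \mathsf H_c^{[\bullet]}(T;\bk)$; substituting into
\[
\widetilde m_!\bigl(\pi^\dag\For^\Iw_\Iwu(\scF) \,\widetilde\boxtimes\, \pi^\dag\For^\Iw_\Iwu(\scG)\bigr)[\dim T] \cong (p_1)_! s_! s^* p_2^*(\scF \,\widetilde\boxtimes\, \scG)[3\dim T]
\]
and using $\pi^\dag = \pi^*[\dim T]$ produces the stated isomorphism. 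The main obstacle is the verification that $s$ is a $T$-torsor, which rests on the correct identification of the residual $T$-action and its free and transitive action on fibers; once this is established, the remaining manipulations are routine.
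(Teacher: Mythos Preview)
Your proof is correct but takes a different route from the paper's. The paper does not work directly with the factorization of $\widetilde m$; instead it first invokes a ``mixed'' convolution formula (from \cite{bgmrr}): for $\scF'$ in $\Db_{\Iwu}(\tFl_G,\bk)$ and $\scG'$ in $\Db_{\Iw}(\tFl_G,\bk)$ one has $\scF' \star_{\Iwu} \For^{\Iw}_{\Iwu}(\scG') \cong (\pi_! \scF') \star_\Iw \scG'[\dim T]$. Applying this with $\scF'=\pi^\dag\For^{\Iw}_{\Iwu}(\scF)$ and $\scG'=\pi^*\scG$ reduces the computation to evaluating $\pi_!\pi^*\scF$, which is handled by the projection formula and base change along the single $T$-torsor $\pi:\tFl_G\to\Fl_G$ (using $\widetilde X\times_X\widetilde X\cong T\times\widetilde X$). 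Your argument instead identifies a $T$-torsor \emph{inside} the convolution diagram: the map $s:\tFl_G\widetilde\times\tFl_G\to Z=\tFl_G\times_{\Fl_G}(\Fl_G\widetilde\times\Fl_G)$, which one can also see as the quotient map $(LG\times\tFl_G)/\Iwu\to(LG\times\tFl_G)/\Iw$. Both approaches rest on the same underlying ingredients (base change and the $T$-torsor projection formula), but yours is self-contained and stays at the level of the twisted products, while the paper's imports a general reduction principle that it can reuse elsewhere.
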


\begin{proof}
If we denote by $\Db_\Iw(\tFl_G,\bk)$ the $\Iw$-equivariant derived category of $\tFl_G$,
 the same definition as for the convolution product $\star_\Iw$ defines a canonical bifunctor
 \[
  \mathsf{D}_{\Iwu,\Iw} \times \Db_\Iw(\tFl_G,\bk) \to \Db_{\Iwu}(\tFl_G,\bk),
 \]
which will also be denoted $\star_\Iw$.
Let us again denote by
\[
 \For^{\Iw}_{\Iwu} : \Db_{\Iw}(\tFl_G,\bk) \to \Db_{\Iwu}(\tFl_G,\bk)
\]
the natural forgetful functor;
then by the same considerations as for~\cite[Lemma~2.5]{bgmrr}, for $\scF'$ in $\Db_{\Iwu}(\tFl_G,\bk)$ and $\scG'$ in $\Db_{\Iw}(\tFl_G,\bk)$ we have a canonical isomorphism
\[
 \scF' \star_{\Iwu} \For^{\Iw}_{\Iwu}(\scG') \cong (\pi_! \scF') \star_\Iw \scG'[\dim(T)].
\]
With $\scF,\scG$ as in the statement, we deduce an isomorphism
\begin{multline*}
 \bigl( \pi^\dag \For^{\Iw}_{\Iwu}(\scF) \bigr) \star_{\Iwu} \bigl( \pi^\dag \For^{\Iw}_{\Iwu}(\scG) \bigr) \cong \bigl( \pi^* \For^{\Iw}_{\Iwu}(\scF) \bigr) \star_{\Iwu} \bigl( \For^{\Iw}_{\Iwu} \circ \pi^*(\scG) \bigr) [2\dim(T)] \\
 \cong \bigl( \pi_! \pi^* \For^{\Iw}_{\Iwu}(\scF) \bigr) \star_{\Iw} \bigl( \pi^*\scG \bigr) [3\dim(T)].
\end{multline*}
Now we have
\[
 \bigl( \pi_! \pi^* \For^{\Iw}_{\Iwu}(\scF) \bigr) \star_{\Iw} \bigl( \pi^*\scG \bigr) \cong \pi^* \For^{\Iw}_{\Iwu} \bigl( ( \pi_! \pi^* \scF) \star_{\Iw} \scG \bigr).
\]
If we denote by $X \subset \Fl_G$ a closed finite union of $\Iw$-orbits over which $\scF$ is supported, and by $\widetilde{X}$ its preimage in $\tFl_G$, then by the projection formula we have $\pi_! \pi^* \scF \cong \scF \otimes_\bk \pi_! \underline{\bk}_{\widetilde{X}}$. Since $\widetilde{X}$ is a $T$-torsor over $X$, we have a canonical isomorphism $\widetilde{X} \times_X \widetilde{X} \cong T \times \widetilde{X}$; the base change theorem then implies that $\pi^* \pi_! \underline{\bk}_{\widetilde{X}} \cong \underline{\bk}_{\widetilde{X}} \otimes_\bk \mathsf{H}_c^{[\bullet]}(T;\bk)$, and using this (and the definition of the convolution product) we obtain an isomorphism
\[
 ( \pi_! \pi^* \scF) \star_{\Iw} \scG \cong (\scF \star_{\Iw} \scG) \otimes_\bk \mathsf{H}_c^{[\bullet]}(T;\bk).
\]
The desired isomorphism follows.
\end{proof}

The formula in Lemma~\ref{lem:convolution-Iw-Iwu} shows in particular that the bifunctor $\star_{\Iwu}$ restricts to a bifunctor
\[
 \mathsf{D}_{\Iwu,\Iwu} \times \mathsf{D}_{\Iwu,\Iwu} \to \mathsf{D}_{\Iwu,\Iwu}.
\]
A similar construction provides a bifunctor
\[
 \star_{\Iwu} : \mathsf{D}_{\Iwu,\Iwu} \times \mathsf{D}_{\Iwu,\Iw} \to \mathsf{D}_{\Iwu,\Iw}
\]
such that
\[
\pi^\dag(\scF \star_{\Iwu} \scG) \cong \scF \star_{\Iwu} \pi^\dag(\scG)
\]
for $\scF$ in $\mathsf{D}_{\Iwu,\Iwu}$ and $\scG$ in $\mathsf{D}_{\Iwu,\Iw}$.

Verdier's monodromy construction (see~\cite{verdier}; see also~\cite{bezr, gouttard} for additional comments) with respect to the action of $T \times T$ on $\tFl_G$ via $(t_1,t_2) \cdot g\Iwu = t_1gt_2 \Iwu$ provides, for any $\scF$ in $\mathsf{D}_{\Iwu,\Iwu}$, a canonical algebra morphism
\[
\mu_\scF : \scO(T^\vee_\bk \times T^\vee_\bk) \to \End_{\D_{\Iwu,\Iwu}}(\scF),
\]
which is unipotent in the sense that it vanishes on a power of the kernel of the natural augmentation morphism $\scO(T^\vee_\bk \times T^\vee_\bk) \to \bk$. (Here we use the identification $\scO(T^\vee_\bk) = \bk[X_*(T)]$.)
This construction satisfies various forms of functoriality; in particular, for any $\scF,\scG$ in $\mathsf{D}_{\Iwu,\Iwu}$ and any morphism $f : \scF \to \scG$ we have
\[
f \circ \mu_{\scF}(x) = \mu_{\scG}(x) \circ f
\]
for all $x \in \scO(T^\vee_\bk \times T^\vee_\bk)$. With this structure, $\D_{\Iwu,\Iwu}$ becomes an $\scO(T^\vee_\bk \times T^\vee_\bk)$-linear category.

\begin{rmk}
\label{rmk:loop-rot}
Recall that each $\Iw$-orbit on $\Fl_G$ is stable under the loop rotation action, as well as each pullback of such an orbit to $\tFl_G$. As a consequence, for every object $\scF$ in $\mathsf{D}_{\Iw,\Iw}$, resp.~$\scG$ in $\mathsf{D}_{\Iwu,\Iwu}$, we have a canonical monodromy morphism
 \[
 \overline{\mu}_{\scF}^{\mathrm{rot}} : \bk[x,x^{-1}] \to \End_{\mathsf{D}_{\Iw,\Iw}}(\scF), \quad \text{resp.} \quad
  \mu_{\scG}^{\mathrm{rot}} : \bk[x,x^{-1}] \to \End_{\mathsf{D}_{\Iwu,\Iwu}}(\scG),
 \]
 where $x$ is an indeterminate.
These morphisms possess the same functoriality properties as those considered above.

Recall also that any object of $\sfP_{\Loop^+G,\Loop^+G}$ is automatically equivariant with respect to the loop rotation action, see~\cite[Proposition~2.2]{mv}. As explained in~\cite[\S 5.2]{ab} (see also~\cite[Proposition~2.4.6 and its proof]{ar-book}), as a consequence of Lem\-ma~\ref{lem:action-Gm} and this property, for any $\scA$ in $\sfP_{\Loop^+G,\Loop^+G}$ we have
\begin{equation}
\label{eqn:mon-Z-rot}
\sm_{\scA} = \overline{\mu}_{\sfZ(\scA)}^{\mathrm{rot}}(x^{-1}).
\end{equation}
\end{rmk}

\subsection{The monodromic regular quotient}
\label{ss:mon-reg-quotient}


The main player in this paper will be the abelian category
\[
\mathsf{P}^0_{\Iwu,\Iwu}
\]
defined as the Serre quotient of $\sfP_{\Iwu,\Iwu}$ by the Serre subcategory $\sfP^+_{\Iwu,\Iwu}$ generated by the simple objects $\pi^\dag \For^{\Iw}_{\Iwu}( \IC_w )$ for $w \in W$ such that $\ell(w)>0$. Let us also denote by $\D^+_{\Iwu,\Iwu}$ the full triangulated subcategory of $\D_{\Iwu,\Iwu}$ generated by the perverse sheaves $\pi^\dag \For^{\Iw}_{\Iwu}( \IC_w )$ for $w \in W$ such that $\ell(w)>0$,
and by $\D^0_{\Iwu,\Iwu}$ the Verdier quotient of $\D_{\Iwu,\Iwu}$ by $\D^+_{\Iwu,\Iwu}$.
By Lemma~\ref{lem:quotient-t-str}, there exists a unique t-structure on $\D^0_{\Iwu,\Iwu}$ such that the quotient functor
\[
\Pi^0_{\Iwu,\Iwu} : \D_{\Iwu,\Iwu} \to \D_{\Iwu,\Iwu}^0
\]
is t-exact; moreover this t-structure is bounded, and its heart
identifies with $\sfP^0_{\Iwu,\Iwu}$. This t-structure will be called the perverse t-structure, and the corresponding cohomology functors will be denoted $\pH^n(-)$.

\begin{lem}
\phantomsection
\label{lem:P+-ideal}
\begin{enumerate}
 \item 
 \label{it:lem:P+-ideal-1}
 If $\scF$ belongs to $\D^+_{\Iwu,\Iwu}$ and $\scG$ is any object of $\D_{\Iwu,\Iwu}$, then the objects $\scF \star_{\Iwu} \scG$ and $\scG \star_{\Iwu} \scF$ belong to $\D^+_{\Iwu,\Iwu}$.
 \item 
 \label{it:lem:P+-ideal-2}
 For $\scF,\scG$ in $\pD^{\leq 0}_{\Iwu,\Iwu}$ and $n \in \Z_{>0}$, the object $\pH^n(\scF \star_{\Iwu} \scG)$ belongs to $\mathsf{P}^+_{\Iwu,\Iwu}$.
\end{enumerate}
\end{lem}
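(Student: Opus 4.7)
For both parts, my strategy is to reduce to the corresponding statements for the $\Iw$-equivariant convolution $\star_\Iw$, provided by \cite[Lemma~5.1]{brr}, by means of the comparison formula of Lemma~\ref{lem:convolution-Iw-Iwu}. For part~\eqref{it:lem:P+-ideal-1}, I would first observe that $\sfD_{\Iwu,\Iwu}$ is triangulated-generated by the essential image of $\pi^\dag \circ \For^\Iw_\Iwu : \sfD_{\Iw,\Iw} \to \sfD_{\Iwu,\Iwu}$: this follows from the definition of $\sfD_{\Iwu,\Iwu}$ as triangulated-generated by $\pi^*(\sfD_{\Iwu,\Iw})$, combined with the fact that the standard objects $\For^\Iw_\Iwu(\Delta^\Iw_w)$ generate $\sfD_{\Iwu,\Iw}$. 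The same reasoning shows that $\sfD^+_{\Iwu,\Iwu}$ is triangulated-generated by $\pi^\dag\For^\Iw_\Iwu(\sfD^+_{\Iw,\Iw})$. Because $\star_\Iwu$ is triangulated in each variable, the problem reduces to the case $\scF = \pi^\dag\For^\Iw_\Iwu(\scF')$ and $\scG = \pi^\dag\For^\Iw_\Iwu(\scG')$ with $\scF' \in \sfD^+_{\Iw,\Iw}$ and $\scG' \in \sfD_{\Iw,\Iw}$ (and symmetrically). Lemma~\ref{lem:convolution-Iw-Iwu} then identifies $\scF \star_\Iwu \scG$ with $\pi^\dag\For^\Iw_\Iwu(\scF' \star_\Iw \scG')$ tensored with the bounded graded $\bk$-vector space $\mathsf{H}_c^{[\bullet]}(T;\bk)[2\dim T]$, and the claim follows from \cite[Lemma~5.1(1)]{brr} together with the stability of $\sfD^+_{\Iwu,\Iwu}$ under tensoring with bounded graded vector spaces.

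For part~\eqref{it:lem:P+-ideal-2}, I would first perform a perverse dévissage. The truncation triangles $\tau^{\leq n-1}\scF \to \tau^{\leq n}\scF \to \pH^n(\scF)[-n]$ for $n \leq 0$, together with the associated long exact sequences of perverse cohomology and the observation that for $m > 0$ and $n \leq 0$ the shift $[-n]$ sends $\pH^m$ to $\pH^{m-n}$ with $m-n > 0$, reduce the statement to the case where $\scF$ and $\scG$ are perverse. I would then argue that every object of $\sfP_{\Iwu,\Iwu}$ has finite length: it is supported on a finite union of strata $\tFl_{G,w}$, and on each stratum the local systems arising from objects of $\sfD_{\Iwu,\Iwu}$ are pullbacks along $\pi$ of constant local systems on the simply connected $\Iw$-orbits in $\Fl_G$, hence themselves constant and of finite rank. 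The simple constituents are therefore precisely the objects $\pi^\dag\For^\Iw_\Iwu(\IC_w)$. Applying long exact sequences of perverse cohomology along composition series of $\scF$ and $\scG$, and using that $\sfP^+_{\Iwu,\Iwu}$ is Serre-closed, we reduce to the case $\scF = \pi^\dag\For^\Iw_\Iwu(\IC_v)$ and $\scG = \pi^\dag\For^\Iw_\Iwu(\IC_w)$.

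In this final case, Lemma~\ref{lem:convolution-Iw-Iwu} yields
\[
\scF \star_\Iwu \scG \cong \pi^\dag\For^\Iw_\Iwu(\IC_v \star_\Iw \IC_w) \otimes_\bk \mathsf{H}_c^{[\bullet]}(T;\bk)[2\dim T].
\]
Since $\mathsf{H}_c^i(T;\bk)$ vanishes outside $i \in [\dim T, 2\dim T]$, the tensor factor is concentrated in cohomological degrees $[-\dim T, 0]$; for $n > 0$ the cohomology $\pH^n(\scF \star_\Iwu \scG)$ is therefore a direct sum of terms $\mathsf{H}^i_c(T;\bk) \otimes_\bk \pi^\dag\For^\Iw_\Iwu(\pH^{n+2\dim T - i}(\IC_v \star_\Iw \IC_w))$ with index $n + 2\dim T - i \geq n > 0$. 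By \cite[Lemma~5.1(2)]{brr} each $\pH^m(\IC_v \star_\Iw \IC_w)$ with $m > 0$ lies in $\sfP^+_{\Iw,\Iw}$, and $\pi^\dag\For^\Iw_\Iwu$ is t-exact and sends $\sfP^+_{\Iw,\Iw}$ into $\sfP^+_{\Iwu,\Iwu}$, so the claim follows. The main technical point I expect to require care is the verification of the finite-length property of $\sfP_{\Iwu,\Iwu}$ used in the reduction to simple objects.
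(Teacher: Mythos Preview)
Your proposal is correct and follows essentially the same approach as the paper, which only gives a one-line sketch: reduce to the case where $\scF$ and $\scG$ are simple perverse sheaves, then invoke \cite[Lemma~5.1]{brr} via Lemma~\ref{lem:convolution-Iw-Iwu}. You have simply spelled out the d\'evissage steps and the cohomological bookkeeping that the paper leaves implicit; in particular your computation of the perverse degrees contributed by $\mathsf{H}_c^{[\bullet]}(T;\bk)[2\dim T]$ is exactly what is needed to make the sketch precise. The finite-length property of $\sfP_{\Iwu,\Iwu}$ that you flag as a potential subtlety is not an issue: it follows from the standard fact that perverse sheaves constructible along a stratification by affine spaces form a finite-length category, and the simple objects are already identified in \S\ref{ss:monodromic}.
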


\begin{proof}[Sketch of proof]
The proof reduces to the case $\scF$ and $\scG$ are simple perverse sheaves, which reduces to the claims in~\cite[Lemma~5.1]{brr} using Lemma~\ref{lem:convolution-Iw-Iwu}.
\end{proof}

As a consequence of Lemma~\ref{lem:P+-ideal} we obtain that the bifunctor
\[
\D_{\Iwu,\Iwu} \times \D_{\Iwu,\Iwu} \to \D_{\Iwu,\Iwu}^0
\]
defined by
\[
(\scF,\scG) \mapsto \Pi^0_{\Iwu,\Iwu}(\scF \star_{\Iwu} \scG)
\]
factors through a bifunctor
\[
\star^0_{\Iwu} : \D_{\Iwu,\Iwu}^0 \times \D_{\Iwu,\Iwu}^0 \to \D_{\Iwu,\Iwu}^0,
\]
which is triangulated and defines a structure of monoidal category (without unit object) on $\D_{\Iwu,\Iwu}^0$. This bifunctor is moreover ``right t-exact'' in the sense that if $\scF,\scG$ belong to the nonpositive part of the perverse t-structure on $\D_{\Iwu,\Iwu}^0$, then so does $\scF \star^0_{\Iwu} \scG$. (This bifunctor is \emph{not} t-exact if $G$ is nontrivial, contrary to the situation for $\sfD_{\Iw,\Iw}^0$.)

We also define the bifunctor
\[
(-) \pstar^0_{\Iwu} (-) : \sfP^0_{\Iwu,\Iwu} \times \sfP^0_{\Iwu,\Iwu} \to \sfP^0_{\Iwu,\Iwu}
\]
by setting,
for $\scF,\scG$ in $\sfP^0_{\Iwu,\Iwu}$,
\[
 \scF \pstar^0_{\Iwu} \scG := \pH^0(\scF \star^0_{\Iwu} \scG).
\]
The right t-exactness of $\star^0_{\Iwu}$ implies that the bifunctor $(-) \pstar^0_{\Iwu} (-)$ is right exact on both sides, and also that for $\scF,\scG,\scH$ in $\sfP^0_{\Iwu,\Iwu}$ we have
\begin{align*}
(\scF \pstar^0_{\Iwu} \scG) \pstar^0_{\Iwu} \scH &\cong \pH^0 \bigl( (\scF \star^0_{\Iwu} \scG) \star^0_{\Iwu} \scH \bigr), \\
\scF \pstar^0_{\Iwu} (\scG \pstar^0_{\Iwu} \scH) &\cong \pH^0 \bigl( \scF \star^0_{\Iwu} (\scG \star^0_{\Iwu} \scH) \bigr);
\end{align*}
in particular, the associativity constraint on $\star^0_{\Iwu}$ induces an associativity constraint on $\pstar^0_{\Iwu}$, so that we obtain a monoidal category
\[
 (\sfP^0_{\Iwu,\Iwu},\pstar^0_{\Iwu})
\]
(again, without unit object). 

\subsection{Relation with the regular quotient}
\label{ss:relation-reg-quotient}

It follows from the definitions and the universal property of the Verdier quotient that the composition
\[
\Pi^0_{\Iwu,\Iwu} \circ \pi^\dag \circ \For^{\Iw}_{\Iwu} : \sfD_{\Iw,\Iw} \to \D_{\Iwu,\Iwu}^0
\]
factors through a triangulated functor
\[
 \pi_0^{\dag} : \sfD_{\Iw,\Iw}^0 \to \D_{\Iwu,\Iwu}^0.
\]
From the t-exactness of $\pi^\dag$ we deduce that $\pi_0^{\dag}$ is t-exact with respect to the perverse t-structures.


\begin{lem}
The restriction of $\pi^\dag_0$ to the hearts of the perverse t-structures admits a natural structure of monoidal functor
\begin{equation*}
\pi_0^{\dag} : (\mathsf{P}_{\Iw,\Iw}^0,\star_{\Iw}^0) \to (\mathsf{P}_{\Iwu,\Iwu}^0,\pstar_{\Iwu}^0).
\end{equation*}
\end{lem}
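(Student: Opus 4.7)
The plan is to build the monoidal structure on $\pi^\dag_0$ by extracting the ``top degree'' piece of the isomorphism provided by Lemma~\ref{lem:convolution-Iw-Iwu}. Concretely, given $\scF,\scG \in \sfP^0_{\Iw,\Iw}$, choose lifts $\widetilde{\scF},\widetilde{\scG} \in \sfP_{\Iw,\Iw}$; by Lemma~\ref{lem:convolution-Iw-Iwu} there is a natural isomorphism
\[
\pi^\dag \For^{\Iw}_{\Iwu}(\widetilde{\scF}) \star_{\Iwu} \pi^\dag \For^{\Iw}_{\Iwu}(\widetilde{\scG}) \cong \pi^\dag \For^{\Iw}_{\Iwu}(\widetilde{\scF} \star_\Iw \widetilde{\scG}) \otimes_\bk \mathsf{H}_c^{[\bullet]}(T;\bk)[2\dim T]
\]
in $\sfD_{\Iwu,\Iwu}$. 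I will show that applying $\Pi^0_{\Iwu,\Iwu} \circ \pH^0$ extracts from this an isomorphism $\pi^\dag_0(\scF) \pstar^0_{\Iwu} \pi^\dag_0(\scG) \simto \pi^\dag_0(\scF \star^0_\Iw \scG)$.

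The key computation goes as follows. The complex $\mathsf{H}_c^{[\bullet]}(T;\bk)[2\dim T]$ is concentrated in cohomological degrees $-\dim T,\ldots,0$, with top (degree-$0$) piece $\mathsf{H}_c^{2\dim T}(T;\bk) \cong \bk$ canonically (via the trace). Using the formula $\pH^0(\scX \otimes V^\bullet) = \bigoplus_j \pH^{-j}(\scX) \otimes V^j$ together with the t-exactness of $\pi^\dag \For^{\Iw}_{\Iwu}$, we obtain
\[
\pH^0\bigl(\text{RHS of Lemma \ref{lem:convolution-Iw-Iwu}}\bigr) = \bigoplus_{k=0}^{\dim T} \pi^\dag \For^{\Iw}_{\Iwu}\bigl(\pH^k(\widetilde{\scF} \star_\Iw \widetilde{\scG})\bigr) \otimes \mathsf{H}_c^{2\dim T - k}(T;\bk).
\]
Since $\star^0_\Iw$ is t-exact, $\pH^k(\widetilde{\scF} \star_\Iw \widetilde{\scG})$ lies in $\sfP^+_{\Iw,\Iw}$ for $k > 0$, so that $\pi^\dag \For^{\Iw}_{\Iwu}$ of it lies in $\sfP^+_{\Iwu,\Iwu}$ and is killed by $\Pi^0_{\Iwu,\Iwu}$. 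Only the $k=0$ summand survives the quotient, and after the canonical identification $\mathsf{H}_c^{2\dim T}(T;\bk) \cong \bk$ it produces exactly
\[
\Pi^0_{\Iwu,\Iwu} \pi^\dag \For^{\Iw}_{\Iwu}\bigl(\pH^0(\widetilde{\scF} \star_\Iw \widetilde{\scG})\bigr) = \pi^\dag_0(\scF \star^0_\Iw \scG),
\]
while the LHS becomes $\pi^\dag_0(\scF) \pstar^0_{\Iwu} \pi^\dag_0(\scG)$ by definition of $\pstar^0_{\Iwu}$.

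Naturality follows directly from naturality of Lemma~\ref{lem:convolution-Iw-Iwu}, which itself rests on naturality of base change and the projection formula used to identify $\pi_! \pi^* \underline{\bk}$. For the associativity constraint, one applies Lemma~\ref{lem:convolution-Iw-Iwu} iteratively to the two bracketings of a threefold convolution: both yield $\pi^\dag \For^{\Iw}_{\Iwu}(\widetilde{\scF} \star_\Iw \widetilde{\scG} \star_\Iw \widetilde{\scH}) \otimes_\bk \bigl(\mathsf{H}_c^{[\bullet]}(T;\bk)[2\dim T]\bigr)^{\otimes 2}$, and the associator for $\star_{\Iwu}$ matches the associator for $\star_\Iw$ tensored with the trivial associator on the vector-space factor. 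After applying $\Pi^0_{\Iwu,\Iwu}\pH^0$ and projecting to the top-degree component in each factor of $\mathsf{H}_c^\bullet(T;\bk)^{\otimes 2} = \bk$, one recovers the associator on $\pi^\dag_0(\scF \star^0_\Iw \scG \star^0_\Iw \scH)$.

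The main obstacle I anticipate is the careful verification of the hexagon diagram in the associativity step: one needs to check that the iterated base-change / projection-formula isomorphism of Lemma~\ref{lem:convolution-Iw-Iwu} is compatible with the associator of $\star_{\Iwu}$, rather than just abstractly produce the same object up to isomorphism. This amounts to a diagram chase on the convolution diagrams $\tFl_G \,\widetilde\times\, \tFl_G \,\widetilde\times\, \tFl_G \to \tFl_G$ versus their $\pi$-images, tracking the canonical Künneth identifications for the $T^{\times 2}$-torsor fibers; the nontriviality of $\mathsf{H}_c^{[\bullet]}(T;\bk)$ in many degrees is harmless once one observes that the $\sfP^+_{\Iwu,\Iwu}$-quotient only retains the top-degree cup-product pairing $\mathsf{H}_c^{2\dim T} \otimes \mathsf{H}_c^{2\dim T} \to \mathsf{H}_c^{2\dim T}$, which is manifestly associative.
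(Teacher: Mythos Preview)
Your proposal is correct and follows essentially the same approach as the paper: both use Lemma~\ref{lem:convolution-Iw-Iwu}, take $\pH^0$ in the quotient category, and observe that only the top-degree piece $\mathsf{H}_c^{2\dim T}(T;\bk)\cong\bk$ survives (the paper streamlines this by working directly in $\sfD^0_{\Iw,\Iw}$ where $\scF\star^0_\Iw\scG$ is already perverse, rather than taking lifts and killing the $k>0$ terms afterward). Your discussion of associativity is more explicit than the paper's, which simply asserts that the resulting isomorphism of bifunctors defines a monoidal structure.
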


\begin{proof}
From Lemma~\ref{lem:convolution-Iw-Iwu} we obtain for $\scF,\scG$ in $\sfD_{\Iw,\Iw}^0$ a canonical isomorphism
\[
 \pi_0^{\dag} (\scF) \star_{\Iwu}^0 \pi_0^{\dag} (\scG) \cong \pi_0^{\dag} (\scF \star^0_{\Iw} \scG) \otimes_\bk \mathsf{H}_c^{[\bullet]}(T;\bk)[2\dim(T)].
\]
If $\scF$ and $\scG$ belong to the heart of the perverse t-structure, we deduce a canonical isomorphism
 \[
  \pi^\dag_0 (\scF) \pstar_{\Iwu}^0 \pi_0^\dag(\scG) \cong \pi_0^\dag (\scF \star_\Iw^0 \scG) \otimes \mathsf{H}_c^{2\dim(T)}(T;\bk).
 \]
 Now the vector space $\mathsf{H}_c^{2\dim(T)}(T;\bk)$ is canonically isomorphic to $\bk$ since $T$ is connected, which provides an isomorphism of bifunctors defining a monoidal structure on our functor.
\end{proof}


Similar considerations show that the composition
\[
\Pi^0_{\Iwu,\Iwu} \circ \pi^\dag : \sfD_{\Iwu,\Iw} \to \D_{\Iwu,\Iwu}^0
\]
factors through a triangulated functor
\[
 \pi^{\dag,0} : \sfD_{\Iwu,\Iw}^0 \to \D_{\Iwu,\Iwu}^0,
\]
which is t-exact for the perverse t-structures and satisfies
\begin{equation}
\label{eqn:pidag-0-For}
 \pi_0^{\dag} \cong \pi^{\dag,0} \circ \For^{\Iw,0}_{\Iwu}
\end{equation}
where $\For^{\Iw,0}_{\Iwu}$ is defined in~\S\ref{ss:another-convolution}.

\section{Free-monodromic perverse sheaves}
\label{sec:fmps}

We continue to consider the general setting of~\S\ref{ss:Fl}.

\subsection{Yun's completed category}
\label{ss:completed-category}

Following Yun (see~\cite[Appendix~A]{by}; see also~\cite{bezr} for additional comments) we consider the ``completed'' category
\[
 \mathsf{D}^\wedge_{\Iwu,\Iwu}
\]
associated with the $T$-torsor $\pi : \tFl_G \to \Fl_G$ and the $\Iwu$-action on these ind-schemes.
Recall that this category is defined as the full subcategory of the category of pro-objects in $\mathsf{D}_{\Iwu,\Iwu}$ whose objects are the systems
\[
``\varprojlim_{n \in \Z_{\geq 0}}" \scF_n
\]
which are:
\begin{itemize}
 \item \emph{$\pi$-constant}, in the sense that the pro-object $``\varprojlim" \pi_!(\scF_n)$ is isomorphic to an object in $\mathsf{D}_{\Iwu,\Iw}$;
 \item \emph{uniformly bounded in degrees}, in the sense that we have an isomorphism $``\varprojlim" \scF_n \cong ``\varprojlim" \scF'_n$ and some $N \in \Z_{\geq 0}$ such that each $\scF_n'$ satisfies $\pH^i(\scF_n')=0$ unless $i \in [-N,N]$.
\end{itemize}
It is proved in~\cite[Appendix~A]{by} that this category admits a triangulated structure, for which the distinguished triangles are the diagrams isomorphic to one of the form
\[
 ``\varprojlim" \scF_n \xrightarrow{``\varprojlim" \alpha_n} ``\varprojlim" \scG_n \xrightarrow{``\varprojlim" \beta_n} ``\varprojlim" \scH_n \xrightarrow{``\varprojlim" \gamma_n} ``\varprojlim" \scF_n[1]
\]
where each
\[
 \scF_n \xrightarrow{\alpha_n} \scG_n \xrightarrow{\beta_n} \scH_n \xrightarrow{\gamma_n} \scF_n[1]
\]
is a distinguished triangle in $\mathsf{D}_{\Iwu,\Iwu}$. In particular we have a canonical fully faithful triangulated functor
\begin{equation}
\label{eqn:embedding-D-Dwedge}
\mathsf{D}_{\Iwu,\Iwu} \to \mathsf{D}^\wedge_{\Iwu,\Iwu}
\end{equation}
sending an object $\scF$ to the constant projective system with value $\scF$.
The functor
$\pi_\dag = \pi_![\dim(T)]$
defines a functor
$\mathsf{D}^\wedge_{\Iwu,\Iwu} \to \mathsf{D}_{\Iwu,\Iw}$, which will also be denoted $\pi_\dag$, and which can be shown to be triangulated. This functor is conservative by~\cite[Lemma~A.3.5]{by}. By~\cite[Corollary~5.5]{bezr}, the category $\mathsf{D}^\wedge_{\Iwu,\Iwu}$ is Krull--Schmidt.

It is clear that in this definition one can replace the ind-scheme $\tFl_G$ by the inverse image of any locally closed union $X$ of $\Iw$-orbits in $\Fl_G$; in this setting the completed category will be denoted $\sfD^\wedge_{\Iwu}(X,\bk)$.

Recall the local systems $\scL_{T,n}$ on $T$ considered in~\cite[\S 10.1]{bezr}, and the associated pro-object
\[
 \scL^\wedge_{T} := ``\varprojlim_{n}" \scL_{T,n}
\]
on $T$. (Here, each $\scL_{T,n}$ is an extension of copies of the constant local system $\underline{\bk}_T$.) We have $\tFl_{G,e}=B/U \cong T$. The pro-object $\scL^\wedge_{T}$ therefore defines a pro-local system on $\tFl_{G,e}$; taking the pushforward under the embedding in $\tFl_G$ of its shift by $\dim(T)$ we obtain an object in $\mathsf{D}^\wedge_{\Iwu,\Iwu}$, which will be denoted $\delta^\wedge$. By~\cite[Equation (3.3)]{bezr}, we have a canonical isomorphism
\[
\pi_\dag(\delta^\wedge) \cong \For^{\Iw}_{\Iwu}(\delta_{\Fl}).
\]

The arguments of~\cite[\S 4.3]{by} (see also~\cite[\S 7.3]{bezr} for the analoguous case of $G/U$) show that the monoidal product $\star_{\Iwu}$ extends to a bifunctor
\[
\hatstar : \mathsf{D}^\wedge_{\Iwu,\Iwu} \times \mathsf{D}^\wedge_{\Iwu,\Iwu} \to \mathsf{D}^\wedge_{\Iwu,\Iwu}
\]
which is triangulated on both sides. More specifically, the bifunctor $\star_{\Iwu}$ extends in a canonical way to a bifunctor $\hatstar$ on pro-objects, so that for pro-objects $``\varprojlim_{i \in I}" \scF_i$ and $``\varprojlim_{j \in J}" \scG_j$ we have
\[
 \left( ``\varprojlim_{i \in I}" \scF_i \right) \hatstar \left( ``\varprojlim_{j \in J}" \scG_j \right) = ``\varprojlim_{(i,j) \in I \times J}" \scF_i \star_{\Iwu} \scG_j.
\]
Now if these pro-objects belong to $\mathsf{D}^\wedge_{\Iwu,\Iwu}$ (so that, in particular, we can assume $I=J=\Z_{\geq 0}$) we have
\[
 \left( ``\varprojlim_{n}" \scF_n \right) \hatstar \left( ``\varprojlim_{m}" \scG_m \right) = ``\varprojlim_{m}" \left(``\varprojlim_{n}" \scF_n \star_{\Iwu} \scG_m \right)
\]
where for each $m$ the pro-object $``\varprojlim_{n}" \scF_n \star_{\Iwu} \scG_m$ is representable by an object of $\mathsf{D}_{\Iwu,\Iwu}$. It is explained in~\cite[\S 4.3]{by} that this formal inverse limit of objects in $\mathsf{D}_{\Iwu,\Iwu}$ belongs to $\sfD^\wedge_{\Iwu,\Iwu}$. In fact we also have
\[
 \left( ``\varprojlim_{n}" \scF_n \right) \hatstar \left( ``\varprojlim_{m}" \scG_m \right) = ``\varprojlim_{n}" \left(``\varprojlim_{m}" \scF_n \star_{\Iwu} \scG_m \right)
\]
where for each $n$ the pro-object $``\varprojlim_{m}" \scF_n \star_{\Iwu} \scG_m$ belongs to $\mathsf{D}_{\Iwu,\Iwu}$.

The bifunctor $\hatstar$ on $\sfD^\wedge_{\Iwu,\Iwu}$ admits a natural associativity constraint and a unit object (namely, $\delta^\wedge$), which equips $\sfD^\wedge_{\Iwu,\Iwu}$ with the structure of a monoidal category. It restricts to (triangulated) bifunctors
\[
\hatstar : \mathsf{D}^\wedge_{\Iwu,\Iwu} \times \mathsf{D}_{\Iwu,\Iwu} \to \mathsf{D}_{\Iwu,\Iwu}, \quad \hatstar : \mathsf{D}_{\Iwu,\Iwu} \times \mathsf{D}^\wedge_{\Iwu,\Iwu} \to \mathsf{D}_{\Iwu,\Iwu}
\]
which define left and right actions of the monoidal category $(\mathsf{D}^\wedge_{\Iwu,\Iwu},\hatstar)$ on the category $\mathsf{D}_{\Iwu,\Iwu}$ respectively. These two actions commute, and are compatible in various ways; in particular, for $\scF_1,\scF_2$ in $\mathsf{D}^\wedge_{\Iwu,\Iwu}$ and $\scG_1,\scG_2$ in $\mathsf{D}_{\Iwu,\Iwu}$ we have canonical isomorphisms
\begin{multline}
\label{eqn:compatibility-hatstar}
 (\scG_1 \hatstar \scF_1) \star_{\Iwu} (\scF_2 \hatstar \scG_2) \cong \scG_1 \star_{\Iwu} \bigl( (\scF_1 \hatstar \scF_2) \hatstar \scG_2 \bigr) \\
 \cong \bigl( \scG_1 \hatstar (\scF_1 \hatstar \scF_2) \bigr) \star_{\Iwu} \scG_2
 \end{multline}
 and
 \begin{equation}
  \label{eqn:compatibility-hatstar-other}
 (\scF_1 \hatstar \scG_1) \star_{\Iwu} \scG_2 \cong \scF_1 \hatstar (\scG_1 \star_{\Iwu} \scG_2). 
\end{equation}

\begin{rmk}
As explained above, the convolution product on $\sfD^\wedge_{\Iwu,\Iwu}$ extends that on $\sfD_{\Iwu,\Iwu}$, and there exists a unit object for this product. The construction of $\sfD^\wedge_{\Iwu,\Iwu}$ can be seen as a way to ``complete'' the category $\sfD_{\Iwu,\Iwu}$ so that it admits a unit object, while staying in the world of \emph{triangulated} monoidal categories.
\end{rmk}

We similarly have an action on the category $\mathsf{D}_{\Iwu,\Iw}$, defined by a bifunctor
\[
\hatstar : \mathsf{D}^\wedge_{\Iwu,\Iwu} \times \mathsf{D}_{\Iwu,\Iw} \to \mathsf{D}_{\Iwu,\Iw}
\]
whose definition is similar to that considered above, and which is also triangulated on both sides.
These functors satisfy the following relations (see~\cite[Equations~(7.2) and (7.6), Lemma~7.2]{bezr}):
\begin{align}
\label{eqn:conv-formula-1}
 \pi_\dag(\scF \hatstar \scG) &\cong \scF \hatstar \pi_\dag(\scG) \\
 \label{eqn:conv-formula-2}
 \scF \hatstar \For^{\Iw}_{\Iwu}(\scH) &\cong \pi_\dag(\scF) \star_\Iw \scH \\
 \label{eqn:conv-formula-3}
 \scF \hatstar \pi^\dag(\mathscr{K}) &\cong \pi^\dag(\scF \hatstar \mathscr{K})
\end{align}
for $\scF,\scG$ in $\mathsf{D}^\wedge_{\Iwu,\Iwu}$, $\scH$ in $\mathsf{D}_{\Iw,\Iw}$ and $\mathscr{K}$ in $\mathsf{D}_{\Iwu,\Iw}$.

The monodromy constructions of~\S\ref{ss:monodromic} pass to the completion, and provide for any $\scF$ in $\mathsf{D}^\wedge_{\Iwu,\Iwu}$ canonical algebra morphisms
\[
\mu_\scF : \scO(T^\vee_\bk \times T^\vee_\bk) \to \End_{\mathsf{D}^\wedge_{\Iwu,\Iwu}}(\scF), \quad \mu_\scF^{\mathrm{rot}} : \bk[x,x^{-1}] \to \End_{\mathsf{D}^\wedge_{\Iwu,\Iwu}}(\scF)
\]
which commute with all morphisms.
In particular, with the morphisms $\mu_\scF$, $\D^\wedge_{\Iwu,\Iwu}$ becomes an $\scO(T^\vee_\bk \times T^\vee_\bk)$-linear category. These operations are compatible with convolution, in the sense that for $\scF,\scG$ in $\D^\wedge_{\Iwu,\Iwu}$ and $f,g \in \scO(T^\vee_\bk)$ we have
\begin{align}
\label{eqn:monodromy-convolution-1}
\mu_{\scF \hatstar \scG}(f \otimes g) &= \mu_\scF(f \otimes 1) \hatstar \mu_\scG(1 \otimes g)
\\
\label{eqn:monodromy-convolution-2}
\mu_\scF(1 \otimes f) \hatstar \id_\scG &= \id_\scF \hatstar \mu_\scG(f \otimes 1).
\end{align}
(The proof is similar to that given for sheaves on $G/U$ in~\cite[Lemma~7.3]{bezr}.) It is not difficult to check that for $\scF,\scG$ in $\D^\wedge_{\Iwu,\Iwu}$ we also have
\begin{equation}
\label{eqn:monodromy-convolution-3}
\mu_{\scF \hatstar \scG}^{\mathrm{rot}}(x) = \mu_\scF^{\mathrm{rot}}(x) \hatstar \mu_\scG^{\mathrm{rot}}(x).
\end{equation}

The following claim follows from~\cite[Remark~5.1]{bezr}.

\begin{lem}
\label{lem:subcat-completion-monodromy}
Let $\scF$ in $\mathsf{D}^\wedge_{\Iwu,\Iwu}$. Then $\scF$ belongs to the essential image of the functor~\eqref{eqn:embedding-D-Dwedge} iff the restriction of $\mu_\scF$ to $\bk \otimes_\bk \scO(T^\vee_\bk) \subset \scO(T^\vee_\bk \times T^\vee_\bk)$ vanishes on some power of the maximal ideal corresponding to $e \in T^\vee_\bk$.
\end{lem}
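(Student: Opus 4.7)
\medskip

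The plan is to verify both implications separately; only the ``if'' direction requires real work.

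\textbf{Only-if direction.} If $\scF$ lies in the essential image of~\eqref{eqn:embedding-D-Dwedge}, then $\scF \cong \scF_0$ for some $\scF_0$ in $\sfD_{\Iwu,\Iwu}$ (regarded as a constant pro-object), and the monodromy $\mu_\scF$ coincides with $\mu_{\scF_0}$. By general properties of Verdier's monodromy applied to constructible complexes (as reviewed in~\cite{verdier, bezr}), the morphism $\mu_{\scF_0}$ is unipotent, so it vanishes on some power of the augmentation ideal of $\scO(T^\vee_\bk \times T^\vee_\bk)$ corresponding to $(e,e)$. In particular its restriction along $f \mapsto 1 \otimes f$ vanishes on $\mathfrak{m}^N$ for $N$ large, as desired.

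\textbf{If direction.} Suppose $\scF = ``\varprojlim_n" \scF_n$ satisfies $\mu_\scF(1 \otimes \mathfrak{m}^N) = 0$. The goal is to produce an essentially constant pro-system isomorphic to $\scF$. First I would use the standard description of vanishing morphisms in a pro-category: since $\mathfrak{m}^N$ is finitely generated, the hypothesis means that for each $n$ there exists $m \geq n$ such that for a fixed finite generating set $f_1,\dots,f_r$ of $\mathfrak{m}^N$, each composition $\scF_m \xrightarrow{\mu_{\scF_m}(1\otimes f_i)} \scF_m \to \scF_n$ vanishes. By a cofinality argument I can therefore replace $\scF$ (up to isomorphism in $\sfD^\wedge_{\Iwu,\Iwu}$) by a pro-object $``\varprojlim_n" \scG_n$ in which the right monodromy on each term factors through the finite-dimensional quotient $\scO(T^\vee_\bk)/\mathfrak{m}^N$.

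\textbf{Key step.} It remains to show that such a pro-object is essentially constant. Here I would use the $\pi$-constancy assumption built into the definition of $\sfD^\wedge_{\Iwu,\Iwu}$: the pro-object $``\varprojlim" \pi_!(\scG_n)$ is representable by an object $\mathscr{K}$ in $\sfD_{\Iwu,\Iw}$. Since $\pi$ is a $T$-torsor, an object of $\sfD_{\Iwu,\Iwu}$ is determined, up to the data packaged in the right monodromy, by its pushforward along $\pi_\dag$ together with the compatible action of $\scO(T^\vee_\bk)/\mathfrak{m}^N$; this can be made precise by thinking of objects with such a bounded right monodromy as modules over a sheaf of finite-dimensional algebras on $\Fl_G$. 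Under this correspondence, the transition maps $\scG_{n+1}\to \scG_n$ correspond to morphisms between such modules covering fixed isomorphisms on $\pi_\dag$, and the uniform boundedness in degrees (built into the definition of $\sfD^\wedge_{\Iwu,\Iwu}$) together with the Krull--Schmidt property (\cite[Corollary~5.5]{bezr}) forces the pro-system to stabilize. One therefore obtains an honest object $\scG \in \sfD_{\Iwu,\Iwu}$ with $\scG \cong ``\varprojlim_n" \scG_n \cong \scF$.

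\textbf{Main obstacle.} The delicate point is the last paragraph: turning ``right monodromy factors through $\scO(T^\vee_\bk)/\mathfrak{m}^N$ uniformly in $n$, plus $\pi$-constancy'' into ``the pro-system is essentially constant.'' I would likely argue by reduction to indecomposable summands (using Krull--Schmidt) and to perverse sheaves (using the uniform boundedness in degrees and perverse truncation); in each indecomposable case, the right monodromy provides just enough extra structure to recover $\scG_n$ from $\pi_\dag(\scG_n)$, turning the inverse system into a diagram of isomorphisms from some index onwards.
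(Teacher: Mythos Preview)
The paper does not prove this lemma itself; it simply cites \cite[Remark~5.1]{bezr}. So the comparison is between your argument and the one implicit in that reference.

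Your ``only if'' direction is correct. The ``if'' direction, however, has two genuine gaps.

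First, Step~1 does not go through as written. From $\mu_\scF(1\otimes\mathfrak m^N)=0$ in the pro-category you only get, for each $n$, some $m\geq n$ such that the composite $\scF_m\to\scF_n\xrightarrow{\mu(1\otimes f_i)}\scF_n$ vanishes. This does \emph{not} say that $\mu_{\scF_n}(1\otimes f_i)=0$, nor can you pass to ``images'' to manufacture new terms $\scG_n$ with that property: you are in a triangulated category, and there is no image construction available. So the promised replacement of the pro-system by one whose individual terms have right monodromy factoring through $\scO(T^\vee_\bk)/\mathfrak m^N$ is not justified.

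Second, even granting Step~1, your ``key step'' is only a heuristic. The assertion that an object of $\sfD_{\Iwu,\Iwu}$ with bounded right monodromy is determined by $\pi_\dag$ together with the $\scO(T^\vee_\bk)/\mathfrak m^N$-action is not a theorem anywhere in the paper, and invoking Krull--Schmidt does not force a pro-system to stabilise (Krull--Schmidt concerns decompositions of single objects, not limits).

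A cleaner route, and the one underlying the cited remark, is to argue by induction on the number of strata in the support of $\scF$, using the recollement structure on $\sfD^\wedge_{\Iwu,\Iwu}$. On a single stratum $\tFl_{G,w}$ one has the equivalence $\widehat D(\tFl_{G,w}\quot T)\cong \Db\Mof(R_A^\wedge)$ of \cite[Proposition~4.5]{bezr}, under which the right monodromy becomes the module action and the uncompleted category corresponds to complexes with finite-length cohomology; the hypothesis $\mathfrak m^N\cdot(-)=0$ then forces every cohomology module to be finite-dimensional, so the object lies in the uncompleted category. For the inductive step, pick an open stratum $s$ in the support and use the recollement triangle $(j_s)_!j_s^*\scF\to\scF\to i_*i^*\scF\to{}$; since the monodromy commutes with $j_s^*$ and $i^*$, both outer terms lie in $\sfD_{\Iwu,\Iwu}$ by induction and the single-stratum case, hence so does $\scF$ because $\sfD_{\Iwu,\Iwu}$ is a full triangulated subcategory. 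This avoids both of the difficulties above.
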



\subsection{The perverse t-structure}
\label{ss:completed-perverse}

Another important feature of the ``completed'' (or ``free monodromic'') category $\D^\wedge_{\Iwu,\Iwu}$ is that it admits a ``perverse'' t-structure $(\pD^{\wedge,\leq 0}_{\Iwu,\Iwu},\pD^{\wedge,\geq 0}_{\Iwu,\Iwu})$, whose heart will be denoted $\mathsf{P}^\wedge_{\Iwu,\Iwu}$. (For the definition of this t-structure, see~\cite[\S 5.2]{bezr}; for an earlier and slightly different construction of this t-structure, see~\cite[\S A.6]{by}.) From the construction it is clear that the functor~\eqref{eqn:embedding-D-Dwedge} is t-exact. One can check also that the functor $\pi_\dag$ of~\S\ref{ss:completed-category} is right t-exact, see~\cite[Corollary~5.8]{bezr}.

For any $w \in W$, the quotient $\Iwu \backslash \tFl_{G,w}$ is a $T$-torsor over $\mathrm{Spec}(\bk)$ for the action induced by right multiplication on $\Loop G$ (but this torsor does not admit any \emph{canonical} trivialization in general). After choosing a $T$-equivariant isomorphism $\Iwu \backslash \tFl_{G,w} \simto T$, we can then define
\begin{align*}
 \Delta^{\wedge}_w &:= ``\varprojlim_{n}" (\widetilde{\jmath}_w)_! p_w^* \scL^\wedge_{T,n}[\dim(T)+\ell(w)], \\ 
 \nabla^{\wedge}_w &:= ``\varprojlim_{n}" (\widetilde{\jmath}_w)_* p_w^* \scL^\wedge_{T,n}[\dim(T)+\ell(w)],
\end{align*}
where $p_w$ is the composition $\tFl_{G,w} \to \Iwu \backslash \tFl_{G,w} \simto T$, and $\widetilde{\jmath}_w : \tFl_{G,w} \to \tFl_G$ is the embedding. These objects are perverse sheaves, and do not depend on the choice of trivialization up to (noncanonical) isomorphism. They also satisfy
\begin{equation}
\label{eqn:pi-DN-wedge}
 \pi_{\dag}(\Delta_w^\wedge) \cong \For^{\Iw}_{\Iwu}(\Delta_w^\Iw), \quad \pi_\dag(\nabla_w^\wedge) \cong \For^{\Iw}_{\Iwu}(\nabla_w^\Iw),
\end{equation}
see~\cite[Equation~(5.3)]{bezr}.
With these objects at hand, one can describe the nonpositive part ${}^{\mathrm{p}} \hspace{-1pt} \mathsf{D}^{\wedge,\leq 0}_{\Iwu,\Iwu}$ of the perverse t-structure on $\mathsf{D}^\wedge_{\Iwu,\Iwu}$ as the subcategory generated under extensions by the objects $\Delta^{\wedge}_w[n]$ with $w \in W$ and $n \geq 0$, see~\cite[Lemma~5.6]{bezr}. (The similar statement for the nonnegative part of the t-structure does \emph{not} hold.)

\begin{lem}
\phantomsection
\label{lem:convolution-stand-costand}
\begin{enumerate}
 \item 
 \label{it:convolution-stand-costand-1}
 For any $w \in W$, there exist isomorphisms
 \[
  \Delta^\wedge_w \hatstar \nabla^\wedge_{w^{-1}} \cong \delta^\wedge, \quad \nabla^\wedge_{w^{-1}} \hatstar \Delta^\wedge_w \cong \delta^\wedge.
 \]
 \item 
 \label{it:convolution-stand-costand-2}
 If $w,y \in W$ are such that $\ell(wy)=\ell(w)+\ell(y)$, then there exist isomorphisms
 \[
  \Delta^\wedge_w \hatstar \Delta^\wedge_y \cong \Delta^\wedge_{wy}, \quad \nabla^\wedge_w \hatstar \nabla^\wedge_y \cong \nabla^\wedge_{wy}.
 \]
 \item
 \label{it:convolution-stand-costand-3}
 For any $w,y \in W$, the objects
 \[
  \Delta^\wedge_w \hatstar \nabla^\wedge_y \quad \text{and} \quad \nabla^\wedge_w \hatstar \Delta^\wedge_y
 \]
belong to $\mathsf{P}^\wedge_{\Iwu,\Iwu}$.
\end{enumerate}
\end{lem}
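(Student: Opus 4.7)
I would establish the three parts in the order (2), (1), (3), reducing each to the case of simple reflections and then making a direct geometric computation.

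For part (2), the key input is the standard fact from affine Bruhat theory that when $\ell(wy)=\ell(w)+\ell(y)$, the multiplication map induces an isomorphism of ind-varieties
\[
\tFl_{G,w} \tilde{\times} \tFl_{G,y} \simto \tFl_{G,wy}.
\]
Unwinding the definitions of $\Delta^\wedge_{(-)}$ as $!$-pushforwards of the pro-local systems $p_{(-)}^* \scL^\wedge_T$ and of $\hatstar$ in terms of $\tilde{\boxtimes}$ and $\widetilde{m}_!$, the required isomorphism then reduces to base change together with the compatibility of $\scL^\wedge_T$ with the multiplication map $T\times T\to T$. The costandard case is dual, using $*$-pushforwards and the properness that arises because we are working with the closed stratum $\tFl_{G,wy}$.

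For part (1), using (2) we reduce to two special cases: $w\in\Omega$ and $w=s$ a simple reflection (any $w$ can be written as a product of these with lengths adding, using Lemma 3.1.1 if necessary). When $w=\omega\in\Omega$, the stratum $\tFl_{G,\omega}$ is closed, so $\Delta^\wedge_\omega=\nabla^\wedge_\omega$, and (2) applied to $\omega\omega^{-1}=e$ yields $\Delta^\wedge_\omega\hatstar\Delta^\wedge_{\omega^{-1}}\cong \Delta^\wedge_e=\delta^\wedge$. For $w=s$, the support of $\Delta^\wedge_s\hatstar\nabla^\wedge_s$ lies in the $T$-torsor $\widetilde{X}$ over the $\mathbb{P}^1$-variety $\overline{\Fl_{G,s}}$, and the convolution map $\widetilde{X}\tilde{\times}\widetilde{X}\to\widetilde{X}$ admits a factorization where the fibers collapse to $\tFl_{G,e}\cong T$; proper base change combined with an explicit cohomology computation on $\mathbb{P}^1$ (exactly parallel to the classical $\Delta^{\Iw}_s\star_\Iw\nabla^{\Iw}_s\cong\delta$) shows that the result is $\delta^\wedge$. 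The opposite identity $\nabla^\wedge_s\hatstar\Delta^\wedge_s\cong\delta^\wedge$ follows by a symmetric argument.

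For part (3), having (1) and (2) we know that the objects $\Delta^\wedge_w$ are invertible in $(\sfD^\wedge_{\Iwu,\Iwu},\hatstar)$ with inverse $\nabla^\wedge_{w^{-1}}$, and that products $\Delta^\wedge_w \hatstar \Delta^\wedge_y$ with lengths adding are again standard objects. I would prove perversity of $\Delta^\wedge_w\hatstar\nabla^\wedge_y$ by induction on $\ell(y)$. The base $y=e$ is clear ($\Delta^\wedge_w$ is perverse). For the inductive step, write $y=sy_1$ with $\ell(y)=\ell(y_1)+1$, so $\nabla^\wedge_y\cong\nabla^\wedge_s\hatstar\nabla^\wedge_{y_1}$. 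If $\ell(ws)<\ell(w)$, part (1) collapses $\Delta^\wedge_w\hatstar\nabla^\wedge_s$ to $\Delta^\wedge_{ws}$ and induction applies; if $\ell(ws)>\ell(w)$, then by (2) we have $\Delta^\wedge_w \hatstar \nabla^\wedge_s\cong\Delta^\wedge_{ws}\hatstar\Delta^\wedge_{s^{-1}}\hatstar\nabla^\wedge_s$, and a direct analysis of the convolution of a standard object with $\nabla^\wedge_s$ (using the $\mathbb{P}^1$-fibration $\overline{\Fl_{G,ws}}\to\overline{\Fl_{G,w}}/P_s$ and base change) shows that the result is perverse. The case of $\nabla^\wedge_w\hatstar\Delta^\wedge_y$ is symmetric.

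\textbf{Main obstacle.} The crux is the pro-local-system bookkeeping in the $\mathbb{P}^1$-computation of (1) for $w=s$: one must track $\scL^\wedge_T$ through base change and an integration along a $T$-direction, and verify that this integration is compatible with the pro-structure to recover exactly $\scL^\wedge_T$ on $\tFl_{G,e}\cong T$ and not a twist or completion thereof. This is the monodromic lift of the classical identity $\Delta^\Iw_s\star_\Iw\nabla^\Iw_s\cong\delta$, and is the only place where the completed structure is genuinely used rather than being a formal consequence of the uncompleted case.
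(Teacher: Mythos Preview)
Your approach to parts (1) and (2) is reasonable and close in spirit to what the paper does (it cites \cite[Lemma~7.7]{bezr} for the $G/U$ case, which performs exactly this kind of direct geometric computation with pro-local systems; compare also the proof of Lemma~\ref{lem:DN-weights} later in the paper for the flavor of the argument).

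For part (3), however, your inductive argument has a gap and the paper takes a completely different, much shorter route. In your case $\ell(ws)>\ell(w)$, the displayed identity $\Delta^\wedge_w \hatstar \nabla^\wedge_s\cong\Delta^\wedge_{ws}\hatstar\Delta^\wedge_{s^{-1}}\hatstar\nabla^\wedge_s$ does not follow from (2): you would need $\ell(w)=\ell(ws)+\ell(s)$, which fails. Read literally, your identity asserts $\Delta^\wedge_w \cong \Delta^\wedge_{ws}\hatstar\Delta^\wedge_s$, which is false in general (the right-hand side has a two-step filtration). Even granting some ``direct analysis'' of $\Delta^\wedge_w\hatstar\nabla^\wedge_s$, you still need to convolve further with $\nabla^\wedge_{y_1}$, and the result is no longer a standard object, so the induction does not close up.

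The paper's argument for (3) avoids all of this by pushing down to $\Fl_G$: using~\eqref{eqn:conv-formula-1}--\eqref{eqn:conv-formula-2} and~\eqref{eqn:pi-DN-wedge} one computes
\[
\pi_\dag(\Delta^\wedge_w\hatstar\nabla^\wedge_y)\cong \For^{\Iw}_{\Iwu}(\Delta^{\Iw}_w)\star_\Iw\nabla^{\Iw}_y,
\]
which is perverse by the classical fact for $\Iw$-equivariant sheaves. Then one invokes the criterion from \cite[Lemma~5.3(1)]{bezr} that an object of $\sfD^\wedge_{\Iwu,\Iwu}$ whose image under $\pi_\dag$ is perverse is itself perverse. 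This bypasses any induction or $\mathbb{P}^1$-geometry in the completed category and reduces everything to the uncompleted statement, which is where the pro-structure is designed to be invisible.
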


\begin{proof}
 For~\eqref{it:convolution-stand-costand-1}--\eqref{it:convolution-stand-costand-2}, the proof is similar to that of the corresponding statements on $G/U$, treated in detail in~\cite[Lemma~7.7]{bezr}. For~\eqref{it:convolution-stand-costand-3}, the proof is again based on the same idea: by~\eqref{eqn:conv-formula-1}--\eqref{eqn:conv-formula-2} and~\eqref{eqn:pi-DN-wedge} we have
 \[
  \pi_\dag(\Delta^\wedge_w \hatstar \nabla^\wedge_y) \cong \Delta^\wedge_w \hatstar \For^{\Iw}_{\Iwu}(\nabla^\Iw_y) \cong \For^{\Iw}_{\Iwu}(\Delta^{\Iw}_w) \star_\Iw \nabla^\Iw_y.
 \]
Now it is well known that the right-hand side is a perverse sheaf, see e.g.~\cite[Lemma~4.1.7]{ar-book}. Since an object whose image under $\pi_\dag$ is perverse is itself perverse (see~\cite[Lemma~5.3(1)]{bezr}), the claim follows. 
\end{proof}

\begin{rmk}
\label{rmk:convolution-equiv}
Lemma~\ref{lem:convolution-stand-costand}\eqref{it:convolution-stand-costand-1} implies in particular that the functor of left (resp.~right) convolution with $\Delta^\wedge_w$ is an equivalence of categories, with quasi-inverse given by left (resp.~right) convolution with $\nabla^\wedge_{w^{-1}}$.
\end{rmk}

\begin{cor}
 \label{cor:exactness-convolution-D-N}
 For any $w \in W$:
 \begin{enumerate}
  \item 
  \label{it:exactness-convolution-N}
  the functors
  \[
   \nabla^\wedge_w \hatstar (-), \, (-) \hatstar \nabla_w^\wedge : \sfD^\wedge_{\Iwu,\Iwu} \to \sfD^\wedge_{\Iwu,\Iwu}
  \]
are right t-exact;
  \item 
  \label{it:exactness-convolution-D}
  the functors
  \[
   \Delta^\wedge_w \hatstar (-), \, (-) \hatstar \Delta_w^\wedge : \sfD^\wedge_{\Iwu,\Iwu} \to \sfD^\wedge_{\Iwu,\Iwu}
  \]
are left t-exact
 \end{enumerate}
\end{cor}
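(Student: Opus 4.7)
The plan is to deduce part (1) directly from Lemma~\ref{lem:convolution-stand-costand}(\ref{it:convolution-stand-costand-3}) together with the description of $\pD^{\wedge,\leq 0}_{\Iwu,\Iwu}$ recalled in~\S\ref{ss:completed-perverse}, and then to obtain part (2) formally from part (1) using the equivalence noted in Remark~\ref{rmk:convolution-equiv}.

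For part (1), I would argue as follows. The functor $\nabla^\wedge_w \hatstar (-)$ is triangulated, and by~\cite[Lemma~5.6]{bezr} (as recalled after~\eqref{eqn:pi-DN-wedge}) the subcategory $\pD^{\wedge,\leq 0}_{\Iwu,\Iwu}$ is generated under extensions and (positive) shifts by the objects $\Delta^\wedge_y$ for $y \in W$. It therefore suffices to check that $\nabla^\wedge_w \hatstar \Delta^\wedge_y \in \pD^{\wedge,\leq 0}_{\Iwu,\Iwu}$ for every $y \in W$, and this is in fact immediate from Lemma~\ref{lem:convolution-stand-costand}(\ref{it:convolution-stand-costand-3}), which tells us the object is perverse. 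The same reasoning, using that $\Delta^\wedge_y \hatstar \nabla^\wedge_w$ is perverse by the same lemma, handles right convolution.

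For part (2), the key observation is a general fact about t-structures: if $F$ is a triangulated autoequivalence of a triangulated category endowed with a t-structure and $G$ is a quasi-inverse, then $F$ is right t-exact if and only if $G$ is left t-exact. This follows from the orthogonality characterization of the nonnegative part: for $X$ in the ambient category,
\[
G(X) \in \pD^{\wedge,\geq 0}_{\Iwu,\Iwu} \iff \Hom(\pD^{\wedge,\leq -1}_{\Iwu,\Iwu}, G(X))=0 \iff \Hom(F(\pD^{\wedge,\leq -1}_{\Iwu,\Iwu}), X) = 0,
\]
and if $F$ is right t-exact then $F(\pD^{\wedge,\leq -1}_{\Iwu,\Iwu}) \subset \pD^{\wedge,\leq -1}_{\Iwu,\Iwu}$, so the last Hom vanishes for $X \in \pD^{\wedge,\geq 0}_{\Iwu,\Iwu}$. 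By Remark~\ref{rmk:convolution-equiv}, the functor $\Delta^\wedge_w \hatstar (-)$ is a quasi-inverse to $\nabla^\wedge_{w^{-1}} \hatstar (-)$, which is right t-exact by part~(1); left t-exactness of $\Delta^\wedge_w \hatstar (-)$ follows. The same argument applied on the other side gives left t-exactness of $(-) \hatstar \Delta^\wedge_w$.

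I do not anticipate any serious obstacle: Lemma~\ref{lem:convolution-stand-costand} does essentially all the geometric work, and the t-exactness claims are then a purely formal consequence combining the generation statement for $\pD^{\wedge,\leq 0}_{\Iwu,\Iwu}$ with the elementary t-structure argument above.
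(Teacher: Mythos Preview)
Your proposal is correct and follows essentially the same approach as the paper: part~(1) is deduced from the generation of $\pD^{\wedge,\leq 0}_{\Iwu,\Iwu}$ by shifted standards together with Lemma~\ref{lem:convolution-stand-costand}\eqref{it:convolution-stand-costand-3}, and part~(2) is obtained formally from part~(1) via the invertibility of convolution with $\nabla^\wedge_{w^{-1}}$. The paper phrases the second step as ``right adjoint of a right t-exact functor is left t-exact'' (using that $\nabla^\wedge_{w^{-1}} \hatstar (-)$ is left adjoint to $\Delta^\wedge_w \hatstar (-)$), while you argue directly with the orthogonality characterization of $\pD^{\wedge,\geq 0}_{\Iwu,\Iwu}$; since the functors in question are mutually quasi-inverse equivalences, these are the same argument.
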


\begin{proof}
 Since the nonpositive part of the perverse t-structure on $\sfD^\wedge_{\Iwu,\Iwu}$ is generated under extensions by the objects $\Delta^\wedge_y[n]$ for $y \in W$ and $n \geq 0$ (see the comments preceding Lemma~\ref{lem:convolution-stand-costand}), \eqref{it:exactness-convolution-N} is a consequence of Lemma~\ref{lem:convolution-stand-costand}\eqref{it:convolution-stand-costand-3}. We deduce~\eqref{it:exactness-convolution-D} using the fact that the functor $\nabla^\wedge_{w^{-1}} \hatstar (-)$, resp.~$(-) \hatstar \nabla^\wedge_{w^{-1}}$ is left adjoint to $\Delta^\wedge_w \hatstar (-)$, resp.~$(-) \hatstar \Delta_w^\wedge$, see Remark~\ref{rmk:convolution-equiv}.
\end{proof}

Below we will also need to consider the monodromy morphisms for the objects $\Delta^\wedge_w$ and $\nabla^\wedge_w$. The following lemma is the analogue in our present setting of~\cite[Lemma 5.4, Lemma~6.1]{bezr}. Here, 
for $\lambda \in X_*(T)$, we denote by $e^\lambda$ the corresponding morphism $T^\vee_\bk \to \Gm$, seen as an element in $\scO(T^\vee_\bk)$.

\begin{lem}
\label{lem:monodromy-DN}
Let $w \in W$, and write $w=v\st(\lambda)$ with $v \in \Wf$ and $\lambda \in X_*(T)$.
\begin{enumerate}
\item
\label{lem:monodromy-Dwedge}
The restriction of $\mu_{\Delta^\wedge_w}$ to the subalgebra
\[
\scO(T^\vee_\bk) = \bk \otimes \scO(T^\vee_\bk) \subset \scO(T^\vee_\bk) \otimes \scO(T^\vee_\bk)=\scO(T^\vee_\bk \times T^\vee_\bk)
\]
factors (in the natural way) through an isomorphism
\[
\scO(\FN_{T^\vee_\bk}(\{e\}))
\simto \End_{\mathsf{D}^\wedge_{\Iwu,\Iwu}}(\Delta^\wedge_w),
\]
and any nonzero endomorphism of $\Delta^\wedge_w$ is injective.
Moreover, for any $f \in \scO(T^\vee_\bk)$ we have
\[
\mu_{\Delta^\wedge_w}(f \otimes 1) = \mu_{\Delta^\wedge_w}(1 \otimes v^{-1}(f)),
\]
and we have
\[
\mu_{\Delta^\wedge_w}^{\mathrm{rot}}(x)=\mu_{\Delta^\wedge_w}(1 \otimes e^{-\lambda}).
\]
\item
The restriction of $\mu_{\nabla^\wedge_w}$ to the subalgebra
\[
\scO(T^\vee_\bk) = \bk \otimes \scO(T^\vee_\bk) \subset \scO(T^\vee_\bk) \otimes \scO(T^\vee_\bk)=\scO(T^\vee_\bk \times T^\vee_\bk)
\]
factors (in the natural way) through an isomorphism
\[
\scO(\FN_{T^\vee_\bk}(\{e\}))
\simto \End_{\mathsf{D}^\wedge_{\Iwu,\Iwu}}(\nabla^\wedge_w).
\]
Moreover, for any $f \in \scO(T^\vee_\bk)$ we have
\[
\mu_{\nabla^\wedge_w}(f \otimes 1) = \mu_{\nabla^\wedge_w}(1 \otimes v^{-1}(f)),
\]
and we have
\[
\mu_{\nabla^\wedge_w}^{\mathrm{rot}}(x)=\mu_{\nabla^\wedge_w}(1 \otimes e^{-\lambda}).
\]
\end{enumerate}
\end{lem}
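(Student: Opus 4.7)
The approach I would take mirrors that of~\cite[Lemma~5.4 and Lemma~6.1]{bezr} for the finite-dimensional analogue on $G/U$. All three claims reduce to explicit computations on the single stratum $\tFl_{G,w}$, exploiting its concrete structure as a torsor under $T$.

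First, I would make the identification $\tFl_{G,w} \cong T$ explicit by sending $t \in T$ to $[\dot{v} z^\lambda t] \in \tFl_{G,w}$; this is $T$-equivariant for the right $T$-torsor action on the target, and we take $p_w$ to be the induced projection. I would then compute, under this identification, how the three relevant torus actions on $\tFl_{G,w}$ translate into actions on $T$: (i) the right torsor action is translation by $s$ by definition; (ii) the left action of $s \in T$ coming from $T \subset \Iw$ satisfies $s \cdot [\dot{v} z^\lambda t] = [\dot{v} (\dot{v}^{-1} s \dot{v}) z^\lambda t] = [\dot{v} z^\lambda v^{-1}(s) t]$ (using that $z^\lambda$ commutes with $v^{-1}(s) \in T$), so it becomes translation by $v^{-1}(s)$; (iii) loop rotation by $r \in \Gm$ fixes the constant loops $\dot{v}$ and $t$ and, under the normalization of~\S\ref{ss:central-sheaves-const}, sends $z^\lambda$ to $\lambda(r)^{-1} z^\lambda$, so it becomes translation by $\lambda(r)^{-1}$.

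Next, I would invoke the construction of $\scL^\wedge_T$ from~\cite[\S 10.1]{bezr}, which provides a canonical isomorphism $\scO((T^\vee_\bk)^\wedge) \simto \End(\scL^\wedge_T)$ sending $f \in \scO(T^\vee_\bk)$ to the Verdier monodromy coming from translation by $T$. Since pullback under the smooth surjection $p_w$ and $!$-pushforward under the locally closed embedding $\widetilde{\jmath}_w$ induce isomorphisms on endomorphism rings (via adjunction and the fact that $\widetilde{\jmath}_w^*\widetilde{\jmath}_{w!} = \mathrm{id}$) and are compatible with Verdier's monodromy construction, applying $(\widetilde{\jmath}_w)_! \circ p_w^*$ produces the desired isomorphism $\scO((T^\vee_\bk)^\wedge) \simto \End_{\mathsf{D}^\wedge_{\Iwu,\Iwu}}(\Delta^\wedge_w)$, and translates $\mu_{\Delta^\wedge_w}(1 \otimes f)$ into $f$. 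Combined with the three identifications of Step~1, this immediately yields both $\mu_{\Delta^\wedge_w}(f \otimes 1) = \mu_{\Delta^\wedge_w}(1 \otimes v^{-1}(f))$ and $\mu_{\Delta^\wedge_w}^{\mathrm{rot}}(x) = \mu_{\Delta^\wedge_w}(1 \otimes e^{-\lambda})$ (where $x$ is the standard generator of $\scO(\Gm)$ under our convention).

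For the injectivity claim in~\eqref{lem:monodromy-Dwedge}, any endomorphism of $\Delta^\wedge_w$ restricts to an endomorphism of $p_w^* \scL^\wedge_T[\cdots]$ on the open stratum $\tFl_{G,w}$, which via $p_w$ corresponds to an element of $\scO((T^\vee_\bk)^\wedge)$; a nonzero such element induces an injective endomorphism at each finite stage $\scL^\wedge_{T,n}$ (as follows from the noetherian structure of the pro-system analogous to the analysis in~\cite{bezr}), and $!$-extension preserves this injectivity. The costandard case is handled by repeating Steps~1--2 verbatim with $(\widetilde{\jmath}_w)_!$ replaced by $(\widetilde{\jmath}_w)_*$; the three monodromy formulas are formally identical, yielding the isomorphism for $\End(\nabla^\wedge_w)$. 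The main obstacle I anticipate is not conceptual but bookkeeping: carefully tracking the left/right action conventions and the sign normalization $r \cdot z^\lambda = \lambda(r)^{-1} z^\lambda$ through the monodromy construction in order to produce the sign $e^{-\lambda}$ (rather than $e^{\lambda}$) in the loop-rotation formula.
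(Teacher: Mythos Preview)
Your approach is essentially the same as the paper's: reduce to an explicit computation of how the left $T$-action and the loop-rotation action translate under a chosen trivialization of the right $T$-torsor, then invoke the general facts from~\cite{bezr} for the endomorphism ring and injectivity. The paper's proof is simply terser, citing~\cite[Lemma~5.4]{bezr} for the right-monodromy isomorphism and injectivity as general facts about completed categories, and~\cite[Lemma~2.5]{bezr} for the compatibility of monodromy with twisted actions.

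One notational slip to fix: you write ``the identification $\tFl_{G,w} \cong T$,'' but $\tFl_{G,w}$ is a $T$-torsor over the affine space $\Fl_{G,w}$, hence has dimension $\ell(w) + \dim(T)$. What you want (and what the paper uses) is the quotient $\Iwu \backslash \tFl_{G,w}$, which \emph{is} a $T$-torsor over a point; your map $t \mapsto [\dot{v} z^\lambda t]$ really defines an isomorphism $T \simto \Iwu \backslash \tFl_{G,w}$, and $p_w$ is then the composite $\tFl_{G,w} \to \Iwu \backslash \tFl_{G,w} \simto T$. Your computations of the three actions are correct once interpreted on this quotient, and the rest of your argument goes through.
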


\begin{proof}
The claims about the right monodromy and the injectivity of nonzero morphisms are general facts in completed derived categories, see~\cite[Lemma~5.4]{bezr}. Since $\Iwu$ is normal in $\Iw$, the left action of $T$ on $\tFl_{G,w}$ induces an action on the quotient $\Iwu \backslash \tFl_{G,w}$, and it is easily seen that this action is the twist of the action considered above on $\Iwu \backslash \tFl_{G,w}$ by $v^{-1}$, which implies the claim about left monodromy by basic properties of the monodromy construction, see~\cite[Lemma~2.5]{bezr}. Similarly, the loop rotation action on $\tFl_{G,w}$ induces an action on the quotient $\Iwu \backslash \tFl_{G,w}$, which is deduced from the $T$-action via $-\lambda : \Gm \to T$; this implies the claims about the loop rotation monodromy.
\end{proof}

\subsection{Tilting perverse sheaves}
\label{ss:tilting-perv}

Recall that an object $\scF$ in $\mathsf{P}^\wedge_{\Iwu,\Iwu}$ is said to be \emph{tilting} if it admits a filtration (in the abelian category $\mathsf{P}^\wedge_{\Iwu,\Iwu}$) with subquotients of the form $\Delta^\wedge_w$ ($w \in W$) and a filtration with subquotients of the form $\nabla^\wedge_w$ ($w \in W$). The full subcategory of $\mathsf{P}^\wedge_{\Iwu,\Iwu}$ whose objects are the tilting perverse sheaves will be denoted $\mathsf{T}^\wedge_{\Iwu,\Iwu}$. As explained in~\cite[\S A.7]{by} (see also~\cite[\S 5.5]{bezr}), the isomorphism classes of indecomposable objects in $\mathsf{T}^\wedge_{\Iwu,\Iwu}$ are in a canonical bijection with $W$; more specifically, for any $w \in W$ there exists a unique (up to isomorphism) object $\mathscr{T}^\wedge_w$ in $\sfD^\wedge_{\Iwu,\Iwu}$ such that $\pi_\dag(\mathscr{T}^\wedge_w) \cong \mathscr{T}_w$ (where the right-hand side is defined in~\S\ref{ss:DbIw}); then $\mathscr{T}^\wedge_w$ is an indecomposable tilting object in $\mathsf{P}^\wedge_{\Iwu,\Iwu}$, and the assignment $w \mapsto \mathscr{T}^\wedge_w$ provides the desired bijection. (We insist that $\mathscr{T}^\wedge_w$ is defined only up to isomorphism.) 

%

The same arguments as in~\cite[Remark~7.9]{bezr} show that $\mathsf{T}^\wedge_{\Iwu,\Iwu}$ is a \emph{monoidal} subcategory in $\mathsf{D}^\wedge_{\Iwu,\Iwu}$. Note also that we have natural equivalences of categories
\begin{equation}
\label{eqn:equiv-KbTilt-wedge}
\Kb \mathsf{T}^\wedge_{\Iwu,\Iwu} \simto \Db \mathsf{P}^\wedge_{\Iwu,\Iwu} \simto \mathsf{D}^\wedge_{\Iwu,\Iwu},
\end{equation}
see~\cite[Proposition~5.11]{bezr}, and that the composition of these two equivalences has a natural monoidal structure.

%
%

Using tilting objects one obtains the following property of monodromy.

\begin{lem}
\label{lem:monodromy-fiber-prod}
For any $\scF$ in $\mathsf{P}^\wedge_{\Iwu,\Iwu}$, the monodromy morphism $\mu_\scF$ factors through the surjection
\[
\scO(T^\vee_\bk \times T^\vee_\bk) \to \scO(T^\vee_\bk \times_{T^\vee_\bk/\Wf} T^\vee_\bk).
\]
\end{lem}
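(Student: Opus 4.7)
Let me outline my plan for proving this lemma.

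The statement asserts that $\mu_\scF$ vanishes on the ideal generated by the elements $f\otimes 1 - 1\otimes f$ for $f\in\scO(T^\vee_\bk)^{\Wf}$. Let $\cC$ denote the class of objects in $\sfD^\wedge_{\Iwu,\Iwu}$ satisfying $\mu_\scF(f\otimes 1) = \mu_\scF(1\otimes f)$ for all such $f$. I first verify three stability properties of $\cC$. First, by naturality of monodromy (any morphism in $\sfD^\wedge_{\Iwu,\Iwu}$ commutes with $\mu$), $\cC$ is closed under subobjects, quotients, direct sums, and direct summands in $\sfP^\wedge_{\Iwu,\Iwu}$. Second, $\cC$ is closed under convolution $\hatstar$: for $\scG,\scH\in\cC$ and $f\in\scO(T^\vee_\bk)^{\Wf}$, using \eqref{eqn:monodromy-convolution-1} and \eqref{eqn:monodromy-convolution-2} successively,
\[
\mu_{\scG\hatstar\scH}(f\otimes 1) = \mu_\scG(f\otimes 1)\hatstar\id = \mu_\scG(1\otimes f)\hatstar\id = \id\hatstar\mu_\scH(f\otimes 1) = \id\hatstar\mu_\scH(1\otimes f) = \mu_{\scG\hatstar\scH}(1\otimes f).
\]

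Next I populate $\cC$ with concrete objects. By Lemma~\ref{lem:monodromy-DN}, if $w = v\st(\lambda)$ then $\mu_{\Delta^\wedge_w}(f\otimes 1) = \mu_{\Delta^\wedge_w}(1\otimes v^{-1}(f)) = \mu_{\Delta^\wedge_w}(1\otimes f)$ whenever $f$ is $\Wf$-invariant; the same holds for $\nabla^\wedge_w$. Thus $\Delta^\wedge_w,\nabla^\wedge_w\in\cC$ for all $w\in W$. I next claim $\scT^\wedge_s\in\cC$ for each simple reflection $s\in S$. Using the standard filtration $0 \to \Delta^\wedge_e \to \scT^\wedge_s \to \Delta^\wedge_s \to 0$, the endomorphism $\varphi := \mu_{\scT^\wedge_s}(f\otimes 1 - 1\otimes f)$ preserves this short exact sequence by naturality, and induces zero on both subquotients; therefore it factors as $\scT^\wedge_s \twoheadrightarrow \Delta^\wedge_s \xrightarrow{\psi} \Delta^\wedge_e \hookrightarrow \scT^\wedge_s$ for some morphism $\psi$. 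A direct computation using the locally-closed-embedding decomposition of $\widetilde{\jmath}_s$ and the disjointness of $\tFl_{G,s}$ from $\tFl_{G,e}$ gives $\Hom(\Delta^\wedge_s, \Delta^\wedge_e)=0$, so $\psi=0$ and hence $\varphi=0$. Given a reduced expression $w=\omega s_1\cdots s_k$ (with $\omega\in\Omega$, $s_i\in S$), the Bott--Samelson object $\Delta^\wedge_\omega\hatstar\scT^\wedge_{s_1}\hatstar\cdots\hatstar\scT^\wedge_{s_k}$ lies in $\cC$ by closure under convolution, and the indecomposable tilting $\scT^\wedge_w$ is a direct summand of it (by the standard theory of Hecke categories, using the Krull--Schmidt property of $\sfD^\wedge_{\Iwu,\Iwu}$). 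Hence $\sfT^\wedge_{\Iwu,\Iwu}\subset\cC$.

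Finally, for any $\scF\in\sfP^\wedge_{\Iwu,\Iwu}$, the equivalence \eqref{eqn:equiv-KbTilt-wedge} represents $\scF$ by a bounded complex $\scT^\bullet$ of tilting objects. Since $\scF$ is perverse, we have $\scF\cong\pH^0(\scT^\bullet) = \ker(\scT^0\to\scT^1)/\image(\scT^{-1}\to\scT^0)$, exhibiting $\scF$ as a subquotient of the tilting $\scT^0\in\cC$. Closure of $\cC$ under subobjects and quotients then yields $\scF\in\cC$, which is precisely the desired factoring. The main potentially delicate point is the vanishing $\Hom(\Delta^\wedge_s,\Delta^\wedge_e)=0$ required for the $\scT^\wedge_s$ step, which must be verified in the completed (pro-object) setting; once this is in hand, all remaining steps are formal consequences of the axiomatic compatibilities recorded in Section~\ref{sec:fmps}.
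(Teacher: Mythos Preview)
Your proof is correct and follows the same overall architecture as the paper's: verify the property for standard objects via Lemma~\ref{lem:monodromy-DN}, propagate it to all tilting objects, and then conclude by realizing an arbitrary perverse object as a subquotient of a tilting via~\eqref{eqn:equiv-KbTilt-wedge}.

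The difference lies in the middle step. The paper first proves the general vanishing $\Hom(\Delta^\wedge_w,\Delta^\wedge_y)=0$ for all $w\neq y$ (this is~\eqref{eqn:Hom-Delta}, established using both the left/right monodromy and the loop-rotation monodromy), and then invokes the ``faithful associated graded'' argument of~\cite[\S\S 6.3--6.4]{bezr} to deduce the factoring for any tilting object directly from its standard filtration. You instead use only the special case $\Hom(\Delta^\wedge_s,\Delta^\wedge_e)=0$ (which, as you note, follows immediately from adjunction and disjointness of supports, and passes to pro-objects termwise), obtain $\scT^\wedge_s\in\cC$, and then build up all tiltings via your observation that $\cC$ is closed under $\hatstar$. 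Your route is more self-contained for this particular lemma and avoids the associated-graded machinery; the paper's route has the advantage of producing the full vanishing~\eqref{eqn:Hom-Delta} as a byproduct, which is needed again later (e.g.\ in the proof of Lemma~\ref{lem:Hom-Wak}).
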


\begin{proof}
Let us first note that for $w,y \in W$ we have
\begin{equation}
\label{eqn:Hom-Delta}
\Hom_{\mathsf{P}^\wedge_{\Iwu,\Iwu}}(\Delta^\wedge_w , \Delta^\wedge_y)=0 \quad \text{if $w \neq y$.}
\end{equation}
In fact, write $w=v \st(\lambda)$ and $y=v' \st(\lambda')$ with $v,v' \in \Wf$ and $\lambda,\lambda' \in X_*(T)$. If $f : \Delta^\wedge_w \to \Delta^\wedge_y$ is a nonzero morphism and $\scF$ is its image, then for any $r \in \scO(T^\vee_\bk)$ we have $\mu_\scF(r \otimes 1) = \mu_\scF(1 \otimes v^{-1}(r))$ by Lemma~\ref{lem:monodromy-DN}\eqref{lem:monodromy-Dwedge} and the fact that $\scF$ is a quotient of $\Delta^\wedge_w$, and $\mu_\scF(r \otimes 1) = \mu_\scF(1 \otimes (v')^{-1}(r))$ by the same lemma and the fact that $\scF$ is a subobject of $\Delta^\wedge_y$. Hence $\mu_\scF(1 \otimes (v^{-1}(r)-(v')^{-1}(r)))=0$ for any $r$. The injectivity claim in Lemma~\ref{lem:monodromy-DN}\eqref{lem:monodromy-Dwedge} then implies that $v^{-1}(r)=(v')^{-1}(r)$ for any $r$, so that $v=v'$. The same considerations using $\mu^{\mathrm{rot}}_\scF$ show that $\lambda=\lambda'$, which finishes the proof of~\eqref{eqn:Hom-Delta}.

Once this claim is proved, using the fact that $\mu_{\Delta^\wedge_w}$ factors through the surjection $\scO(T^\vee_\bk \times T^\vee_\bk) \to \scO(T^\vee_\bk \times_{T^\vee_\bk/\Wf} T^\vee_\bk)$ (see once again Lemma~\ref{lem:monodromy-DN}\eqref{lem:monodromy-Dwedge}) one obtains as in~\cite[\S\S 6.3--6.4]{bezr} or in~\S\ref{ss:wakimoto} below (using a faithful associated graded functor) that this property holds for any object of $\mathsf{T}^\wedge_{\Iwu,\Iwu}$. Finally, the first equivalence in~\eqref{eqn:equiv-KbTilt-wedge} shows that any object in $\mathsf{P}^\wedge_{\Iwu,\Iwu}$ is a subquotient of a tilting object, which implies our claim.
\end{proof}

We will still denote by $\mu_\scF$ the morphism
\[
\scO(T^\vee_\bk \times_{T^\vee_\bk/\Wf} T^\vee_\bk) \to \End(\scF)
\]
induced by the map previously denoted $\mu_\scF$.
In concrete terms, Lemma~\ref{lem:monodromy-fiber-prod} means that the actions of $\scO(T^\vee_\bk)$ on an object of $\mathsf{P}^\wedge_{\Iwu,\Iwu}$ defined by monodromy for the left and right actions of $T$ on $\tFl_G$ coincide on the subalgebra $\scO(T^\vee_\bk / \Wf)$. We can therefore speak unambiguously of the monodromy action of $\scO(T^\vee_\bk / \Wf)$ on such an object. In view of~\eqref{eqn:monodromy-convolution-1} the same comment will apply to any object of the form $\scF \hatstar \scG$ with $\scF,\scG$ in $\mathsf{P}^\wedge_{\Iwu,\Iwu}$, and moreover the action on $\scF \hatstar \scG$ identifies with both the action induced by that on $\scF$ and the action induced by that on $\scG$.

\subsection{Actions of \texorpdfstring{$\D^\wedge_{\Iwu,\Iwu}$}{D} on \texorpdfstring{$\D^0_{\Iwu,\Iwu}$}{D0}}
\label{ss:actions-Dwedge-D}


Recall from~\S\ref{ss:completed-category} that the monoidal category $(\D^\wedge_{\Iwu,\Iwu}, \hatstar)$ acts on $\D_{\Iwu,\Iwu}$, via a bifunctor
\[
 \hatstar : \D^\wedge_{\Iwu,\Iwu} \times \D_{\Iwu,\Iwu} \to \D_{\Iwu,\Iwu}.
\]

\begin{lem}
\label{lem:convolution-quotient}
For $\scF \in \D^\wedge_{\Iwu,\Iwu}$ and $\scG \in \D^+_{\Iwu,\Iwu}$, the object $\scF \hatstar \scG$ belongs to $\D^+_{\Iwu,\Iwu}$.
\end{lem}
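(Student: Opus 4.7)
The plan is to reduce to a manageable generating set of $\D^+_{\Iwu,\Iwu}$ and then transfer the problem back to ordinary (non-completed) convolution, where the desired statement has essentially already been proved. Since the functor $\scF \hatstar (-)$ is triangulated (see~\S\ref{ss:completed-category}) and $\D^+_{\Iwu,\Iwu}$ is, by definition, the full triangulated subcategory of $\D_{\Iwu,\Iwu}$ generated by the simple perverse sheaves $\pi^\dag \For^{\Iw}_{\Iwu}(\IC_w)$ with $\ell(w)>0$, it suffices to verify the claim in the case $\scG = \pi^\dag \For^{\Iw}_{\Iwu}(\IC_w)$ with $\ell(w)>0$.

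In this case, first I will apply~\eqref{eqn:conv-formula-3} with $\mathscr{K} = \For^{\Iw}_{\Iwu}(\IC_w)$, and then~\eqref{eqn:conv-formula-2} with $\scH = \IC_w$, to obtain canonical isomorphisms
\[
\scF \hatstar \pi^\dag \For^{\Iw}_{\Iwu}(\IC_w) \cong \pi^\dag\bigl( \scF \hatstar \For^{\Iw}_{\Iwu}(\IC_w) \bigr) \cong \pi^\dag \bigl( \pi_\dag(\scF) \star_\Iw \IC_w \bigr).
\]
This reduces the problem to showing that $\pi_\dag(\scF) \star_\Iw \IC_w$ belongs to the full triangulated subcategory $\D^+_{\Iwu,\Iw} \subset \D_{\Iwu,\Iw}$ generated by the perverse sheaves $\For^{\Iw}_{\Iwu}(\IC_y)$ with $\ell(y)>0$; for then $\pi^\dag$, being t-exact and sending each such generator to a generator of $\D^+_{\Iwu,\Iwu}$, will send this subcategory into $\D^+_{\Iwu,\Iwu}$.

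The remaining claim is the $(\Iwu,\Iw)$-variant of~\cite[Lemma~5.1(1)]{brr}: convolution on the right with an object of $\D^+_{\Iw,\Iw}$ sends $\D_{\Iwu,\Iw}$ into $\D^+_{\Iwu,\Iw}$. This is in fact precisely the statement used (without restatement) in~\S\ref{ss:another-convolution} to legitimize the factorization of the bifunctor $(\scF,\scG) \mapsto \Pi^0_{\Iwu,\Iw}(\scF \star_\Iw \scG)$ through $\sfD^0_{\Iwu,\Iw} \times \sfD^0_{\Iw,\Iw}$. Its proof follows the pattern already indicated in the (sketched) proof of Lemma~\ref{lem:P+-ideal}: after reducing to the case where both factors are simple perverse sheaves, one appeals to~\cite[Lemma~5.1(1)]{brr} together with the observation that the forgetful functor $\For^{\Iw}_{\Iwu}$ intertwines the two convolutions, so that vanishing of the regular quotient image on one side forces it on the other.

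The only step requiring any attention is the last one, namely identifying the ideal property of $\D^+_{\Iw,\Iw}$ inside $\D_{\Iw,\Iw}$ with the corresponding property for $\D^+_{\Iwu,\Iw}$ inside $\D_{\Iwu,\Iw}$; this is routine given the compatibility of $\For^{\Iw}_{\Iwu}$ with $\star_\Iw$ recalled in~\S\ref{ss:DbIw} and the fact that $\For^{\Iw}_{\Iwu}$ detects the subcategory $\D^+$ on simple perverse sheaves by construction. I do not expect any serious obstacle in this argument; the chief technical content is simply that the two formulas~\eqref{eqn:conv-formula-2}--\eqref{eqn:conv-formula-3} already effect the passage from the free-monodromic setting to the ordinary equivariant setting, where the desired ``ideal'' property of $\D^+$ is known.
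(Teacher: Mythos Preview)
Your proposal is correct and follows essentially the same approach as the paper: reduce to $\scG = \pi^\dag \For^{\Iw}_{\Iwu}(\IC_w)$ with $\ell(w)>0$, apply~\eqref{eqn:conv-formula-2}--\eqref{eqn:conv-formula-3} to rewrite $\scF \hatstar \scG$ as $\pi^\dag(\pi_\dag(\scF) \star_\Iw \IC_w)$, and then invoke~\cite[Lemma~5.1]{brr}. Your write-up is somewhat more explicit about the final reduction step, but the argument is the same.
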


\begin{proof}
As for Lemma~\ref{lem:P+-ideal}, it suffices to prove the claim when $\scG=\pi^\dag \For^{\Iw}_{\Iwu}(\IC_w)$ for some $w \in W$ with $\ell(w)>0$. Now using~\eqref{eqn:conv-formula-2}--\eqref{eqn:conv-formula-3} we see that
\[
\scF \hatstar \pi^\dag \For^{\Iw}_{\Iwu}(\IC_w) \cong \pi^\dag((\pi_\dag \scF) \star_\Iw \IC_w).
\]
Here $\pi_\dag \scF$ is an object of $\D_{\Iwu,\Iw}$, and the bifunctor we consider is that of~\eqref{eqn:conv-Iwu-Iw}. Then the claim follows once again from~\cite[Lemma~5.1]{brr}.
\end{proof}

With this lemma at hand, the universal property of the Verdier quotient implies that for any $\scF$ in $\D^\wedge_{\Iwu,\Iwu}$, the functor
\[
\D_{\Iwu,\Iwu} \to \D^0_{\Iwu,\Iwu}
\]
defined by $\scG \mapsto \Pi^0_{\Iwu,\Iwu}(\scF \hatstar \scG)$ factors canonically through a functor $\D^0_{\Iwu,\Iwu} \to \D^0_{\Iwu,\Iwu}$, and one checks easily that this operation defines a triangulated bifunctor
\[
\hatstar^0 : \D^\wedge_{\Iwu,\Iwu} \times \D^0_{\Iwu,\Iwu} \to \D^0_{\Iwu,\Iwu}
\]
defining a left action of the monoidal category $(\D^\wedge_{\Iwu,\Iwu}, \hatstar)$ on $\D^0_{\Iwu,\Iwu}$. Similar consi\-derations starting with the right action of $(\D^\wedge_{\Iwu,\Iwu}, \hatstar)$ on $\sfD_{\Iwu,\Iwu}$ (by convolution on the right) leads to the construction of a triangulated bifunctor
\[
\hatstar^0 : \D^0_{\Iwu,\Iwu}  \times \D^\wedge_{\Iwu,\Iwu} \to \D^0_{\Iwu,\Iwu}
\]
defining a right action of the monoidal category $(\D^\wedge_{\Iwu,\Iwu}, \hatstar)$ on $\D^0_{\Iwu,\Iwu}$. These two actions commute, and are related in various ways; in particular, from~\eqref{eqn:compatibility-hatstar}--\eqref{eqn:compatibility-hatstar-other} we deduce that for $\scF_1,\scF_2$ in $\mathsf{D}^\wedge_{\Iwu,\Iwu}$ and $\scG_1,\scG_2$ in $\sfD_{\Iwu,\Iwu}^0$ we have canonical isomorphisms
\begin{multline}
\label{eqn:compatibility-hatstar-0}
 (\scG_1 \hatstar^0 \scF_1) \star^0_{\Iwu} (\scF_2 \hatstar^0 \scG_2) \cong \scG_1 \star^0_{\Iwu} \bigl( (\scF_1 \hatstar \scF_2) \hatstar^0 \scG_2 \bigr) \\
 \cong \bigl( \scG_1 \hatstar^0 (\scF_1 \hatstar \scF_2) \bigr) \star^0_{\Iwu} \scG_2
 \end{multline}
 and
 \begin{equation}
 \label{eqn:compatibility-hatstar-other-0}
 (\scF_1 \hatstar^0 \scG_1) \star^0_{\Iwu} \scG_2 \cong \scF_1 \hatstar^0 (\scG_1 \star^0_{\Iwu} \scG_2).
\end{equation}

We will also consider the bifunctors
\[
\phatstar^0 : \sfP^\wedge_{\Iwu,\Iwu} \times \sfP^0_{\Iwu,\Iwu} \to \sfP^0_{\Iwu,\Iwu}, \quad
\phatstar^0 : \sfP^0_{\Iwu,\Iwu} \times \sfP^\wedge_{\Iwu,\Iwu} \to \sfP^0_{\Iwu,\Iwu}
\]
defined by
\[
\scF \phatstar^0 \scG := \pH^0(\scF \hatstar^0 \scG).
\]

\begin{lem}
\label{lem:t-exactness-conv-tFl}
 If $\scF \in \sfD^\wedge_{\Iwu,\Iwu}$ and $\scG \in \sfD_{\Iwu,\Iwu}$ belong to the nonpositive parts of the perverse t-structures, then so does $\Pi^0_{\Iwu,\Iwu}(\scF \hatstar \scG)$. In particular, for $\scF$ in $\sfP^\wedge_{\Iwu,\Iwu}$ the functor
 \[
  \scF \phatstar^0 (-) : \sfP^0_{\Iwu,\Iwu} \to \sfP^0_{\Iwu,\Iwu}
 \]
is right exact, and for $\scG$ in $\sfP^0_{\Iwu,\Iwu}$ the functor
\[
 (-) \phatstar^0 \scG : \sfP^\wedge_{\Iwu,\Iwu} \to \sfP^0_{\Iwu,\Iwu}
\]
is right exact.
\end{lem}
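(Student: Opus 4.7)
The plan is to prove the first (t-exactness) statement directly and then deduce the right-exactness of the two induced functors on hearts by a standard long exact sequence argument. Since both $\hatstar$ and $\Pi^0_{\Iwu,\Iwu}$ are triangulated on each side, and the nonpositive part of a t-structure is stable under extensions and shifts by $[n]$ for $n \geq 0$, a d\'evissage reduces the main assertion to the case $\scF = \Delta^\wedge_w$ for some $w \in W$ and $\scG = \pi^\dag \For^\Iw_{\Iwu}(\Delta^\Iw_y)$ for some $y \in W$. Here the generators for the $\hatstar$-variable are provided by the description of ${}^{\mathrm{p}} \sfD^{\wedge,\leq 0}_{\Iwu,\Iwu}$ recalled in~\S\ref{ss:completed-perverse}; for the second variable one uses that $\sfP_{\Iwu,\Iwu}$ is generated under extensions by objects of the form $\pi^\dag \For^\Iw_{\Iwu}(\Delta^\Iw_y)$ (as these refine via standard filtrations the simple objects $\pi^\dag \For^\Iw_{\Iwu}(\IC_w)$), together with the fact that $\sfD^{\leq 0}_{\Iwu,\Iwu}$ is generated under extensions by $\sfP_{\Iwu,\Iwu}[n]$ for $n \geq 0$.

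On such generators, combining~\eqref{eqn:conv-formula-2},~\eqref{eqn:conv-formula-3} and~\eqref{eqn:pi-DN-wedge} with the compatibility $\For^\Iw_{\Iwu}(\scH \star_\Iw \scH') \cong \For^\Iw_{\Iwu}(\scH) \star_\Iw \scH'$ yields
\[
\Delta^\wedge_w \hatstar \pi^\dag \For^\Iw_{\Iwu}(\Delta^\Iw_y) \cong \pi^\dag \bigl( \pi_\dag(\Delta^\wedge_w) \star_\Iw \Delta^\Iw_y \bigr) \cong \pi^\dag \For^\Iw_{\Iwu}(\Delta^\Iw_w \star_\Iw \Delta^\Iw_y).
\]
Applying $\Pi^0_{\Iwu,\Iwu}$ and using the factorizations $\Pi^0_{\Iwu,\Iwu} \circ \pi^\dag = \pi^{\dag,0} \circ \Pi^0_{\Iwu,\Iw}$ and $\Pi^0_{\Iwu,\Iw} \circ \For^\Iw_{\Iwu} = \For^{\Iw,0}_{\Iwu} \circ \Pi^0_{\Iw,\Iw}$ coming from~\S\ref{ss:another-convolution} and~\S\ref{ss:relation-reg-quotient}, together with the t-exactness of the functors $\pi^{\dag,0}$ and $\For^{\Iw,0}_{\Iwu}$, the problem reduces to showing that
\[
\Pi^0_{\Iw,\Iw}(\Delta^\Iw_w \star_\Iw \Delta^\Iw_y) = \Pi^0_{\Iw,\Iw}(\Delta^\Iw_w) \star^0_\Iw \Pi^0_{\Iw,\Iw}(\Delta^\Iw_y)
\]
belongs to $\sfP^0_{\Iw,\Iw}$. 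This is immediate from the biexactness of $\star^0_\Iw$ on the perverse t-structure recalled in~\S\ref{ss:regular-quotient}, since both factors are images of perverse sheaves.

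For the ``in particular'' statement, let $\scF \in \sfP^\wedge_{\Iwu,\Iwu}$ and let $0 \to A \to B \to C \to 0$ be a short exact sequence in $\sfP^0_{\Iwu,\Iwu}$. Then one has a distinguished triangle $\scF \hatstar^0 A \to \scF \hatstar^0 B \to \scF \hatstar^0 C \xrightarrow{+1}$ in $\sfD^0_{\Iwu,\Iwu}$ whose three terms lie in the nonpositive part of the perverse t-structure by the first part of the lemma; the associated long exact cohomology sequence therefore yields an exact sequence
\[
\pH^0(\scF \hatstar^0 A) \to \pH^0(\scF \hatstar^0 B) \to \pH^0(\scF \hatstar^0 C) \to 0,
\]
which is the desired right-exactness of $\scF \phatstar^0 (-)$. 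The right-exactness of $(-) \phatstar^0 \scG$ follows by the symmetric argument. The main technical point is the identification of a convenient set of generators for the two nonpositive parts; once this is in place, all compatibilities are routine and the crux of the argument is the already-established t-exactness of $\star^0_\Iw$.
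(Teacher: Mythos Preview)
Your proof is correct but follows a somewhat different route from the paper's. The paper reduces only in the variable $\scG$, to simple perverse sheaves $\pi^\dag\For^{\Iw}_{\Iwu}(\IC_w)$, and keeps $\scF$ general throughout. It then uses the isomorphism $\scF \hatstar \pi^\dag\For^{\Iw}_{\Iwu}(\IC_w) \cong \pi^\dag(\pi_\dag(\scF) \star_\Iw \IC_w)$ together with a dichotomy: for $\ell(w)>0$ the image under $\Pi^0_{\Iwu,\Iwu}$ vanishes by Lemma~\ref{lem:convolution-quotient}, while for $\ell(w)=0$ one invokes the right t-exactness of $\pi_\dag$ and the t-exactness of $(-)\star_\Iw\IC_w$. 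Your approach instead reduces in \emph{both} variables, to the generators $\Delta^\wedge_w$ and $\pi^\dag\For^{\Iw}_{\Iwu}(\Delta^\Iw_y)$, and finishes by invoking the t-exactness of $\star^0_\Iw$ on $\sfD^0_{\Iw,\Iw}$. This avoids any case distinction and gives a clean endgame, at the cost of an extra d\'evissage in $\scF$ (for which you correctly cite the description of ${}^{\mathrm{p}}\sfD^{\wedge,\leq 0}_{\Iwu,\Iwu}$ from~\S\ref{ss:completed-perverse}).

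One minor point: your parenthetical justification for the $\scG$-reduction (``as these refine via standard filtrations the simple objects'') is not quite the right statement---what you actually need is that ${}^{\mathrm{p}}\sfD^{\leq 0}_{\Iwu,\Iwu}$ is generated under extensions by the shifted $!$-extensions $\pi^\dag\For^{\Iw}_{\Iwu}(\Delta^\Iw_y)[n]$, $n\geq 0$, which is a standard recollement fact. Also, in the ``in particular'' part you implicitly use that each $A\in\sfP^0_{\Iwu,\Iwu}$ lifts to some $\scG'\in\sfP_{\Iwu,\Iwu}$ (by essential surjectivity and t-exactness of $\Pi^0_{\Iwu,\Iwu}$) so that the first claim applies; the paper makes this lifting explicit.
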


\begin{proof}
 The first claim will follow in general if we prove it when $\scG$ is a simple perverse sheaf, i.e.~is of the form $\pi^\dag(\For^{\Iw}_{\Iwu}(\IC_w))$ for some $w \in W$. In this case, as in the proof of Lemma~\ref{lem:convolution-quotient} we have
 \[
  \scF \hatstar \pi^\dag(\For^{\Iw}_{\Iwu}(\IC_w)) \cong \pi^\dag(\pi_\dag(\scF) \star_{\Iw} \IC_w).
 \]
Now if $\ell(w) > 0$, Lemma~\ref{lem:convolution-quotient} implies that $\Pi^0_{\Iwu,\Iwu}(\pi^\dag(\pi_\dag(\scF) \star_{\Iw} \IC_w))=0$, and if $\ell(w)=0$ the object $\pi_\dag(\scF) \star_{\Iw} \IC_w$ belongs to the nonpositive part of the perverse t-structure by right exactness of $\pi_\dag$ (see~\S\ref{ss:completed-perverse}) and exactness of $(-) \star_{\Iw} \IC_w$.

To prove the second claim, we fix $\scF$ in $\sfP^\wedge_{\Iwu,\Iwu}$. An exact sequence in $\sfP^0_{\Iwu,\Iwu}$ is given by a distinguished triangle
\[
 \scG_1 \to \scG_2 \to \scG_3 \xrightarrow{[1]}
\]
in $\sfD^0_{\Iwu,\Iwu}$ where each $\scG_i$ belongs to $\sfP^0_{\Iwu,\Iwu}$. Applying the triangulated functor $\scF \hatstar^0 (-)$ we deduce a distinguished triangle
\[
 \scF \hatstar^0 \scG_1 \to \scF \hatstar^0 \scG_2 \to \scF \hatstar^0 \scG_3 \xrightarrow{[1]}.
\]
Now $\scG_1 = \Pi^0_{\Iwu,\Iwu}(\scG_1')$ for some $\scG_1'$ in $\sfP_{\Iwu,\Iwu}$ (by essential surjectivity and t-exactness of $\Pi^0_{\Iwu,\Iwu}$), and then $\scF \hatstar^0 \scG_1 = \Pi^0_{\Iwu,\Iwu}(\scF \hatstar \scG_1')$ is concentrated in nonpositive perverse degrees by the first claim; taking the long exact sequence of perverse cohomology associated with the above distinguished triangle we deduce an exact sequence
\[
 \pH^0(\scF \hatstar^0 \scG_1) \to \pH^0(\scF \hatstar^0 \scG_2) \to \pH^0(\scF \hatstar^0 \scG_3) \to 0,
\]
showing the right exactness of $\scF \phatstar^0 (-)$. The proof that $(-) \phatstar^0 \scG$ is right exact is similar, and left to the reader
\end{proof}

\subsection{Truncation of completed perverse sheaves}
\label{ss:truncation-ps}

Recall that the monodromy morphism for the right action of $T$ on $\tFl_G$ (see~\S\ref{ss:completed-category}) provides for any $\scF$ in $\mathsf{P}^\wedge_{\Iwu,\Iwu}$ a canonical $\bk$-algebra morphism
\[
\scO(T^\vee_\bk) \to \End_{\sfP^\wedge_{\Iwu,\Iwu}}(\scF),
\]
which is compatible in the obvious way with all morphisms in $\mathsf{P}^\wedge_{\Iwu,\Iwu}$; in the language of~\S\ref{ss:mod-cat}, this means that the monodromy construction provides a functor
\begin{equation}
\label{eqn:monodromy-Mod}
\mathsf{P}^\wedge_{\Iwu,\Iwu} \to \Mod \left( \scO(T^\vee_\bk), \mathsf{P}^\wedge_{\Iwu,\Iwu} \right)
\end{equation}
whose composition with the obvious forgetful functor
\[
\Mod \left( \scO(T^\vee_\bk), \mathsf{P}^\wedge_{\Iwu,\Iwu} \right) \to \mathsf{P}^\wedge_{\Iwu,\Iwu}
\]
is the identity. Composing~\eqref{eqn:monodromy-Mod} with the bifunctor~\eqref{eqn:otimes}, we therefore obtain a bifunctor
\[
(-) \otimes_{\scO(T^\vee_\bk)} (-) : \Mof(\scO(T^\vee_\bk)) \times \mathsf{P}^\wedge_{\Iwu,\Iwu} \to \mathsf{P}^\wedge_{\Iwu,\Iwu}.
\]

We will denote by $\mathcal{J} \subset \scO(T^\vee_\bk / \Wf)$ the ideal of the image of $e \in T^\vee_\bk$ in $T^\vee_\bk / \Wf$, and for $m \geq 1$ we set 
\[
(T^\vee_\bk)^{(m)} := \mathrm{Spec} \bigl( \scO(T^\vee_\bk) / \mathcal{J}^m \cdot \scO(T^\vee_\bk) \bigr).
\]
Below we will make use of the ``truncation'' functor
\[
\mathsf{C}_m := \scO((T^\vee_\bk)^{(m)}) \otimes_{\scO(T^\vee_\bk)} (-) : \mathsf{P}^\wedge_{\Iwu,\Iwu} \to \mathsf{P}^\wedge_{\Iwu,\Iwu}.
\]
In view of Lemma~\ref{lem:subcat-completion-monodromy}, this functor in fact takes values in $\mathsf{P}_{\Iwu,\Iwu}$, which allows us to also consider the composition
\[
\mathsf{C}_m^0 := \Pi^0_{\Iwu,\Iwu} \circ \mathsf{C}_m : \mathsf{P}^\wedge_{\Iwu,\Iwu} \to \mathsf{P}_{\Iwu,\Iwu}^0.
\]

\subsection{The case of the finite flag variety}
\label{ss:G/B-tilting}

Recall that the action of $G$ on the base point of $\tFl_G$ provides
a canonical closed embedding $G/U \hookrightarrow \tFl_G$, which identifies $G/U$ with the closure of $\tFl_{G,w_\circ}$, where $w_\circ \in \Wf$ is the longest element (or, in other words, the union of the orbits $\tFl_{G,w}$ with $w \in \Wf$). Under this identification, the action of $\Iwu$ on the closure of $\tFl_{G,w_\circ}$ corresponds to the action on $G/U$ induced by the natural $U$-action (by left multiplication) via the projection $\Iwu \to U$.

The same construction as for $\mathsf{D}_{\Iwu,\Iwu}$ and $\mathsf{D}^\wedge_{\Iwu,\Iwu}$ above provides a category $\mathsf{D}_{U,U}$ of sheaves on $G/U$, and a ``completed'' triangulated category $\mathsf{D}^\wedge_{U,U}$, with a monoidal product $\hatstar_U$. There is also a perverse t-structure on $\mathsf{D}^\wedge_{U,U}$, whose heart will be denoted $\mathsf{P}^\wedge_{U,U}$, and a notion of tilting perverse sheaves; the full subcategory of $\mathsf{P}^\wedge_{U,U}$ whose objects are the tilting perverse sheaves will be denoted $\mathsf{T}^\wedge_{U,U}$. The pushforward functor associated with the closed embedding $G/U \hookrightarrow \tFl_G$ provides a t-exact, monoidal, fully faithful triangulated functor 
\[
(\mathsf{D}^\wedge_{U,U}, \hatstar_U) \to (\mathsf{D}^\wedge_{\Iwu,\Iwu},\hatstar),
\]
which sends tilting perverse sheaves to tilting perverse sheaves. (This functor will usually be omitted from notation; similarly, objects of $\mathsf{D}^\wedge_{\Iwu,\Iwu}$ supported on $G/U$ will be considered as objects in $\mathsf{D}^\wedge_{U,U}$ whenever convenient.) The tilting perverse sheaves in the essential image of this functor are those which are direct sums of objects $\mathscr{T}^\wedge_w$ with $w \in \Wf$. 

The study of the category $\mathsf{T}^\wedge_{U,U}$ is the main subject of~\cite{bezr}. In the course of this study, we in particular construct explicit representatives for the objects $\mathscr{T}^\wedge_{w_\circ}$ and $\mathscr{T}^\wedge_s$ ($s \in \Sf$), as follows. Let us denote by $U^+$ the unipotent radical of the Borel subgroup of $G$ opposite to $B$ with respect to $T$, so that the $T$-weights in the Lie algebra of $U^+$ are the positive roots.
Fix, for any $s \in \Sf$, a trivialization of the root subgroup $U_s \subset U^+$ associated with the simple root corresponding to $s$. Then we obtain a group morphism $\chi : U^+ \to \Ga$ as the following composition:
\[
U^+ \to U^+/\mathscr{D}(U^+) \xleftarrow{\sim} \prod_{s \in \Sf} U_s \cong \prod_{s \in \Sf} \Ga \xrightarrow{+} \Ga.
\]
(Here we denote by $\mathscr{D}(U^+)$ the derived subgroup of $U^+$.)
Let us also fix a nontrivial $p$-th root of unity in $\bk$, and denote by $\LAS$ the associated Artin--Schreier local system on $\Ga$. Then as in~\cite[\S 10.3]{bezr} one can consider the $(U^+,\chi^*(\LAS))$-equivariant derived category of $\bk$-sheaves on $G/B$, which will be denoted $\sfD_{\Whit,B}$, and the full triangulated subcategory $\sfD_{\Whit,U}$ in the $(U^+,\chi^*(\LAS))$-equivariant derived category of $\bk$-sheaves on $G/U$ generated by complexes obtained by pullback from $\sfD_{\Whit,B}$. We have ``averaging'' functors
\[
\Av_\Whit : \sfD_{U,U} \to \sfD_{\Whit,U}, \qquad \Av_{U,*}, \Av_{U,!} : \sfD_{\Whit,U} \to \sfD_{U,U},
\]
which form adjoint pairs $(\Av_{U,!}, \Av_{\Whit})$ and $(\Av_{\Whit}, \Av_{U,*})$. One can also consider a ``completed'' category $\sfD_{\Whit,U}^\wedge$ using the same procedure as above (based on~\cite{by}), and the averaging functors induce triangulated functors
\[
\Av_\Whit : \sfD^\wedge_{U,U} \to \sfD^\wedge_{\Whit,U}, \qquad \Av_{U,*}, \Av_{U,!} : \sfD^\wedge_{\Whit,U} \to \sfD^\wedge_{U,U},
\]
which have the same adjointness properties as above.

We set
\[
\Xi_!^\wedge := \Av_{U,!} \circ \Av_\Whit(\delta^\wedge_e), \qquad \Xi_*^\wedge := \Av_{U,*} \circ \Av_\Whit(\delta^\wedge_e).
\]
It is proved in~\cite[Lemma~10.1]{bezr} (following standard arguments taken from~\cite{bbm,by}) that $\Xi^\wedge_!$ and $\Xi^\wedge_*$ are (noncanonically) isomorphic, and representatives for the object $\mathscr{T}^\wedge_{w_\circ}$. We will also set
\[
\Xi_!:=\pi_\dag(\Xi_!^\wedge), \quad \Xi_*:=\pi_\dag(\Xi_*^\wedge),
\]
so that we have $\Xi_! \cong \Xi_* \cong \scT_{w_\circ}$.

In the next two lemmas we prove some standard properties of this object that will be required later.

\begin{lem}
\label{lem:Xi-projective}
For $w \in \Wf$ and $n \in \Z$ we have
\[
\Hom_{\sfD_{U,U}^\wedge}(\Xi^\wedge_!, \pi^\dag \For^{\Iw}_{\Iwu}(\IC_w)[n]) \cong \begin{cases} \bk & \text{if $w=e$ and $n=0$;} \\ 0 & \text{otherwise.} \end{cases}
\]
In particular, the object $\Xi^\wedge_!$ is projective in $\sfP^\wedge_{U,U}$. 
\end{lem}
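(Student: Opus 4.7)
The plan is to apply the adjunction $(\Av_{U,!}, \Av_{\Whit})$ to transport the computation into the simpler Whittaker category $\sfD^\wedge_{\Whit,U}$, where the calculation can be made explicit. More precisely, using this adjunction together with the definition $\Xi^\wedge_! = \Av_{U,!} \circ \Av_\Whit(\delta^\wedge_e)$, the Hom we wish to compute is naturally isomorphic to
\[
\Hom_{\sfD^\wedge_{\Whit,U}}\bigl(\Av_\Whit(\delta^\wedge_e),\, \Av_\Whit \bigl( \pi^\dag \For^{\Iw}_{\Iwu}(\IC_w) \bigr)[n]\bigr).
\]
Thus the main step is to understand $\Av_\Whit \circ \pi^\dag \circ \For^{\Iw}_{\Iwu}$ applied to the simple perverse sheaves $\IC_w$ on $\Fl_G$ for $w \in \Wf$.

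The key claim is: for $w \in \Wf$ with $w \neq e$, one has $\Av_\Whit(\pi^\dag \For^{\Iw}_{\Iwu}(\IC_w)) = 0$, while for $w=e$ the object $\Av_\Whit(\pi^\dag \For^{\Iw}_{\Iwu}(\IC_e))$ identifies canonically with the ``truncation'' $\mathsf{C}_1 \Av_\Whit(\delta^\wedge_e)$ obtained by killing the (right) monodromy by the augmentation ideal of $\scO((T^\vee_\bk)^\wedge)$. The vanishing for $w \neq e$ is the heart of the argument: the sheaf $\pi^\dag \For^{\Iw}_{\Iwu}(\IC_w)$ is supported on the closure $\overline{\tFl_{G,w}}$ of a positive-dimensional $\Iwu$-orbit, and the stalk computation
\[
\Av_\Whit(\scF)|_y \cong \mathrm{H}^*_c(U^+,\, \chi^*\LAS \otimes a_y^*\scF)[\dim U^+]
\]
will be shown to vanish for every $y \in G/U$ because the restriction of $\chi^*\LAS$ to the intersection of the relevant $U^+$-orbit with $\overline{\tFl_{G,w}}$ is a nontrivial Artin--Schreier local system on a positive-dimensional affine space, whose compactly-supported cohomology vanishes. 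The identification for $w=e$, where $\pi^\dag \For^{\Iw}_{\Iwu}(\IC_e)$ is the constant sheaf on $\tFl_{G,e} \cong T$ (placed in appropriate degree), follows from the fact that this is exactly the quotient of $\delta^\wedge$ by its right-monodromy augmentation ideal, combined with the $\scO((T^\vee_\bk)^\wedge)$-linearity of $\Av_\Whit$ (via the monodromy construction on both sides).

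Once this is established, the computation reduces to $\Hom_{\sfD^\wedge_{\Whit,U}}(\Av_\Whit(\delta^\wedge_e),\, \mathsf{C}_1 \Av_\Whit(\delta^\wedge_e)[n])$. The object $\Av_\Whit(\delta^\wedge_e)$ carries a canonical action of $\scO((T^\vee_\bk)^\wedge)$ via the right monodromy, inducing an isomorphism of this algebra with $\End(\Av_\Whit(\delta^\wedge_e))$, and is in fact a pro-representative of the forgetful functor on $\sfP^\wedge_{\Whit,U}$ with vanishing higher $\Ext$'s. Consequently this Hom equals $\scO((T^\vee_\bk)^\wedge)$ modulo its augmentation ideal, which is $\bk$ concentrated in degree $0$, yielding the stated formula. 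Projectivity of $\Xi^\wedge_!$ in $\sfP^\wedge_{U,U}$ then follows from the vanishing of $\Ext^n$ for $n>0$ to every simple $\pi^\dag \For^{\Iw}_{\Iwu}(\IC_w)$ with $w \in \Wf$: since every object of $\sfP^\wedge_{U,U}$ admits a finite composition series with these simple quotients, the long exact sequences propagate the vanishing to all of $\sfP^\wedge_{U,U}$.

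The main obstacle in this plan is the Whittaker vanishing for $w \neq e$, which requires a careful geometric analysis of how the closures $\overline{\tFl_{G,w}}$ meet the $U^+$-orbits on $G/U$ and a verification that the restriction of $\chi^*\LAS$ to each such intersection is nontrivial on a positive-dimensional affine factor; this is analogous to the cuspidality-type arguments carried out in the references~\cite{bbm, by} and in~\cite[\S 10]{bezr}, on which we will rely.
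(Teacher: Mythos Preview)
Your approach to the Hom computation via the Whittaker adjunction $(\Av_{U,!}, \Av_\Whit)$ is correct and would work, but it is more involved than the paper's route. The paper instead uses the adjunction $(\pi_\dag, \pi^\dag)$: since $\pi_\dag(\Xi^\wedge_!) \cong \scT_{w_\circ}$ is the projective cover of $\For^{\Iw}_{\Iwu}(\IC_e)$ in $\sfP_{U,B}$, and since the realization functor $\Db\sfP_{U,B} \to \sfD_{U,B}$ is an equivalence, one gets
\[
\Hom_{\sfD^\wedge_{U,U}}(\Xi^\wedge_!, \pi^\dag\For^{\Iw}_{\Iwu}(\IC_w)[n]) \cong \Hom_{\sfD_{U,B}}(\scT_{w_\circ}, \For^{\Iw}_{\Iwu}(\IC_w)[n]),
\]
which is immediate from projectivity of $\scT_{w_\circ}$. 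This bypasses the Whittaker vanishing entirely. Your route has the merit of staying closer to the definition of $\Xi^\wedge_!$, but it requires importing the structure of $\sfD^\wedge_{\Whit,U}$ (essentially that it is equivalent to $\Db\Mof(\scO((T^\vee_\bk)^\wedge))$), whereas the paper's argument needs only the classical highest weight theory on $G/B$.

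Your projectivity argument, however, has a genuine gap: the category $\sfP^\wedge_{U,U}$ is \emph{not} of finite length. For instance, $\delta^\wedge$ and the objects $\Delta^\wedge_w$ have endomorphism ring $\scO((T^\vee_\bk)^\wedge)$, which is infinite-dimensional over $\bk$, so they cannot admit finite composition series. Hence one cannot propagate the $\Ext$-vanishing from simples to all of $\sfP^\wedge_{U,U}$ by d\'evissage. The paper's fix is to first use finite length in the \emph{uncompleted} heart $\sfP_{U,U}$ to obtain $\Hom_{\sfD^\wedge_{U,U}}(\Xi^\wedge_!, \scF) = 0$ for every $\scF$ in ${}^{\mathrm{p}} \hspace{-1pt} \sfD_{U,U}^{\leq -1}$, and then to invoke the description of the perverse t-structure on the completed category: any object of ${}^{\mathrm{p}} \hspace{-1pt} \sfD_{U,U}^{\wedge,\leq -1}$ can be written as $``\varprojlim_n" \scF_n$ with each $\scF_n$ in ${}^{\mathrm{p}} \hspace{-1pt} \sfD_{U,U}^{\leq -1}$, whence $\Hom(\Xi^\wedge_!, \scF) = \varprojlim_n \Hom(\Xi^\wedge_!, \scF_n) = 0$.
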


\begin{proof}
Let us denote by $\sfD_{U,B}$ the $U$-equivariant derived category of constructible $\bk$-sheaves on $G/B$, and by $\sfP_{U,B}$ the heart of its perverse t-structure. Then the realization functor $\Db \sfP_{U,B} \to \sfD_{U,B}$ is an equivalence of categories by the formalism of highest weight categories, see~\cite{bgs}, and $\pi_\dag(\Xi^\wedge_!) \cong \scT_{w_\circ}$ is the projective cover of $\For^{\Iw}_{\Iwu}(\IC_e)$ in $\sfP_{U,B}$ (see e.g.~\cite[Lemma~6.9]{bezr}). We deduce the desired isomorphism using adjunction.
This implies in particular that
\[
\Hom_{\sfD_{U,U}^\wedge}(\Xi^\wedge_!, \scF)=0
\]
for any $\scF$ in ${}^{\mathrm{p}} \hspace{-1pt} \sfD_{U,U}^{\leq -1}$. By Proposition~\ref{prop:perverse-t-str-completed-cat}, any object $\scF$ in ${}^{\mathrm{p}} \hspace{-1pt} \sfD_{U,U}^{\wedge,\leq -1}$ can be written as $``\varprojlim_n{}" \scF_n$ where each $\scF_n$ belongs to ${}^{\mathrm{p}} \hspace{-1pt} \sfD_{U,U}^{\leq -1}$; we then have
\[
\Hom_{\sfD_{U,U}^\wedge}(\Xi^\wedge_!, \scF) = \varprojlim_n \Hom_{\sfD_{U,U}^\wedge}(\Xi^\wedge_!, \scF_n) = 0,
\]
which proves that $\Xi^\wedge_!$ is projective in $\sfP^\wedge_{U,U}$.
\end{proof}


Using the fact that convolution on the left and on the right commute with each other, one sees that the functors
\[
\Xi_!^\wedge \hatstar_U (-), \, \Xi_*^\wedge \hatstar_U (-) : \sfD^\wedge_{U,U} \to \sfD^\wedge_{U,U}
\]
are canonically isomorphic to the functors $\Av_{U,!} \circ \Av_\Whit$ and $\Av_{U,*} \circ \Av_\Whit$ respectively. In particular, the adjunction morphisms for the pairs $(\Av_{U,!}, \Av_{\Whit})$ and $(\Av_{\Whit}, \Av_{U,*})$ provide canonical morphisms
\begin{equation}
\label{eqn:morph-adjunction-Xi}
\Xi_!^\wedge \hatstar_U \Xi_*^\wedge \to \delta^\wedge_e, \qquad \delta^\wedge_e \to \Xi_*^\wedge \hatstar_U \Xi_!^\wedge
\end{equation}
which satisfy the appropriate zigzag relations, so that the functor $\Xi_!^\wedge \hatstar_U (-)$ is left adjoint to $\Xi_*^\wedge \hatstar_U (-)$. The same holds for any category which admits an action of $\sfD^\wedge_{U,U}$, e.g.~$\mathsf{D}^\wedge_{\Iwu,\Iwu}$.

\begin{lem}
\label{lem:exactness-convolution-Xi}
 The functor
 \[
  \Xi_!^\wedge \hatstar (-) : \sfD^\wedge_{\Iwu,\Iwu} \to \sfD^\wedge_{\Iwu,\Iwu}
 \]
is t-exact for the perverse t-structure.
\end{lem}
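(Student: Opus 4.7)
The plan is to exploit the fact that $\Xi_!^\wedge$ is a tilting perverse sheaf (representing $\mathscr{T}^\wedge_{w_\circ}$) together with the (noncanonical) isomorphism $\Xi_!^\wedge \cong \Xi_*^\wedge$ recalled in~\S\ref{ss:G/B-tilting}. Combined with the one-sided t-exactness statements of Corollary~\ref{cor:exactness-convolution-D-N} and the biexactness of $\hatstar$ in the triangulated sense (see~\S\ref{ss:completed-category}), t-exactness will follow by splitting into ``left'' and ``right'' halves.

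First I would prove left t-exactness of $\Xi_!^\wedge \hatstar (-)$. Since $\Xi_!^\wedge$ is a tilting perverse sheaf supported on $G/U$, it admits in $\mathsf{P}^\wedge_{U,U}$ a finite filtration whose successive subquotients are direct sums of standard objects $\Delta^\wedge_w$ with $w \in \Wf$. This filtration lifts to a sequence of distinguished triangles in $\sfD^\wedge_{\Iwu,\Iwu}$
\[
F_{i-1} \to F_i \to \bigoplus_{w} (\Delta^\wedge_w)^{\oplus n_{w,i}} \xrightarrow{[1]}
\]
with $F_0 = 0$ and $F_N = \Xi_!^\wedge$. Convolving with an object $\scG$ in ${}^{\mathrm{p}} \hspace{-1pt} \sfD^{\wedge,\geq 0}_{\Iwu,\Iwu}$ yields distinguished triangles relating $F_{i-1} \hatstar \scG$, $F_i \hatstar \scG$, and $\bigoplus_w (\Delta^\wedge_w \hatstar \scG)^{\oplus n_{w,i}}$. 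By Corollary~\ref{cor:exactness-convolution-D-N}\eqref{it:exactness-convolution-D}, each $\Delta^\wedge_w \hatstar \scG$ lies in ${}^{\mathrm{p}} \hspace{-1pt} \sfD^{\wedge,\geq 0}_{\Iwu,\Iwu}$, and since the nonnegative part of a t-structure is closed under extensions, an induction on $i$ gives $\Xi_!^\wedge \hatstar \scG \in {}^{\mathrm{p}} \hspace{-1pt} \sfD^{\wedge,\geq 0}_{\Iwu,\Iwu}$.

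For right t-exactness I would run the symmetric argument: fix an isomorphism $\Xi_!^\wedge \cong \Xi_*^\wedge$ (whose existence is recalled in~\S\ref{ss:G/B-tilting}), so that the functor in question is isomorphic to $\Xi_*^\wedge \hatstar (-)$. Then use the costandard filtration of $\Xi_*^\wedge$ whose subquotients are direct sums of the objects $\nabla^\wedge_w$ for $w \in \Wf$, together with Corollary~\ref{cor:exactness-convolution-D-N}\eqref{it:exactness-convolution-N} and the stability of the nonpositive part of the t-structure under extensions, to conclude that $\Xi_*^\wedge \hatstar \scG$ (hence $\Xi_!^\wedge \hatstar \scG$) lies in ${}^{\mathrm{p}} \hspace{-1pt} \sfD^{\wedge,\leq 0}_{\Iwu,\Iwu}$ whenever $\scG$ does.

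There is no real obstacle here: the only subtlety to handle carefully is that the representative $\Xi_!^\wedge$ is only \emph{noncanonically} isomorphic to $\Xi_*^\wedge$ and to $\mathscr{T}^\wedge_{w_\circ}$, but for a t-exactness statement we only need the functor up to natural isomorphism. One could alternatively deduce right t-exactness from left t-exactness via the adjunction $(\Xi_!^\wedge \hatstar (-), \Xi_*^\wedge \hatstar (-))$ provided by the morphisms~\eqref{eqn:morph-adjunction-Xi}, but invoking the concrete costandard filtration is the cleanest route.
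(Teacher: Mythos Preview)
Your proof is correct and uses essentially the same ingredients as the paper. The only organizational difference is that the paper proves right t-exactness first (by checking on the generators $\Delta^\wedge_w[n]$ of ${}^{\mathrm{p}}\sfD^{\wedge,\leq 0}_{\Iwu,\Iwu}$ and using the costandard filtration of $\Xi_!^\wedge$ together with Lemma~\ref{lem:convolution-stand-costand}\eqref{it:convolution-stand-costand-3}) and then deduces left t-exactness via the adjunction you mention at the end, rather than running the symmetric filtration argument.
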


\begin{proof}
 The subcategory ${}^{\mathrm{p}} \sfD^{\wedge, \leq 0}_{\Iwu,\Iwu}$ is generated under extensions by the objects $\Delta^\wedge_w[n]$ for $w \in W$ and $n \in \Z_{\geq 0}$, see~\S\ref{ss:completed-perverse}.
 Now $\Xi_!^\wedge \hatstar \Delta^\wedge_w$ is perverse for any $w$ by Lemma~\ref{lem:convolution-stand-costand}\eqref{it:convolution-stand-costand-3}, since $\Xi_!^\wedge$ admits a costandard filtration. This implies that our functor is right t-exact. As explained above this functor is isomorphic to $\Xi_*^\wedge \hatstar (-)$, which admits as a left adjoint the right t-exact functor $\Xi_!^\wedge \hatstar (-)$. It is therefore left t-exact, hence finally t-exact.
\end{proof}

\begin{rmk}
 One can check (using essentially the same arguments) that in fact the functor $\scF \hatstar (-)$ is t-exact for any $\scF$ in $\mathsf{T}^\wedge_{\Iwu,\Iwu}$.
\end{rmk}




If now $s \in \Sf$ is a simple reflection in $\Wf$, let us denote by $\overline{\imath}_s$ the embedding of the closure of $\tFl_{G,s}$ in $\tFl_G$. We set
\[
\Xi_{s,!}^\wedge := (\overline{\imath}_s)_* (\overline{\imath}_s)^* \Xi_!^\wedge, \qquad \Xi_{s,*}^\wedge := (\overline{\imath}_s)_* (\overline{\imath}_s)^! \Xi_*^\wedge.
\]
As explained in~\cite[Remark~9.5]{bezr}, these objects are (noncanonically) isomorphic, and are representatives for the object $\mathscr{T}^\wedge_s$. Moreover, there exists a surjection $\Xi_!^\wedge \twoheadrightarrow \Xi_{s,!}^\wedge$ whose kernel admits a standard filtration, and there exists an embedding $\Xi_{s,*}^\wedge \hookrightarrow \Xi_{*}^\wedge$ whose cokernel admits a costandard filtration.

\section{Free-monodromic central sheaves and Wakimoto sheaves}
\label{sec:fmZ}

As in Sections~\ref{sec:construction-mon-reg-quot}--\ref{sec:fmps} we continue with the general setting of~\S\ref{ss:Fl}.

\subsection{Free-monodromic Wakimoto sheaves -- definition}

Recall the Wakimoto sheaves mentioned in~\S\ref{ss:central-sheaves-prop}.
In this subsection we explain (following~\cite{be}) how these objects can be ``lifted'' to the completed category $\mathsf{D}^\wedge_{\Iwu,\Iwu}$.

Recall that for $\lambda \in X_*(T)$ we have a point $z^\lambda \in \Loop G(\F)$ whose image in $\tFl_G$ belongs to $\tFl_{G,\st(\lambda)}$. This point defines a point in the quotient $\Iwu \backslash \tFl_{G,\st(\lambda)}$, hence a trivialization of this $T$-torsor. In case $w \in X_*(T) \subset W$, the objects $\Delta^\wedge_w$ and $\nabla^\wedge_w$ therefore admit \emph{canonical} representatives $\Delta^{\wedge,\can}_w$ and $\nabla^{\wedge,\can}_w$, defined using this trivialization. 
With this definition, we have canonical isomorphisms
\begin{equation}
\label{eqn:pidag-DN-can}
 \pi_\dag(\Delta^{\wedge,\can}_w) \cong \For^{\Iw}_{\Iwu}(\Delta^\Iw_w), \quad \pi_\dag(\nabla^{\wedge,\can}_w) \cong \For^{\Iw}_{\Iwu}(\nabla_w^\Iw).
\end{equation}
We will also denote by $p_w^{\can} : \Fl_{G,w} \to T$ the associated morphism (for $w \in X_*(T)$).

\begin{lem}
\label{lem:DN-weights}
 If $\lambda,\mu \in X_*^+(T)$, then there exist \emph{canonical} isomorphisms
 \[
  \Delta^{\wedge,\can}_{\st(\lambda)} \hatstar \Delta^{\wedge,\can}_{\st(\mu)} \simto \Delta^{\wedge,\can}_{\st(\lambda+\mu)}, \quad \nabla^{\wedge,\can}_{\st(\lambda)} \hatstar \nabla^{\wedge,\can}_{\st(\mu)} \simto \nabla^{\wedge,\can}_{\st(\lambda+\mu)}.
 \]
 Moreover these isomorphisms are compatible with associativity, in the sense that for $\lambda,\mu,\nu \in X_*^+(T)$ the two natural isomorphisms between
 \[
  \Delta^{\wedge,\can}_{\st(\lambda)} \hatstar \Delta^{\wedge,\can}_{\st(\mu)} \hatstar \Delta^{\wedge,\can}_{\st(\nu)} \quad \text{and} \quad \Delta^{\wedge,\can}_{\st(\lambda+\mu+\nu)},
 \]
resp.~between
\[
  \nabla^{\wedge,\can}_{\st(\lambda)} \hatstar \nabla^{\wedge,\can}_{\st(\mu)} \hatstar \nabla^{\wedge,\can}_{\st(\nu)} \quad \text{and} \quad \nabla^{\wedge,\can}_{\st(\lambda+\mu+\nu)},
 \]
 which can be constructed by combining these isomorphisms coincide.
\end{lem}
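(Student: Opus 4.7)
The plan is to construct both isomorphisms by an explicit unwinding of the convolution on the open stratum, using the observation that for $\lambda,\mu\in X_*^+(T)$ the length formula~\eqref{eqn:formula-length} gives $\ell(\st(\lambda+\mu))=\ell(\st(\lambda))+\ell(\st(\mu))$ (since in the dominant case all pairings $\langle\lambda,\alpha^\vee\rangle,\langle\mu,\alpha^\vee\rangle$ are $\geq 0$, so no cancellations occur). As a consequence, the multiplication morphism
\[
 m_{\lambda,\mu}\colon \tFl_{G,\st(\lambda)}\,\widetilde{\times}\,\tFl_{G,\st(\mu)} \longrightarrow \tFl_{G,\st(\lambda+\mu)}
\]
(see Remark~\ref{rmk:tilde-times-2}) is an isomorphism of ind-schemes, because both sides are (iterated) affine space bundles of the same dimension and it is bijective on $\F$-points by the Bruhat decomposition. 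This is the only place where dominance is used.

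Next, I would check the key compatibility of the canonical trivializations. The point $z^\lambda z^\mu \in LG$ equals $z^{\lambda+\mu}$, so under $m_{\lambda,\mu}$ the distinguished class in $\tFl_{G,\st(\lambda)}\,\widetilde{\times}\,\tFl_{G,\st(\mu)}$ maps to the distinguished point of $\tFl_{G,\st(\lambda+\mu)}$; moreover, using that the right $T$-actions on $\tFl_G$ commute with left multiplication by $LG$, one checks that the composition
\[
 \tFl_{G,\st(\lambda)}\,\widetilde{\times}\,\tFl_{G,\st(\mu)} \xrightarrow{m_{\lambda,\mu}} \tFl_{G,\st(\lambda+\mu)} \xrightarrow{p_{\st(\lambda+\mu)}^{\can}} T
\]
agrees (via $m_{\lambda,\mu}$) with the product map $T\times T\to T$ applied to $p_{\st(\lambda)}^{\can}\times p_{\st(\mu)}^{\can}$. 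Since $\scL^\wedge_T$ is multiplicative with respect to the group law on $T$ (this is built into its construction in~\cite{bezr}), pullback along this square yields a canonical isomorphism of pro-local systems on the source. Unwinding the definitions of $\Delta^{\wedge,\can}$ and the convolution bifunctor $\hatstar$, and using that $m_{\lambda,\mu}$ is a proper (indeed closed) embedding so that $!$-pushforward and $*$-pushforward along it agree, this identification produces the desired canonical isomorphisms for both $\Delta$ and $\nabla$.

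For associativity, I would consider the threefold multiplication map
\[
 m_{\lambda,\mu,\nu}\colon \tFl_{G,\st(\lambda)}\,\widetilde{\times}\,\tFl_{G,\st(\mu)}\,\widetilde{\times}\,\tFl_{G,\st(\nu)} \longrightarrow \tFl_{G,\st(\lambda+\mu+\nu)},
\]
which is again an isomorphism by the dominance hypothesis, and observe that the two candidate associativity isomorphisms both arise from factoring $m_{\lambda,\mu,\nu}$ either as $m_{\lambda+\mu,\nu}\circ(m_{\lambda,\mu}\,\widetilde{\times}\,\id)$ or as $m_{\lambda,\mu+\nu}\circ(\id\,\widetilde{\times}\,m_{\mu,\nu})$. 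At the level of the canonical trivializations, both factorizations correspond to the two bracketings of the associative product $T\times T\times T\to T$ restricted to $z^\lambda z^\mu z^\nu=z^{\lambda+\mu+\nu}$; hence both reduce to the same identification. Combined with the multiplicativity of $\scL^\wedge_T$, which is itself strictly associative, the two associativity isomorphisms coincide.

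The main technical obstacle I foresee is verifying the compatibility in the middle paragraph: keeping track of how the chosen trivializations $p_{\st(\lambda)}^{\can}$ interact with the quotient $\tFl_{G,\st(\lambda)}\,\widetilde{\times}\,\tFl_{G,\st(\mu)}=(\text{preimage in }LG)\times^{\Iwu}\tFl_{G,\st(\mu)}$ and the right $T$-action used to define the torsor, in a way that really yields a canonical (rather than merely existing up to scalar) isomorphism of pro-local systems. Once this bookkeeping is correct, the rest of the argument is formal and only uses properness of $m_{\lambda,\mu}$ and the multiplicative structure on $\scL^\wedge_T$.
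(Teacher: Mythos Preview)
Your argument rests on the claim that the multiplication map
\[
 m_{\lambda,\mu}\colon \tFl_{G,\st(\lambda)}\,\widetilde{\times}\,\tFl_{G,\st(\mu)} \longrightarrow \tFl_{G,\st(\lambda+\mu)}
\]
is an isomorphism. This is false. On the $\Fl_G$-level (quotient by $\Iw$) the analogous map \emph{is} an isomorphism by length additivity, and this is exactly~\eqref{eqn:m-lamu-isomorphism} in the paper. But on the $\tFl_G$-level (quotient by $\Iwu$) the source has dimension $\ell(\st(\lambda))+\ell(\st(\mu))+2\dim T$ while the target has dimension $\ell(\st(\lambda+\mu))+\dim T$; the map $\tilde m_{\lambda,\mu}$ has fibers isomorphic to $T$. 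In particular it is neither a closed embedding nor proper, so your claim that $!$- and $*$-pushforward along it agree, which is what you invoke to treat $\Delta$ and $\nabla$ uniformly, has no grounds.

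The paper repairs this as follows. It shows that the square with $\tilde m_{\lambda,\mu}$ on top and the group multiplication $m_T\colon T\times T\to T$ on the bottom (via the canonical trivializations $p^{\can}$) is \emph{cartesian}. One then applies base change and the key identity $``\varprojlim_n"\,(m_T)_!(\scL_{T,n}\boxtimes\scL_{T,m})\cong\scL_{T,m}[-2\dim T]$ from~\cite[Lemma~3.4]{bezr}; the shift here is precisely what cancels the extra $[\dim T]$ built into the definition of $\hatstar$. This handles the $\Delta$ case. For the $\nabla$ case one cannot simply swap $!$ and $*$: the paper constructs a comparison morphism $\tilde m_!\circ(\tilde\jmath_{\st(\lambda)}\,\widetilde{\times}\,\tilde\jmath_{\st(\mu)})_*\to(\tilde\jmath_{\st(\lambda+\mu)})_*\circ(\tilde m_{\lambda,\mu})_!$ and checks it is an isomorphism by reducing to constant local systems, where it becomes the computation in Lemma~\ref{lem:convolution-Iw-Iwu}. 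Your associativity argument is essentially right in spirit once these corrections are made, but the heart of the proof is the cartesian square and the $(m_T)_!$ computation, not an isomorphism of strata.
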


\begin{proof}
The proof is similar to that of~\cite[Lemma~4]{be}.
 Namely, let us first consider the case of the objects $\Delta^{\wedge,\can}_{\st(\lambda)}$. Recall that for $\lambda,\mu \in X_*^+(T)$ we have $\ell(\st(\lambda)) + \ell(\st(\mu)) = \ell(\st(\lambda+\mu))$; the morphism $m$ therefore induces an isomorphism
 \begin{equation}
 \label{eqn:m-lamu-isomorphism}
  \Fl_{G,\st(\lambda)} \, \tilde{\times} \, \Fl_{G,\st(\mu)} \simto \Fl_{G,\st(\lambda+\mu)}.
 \end{equation}
 Let us denote by
 \[
  \tilde{m}_{\lambda,\mu} : \tFl_{G,\st(\lambda)} \, \widetilde{\times} \, \tFl_{G,\st(\mu)} \to \tFl_{G,\st(\lambda+\mu)}
 \]
the morphism induced by $\tilde{m}$, by
\[
 \tilde{\jmath}_{\st(\lambda)} \, \tilde{\times} \, \tilde{\jmath}_{\st(\mu)} : \tFl_{G,\st(\lambda)} \, \widetilde{\times} \, \tFl_{G,\st(\mu)} \to \tFl_{G} \, \widetilde{\times} \, \tFl_{G}
\]
the (locally closed) embedding, and by
\[
 p_{\st(\lambda)}^{\can} \, \tilde{\times} \, p_{\st(\mu)}^{\can} : \tFl_{G,\st(\lambda)} \, \widetilde{\times} \, \tFl_{G,\st(\mu)} \to T \times T
\]
the morphism induced by $p_{\st(\lambda)}^{\can}$ and $p_{\st(\mu)}^{\can}$. These morphisms fit in a diagram
\[
 \xymatrix@C=1.5cm{
 \tFl_{G,\st(\lambda)} \, \widetilde{\times} \, \tFl_{G,\st(\mu)} \ar[r]^-{\tilde{m}_{\lambda,\mu}} \ar[d]_-{p_{\st(\lambda)}^{\can} \, \tilde{\times} \, p_{\st(\mu)}^{\can}} & \tFl_{G,\st(\lambda+\mu)} \ar[d]^-{p^\can_{\st(\lambda+\mu)}} \\
 T \times T \ar[r]^{m_T} & T
 }
\]
(where the lower horizontal arrow is multiplication in $T$), which is easily seen to be commutative. Moreover, the isomorphism~\eqref{eqn:m-lamu-isomorphism} implies that this diagram is cartesian.

Using the isomorphism
\[
 \tilde{m}_! \circ (\tilde{\jmath}_{\st(\lambda)} \, \tilde{\times} \, \tilde{\jmath}_{\st(\mu)})_! \cong (\tilde{\jmath}_{\st(\lambda+\mu)})_! \circ (\tilde{m}_{\lambda,\mu})_!
\]
and the definition of $\hatstar$ we obtain an isomorphism
\begin{multline*}
 \Delta^{\wedge,,\can}_{\st(\lambda)} \hatstar \Delta^{\wedge,\can}_{\st(\mu)} \cong \\
 ``\varprojlim_{m}" ``\varprojlim_{n}" (\tilde{\jmath}_{\st(\lambda+\mu)})_! (\tilde{m}_{\lambda,\mu})_! (p_{\st(\lambda)}^{\can} \, \tilde{\times} \, p_{\st(\mu)}^{\can})^*(\scL_{T,n} \boxtimes \scL_{T,m})[\ell(\st(\lambda+\mu))+3\dim(T)].
\end{multline*}
Using the base change theorem we deduce an isomorphism
\begin{multline*}
 \Delta^{\wedge,\can}_{\st(\lambda)} \hatstar \Delta^{\wedge,\can}_{\st(\mu)} \cong \\
 ``\varprojlim_{m}" ``\varprojlim_{n}" (\tilde{\jmath}_{\st(\lambda+\mu)})_! (p_{\st(\lambda+\mu)}^{\can})^* (m_T)_! (\scL_{T,n} \boxtimes \scL_{T,m})[\ell(\st(\lambda+\mu))+3\dim(T)].
\end{multline*}
Now by~\cite[Lemma~3.4]{bezr}, for any $m$ there is a canonical isomorphism
\[
 ``\varprojlim_{n}" (m_T)_! (\scL_{T,n} \boxtimes \scL_{T,m}) \cong \scL_{T,m}[-2\dim(T)],
\]
which provides the desired isomorphism.
The verification that this isomorphism is compatible with associativity in the sense above is straightforward, and left to the reader.

The proof for the objects $\nabla^{\wedge,\can}_{\st(\lambda)}$ will be similar, once we construct a canonical isomorphism
\[
 \tilde{m}_! \circ (\tilde{\jmath}_{\st(\lambda)} \, \tilde{\times} \, \tilde{\jmath}_{\st(\mu)})_* \scF \simto (\tilde{\jmath}_{\st(\lambda+\mu)})_* \circ (\tilde{m}_{\lambda,\mu})_! \scF
\]
for each $\scF$ of the form $(p_{\st(\lambda)}^{\can} \, \tilde{\times} \, p_{\st(\mu)}^{\can})^*(\scL_{T,n} \boxtimes \scL_{T,m})$ with $n,m \geq 0$. We obtain a morphism from the left-hand side to the right-hand side as the composition
\begin{multline*}
 \tilde{m}_! (\tilde{\jmath}_{\st(\lambda)} \, \tilde{\times} \, \tilde{\jmath}_{\st(\mu)})_* \scF \to (\tilde{\jmath}_{\st(\lambda+\mu)})_* (\tilde{\jmath}_{\st(\lambda+\mu)})^* \tilde{m}_! (\tilde{\jmath}_{\st(\lambda)} \, \tilde{\times} \, \tilde{\jmath}_{\st(\mu)})_* \scF \\
 \simto (\tilde{\jmath}_{\st(\lambda+\mu)})_* (\tilde{m}_{\lambda,\mu})_! (\tilde{\jmath}_{\st(\lambda)} \, \tilde{\times} \, \tilde{\jmath}_{\st(\mu)})^* (\tilde{\jmath}_{\st(\lambda)} \, \tilde{\times} \, \tilde{\jmath}_{\st(\mu)})_* \scF \to (\tilde{\jmath}_{\st(\lambda+\mu)})_* (\tilde{m}_{\lambda,\mu})_! \scF
\end{multline*}
where the first and third morphisms are induced by adjunction, and the middle isomorphism is given by the base change theorem. Since the objects we have to consider are all extensions of constant local systems, to prove that this morphism is invertible on these objects it suffices to prove this property when $\scF$ is constant. In this case, Lemma~\ref{lem:convolution-Iw-Iwu} shows that the left-hand side identifies with
\[
 (\tilde{\jmath}_{\st(\lambda+\mu)})_* \underline{\bk} \otimes_\bk \mathsf{H}^\bullet_c(T,\bk),
\]
and the same holds for the right-hand side since $\tilde{m}_{\lambda,\mu}$ is a trivial $T$-torsor. It is easily seen that our morphism identifies with the identity of this object, which concludes the proof.
\end{proof}

This lemma will allow us to define the \emph{free-monodromic Wakimoto sheaves} as follows. Given $\lambda \in X_*(T)$, for $\scF$ in $\mathsf{D}^\wedge_{\Iwu,\Iwu}$ and $\mu,\nu \in X_*^+(T)$ such that $\lambda=\mu-\nu$ we consider the $\bk$-vector space
\[
 \Hom_{\mathsf{D}^\wedge_{\Iwu,\Iwu}}(\nabla^{\wedge,\can}_{\st(\mu)}, \nabla^{\wedge,\can}_{\st(\nu)} \hatstar \scF).
\]
If $(\mu',\nu')$ is another pair of elements of $X_*^+(T)$ such that $\lambda=\mu'-\nu'$, and if $\mu'-\mu \in X_*^+(T)$, then there is a canonical isomorphism
\[
 \Hom_{\mathsf{D}^\wedge_{\Iwu,\Iwu}}(\nabla^{\wedge,\can}_{\st(\mu)}, \nabla^{\wedge,\can}_{\st(\nu)} \hatstar \scF) \simto \Hom_{\mathsf{D}^\wedge_{\Iwu,\Iwu}}(\nabla^{\wedge,\can}_{\st(\mu')}, \nabla^{\wedge,\can}_{\st(\nu')} \hatstar \scF)
\]
induced by left convolution with $\nabla^{\wedge,\can}_{\st(\mu'-\mu)}$ and the isomorphisms of Lemma~\ref{lem:DN-weights}. Moreover, the compatibility with associativity implies that this collection of spaces and isomorphisms is an inductive system for the order on pairs $(\mu,\nu)$ of elements of $X_*^+(T)$ such that $\lambda=\mu-\nu$ given by $(\mu,\nu) \unlhd (\mu',\nu')$ iff $\mu'-\mu \in X_*^+(T)$. One can therefore consider the functor
\[
 \scF \mapsto \varinjlim_{(\mu,\nu)} \Hom_{\mathsf{D}^\wedge_{\Iwu,\Iwu}}(\nabla^{\wedge,\can}_{\st(\mu)}, \nabla^{\wedge,\can}_{\st(\nu)} \hatstar \scF).
\]
This functor is representable, since each transition morphism is an isomorphism and for any given choice of pair $(\mu,\nu)$ we have an isomorphism
\[
 \Hom_{\mathsf{D}^\wedge_{\Iwu,\Iwu}}(\nabla^{\wedge,\can}_{\st(\mu)}, \nabla^{\wedge,\can}_{\st(\nu)} \hatstar \scF) \cong \Hom_{\mathsf{D}^\wedge_{\Iwu,\Iwu}}(\Delta^{\wedge,\can}_{\st(-\nu)} \hatstar \nabla^{\wedge,\can}_{\st(\mu)}, \scF),
\]
see Lemma~\ref{lem:convolution-stand-costand}. One can therefore define $\Wak_\lambda^\wedge$ as the object representing this functor. From the construction, we see that for any pair $(\mu,\nu) \in (X_*^+(T))^2$ such that $\lambda=\mu-\nu$ we have a \emph{noncanonical} isomorphism
\[
 \Wak_\lambda^\wedge \cong \Delta^{\wedge,\can}_{\st(-\nu)} \hatstar \nabla^{\wedge,\can}_{\st(\mu)}.
\]
Using Lemma~\ref{lem:convolution-stand-costand}, it is also not difficult to check that the right-hand side is (again, noncanonically) isomorphic to $\nabla^{\wedge,\can}_{\st(\mu)} \hatstar \Delta^{\wedge,\can}_{\st(-\nu)}$.

\subsection{Free-monodromic Wakimoto sheaves -- properties}

The following statement gathers the main properties of free-monodromic Wakimoto sheaves that we will need below.

\begin{lem}
 \phantomsection
 \label{lem:properties-Wak}
 \begin{enumerate}
  \item 
  \label{it:properties-Wak-0}
  If $\lambda \in X_*(T)$, and if $\mu \in X_*^+(T)$ is such that $\lambda+\mu \in X_*^+(T)$, then there exists a canonical isomorphism
 \[
 \Wak^\wedge_\lambda \hatstar \nabla^{\wedge,\can}_{\st(\mu)} \cong \nabla^{\wedge,\can}_{\st(\lambda+\mu)}.
 \]
  \item 
  \label{it:properties-Wak-1}
  For $\lambda,\mu \in X_*(T)$ and $n \in \Z$, we have
  \[
   \Hom_{\mathsf{D}^\wedge_{\Iwu,\Iwu}}(\Wak^\wedge_\lambda,\Wak^\wedge_\mu[n])=0
  \]
unless $\mu \preceq \lambda$.
\item 
\label{it:properties-Wak-2}
For any $\lambda \in X_*(T)$, we have a canonical isomorphism
\[
 \pi_\dag(\Wak_\lambda^\wedge) \cong \For^{\Iw}_{\Iwu}(\Wak_\lambda).
\]
Moreover, $\Wak_\lambda^\wedge$ is a perverse sheaf.
\item
\label{it:properties-Wak-3}
For $\lambda,\mu \in X_*(T)$ there exists a canonical isomorphism
\[
\Wak_\lambda^\wedge \hatstar \Wak_\mu^\wedge \cong \Wak_{\lambda+\mu}^\wedge.
\]
\item
\label{it:properties-Wak-4}
For any $\lambda \in X_*(T)$, the restriction of $\mu_{\Wak^\wedge_\lambda}$ to
\[
\scO(T^\vee_\bk) = \bk \otimes \scO(T^\vee_\bk) \subset \scO(T^\vee_\bk \times T^\vee_\bk)
\]
factors (in the canonical way) through an isomorphism
\[
\scO(\FN_{T^\vee_\bk}(\{e\})) \simto \End_{\mathsf{D}^\wedge_{\Iwu,\Iwu}}(\Wak^\wedge_\lambda),
\]
and we have
\[
   \Hom_{\mathsf{D}^\wedge_{\Iwu,\Iwu}}(\Wak^\wedge_\lambda,\Wak^\wedge_\lambda[1])=0.
  \]
 \item
\label{it:properties-Wak-5}
For any $\lambda \in X_*(T)$ and $f \in \scO(T^\vee_\bk)$ we have
\[
\mu_{\Wak^\wedge_\lambda}(f \otimes 1)=\mu_{\Wak^\wedge_\lambda}(1 \otimes f),
\]
and moreover
\[
\mu_{\Wak^\wedge_\lambda}^{\mathrm{rot}}(x)=\mu_{\Wak^\wedge_\lambda}(1 \otimes e^{-\lambda}).
\]
 \end{enumerate}
\end{lem}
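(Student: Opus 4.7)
The plan is to prove the six claims in the order $\eqref{it:properties-Wak-2}\to\eqref{it:properties-Wak-0}\to\eqref{it:properties-Wak-3}\to\eqref{it:properties-Wak-4}\to\eqref{it:properties-Wak-1}\to\eqref{it:properties-Wak-5}$, exploiting throughout the noncanonical representative $\Wak^\wedge_\lambda \cong \Delta^{\wedge,\can}_{\st(-\nu)} \hatstar \nabla^{\wedge,\can}_{\st(\mu)}$ attached to any decomposition $\lambda = \mu - \nu$ with $\mu,\nu \in X_*^+(T)$ furnished by the construction. For~\eqref{it:properties-Wak-2}, perversity of this representative is immediate from Lemma~\ref{lem:convolution-stand-costand}\eqref{it:convolution-stand-costand-3}, and applying~\eqref{eqn:conv-formula-1}--\eqref{eqn:conv-formula-2} together with~\eqref{eqn:pidag-DN-can} gives $\pi_\dag(\Wak^\wedge_\lambda) \cong \For^\Iw_{\Iwu}(\Delta^\Iw_{\st(-\nu)} \star_\Iw \nabla^\Iw_{\st(\mu)}) = \For^\Iw_{\Iwu}(\Wak_\lambda)$.

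For~\eqref{it:properties-Wak-0}, I would pick $\nu \in X_*^+(T)$ with $\lambda + \nu \in X_*^+(T)$, so that $\Wak^\wedge_\lambda \cong \Delta^{\wedge,\can}_{\st(-\nu)} \hatstar \nabla^{\wedge,\can}_{\st(\lambda+\nu)}$; convolving on the right with $\nabla^{\wedge,\can}_{\st(\mu)}$ and using Lemma~\ref{lem:DN-weights} to identify $\nabla^{\wedge,\can}_{\st(\lambda+\nu)} \hatstar \nabla^{\wedge,\can}_{\st(\mu)} \cong \nabla^{\wedge,\can}_{\st(\lambda+\mu+\nu)} \cong \nabla^{\wedge,\can}_{\st(\nu)} \hatstar \nabla^{\wedge,\can}_{\st(\lambda+\mu)}$, the outer $\Delta^{\wedge,\can}_{\st(-\nu)} \hatstar \nabla^{\wedge,\can}_{\st(\nu)}$ collapses to $\delta^\wedge$ via Lemma~\ref{lem:convolution-stand-costand}\eqref{it:convolution-stand-costand-1}, leaving $\nabla^{\wedge,\can}_{\st(\lambda+\mu)}$. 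Claim~\eqref{it:properties-Wak-3} then follows from~\eqref{it:properties-Wak-0} and Remark~\ref{rmk:convolution-equiv}: picking $\mu' \in X_*^+(T)$ so that $\mu+\mu'$ and $\lambda+\mu+\mu'$ are dominant, two applications of~\eqref{it:properties-Wak-0} yield
\[
\Wak^\wedge_\lambda \hatstar \Wak^\wedge_\mu \hatstar \nabla^{\wedge,\can}_{\st(\mu')} \cong \nabla^{\wedge,\can}_{\st(\lambda+\mu+\mu')} \cong \Wak^\wedge_{\lambda+\mu} \hatstar \nabla^{\wedge,\can}_{\st(\mu')},
\]
and cancelling $\nabla^{\wedge,\can}_{\st(\mu')}$ via its convolution-invertibility gives the desired isomorphism.

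For~\eqref{it:properties-Wak-4}, the same representative together with the adjunction $\Hom(\Delta^{\wedge,\can}_{\st(-\nu)} \hatstar X, Y) \cong \Hom(X, \nabla^{\wedge,\can}_{\st(\nu)} \hatstar Y)$ and $\nabla^{\wedge,\can}_{\st(\nu)} \hatstar \Delta^{\wedge,\can}_{\st(-\nu)} \cong \delta^\wedge$ identifies $\End(\Wak^\wedge_\lambda)$ with $\End(\nabla^{\wedge,\can}_{\st(\mu)}) \cong \scO((T^\vee_\bk)^\wedge)$ via Lemma~\ref{lem:monodromy-DN}; the monodromy compatibilities~\eqref{eqn:monodromy-convolution-1}--\eqref{eqn:monodromy-convolution-2} match this identification with $\mu_{\Wak^\wedge_\lambda}$ restricted to $\bk \otimes \scO(T^\vee_\bk)$, and the $\Ext^1$ vanishing reduces similarly to $\Ext^1(\nabla^{\wedge,\can}_{\st(\mu)}, \nabla^{\wedge,\can}_{\st(\mu)}) = 0$, which holds since $\Fl_{G,\st(\mu)}$ is an affine space and $\scL^\wedge_T$ is a pro-extension of constant local systems. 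Claim~\eqref{it:properties-Wak-1} is similar but more delicate: decomposing $\lambda = \mu_\lambda - \nu_\lambda$ and $\mu = \mu_\mu - \nu_\mu$ with all four elements dominant and enlarging $\nu_\lambda$ if necessary, repeated adjunctions together with Lemmas~\ref{lem:DN-weights} and~\ref{lem:convolution-stand-costand}\eqref{it:convolution-stand-costand-1} and claim~\eqref{it:properties-Wak-0} reduce the Hom to
\[
\Hom_{\mathsf{D}^\wedge_{\Iwu,\Iwu}}(\nabla^{\wedge,\can}_{\st(\lambda')}, \nabla^{\wedge,\can}_{\st(\mu')}[n])
\]
for $\lambda',\mu' \in X_*^+(T)$ with $\lambda' - \mu' = \lambda - \mu$; the standard recollement computation in the completed category then forces $\st(\mu') \leq \st(\lambda')$ in Bruhat order, which for dominant translations amounts to $\mu' \preceq \lambda'$, i.e.~$\mu \preceq \lambda$.

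Claim~\eqref{it:properties-Wak-5} is a direct calculation using the representative, Lemma~\ref{lem:monodromy-DN} (for translations, the finite Weyl part is trivial so left and right monodromies of each of $\Delta^{\wedge,\can}_{\st(-\nu)}$ and $\nabla^{\wedge,\can}_{\st(\mu)}$ coincide), and the convolution compatibilities~\eqref{eqn:monodromy-convolution-1}--\eqref{eqn:monodromy-convolution-3}: these give $\mu_{\Wak^\wedge_\lambda}(f\otimes 1) = \mu_{\Wak^\wedge_\lambda}(1\otimes f)$ and $\mu_{\Wak^\wedge_\lambda}^{\mathrm{rot}}(x) = \mu_{\Wak^\wedge_\lambda}(1 \otimes e^{\nu-\mu}) = \mu_{\Wak^\wedge_\lambda}(1 \otimes e^{-\lambda})$. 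The main obstacle will be the Hom computation underlying~\eqref{it:properties-Wak-1}: carefully executing the chain of adjunctions and invoking the recollement/support analysis in the completed pro-category requires the most care, whereas the remaining parts are essentially formal manipulations with the structural isomorphisms already established.
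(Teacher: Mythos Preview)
Your proposal is correct and follows essentially the same route as the paper: both reduce to properties of the standard/costandard objects via convolution invertibility and Lemma~\ref{lem:monodromy-DN}, with only minor differences in the order of the items and in the reduction for~\eqref{it:properties-Wak-1} (the paper convolves both $\Wak^\wedge_\lambda$ and $\Wak^\wedge_\mu$ on the right by a single $\nabla^{\wedge,\can}_{\st(\nu)}$ via Remark~\ref{rmk:convolution-equiv} and~\eqref{it:properties-Wak-0}, which is cleaner than unpacking two separate representatives). One point to watch: since~\eqref{it:properties-Wak-0},~\eqref{it:properties-Wak-2},~\eqref{it:properties-Wak-3} assert \emph{canonical} isomorphisms, you should check independence of your auxiliary dominant-weight choices---the paper arranges this by deriving everything from~\eqref{it:properties-Wak-0}, whose canonicity comes directly from the representing-functor definition of $\Wak^\wedge_\lambda$, and then explicitly noting the independence of the remaining choice.
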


\begin{proof}
\eqref{it:properties-Wak-0} The proof is identical to that of its counterpart in $\mathsf{D}_{\Iw,\Iw}$, see~\cite[Lemma~4.2.5]{ar-book}. 

\eqref{it:properties-Wak-1} Let $\nu \in X_*^+(T)$ be such that $\lambda+\nu \in X_*^+(T)$ and $\mu+\nu \in X_*^+(T)$. Since the functor $(-) \hatstar \nabla^{\wedge,\can}_{\st(\nu)}$ is an equivalence of categories (see Remark~\ref{rmk:convolution-equiv}), using~\eqref{it:properties-Wak-0} we obtain an isomorphism
\[
\Hom_{\mathsf{D}^\wedge_{\Iwu,\Iwu}}(\Wak^\wedge_\lambda,\Wak^\wedge_\mu[n]) \cong \Hom_{\mathsf{D}^\wedge_{\Iwu,\Iwu}}(\nabla^{\wedge,\can}_{\st(\lambda+\nu)},\nabla^{\wedge,\can}_{\st(\mu+\nu)}[n]).
\]
Now the right-hand side vanishes unless $\Fl_{G,\st(\mu+\nu)} \subset \overline{\Fl_{G,\st(\lambda+\nu)}}$. It is well known that this condition is satisfied if and only if $\mu+\nu \preceq \lambda+\nu$, which implies the claim.

\eqref{it:properties-Wak-2} If $\mu \in X_*^+(T)$ is such that $\lambda+\mu \in X_*^+(T)$, then by~\eqref{it:properties-Wak-0} we have a canonical isomorphism
 \[
 \Wak^\wedge_\lambda \hatstar \nabla^{\wedge,\can}_{\st(\mu)} \cong \nabla^{\wedge,\can}_{\st(\lambda+\mu)}.
 \]
 Now we have
 \begin{multline*}
 \pi_\dag(\Wak^\wedge_\lambda \hatstar \nabla^{\wedge,\can}_{\st(\mu)}) \overset{\eqref{eqn:conv-formula-1}}{\cong} \Wak^\wedge_\lambda \hatstar \pi_\dag(\nabla^{\wedge,\can}_{\st(\mu)}) \overset{\eqref{eqn:pidag-DN-can}}{\cong} \Wak^\wedge_\lambda \hatstar \For^{\Iw}_{\Iwu}(\nabla_{\st(\mu)}^\Iw) \\
 \overset{\eqref{eqn:conv-formula-2}}{\cong} \pi_\dag(\Wak^\wedge_\lambda) \star_\Iw \nabla^\Iw_{\st(\mu)}.
 \end{multline*}
 Using again~\eqref{eqn:pidag-DN-can}, we deduce a canonical isomorphism
 \[
 \pi_\dag(\Wak^\wedge_\lambda) \star_\Iw \nabla^\Iw_{\st(\mu)} \cong \For^{\Iw}_{\Iwu}(\nabla^{\Iw}_{\st(\lambda+\mu)}).
 \]
By~\cite[Lemma~4.2.7]{ar-book} the right-hand side is canonically isomorphic to $\For^{\Iw}_{\Iwu}(\Wak_\lambda) \star_\Iw \nabla^\Iw_{\st(\mu)}$. Since the functor $(-) \star_\Iw \nabla^\Iw_{\st(\mu)}$ is an equivalence of categories (with quasi-inverse $(-) \star_\Iw \Delta^\Iw_{\st(-\mu)}$), this defines an isomorphism
 \[
  \pi_\dag(\Wak^\wedge_\lambda) \cong \For^{\Iw}_{\Iwu}(\Wak_\lambda).
 \]
 One can easily check that this isomorphism does not depend on the choice of $\mu$, hence is indeed canonical. Finally, since $\Wak_\lambda$ is perverse this isomorphism implies that $\Wak_\lambda^\wedge$ is perverse by~\cite[Lemma~5.3(1)]{bezr}.
 
\eqref{it:properties-Wak-3} If $\nu \in X_*^+(T)$ is such that $\mu+\nu \in X_*^+(T)$ and $\lambda+\mu+\nu \in X_*^+(T)$, then using~\eqref{it:properties-Wak-0} we obtain canonical isomorphisms
\[
\Wak_\lambda^\wedge \hatstar \Wak_\mu^\wedge \hatstar \nabla^{\wedge,\can}_{\st(\nu)} \cong \Wak_\lambda^\wedge \hatstar \nabla^{\wedge,\can}_{\st(\mu+\nu)} \cong \nabla^{\wedge,\can}_{\st(\lambda+\mu+\nu)} \cong \Wak_{\lambda+\mu}^\wedge \hatstar \nabla^{\wedge,\can}_{\st(\nu)}.
\]
Since the functor $(-) \hatstar \nabla^{\wedge,\can}_{\st(\nu)}$ is an equivalence of categories (see Remark~\ref{rmk:convolution-equiv}), we deduce an isomorphism
\[
\Wak_\lambda^\wedge \hatstar \Wak_\mu^\wedge \cong \Wak^\wedge_{\lambda+\mu}.
\]
It can be checked that this isomorphism does not depend on $\nu$, hence is indeed canonical.

\eqref{it:properties-Wak-4}
Let $\mu \in X_*^+(T)$ be such that $\lambda+\mu \in X_*^+(T)$. Then as above, using~\eqref{it:properties-Wak-0} we obtain that the functor $(-) \hatstar \nabla^{\wedge,\can}_{\st(\mu)}$ induces an isomorphism
\[
\Hom_{\mathsf{D}^\wedge_{\Iwu,\Iwu}}(\Wak^\wedge_\lambda,\Wak^\wedge_\lambda[n]) \simto \Hom_{\mathsf{D}^\wedge_{\Iwu,\Iwu}}(\nabla^{\wedge,\can}_{\st(\lambda+\mu)},\nabla^{\wedge,\can}_{\st(\lambda+\mu)}[n])
\]
for any $n \in \Z$. When $n=1$, it follows from~\cite[Corollary~4.6]{bezr} and adjunction that the right-hand side vanishes, so that the left-hand side vanishes as well. For $n=0$, it follows from~\eqref{eqn:monodromy-convolution-1}--\eqref{eqn:monodromy-convolution-2} and Lemma~\ref{lem:monodromy-DN} that the composition of this isomorphism with $\mu_{\Wak^\wedge_\lambda}$ is $\mu_{\nabla^{\wedge,\can}_{\st(\lambda+\mu)}}$; the desired claim therefore follows from Lemma~\ref{lem:monodromy-DN}.

\eqref{it:properties-Wak-5}
Let us fix $\mu,\nu \in X_*^+(T)$ such that $\lambda=\mu-\nu$, and an isomorphism $\Wak_\lambda^\wedge \cong \Delta^{\wedge,\can}_{\st(-\nu)} \hatstar \nabla^{\wedge,\can}_{\st(\mu)}$. Identifying these objects via this isomorphism and using~\eqref{eqn:monodromy-convolution-1}--\eqref{eqn:monodromy-convolution-2} and Lemma~\ref{lem:monodromy-DN} we obtain that for $f \in \scO(T^\vee_\bk)$ we have
\begin{multline*}
\mu_{\Wak_\lambda^\wedge}(f \otimes 1) = \mu_{\Delta^{\wedge,\can}_{\st(-\nu)}}(f \otimes 1) \hatstar \id = \mu_{\Delta^{\wedge,\can}_{\st(-\nu)}}(1 \otimes f) \hatstar \id \\
= \id \hatstar \mu_{\nabla^{\wedge,\can}_{\st(\mu)}}(f \otimes 1)
= \id \hatstar \mu_{\nabla^{\wedge,\can}_{\st(\mu)}}(1 \otimes f) = \mu_{\Wak_\lambda^\wedge}(1 \otimes f),
\end{multline*}
which proves the first claim. To prove the second claim, we observe that 
by~\eqref{eqn:monodromy-convolution-1}--\eqref{eqn:monodromy-convolution-2}--\eqref{eqn:monodromy-convolution-3}
and Lemma~\ref{lem:monodromy-DN} we have
\begin{multline*}
\mu^{\mathrm{rot}}_{\Wak_\lambda^\wedge}(x) = \mu^{\mathrm{rot}}_{\Delta^{\wedge,\can}_{\st(-\nu)}}(x) \hatstar \mu_{\nabla^{\wedge,\can}_{\st(\mu)}}^{\mathrm{rot}}(x) = \mu_{\Delta^{\wedge,\can}_{\st(-\nu)}}(1 \otimes e^{\nu}) \hatstar \mu_{\nabla^{\wedge,\can}_{\st(\mu)}}(1 \otimes e^{-\mu}) \\
= \id \hatstar \mu_{\nabla^{\wedge,\can}_{\st(\mu)}}(1 \otimes e^{-\lambda}) = \mu_{\Wak_\lambda^\wedge}(1 \otimes e^{-\lambda}),
\end{multline*}
which finishes the proof.
\end{proof}

Below we will also need the following property, which has no counterpart in the nonmonodromic setting. 

\begin{lem}
\label{lem:Hom-Wak}
If $\lambda,\mu \in X_*(T)$ are distinct, then we have
\[
\Hom_{\mathsf{P}^\wedge_{\Iwu,\Iwu}}(\Wak^\wedge_\lambda,\Wak^\wedge_\mu)=0.
\]
\end{lem}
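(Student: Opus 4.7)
The plan is to derive a contradiction from the existence of a nonzero morphism $f \colon \Wak^\wedge_\lambda \to \Wak^\wedge_\mu$ in $\mathsf{P}^\wedge_{\Iwu,\Iwu}$ when $\lambda \neq \mu$. By Lemma~\ref{lem:properties-Wak}\eqref{it:properties-Wak-1} the Hom space already vanishes unless $\mu \preceq \lambda$, so one may assume $\mu \prec \lambda$.

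The key input is the loop-rotation monodromy. Naturality of $\mu^{\mathrm{rot}}$ and of the right $T^\vee_\bk$-monodromy with respect to $f$, combined with the identities $\mu^{\mathrm{rot}}_{\Wak^\wedge_\gamma}(x) = \mu_{\Wak^\wedge_\gamma}(1 \otimes e^{-\gamma})$ of Lemma~\ref{lem:properties-Wak}\eqref{it:properties-Wak-5}, force $f$ to satisfy $\alpha \circ f = 0$, where
\[
\alpha := \mu_{\Wak^\wedge_\mu}(1 \otimes (e^{-\lambda} - e^{-\mu})).
\]
Since $\lambda \neq \mu$, the element $e^{-\lambda} - e^{-\mu}$ is nonzero in $\scO(T^\vee_\bk) = \bk[X_*(T)]$, and hence also in $\scO((T^\vee_\bk)^\wedge)$ (the completion map being injective as $T^\vee_\bk$ is a split torus). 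By the isomorphism of Lemma~\ref{lem:properties-Wak}\eqref{it:properties-Wak-4}, $\alpha$ is therefore a nonzero endomorphism of $\Wak^\wedge_\mu$, and it will suffice to show $\alpha$ is injective in $\mathsf{P}^\wedge_{\Iwu,\Iwu}$.

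To prove injectivity, the plan is to reduce to a costandard object. Fix $\kappa, \eta \in X_*^+(T)$ with $\mu = \kappa - \eta$ and a (noncanonical) isomorphism $\Wak^\wedge_\mu \cong \Delta^{\wedge,\can}_{\st(-\eta)} \hatstar \nabla^{\wedge,\can}_{\st(\kappa)}$. Using~\eqref{eqn:monodromy-convolution-1}--\eqref{eqn:monodromy-convolution-2}, under this identification $\alpha$ becomes $\id \hatstar \beta$, where $\beta := \mu_{\nabla^{\wedge,\can}_{\st(\kappa)}}(1 \otimes (e^{-\lambda} - e^{-\mu}))$. The analog for $\nabla^\wedge_w$ of the injectivity assertion of Lemma~\ref{lem:monodromy-DN}\eqref{lem:monodromy-Dwedge} (which rests on the same general facts about completed derived categories cited in the proof of that lemma from~\cite[Lemma~5.4]{bezr}) yields that $\beta$ is injective, and the equivalence $\Delta^{\wedge,\can}_{\st(-\eta)} \hatstar (-)$ of Remark~\ref{rmk:convolution-equiv} transports this injectivity to $\alpha$ (both source and target lying in the perverse heart by Lemma~\ref{lem:properties-Wak}\eqref{it:properties-Wak-2}). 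With $\alpha$ injective and $\alpha \circ f = 0$, we conclude $f = 0$, a contradiction.

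The main obstacle is this injectivity claim for $\nabla^\wedge_w$, which is not spelled out in Lemma~\ref{lem:monodromy-DN} (stated explicitly only for $\Delta^\wedge_w$) but which the authors identify as a general feature of completed derived categories; the delicate point is to verify that the formalism of~\cite[Lemma~5.4]{bezr} applies equally well to $*$-extensions of free rank-one pro-local systems, and that the equivalence $\Delta^{\wedge,\can}_{\st(-\eta)} \hatstar (-)$ indeed sends an injection of perverse objects to an injection of perverse objects (which follows because it is an equivalence of the triangulated category $\sfD^\wedge_{\Iwu,\Iwu}$ sending $\nabla^{\wedge,\can}_{\st(\kappa)}$ into the heart).
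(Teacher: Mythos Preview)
Your argument is correct but takes a more roundabout path than the paper's. The monodromy calculation giving $\alpha \circ f = 0$ is precisely the content of~\eqref{eqn:Hom-Delta} (proved inside Lemma~\ref{lem:monodromy-fiber-prod}), specialized from standards to Wakimoto sheaves. The paper exploits this by \emph{first} convolving on the right by $\Delta^{\wedge,\can}_{\st(\nu)}$ for a sufficiently antidominant $\nu$: since $\Wak^\wedge_\gamma \hatstar \Delta^{\wedge,\can}_{\st(\nu)} \cong \Delta^{\wedge,\can}_{\st(\gamma+\nu)}$ once $\gamma+\nu$ is antidominant, and since this convolution is a triangulated equivalence, one gets $\Hom(\Wak^\wedge_\lambda,\Wak^\wedge_\mu) \cong \Hom(\Delta^{\wedge,\can}_{\st(\lambda+\nu)},\Delta^{\wedge,\can}_{\st(\mu+\nu)}) = 0$ by~\eqref{eqn:Hom-Delta} directly. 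This sidesteps both the injectivity claim for $\nabla^\wedge$ and any reasoning about transporting injectivity through a convolution equivalence.

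Your proof has one genuine imprecision in that last step. The stated reason for $\Delta^{\wedge,\can}_{\st(-\eta)} \hatstar (-)$ preserving injectivity---that it is a triangulated equivalence landing the object in the heart---is not sufficient in general; a triangulated autoequivalence that sends a perverse object to a perverse object need not send monomorphisms of that object to monomorphisms. What actually makes it work here is that $\Delta^{\wedge,\can}_{\st(-\eta)} \hatstar (-)$ is \emph{left t-exact} by Corollary~\ref{cor:exactness-convolution-D-N}\eqref{it:exactness-convolution-D}: if $\beta$ is injective with cokernel $C$ in the heart, then $\cone(\alpha) \cong \Delta^{\wedge,\can}_{\st(-\eta)} \hatstar C$ lies in $\pD^{\wedge,\geq 0}_{\Iwu,\Iwu}$, so $\ker(\alpha) = \pH^{-1}(\cone(\alpha)) = 0$. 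With this correction (and granting that the injectivity claim for nonzero endomorphisms of $\nabla^\wedge_w$ indeed follows from~\cite[Lemma~5.4]{bezr} just as for $\Delta^\wedge_w$), your proof goes through.
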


\begin{proof}
Let $\nu \in -X_*^+(T)$ be such that $\lambda+\nu$ and $\mu + \nu$ belong to $-X_*^+(T)$. Then we have
\[
\Wak_\lambda^\wedge \hatstar \Delta^{\wedge,\can}_{\st(\nu)} \cong \Wak_\lambda^\wedge \hatstar \Wak_\nu^\wedge \cong \Wak^\wedge_{\lambda+\nu} \cong \Delta^{\wedge,\can}_{\st(\lambda+\nu)},
\]
and similarly for $\mu$. Since the functor $(-) \hatstar \Delta^{\wedge,\can}_{\st(\nu)}$ is an equivalence of categories (see Remark~\ref{rmk:convolution-equiv}), we deduce an isomorphism
\[
\Hom_{\mathsf{P}^\wedge_{\Iwu,\Iwu}}(\Wak^\wedge_\lambda,\Wak^\wedge_\mu) \cong \Hom_{\mathsf{P}^\wedge_{\Iwu,\Iwu}}(\Delta^{\wedge,\can}_{\st(\lambda+\nu)},\Delta^{\wedge,\can}_{\st(\mu+\nu)}).
\]
The claim then follows from~\eqref{eqn:Hom-Delta}.
\end{proof}

\subsection{Wakimoto filtrations of free-monodromic perverse sheaves}
\label{ss:wakimoto}

The notion of objects admitting a Wakimoto filtration (see~\S\ref{ss:central-sheaves-prop}) has an obvious analogue in the category $\mathsf{P}^\wedge_{\Iwu,\Iwu}$: we will say that an object $\scF$ \emph{admits a Wakimoto filtration} if there exists a finite filtration on $\scF$ such that each subquotient is a free-monodromic Wakimoto sheaf $\Wak^\wedge_\lambda$ with $\lambda \in X_*(T)$. As in the case of $\mathsf{P}_{\Iw,\Iw}$ and $\mathsf{P}_{\Iwu,\Iw}$, the properties of free-monodromic Wakimoto sheaves stated in Lemma~\ref{lem:properties-Wak}\eqref{it:properties-Wak-1}--\eqref{it:properties-Wak-4} imply that if $\scF$ admits a Wakimoto filtration, then there exists a unique filtration $(\scF_{\leq \lambda} : \lambda \in X_*(T))$ on $\scF$ such that $\scF_{\leq \lambda}=\{0\}$ for some $\lambda$, $\scF_{\leq \mu}=\scF$ for some $\mu$,  and $\scF_{\leq \lambda} / \scF_{< \lambda}$ is a direct sum of copies of $\Wak^\wedge_\lambda$ for each $\lambda \in X_*(T)$. (Here, $\leq$ is the order we fixed in~\S\ref{ss:central-sheaves-prop}.)
Moreover, this filtration is functorial in the same sense as for its ``traditional'' counterpart in~\S\ref{ss:central-sheaves-prop}, which allows to define the functor $\gr_\lambda^\wedge$ sending an object $\scF$ which admits a free-monodromic Wakimoto filtration to
\[
\gr^\wedge_\lambda(\scF):=\scF_{\leq \lambda}/\scF_{< \lambda}.
\]
For such $\scF$, we will also set
\[
\gr^\wedge_\bullet(\scF) = \bigoplus_{\lambda \in X_*(T)} \gr^\wedge_\lambda(\scF).
\]

\begin{prop}
If $\scF,\scG$ in $\mathsf{P}^\wedge_{\Iwu,\Iwu}$ admit Wakimoto filtrations, then the morphism
\[
\Hom_{\mathsf{P}^\wedge_{\Iwu,\Iwu}}(\scF,\scG) \to \Hom_{\mathsf{P}^\wedge_{\Iwu,\Iwu}}(\gr^\wedge_\bullet(\scF),\gr^\wedge_\bullet(\scG))
\]
induced by the functor $\gr^\wedge_\bullet$ is injective.
\end{prop}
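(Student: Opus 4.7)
The plan is to reduce the claim, by a double induction, to the vanishing result of Lemma~\ref{lem:Hom-Wak}. Let $f : \scF \to \scG$ be a morphism with $\gr^\wedge_\bullet(f) = 0$; I want to show that $f = 0$. By the functoriality of the Wakimoto filtration recalled just before the statement, $f$ preserves the filtrations in the sense that $f(\scF_{\leq \lambda}) \subset \scG_{\leq \lambda}$ for every $\lambda \in X_*(T)$, and the hypothesis $\gr^\wedge_\lambda(f) = 0$ precisely says that the induced map $\scF_{\leq \lambda}/\scF_{<\lambda} \to \scG_{\leq \lambda}/\scG_{<\lambda}$ vanishes; equivalently, $f(\scF_{\leq \lambda}) \subset \scG_{<\lambda}$ for every $\lambda$.

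Next, I would establish the following auxiliary fact: for any $\nu \in X_*(T)$ and any object $\scH \in \sfP^\wedge_{\Iwu,\Iwu}$ admitting a Wakimoto filtration all of whose subquotients are of the form $\Wak^\wedge_\mu$ with $\mu \neq \nu$, one has $\Hom_{\sfP^\wedge_{\Iwu,\Iwu}}(\Wak^\wedge_\nu, \scH) = 0$. This follows by induction on the length of the filtration of $\scH$: for a length-one filtration the statement is exactly Lemma~\ref{lem:Hom-Wak}, and the inductive step is an immediate consequence of the left exactness of $\Hom(\Wak^\wedge_\nu, -)$ applied to a short exact sequence $0 \to \scH' \to \scH \to \Wak^\wedge_\mu \to 0$ with $\scH'$ of strictly shorter filtration length.

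With this in hand, I would finish by induction on the length of the Wakimoto filtration of $\scF$. Let $\lambda_0$ be minimal (with respect to the fixed total order $\leq$ on $X_*(T)$) among those $\lambda$ for which $\gr^\wedge_\lambda(\scF) \neq 0$, so that $\scF_{\leq \lambda_0} = \gr^\wedge_{\lambda_0}(\scF)$ is a direct sum of copies of $\Wak^\wedge_{\lambda_0}$. The restriction of $f$ to $\scF_{\leq \lambda_0}$ lands in $\scG_{<\lambda_0}$, whose Wakimoto subquotients are all of the form $\Wak^\wedge_\mu$ with $\mu < \lambda_0$, in particular $\mu \neq \lambda_0$. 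By the auxiliary fact, the composition $\scF_{\leq \lambda_0} \to \scG_{<\lambda_0}$ is zero, so $f$ factors through a morphism $\bar{f} : \scF/\scF_{\leq \lambda_0} \to \scG$. The quotient $\scF/\scF_{\leq \lambda_0}$ again admits a Wakimoto filtration (with one fewer step), and $\gr^\wedge_\bullet(\bar{f}) = 0$ since $\gr^\wedge_\bullet(f) = 0$; the induction hypothesis gives $\bar{f} = 0$, hence $f = 0$.

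The only non-routine input here is Lemma~\ref{lem:Hom-Wak}; since that has already been proved (using that the convolution equivalences $(-) \hatstar \Delta^{\wedge,\can}_{\st(\nu)}$ reduce such $\Hom$-spaces to ones between distinct free-monodromic standard objects, which vanish by~\eqref{eqn:Hom-Delta}), I do not expect a serious obstacle in carrying out this argument.
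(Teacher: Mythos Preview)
Your proof is correct and follows essentially the same approach as the paper: the paper's proof simply invokes Lemma~\ref{lem:Hom-Wak} and defers to the argument of~\cite[Corollary~6.3]{bezr}, which is precisely the double induction (on the filtration of $\scG$ for the auxiliary vanishing, then on the filtration of $\scF$) that you have spelled out. The only cosmetic imprecision is the phrase ``with one fewer step'': what decreases is the total length of a Wakimoto filtration (possibly by more than one if $\gr^\wedge_{\lambda_0}(\scF)$ has several summands), but this does not affect the induction.
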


\begin{proof}
The claim is an easy consequence of Lemma~\ref{lem:Hom-Wak}, following the same arguments as in~\cite[Corollary~6.3]{bezr}.
\end{proof}

From this proposition we deduce in particular the following claim.

\begin{cor}
\label{cor:Wak-filtr-monodromy}
For any $\scF$ in $\mathsf{P}^\wedge_{\Iwu,\Iwu}$ which admits a Wakimoto filtration, the morphism
\[
\mu_{\scF} : \scO(T^\vee_\bk \times T^\vee_\bk) \to \End(\scF)
\]
factors through the morphism $\scO(T^\vee_\bk \times T^\vee_\bk) \to \scO(T^\vee_\bk)$ induced by the diagonal embedding $T^\vee_\bk \hookrightarrow T^\vee_\bk \times T^\vee_\bk$; in other words, for any $f$ in $\scO(T^\vee_\bk)$ we have
\[
\mu_\scF(f \otimes 1)=\mu_\scF(1 \otimes f).
\]
\end{cor}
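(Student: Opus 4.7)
The plan is to reduce to the Wakimoto sheaf case using the preceding proposition (the faithfulness of $\gr^\wedge_\bullet$ on the category of objects admitting Wakimoto filtrations) and then invoke Lemma~\ref{lem:properties-Wak}\eqref{it:properties-Wak-5}. This is the only reasonable strategy given the tools assembled up to this point.

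More precisely, given $\scF$ admitting a Wakimoto filtration and $f \in \scO(T^\vee_\bk)$, consider the endomorphism
\[
\varphi_f := \mu_\scF(f \otimes 1) - \mu_\scF(1 \otimes f) \in \End_{\sfP^\wedge_{\Iwu,\Iwu}}(\scF).
\]
By the general functoriality of Verdier's monodromy construction recalled in~\S\ref{ss:monodromic}, the morphism $\mu_{(-)}(x)$ is a natural transformation of the identity functor on $\sfD^\wedge_{\Iwu,\Iwu}$; in particular it commutes with every morphism in $\sfP^\wedge_{\Iwu,\Iwu}$, and therefore preserves the canonical Wakimoto filtration $(\scF_{\leq \lambda})$ and induces on each subquotient $\scF_{\leq \lambda}/\scF_{<\lambda}$ the monodromy endomorphism of that subquotient. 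Consequently $\gr^\wedge_\bullet(\varphi_f)$ is precisely the endomorphism $\mu_{\gr^\wedge_\bullet(\scF)}(f \otimes 1) - \mu_{\gr^\wedge_\bullet(\scF)}(1 \otimes f)$ of $\gr^\wedge_\bullet(\scF)$.

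Now $\gr^\wedge_\bullet(\scF)$ is, by construction, a direct sum of free-monodromic Wakimoto sheaves $\Wak^\wedge_\lambda$, and the monodromy of a direct sum is the direct sum of the monodromies. By Lemma~\ref{lem:properties-Wak}\eqref{it:properties-Wak-5}, the identity $\mu_{\Wak^\wedge_\lambda}(f \otimes 1) = \mu_{\Wak^\wedge_\lambda}(1 \otimes f)$ holds on each summand, so $\gr^\wedge_\bullet(\varphi_f) = 0$. The preceding proposition then asserts that $\gr^\wedge_\bullet$ is injective on morphisms between objects admitting a Wakimoto filtration, so $\varphi_f = 0$, which is the claim.

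I do not anticipate any serious obstacle: the entire argument is a two-step reduction in which all the real work has already been done, the functoriality step being immediate from the compatibility of $\mu_{(-)}$ with all morphisms and the Wakimoto-sheaf step being a direct quotation of Lemma~\ref{lem:properties-Wak}\eqref{it:properties-Wak-5}. The only thing worth double-checking is that the functor $\gr^\wedge_\bullet$ really does intertwine the monodromy of $\scF$ with the monodromy of its associated graded — but this is automatic from the naturality of $\mu$ applied to the inclusions $\scF_{<\lambda} \hookrightarrow \scF_{\leq \lambda}$ and the projections $\scF_{\leq \lambda} \twoheadrightarrow \scF_{\leq \lambda}/\scF_{<\lambda}$.
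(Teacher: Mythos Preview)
Your proposal is correct and follows essentially the same approach as the paper: both arguments use the functoriality of monodromy to show that $\gr^\wedge_\bullet$ intertwines $\mu_\scF$ with $\mu_{\gr^\wedge_\bullet(\scF)}$, invoke Lemma~\ref{lem:properties-Wak}\eqref{it:properties-Wak-5} on the Wakimoto summands, and then apply the faithfulness of $\gr^\wedge_\bullet$ from the preceding proposition. Your version spells out the naturality step a bit more explicitly, but there is no substantive difference.
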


\begin{proof}
The functoriality of monodromy implies that the composition
\[
\scO(T^\vee_\bk \times T^\vee_\bk) \xrightarrow{\mu_\scF} \End(\scF) \to \End(\gr^\wedge_\bullet(\scF))
\]
(where the second morphism is induced by the functor $\gr^\wedge_\bullet$) coincides with $\mu_{\gr^\wedge_\bullet(\scF)}$. This reduces the proof to the case $\scF$ is a free-monodromic Wakimoto sheaf, in which case the claim was proved in Lemma~\ref{lem:properties-Wak}\eqref{it:properties-Wak-5}.
\end{proof}

We finish this subsection with a criterion for the existence of Wakimoto filtrations.

\begin{lem}
\label{lem:criterion-Wak}
Let $\scF$ in $\mathsf{D}^\wedge_{\Iwu,\Iwu}$. Then $\scF$ belongs to $\mathsf{P}^\wedge_{\Iwu,\Iwu}$ and admits a Wakimoto filtration iff $\pi_\dag(\scF)$ belongs to $\mathsf{P}_{\Iwu,\Iw}$ and admits a Wakimoto filtration. Moreover, in this case the multiplicity of $\Wak^\wedge_\lambda$ in $\gr^\wedge_\lambda(\scF)$ equals the multiplicity of $\Wak_\lambda$ in $\gr_\lambda(\pi_\dag(\scF))$.
\end{lem}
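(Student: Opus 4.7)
The plan is to prove the two implications separately, and for the non-trivial direction, reduce to an analogous statement for $\nabla$-filtrations.

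\medskip

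\noindent\textbf{Forward direction.} Suppose $\scF$ is perverse with Wakimoto filtration $0 = \scF_0 \subset \scF_1 \subset \cdots \subset \scF_n = \scF$, where $\scF_i/\scF_{i-1} \cong \Wak^\wedge_{\lambda_i}$. I would induct on $i$ to show that $\pi_\dag(\scF_i)$ is perverse and has a Wakimoto filtration. Applying the triangulated functor $\pi_\dag$ to the short exact sequence $\scF_{i-1} \hookrightarrow \scF_i \twoheadrightarrow \Wak^\wedge_{\lambda_i}$ yields a distinguished triangle whose outer terms are perverse (using the inductive hypothesis and Lemma~\ref{lem:properties-Wak}\eqref{it:properties-Wak-2}, which gives $\pi_\dag(\Wak^\wedge_{\lambda_i}) \cong \For^{\Iw}_{\Iwu}(\Wak_{\lambda_i})$). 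Combined with the right t-exactness of $\pi_\dag$ (see~\S\ref{ss:completed-perverse}), the long exact sequence of perverse cohomology forces $\pH^{-1}(\pi_\dag(\scF_i)) = 0$, so $\pi_\dag(\scF_i)$ is perverse and fits in a short exact sequence $\pi_\dag(\scF_{i-1}) \hookrightarrow \pi_\dag(\scF_i) \twoheadrightarrow \For^{\Iw}_{\Iwu}(\Wak_{\lambda_i})$. This establishes a Wakimoto filtration on $\pi_\dag(\scF)$, and simultaneously records the multiplicity matching.

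\medskip

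\noindent\textbf{Backward direction, reduction to $\nabla$-filtrations.} Suppose $\pi_\dag(\scF)$ is perverse with Wakimoto filtration having graded pieces $\bigoplus_\lambda(\For^{\Iw}_{\Iwu}\Wak_\lambda)^{\oplus n_\lambda}$. First, $\scF$ is itself perverse by~\cite[Lemma~5.3(1)]{bezr}. Choose $\mu \in X_*^+(T)$ sufficiently dominant so that $\lambda + \mu \in X_*^+(T)$ for every $\lambda$ with $n_\lambda \neq 0$, and set $\scG := \scF \hatstar \nabla^{\wedge,\can}_{\st(\mu)}$. By~\eqref{eqn:conv-formula-1}--\eqref{eqn:conv-formula-2} and~\eqref{eqn:pidag-DN-can},
\[
\pi_\dag(\scG) \;\cong\; \pi_\dag(\scF) \star_{\Iw} \nabla^{\Iw}_{\st(\mu)},
\]
and the analogue of Lemma~\ref{lem:properties-Wak}\eqref{it:properties-Wak-0} in $\sfP_{\Iwu,\Iw}$ converts the Wakimoto filtration on $\pi_\dag(\scF)$ into a $\nabla$-filtration on $\pi_\dag(\scG)$ with subquotients $(\For^{\Iw}_{\Iwu}\nabla^{\Iw}_{\st(\lambda+\mu)})^{\oplus n_\lambda}$. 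Since right-convolution with $\nabla^{\wedge,\can}_{\st(\mu)}$ is an equivalence (Remark~\ref{rmk:convolution-equiv}) and sends $\Wak^\wedge_\lambda$ to $\nabla^{\wedge,\can}_{\st(\lambda+\mu)}$ (Lemma~\ref{lem:properties-Wak}\eqref{it:properties-Wak-0}), producing the desired filtration on $\scF$ is equivalent to producing a filtration on $\scG$ with subquotients $(\nabla^{\wedge,\can}_{\st(\lambda+\mu)})^{\oplus n_\lambda}$, which I can then convolve back with $\Delta^{\wedge,\can}_{\st(-\mu)} \cong \Wak^\wedge_{-\mu}$ and apply Lemma~\ref{lem:properties-Wak}\eqref{it:properties-Wak-3} to obtain the Wakimoto filtration of $\scF$.

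\medskip

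\noindent\textbf{The auxiliary $\nabla$-filtration lemma.} The main technical step is to show: if $\scG \in \sfP^\wedge_{\Iwu,\Iwu}$ satisfies that $\pi_\dag(\scG)$ admits a $\nabla$-filtration (by objects $\For^{\Iw}_{\Iwu}\nabla^{\Iw}_w$), then so does $\scG$, with matching multiplicities. I would prove this by induction on the (finite) set $\{w \in W : \tilde\jmath_w^*(\scG) \neq 0\}$. Pick a maximal element $w_0$ in this set; then $\tilde\jmath_{w_0}$ is open onto its image in the support. Using the classification of free-monodromic complexes on a single stratum, $\tilde\jmath_{w_0}^*(\scG)$ is (noncanonically) of the form $(\scL^\wedge_T)^{\oplus m}[\dim T + \ell(w_0)]$, and via the trivial $T$-torsor $\pi_{w_0}$ we have $j_{w_0}^*(\pi_\dag(\scG)) \cong (\pi_{w_0})_!\tilde\jmath_{w_0}^*(\scG)[\dim T]$; the computation of $(\pi_{w_0})_! \scL^\wedge_T$ (as in the proof of Lemma~\ref{lem:DN-weights}) forces $m = n_{w_0}$, the multiplicity of $\For^{\Iw}_{\Iwu}\nabla^{\Iw}_{w_0}$ in $\pi_\dag(\scG)$. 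Adjunction then produces a map $\scG \to (\nabla^\wedge_{w_0})^{\oplus n_{w_0}}$, and applying $\pi_\dag$ together with the fact that this map is surjective on the top stratum identifies its kernel $\scG'$ with an object supported on strictly fewer strata and satisfying the inductive hypothesis (since $\pi_\dag(\scG')$ inherits a $\nabla$-filtration). The converse, that $\pi_\dag$ preserves $\nabla$-filtrations, is immediate from the forward argument applied to $\nabla$-objects via~\eqref{eqn:pi-DN-wedge}.

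\medskip

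\noindent\textbf{Main obstacle.} The hardest step is the auxiliary $\nabla$-filtration lemma, specifically the translation between the free-monodromic structure of $\tilde\jmath_{w_0}^*\scG$ and the constant-sheaf structure of $j_{w_0}^* \pi_\dag(\scG)$: one must show that the monodromy action on $\tilde\jmath_{w_0}^*\scG$ is \emph{free} (not merely nilpotent), which is needed to identify $\tilde\jmath_{w_0}^*\scG$ with $(\scL^\wedge_T)^{\oplus n_{w_0}}[\dim T + \ell(w_0)]$ rather than with some non-free extension of constant local systems. This freeness is what is ultimately encoded in the assumption that $\pi_\dag(\scG)$ is itself perverse (rather than merely living in a bounded range), via the compatibility of $(\pi_{w_0})_!$ with $\scL^\wedge_T$. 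Once this is established, the multiplicity assertion follows by applying $\pi_\dag$ to the Wakimoto filtration of $\scF$ produced above and using the uniqueness (up to isomorphism) of the associated graded of a Wakimoto filtration in $\sfP_{\Iwu,\Iw}$.
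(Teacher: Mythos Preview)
Your proposal is correct and follows essentially the same route as the paper. The paper's proof is terser: for the forward direction it simply invokes Lemma~\ref{lem:properties-Wak}\eqref{it:properties-Wak-2}, and for the backward direction it makes the same reduction (convolve with $\nabla^{\wedge,\can}_{\st(\mu)}$ for $\mu$ sufficiently dominant, then convolve back with $\Delta^{\wedge,\can}_{\st(-\mu)}$), but rather than spelling out your ``auxiliary $\nabla$-filtration lemma'' it cites \cite[Lemma~5.9(1)]{bezr} and \cite[Proposition~9]{be}, which contain exactly the stratum-by-stratum lifting argument you sketch. Your identification of the ``main obstacle''---that one must know the free-monodromic restriction to the top stratum is actually free over $R_A^\wedge$, and that this is forced by perversity of $\pi_\dag(\scG)$ on that stratum---is precisely the content of those cited results.
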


\begin{proof}
If $\scF$ belongs to $\mathsf{P}^\wedge_{\Iwu,\Iwu}$ and admits a Wakimoto filtration, then Lemma~\ref{lem:properties-Wak}\eqref{it:properties-Wak-2} implies that $\pi_\dag(\scF)$ belongs to $\mathsf{P}^\wedge_{\Iwu,\Iw}$ and admits a Wakimoto filtration. 

Now, assume that $\pi_\dag(\scF)$ belongs to $\mathsf{P}_{\Iwu,\Iw}$ and admits a Wakimoto filtration. Choose $\mu \in X_*^+(T)$ such that $\lambda+\mu$ is dominant for any $\lambda \in X_*(T)$ such that $\gr_\lambda(\pi_\dag(\scF)) \neq 0$. We have
\[
\pi_\dag(\scF \hatstar \nabla^{\wedge,\can}_{\st(\mu)}) \overset{\eqref{eqn:conv-formula-1}}{\cong} \scF \hatstar \pi_\dag(\nabla^{\wedge,\can}_{\st(\mu)}) \overset{\eqref{eqn:pidag-DN-can}}{\cong} \scF \hatstar \For^{\Iw}_{\Iwu}(\nabla^\Iw_{\st(\mu)}) \overset{\eqref{eqn:conv-formula-2}}{\cong} \pi_\dag(\scF) \star_\Iw \nabla^\Iw_{\st(\mu)}.
\]
Our assumption,~\eqref{eqn:convolution-Wak} and the choice of $\mu$ imply that $\pi_\dag(\scF) \star_\Iw \nabla^\Iw_{\st(\mu)}$ is perverse and admits a filtration with subquotients of the form $\nabla^\Iw_{\st(\nu)}$ with $\nu \in X_*^+(T)$. The arguments in the proof of~\cite[Lemma~5.9(1)]{bezr} (see also~\cite[Proposition~9]{be}) show that this condition implies that $\scF \hatstar \nabla^{\wedge,\can}_{\st(\mu)}$ is perverse and admits a filtration with subquotients of the form $\nabla^{\wedge,\can}_{\st(\nu)}$ with $\nu \in X_*(T)$. Applying the functor $(-) \hatstar \Delta^{\wedge,\can}_{\st(-\mu)}$ we deduce that $\scF$ is perverse and admits a Wakimoto filtration, as desired.

The proof of the claim about multiplicities is immediate for these considerations.
\end{proof}

\subsection{Free-monodromic central sheaves}
\label{ss:fm-central-sheaves}

We now explain how to ``upgrade'' Gaitsgory's functor $\sfZ$ (see~\S\ref{ss:central-sheaves-prop}) to a functor with values in $\mathsf{D}^\wedge_{\Iwu,\Iwu}$, following the case of $\overline{\Q}_\ell$-coefficients treated in~\cite[\S 3.5]{be}. (The present construction is based on the slightly different point of view explained in~\S\ref{ss:central-sheaves-const}).

We start with the group scheme $\cH$ defined as the N\'eron blowup of $G \times \mathbb{A}^1_\F$ in $U$ along the divisor $\{0\} \subset \mathbb{A}^1_\F$, in the sense of~\cite{mrr}. Then $\mathcal{H}$ is a smooth group scheme over $\mathbb{A}^1_\F$, and the ind-scheme $\tFl_G$ represents the functor sending an $\F$-algebra $R$ to the set of isomorphism classes of pairs consisting of a principal $\cH$-bundle over $\mathrm{Spec}(R[ \hspace{-1pt} [z] \hspace{-1pt} ])$ together with a trivialization over $\mathrm{Spec}(R( \hspace{-1pt} (z) \hspace{-1pt} ))$. One can then define the ind-scheme $\widetilde{\Gr}^{\mathrm{BD}}_G$ by simply replacing $\cG$ by $\cH$ in the definition of $\Gr^{\mathrm{BD}}_G$. In this way we have canonical identifications
\[
\{0\} \times_{\mathbb{A}^1_\F} \widetilde{\Gr}_G^{\mathrm{BD}} = \tFl_G,
\]
and
\[
(\mathbb{A}^1_\F \smallsetminus \{0\}) \times_{\mathbb{A}^1_\F} \widetilde{\Gr}_G^{\mathrm{BD}} = \Gr_G \times \tFl_G \times (\mathbb{A}^1_\F \smallsetminus \{0\}).
\]
Using nearby cycles we therefore obtain a bifunctor
\[
\widetilde{\mathsf{Y}} : \mathsf{D}_{\Loop^+G, \Loop^+G} \times \mathsf{D}_{\Iwu,\Iwu} \to \Db_c(\tFl_G,\bk).
\]
We have a smooth morphism $\widetilde{\Gr}^{\mathrm{BD}}_G \to \Gr^{\mathrm{BD}}_G$ which restricts to the morphism $\pi : \tFl_G \to \Fl_G$ over $0$, and to the induced morphism $\Gr_G \times \tFl_G \times (\mathbb{A}^1_\F \smallsetminus \{0\}) \to \Gr_G \times \Fl_G \times (\mathbb{A}^1_\F \smallsetminus \{0\})$ over $\mathbb{A}^1_\F \smallsetminus \{0\}$ under the identifications above; by compatibility of nearby cycles with smooth pullback we deduce a canonical isomorphism
\begin{equation}
\label{eqn:C-pullback}
\widetilde{\mathsf{Y}}(\scA,\pi^* \For^{\Iw}_{\Iwu}(\scF)) \cong \pi^* \For^{\Iw}_{\Iwu}(\mathsf{Y}(\scA,\scF))
\end{equation}
for any $\scA$ in $\mathsf{D}_{\Loop^+G, \Loop^+G}$ and $\scF$ in $\mathsf{D}_{\Iw,\Iw}$. In particular, this shows that the functor $\widetilde{\mathsf{Y}}$ factors through a bifunctor
\[
\mathsf{D}_{\Loop^+G, \Loop^+G} \times \mathsf{D}_{\Iwu,\Iwu} \to \mathsf{D}_{\Iwu,\Iwu},
\]
which will also be denoted $\widetilde{\mathsf{Y}}$.

In the following statement, we use the variant of the bifunctor $\mathsf{Y}$ considered in Remark~\ref{rmk:C-noneq}.

\begin{lem}
\label{lem:nearby-cycles-pushforward}
For any $\scA$ in $\mathsf{D}_{\Loop^+G, \Loop^+G}$ and $\scF$ in $\mathsf{D}_{\Iwu,\Iwu}$, we have a canonical isomorphism
\[
\pi_! \widetilde{\mathsf{Y}}(\scA,\scF) \cong \mathsf{Y}(\scA,\pi_! \scF).
\]
\end{lem}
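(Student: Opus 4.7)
The plan is to realise the claimed identity as the image under nearby cycles of a corresponding identification on the generic fibre. More precisely, I would construct a morphism $\tilde\pi : \widetilde{\Gr}_G^{\mathrm{BD}} \to \Gr_G^{\mathrm{BD}}$ of ind-schemes over $\mathbb{A}^1$ extending $\pi$, and then combine a Künneth-type identification over $\mathbb{A}^1 \smallsetminus \{0\}$ with the commutation of nearby cycles and $\tilde\pi_!$.

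First I would construct $\tilde\pi$. The inclusion $U \subset B$ of smooth subgroups of $G$ induces, via the Néron-blowup description recalled in~\S\ref{ss:central-sheaves-const}, a morphism of smooth affine group schemes $\mathcal{H} \to \mathcal{G}$ over $\mathbb{A}^1$, whose kernel is the constant group scheme $T \times \mathbb{A}^1$. Passing to the ind-schemes representing the moduli of bundles with trivialisations, this yields a morphism $\tilde\pi$ of ind-schemes over $\mathbb{A}^1$ whose restriction over $\{0\}$ is $\pi : \tFl_G \to \Fl_G$ and whose restriction over $\mathbb{A}^1 \smallsetminus \{0\}$ is $\mathrm{id}_{\Gr_G} \times \pi \times \mathrm{id}$ under the identifications recalled before the statement. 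Moreover $\tilde\pi$ is a Zariski-locally trivial $T$-torsor.

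Next I would compare the underlying sheaves. Denote by $\widetilde{\scS}(\scA, \scF)$ the complex on $(\mathbb{A}^1 \smallsetminus \{0\}) \times_{\mathbb{A}^1} \widetilde{\Gr}_G^{\mathrm{BD}} = \Gr_G \times \tFl_G \times (\mathbb{A}^1 \smallsetminus \{0\})$ whose nearby cycles at $0$ produce $\widetilde{\mathsf{Y}}(\scA, \scF)$; explicitly, this is (up to shift) the exterior product $\scA \boxtimes \scF \boxtimes \underline{\bk}$. Similarly let $\scS(\scA, \pi_! \scF)$ denote the analogue $\scA \boxtimes \pi_!\scF \boxtimes \underline{\bk}$ on $\Gr_G \times \Fl_G \times (\mathbb{A}^1 \smallsetminus \{0\})$. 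Since over $\mathbb{A}^1 \smallsetminus \{0\}$ the morphism $\tilde\pi$ is simply $\mathrm{id}_{\Gr_G} \times \pi \times \mathrm{id}$, the Künneth formula yields a canonical isomorphism $\tilde\pi_! \widetilde{\scS}(\scA, \scF) \cong \scS(\scA, \pi_! \scF)$.

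To conclude I would apply the identity $\pi_! \circ \psi_0 \cong \psi_0 \circ \tilde\pi_!$, where $\psi_0$ denotes the nearby-cycles functor at $0 \in \mathbb{A}^1$; combined with the isomorphism of the previous paragraph this yields the stated identification. The main obstacle is the justification of this compatibility: $\tilde\pi$ is not proper (its fibres are $T$), so the standard commutation of nearby cycles with proper pushforward does not apply directly. I would exploit instead the fact that $\tilde\pi$ is a Zariski-locally trivial $T$-torsor: locally on $\Gr_G^{\mathrm{BD}}$ one trivialises $\tilde\pi$ and reduces to the projection $X \times T \to X$, for which the compatibility follows from the Künneth formula (the constant sheaf $\underline{\bk}_T$ being pulled back from a point) together with the standard commutation of nearby cycles with tensor product by a constant sheaf pulled back from the base. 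Gluing these local identifications produces the required global compatibility, and combining all three steps yields the lemma.
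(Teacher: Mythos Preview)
Your framework in Steps~1 and~2 is sound and matches the paper's setup: the morphism $\tilde\pi$ you describe is exactly the smooth morphism $\widetilde{\Gr}_G^{\mathrm{BD}} \to \Gr_G^{\mathrm{BD}}$ introduced just before the lemma, and the K\"unneth identification over the generic fibre is immediate from the definitions. The gap is in Step~3.

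Your local argument does not go through as written. After trivialising $\tilde\pi$ over an open $U \subset \Gr_G^{\mathrm{BD}}$ as $U \times T \to U$, the restriction of $\widetilde{\scS}(\scA,\scF)$ to $\tilde\pi^{-1}(U)$ has no reason to decompose as an exterior product of the form $\scK \boxtimes \underline{\bk}_T$: such a decomposition would force $\scF$ to be, locally on $\Fl_G$, a pullback along $\pi$, whereas a general object of $\mathsf{D}_{\Iwu,\Iwu}$ (for instance a nontrivial extension of two pullbacks) is not itself a pullback. So your appeal to K\"unneth and to ``commutation of nearby cycles with tensor product by a constant sheaf pulled back from the base'' does not apply to the sheaf at hand. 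There is a second, related issue: even granting local isomorphisms, gluing them into a global one requires a globally defined comparison map, which you have not constructed.

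The paper resolves both points at once. It first invokes the canonical comparison morphism $\pi_! \psi_0 \to \psi_0\, \tilde\pi_!$ that exists for \emph{any} morphism over the base (this is the content of the reference to~\cite[\S 2.1.7]{deligne}); this supplies the global map whose invertibility may then be tested. Second, since $\mathsf{D}_{\Iwu,\Iwu}$ is generated as a triangulated category by objects of the form $\pi^* \For^{\Iw}_{\Iwu}(\scG)$ and both sides are triangulated in $\scF$, it suffices to check invertibility on such generators. For $\scF = \pi^* \For^{\Iw}_{\Iwu}(\scG)$ the projection formula gives $\pi_! \pi^* \scG \cong \scG \otimes_\bk \pi_! \underline{\bk}_{\tFl_G}$, and together with the smooth-pullback compatibility~\eqref{eqn:C-pullback} this identifies both sides with $\mathsf{Y}(\scA,\scG) \otimes_\bk \pi_! \underline{\bk}_{\tFl_G}$. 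Your local K\"unneth picture is essentially this computation in the special case where $\scF$ is a pullback; what is missing is the reduction to that case and the global comparison morphism that makes the reduction legitimate.
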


\begin{proof}
By general properties of nearby cycles (see~\cite[Exp.~XIII, \S 2.1.7]{deligne}), there exists a canonical morphism
\[
\pi_! \widetilde{\mathsf{Y}}(\scA,\scF) \to \mathsf{Y}(\scA,\pi_! \scF).
\]
Since $\mathsf{D}_{\Iwu,\Iwu}$ is generated, as a triangulated category, by the objects of the form $\pi^* \For^{\Iw}_{\Iwu}(\scG)$ with $\scG$ in $\mathsf{D}_{\Iw,\Iw}$, to prove that our morphism is an isomorphism it suffices to do so for such objects. Now by the projection formula we have
\[
\pi_! \pi^* \scG \cong \scG \otimes_\bk \pi_! \underline{\bk}_{\tFl_G},
\]
so that
\[
\mathsf{Y}(\scA,\pi_! \pi^*\scG) \cong \mathsf{Y}(\scA,\scG) \otimes_\bk \pi_! \underline{\bk}_{\tFl_G}.
\]
On the other hand, using~\eqref{eqn:C-pullback} and again the projection formula we see that
\[
\pi_! \widetilde{\mathsf{Y}}(\scA,\pi^*\scG) \cong \pi_! \pi^* \mathsf{Y}(\scA,\scG) \cong \mathsf{Y}(\scA,\scG) \otimes_\bk \pi_! \underline{\bk}_{\tFl_G}.
\]
One can check that under these identifications our morphism identifies with the identity morphism of $\mathsf{Y}(\scA,\scG) \otimes_\bk \pi_! \underline{\bk}_{\tFl_G}$; in particular it is indeed an isomorphism, which finishes the proof.
\end{proof}

For $\scA$ in $\mathsf{D}_{\Loop^+G, \Loop^+G}$, the functor $\widetilde{\mathsf{Y}}(\scA,-)$ extends to a functor from $\mathsf{D}^\wedge_{\Iwu,\Iwu}$ to pro-objects in $\mathsf{D}_{\Iwu,\Iwu}$. Using Lemma~\ref{lem:nearby-cycles-pushforward} one sees that this functor in fact takes values in $\mathsf{D}^\wedge_{\Iwu,\Iwu}$; we have therefore obtained a bifunctor
\[
\widetilde{\mathsf{Y}} : \mathsf{D}_{\Loop^+G, \Loop^+G} \times \mathsf{D}^\wedge_{\Iwu,\Iwu} \to \mathsf{D}^\wedge_{\Iwu,\Iwu}
\]
which satisfies
\begin{equation}
\label{eqn:nearby-cycles-pushforward}
\pi_\dag \widetilde{\mathsf{Y}}(\scA,\scF) \cong \mathsf{Y}(\scA, \pi_\dag \scF)
\end{equation}
for any $\scA$ in $\mathsf{D}_{\Loop^+G, \Loop^+G}$ and $\scF$ in $\mathsf{D}^\wedge_{\Iwu,\Iwu}$. 

The same considerations as in~\S\ref{ss:central-sheaves-const} show that for $\scA,\scB$ in $\mathsf{D}_{\Loop^+G, \Loop^+G}$ and $\scF,\scG$ in $\mathsf{D}_{\Iwu,\Iwu}$ we have a canonical isomorphism
\begin{equation*}
\widetilde{\mathsf{Y}}(\scA,\scF) \star_{\Iwu} \widetilde{\mathsf{Y}}(\scB,\scG) \cong \widetilde{\mathsf{Y}}(\scA \star_{\Loop^+G} \scB, \scF \star_{\Iwu} \scG).
\end{equation*}
(The proof of this isomorphism involves the compatibility of nearby cycles with respect to pushforward along a nonproper map; this compatibility does not follow from a general result, but can be obtained using considerations similar to those encountered in the proof of Lemma~\ref{lem:nearby-cycles-pushforward}.) 
Once this is proved, one obtains more generally that for $\scA,\scB$ in $\mathsf{D}_{\Loop^+G,\Loop^+G}$ and $\scF,\scG$ in $\mathsf{D}^\wedge_{\Iwu,\Iwu}$ we have a canonical isomorphism
\begin{equation}
\label{eqn:tC-convolution-comp}
\widetilde{\mathsf{Y}}(\scA,\scF) \hatstar \widetilde{\mathsf{Y}}(\scB,\scG) \cong \widetilde{\mathsf{Y}}(\scA \star_{\Loop^+G} \scB, \scF \hatstar \scG).
\end{equation}

We now define the functor
\[
\tsfZ : \mathsf{D}_{\Loop^+G, \Loop^+G} \to \mathsf{D}^\wedge_{\Iwu,\Iwu}
\]
by setting
\[
\tsfZ(\scA) := \widetilde{\mathsf{Y}}(\scA,\delta^\wedge).
\]
Since this morphism is defined by nearby cycles, it comes with a canonical monodromy automorphism $\widehat{\sm}$. These data possess properties similar to those of the ``traditional'' functor $\sfZ$ of~\S\ref{ss:central-sheaves-prop}, as explained in the following statement.

\begin{thm}
\phantomsection
\label{thm:gaitsgory-mon}
\begin{enumerate}
\item
\label{it:gaitsgory-mon-0}
There exists a canonical isomorphism of functors
\[
\pi_\dag \circ \tsfZ \cong \For^{\Iw}_{\Iwu} \circ \sfZ
\]
which identifies $\pi_\dag \widehat{\sm}$ with $\For^{\Iw}_{\Iwu}(\sm)$.
\item
\label{it:gaitsgory-mon-1}
The functor $\tsfZ$ restricts to an exact functor from $\sfP_{\Loop^+G,\Loop^+G}$ to $\mathsf{P}^\wedge_{\Iwu,\Iwu}$.
\item
\label{it:gaitsgory-mon-2}
There exist canonical isomorphisms
\[
\widehat{\phi}_{\scA,\scB} : \tsfZ(\scA \star_{\Loop^+G} \scB) \simto \tsfZ(\scA) \hatstar \tsfZ(\scB)
\]
for $\scA,\scB$ in $\mathsf{D}_{\Loop^+G, \Loop^+G}$ and $\tsfZ(\delta_{\Gr}) \cong \delta^\wedge$ which endow $\tsfZ$ with a monoidal structure. Moreover, via the identification
\begin{multline*}
 \pi_\dag(\tsfZ(\scA) \hatstar \tsfZ(\scB)) \overset{\eqref{eqn:conv-formula-1}}{\cong} \tsfZ(\scA) \hatstar \pi_\dag(\tsfZ(\scB)) \overset{\eqref{it:gaitsgory-mon-0}}{\cong} \tsfZ(\scA) \hatstar \For^{\Iw}_{\Iwu}(\sfZ(\scB))\\
 \overset{\eqref{eqn:conv-formula-2}}{\cong} \pi_\dag(\tsfZ(\scA)) \star_\Iw \sfZ(\scB) \overset{\eqref{it:gaitsgory-mon-0}}{\cong} \sfZ(\scA) \star_\Iw \sfZ(\scB)
\end{multline*}
we have $\pi_\dag(\widehat{\phi}_{\scA,\scB})=\phi_{\scA,\scB}$, and $\widehat{\phi}_{\scA,\scB}$ intertwines the automorphisms $\widehat{\sm}_{\scA \star_{\Loop^+G} \scB}$ of $\tsfZ(\scA \star_{\Loop^+G} \scB)$ and $\widehat{\sm}_{\scA} \hatstar \widehat{\sm}_{\scB}$ of $\tsfZ(\scA) \hatstar \tsfZ(\scB)$.
\item
\label{it:gaitsgory-mon-4}
For any $\scA$ in $\mathsf{D}_{\Loop^+G, \Loop^+G}$ and $\scF$ in $\D^\wedge_{\Iwu,\Iwu}$, there exists a canonical isomorphism
\[
\widehat{\sigma}_{\scA,\scF} : \tsfZ(\scA) \hatstar \scF \simto \scF \hatstar \tsfZ(\scA)
\]
which identifies $\widehat{\sm}_{\scA} \hatstar \id_{\scF}$ and $\id_{\scF} \hatstar \widehat{\sm}_{\scA}$. Moreover
the functor $\tsfZ$, together with the isomorphisms $\widehat{\sigma}_{\scA,\scF}$ and $\widehat{\phi}_{\scA,\scB}$, define a central functor from $\sfP_{\Loop^+G,\Loop^+G}$ to $\mathsf{D}^\wedge_{\Iwu,\Iwu}$ in the sense of~\cite{bez}; in other words these data define a braided monoidal functor from $\sfP_{\Loop^+G,\Loop^+G}$ to the Drinfeld center of $\mathsf{D}^\wedge_{\Iwu,\Iwu}$.
\item
\label{it:gaitsgory-mon-5}
For any $\scA$ in $\sfP_{\Loop^+G,\Loop^+G}$, the functor
\[
 \tsfZ(\scA) \hatstar (-) : \D^\wedge_{\Iwu,\Iwu} \to \D^\wedge_{\Iwu,\Iwu}
\]
is t-exact.
\item
\label{it:gaitsgory-mon-6}
For any $\scA$ in $\sfP_{\Loop^+G,\Loop^+G}$ we have
\[
 \hat{\sm}_{\scA}=\mu^{\mathrm{rot}}_{\tsfZ(\scA)}(x^{-1}).
\]
\item
\label{it:fm-central-Wak}
 For any $\scA$ in $\sfP_{\Loop^+G,\Loop^+G}$, the perverse sheaf $\tsfZ(\scA)$ admits a Wakimoto filtration. Moreover, for any $\lambda \in X_*(T)$ the multiplicity of $\Wak^\wedge_\lambda$ in $\gr^\wedge_\lambda(\tsfZ(\scA))$ equals the dimension of the $\lambda$-weight space of $\Sat(\scA)$.
\end{enumerate}
\end{thm}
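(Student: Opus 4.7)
\medskip

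\noindent\textbf{Proof plan.} The strategy is to reduce each claim to its known analogue for the non-monodromic functor $\sfZ$ by means of the functor $\pi_\dag$, which is conservative and t-exact, using the compatibilities~\eqref{eqn:nearby-cycles-pushforward} and~\eqref{eqn:tC-convolution-comp} as the main tools. Part~\eqref{it:gaitsgory-mon-0} is immediate from~\eqref{eqn:nearby-cycles-pushforward} applied to $\scF=\delta^\wedge$, together with the canonical isomorphism $\pi_\dag(\delta^\wedge) \cong \For^{\Iw}_{\Iwu}(\delta)$ noted in~\S\ref{ss:completed-category}; the compatibility of monodromy with smooth pullback then transports $\sm$ to $\pi_\dag \widehat{\sm}$. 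Part~\eqref{it:gaitsgory-mon-1} follows at once: t-exactness of $\pi_\dag$ combined with~\eqref{it:gaitsgory-mon-0} and t-exactness of $\sfZ$ shows that $\pi_\dag \tsfZ(\scA)$ is perverse for $\scA \in \sfP_{L^+G,L^+G}$, and by~\cite[Lemma~5.3(1)]{bezr} this implies $\tsfZ(\scA) \in \sfP^\wedge_{\Iwu,\Iwu}$; exactness follows similarly.

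For~\eqref{it:gaitsgory-mon-2}, I would define $\widehat{\phi}_{\scA,\scB}$ as the composition
\[
\tsfZ(\scA \star_{L^+G} \scB) = \widetilde{\mathsf{Y}}(\scA \star_{L^+G} \scB, \delta^\wedge) \xleftarrow[\sim]{\eqref{eqn:tC-convolution-comp}} \widetilde{\mathsf{Y}}(\scA,\delta^\wedge) \hatstar \widetilde{\mathsf{Y}}(\scB,\delta^\wedge) = \tsfZ(\scA) \hatstar \tsfZ(\scB),
\]
after using the canonical isomorphism $\delta^\wedge \hatstar \delta^\wedge \cong \delta^\wedge$; the unit isomorphism $\tsfZ(\delta_\Gr) \cong \delta^\wedge$ is obtained by identifying $\widetilde{\mathsf{Y}}(\delta_\Gr, -)$ with the identity (via a closed embedding $\tFl_G \times \mathbb{A}^1 \hookrightarrow \widetilde{\Gr}^{\mathrm{BD}}_G$, as in~\S\ref{ss:central-sheaves-const}). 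Coherence (associativity/unitality) and compatibility with $\phi_{\scA,\scB}$ under $\pi_\dag$ then follow from the functoriality of~\eqref{eqn:tC-convolution-comp} and the analogous statement for $\mathsf{Y}$. The compatibility with $\widehat{\sm}$ is standard functoriality of the monodromy of nearby cycles under external convolution. Part~\eqref{it:gaitsgory-mon-4} is proved in the same spirit: for $\scF \in \sfD^\wedge_{\Iwu,\Iwu}$, one constructs
\[
\tsfZ(\scA) \hatstar \scF \cong \widetilde{\mathsf{Y}}(\scA,\scF) \cong \scF \hatstar \tsfZ(\scA)
\]
using~\eqref{eqn:tC-convolution-comp} with $\scB = \delta_\Gr$ (resp.\ $\scA$ swapped with $\delta_\Gr$) and the identification of $\widetilde{\mathsf{Y}}(\delta_\Gr,-)$ with the identity. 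The hexagon axioms for the braiding are checked by reduction to the analogous identities for $\sfZ$ via $\pi_\dag$ (which is faithful on perverse sheaves, as $\pi^\dag$ is t-exact with trivial kernel after applying $\pi_*$); alternatively, one copies the argument of~\cite{be} verbatim, which is purely geometric on $\widetilde{\Gr}^{\mathrm{BD}}_G$ and carries over unchanged.

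For part~\eqref{it:fm-central-Wak} I apply Lemma~\ref{lem:criterion-Wak}: by~\eqref{it:gaitsgory-mon-0}, $\pi_\dag \tsfZ(\scA) \cong \For^{\Iw}_{\Iwu}(\sfZ(\scA))$, and the latter admits a Wakimoto filtration with the stated multiplicities by the Arkhipov--Bezrukavnikov theorem recalled at the end of~\S\ref{ss:central-sheaves-prop}, so Lemma~\ref{lem:criterion-Wak} gives both the filtration on $\tsfZ(\scA)$ and the multiplicity statement. Part~\eqref{it:gaitsgory-mon-5} then follows: since $\tsfZ(\scA)$ has a filtration by objects $\Wak^\wedge_\lambda$, which are (noncanonically) of the form $\Delta^{\wedge,\can}_{\st(-\nu)} \hatstar \nabla^{\wedge,\can}_{\st(\mu)}$ for suitable dominant $\mu,\nu$, Corollary~\ref{cor:exactness-convolution-D-N} shows that $\Wak^\wedge_\lambda \hatstar (-)$ is both left and right t-exact, hence t-exact, and the same follows for $\tsfZ(\scA) \hatstar (-)$ by the long exact sequence. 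Finally~\eqref{it:gaitsgory-mon-6} is obtained by constructing an analogue of Lemma~\ref{lem:action-Gm} for $\widetilde{\Gr}^{\mathrm{BD}}_G$ (using that $\cH$, being a N\'eron blowup of $G \times \mathbb{A}^1$, inherits a $\Gm$-action compatible with dilation on $\mathbb{A}^1$ and loop rotation on $\tFl_G$), which furnishes $\Gm$-equivariance of the nearby cycles and hence identifies $\widehat{\sm}_\scA$ with $\mu^{\mathrm{rot}}_{\tsfZ(\scA)}(x^{-1})$ by the same argument as for~\eqref{eqn:mon-Z-rot}; alternatively, one checks the identity after $\pi_\dag$ using~\eqref{it:gaitsgory-mon-0} and the fact that $\mu^{\mathrm{rot}}_{\pi_\dag \scG} = \pi_\dag \mu^{\mathrm{rot}}_\scG$.

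The main obstacle is the construction of~\eqref{eqn:tC-convolution-comp} itself, on which nearly every part rests: it requires compatibility of nearby cycles with convolution by the \emph{non-proper} map $\widetilde{m}$, which is not a formal consequence of the standard compatibility of nearby cycles with proper pushforward. This is handled by the same trick as in~\S\ref{ss:central-sheaves-const} (comparing $\widetilde{\Gr}^{\mathrm{BD}}_G$ with an appropriate ``two-point'' version and using a base change argument), but the bookkeeping is delicate; once it is in place, everything else falls out by reduction to $\sfZ$.
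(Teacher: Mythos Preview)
Your approach is essentially the paper's for parts~\eqref{it:gaitsgory-mon-0}--\eqref{it:gaitsgory-mon-4}, \eqref{it:gaitsgory-mon-6} and~\eqref{it:fm-central-Wak}: the paper likewise defines $\widehat{\phi}$ via~\eqref{eqn:tC-convolution-comp} with $\scF=\scG=\delta^\wedge$, establishes $\widetilde{\mathsf{Y}}(\delta_{\Gr},-) \cong \id$ via the closed embedding $\tFl_G \times \mathbb{A}^1 \hookrightarrow \widetilde{\Gr}^{\mathrm{BD}}_G$, builds $\widehat{\sigma}$ from these two ingredients, and derives~\eqref{it:fm-central-Wak} from Lemma~\ref{lem:criterion-Wak}. (One small slip: $\pi_\dag$ is only \emph{right} t-exact, not t-exact; but you correctly invoke~\cite[Lemma~5.3(1)]{bezr}, which is what is actually needed.)

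For part~\eqref{it:gaitsgory-mon-5}, however, your argument has a gap. You claim that Corollary~\ref{cor:exactness-convolution-D-N} shows $\Wak^\wedge_\lambda \hatstar (-)$ is t-exact, but that corollary only gives that $\nabla^\wedge_w \hatstar(-)$ is \emph{right} t-exact and $\Delta^\wedge_w \hatstar(-)$ is \emph{left} t-exact. Writing $\Wak^\wedge_\lambda \cong \Delta^{\wedge,\can}_{\st(-\nu)} \hatstar \nabla^{\wedge,\can}_{\st(\mu)}$ (or the other order) gives a composition of a left t-exact functor with a right t-exact one, which is neither in general; and the invertibility of $\Wak^\wedge_\lambda$ does not by itself promote one-sided exactness to full t-exactness. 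So the step ``hence t-exact'' is unjustified. The paper avoids this by arguing directly on $\tsfZ(\scA)$: for right t-exactness one checks on the generators $\Delta^\wedge_w$ of ${}^{\mathrm{p}}\sfD^{\wedge,\leq 0}_{\Iwu,\Iwu}$, computing $\pi_\dag(\tsfZ(\scA) \hatstar \Delta^\wedge_w) \cong \For^{\Iw}_{\Iwu}(\sfZ(\scA) \star_\Iw \Delta^\Iw_w)$, which is perverse by convolution exactness of $\sfZ$, and then lifts via~\cite[Lemma~5.3(1)]{bezr}; for left t-exactness one uses that $\tsfZ(\scA^\vee) \hatstar (-)$ is left adjoint to $\tsfZ(\scA) \hatstar (-)$ (by monoidality of $\tsfZ$ and rigidity of $\sfP_{L^+G,L^+G}$) and is right t-exact by the first step applied to $\scA^\vee$. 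This dual-via-adjunction trick is the key idea your argument is missing.
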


\begin{proof}
\eqref{it:gaitsgory-mon-0}
Using~\eqref{eqn:nearby-cycles-pushforward}, for $\scA$ in $\mathsf{D}_{\Loop^+G, \Loop^+G}$ we obtain a canonical isomorphism
\[
\pi_\dag \tsfZ(\scA) = \pi_\dag \widetilde{\mathsf{Y}}(\scA,\delta^\wedge) \cong \mathsf{Y}(\scA,\pi_\dag \delta^\wedge) \cong \mathsf{Y}(\scA, \delta_{\Fl}).
\]
Now by~\eqref{eqn:isom-C-Z} the right-hand side identifies with $\sfZ(\scA)$, which provides the desired isomorphism. The compatibility with monodromy automorphisms follows from general properties of nearby cycles functors.

\eqref{it:gaitsgory-mon-1}
The isomorphism in~\eqref{it:gaitsgory-mon-0} and the exactness of $\sfZ$ show that $\pi_\dag \tsfZ(\scA)$ is perverse for any $\scA$ in $\sfP_{\Loop^+G,\Loop^+G}$. By~\cite[Lemma~5.3(1)]{bezr}, this implies that $\tsfZ(\scA)$ is perverse. Hence $\tsfZ$ sends $\sfP_{\Loop^+G,\Loop^+G}$ to $\mathsf{P}^\wedge_{\Iwu,\Iwu}$. Exactness of the restriction is automatic since this functor is obtained from a triangulated functor.

\eqref{it:gaitsgory-mon-2}
The isomorphism $\widehat{\phi}_{\scA,\scB}$ is obtained by applying~\eqref{eqn:tC-convolution-comp} in the case $\scF=\scG=\delta^\wedge$ and using the canonical isomorphism $\delta^\wedge \hatstar \delta^\wedge \cong \delta^\wedge$. The compatibility with $\phi_{\scA,\scB}$ follows from the comments in~\S\ref{ss:central-sheaves-const}. The proof of the compatibility with monodromy is similar to that in the case of $\sfZ$.

To justify that $\sfZ(\delta_{\Gr})=\delta^\wedge$, we remark that more generally we have
\begin{equation}
\label{eqn:tC-id}
\widetilde{\mathsf{Y}}(\delta_{\Gr},\scF) \cong \scF
\end{equation}
for any $\scF$ in $\mathsf{D}^\wedge_{\Iwu,\Iwu}$. In fact this follows from considerations analogous to those for the isomorphism $\mathsf{Y}(\delta_{\Gr},-) \cong \id$ in~\S\ref{ss:central-sheaves-const}, using the natural closed embedding $\tFl_G \times \mathbb{A}^1_\F \hookrightarrow \widetilde{\Gr}_G^{\mathrm{BD}}$.


The fact that these data define a monoidal structure on $\tsfZ$ is easy, and left to the reader.

\eqref{it:gaitsgory-mon-4}
As in the case of the functor $\sfZ$ in~\S\ref{ss:central-sheaves-const}, the isomorphism $\widehat{\sigma}_{\scA,\scF}$ is constructed from~\eqref{eqn:tC-convolution-comp} and the isomorphism~\eqref{eqn:tC-id}. Namely, these isomorphisms imply that we have
\[
\tsfZ(\scA) \hatstar \scF \cong \widetilde{\mathsf{Y}}(\scA,\delta^\wedge) \hatstar \widetilde{\mathsf{Y}}(\delta_{\Gr},\scF) \cong \widetilde{\mathsf{Y}}(\scA \star_{\Loop^+G} \delta_{\Gr},\delta^\wedge \hatstar \scF) \cong \widetilde{\mathsf{Y}}(\scA,\scF)
\]
on the one hand, and that
\[
\scF \hatstar \tsfZ(\scA) \cong \widetilde{\mathsf{Y}}(\delta_{\Gr},\scF) \hatstar \widetilde{\mathsf{Y}}(\scA,\delta^\wedge) \cong \widetilde{\mathsf{Y}}(\delta_{\Gr} \star_{\Loop^+G} \scA,\scF \hatstar \delta^\wedge) \cong \widetilde{\mathsf{Y}}(\scA,\scF)
\]
on the other hand. The proof that these data define a central functor can be copied from the case of $\sfZ$ (see~\cite{gaitsgory-app} or~\cite{ar-book}).

\eqref{it:gaitsgory-mon-5}
First we prove that our functor is right exact. For that, since the nonpositive part of the perverse t-structure is generated under extensions by the objects $\Delta^\wedge_w$ ($w \in W$), it suffices to prove that for any such $w$ the object $\tsfZ(\scA) \hatstar \Delta^\wedge_w$ has no perverse cohomology in positive degrees. In fact we will prove that this object is perverse. Indeed, using~\eqref{eqn:conv-formula-1}--\eqref{eqn:conv-formula-2} and~\eqref{it:gaitsgory-mon-0} we obtain that
\[
 \pi_\dag(\tsfZ(\scA) \hatstar \Delta^\wedge_w) \cong \tsfZ(\scA) \hatstar \For^{\Iw}_{\Iwu}(\Delta^{\Iw}_w) \cong \For^{\Iw}_{\Iwu} \bigl( \sfZ(\scA) \star_{\Iw} \Delta^{\Iw}_w \bigr).
\]
Now $\sfZ(\scA) \star_{\Iw} \Delta^{\Iw}_w$ is perverse by ``convolution exactness'' of usual central sheaves (see~\cite[Theorem~4.2(2)]{brr}). Using~\cite[Lemma~5.3(1)]{bezr} we deduce that $\tsfZ(\scA) \hatstar \Delta^\wedge_w$ is perverse, as desired.

To prove left exactness of our functor, we consider the rigid dual $\scA^\vee$ of $\scA$ in the rigid tensor category $\Perv_{\Loop^+G}(\Gr_G,\bk)$. By monoidality of $\tsfZ$ (see~\eqref{it:gaitsgory-mon-2}), the functor $\tsfZ(\scA^{\vee}) \hatstar (-)$ is left adjoint to $\tsfZ(\scA) \hatstar (-)$. Since the former functor is right exact, we deduce that the latter is left exact, which finishes the proof.


\eqref{it:gaitsgory-mon-6}
The proof is similar to that of the corresponding claim for $\sfZ$, see~\eqref{eqn:mon-Z-rot}.

\eqref{it:fm-central-Wak}
The claim follows from Lemma~\ref{lem:criterion-Wak}, in view of~\eqref{it:gaitsgory-mon-0} and the property that $\sfZ(\scA)$ admits a Wakimoto filtration, see~\S\ref{ss:central-sheaves-prop}.
\end{proof}


For simplicity of notation, below we will set
\[
\tsZ := \tsfZ \circ \Sat^{-1} : \Rep(G^\vee_\bk) \to \mathsf{D}^\wedge_{\Iwu,\Iwu},
\]
and write
\[
 \hat{\sm}_V:=\hat{\sm}_{\Sat^{-1}(V)} \in \End(\tsZ(V)).
\]
Theorem~\ref{thm:gaitsgory-mon}\eqref{it:fm-central-Wak} and Corollary~\ref{cor:Wak-filtr-monodromy} imply that for any $V$ in $\Rep(G^\vee_\bk)$ the morphism $\mu_{\tsZ(V)}$ factors through a canonical morphism
\begin{equation}
\label{eqn:muV}
 \mu_V : \scO(T^\vee_\bk) \to \End(\tsZ(V)).
\end{equation}

Let us note for later use that
from Theorem~\ref{thm:gaitsgory-mon}\eqref{it:gaitsgory-mon-4}--\eqref{it:gaitsgory-mon-5} we also deduce, for any $V \in \Rep(G^\vee_\bk)$ and $\scF$ in $\sfD^0_{\Iwu,\Iwu}$, a canonical isomorphism
\begin{equation}
\label{eqn:commutation-Z-0}
 \tsZ(V) \hatstar^0 \scF \simto \scF \hatstar^0 \tsZ(V),
\end{equation}
and that these objects belong to the heart of the perverse t-structure if $\scF$ does.

\subsection{Some tilting perverse sheaves}

In this subsection we assume that the conditions in~\S\ref{ss:regular-quotient-coh} are satisfied.
Under this assumption,
the free-monodromic central sheaves, together with the object $\Xi^\wedge_!$ introduced in~\S\ref{ss:G/B-tilting}, allow to describe a family of tilting objects in $\sfP^\wedge_{\Iwu,\Iwu}$, as follows.

\begin{prop}
\label{prop:tilting-Z}
For any $V \in \Rep(G^\vee_\bk)$ which is tilting, the perverse sheaf
\[
\Xi_!^\wedge \hatstar \tsZ(V)
\]
is tilting.
\end{prop}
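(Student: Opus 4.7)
The strategy is to show that $\scF := \Xi_!^\wedge \hatstar \tsZ(V)$ is perverse and admits both a standard filtration and a costandard filtration. Perverseness is immediate from Lemma~\ref{lem:exactness-convolution-Xi} together with Theorem~\ref{thm:gaitsgory-mon}\eqref{it:gaitsgory-mon-1}. For the filtrations, the plan is to reduce to the non-monodromic analogue by applying the functor $\pi_\dag$. Combining the isomorphisms~\eqref{eqn:conv-formula-1},~\eqref{eqn:conv-formula-2}, and Theorem~\ref{thm:gaitsgory-mon}\eqref{it:gaitsgory-mon-0}, one obtains
\[
\pi_\dag(\scF) \cong \pi_\dag(\Xi_!^\wedge) \star_{\Iw} \sZ(V) \cong \Xi_! \star_{\Iw} \sZ(V).
\]

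The first main step is to verify that $\Xi_! \star_\Iw \sZ(V)$ is tilting in $\mathsf{P}_{\Iwu,\Iw}$. By centrality of $\sZ$ one has $\Xi_! \star_\Iw \sZ(V) \cong \sZ(V) \star_\Iw \Xi_!$, and the Wakimoto filtration of $\sZ(V)$ recalled in~\S\ref{ss:central-sheaves-prop}, combined with the standard filtration of $\Xi_!$, yields a filtration whose subquotients are of the form $\Wak_\mu \star_\Iw \Delta^{\Iw}_w$ for $w \in \Wf$ and weights $\mu$ of $V$ (each appearing with multiplicity $\dim V_\mu$). Writing $\Wak_\mu \cong \nabla^{\Iw}_{\st(\lambda_1)} \star_\Iw \Delta^{\Iw}_{\st(-\lambda_2)}$ with $\lambda_1,\lambda_2 \in X_*^+(T)$ and $\mu = \lambda_1 - \lambda_2$, and exploiting the length-additivity $\ell(\st(-\lambda_2) w) = \ell(\st(-\lambda_2)) + \ell(w)$ (a direct verification from~\eqref{eqn:formula-length} for $\lambda_2$ sufficiently dominant), one rewrites the subquotient as $\nabla^{\Iw}_{\st(\lambda_1)} \star_\Iw \Delta^{\Iw}_{\st(-\lambda_2) w}$. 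The existence of a standard filtration of this mixed convolution is then a classical consequence of the central-sheaf formalism of~\cite{ab}; the dual argument using the costandard filtration of $\Xi_!$ produces a costandard filtration.

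The second step is a lifting principle: an object $\scG \in \mathsf{P}^\wedge_{\Iwu,\Iwu}$ admits a standard (resp.~costandard) filtration if and only if $\pi_\dag(\scG) \in \mathsf{P}_{\Iwu,\Iw}$ does. The ``only if'' direction is immediate from~\eqref{eqn:pi-DN-wedge}; the ``if'' direction can be established by adapting the argument of Lemma~\ref{lem:criterion-Wak} (which handled Wakimoto filtrations) to the standard and costandard settings, exploiting that each $\Delta^\wedge_w$, resp.~$\nabla^\wedge_w$, is characterized by its image under $\pi_\dag$ together with its local monodromy. Combining the two steps yields the tiltingness of $\scF$.

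The main obstacle is the lifting principle in the last paragraph: detecting standard and costandard filtrations in the completed category $\mathsf{D}^\wedge_{\Iwu,\Iwu}$ through $\pi_\dag$ is delicate because $\mathsf{D}^\wedge_{\Iwu,\Iwu}$ is built as a pro-completion, so filtrations in the completion have to be assembled from compatible filtrations on truncations. The non-monodromic tiltingness of $\Xi_! \star_\Iw \sZ(V)$, while not entirely trivial, follows comparatively directly from the Wakimoto filtration of $\sZ(V)$ and the tilting structure of $\Xi_!$.
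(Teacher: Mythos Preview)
Your reduction to $\pi_\dag(\scF) \cong \Xi_! \star_\Iw \sZ(V)$ is correct, and your ``lifting principle'' is not the obstacle you think it is: it is precisely \cite[Lemma~5.9]{bezr}, which the paper simply cites. The genuine gap is in your first step, where you claim that $\nabla^{\Iw}_{\st(\lambda_1)} \star_\Iw \Delta^{\Iw}_{\st(-\lambda_2)w}$ admits a standard filtration as ``a classical consequence of the central-sheaf formalism of~\cite{ab}''. This is false in general: a convolution $\nabla^{\Iw}_x \star_\Iw \Delta^{\Iw}_y$ is perverse, but it has no reason to be standard-filtered. Already $\nabla^{\Iw}_{\st(\lambda_1)} \star_\Iw \Delta^{\Iw}_e \cong \nabla^{\Iw}_{\st(\lambda_1)}$ is a costandard object that is not standard-filtered unless $\lambda_1 = 0$. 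Nothing in~\cite{ab} provides such a statement, and the Wakimoto filtration of $\sZ(V)$ is a genuine mixture of standard-type and costandard-type pieces that cannot be untangled by this kind of direct manipulation.

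The paper's route is different and essential: it identifies $\Xi_! \star_\Iw (-)$ with the composite $\Av_{\Iwu,!} \circ \Av_{\mathcal{IW}} \circ \For^{\Iw}_{\Iwu}$ via Iwahori--Whittaker averaging, then invokes \cite[Theorem~8.1]{brr} to conclude that $\Av_{\mathcal{IW}}(\For^{\Iw}_{\Iwu}(\sZ(V)))$ is tilting when $V$ is tilting. Since $\Av_{\Iwu,!}$ preserves standard filtrations and $\Av_{\Iwu,*}$ preserves costandard filtrations (and $\Xi_! \cong \Xi_*$), both filtrations of $\Xi_! \star_\Iw \sZ(V)$ follow. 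This result from~\cite{brr} is exactly where the running assumptions of~\S\ref{ss:regular-quotient-coh} enter; your proposed shortcut would, if it worked, eliminate the need for those assumptions here, which should already signal that something is wrong.
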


\begin{proof}
In view of~\cite[Lemma~5.9]{bezr}, to prove the claim it suffices to prove that the object
$\pi_\dag ( \Xi_!^\wedge \hatstar \tsZ(V) )$ of $\D_{\Iwu,\Iw}$
is a tilting perverse sheaf. Now, using Theorem~\ref{thm:gaitsgory-mon}\eqref{it:gaitsgory-mon-0} we have
\begin{multline*}
\pi_\dag \bigl( \Xi_!^\wedge \hatstar \tsZ(V) \bigr) \overset{\eqref{eqn:conv-formula-1}}{\cong} \Xi_!^\wedge \hatstar \pi_\dag \bigl( \tsZ(V) \bigr) \cong \Xi_!^\wedge \hatstar \For^{\Iw}_{\Iwu}(\sZ(V)) \\
\overset{\eqref{eqn:conv-formula-2}}{\cong} \pi_\dag(\Xi^\wedge_!) \star_\Iw \sZ(V) \cong \Xi_! \star_{\Iw} \sZ(V).
\end{multline*}

Let $\Iwu^+$ be the inverse image of $U^+$ under the evaluation morphism $\Loop^+G \to G$, and consider the composition
\[
\chi_{\Iw} : \Iwu^+ \to U^+ \xrightarrow{\chi} \Ga,
\]
where the first morphism is the obvious projection and $\chi$ is as in~\S\ref{ss:G/B-tilting}. Let us denote by $\sfD_{\mathcal{IW},\Iw}$ the $(\Iwu^+,\chi_\Iw^*(\LAS))$-equivariant derived category of $\bk$-sheaves on $\Fl_G$. 
This category has a natural perverse t-structure, whose heart $\sfP_{\mathcal{IW},\Iw}$ has a canonical structure of highest weight category.

As in the case of $G/U$ in~\S\ref{ss:G/B-tilting},
standard constructions provide t-exact ``averaging'' functors
\[
\Av_{\mathcal{IW}} : \sfD_{\Iwu,\Iw} \to \sfD_{\mathcal{IW},\Iw}, \quad \Av_{\Iwu,!} : \sfD_{\mathcal{IW},\Iw} \to \sfD_{\Iwu,\Iw}, \quad \quad \Av_{\Iwu,*} : \sfD_{\mathcal{IW},\Iw} \to \sfD_{\Iwu,\Iw}
\]
such that $\Av_{\Iwu,!}$ is left adjoint to $\Av_{\mathcal{IW}}$ and $\Av_{\Iwu,*}$ is right adjoint to $\Av_{\mathcal{IW}}$. Standard considerations (see e.g.~\cite[Lemma~3.6]{projGr}) show
that $\Av_{\Iwu,!}$, resp.~$\Av_{\Iwu,*}$, sends objects admitting a standard, resp.~costandard, filtration to objects admitting a standard, resp.~costandard, filtration.

It follows from the definition of $\Xi_!$ that we have a canonical isomorphism of functors
\[
\Xi_! \star_{\Iw} (-) \cong \Av_{\Iwu,!} \circ \Av_{\mathcal{IW}} \circ \For^{\Iw}_{\Iwu};
\]
in particular we deduce that
\[
\pi_\dag \bigl( \Xi_!^\wedge \hatstar \tsZ(V) \bigr) \cong \Av_{\Iwu,!} \circ \Av_{\mathcal{IW}} \circ \For^{\Iw}_{\Iwu}(\sZ(V)).
\]
Now by~\cite[Theorem~8.1]{brr} the perverse sheaf $\Av_{\mathcal{IW}} \circ \For^{\Iw}_{\Iwu}(\sZ(V))$ is tilting, from which we deduce that $\Av_{\Iwu,!} \circ \Av_{\mathcal{IW}} \circ \For^{\Iw}_{\Iwu}(\sZ(V))$ is a perverse sheaf admitting a standard filtration.

Using the isomorphism $\Xi_! \cong \Xi_*$ we similarly obtain that
\[
\pi_\dag \bigl( \Xi_!^\wedge \hatstar \tsZ(V) \bigr) \cong \Av_{\Iwu,*} \circ \Av_{\mathcal{IW}} \circ \For^{\Iw}_{\Iwu}(\sZ(V)),
\]
which implies that $\pi_\dag \bigl( \Xi_!^\wedge \hatstar \tsZ(V) \bigr)$ admits a costandard filtration and finishes the proof.
\end{proof}

\subsection{Quantum trace of monodromy}

Recall (see e.g.~\cite[\S 2.10]{egno}) that if $(\mathsf{A},\odot)$ is a monoidal category with unit object $\mathbf{1}$, and if $X$ is an object of $\mathsf{A}$, a \emph{left dual} 
of $X$ is the data of an object $X^\vee$ together with morphisms
\[
 \ev_X : X^\vee \odot X \to \mathbf{1}, \quad \coev_X : \mathbf{1} \to X \odot X^\vee
\]
such that the compositions
\begin{gather*}
 X \xrightarrow{\coev_X \odot \id} X \odot X^\vee \odot X \xrightarrow{\id \odot \ev_X} X, \\
 X^\vee \xrightarrow{\id \odot \coev_X} X^\vee \odot X \odot X^\vee \xrightarrow{\ev_X \odot \id} X^\vee
\end{gather*}
are the identity morphisms of $X$ and $X^\vee$, respectively. (Here, we omit the unit and associativity isomorphisms.) Similarly, a \emph{right dual} of $X$ is the data of an object ${}^\vee \hspace{-1pt} X$ together with morphisms
\[
 \ev'_X : X \odot {}^\vee \hspace{-1pt} X \to \mathbf{1}, \quad \coev'_X : \mathbf{1} \to {}^\vee \hspace{-1pt} X \odot X
\]
such that the compositions
\begin{gather*}
 {}^\vee \hspace{-1pt} X \xrightarrow{\coev'_X \odot \id} {}^\vee \hspace{-1pt} X \odot X \odot {}^\vee \hspace{-1pt} X \xrightarrow{\id \odot \ev'_X} {}^\vee \hspace{-1pt} X, \\
 X \xrightarrow{\id \odot \coev'_X} X \odot {}^\vee \hspace{-1pt} X \odot X \xrightarrow{\ev'_X \odot \id} X
\end{gather*}
are the identity morphisms of ${}^\vee \hspace{-1pt} X$ and $X$, respectively. The object $X$ is called \emph{left dualizable}, resp.~\emph{right dualizable}, if a left dual, resp.~right dual, exists; in this case such a dual is unique up to unique isomorphism, see~\cite[Proposition~2.10.5]{egno}. This notion is functorial is the following sense: if $X,Y$ are left, resp.~right, dualizable, then there exists a canonical isomorphism
\[
 \Hom_{\mathsf{A}}(X,Y) \simto \Hom_{\mathsf{A}}(Y^\vee,X^\vee), \quad \text{resp.} \quad \Hom_{\mathsf{A}}(X,Y) \simto \Hom_{\mathsf{A}}({}^\vee \hspace{-1pt} Y,{}^\vee \hspace{-1pt}X),
\]
denoted $f \mapsto f^\vee$, resp.~$f \mapsto {}^\vee \hspace{-1pt} f$. Here, given $f : X \to Y$, the morphism $f^\vee$ is the composition
\[
 Y^\vee \xrightarrow{\id \odot \coev_X} Y^\vee \odot X \odot X^\vee \xrightarrow{\id \odot f \odot \id} Y^\vee \odot Y \odot X^\vee \xrightarrow{\ev_Y \odot \id} X^\vee,
\]
and the morphism ${}^\vee \hspace{-1pt} f$ is the composition
\[
 {}^\vee \hspace{-1pt} Y \xrightarrow{\coev'_X \odot \id} {}^\vee \hspace{-1pt} X \odot X \odot {}^\vee \hspace{-1pt} Y \xrightarrow{\id \odot f \odot \id} {}^\vee \hspace{-1pt} X \odot Y \odot {}^\vee \hspace{-1pt} Y \xrightarrow{\id \odot \ev'_Y} {}^\vee \hspace{-1pt}X.
\]
Below we will also use the fact that if $X$ is left dualizable, resp.~right dualizable, then the functor $X^\vee \odot (-)$ is left adjoint to $X \odot (-)$ and the functor $(-) \odot X$ is left adjoint to $(-) \odot X^\vee$, resp.~the functor $X \odot (-)$ is left adjoint to ${}^\vee \hspace{-1pt} X \odot (-)$ and the functor $(-) \odot {}^\vee \hspace{-1pt} X$ is left adjoint to $(-) \odot X$; see~\cite[Proposition~2.10.8]{egno}.

The application of this notion that will be relevant for us is to the definition of \emph{quantum traces}, see~\cite[\S 4.7]{egno}. Namely, consider an object $X$ which is left dualizable, and assume that $X^\vee$ is itself left dualizable (with left dual denoted $X^{\vee\vee}$). Then for any $a \in \Hom_{\mathsf{A}}(X,X^{\vee\vee})$ the \emph{left quantum trace} $\trL(a)$ of $a$ is defined as the endomorphism of $\mathbf{1}$ obtained as the composition
\[
 \mathbf{1} \xrightarrow{\coev_X} X \odot X^\vee \xrightarrow{a \odot \id} X^{\vee\vee} \odot X^\vee \xrightarrow{\ev_{X^\vee}} \mathbf{1}.
\]
A similar definition leads to the notion of the \emph{right quantum trace} of a morphism $a : X \to {}^{\vee\vee} \hspace{-1pt} X$, in case $X$ and ${}^\vee \hspace{-1pt} X$ are right dualizable.

In our present setting, since a monoidal functor sends dualizable objects to dualizable objects, and their duals to the corresponding duals (see~\cite[Exercise~2.10.6]{egno}), and since every object $V$ in $\Rep(G^\vee_\bk)$ is left and right dualizable with left and right duals $V^*$ (together with the obvious evaluation and coevaluation maps), for any $V$ the object $\sZ(V)$, resp.~$\tsZ(V)$, is left and right dualizable in $\mathsf{D}_{\Iw,\Iw}$, resp.~$\mathsf{D}^\wedge_{\Iwu,\Iwu}$, with left and right dual $\sZ(V^*)$, resp.~$\tsZ(V^*)$. Hence the (left) quantum trace $\trL(a)$ is defined for any $a \in \End_{\mathsf{D}_{\Iw,\Iw}}(\sZ(V))$, resp.~$a \in \End_{\mathsf{D}^\wedge_{\Iwu,\Iwu}}(\tsZ(V))$. The case of $\mathsf{D}_{\Iw,\Iw}$ is not very rich, since the endomorphisms of $\delta_{\Fl}$ are $\bk$. But in $\mathsf{D}^\wedge_{\Iwu,\Iwu}$ we have $\End(\delta^\wedge)=\scO(\FN_{T^\vee_\bk}(\{e\}))$ (see Lemma~\ref{lem:monodromy-DN}); the left quantum trace of a morphism is therefore an element in $\scO(\FN_{T^\vee_\bk}(\{e\}))$.

The following lemma will play a technical role in the construction of a functor in Section~\ref{sec:construction}. Its proof will occupy the rest of the section. (No detail of this proof will be used in later sections, so that these subsections can be safely skipped.)

\begin{lem}
\label{lem:trace}
 For any $V$ in $\Rep(G^\vee_\bk)$ we have
 \[
  \trL(\hat{\sm}_V) = \sum_{\mu \in X_*(T)} \dim(V_\mu) \cdot e^\mu
 \]
 where $V_\mu$ is the $\mu$-weight space of $V$.
 In other words, $\trL(\hat{\sm}_V)$ is the image of the character of $V$ (seen as a function on $T^\vee_\bk)$ in $\scO(\FN_{T^\vee_\bk}(\{e\}))$.
\end{lem}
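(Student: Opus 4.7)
The plan is to reduce the computation to the Wakimoto filtration of $\tsZ(V)$, where the monodromy action can be read off explicitly, and then invoke additivity of the left quantum trace.

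First, by Theorem~\ref{thm:gaitsgory-mon}\eqref{it:gaitsgory-mon-6}, we have $\hat{\sm}_V = \mu^{\mathrm{rot}}_{\tsZ(V)}(x^{-1})$. By Theorem~\ref{thm:gaitsgory-mon}\eqref{it:fm-central-Wak}, $\tsZ(V)$ lies in $\mathsf{P}^\wedge_{\Iwu,\Iwu}$ and admits a Wakimoto filtration with $\gr^\wedge_\mu(\tsZ(V)) \cong V_\mu \otimes_\bk \Wak^\wedge_\mu$ for every $\mu \in X_*(T)$. Since monodromy is functorial and the Wakimoto filtration is functorial (see~\S\ref{ss:wakimoto}), $\hat{\sm}_V$ preserves this filtration. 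Applying Lemma~\ref{lem:properties-Wak}\eqref{it:properties-Wak-5}, the induced endomorphism on $\gr^\wedge_\mu(\tsZ(V))$ is multiplication by $e^\mu \in \scO((T^\vee_\bk)^\wedge) = \End(\Wak^\wedge_\mu)$ (combined with the identity on $V_\mu$).

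Granting additivity of $\trL$ over this filtration, we obtain
\[
\trL(\hat{\sm}_V) \;=\; \sum_{\mu \in X_*(T)} \dim(V_\mu) \cdot \trL(e^\mu \cdot \id_{\Wak^\wedge_\mu}).
\]
Each Wakimoto sheaf $\Wak^\wedge_\mu$ is invertible in the monoidal category $(\sfD^\wedge_{\Iwu,\Iwu}, \hatstar)$, with inverse $\Wak^\wedge_{-\mu}$, by Lemma~\ref{lem:properties-Wak}\eqref{it:properties-Wak-3}. For an invertible object $L$, the evaluation and coevaluation maps exhibit $L^{-1}$ as both left and right dual, and a direct computation from the definition of $\trL$ shows that the quantum trace of an endomorphism $c\cdot \id_L$ is simply $c$, regarded as an element of $\End(\delta^\wedge) = \scO((T^\vee_\bk)^\wedge)$ via the canonical identification $L \hatstar L^{-1} \cong \delta^\wedge$. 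Hence $\trL(e^\mu \cdot \id_{\Wak^\wedge_\mu}) = e^\mu$, which gives the desired formula.

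The main obstacle is justifying the additivity of $\trL$ on the Wakimoto filtration, as $\sfD^\wedge_{\Iwu,\Iwu}$ is not symmetric monoidal and standard categorical additivity theorems do not apply directly. The approach will be to exploit the fact that all subquotients of the Wakimoto filtration are direct sums of \emph{invertible} objects, which makes the evaluation/coevaluation structure compatible with the filtration in a controlled way. Concretely, one argues by induction on the length of the filtration: for a short exact sequence $0 \to \scF' \to \scF \to \scF'' \to 0$ in $\mathsf{P}^\wedge_{\Iwu,\Iwu}$ in which all three terms are dualizable and the sequence dualizes to a short exact sequence (which is automatic here since the ``dual'' of the filtration of $\tsZ(V)$ is a filtration of $\tsZ(V^*) = \tsZ(V)^\vee$ with subquotients $V^*_{-\mu} \otimes \Wak^\wedge_{-\mu}$, again obtained from the Wakimoto filtration of a central sheaf), additivity of $\trL$ for filtration-preserving endomorphisms can be verified by unfolding the definition using the $\ev$/$\coev$ morphisms of $\tsZ(V)$ and its subquotients. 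Alternatively, one can twist by a sufficiently dominant $\Wak^\wedge_\nu$ to turn the Wakimoto filtration into an honest costandard filtration of $\tsZ(V)\hatstar \Wak^\wedge_\nu$ (using Lemma~\ref{lem:properties-Wak}\eqref{it:properties-Wak-0} and Remark~\ref{rmk:convolution-equiv}) and compute the resulting trace via costalks, reducing additivity to the elementary additivity of traces of linear endomorphisms on filtered vector spaces.
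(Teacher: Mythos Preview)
Your overall strategy matches the paper's: reduce to the Wakimoto filtration, compute the trace on each $\Wak^\wedge_\mu$ to be $e^\mu$, and then invoke additivity of $\trL$ along the filtration. The paper proceeds exactly by this induction (see the proof of Lemma~\ref{lem:trace}), and its Lemma~\ref{lem:trace-Wak} is your computation of $\trL(e^\mu \cdot \id_{\Wak^\wedge_\mu}) = e^\mu$.

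The gap in your proposal is the claim that dualizability of the subquotients, and compatibility of the duality with the filtration, is ``automatic here since the `dual' of the filtration of $\tsZ(V)$ is a filtration of $\tsZ(V^*)$.'' This is precisely the content that the paper has to supply in Lemma~\ref{lem:dualizability}, and it is \emph{not} automatic: as Remark~\ref{rmk:dualizability} points out, there is no obvious abelian monoidal subcategory containing the central sheaves in which the standard ``kernel/cokernel of a map of dualizables is dualizable'' argument applies. The paper establishes by a careful double induction that each $\tsZ(V)_{\leq \mu}/\tsZ(V)_{\leq \lambda}$ is left and right dualizable with dual $\tsZ(V^*)_{<-\lambda}/\tsZ(V^*)_{<-\mu}$, and that the dual of an inclusion (resp.\ projection) in the filtration is the corresponding projection (resp.\ inclusion) on the dual side. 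Only with this in hand can one carry out the additivity step: the paper analyzes a three-step filtration of $\tsZ(V)_{\leq \lambda} \hatstar (\tsZ(V)_{\leq \lambda})^\vee$ and uses vanishing of specific $\Hom$-groups (coming from Lemma~\ref{lem:properties-Wak}\eqref{it:properties-Wak-1}) to show that $\coev$ factors through the middle piece and $\ev$ kills the bottom piece, whence $\trL$ decomposes as a sum.

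Your alternative route---twisting by a dominant $\Wak^\wedge_\nu$ and appealing to ``additivity of traces of linear endomorphisms on filtered vector spaces'' via costalks---is too vague to assess. The quantum trace here lives in $\End(\delta^\wedge) = \scO((T^\vee_\bk)^\wedge)$, not in $\bk$, and there is no fiber functor to vector spaces that would convert the monoidal trace into an ordinary linear-algebra trace; you would still need the dualizability and compatibility statements above to make any such reduction precise.
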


\subsection{Description of duals}

The proof of Lemma~\ref{lem:trace} will use the free-monodromic Wakimoto filtration on $\tsZ(V)$ (see Theorem~\ref{thm:gaitsgory-mon}\eqref{it:fm-central-Wak}). For this we will need to show that each subquotient in this filtration is dualizable, and describe its dual.

For the next lemma, we will have to assume that the order $\leq$ on $X_*(T)$ chosen in~\S\ref{ss:central-sheaves-prop} satisfies the following property:
\[
 \text{for $\lambda,\mu \in X_*(T)$, $\lambda \leq \mu$ if and only if $-\mu \leq -\lambda$.}
\]
(Of course, there exists an order with this property.)

\begin{lem}
\label{lem:dualizability}
 Consider some $V$ in $\Rep(G^\vee_\bk)$, and let $\lambda,\mu \in X_*(T)$ be such that $\lambda \leq \mu$. 
 \begin{enumerate}
 \item The object $\sZ(V)_{\leq \mu} / \sZ(V)_{\leq \lambda}$ is left and right dualizable, with left and right dual $\sZ(V^*)_{< -\lambda} / \sZ(V^*)_{< -\mu}$.
  \item The object $\tsZ(V)_{\leq \mu} / \tsZ(V)_{\leq \lambda}$ is left and right dualizable, with left and right dual $\tsZ(V^*)_{< -\lambda} / \tsZ(V^*)_{< -\mu}$.
 \end{enumerate}
\end{lem}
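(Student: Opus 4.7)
The plan is to reduce the statement to properties of Wakimoto filtrations under convolution. Since $\sZ$ and $\tsZ$ are monoidal functors from the rigid symmetric monoidal category $\Rep(G^\vee_\bk)$ (see~\S\ref{ss:central-sheaves-prop} and Theorem~\ref{thm:gaitsgory-mon}\eqref{it:gaitsgory-mon-2}), for any $V$ the object $\sZ(V)$, resp.~$\tsZ(V)$, is automatically left and right dualizable with dual $\sZ(V^*)$, resp.~$\tsZ(V^*)$, and the evaluation/coevaluation morphisms $\hat{\ev}_V : \tsZ(V^*) \hatstar \tsZ(V) \to \delta^\wedge$ and $\hat{\coev}_V : \delta^\wedge \to \tsZ(V) \hatstar \tsZ(V^*)$ (and their $\sZ$-analogues $\ev_V$, $\coev_V$) are the images of the corresponding maps in $\Rep(G^\vee_\bk)$. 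I will write the proof for the free-monodromic case in detail; the case of $\sZ(V)$ is entirely parallel (and can alternatively be deduced by applying $\pi_\dag$, using Theorem~\ref{thm:gaitsgory-mon}\eqref{it:gaitsgory-mon-0}). Set
\[
Q := \tsZ(V)_{\leq \mu}/\tsZ(V)_{\leq \lambda}, \qquad R := \tsZ(V^*)_{<-\lambda}/\tsZ(V^*)_{<-\mu}.
\]

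The key vanishing is the following. Since $\leq$ extends $\preceq$, whenever $\gamma \in X_*(T)$ satisfies $\gamma < 0$ we have $0 \not\preceq \gamma$, and Lemma~\ref{lem:properties-Wak}\eqref{it:properties-Wak-1} gives $\Hom_{\sfD^\wedge_{\Iwu,\Iwu}}(\Wak^\wedge_\gamma, \delta^\wedge) = 0$. An induction on the length of a Wakimoto filtration then shows that any Wakimoto-filtered perverse sheaf whose indices are all strictly negative admits no nonzero morphism to $\delta^\wedge$; by our compatibility hypothesis on $\leq$, the analogous statement holds for morphisms \emph{from} $\delta^\wedge$ into an object all of whose Wakimoto indices are strictly positive.

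The next step is to apply this vanishing to the subobjects $\tsZ(V^*)_{<-\mu} \hatstar \tsZ(V)_{\leq \mu}$ and $\tsZ(V^*)_{<-\lambda} \hatstar \tsZ(V)_{\leq \lambda}$ of $\tsZ(V^*) \hatstar \tsZ(V) \cong \tsZ(V^* \otimes V)$. Each admits a Wakimoto filtration whose indices are of the form $\alpha + \beta$ with $\alpha$ and $\beta$ in the respective index ranges; in both cases $\alpha + \beta < 0$ in the total order. The vanishing statement thus forces the restriction of $\hat{\ev}_V$ to either of these subobjects to vanish, and hence $\hat{\ev}_V$ restricted to $\tsZ(V^*)_{<-\lambda} \hatstar \tsZ(V)_{\leq \mu}$ factors uniquely through the quotient by their sum, which is precisely $R \hatstar Q$. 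This produces the evaluation map $\ev_Q : R \hatstar Q \to \delta^\wedge$. A symmetric construction applied to $\hat{\coev}_V$ (using that $\tsZ(V) \hatstar \tsZ(V^*)_{\geq -\lambda} + \tsZ(V)_{>\mu} \hatstar \tsZ(V^*)_{\geq -\mu}$ receives no nonzero morphism from $\delta^\wedge$) yields the coevaluation $\coev_Q : \delta^\wedge \to Q \hatstar R$. The zigzag identities for the pair $(Q, R)$ are then obtained by pre- and post-composing the zigzag identities for $(\tsZ(V), \tsZ(V^*))$ (which come by monoidality from those in $\Rep(G^\vee_\bk)$) with the obvious inclusions and projections, the identities descending uniquely by the factorization uniqueness established above.

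The main technical obstacle lies in justifying that the ``convolution of Wakimoto-filtered subobjects'' $\tsZ(V^*)_{<\alpha_0} \hatstar \tsZ(V)_{\leq \beta_0}$ really defines a subobject of $\tsZ(V^* \otimes V)$ that carries a Wakimoto filtration whose indices are exactly the expected sums $\alpha + \beta$. Since the convolution bifunctor is not t-exact in general, this step relies on the fact that convolution with each individual $\Wak^\wedge_\nu$ is an equivalence (see Remark~\ref{rmk:convolution-equiv} together with Lemma~\ref{lem:properties-Wak}\eqref{it:properties-Wak-3}), so by induction on filtration length convolution with a Wakimoto-filtered perverse sheaf preserves short exact sequences of Wakimoto-filtered perverse sheaves; combined with the monoidal isomorphism $\tsZ(V^*) \hatstar \tsZ(V) \cong \tsZ(V^* \otimes V)$ and the functoriality of the Wakimoto filtration (\S\ref{ss:wakimoto}), this yields the required identification of filtration pieces, after which the factorizations producing $\ev_Q$ and $\coev_Q$ go through as described.
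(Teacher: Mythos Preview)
Your direct approach---factoring the evaluation and coevaluation of the full dual pair $(\tsZ(V),\tsZ(V^*))$ through the Wakimoto subquotients in one shot---is different from the paper's, which proceeds by a double induction: first downward on $\mu$ (with $\lambda$ so small that $\tsZ(V)_{\leq\lambda}=0$), identifying the dual of $\tsZ(V)_{\leq\mu}$ step-by-step by peeling off one Wakimoto layer $\gr^\wedge_{\mu'}$ at a time and checking explicitly that the dual of the projection to $\gr^\wedge_{\mu'}(\tsZ(V))$ is the inclusion of $\gr^\wedge_{-\mu'}(\tsZ(V^*))$; then upward on $\lambda$ for fixed $\mu$. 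The paper's step-by-step construction makes the zigzag check tractable at each stage; your construction is conceptually cleaner but moves this difficulty to the end.

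There are two concrete problems. First, the assertion ``in both cases $\alpha+\beta<0$ in the total order'' is unjustified: the total order $\leq$ extending $\preceq$ is only assumed compatible with negation, not translation-invariant, so from $\alpha<-\mu$ and $\beta\leq\mu$ you cannot conclude $\alpha+\beta<0$. What you actually need (and what does hold) is $0\not\preceq\alpha+\beta$: writing $\alpha=-\beta'$ for a weight $\beta'$ of $V$, the hypotheses give $\beta'>\beta$ in the total order, hence $\beta'\not\preceq\beta$, i.e.\ $0\not\preceq\beta-\beta'=\alpha+\beta$, and Lemma~\ref{lem:properties-Wak}\eqref{it:properties-Wak-1} then gives the vanishing. (Your description of the coevaluation side is also garbled: ``$\tsZ(V)\hatstar\tsZ(V^*)_{\geq-\lambda}+\tsZ(V)_{>\mu}\hatstar\tsZ(V^*)_{\geq-\mu}$'' mixes quotients and sums of subobjects; the correct construction projects $\hat{\coev}_V$ to $(\tsZ(V)/\tsZ(V)_{\leq\lambda})\hatstar(\tsZ(V^*)/\tsZ(V^*)_{<-\mu})$ and shows this lands in the subobject $Q\hatstar R$ by the dual $\Hom$-vanishing.)

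Second, and more seriously, the zigzag identities are not proved: ``descending uniquely by factorization uniqueness'' is not an argument. The zigzag for $Q$ is a specific composite $Q\to Q\hatstar R\hatstar Q\to Q$, and relating it to the known zigzag for $\tsZ(V)$ requires tracking the factorizations through a diagram involving both an inclusion ($\tsZ(V)_{\leq\mu}\hookrightarrow\tsZ(V)$) and a projection ($\tsZ(V)_{\leq\mu}\twoheadrightarrow Q$), together with their convolutions with the analogous maps on the $V^*$-side. This can be carried out---your observation that $\hatstar$ is exact on the subcategory of Wakimoto-filtered perverse sheaves is exactly the ingredient needed, and effectively puts you in the setting alluded to in Remark~\ref{rmk:dualizability}---but it requires a genuine computation you have not provided. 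The paper's inductive route sidesteps this by checking dualizability after each single-layer modification.
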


\begin{rmk}
\label{rmk:dualizability}
 It can be checked that in a monoidal category $(\mathsf{A},\odot)$ where $\mathsf{A}$ is abelian and $\odot$ is exact, the kernel (resp.~cokernel) of a morphism between dualizable objects is dualizable, with dual the cokernel (resp.~kernel) of the dual morphism. This statement does not apply here, since we do not have an obvious abelian subcategory of $\mathsf{D}^\wedge_{\Iwu,\Iwu}$ or $\mathsf{D}_{\Iw,\Iw}$ containing the central sheaves and stable under the convolution product; however the proof below repeats arguments close to those required to prove this property.
\end{rmk}

\begin{proof}[Proof of Lemma~\ref{lem:dualizability}]
 We will treat the two cases in parallel, and work with left duals; the proof for right duals is similar. First, let us assume that $\lambda$ satisfies $\lambda < \nu$ for any $\nu \in X_*(T)$ such that $V_\nu \neq 0$, so that $\sZ(V)_{\leq \lambda}=0$ and $\sZ(V^*)_{<-\lambda}=\sZ(V^*)$ (and similarly for $\tsZ(V)$ and $\tsZ(V^*)$). In this case we will proceed by downward induction on $\mu$, and prove (in addition to the fact that $\sZ(V)_{\leq \mu}$ and $\tsZ(V)_{\leq \mu}$ are dualizable with the duals given in the statement) that the dual of the embedding $\sZ(V)_{\leq \mu} \hookrightarrow \sZ(V)$, resp.~$\tsZ(V)_{\leq \mu} \hookrightarrow \tsZ(V)$, is the projection $\sZ(V^*) \twoheadrightarrow \sZ(V^*)/ \sZ(V^*)_{< -\mu}$, resp.~$\tsZ(V^*) \twoheadrightarrow \tsZ(V^*)/ \tsZ(V^*)_{< -\mu}$.
 
 If $\mu \geq \nu$ for any $\nu \in X_*(T)$ such that $V_\nu \neq 0$ we have $\sZ(V)_{\leq \mu} = \sZ(V)$ and $\sZ(V^*)_{< -\mu}=0$ (and similarly for $\tsZ(V)$ and $\tsZ(V^*)$); in this case the claim has already been justified above Lemma~\ref{lem:trace}. Now we fix $\mu \in X_*(T)$, and assume the claim is known for the successor $\mu'$ of $\mu$, i.e.~that $\sZ(V)_{\leq \mu'}$ and $\tsZ(V)_{\leq \mu'}$ are left dualizable, with left duals
 \[
  \sZ(V^*) / \sZ(V^*)_{< -\mu'} \quad \text{and} \quad \tsZ(V^*) / \tsZ(V^*)_{< -\mu'}
 \]
respectively, and that the dual of the embedding in $\sZ(V)$, resp.~$\tsZ(V)$, is the projection from $\sZ(V^*)$, resp.~$\tsZ(V^*)$. We now consider the exact sequence
 \begin{equation}
 \label{eqn:Wak-exact-sequence}
  \sZ(V)_{\leq \mu} \hookrightarrow \sZ(V)_{\leq \mu'} \twoheadrightarrow \gr_{\mu'}(\sZ(V)).
  \end{equation}
Here the right-hand side is isomorphic to $\Wak_{\mu'}^{\oplus r}$ for some $r \geq 0$; we fix an isomorphism $\gr_{\mu'}(\sZ(V)) \cong \Wak_{\mu'}^{\oplus r}$ and therefore identify the second morphism in~\eqref{eqn:Wak-exact-sequence} with a surjection $f : \sZ(V)_{\leq \mu'} \twoheadrightarrow \Wak_{\mu'}^{\oplus r}$. By assumption $\sZ(V)_{\leq \mu'}$ is left dualizable, and since $\Wak_{\mu'}$ is invertible it is also left dualizable (with left dual $\Wak_{-\mu'}$); hence so is $\Wak_{\mu'}^{\oplus r}$. We can therefore consider the dual morphism
\[
 f^\vee : (\Wak_{\mu'}^{\oplus r})^\vee \to (\sZ(V)_{\leq \mu'})^\vee,
\]
which we interpret as a morphism from $\Wak_{-\mu'}^{\oplus r}$ to $\sZ(V^*) / \sZ(V^*)_{< -\mu'}$. By functoriality of Wakimoto filtrations this morphism factors through a morphism
\begin{equation}
\label{eqn:dual-gr}
 \tilde{f^\vee} : \Wak_{-\mu'}^{\oplus r} \to \gr_{-\mu'}(\sZ(V^*)).
\end{equation}

We claim that $\tilde{f^\vee}$ is an isomorphism. In fact we have $\gr_{-\mu'}(\sZ(V^*)) \cong \Wak_{-\mu'}^{\oplus r}$ since $\dim((V^*)_{-\mu'})=\dim(V_{\mu'})$. Since $\End(\Wak_{-\mu'}) \cong \bk$ (see~\cite[\S 4.5]{brr}), the morphism $\tilde{f^\vee}$ can be represented by an $r \times r$-matrix, and saying that it is invertible is equivalent to this matrix being invertible. If this were not the case, then there would exist an embedding $\Wak_{-\mu'} \hookrightarrow \Wak_{-\mu'}^{\oplus r}$ as a direct summand such that the composition
\[
 \Wak_{-\mu'} \to \Wak_{-\mu'}^{\oplus r} \xrightarrow{f^\vee} \sZ(V^*) / \sZ(V^*)_{< -\mu'}
\]
vanishes. However, by adjunction we have
\begin{align*}
 \Hom(\Wak_{-\mu'},\Wak_{-\mu'}^{\oplus r}) &\cong \Hom(\Wak_{-\mu'} \star_{\Iw} \Wak_{\mu'}^{\oplus r}, \delta_{\Fl}), \\
 \Hom(\Wak_{-\mu'}, \sZ(V^*) / \sZ(V^*)_{< -\mu'}) &\cong \Hom(\Wak_{-\mu'} \star_{\Iw} \sZ(V)_{\leq \mu'}, \delta_{\Fl}),
\end{align*}
and through this identification the morphism $f^\vee \circ (-)$ corresponds to the morphism
\[
 (-) \circ (\id \star f) : \Hom(\Wak_{-\mu'} \star_{\Iw} \Wak_{\mu'}^{\oplus r}, \delta_{\Fl}) \to \Hom(\Wak_{-\mu'} \star_{\Iw} \sZ(V)_{\leq \mu'}, \delta_{\Fl}),
\]
which is injective since $\id \star f$ is surjective. This provides a contradiction, proving therefore that $\tilde{f^\vee}$ indeed is an isomorphism.

We have now obtained an isomorphism
\[
 \bigl( \gr_{\mu'}(\sZ(V)) \bigr)^\vee \simto (\Wak_{\mu'}^{\oplus r})^\vee \simto \Wak_{-\mu'}^{\oplus r} \simto \gr_{-\mu'}(\sZ(V^*)),
\]
which is easily seen not to depend on our initial choice of isomorphism $\gr_{\mu'}(\sZ(V)) \cong \Wak_{\mu'}^{\oplus r}$; it is therefore canonical. Moreover, through this identification the dual of the projection $\sZ(V)_{\leq \mu'} \twoheadrightarrow \gr_{\mu'}(\sZ(V))$ is the embedding $\gr_{-\mu'}(\sZ(V^*)) \hookrightarrow \sZ(V^*) / \sZ(V^*)_{< -\mu'}$.

Let us consider the composition
\[
 \delta_{\Fl} \xrightarrow{\coev} \sZ(V)_{\leq \mu'} \star_\Iw (\sZ(V^*) / \sZ(V^*)_{< -\mu'}) \twoheadrightarrow \sZ(V)_{\leq \mu'} \star_\Iw (\sZ(V^*) / \sZ(V^*)_{< -\mu}).
\]
The preceding considerations show that its composition with the surjection
\[
 \sZ(V)_{\leq \mu'} \star_\Iw (\sZ(V^*) / \sZ(V^*)_{< -\mu}) \twoheadrightarrow \gr_{\mu'}(\sZ(V)) \star_\Iw (\sZ(V^*) / \sZ(V^*)_{< -\mu})
\]
vanishes; this morphism therefore factors through a morphism
\begin{equation}
 \label{eqn:coev-induction}
  \delta_{\Fl} \xrightarrow{\coev} \sZ(V)_{\leq \mu} \star_\Iw (\sZ(V^*) / \sZ(V^*)_{< -\mu}).
\end{equation}
Similarly, the composition
\[
  (\sZ(V^*) / \sZ(V^*)_{< -\mu'}) \star_\Iw \sZ(V)_{\leq \mu} \hookrightarrow  (\sZ(V^*) / \sZ(V^*)_{< -\mu'}) \star_\Iw \sZ(V)_{\leq \mu'} \xrightarrow{\ev} \delta_{\Fl}
\]
factors through a morphism
\begin{equation}
 \label{eqn:ev-induction}
  (\sZ(V^*) / \sZ(V^*)_{< -\mu}) \star_\Iw \sZ(V)_{\leq \mu} \to \delta_{\Fl}.
\end{equation}
It is then not difficult to check that~\eqref{eqn:coev-induction} and~\eqref{eqn:ev-induction} exhibit $\sZ(V^*) / \sZ(V^*)_{< -\mu}$ as the left dual of $\sZ(V)_{\leq \mu}$. Moreover from the construction of the evaluation and coevaluation morphisms one sees that the dual of the embedding $\sZ(V)_{\leq \mu} \hookrightarrow \sZ(V)_{\leq \mu'}$ is the projection $\sZ(V^*) / \sZ(V^*)_{< -\mu'} \twoheadrightarrow \sZ(V^*) / \sZ(V^*)_{< -\mu}$; by compability of duality with composition and the induction hypothesis, it follows that the dual of the embedding $\sZ(V)_{\leq \mu} \hookrightarrow \sZ(V)$ is the natural projection $\sZ(V^*) \twoheadrightarrow \sZ(V^*) / \sZ(V^*)_{< -\mu}$.

Next, we consider the free-monodromic setting, and more specifically the exact sequence
\[
 \tsZ(V)_{\leq \mu} \hookrightarrow \tsZ(V)_{\leq \mu'} \twoheadrightarrow \gr^\wedge_{\mu'}(\tsZ(V)).
\]
Here again, by induction the middle term is left dualizable, and the right-hand side is dualizable because it is isomorphic to a direct sum of invertible objects. If we fix an isomorphism $\gr^\wedge_{\mu'}(\tsZ(V)) \cong (\Wak_{\mu'}^\wedge)^{\oplus r}$, the dual of the surjection $f : \tsZ(V)_{\leq \mu'} \twoheadrightarrow (\Wak_{\mu'}^\wedge)^{\oplus r}$ is a morphism
\[
 f^\vee : (\Wak_{-\mu'}^\wedge)^{\oplus r} \to \tsZ(V^*) / \tsZ(V^*)_{< -\mu'},
\]
which has to factor through a morphism
\[
 (\Wak_{-\mu'}^\wedge)^{\oplus r} \to \gr^\wedge_{-\mu'}(\tsZ(V^*)).
\]
It is clear that the image of this morphism under $\pi_\dag$ is the isomorphism
\[
 \Wak_{-\mu'}^{\oplus r} \to \gr_{-\mu'}(\sZ(V^*))
\]
considered in~\eqref{eqn:dual-gr}; since the functor $\pi_\dag$ is conservative (see~\S\ref{ss:completed-category}) this implies that our morphism is also invertible, and as in the $\Iw$-equivariant setting we deduce a canonical isomorphism
\[
 \bigl( \gr^\wedge_{\mu'}(\tsZ(V)) \bigr)^\vee \cong \gr_{-\mu'}(\tsZ(V^*)).
\]
Once this is established, the same arguments as above allow to prove that $\tsZ(V)_{\leq \mu}$ is left dualizable, with left dual $\tsZ(V^*) / \tsZ(V^*)_{< -\mu}$, and that the dual of the embedding $\tsZ(V)_{\leq \mu} \hookrightarrow \tsZ(V)$ is the surjection $\tsZ(V^*) \twoheadrightarrow \tsZ(V^*) / \tsZ(V^*)_{< -\mu}$.

Now we fix $\mu$, and prove by upward induction on $\lambda$ that the object 
\[
\sZ(V)_{\leq \mu} / \sZ(V)_{\leq \lambda}, \quad \text{resp.} \quad \tsZ(V)_{\leq \mu} / \tsZ(V)_{\leq \lambda},
\]
is left dualizable, with left dual
\[
\sZ(V^*)_{< -\lambda} / \sZ(V^*)_{< -\mu}, \quad \text{resp.} \quad \tsZ(V^*)_{< -\lambda} / \tsZ(V^*)_{< -\mu}.
\]
The two cases are similar, so we only treat the second one.
We consider the exact sequence
\[
\tsZ(V)_{\leq \lambda} \hookrightarrow \tsZ(V)_{\leq \mu} \twoheadrightarrow \tsZ(V)_{\leq \mu} / \tsZ(V)_{\leq \lambda}.
\]
We now know that the first two terms here are left dualizable; moreover the dual of the composition of the first map with the embedding $\tsZ(V)_{\leq \mu} \hookrightarrow \tsZ(V)$ is the surjection $\tsZ(V^*) \twoheadrightarrow \sZ(V^*) / \sZ(V^*)_{< -\lambda}$, with the dual of the latter map being the surjection $\tsZ(V^*) \twoheadrightarrow \sZ(V^*) / \sZ(V^*)_{< -\mu}$; by compatibility of duality with composition this implies that the dual of this first map is the surjection $\sZ(V^*) / \sZ(V^*)_{< -\mu} \twoheadrightarrow \sZ(V^*) / \sZ(V^*)_{< -\lambda}$. From this claim we deduce that the composition
\begin{multline*}
\delta^\wedge \xrightarrow{\coev} \tsZ(V)_{\leq \mu} \hatstar \left( \tsZ(V^*) / \tsZ(V^*)_{< -\mu} \right) \\
\to \left( \tsZ(V)_{\leq \mu} / \tsZ(V)_{\leq \lambda} \right) \hatstar \left( \tsZ(V^*) / \tsZ(V^*)_{< -\mu} \right) \\
\twoheadrightarrow \left( \tsZ(V)_{\leq \mu} / \tsZ(V)_{\leq \lambda} \right) \hatstar \left( \tsZ(V^*) / \tsZ(V^*)_{< -\lambda} \right)
\end{multline*}
vanishes; it follows that the composition of the first two maps factors through a morphism
\[
\delta^\wedge \to \left( \tsZ(V)_{\leq \mu} / \tsZ(V)_{\leq \lambda} \right) \hatstar \left( \tsZ(V^*)_{< -\lambda} / \tsZ(V^*)_{< -\mu} \right).
\]
Similarly, from the evaluation morphism
\[
 \left( \tsZ(V^*) / \tsZ(V^*)_{< -\mu} \right) \hatstar \tsZ(V)_{\leq \mu} \to \delta^\wedge
\]
we obtain a morphism
\[
\left( \tsZ(V^*)_{< -\lambda} / \tsZ(V^*)_{< -\mu} \right) \hatstar \left( \tsZ(V)_{\leq \mu} / \tsZ(V)_{\leq \lambda} \right) \to \delta^\wedge.
\]
It is easily seen that, taken together, these maps exhibit $\tsZ(V^*)_{< -\lambda} / \tsZ(V^*)_{< -\mu}$ as the left dual of $\tsZ(V)_{\leq \mu} / \tsZ(V)_{\leq \lambda}$, which finishes the proof.
%
\end{proof}

\begin{rmk}
From the proof of Lemma~\ref{lem:dualizability} we see that if $\lambda \leq \lambda' \leq \mu' \leq \mu$, the dual of the embedding
\[
\tsZ(V)_{\leq \mu'} / \tsZ(V)_{\leq \lambda} \hookrightarrow \tsZ(V)_{\leq \mu} / \tsZ(V)_{\leq \lambda}
\]
is the projection
\[
\tsZ(V^*)_{< -\lambda} / \tsZ(V^*)_{< -\mu} \twoheadrightarrow \tsZ(V^*)_{< -\lambda} / \tsZ(V^*)_{< -\mu'},
\]
and the dual of the projection
\[
\tsZ(V)_{\leq \mu} / \tsZ(V)_{\leq \lambda} \twoheadrightarrow \tsZ(V)_{\leq \mu} / \tsZ(V)_{\leq \lambda'}
\]
is the embedding
\[
\tsZ(V^*)_{< -\lambda'} / \tsZ(V^*)_{< -\mu} \hookrightarrow \tsZ(V^*)_{< -\lambda} / \tsZ(V^*)_{< -\mu}.
\]
\end{rmk}

\subsection{Proof of Lemma~\ref{lem:trace}}

In order to give the proof of Lemma~\ref{lem:trace} we need another lemma.

\begin{lem}
\label{lem:trace-Wak}
For any $\lambda \in X_*(T)$, we have
\[
\trL(\mu^{\mathrm{rot}}_{\Wak^\wedge_\lambda}(x^{-1}))=e^{\lambda}.
\]
\end{lem}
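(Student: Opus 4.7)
By Lemma~\ref{lem:properties-Wak}\eqref{it:properties-Wak-5} we have $\mu^{\mathrm{rot}}_{\Wak^\wedge_\lambda}(x^{-1}) = \mu_{\Wak^\wedge_\lambda}(1 \otimes e^\lambda)$; denote this endomorphism by $a$. The plan is to exploit the invertibility of $\Wak^\wedge_\lambda$ in $(\sfD^\wedge_{\Iwu,\Iwu}, \hatstar)$ (see Lemma~\ref{lem:properties-Wak}\eqref{it:properties-Wak-3}) in order to reduce the computation of $\trL(a)$ to a monodromy calculation on $\delta^\wedge$.

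First I would unfold $a \hatstar \id_{\Wak^\wedge_{-\lambda}}$: by~\eqref{eqn:monodromy-convolution-2} this equals $\id \hatstar \mu_{\Wak^\wedge_{-\lambda}}(e^\lambda \otimes 1)$; applying Lemma~\ref{lem:properties-Wak}\eqref{it:properties-Wak-5} to $-\lambda$ yields $\id \hatstar \mu_{\Wak^\wedge_{-\lambda}}(1 \otimes e^\lambda)$; and then~\eqref{eqn:monodromy-convolution-1} identifies this with $\mu_{\Wak^\wedge_\lambda \hatstar \Wak^\wedge_{-\lambda}}(1 \otimes e^\lambda)$. Transporting along the canonical isomorphism $c_\lambda : \Wak^\wedge_\lambda \hatstar \Wak^\wedge_{-\lambda} \simto \delta^\wedge$ from Lemma~\ref{lem:properties-Wak}\eqref{it:properties-Wak-3}, naturality of monodromy then identifies this with $\mu_{\delta^\wedge}(1 \otimes e^\lambda)$, which is the element $e^\lambda \in \scO((T^\vee_\bk)^\wedge) = \End(\delta^\wedge)$ (the identification coming from Lemma~\ref{lem:monodromy-DN}\eqref{lem:monodromy-Dwedge} applied to $w = e$).

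To finish, I would exhibit $\Wak^\wedge_{-\lambda}$ as the left dual of $\Wak^\wedge_\lambda$ by taking $\coev_{\Wak^\wedge_\lambda} := c_\lambda^{-1}$ and $\ev_{\Wak^\wedge_{-\lambda}} := c_\lambda$; with these choices $\ev_{\Wak^\wedge_{-\lambda}} \circ \coev_{\Wak^\wedge_\lambda} = \id_{\delta^\wedge}$, and combining with the above gives
\[
\trL(a) = \ev_{\Wak^\wedge_{-\lambda}} \circ (a \hatstar \id) \circ \coev_{\Wak^\wedge_\lambda} = e^\lambda.
\]
The main obstacle is to check that this choice of $\coev$ and $\ev$ really satisfies the zigzag axioms of a duality; this amounts to a coherence identity for the family of canonical isomorphisms $\Wak^\wedge_\mu \hatstar \Wak^\wedge_\nu \simto \Wak^\wedge_{\mu+\nu}$ of Lemma~\ref{lem:properties-Wak}\eqref{it:properties-Wak-3}, which I expect to verify either by an associativity statement analogous to Lemma~\ref{lem:DN-weights}, or more directly by reducing both sides of each zigzag identity to an element of $\scO((T^\vee_\bk)^\wedge)$ via the monodromy isomorphism $\End(\Wak^\wedge_\mu) \simto \scO((T^\vee_\bk)^\wedge)$ of Lemma~\ref{lem:properties-Wak}\eqref{it:properties-Wak-4} and comparing.
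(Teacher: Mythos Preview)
Your approach is essentially the same as the paper's. The paper identifies $\mu^{\mathrm{rot}}_{\Wak^\wedge_\lambda}(x^{-1})$ with the \emph{left} monodromy $\mu_{\Wak^\wedge_\lambda}(e^\lambda \otimes 1)$ (using both halves of Lemma~\ref{lem:properties-Wak}\eqref{it:properties-Wak-5} at once), so that a single application of~\eqref{eqn:monodromy-convolution-1} gives $a \hatstar \id = \mu_{\Wak^\wedge_\lambda \hatstar \Wak^\wedge_{-\lambda}}(e^\lambda \otimes 1)$ directly; functoriality of monodromy with respect to the isomorphisms $\coev$ and $\ev$ then yields $\mu_{\delta^\wedge}(e^\lambda \otimes 1) = e^\lambda$. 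Your detour through~\eqref{eqn:monodromy-convolution-2} and a second invocation of Lemma~\ref{lem:properties-Wak}\eqref{it:properties-Wak-5} reaches the same point by a slightly longer path.

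Your final worry about the zigzag identities can be dispatched more cleanly than you suggest. For any invertible object $L$, once you fix an isomorphism $\alpha : L \odot L^{-1} \simto \mathbf{1}$ there is a unique $\beta : L^{-1} \odot L \simto \mathbf{1}$ making $(L^{-1},\beta,\alpha^{-1})$ a left dual of $L$, and then $(L,\alpha,\beta^{-1})$ is automatically a left dual of $L^{-1}$ (the zigzag identities for the second duality are the inverses of those for the first). With these choices one has $\ev_{L^\vee} \circ \coev_L = \alpha \circ \alpha^{-1} = \id_{\mathbf{1}}$, so the quantum dimension of an invertible object is $1$. In particular your choice $\ev_{\Wak^\wedge_{-\lambda}} = c_\lambda$, $\coev_{\Wak^\wedge_\lambda} = c_\lambda^{-1}$ is consistent with legitimate duality data, and no further coherence check is needed. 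The paper simply takes this for granted.
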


\begin{proof}
By definition, $\trL(\mu^{\mathrm{rot}}_{\Wak^\wedge_\lambda}(x^{-1}))$ is the composition
\[
\delta^\wedge \xrightarrow[\sim]{\coev} \Wak^\wedge_\lambda \hatstar \Wak^\wedge_{-\lambda} \xrightarrow{\mu^{\mathrm{rot}}_{\Wak^\wedge_\lambda}(x^{-1}) \hatstar \id} \Wak^\wedge_\lambda \hatstar \Wak^\wedge_{-\lambda} \xrightarrow[\sim]{\ev} \delta^\wedge.
\]
Here by Lemma~\ref{lem:properties-Wak}\eqref{it:properties-Wak-5} we have $\mu^{\mathrm{rot}}_{\Wak^\wedge_\lambda}(x^{-1})=\mu_{\Wak^\wedge_\lambda}(e^{\lambda} \otimes 1)$, so that the middle map is $\mu_{\Wak^\wedge_\lambda \hatstar \Wak^\wedge_{-\lambda}}(e^{\lambda} \otimes 1)$ by~\eqref{eqn:monodromy-convolution-1}. By functoriality of monodromy this implies that the composition above is $\mu_{\delta^\wedge}(e^{\lambda} \otimes 1)$, which implies the desired claim.
\end{proof}

A general result about monoidal categories states that in an abelian monoidal category with exact monoidal product, the quantum trace is additive on short exact sequences (for morphisms compatible with the exact sequence), see~\cite[Proposition~4.7.5]{egno}. As in Remark~\ref{rmk:dualizability} this statement does not apply directly in our setting, but our proof of Lemma~\ref{lem:trace} will consist of repeating its proof\footnote{This proof was kindly explained to us by P.~Etingof.} and using Lemma~\ref{lem:trace-Wak} to compute the appropriate trace by induction.

\begin{proof}[Proof of Lemma~\ref{lem:trace}]
By Theorem~\ref{thm:gaitsgory-mon}\eqref{it:gaitsgory-mon-6} we have
\[
\hat{\sm}_V = \mu^{\mathrm{rot}}_{\tsZ(V)}(x^{-1}).
\]
We will prove by induction on $\lambda$ that
\[
\trL \left( \mu^{\mathrm{rot}}_{\tsZ(V)_{\leq \lambda}}(x^{-1}) \right) = \sum_{\mu \leq \lambda} \dim(V_\mu) \cdot e^\mu;
\]
this will imply the desired equality by taking $\lambda$ such that $\nu \leq \lambda$ for any $\nu$ such that $V_\nu \neq 0$.

If $\lambda$ satisfies $\lambda < \nu$ for any $\nu$ such that $V_\nu \neq 0$, then this equality holds since both sides vanish. Now let $\lambda \in X_*(T)$, and assume the equality is known for the predecessor $\lambda'$ of $\lambda$. We consider the exact sequence
\[
\tsZ(V)_{\leq \lambda'} \hookrightarrow \tsZ(V)_{\leq \lambda} \twoheadrightarrow \gr_\lambda(\tsZ(V)).
\]
By functoriality of monodromy, the automorphism $\mu^{\mathrm{rot}}_{\tsZ(V)_{\leq \lambda}}(x^{-1})$ of $\tsZ(V)_{\leq \lambda}$ preserves $\tsZ(V)_{\leq \lambda'}$, and restricts to $\mu^{\mathrm{rot}}_{\tsZ(V)_{\leq \lambda'}}(x^{-1})$ on this subobject. Moreover, the induced automorphism of $\gr_\lambda(\tsZ(V))$ is $\mu^{\mathrm{rot}}_{\gr_\lambda(\tsZ(V))}(x^{-1})$.

The object
\[
\tsZ(V)_{\leq \lambda} \hatstar (\tsZ(V^*)/\tsZ(V^*)_{< -\lambda}) = \tsZ(V)_{\leq \lambda} \hatstar (\tsZ(V)_{\leq \lambda})^\vee
\]
admits a canonical 3-step filtration
\[
\mathscr{M}_1 \subset \mathscr{M}_2 \subset \tsZ(V)_{\leq \lambda} \hatstar (\tsZ(V^*)/\tsZ(V^*)_{< -\lambda})
\]
with successive associated subquotients given by
\begin{multline*}
\tsZ(V)_{\leq \lambda'} \hatstar \gr_{-\lambda}(\tsZ(V^*)), \\
\tsZ(V)_{\leq \lambda'} \hatstar (\tsZ(V^*)/\tsZ(V^*)_{< -\lambda'}) \oplus \gr_\lambda(\tsZ(V)) \hatstar \gr_{-\lambda}(\tsZ(V^*)) \\
\text{and } \gr_\lambda(\tsZ(V)) \hatstar (\tsZ(V^*)/\tsZ(V^*)_{< -\lambda'}).
\end{multline*}
We have
\begin{multline*}
\Hom(\delta^\wedge, \gr_\lambda(\tsZ(V)) \hatstar (\tsZ(V^*)/\tsZ(V^*)_{< -\lambda'})) \\
\cong \Hom(\gr_{-\lambda}(\tsZ(V^*)), \tsZ(V^*)/\tsZ(V^*)_{< -\lambda'})=0,
\end{multline*}
since $(\tsZ(V^*)/\tsZ(V^*)_{< -\lambda'})_{\leq -\lambda}=0$; it follows that the coevaluation map
\[
\delta^\wedge \to \tsZ(V)_{\leq \lambda} \hatstar (\tsZ(V^*)/\tsZ(V^*)_{< -\lambda})
\]
factors through a map $\delta^\wedge \to \mathscr{M}_2$. For similar reasons, the evaluation map
\[
\tsZ(V)_{\leq \lambda} \hatstar (\tsZ(V^*)/\tsZ(V^*)_{< -\lambda}) \to \delta^\wedge
\]
vanishes on $\mathscr{M}_1$, hence factors through a morphism
\[
\bigl( \tsZ(V)_{\leq \lambda} \hatstar (\tsZ(V^*)/\tsZ(V^*)_{< -\lambda}) \bigr) / \mathscr{M}_1 \to \delta^\wedge.
\]
It follows that our trace is the composition
\begin{multline*}
\delta^\wedge \to \tsZ(V)_{\leq \lambda'} \hatstar (\tsZ(V^*)/\tsZ(V^*)_{< -\lambda'}) \oplus \gr_\lambda(\tsZ(V)) \hatstar \gr_{-\lambda}(\tsZ(V^*)) \\
\to \tsZ(V)_{\leq \lambda'} \hatstar (\tsZ(V^*)/\tsZ(V^*)_{< -\lambda'}) \oplus \gr_\lambda(\tsZ(V)) \hatstar \gr_{-\lambda}(\tsZ(V^*)) \to \delta^\wedge
\end{multline*}
where the first, resp.~third, map is the sum of the coevaluation, resp.~evaluation, morphisms for $\tsZ(V)_{\leq \lambda'}$ and $\gr_\lambda(\tsZ(V))$, and the middle arrow is the map induced by $\mu^{\mathrm{rot}}_{\tsZ(V)_{\leq \lambda}}(x^{-1}) \hatstar \id$, i.e.~the direct sum 
\[
\mu^{\mathrm{rot}}_{\tsZ(V)_{\leq \lambda'}}(x^{-1}) \hatstar \id \oplus \mu^{\mathrm{rot}}_{\gr_\lambda(\tsZ(V))}(x^{-1}) \hatstar \id.
\]
We deduce that
\[
\trL(\mu^{\mathrm{rot}}_{\tsZ(V)_{\leq \lambda}}(x^{-1})) = \trL(\mu^{\mathrm{rot}}_{\tsZ(V)_{\leq \lambda'}}(x^{-1})) + \trL(\mu^{\mathrm{rot}}_{\gr_\lambda(\tsZ(V))}(x^{-1})),
\]
which implies the desired formula by the induction hypothesis and Lemma~\ref{lem:trace-Wak}.
\end{proof}

\section{Perverse sheaves on \texorpdfstring{$G/U$}{G/U}}
\label{sec:Perv-G/U}

We continue with the setting of Sections~\ref{ss:const-Fl}--\ref{sec:fmZ}, and consider also the constructions of Section~\ref{sec:Coh-St} in the case $\bG=G^\vee_\bk$ (with the Borel subgroup $B^\vee_\bk$ and the maximal torus $T^\vee_\bk$). In particular, we fix a Steinberg section $\Sigma \subset G^\vee_\bk$ as in~\S\ref{ss:Steinberg-section}. It is clear that in this case the Coxeter system $(\bWf,\bSf)$ of Section~\ref{sec:Coh-St} identifies with the Coxeter system $(\Wf,\Sf)$ of Section~\ref{ss:const-Fl}. We will assume in this section that $G^\vee_\bk$ has simply connected derived subgroup, or in other words that the quotient of $X^*(T)$ by the root lattice is free.

Before constructing the main equivalence of the paper, we explain a similar construction for perverse sheaves on the ``finite'' flag variety $G/B$ (or, in fact, on the basic affine space $G/U$). This construction is essentially a reinterpretation of the main result of~\cite{bezr}; it will serve as a ``toy example'' to illustrate our methods, but will also play a role in the proof of the theorem.

\begin{rmk}
As explained above, the proofs in this section rely on the results of~\cite{bezr}.
 In this reference it is assumed that the group $G$ is semisimple of adjoint type, but all the proofs apply more generally under the present assumption that the dual group has simply connected derived subgroup. (In fact, the main ingredient that requires some assumption is Theorem~\ref{thm:Pittie-Steinberg}, which holds under our present assumption by Remark~\ref{thm:Pittie-Steinberg}.) 
\end{rmk}

\subsection{Categories of sheaves on \texorpdfstring{$G/U$}{G/U}}
\label{ss:cat-sheaves-G/U}

Recall the categories $\sfD_{U,U}$ and $\sfD^\wedge_{U,U}$ considered in~\S\ref{ss:G/B-tilting}. These categories admit perverse t-structures, whose hearts are denoted $\sfP_{U,U}$ and $\sfP^\wedge_{U,U}$ respectively. In fact, pushforward along the closed embedding $G/U \hookrightarrow \tFl_G$ identifies $\sfP_{U,U}$, resp.~$\D_{U,U}$ with the Serre subcategory of $\sfP_{\Iwu,\Iwu}$, resp.~the full triangulated subcategory of $\D_{\Iwu,\Iwu}$, generated by the simple objects $\pi^\dag \For^{\Iw}_{\Iwu}( \IC_w)$ with $w \in \Wf$; it also provides a t-exact fully faithful functor $\sfD^\wedge_{U,U} \to \sfD^\wedge_{\Iwu,\Iwu}$. If we denote by $\sfP_{U,U}^+$, resp.~$\sfD_{U,U}^+$, the Serre subcategory of $\sfP_{U,U}$, resp.~the full triangulated subcategory of $\sfD_{U,U},$ generated by the objects $\pi^\dag \For^{\Iw}_{\Iwu}( \IC_w)$ with $w \in \Wf \smallsetminus \{e\}$, then we can consider the quotient categories
\[
\sfP_{U,U}^0 := \sfP_{U,U} / \sfP_{U,U}^+, \quad \sfD_{U,U}^0 := \sfD_{U,U} / \sfD_{U,U}^+.
\]
By Lemma~\ref{lem:quotient-t-str}, there exists a unique t-structure on $\sfD_{U,U}^0$ such that the quotient functor
\[
 \Pi^0_{U,U} : \sfD_{U,U} \to \sfD_{U,U}^0
\]
is t-exact. This t-structure is bounded, and its heart identifies with $\sfP^0_{U,U}$; it will be called the perverse t-structure, and the associated cohomology functors will once again be denoted $\pH^n(-)$. We have a canonical t-exact functor
\begin{equation}
\label{eqn:D-U-Iu}
\sfD_{U,U}^0 \to \sfD^0_{\Iwu,\Iwu},
\end{equation}
whose restriction to the heart of the perverse t-structure is fully faithful.


As for $\sfD_{\Iwu,\Iwu}$, the category $\sfD_{U,U}$ admits a natural convolution product $\star_U$ which equips it with the structure of a monoidal category (without unit object) such that the embedding $\sfD_{U,U} \to \sfD_{\Iwu,\Iwu}$ is monoidal, and which induces (in the appropriate sense) the product $\hatstar_U$.
We also have a canonical bifunctor
\[
 \hatstar_U : \sfD^\wedge_{U,U} \times \sfD_{U,U} \to \sfD_{U,U}
\]
which defines an action of $(\sfD^\wedge_{U,U}, \hatstar_U)$ on $\sfD_{U,U}$.

 As in~\S\ref{ss:mon-reg-quotient}, the bifunctor $(\scF,\scG) \mapsto \Pi^0_{U,U}(\scF \star_U \scG)$ factors through a triangulated bifunctor
 \[
  \star^0_U : \sfD_{U,U}^0 \times \sfD_{U,U}^0 \to \sfD_{U,U}^0
 \]
which defines a monoidal structure (without unit object) on $\D_{U,U}^0$ so that~\eqref{eqn:D-U-Iu} is monoidal. Moreover, $\star^0_U$ is ``right t-exact'' in the sense that if $\scF,\scG$ belong to the nonpositive part of the perverse t-structure on $\sfD^0_{U,U}$ then so does $\scF \star^0_U \scG$. We therefore obtain a monoidal structure (without unit object) on $\sfP_{U,U}^0$ by setting
\[
 \scF \pstar^0_U \scG := \pH^0(\scF \star^0_U \scG)
\]
for $\scF,\scG$ in $\sfP_{U,U}^0$;
then we have a fully faithful exact monoidal functor
\begin{equation}
\label{eqn:embedding-PUU-PII}
(\sfP_{U,U}^0, \pstar^0_U) \to (\sfP^0_{\Iwu,\Iwu},\pstar^0_{\Iwu}).
\end{equation}

As in~\S\ref{ss:actions-Dwedge-D} we have a canonical bifunctor
\[
 \hatstar^0_U : \sfD^\wedge_{U,U} \times \sfD^0_{U,U} \to \sfD^0_{U,U}
\]
compatible with $\hatstar^0$ in the obvious way, and which defines an action of $(\sfD^\wedge_{U,U},\hatstar_U)$ on the category $\sfD^0_{U,U}$. For $\scF$ in $\sfP^\wedge_{U,U}$ and $\scG$ in $\sfP^0_{U,U}$ we then set
\[
 \scF \phatstar^0_U \scG := \pH^0( \scF \hatstar^0_U \scG).
\]
As in Lemma~\ref{lem:t-exactness-conv-tFl}, this bifunctor is right exact on each side.

Recall also the functor $\mathsf{C}_m$ considered in~\S\ref{ss:truncation-ps}. It is clear that this functor restricts to a functor from $\sfP^\wedge_{U,U}$ to $\sfP_{U,U}$, which will again be denoted $\mathsf{C}_m$.

%

\subsection{Morphisms from \texorpdfstring{$\Pi^0_{U,U}(\Xi^\wedge_!)$}{PiXi}}
\label{ss:morphisms-PiXi}

Recall the object $\Xi^\wedge_!$ defined in~\S\ref{ss:G/B-tilting}. Considering this object as a pro-object in $\sfD_{U,U}$, and applying the extension to pro-objects of the functor $\Pi^0_{U,U}$ we obtain a pro-object $\Pi^0_{U,U}(\Xi^\wedge_!)$ in $\sfD^0_{U,U}$. We can then consider the functor from $\sfP^0_{U,U}$ to the category of $\bk$-vector spaces given by
\[
\scF \mapsto \Hom_{\sfD^0_{U,U}}(\Pi^0_{U,U}(\Xi^\wedge_!), \scF),
\]
where in the right-hand side we mean morphisms in the category of pro-objects in $\sfD^0_{U,U}$. Concretely, if we write $\Xi^\wedge_! = ``\varprojlim_n" \scA_n$ for some objects $\scA_n$ in $\sfD_{U,U}$, then we have
\[
\Hom_{\sfD^0_{U,U}}(\Pi^0_{U,U}(\Xi^\wedge_!), \scF) = \varinjlim_n \Hom_{\sfD^0_{U,U}}(\Pi^0_{U,U}(\scA_n), \scF).
\]
Similarly, given $\scG$ in $\sfP_{U,U}$ we can consider the vector space
\[
\Hom_{\sfP^\wedge_{U,U}}(\Xi^\wedge_!, \scG) = \varinjlim_n \Hom_{\sfD_{U,U}}(\scA_n, \scG).
\]

\begin{lem}
\label{lem:morph-Xiwedge-Pi}
For any $\scG$ in $\sfP_{U,U}$, the canonical morphism
\[
\Hom_{\sfP^\wedge_{U,U}}(\Xi^\wedge_!, \scG) \to \Hom_{\sfD^0_{U,U}}(\Pi^0_{U,U}(\Xi^\wedge_!),\Pi^0_{U,U}(\scG))
\]
is an isomorphism.
\end{lem}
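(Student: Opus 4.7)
The plan is to exploit the projectivity of $\Xi^\wedge_!$ in $\sfP^\wedge_{U,U}$ (Lemma~\ref{lem:Xi-projective}) to compare the two colimit descriptions of the $\Hom$-spaces. Writing $\Xi^\wedge_! = ``\varprojlim_n" \scA_n$ with each $\scA_n$ in $\sfP_{U,U}$, the left-hand side is $\varinjlim_n \Hom_{\sfP_{U,U}}(\scA_n, \scG)$, while the right-hand side is $\varinjlim_n \Hom_{\sfP^0_{U,U}}(\Pi^0_{U,U}(\scA_n), \Pi^0_{U,U}(\scG))$. The natural map is induced termwise by $\Pi^0_{U,U}$, and we show it is injective and surjective on colimits.

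The key preliminary step is the vanishing
\[
\Hom_{\sfP^\wedge_{U,U}}(\Xi^\wedge_!, \scF) = 0 = \Ext^1_{\sfP^\wedge_{U,U}}(\Xi^\wedge_!, \scF) \qquad \text{for all $\scF \in \sfP_{U,U}^+$.}
\]
The $\Ext^1$-vanishing is immediate from projectivity. For the $\Hom$-vanishing, I would induct on the length of a composition series of $\scF$ in $\sfP_{U,U}^+$; the base case reduces, via $\scF \cong \pi^\dag \For^{\Iw}_{\Iwu}(\IC_w)$ for some $w \in \Wf \smallsetminus \{e\}$, to the vanishing already recorded in Lemma~\ref{lem:Xi-projective}, and the inductive step uses the long exact sequence obtained by applying $\Hom_{\sfP^\wedge_{U,U}}(\Xi^\wedge_!, -)$ to a short exact sequence, together with the $\Ext^1$-vanishing.

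For injectivity: suppose $f : \scA_n \to \scG$ has zero image in $\Hom_{\sfP^0_{U,U}}(\Pi^0_{U,U}(\scA_n), \Pi^0_{U,U}(\scG))$. By the standard description of morphisms in a Serre quotient, there exist $\scA_n' \subseteq \scA_n$ with $\scA_n/\scA_n' \in \sfP_{U,U}^+$ and $\scG' \subseteq \scG$ with $\scG' \in \sfP_{U,U}^+$ such that $f(\scA_n') \subseteq \scG'$. Applying the $\Hom$-vanishing to $\scA_n/\scA_n'$, the pro-morphism $\Xi^\wedge_! \to \scA_n \to \scA_n/\scA_n'$ vanishes, so after increasing $n$ we may replace $\scA_n$ by its image in $\scA_n'$ and assume $f$ factors through $\scG' \hookrightarrow \scG$. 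Applying the $\Hom$-vanishing once more to $\scG'$, after a further increase of $n$ the map $f$ itself becomes zero, proving $f$ is zero in the colimit.

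For surjectivity: a class in the right-hand side is represented by a roof $\scA_n \hookleftarrow \scA_n' \xrightarrow{f} \scG/\scG'$ with $\scA_n/\scA_n' \in \sfP_{U,U}^+$ and $\scG' \in \sfP_{U,U}^+$. Arguing as in the previous paragraph, after increasing $n$ we may assume $\scA_n' = \scA_n$ and so have an honest morphism $g : \scA_n \to \scG/\scG'$; this defines a pro-morphism $\bar g \in \Hom_{\sfP^\wedge_{U,U}}(\Xi^\wedge_!, \scG/\scG')$. Applying $\Hom_{\sfP^\wedge_{U,U}}(\Xi^\wedge_!, -)$ to the short exact sequence $0 \to \scG' \to \scG \to \scG/\scG' \to 0$ and using the $\Ext^1$-vanishing, we can lift $\bar g$ to a morphism $\tilde g : \Xi^\wedge_! \to \scG$, i.e., to a morphism $\tilde g_m : \scA_m \to \scG$ for some $m \geq n$. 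Its composition with $\scG \twoheadrightarrow \scG/\scG'$ agrees with the restriction of $g$ (up to enlarging $m$), so chasing the roofs shows that $\tilde g_m$ represents the class we started from.

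The main obstacle is purely bookkeeping: one needs to juggle the increments of the pro-system index $n$ simultaneously with the choices of subobjects $\scA_n' \subset \scA_n$ and $\scG' \subset \scG$ witnessing the Serre-quotient relations. This is manageable because $\scG$ has finite length in $\sfP_{U,U}$ (so the lattice of $\scG'$ is finite), and because the vanishing of $\Hom_{\sfP^\wedge_{U,U}}(\Xi^\wedge_!, \scF)$ for $\scF \in \sfP_{U,U}^+$ precisely encodes that any obstruction can be cleared by moving sufficiently far in the pro-system.
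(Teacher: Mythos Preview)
Your argument is correct and rests on the same key input as the paper's proof, namely the vanishing $\Hom_{\sfD^\wedge_{U,U}}(\Xi^\wedge_!,\scC)=0$ for $\scC$ built from the simple perverse sheaves $\pi^\dag\For^\Iw_\Iwu(\IC_w)$ with $w\neq e$, which is precisely Lemma~\ref{lem:Xi-projective}. The difference is purely in the packaging: the paper works in the Verdier quotient $\sfD^0_{U,U}$, representing morphisms by roofs $\scA_n \xleftarrow{s} \mathscr{X} \xrightarrow{f} \scG$ with $\mathrm{cone}(s)\in\sfD^+_{U,U}$, and uses the full vanishing in all cohomological degrees; you instead identify $\Hom_{\sfD^0_{U,U}}$ between objects of the heart with $\Hom_{\sfP^0_{U,U}}$ (which is legitimate by Lemma~\ref{lem:quotient-t-str}) and use Gabriel's description of the Serre quotient, so that the only vanishings needed are $\Hom$ and $\Ext^1$ into $\sfP_{U,U}^+$. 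Your route is a bit more elementary since it stays in the abelian world; the paper's triangulated formulation has the mild advantage that it does not require choosing the pro-system $(\scA_n)$ inside $\sfP_{U,U}$.

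One small point to make explicit: your first sentence assumes each $\scA_n$ lies in $\sfP_{U,U}$. This is true for $\Xi^\wedge_!$ by its explicit construction as $\Av_{U,!}\circ\Av_\Whit$ applied to the pro-system of perverse local systems defining $\delta^\wedge$ (the averaging functors being t-exact), but it is not a formal consequence of $\Xi^\wedge_!$ being perverse---Proposition~\ref{prop:perverse-t-str-completed-cat} only supplies a presentation by objects of ${}^p\sfD^{\leq 0}$ or of ${}^p\sfD^{\geq 0}$ separately. Either invoke the explicit construction, or else take $\scA_n\in{}^p\sfD_{U,U}^{\leq 0}$ and note that $\Hom_{\sfD_{U,U}}(\scA_n,\scG)=\Hom_{\sfP_{U,U}}(\pH^0(\scA_n),\scG)$ since $\scG$ is perverse, after which your argument goes through verbatim with $\pH^0(\scA_n)$ in place of $\scA_n$.
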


\begin{proof}
Fix $\scG$ in $\sfP_{U,U}$.
As explained above, writing $\Xi^\wedge_! = ``\varprojlim_n" \scA_n$ for some $\scA_n$ in $\sfD_{U,U}$, our morphism can be written more concretely as the morphism
\begin{equation}
\label{eqn:morph-Xiwedge-Pi}
\varinjlim_n \Hom_{\sfD_{U,U}}(\scA_n, \scG) \to \varinjlim_n \Hom_{\sfD^0_{U,U}}(\Pi^0_{U,U}(\scA_n), \Pi^0_{U,U}(\scG))
\end{equation}
induced by the functor $\Pi^0_{U,U}$. 

Let us first show that~\eqref{eqn:morph-Xiwedge-Pi} is surjective. A morphism in the right-hand side is represented by a morphism $\Pi^0_{U,U}(\scA_n) \to \Pi^0_{U,U}(\scG)$ in $\sfD^0_{U,U}$ for some $n$, i.e.~by a diagram
\[
\scA_n \xleftarrow{s} \mathscr{X} \xrightarrow{f} \scG
\]
where $\mathscr{X}$ is an object in $\sfD_{U,U}$ and $s$, $f$ are morphisms in this category such that the cone $\scC$ of $s$ belongs to $\sfD^+_{U,U}$. Now, by Lemma~\ref{lem:Xi-projective} we have
\[
\Hom_{\sfD^\wedge_{U,U}}(\Xi^\wedge_!, \pi^\dag \For^{\Iw}_{\Iwu}(\IC_w)[i])=0
\]
for any $w \in \Wf \smallsetminus \{e\}$ and $i \in \Z$; it follows that
\[
\Hom_{\sfD^\wedge_{U,U}}(\Xi^\wedge_!, \scC)=0,
\]
or in other words that
\[
\varinjlim_m \Hom_{\sfD_{U,U}}(\scA_m, \scC)=0.
\]
We deduce that for $m \gg n$ the composition $\scA_m \to \scA_n \to \scC$ vanishes. Fix such an $m$, and denote by $h$ the structure morphism $\scA_m \to \scA_n$. If we complete the morphisms $s$ and $h$ to a commutative diagram
\[
\xymatrix{
\scA_m \ar[d]_-{h} & \mathscr{Y} \ar[l]_-{t} \ar[d]^-{g} \\
\scA_n & \mathscr{X} \ar[l]^-{s}
}
\]
in $\sfD_{U,U}$ such that the cone of $t$ belongs to $\sfD^+_{U,U}$ (which is always possible since morphisms whose cone belongs to $\sfD^+_{U,U}$ form a multiplicative system, see~\cite[\href{https://stacks.math.columbia.edu/tag/05RG}{Tag 05RG}]{stacks-project}), then the image of our morphism in $\Hom_{\sfD^0_{U,U}}(\Pi^0_{U,U}(\scA_m), \Pi^0_{U,U}(\scG))$ is represented by the diagram
\[
\scA_m \xleftarrow{t} \mathscr{Y} \xrightarrow{f \circ g} \scG.
\]
Now since the composition $\scA_m \to \scA_n \to \scC$ vanishes, there exists $k : \scA_m \to \mathscr{X}$ in $\sfD_{U,U}$ such that $s \circ k = h$. Then in $\sfD^0_{U,U}$ we have
\[
f \circ g \circ t^{-1} = f \circ s^{-1} \circ h = f \circ k.
\]
This shows that the image of our morphism in $\Hom_{\sfD^0_{U,U}}(\Pi^0_{U,U}(\scA_m), \Pi^0_{U,U}(\scG))$ is the image of a morphism in $\Hom_{\sfD_{U,U}}(\scA_m, \scG)$,
 which finishes the proof of surjectivity.

The proof of injectivity is similar. If a morphism $f : \scA_n \to \scG$ has trivial image in $\varinjlim_m \Hom_{\sfD^0_{U,U}}(\Pi^0_{U,U}(\scA_m), \Pi^0_{U,U}(\scG))$, then composing with the morphism $\scA_m \to \scA_n$ for some $m \gg n$ we can assume that $\Pi^0_{U,U}(f)=0$. This means that there exists a morphism $g : \scG \to \scH$ whose cone $\scC$ belongs to $\sfD^+_{U,U}$ such that $g \circ f = 0$. Then $f$ factors through a morphism $\scA_n \to \scC[-1]$. Replacing again $n$ by a larger integer we can assume that this morphism vanishes, so that $f=0$ in the inductive limit, which finishes the proof.
\end{proof}

\subsection{Towards a monoidal structure}
\label{ss:towards-monoidal-U}

For any $\scF$ in $\sfP^0_{U,U}$,
monodromy for the action of $T$ on the left and on the right on $G/U$ equips $\Hom_{\sfD^0_{U,U}}(\Pi^0_{U,U}(\Xi^\wedge_!), \scF)$ with the structure of an $\scO(T^\vee_\bk)$-bimodule. In fact, it follows from Lemma~\ref{lem:monodromy-fiber-prod} that these actions factor through a structure of $\scO(T^\vee_\bk \times_{T^\vee_\bk / \Wf} T^\vee_\bk)$-module.
Our goal in this subsection is to explain how to define, for $\scF,\scG$ in $\sfP^0_{U,U}$, a canonical morphism
\begin{multline}
\label{eqn:morph-monoidality-DUU}
\Hom_{\sfD^0_{U,U}}(\Pi^0_{U,U}(\Xi^\wedge_!), \scF) \otimes_{\scO(T^\vee_\bk)} \Hom_{\sfD^0_{U,U}}(\Pi^0_{U,U}(\Xi^\wedge_!), \scG) \\
\to \Hom_{\sfD^0_{U,U}}(\Pi^0_{U,U}(\Xi^\wedge_!), \scF \pstar^0_U \scG),
\end{multline}
which will eventually be shown to define a monoidal structure on the functor $\Hom_{\sfD^0_{U,U}}(\Pi^0_{U,U}(\Xi^\wedge_!), -)$.
(Here $\scO(T^\vee_\bk)$ acts on $\Hom_{\sfD^0_{U,U}}(\Pi^0_{U,U}(\Xi^\wedge_!), \scF)$ via the projection $T^\vee_\bk \times_{T^\vee_\bk / \Wf} T^\vee_\bk \to T^\vee_\bk$ on the second factor, and on $\Hom_{\sfD^0_{U,U}}(\Pi^0_{U,U}(\Xi^\wedge_!), \scG)$ via the projection $T^\vee_\bk \times_{T^\vee_\bk / \Wf} T^\vee_\bk \to T^\vee_\bk$ on the first factor.) To explain this construction we first need to recall a similar construction from~\cite{bezr}.

First, consider the scheme $\FN_{T^\vee_\bk \times_{T^\vee_\bk / \Wf} T^\vee_\bk}(\{(e,e)\})$.
By Lemma~\ref{lem:Dw-fiber-prod}\eqref{it:Dw-fiber-prod-2} this scheme is the spectrum of the algebra
\[
 \scO(\FN_{T^\vee_\bk}(\{e\})) \otimes_{\scO(\FN_{T^\vee_\bk}(\{e\}))^{\Wf}} \scO(\FN_{T^\vee_\bk}(\{e\}))
\]
that appears e.g.~in~\cite[Theorem~9.1]{bezr}. (Section~\ref{sec:Hecke-cat} is written under the running assumption that $Z(\bG)$ is smooth, but this condition is not required for this specific lemma.)

Recall the category $\sfT^\wedge_{U,U}$ of tilting objects in $\sfP^\wedge_{U,U}$. As explained in~\cite[Remark~7.9]{bezr}, this subcategory is closed under the convolution product $\hatstar_U$.
For $\scF$ in $\sfP^\wedge_{U,U}$, in~\cite{bezr} we explain that monodromy defines on $\Hom_{\sfP_{U,U}^{\wedge}}(\Xi_!^\wedge, \scF)$ the structure of a finitely generated $\scO(\FN_{T^\vee_\bk \times_{T^\vee_\bk / \Wf} T^\vee_\bk}(\{(e,e)\}))$-module. Moreover,
in~\cite[\S 11.3]{bezr} we construct a monoidal structure on the functor 
\[
\Hom_{\sfP_{U,U}^{\wedge}}(\Xi_!^\wedge, -) : \mathsf{T}^\wedge_{U,U} \to \Mod^{\mathrm{fg}}(\scO(\FN_{T^\vee_\bk \times_{T^\vee_\bk / \Wf} T^\vee_\bk}(\{(e,e)\}))),
\]
for the monoidal product on the category $\mathsf{T}^\wedge_{U,U}$ given by $\hatstar_U$, and that on the category of $\scO(\FN_{T^\vee_\bk \times_{T^\vee_\bk / \Wf} T^\vee_\bk}(\{(e,e)\}))$-modules given by tensor product over the algebra $\scO(\FN_{T^\vee_\bk}(\{e\}))$. Using this structure we obtain an isomorphism
\begin{equation}
\label{eqn:Hom-Xi-monoidal}
\Hom_{\sfP_{U,U}^{\wedge}}(\Xi_!^\wedge, \Xi_!^\wedge \hatstar_U \Xi_!^\wedge) \cong \Hom_{\sfP_{U,U}^{\wedge}}(\Xi_!^\wedge, \Xi_!^\wedge) \otimes_{\scO(\FN_{T^\vee_\bk}(\{e\}))} \Hom_{\sfP_{U,U}^{\wedge}}(\Xi_!^\wedge, \Xi_!^\wedge).
\end{equation}
Here the right-hand side has a canonical element, given by $\id_{\Xi_!^\wedge} \otimes \id_{\Xi_!^\wedge}$, which then defines a canonical morphism $\xi : \Xi_!^\wedge \to \Xi_!^\wedge \hatstar_U \Xi_!^\wedge$. Concretely, writing $\Xi^\wedge_! = ``\varprojlim_n" \scA_n$ for some $\scA_n$ in $\sfD_{U,U}$ as in~\S\ref{ss:morphisms-PiXi}, we have
\[
\Xi_!^\wedge \hatstar_U \Xi_!^\wedge = ``\varprojlim_{n,m \geq 0}" \scA_n \star_{U} \scA_m,
\]
so that
\[
\Hom_{\sfP_{U,U}^{\wedge}}(\Xi_!^\wedge, \Xi_!^\wedge \hatstar_U \Xi_!^\wedge) = \varprojlim_{n,m} \varinjlim_q \Hom_{\sfD_{U,U}}(\scA_q, \scA_n \star_{U} \scA_m);
\]
our morphism is therefore defined by a collection of morphisms $\xi_{n,m} : \scA_{q(n,m)} \to \scA_n \star_{U} \scA_m$ for some function $q : (\Z_{\geq 0})^2 \to \Z_{\geq 0}$, which we fix from now on.

Finally we can explain the construction of~\eqref{eqn:morph-monoidality-DUU}.
Consider some elements $f$ in $\Hom_{\sfD^0_{U,U}}(\Pi^0_{U,U}(\Xi^\wedge_!), \scF)$ and $g$ in $\Hom_{\sfD^0_{U,U}}(\Pi^0_{U,U}(\Xi^\wedge_!), \scG)$, represented by morphisms $f : \Pi^0_{U,U}(\scA_n) \to \scF$ and $g : \Pi^0_{U,U}(\scA_m) \to \scG$. Then the image of $f \otimes g$ under~\eqref{eqn:morph-monoidality-DUU} is the composition
\begin{multline*}
\Pi^0_{U,U}(\scA_{q(n,m)}) \xrightarrow{\Pi^0_{U,U}(\xi_{n,m})} \Pi^0_{U,U}(\scA_n \star_{U} \scA_m) = \Pi^0_{U,U}(\scA_n) \star^0_U \Pi^0_{U,U}(\scA_m) \\
\xrightarrow{f \star^0_U g} \scF \star^0_U \scG \to \scF \pstar^0_U \scG
\end{multline*}
where the last morphism is the natural truncation morphism. (Recall that $\scF \star^0_U \scG$ belongs to the nonpositive part of the perverse t-structure, and $\scF \pstar^0_U \scG$ is its degree-$0$ cohomology.)

\subsection{Statement}
\label{ss:statement-G/U}

Consider the category $\Coh_{\{(e,e)\}}(T^\vee_\bk \times_{T^\vee_\bk / \Wf} T^\vee_\bk)$ of coherent shea\-ves on $T^\vee_\bk \times_{T^\vee_\bk / \Wf} T^\vee_\bk$ which are supported set-theoretically on the closed subscheme $\{(e,e)\}$, which we identify with the category of finitely generated $\scO(T^\vee_\bk \times_{T^\vee_\bk / \Wf} T^\vee_\bk)$-modules on which a power of the ideal $\mathcal{I}$ (see~\S\ref{ss:completions}), or equivalently a power of the ideal $\cJ$, acts trivially. This category is monoidal (without unit object) for the product $\circledast$ defined by
\[
M \circledast N := M \otimes_{\scO(T^\vee_\bk)} N.
\]
(Here in the tensor product the action on $M$ is induced by the projection $T^\vee_\bk \times_{T^\vee_\bk / \Wf} T^\vee_\bk \to T^\vee_\bk$ on the second factor, and the action on $N$ is induced by projection on the first factor; the action of $\scO(T^\vee_\bk \times_{T^\vee_\bk / \Wf} T^\vee_\bk)$ on the tensor product is the obvious one, defined in terms of the remaining actions.)

The following proposition is the promised ``finite variant'' of our main result. Its proof will be explained in the next subsection.

\begin{thm}
\label{thm:reg-quotient-finite}
The functor $\Hom_{\sfD^0_{U,U}}(\Pi^0_{U,U}(\Xi^\wedge_!), -)$ induces an equivalence of mo\-noidal abelian categories
\[
\Phi_{U,U} : \left( \sfP_{U,U}^0, \star^0_U \right) \simto \left( \Coh_{\{(e,e)\}}(T^\vee_\bk \times_{T^\vee_\bk / \Wf} T^\vee_\bk), \circledast \right).
\]
\end{thm}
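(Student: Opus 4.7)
The plan is to deduce the theorem from the already-established description of $(\sfT^\wedge_{U,U}, \hatstar_U)$ from~\cite{bezr}, bridging between the free-monodromic and regular-quotient settings via Lemma~\ref{lem:morph-Xiwedge-Pi}. The key input is that $\Hom_{\sfP^\wedge_{U,U}}(\Xi^\wedge_!, -)$ realizes a monoidal equivalence from $\sfT^\wedge_{U,U}$ to a category of $\scO(\fD^\wedge)$-bimodules, with product given by $\otimes_{\scO((T^\vee_\bk)^\wedge)}$, sending $\Xi^\wedge_!$ to $\scO(\fD^\wedge)$. Combined with projectivity of $\Xi^\wedge_!$ (Lemma~\ref{lem:Xi-projective}) and the derived equivalence $\Db \sfP^\wedge_{U,U} \cong \sfD^\wedge_{U,U}$, this computes $\Hom_{\sfP^\wedge_{U,U}}(\Xi^\wedge_!, -)$ on all of $\sfP^\wedge_{U,U}$.

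First I would verify that $\Phi_{U,U}$ takes values in $\Coh_0(\fD)$. For $\scF \in \sfP^0_{U,U}$ with a lift $\widetilde{\scF} \in \sfP_{U,U}$, Lemma~\ref{lem:morph-Xiwedge-Pi} yields a canonical isomorphism
\[
\Phi_{U,U}(\scF) \cong \Hom_{\sfP^\wedge_{U,U}}(\Xi^\wedge_!, \widetilde{\scF}),
\]
which is finitely generated over $\scO(\fD^\wedge)$ by~\cite{bezr}. Since $\widetilde{\scF}$ lies in the image of $\sfP_{U,U} \hookrightarrow \sfP^\wedge_{U,U}$, a power of the ideal $\cI$ of $(e,e) \in \fD$ annihilates its monodromy (by the combination of Lemmas~\ref{lem:subcat-completion-monodromy} and~\ref{lem:monodromy-fiber-prod}), so the module lands in $\Coh_0(\fD)$. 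Independence of the lift follows from the orthogonality $\Hom_{\sfP^\wedge_{U,U}}(\Xi^\wedge_!, \pi^\dag \For^{\Iw}_{\Iwu}(\IC_w)) = 0$ for $w \ne e$ provided by Lemma~\ref{lem:Xi-projective}.

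For fully faithfulness and essential surjectivity, the strategy is to use the cofinal family of truncations $\mathsf{C}_n(\Xi^\wedge_!) \in \sfP_{U,U}$ (see~\S\ref{ss:truncation-ps}): their images $\Pi^0_{U,U}(\mathsf{C}_n(\Xi^\wedge_!)) \in \sfP^0_{U,U}$ form a pro-progenerator whose individual members are projective in $\sfP^0_{U,U}$ (inherited from projectivity of $\Xi^\wedge_!$ modulo $\cI^n$), and are sent by $\Phi_{U,U}$ to the free modules $\scO(\fD)/\cI^n$. Given any $\scF \in \sfP^0_{U,U}$, choose $n$ large so that $\cI^n$ annihilates $\Phi_{U,U}(\scF)$, and choose a finite projective presentation
\[
\bigoplus_i \Pi^0_{U,U}(\mathsf{C}_n(\Xi^\wedge_!)) \to \bigoplus_j \Pi^0_{U,U}(\mathsf{C}_n(\Xi^\wedge_!)) \to \scF \to 0;
\]
applying $\Phi_{U,U}$ yields a free presentation of $\Phi_{U,U}(\scF)$ over $\scO(\fD)/\cI^n$. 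Full faithfulness on the building blocks (which reduces via Lemma~\ref{lem:morph-Xiwedge-Pi} to the known case in $\sfP^\wedge_{U,U}$) together with right-exactness on both sides then yields full faithfulness on $\scF$; essential surjectivity follows by running the same argument in reverse, lifting a free presentation on the coherent side.

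The main obstacle, and the step requiring the most delicate verification, is to show that the monoidal structure induced by~\eqref{eqn:morph-monoidality-DUU} corresponds to $\circledast = \otimes_{\scO(T^\vee_\bk)}$ on $\Coh_0(\fD)$. The construction relies on the morphism $\xi : \Xi^\wedge_! \to \Xi^\wedge_! \hatstar_U \Xi^\wedge_!$ defined via the identity element in the isomorphism~\eqref{eqn:Hom-Xi-monoidal}; tracing this through $\Pi^0_{U,U}$ and Lemma~\ref{lem:morph-Xiwedge-Pi}, one needs to check that the resulting pairing on Hom-spaces agrees with tensor product over $\scO(T^\vee_\bk)$. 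This amounts to a compatibility statement between the free-monodromic monoidal equivalence of~\cite{bezr} (which identifies convolution with tensor over $\scO((T^\vee_\bk)^\wedge)$) and the interplay of left and right $T$-monodromy on convolution products, governed by the decomposition $\scO(\fD^\wedge) \cong \scO((T^\vee_\bk)^\wedge) \otimes_{\scO((T^\vee_\bk)^\wedge)^{\Wf}} \scO((T^\vee_\bk)^\wedge)$ from Lemma~\ref{lem:Dw-fiber-prod}\eqref{it:Dw-fiber-prod-2}. Once this core compatibility is pinned down, associativity and functoriality of the monoidal structure follow formally from the corresponding properties on the free-monodromic side.
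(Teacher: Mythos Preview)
Your overall strategy matches the paper's: reduce to truncated versions $\Xi_!^{(m)} = \mathsf{C}_m(\Xi^\wedge_!)$, identify their endomorphism algebras via monodromy and~\cite{bezr}, and bootstrap the equivalence and its monoidal structure from these generators to arbitrary objects via presentations. The use of Lemma~\ref{lem:morph-Xiwedge-Pi} to pass between the regular quotient and the free-monodromic category is exactly right.

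There is, however, one concrete error that you should fix. The objects $\Pi^0_{U,U}(\mathsf{C}_n(\Xi^\wedge_!))$ are \emph{not} projective in $\sfP^0_{U,U}$. Already in the toy model of finite-dimensional $k[x]$-modules with $x$ nilpotent, the module $k[x]/x^n$ is not projective: the identity map does not lift along $k[x]/x^{n+1} \twoheadrightarrow k[x]/x^n$. What \emph{is} true is that $\Pi^0_{U,U}(\Xi_!^{(m)})$ is projective in the full subcategory $\sfP^{(m),0}_{U,U} \subset \sfP^0_{U,U}$ of objects on which $\cJ^m$ acts trivially, and is in fact the projective cover of the unique simple there. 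The paper makes this explicit by introducing the filtration $\sfP^{(m),0}_{U,U}$ and proving the equivalence level by level: one shows $\End(\Pi^0_{U,U}(\Xi_!^{(m)})) \cong \scO(\fD^{(m)})$, deduces $\sfP^{(m),0}_{U,U} \simto \Coh(\fD^{(m)})$ by the standard projective-generator argument, and then passes to the direct limit over $m$. Your argument becomes correct once you replace ``projective in $\sfP^0_{U,U}$'' by ``projective in $\sfP^{(m),0}_{U,U}$'' and organize the presentation step inside that subcategory; this is implicitly what you are doing when you choose $n$ large enough to annihilate $\scF$. (A minor related point: the truncation $\mathsf{C}_m$ is defined using $\cJ$, not $\cI$, so $\Phi_{U,U}(\Pi^0_{U,U}(\Xi_!^{(m)})) \cong \scO(\fD^{(m)}) = \scO(\fD)/\cJ^m \cdot \scO(\fD)$ rather than $\scO(\fD)/\cI^m$; the two filtrations are cofinal, so this does not affect the limit statement, but it matters for the intermediate identifications.)

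Your treatment of monoidality is on the right track and matches the paper: one first checks that~\eqref{eqn:morph-monoidality-DUU} is an isomorphism for $\scF = \scG = \Pi^0_{U,U}(\Xi_!^{(m)})$ by unwinding to the free-monodromic isomorphism~\eqref{eqn:Hom-Xi-monoidal}, and then propagates to arbitrary $\scF,\scG$ by right-exactness on both sides and the presentations just constructed.
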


\subsection{``Truncated'' version}
\label{ss:truncation-PUU}

For $m \geq 1$ we consider the affine scheme
\[
\bigl( T^\vee_\bk \times_{T^\vee_\bk / \Wf} T^\vee_\bk \bigr)^{(m)} :=
 \Spec \bigl( \scO(T^\vee_\bk \times_{T^\vee_\bk / \Wf} T^\vee_\bk) / \mathcal{J}^m \cdot \scO(T^\vee_\bk \times_{T^\vee_\bk / \Wf} T^\vee_\bk) \bigr).
\]
Pushforward along the closed embedding $\bigl( T^\vee_\bk \times_{T^\vee_\bk / \Wf} T^\vee_\bk \bigr)^{(m)} \to T^\vee_\bk \times_{T^\vee_\bk / \Wf} T^\vee_\bk$ provides
a fully faithful functor
\[
\Coh \bigl( ( T^\vee_\bk \times_{T^\vee_\bk / \Wf} T^\vee_\bk)^{(m)} \bigr) \to \Coh_{\{(e,e)\}}(T^\vee_\bk \times_{T^\vee_\bk / \Wf} T^\vee_\bk),
\]
and it is clear from definitions that the product $\circledast$ restricts to a monoidal product on $\Coh(( T^\vee_\bk \times_{T^\vee_\bk / \Wf} T^\vee_\bk)^{(m)})$. In fact, 
this collection of functors realizes the category $\Coh_{\{(e,e)\}}(T^\vee_\bk \times_{T^\vee_\bk / \Wf} T^\vee_\bk)$ as the direct limit of its subcategories $\Coh(( T^\vee_\bk \times_{T^\vee_\bk / \Wf} T^\vee_\bk)^{(m)})$, in a way compatible with the monoidal product.

On the constructible side, again for $m \geq 1$ we will
denote by $\sfP_{U,U}^{(m)}$ the full abelian subcategory of $\sfP_{U,U}$ whose objects are the perverse sheaves such that the monodromy action of $\scO(T^\vee_\bk / \Wf)$ (see~\S\ref{ss:tilting-perv}) vanishes on $\mathcal{J}^m$. 
This subcategory contains all the simple objects of $\sfP_{U,U}$, and is stable under subquotients (but not under extensions). If we denote by $\sfP_{U,U}^{(m),0}$ the Serre quotient of $\sfP_{U,U}^{(m)}$ by the Serre subcategory generated by the simple objects $\pi^\dag \For^{\Iw}_{\Iwu}(\IC_w)$ with $w \in \Wf \smallsetminus \{e\}$, then we have a natural fully faithful functor
\[
\sfP_{U,U}^{(m),0} \to \sfP_{U,U}^{0}.
\]
The essential image of this functor can be described as follows. It is clear that monodromy induces, for any $\scF$ in $\sfP_{U,U}^{0}$, a canonical morphism
\[
\mu^0_\scF : \scO(T^\vee_\bk \times_{T^\vee_\bk / \Wf} T^\vee_\bk) \to \End_{\sfP_{U,U}^0}(\scF).
\]
Then the essential image of $\sfP_{U,U}^{(m),0}$ in $\sfP_{U,U}^{0}$ identifies with the subcategory consisting of objects such that $\mu^0_\scF$ vanishes on $\cJ^m$. Indeed, any object in this essential image clearly satisfies this property. On the other hand, if $\mu^0_\scF$ vanishes on $\cJ^m$, writing $\scF=\Pi^0_{U,U}(\scG)$ for some $\scG$ in $\sfP_{U,U}$, we see that the surjection $\scG \twoheadrightarrow \scG / \cJ^m \cdot \scG$ becomes an isomorphism after application of $\Pi^0_{U,U}$, and obviously $\scG / \cJ^m \cdot \scG$ belongs to $\sfP^{(m)}_{U,U}$.

Using~\eqref{eqn:monodromy-convolution-1}--\eqref{eqn:monodromy-convolution-2} one sees that the convolution product $\star^0_U$ restricts to a monoidal product on $\sfP_{U,U}^{(m),0}$. In this way we realize the category $\sfP_{U,U}^{0}$ as the direct limit of its subcategories $\sfP_{U,U}^{(m),0}$, in a way compatible with the monoidal product. It is clear that the restriction of the quotient functor $\Pi^0_{U,U}$ to $\sfP_{U,U}^{(m)}$ takes values in $\sfP_{U,U}^{(m),0}$, and identifies with the quotient functor $\sfP_{U,U}^{(m)} \to \sfP_{U,U}^{(m),0}$.

From these considerations we see that Theorem~\ref{thm:reg-quotient-finite} is a corollary of the following statement.

\begin{prop}
\label{prop:reg-quotient-finite-m}
For any $m \geq 1$,
the functor $\Hom_{\sfD^0_{U,U}}(\Pi^0_{U,U}(\Xi^\wedge_!), -)$ induces an equivalence of abelian categories
\[
\sfP_{U,U}^{(m),0} \simto \Coh \bigl( (T^\vee_\bk \times_{T^\vee_\bk / \Wf} T^\vee_\bk)^{(m)} \bigr).
\]
Moreover these equivalences admit structures of monoidal functors compatible in the obvious way with the natural embeddings
\[
\sfP_{U,U}^{(m),0} \to \sfP_{U,U}^{(m'),0}, \qquad \Coh \bigl( (T^\vee_\bk \times_{T^\vee_\bk / \Wf} T^\vee_\bk)^{(m)} \bigr) \to \Coh \bigl( (T^\vee_\bk \times_{T^\vee_\bk / \Wf} T^\vee_\bk)^{(m')} \bigr)
\]
when $m \leq m'$.
\end{prop}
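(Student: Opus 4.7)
My plan is to establish the equivalence by exhibiting a projective progenerator of $\sfP^{(m),0}_{U,U}$ whose endomorphism ring is $\scO(\fD^{(m)})$, and then invoking Morita-style reasoning; the endomorphism ring calculation will be a consequence of the description of the completed category $\sfP^\wedge_{U,U}$ obtained in~\cite{bezr}.

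First I would verify that the functor is well-defined and exact. Using Lemma~\ref{lem:morph-Xiwedge-Pi}, whenever $\scF = \Pi^0_{U,U}(\scG)$ with $\scG \in \sfP^{(m)}_{U,U}$, we can rewrite
\[
\Hom_{\sfD^0_{U,U}}(\Pi^0_{U,U}(\Xi^\wedge_!), \scF) \cong \Hom_{\sfP^\wedge_{U,U}}(\Xi^\wedge_!, \scG).
\]
The right monodromy action of $\scO(T^\vee_\bk)$ on $\scG$ equips this space with an $\scO(\fD)$-module structure (using Lemma~\ref{lem:monodromy-fiber-prod} and the description of $\scO(\fD^\wedge)$ as bimodules on which the actions of $\scO((T^\vee_\bk/\Wf)^\wedge)$ agree) that factors through $\scO(\fD^{(m)})$ by definition of $\sfP^{(m)}_{U,U}$. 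Finite generation reduces, by d\'evissage along the (finite) length filtration of $\scG$, to the case of the simple objects $\pi^\dag \For^{\Iw}_{\Iwu}(\IC_w)$ with $w \in \Wf$, where the Hom is either $\bk$ (for $w = e$) or zero (for $w \ne e$) by Lemma~\ref{lem:Xi-projective}. Exactness of our functor on $\sfP^{(m),0}_{U,U}$ follows directly from the projectivity of $\Xi^\wedge_!$ in $\sfP^\wedge_{U,U}$ (Lemma~\ref{lem:Xi-projective}), which implies exactness of $\Hom_{\sfP^\wedge_{U,U}}(\Xi^\wedge_!, -)$ on $\sfP^{(m)}_{U,U}$.

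Next I would construct the quasi-inverse via a ``truncated progenerator.'' Set $P^{(m)} := \mathsf{C}_m(\Xi^\wedge_!) \in \sfP^{(m)}_{U,U}$, where $\mathsf{C}_m$ is the truncation functor from~\S\ref{ss:truncation-ps}. The monodromy morphism provides a ring map $\scO(\fD^{(m)}) \to \End_{\sfP^{(m)}_{U,U}}(P^{(m)})$, and the key claim is that the induced map $\scO(\fD^{(m)}) \to \End_{\sfD^0_{U,U}}(\Pi^0_{U,U}(P^{(m)}))$ is an isomorphism; this follows from the corresponding identification $\scO(\fD^\wedge) \simto \End_{\sfP^\wedge_{U,U}}(\Xi^\wedge_!)$ established in~\cite{bezr} together with Lemma~\ref{lem:morph-Xiwedge-Pi} and the fact that quotienting by the Serre subcategory generated by the $\IC_w$ with $w \ne e$ does not change the Hom out of $\Xi^\wedge_!$. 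The quasi-inverse is then the functor $M \mapsto \Pi^0_{U,U}(P^{(m)}) \otimes_{\scO(\fD^{(m)})} M$, interpreted as in~\S\ref{ss:mod-cat}. Fully faithfulness and essential surjectivity of $\Phi_{U,U}$ restricted to $\sfP^{(m),0}_{U,U}$ then reduce to checking that the unit and counit of the adjunction are isomorphisms; the unit is an isomorphism because $P^{(m)}$ represents the functor in question with endomorphism ring $\scO(\fD^{(m)})$, and the counit is verified on generators (the simple object $\delta^0 = \Pi^0_{U,U}(\IC_e)$ and suitable extensions) by explicit computation.

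The main obstacle will be to verify that the monoidal structure defined in~\S\ref{ss:towards-monoidal-U} via the canonical morphism $\xi : \Xi^\wedge_! \to \Xi^\wedge_! \hatstar_U \Xi^\wedge_!$ corresponds, under the equivalence, to the tensor product $\circledast$ on $\Coh(\fD^{(m)})$. This amounts to showing that the image under $\Hom(\Xi^\wedge_!, -)$ of the composition built from $\xi$ recovers the obvious multiplication map $M \otimes_{\scO(T^\vee_\bk)} N \to \Hom(\Xi^\wedge_!, P^{(m)} \star^0_U P^{(m)}) \otimes \cdots$ for $M, N \in \Coh(\fD^{(m)})$. This will be deduced from the monoidal compatibility~\eqref{eqn:Hom-Xi-monoidal} established in~\cite{bezr} on tilting perverse sheaves, by writing an arbitrary object of $\sfP^{(m),0}_{U,U}$ as the image under $\Pi^0_{U,U}$ of a quotient of a direct sum of copies of $P^{(m)}$ (which is tilting, as a summand of $\Xi^\wedge_!$) and tracing the construction of~\eqref{eqn:morph-monoidality-DUU} through this presentation. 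Finally, compatibility with the embeddings when $m \le m'$ is then automatic from the naturality of all constructions in $m$.
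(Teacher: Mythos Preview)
Your proposal is correct and follows essentially the same approach as the paper: you introduce the truncated object $P^{(m)} = \mathsf{C}_m(\Xi^\wedge_!)$ (denoted $\Xi_!^{(m)}$ in the paper), compute its endomorphism ring via the isomorphism $\scO(\fD^\wedge) \simto \End(\Xi^\wedge_!)$ from~\cite{bezr}, identify it as a projective generator of $\sfP^{(m),0}_{U,U}$, and then handle monoidality by reducing to the case $\scF = \scG = \Pi^0_{U,U}(P^{(m)})$ via presentations. One small correction: $P^{(m)}$ is a \emph{quotient} of $\Xi^\wedge_!$, not a summand, and it is not tilting in any sense used here; the paper instead invokes Lemma~\ref{eqn:isom-convolution-Xim} to identify $\Pi^0_{U,U}(\mathsf{C}_m(\Xi^\wedge_! \hatstar_U \Xi^\wedge_!))$ with $\Pi^0_{U,U}(\Xi_!^{(m)}) \pstar^0_U \Pi^0_{U,U}(\Xi_!^{(m)})$, which is the bridge you need between the completed monoidal structure~\eqref{eqn:Hom-Xi-monoidal} and the truncated one.
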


In the proof of this proposition we will consider, for $m \geq 1$, the object
\[
\Xi_!^{(m)} := \mathsf{C}_m(\Xi^\wedge_!) \quad \in \sfP_{U,U}.
\]
It is clear that this object belongs to the subcategory $\sfP_{U,U}^{(m)}$.

\begin{lem}
\label{eqn:isom-convolution-Xim}
 For any $m \geq 1$, there exists a canonical isomorphism
 \[
  \Pi^0_{U,U} \bigl( \mathsf{C}_m(\Xi^\wedge_! \hatstar_U \Xi^\wedge_!) \bigr) \cong \Pi^0_{U,U}(\Xi_!^{(m)}) \pstar^0_U \Pi^0_{U,U}(\Xi_!^{(m)}).
 \]
\end{lem}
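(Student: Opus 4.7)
The strategy is to produce a common intermediate perverse sheaf $\Xi^\wedge_! \hatstar_U \Xi_!^{(m)} \in \sfP_{U,U}$ through which both sides of the claimed isomorphism factor. First, I would exhibit a canonical isomorphism $\mathsf{C}_m(\Xi^\wedge_! \hatstar_U \Xi^\wedge_!) \cong \Xi^\wedge_! \hatstar_U \Xi_!^{(m)}$ in $\sfP_{U,U}$. Setting $K := \ker(\Xi^\wedge_! \twoheadrightarrow \Xi_!^{(m)}) \in \sfP^\wedge_{U,U}$, the $G/U$-analogue of Lemma~\ref{lem:exactness-convolution-Xi} asserts that $\Xi^\wedge_! \hatstar_U (-)$ is t-exact on $\sfD^\wedge_{U,U}$; applying it to the short exact sequence $0 \to K \to \Xi^\wedge_! \to \Xi_!^{(m)} \to 0$ yields a short exact sequence
\[
0 \to \Xi^\wedge_! \hatstar_U K \to \Xi^\wedge_! \hatstar_U \Xi^\wedge_! \to \Xi^\wedge_! \hatstar_U \Xi_!^{(m)} \to 0
\]
in $\sfP_{U,U}$. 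Using the compatibility of monodromy with convolution~\eqref{eqn:monodromy-convolution-1}--\eqref{eqn:monodromy-convolution-2}, the subobject $\Xi^\wedge_! \hatstar_U K$ coincides with the image of the right-monodromy action of $\cJ^m \cdot \scO(T^\vee_\bk)$ on $\Xi^\wedge_! \hatstar_U \Xi^\wedge_!$, which is precisely the subobject factored out by $\mathsf{C}_m$.

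Next I would convolve the distinguished triangle $K \to \Xi^\wedge_! \to \Xi_!^{(m)} \to [1]$ with $\Xi_!^{(m)}$ on the right and take perverse cohomology. Since the middle term $\Xi^\wedge_! \hatstar_U \Xi_!^{(m)}$ is perverse (by the t-exactness just used) and the outer terms sit in non-positive perverse degrees (cf.~the right t-exactness statements in~\S\ref{ss:cat-sheaves-G/U}), the long exact sequence of perverse cohomology yields a surjection
\[
\alpha : \Xi^\wedge_! \hatstar_U \Xi_!^{(m)} \twoheadrightarrow \pH^0(\Xi_!^{(m)} \star_U \Xi_!^{(m)})
\]
in $\sfP_{U,U}$, with $\ker(\alpha)$ equal to the image of $\pH^0(K \hatstar_U \Xi_!^{(m)})$. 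Applying the t-exact quotient functor $\Pi^0_{U,U}$ will then give the claimed isomorphism, provided that $\ker(\alpha) \in \sfP_{U,U}^+$.

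This last vanishing is the main obstacle. My plan is to evaluate $\alpha$ under the functor $\Hom_{\sfD^0_{U,U}}(\Pi^0_{U,U}(\Xi^\wedge_!),-)$, which coincides with $\Hom_{\sfP^\wedge_{U,U}}(\Xi^\wedge_!,-)$ on $\sfP_{U,U}$ by Lemma~\ref{lem:morph-Xiwedge-Pi} and is exact thanks to the projectivity of $\Xi^\wedge_!$ in $\sfP^\wedge_{U,U}$ (Lemma~\ref{lem:Xi-projective}). Applying this exact functor to the short exact sequence $0 \to K \to \Xi^\wedge_! \to \Xi_!^{(m)} \to 0$ gives the identification $\Hom(\Xi^\wedge_!, \Xi_!^{(m)}) \cong \scO(\fD^{(m)})$; invoking the naturality in the second variable of the canonical monoidal factorization~\eqref{eqn:Hom-Xi-monoidal} applied to the projection $\Xi^\wedge_! \twoheadrightarrow \Xi_!^{(m)}$ then identifies both $\Hom(\Xi^\wedge_!, \Xi^\wedge_! \hatstar_U \Xi_!^{(m)})$ and $\Hom(\Xi^\wedge_!, \pH^0(\Xi_!^{(m)} \star_U \Xi_!^{(m)}))$ with $\scO(\fD^{(m)}) \otimes_{\scO((T^\vee_\bk)^\wedge)} \scO(\fD^{(m)})$ in a manner compatible with $\alpha$, so that $\Hom(\Xi^\wedge_!,\alpha)$ is an isomorphism. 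Since any $N \in \sfP_{U,U}$ with $\Hom(\Xi^\wedge_!, N) = 0$ must lie in $\sfP_{U,U}^+$ (by projectivity of $\Xi^\wedge_!$ together with the nonvanishing $\Hom(\Xi^\wedge_!, \pi^\dag \For^{\Iw}_{\Iwu}(\IC_e)) = \bk$ from Lemma~\ref{lem:Xi-projective}), applying this to $\ker(\alpha)$ gives the desired vanishing.
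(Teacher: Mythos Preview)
Your first step---identifying $\mathsf{C}_m(\Xi^\wedge_! \hatstar_U \Xi^\wedge_!)$ with $\Xi^\wedge_! \hatstar_U \Xi_!^{(m)}$ via t-exactness of $\Xi^\wedge_! \hatstar_U (-)$---is correct and matches the paper. The remaining task, showing that the natural map $\Pi^0_{U,U}(\Xi^\wedge_! \hatstar_U \Xi_!^{(m)}) \to \Pi^0_{U,U}(\Xi_!^{(m)}) \pstar^0_U \Pi^0_{U,U}(\Xi_!^{(m)})$ is an isomorphism, is also the paper's remaining task, but your route has two gaps.

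First, in step~3 you claim that $K \hatstar_U \Xi_!^{(m)}$ and $\Xi_!^{(m)} \star_U \Xi_!^{(m)}$ lie in nonpositive perverse degrees in $\sfD_{U,U}$. The right t-exactness statements you cite from \S\ref{ss:cat-sheaves-G/U} (the analogue of Lemma~\ref{lem:t-exactness-conv-tFl}) hold only \emph{after} applying $\Pi^0_{U,U}$; in $\sfD_{U,U}$ itself the positive perverse cohomology can be nonzero (it merely lands in $\sfP^+_{U,U}$, by Lemma~\ref{lem:P+-ideal}\eqref{it:lem:P+-ideal-2}). This is fixable by reformulating the argument in $\sfP^0_{U,U}$.

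The more serious gap is in step~5. You need to compute $\Hom(\Xi^\wedge_!, \pH^0(\Xi_!^{(m)} \star_U \Xi_!^{(m)}))$, and you invoke ``naturality'' of the monoidal factorization~\eqref{eqn:Hom-Xi-monoidal}. But that factorization is established only on $\sfT^\wedge_{U,U}$, and $\Xi_!^{(m)}$ is not tilting. Extending it by naturality along the surjection $\Xi^\wedge_! \twoheadrightarrow \Xi_!^{(m)}$ works for the \emph{second} convolution factor (because $\Xi^\wedge_! \hatstar_U(-)$ is t-exact), but doing the same for the first factor requires knowing that $\Xi^\wedge_! \hatstar_U \Xi_!^{(m)} \to \pH^0(\Xi_!^{(m)} \star_U \Xi_!^{(m)})$ is surjective with the expected kernel---which is precisely what you are trying to prove.

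The paper avoids both issues by reusing the monodromy observation you already made in step~1. It applies the right exact functor $(-) \phatstar^0_U \Pi^0_{U,U}(\Xi_!^{(m)}) : \sfP^\wedge_{U,U} \to \sfP^0_{U,U}$ to the presentation $(\Xi^\wedge_!)^{\oplus r} \to \Xi^\wedge_! \to \Xi_!^{(m)} \to 0$ (with the first map given by generators of $\cJ^m$ acting via monodromy). The resulting first map is $\mu(f_i) \hatstar \id$ on $\Xi^\wedge_! \hatstar_U \Xi_!^{(m)}$, which by~\eqref{eqn:monodromy-convolution-1}--\eqref{eqn:monodromy-convolution-2} equals $\id \hatstar \mu(f_i)$ and hence vanishes since $\cJ^m$ acts trivially on $\Xi_!^{(m)}$. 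The second map is therefore an isomorphism, and no $\Hom$ computation is needed.
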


\begin{proof}
 Since the functor $\Xi^\wedge_! \hatstar_U (-)$ is t-exact (see Lemma~\ref{lem:exactness-convolution-Xi}) the morphism
\[
\Xi_!^\wedge \hatstar_U \Xi_!^\wedge \to \Xi_!^\wedge \hatstar_U \Xi_!^{(m)}
\]
is surjective, and identifies the right-hand side with
$\mathsf{C}_m(\Xi_!^\wedge \hatstar_U \Xi_!^\wedge)$.
On the other hand, by definition we have
\[
 \Pi^0_{U,U}(\Xi_!^{(m)}) \pstar^0_U \Pi^0_{U,U}(\Xi_!^{(m)}) \cong \pH^0(\Pi^0_{U,U}(\Xi_!^{(m)} \star_U \Xi_!^{(m)})).
\]
What we have to construct is therefore a canonical isomorphism
\[
 \Pi^0_{U,U} ( \Xi_!^\wedge \hatstar_U \Xi_!^{(m)}) \cong \pH^0(\Pi^0_{U,U}(\Xi_!^{(m)} \star_U \Xi_!^{(m)})).
\]

A choice of a family of $r$ generators of the ideal $\mathcal{J}^m$ defines an exact sequence
\[
(\Xi_!^\wedge)^{\oplus r} \to \Xi_!^\wedge \to \Xi_!^{(m)} \to 0.
\]
Now the functor
\[
 \pH^0(\Pi^0_{U,U}( (-) \hatstar_U \Xi_!^{(m)})) = (-) \phatstar^0_U \Pi^0_{U,U}(\Xi_!^{(m)}) : \sfP^\wedge_{U,U} \to \sfP^0_{U,U}
\]
is right exact (see~\S\ref{ss:cat-sheaves-G/U}); we therefore deduce an exact squence
\[
\Pi^0_{U,U}((\Xi_!^\wedge)^{\oplus r} \hatstar_U \Xi_!^{(m)}) \to \Pi^0_{U,U}(\Xi_!^\wedge \hatstar_U \Xi_!^{(m)}) \to \pH^0(\Pi^0_{U,U}(\Xi_!^{(m)} \star_U \Xi_!^{(m)})) \to 0.
\]
Using~\eqref{eqn:monodromy-convolution-1}--\eqref{eqn:monodromy-convolution-2} one sees that the first morphism in this sequence vanishes, which shows that the second morphism is an isomorphism and finishes the proof.
\end{proof}

%

\begin{proof}[Proof of Proposition~\ref{prop:reg-quotient-finite-m}]
%
%
%

We claim that for $m \geq 1$ the morphism $\mu_{\Xi_!^{(m)}}$ factors through an isomorphism
\begin{equation}
\label{eqn:End-Xi-m}
\scO((T^\vee_\bk \times_{T^\vee_\bk / \Wf} T^\vee_\bk)^{(m)}) \simto \End_{\sfP_{U,U}}(\Xi_!^{(m)}).
\end{equation}
In fact, since by definition the action of $\scO((T^\vee_\bk \times_{T^\vee_\bk / \Wf} T^\vee_\bk)^{(m)})$ on $\Xi_!^{(m)}$ vanishes on $\mathcal{J}^m$ we have
\[
\End_{\sfP_{U,U}}(\Xi_!^{(m)}) \cong \Hom_{\sfP_{U,U}^\wedge}(\Xi^\wedge_!,\Xi_!^{(m)}).
\]
Now by projectivity of $\Xi^\wedge_!$ (see Lemma~\ref{lem:Xi-projective}) we have
\[
\Hom_{\sfP_{U,U}^\wedge}(\Xi^\wedge_!,\Xi_!^{(m)}) \cong \End_{\sfP_{U,U}^\wedge}(\Xi^\wedge_!) \otimes_{\scO(T^\vee_\bk \times_{T^\vee_\bk / \Wf} T^\vee_\bk)} \scO((T^\vee_\bk \times_{T^\vee_\bk / \Wf} T^\vee_\bk)^{(m)}).
\]
Finally, by~\cite[Theorem~9.1]{bezr} and the comments in~\S\ref{ss:towards-monoidal-U}, $\mu_{\Xi_!^\wedge}$ induces an algebra isomorphism
\begin{equation}
\label{eqn:isom-End-Xi}
\scO(\FN_{T^\vee_\bk \times_{T^\vee_\bk / \Wf} T^\vee_\bk}(\{(e,e)\})) \simto \End(\Xi^\wedge_!).
\end{equation}
We deduce that~\eqref{eqn:End-Xi-m} is an isomorphism, as desired.

For any $\scF$ in $\sfP_{U,U}^{(m)}$ we have
\begin{equation*}
\Hom_{\sfP^\wedge_{U,U}}(\Xi^\wedge_!,\scF) = \Hom_{\sfP^{(m)}_{U,U}}(\Xi^{(m)}_!,\scF).
\end{equation*}
From this and Lemma~\ref{lem:Xi-projective} we deduce that
$\Xi_!^{(m)}$ is the projective cover of the simple object $\pi^\dag \For^{\Iw}_{\Iwu}(\IC_e)$ in $\sfP^{(m)}_{U,U}$, which implies that the image $\Pi^0_{U,U}(\Xi_!^{(m)})$ of $\Xi_!^{(m)}$ in $\sfP_{U,U}^{(m),0}$ is the projective cover of the unique simple object in this category; by standard arguments this implies that the functor
\[
 \scF \mapsto \Hom_{\sfP^{(m),0}_{U,U}}(\Pi^0_{U,U}(\Xi^{(m)}_!),\scF)
\]
induces an equivalence of categories
\begin{equation}
\label{eqn:Hom-Xi-m}
\sfP_{U,U}^{(m),0} \simto \Modr^{\mathrm{fg}} \bigl( \End_{\sfP_{U,U}^{(m),0}}(\Pi^0_{U,U}(\Xi_!^{(m)})) \bigr).
\end{equation}
Now since $\Xi^{(m)}_!$ is projective in $\sfP^{(m)}_{U,U}$ with unique simple quotient $\pi^\dag \For^{\Iw}_{\Iwu}(\IC_e)$, for any $\scF$ in $\sfP^{(m)}_{U,U}$ the morphism
\[
 \Hom_{\sfP^{(m)}_{U,U}}(\Xi^{(m)}_!,\scF) \to \Hom_{\sfP^{(m),0}_{U,U}} \bigl( \Pi^0_{U,U}(\Xi^{(m)}_!),\Pi^0_{U,U}(\scF) \bigr)
\]
induced by $\Pi^0_{U,U}$
is an isomorphism. Using the isomorphism~\eqref{eqn:End-Xi-m} this allows to identify the algebra $\End_{\sfP_{U,U}^{(m),0}}(\Pi^0_{U,U}(\Xi_!^{(m)}))$ with $\scO((T^\vee_\bk \times_{T^\vee_\bk / \Wf} T^\vee_\bk)^{(m)})$, hence 
the category $\Modr^{\mathrm{fg}}(\End_{\sfP_{U,U}^{(m),0}}(\Xi_!^{(m)}))$ with $\Coh((T^\vee_\bk \times_{T^\vee_\bk / \Wf} T^\vee_\bk)^{(m)})$.

We claim that the functor
\[
 \Hom_{\sfP^{(m),0}_{U,U}}(\Pi^0_{U,U}(\Xi^{(m)}_!), -) : \sfP_{U,U}^{(m),0} \to \Mod(\scO(T^\vee_\bk \times_{T^\vee_\bk / \Wf} T^\vee_\bk))
\]
identifies canonically with the restriction of
\[
 \Hom_{\sfD^0_{U,U}}(\Pi^0_{U,U}(\Xi^\wedge_!), -) : \sfP_{U,U}^{0} \to \Mod(\scO(T^\vee_\bk \times_{T^\vee_\bk / \Wf} T^\vee_\bk))
\]
to $\sfP_{U,U}^{(m),0}$. In fact, to prove this claim it suffices to construct, for $\scF$ in $\sfP^{(m)}_{U,U}$, a functorial isomorphism
\[
 \Hom_{\sfP^{(m),0}_{U,U}}(\Pi^0_{U,U}(\Xi^{(m)}_!), \Pi^0_{U,U}(\scF)) \cong \Hom_{\sfD^0_{U,U}}(\Pi^0_{U,U}(\Xi^\wedge_!), \Pi^0_{U,U}(\scF)).
\]
And for this, as explained above the left-hand side identifies canonically with $\Hom_{\sfP^{(m)}_{U,U}}(\Xi^{(m)}_!,\scF)$, and then with $\Hom_{\sfP^{\wedge}_{U,U}}(\Xi^{\wedge}_!,\scF)$. The desired identification is therefore provided by Lemma~\ref{lem:morph-Xiwedge-Pi}. This isomorphism and the considerations above show that the restriction of the functor $\Hom_{\sfD^0_{U,U}}(\Pi^0_{U,U}(\Xi^\wedge_!), -)$ to $\sfP_{U,U}^{(m),0}$ takes values in $\Coh((T^\vee_\bk \times_{T^\vee_\bk / \Wf} T^\vee_\bk)^{(m)})$, and induces an equivalence between these categories.


To conclude the proof, we have to construct compatible monoidal structures on our equivalences, which will be done if we prove that~\eqref{eqn:morph-monoidality-DUU} is an isomorphism for any $\scF,\scG$ in $\sfP^{(m),0}_{U,U}$.

First, consider the case $\scF=\scG=\Pi^0_{U,U}(\Xi_!^{(m)})$. In this case we have seen that
\begin{equation}
\label{eqn:Hom-Xi-Xi-m-1}
\Hom_{\sfD^0_{U,U}}(\Pi^0_{U,U}(\Xi^\wedge_!), \Pi^0_{U,U}(\Xi_!^{(m)})) \cong \scO((T^\vee_\bk \times_{T^\vee_\bk / \Wf} T^\vee_\bk)^{(m)}).
\end{equation}
On the other hand, by Lemma~\ref{eqn:isom-convolution-Xim} we have
\begin{multline*}
 \Hom_{\sfD^0_{U,U}}(\Pi^0_{U,U}(\Xi^\wedge_!), \Pi^0_{U,U}(\Xi_!^{(m)}) \pstar^0_U \Pi^0_{U,U}(\Xi_!^{(m)})) \\
 \cong \Hom_{\sfD^0_{U,U}}(\Pi^0_{U,U}(\Xi^\wedge_!),\Pi^0_{U,U} ( \mathsf{C}_m(\Xi^\wedge_! \hatstar_U \Xi^\wedge_!) ) ),
\end{multline*}
and by Lemma~\ref{lem:morph-Xiwedge-Pi} the right-hand side identifies with
\[
 \Hom_{\sfP_{U,U}^\wedge}(\Xi^\wedge_!, \mathsf{C}_m((\Xi^\wedge_! \hatstar_U \Xi^\wedge_!) )).
\]
By projectivity of $\Xi^\wedge_!$ (see Lemma~\ref{lem:Xi-projective}) this space identifies with
\begin{equation*}
\Hom_{\sfP^\wedge_{U,U}}(\Xi^\wedge_!,\Xi_!^\wedge \hatstar_U \Xi_!^\wedge) \otimes_{\scO(T^\vee_\bk \times_{T^\vee_\bk / \Wf} T^\vee_\bk)} \scO((T^\vee_\bk \times_{T^\vee_\bk / \Wf} T^\vee_\bk)^{(m)}),
\end{equation*}
which itself, in view of~\eqref{eqn:Hom-Xi-monoidal} and~\eqref{eqn:isom-End-Xi}, identifies with
\begin{multline*}
 \bigl( \scO(T^\vee_\bk \times_{T^\vee_\bk / \Wf} T^\vee_\bk) \otimes_{\scO(T^\vee_\bk)} \scO(T^\vee_\bk \times_{T^\vee_\bk / \Wf} T^\vee_\bk) \bigr) \\
 \otimes_{\scO(T^\vee_\bk \times_{T^\vee_\bk / \Wf} T^\vee_\bk)} \scO((T^\vee_\bk \times_{T^\vee_\bk / \Wf} T^\vee_\bk)^{(m)}).
\end{multline*}
It is easily seen that under these identifications the morphism~\eqref{eqn:morph-monoidality-DUU} identifies with the natural isomorphism
\begin{multline*}
 \scO((T^\vee_\bk \times_{T^\vee_\bk / \Wf} T^\vee_\bk)^{(m)}) \otimes_{\scO(T^\vee_\bk)} \scO((T^\vee_\bk \times_{T^\vee_\bk / \Wf} T^\vee_\bk)^{(m)}) \cong \\
 \bigl( \scO(T^\vee_\bk \times_{T^\vee_\bk / \Wf} T^\vee_\bk) \otimes_{\scO(T^\vee_\bk)} \scO(T^\vee_\bk \times_{T^\vee_\bk / \Wf} T^\vee_\bk) \bigr) \\
 \otimes_{\scO(T^\vee_\bk \times_{T^\vee_\bk / \Wf} T^\vee_\bk)} \scO((T^\vee_\bk \times_{T^\vee_\bk / \Wf} T^\vee_\bk)^{(m)}),
 \end{multline*}
and is therefore an isomorphism.


Now we prove that~\eqref{eqn:morph-monoidality-DUU} is an isomorphism in case $\scF=\Pi^0_{U,U}(\Xi_!^{(m)})$ and $\scG$ is arbitrary. For this, since we already know the equivalence
\[
\sfP_{U,U}^{(m),0} \simto \Coh \bigl( (T^\vee_\bk \times_{T^\vee_\bk / \Wf} T^\vee_\bk)^{(m)} \bigr)
\]
we know that there exists a presentation
\[
\bigl( \Pi^0_{U,U}(\Xi_!^{(m)}) \bigr)^{\oplus r} \to \bigl( \Pi^0_{U,U}(\Xi_!^{(m)}) \bigr)^{\oplus s} \to \scG \to 0
\]
for some $r,s \in \Z_{\geq 0}$. Then we have exact sequences
\begin{multline*}
\Hom_{\sfD^0_{U,U}}(\Pi^0_{U,U}(\Xi^\wedge_!),\Pi^0_{U,U}(\Xi_!^{(m)})) \otimes_{\scO(T^\vee_\bk)} \Hom_{\sfD^0_{U,U}}(\Pi^0_{U,U}(\Xi^\wedge_!),\Pi^0_{U,U}(\Xi_!^{(m)})^{\oplus r}) \to \\
\Hom_{\sfD^0_{U,U}}(\Pi^0_{U,U}(\Xi^\wedge_!),\Pi^0_{U,U}(\Xi_!^{(m)})) \otimes_{\scO(T^\vee_\bk)} \Hom_{\sfD^0_{U,U}}(\Pi^0_{U,U}(\Xi^\wedge_!),\Pi^0_{U,U}(\Xi_!^{(m)})^{\oplus s}) \to \\
\Hom_{\sfD^0_{U,U}}(\Pi^0_{U,U}(\Xi^\wedge_!),\Pi^0_{U,U}(\Xi_!^{(m)})) \otimes_{\scO(T^\vee_\bk)} \Hom_{\sfD^0_{U,U}}(\Pi^0_{U,U}(\Xi^\wedge_!), \scG) \to 0
\end{multline*}
and
\begin{multline*}
\Hom_{\sfD^0_{U,U}}(\Pi^0_{U,U}(\Xi^\wedge_!), \Pi^0_{U,U}(\Xi^{(m)}_!) \pstar^0_U \Pi^0_{U,U}(\Xi_!^{(m)})^{\oplus r}) \to \\
\Hom_{\sfD^0_{U,U}}(\Pi^0_{U,U}(\Xi^\wedge_!), \Pi^0_{U,U}(\Xi^{(m)}_!) \pstar^0_U \Pi^0_{U,U}(\Xi_!^{(m)})^{\oplus s}) \to \\
\Hom_{\sfD^0_{U,U}}(\Pi^0_{U,U}(\Xi^\wedge_!), \Pi^0_{U,U}(\Xi^{(m)}_!) \pstar^0_U \scG) \to 0,
\end{multline*}
which are related by the corresponding morphisms~\eqref{eqn:morph-monoidality-DUU}. The morphisms relating the first two terms are isomorphisms by the case treated above, hence the one relating the third terms is also an isomorphism, which finishes the proof of the case under consideration.

Finally, one passes from this case to the case of general $\scF,\scG$ using similar arguments, which finishes the proof of the proposition.
\end{proof}

\subsection{Images of truncated costandard objects}

Recall that in~\S\ref{ss:representations-Iadj} we have defined some representations $(\scM_w : w \in W)$ of $\bbI_\Sigma$. By definition, in case $w \in \Wf$, the representation $\scM_w$ is a coherent sheaf on $T^\vee_\bk \times_{T^\vee_\bk / \Wf} T^\vee_\bk$, endowed with the trivial structure as a representation of $\bbI_\Sigma$. In this subsection these representations will be simply considered as coherent sheaves on $T^\vee_\bk \times_{T^\vee_\bk / \Wf} T^\vee_\bk$, or equivalently as finitely generated $\scO(T^\vee_\bk \times_{T^\vee_\bk / \Wf} T^\vee_\bk)$-modules.

Recall the functors $\sfC^0_m$ of~\S\ref{ss:truncation-ps}. The comments in~\S\ref{ss:cat-sheaves-G/U} show that if we consider an object $\scF \in \sfP_{U,U}^\wedge$, seen as an object in $\sfP^\wedge_{\Iwu,\Iwu}$, for any $m \geq 1$ the object $\sfC^0_m(\scF)$ belongs to the full subcategory $\sfP_{U,U}^0 \subset \sfP_{\Iwu,\Iwu}^0$. In this way we obtain a projective system $(\sfC^0_m(\scF) : m \geq 1)$ of objects in $\sfP_{U,U}^0$.

\begin{lem}
\label{lem:PhiUU-cost}
For any $w \in \Wf$, we have an isomorphism of projective systems
 \[
  (\Phi_{U,U}(\sfC^0_m(\nabla_w^\wedge)) : m \geq 1) \cong
 (\scM_w / \cJ^m \cdot \scM_w : m \geq 1).
\]
\end{lem}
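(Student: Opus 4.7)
My plan is to reduce the statement to an identification $\Hom_{\sfP^\wedge_{U,U}}(\Xi^\wedge_!, \nabla^\wedge_w) \cong \scM^\wedge_w$ of $\scO(\bD^\wedge)$-modules, and then pass to the quotients by $\cJ^m$. First, because the right monodromy of $\sfC_m(\nabla^\wedge_w)$ is killed by $\cJ^m \cdot \scO(T^\vee_\bk)$, this object lies in the essential image of $\sfP_{U,U}$ in $\sfP^\wedge_{U,U}$ by Lemma~\ref{lem:subcat-completion-monodromy}, so Lemma~\ref{lem:morph-Xiwedge-Pi} yields a natural isomorphism
\[
\Phi_{U,U}(\sfC^0_m(\nabla^\wedge_w)) \cong \Hom_{\sfP^\wedge_{U,U}}(\Xi^\wedge_!, \sfC_m(\nabla^\wedge_w))
\]
of $\scO(\bD)$-modules, functorial in $m$ (hence respecting the projective system structure).

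Next I would exploit the projectivity of $\Xi^\wedge_!$ in $\sfP^\wedge_{U,U}$ (Lemma~\ref{lem:Xi-projective}) to commute $\Hom(\Xi^\wedge_!,-)$ with the truncation $\sfC_m$. Choosing finitely many generators of the ideal $\cJ^m \cdot \scO(T^\vee_\bk)$ gives a presentation $(\nabla^\wedge_w)^{\oplus r} \to \nabla^\wedge_w \to \sfC_m(\nabla^\wedge_w) \to 0$ in which the first map is induced by the right monodromy of $\nabla^\wedge_w$. Applying the exact functor $\Hom_{\sfP^\wedge_{U,U}}(\Xi^\wedge_!,-)$ and comparing presentations then yields
\[
\Hom_{\sfP^\wedge_{U,U}}(\Xi^\wedge_!, \sfC_m(\nabla^\wedge_w)) \cong \Hom_{\sfP^\wedge_{U,U}}(\Xi^\wedge_!, \nabla^\wedge_w) \otimes_{\scO(T^\vee_\bk)} \scO((T^\vee_\bk)^{(m)}),
\]
with the $\scO(T^\vee_\bk)$-action on the first factor coming from the right monodromy of $\nabla^\wedge_w$.

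The substantive step, and what I expect to be the principal obstacle, is the canonical identification $\Hom_{\sfP^\wedge_{U,U}}(\Xi^\wedge_!, \nabla^\wedge_w) \cong \scM^\wedge_w$ as $\scO(\bD^\wedge)$-modules. For this I would use that $\Xi^\wedge_!$ is a tilting object admitting a free-monodromic standard filtration in which each $\Delta^\wedge_v$ for $v \in \Wf$ appears with multiplicity one; this can be extracted from the analysis in~\cite{bezr} together with the fact that $\Xi^\wedge_!$ is the projective cover of $\pi^\dag \For^{\Iw}_{\Iwu}(\IC_e)$ in $\sfP^\wedge_{U,U}$ and that $[\nabla^\wedge_v : \pi^\dag \For^{\Iw}_{\Iwu}(\IC_e)] = 1$ for every $v \in \Wf$. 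Combined with the standard vanishing $\Ext^i_{\sfP^\wedge_{U,U}}(\Delta^\wedge_v, \nabla^\wedge_w) = 0$ unless $i=0$ and $v=w$, this shows that $\Hom(\Xi^\wedge_!, \nabla^\wedge_w)$ is free of rank one over $\End(\nabla^\wedge_w) \cong \scO((T^\vee_\bk)^\wedge)$, the latter identification being Lemma~\ref{lem:monodromy-DN}. The remaining left $\scO((T^\vee_\bk)^\wedge)$-action, coming from the left monodromy on $\nabla^\wedge_w$, is then computed by Lemma~\ref{lem:monodromy-DN}, which gives $\mu_{\nabla^\wedge_w}(f \otimes 1) = \mu_{\nabla^\wedge_w}(1 \otimes w^{-1}(f))$: in the chosen trivialization, left multiplication by $f$ acts as right multiplication by $w^{-1}(f)$. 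This matches exactly the bimodule structure $(f \otimes g) \cdot m = f \cdot w(g) \cdot m$ defining $\scM^\wedge_w$ recalled in~\S\ref{ss:representations-Iadj}.

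Combining these three steps gives $\Phi_{U,U}(\sfC^0_m(\nabla^\wedge_w)) \cong \scM^\wedge_w / \cJ^m \scM^\wedge_w \cong \scM_w / \cJ^m \scM_w$, and the compatibility with the transition maps of the projective systems is automatic from functoriality of all constructions involved. The genuinely subtle input is the rank-one computation together with the correct bimodule structure carried out in the third paragraph; the $w$-twist distinguishing $\scM_w$ from $\scM_e$ must be tracked carefully by comparing left and right monodromies under the chosen trivialization.
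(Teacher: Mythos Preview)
Your proposal is correct and follows essentially the same approach as the paper: use Lemma~\ref{lem:morph-Xiwedge-Pi}, then projectivity of $\Xi^\wedge_!$ (Lemma~\ref{lem:Xi-projective}) to pull the truncation outside the $\Hom$, and finally the multiplicity-one fact for $\Delta^\wedge_w$ in $\Xi^\wedge_!$ (which the paper cites from~\cite[\S 9.1]{bezr}) to identify $\Hom_{\sfP^\wedge_{U,U}}(\Xi^\wedge_!, \nabla^\wedge_w) \cong \scM^\wedge_w$. Your treatment of the bimodule structure via Lemma~\ref{lem:monodromy-DN} is more explicit than the paper's, which simply asserts the $\scO(\bD^\wedge)$-module isomorphism.
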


\begin{proof}
 For any $m \geq 1$, by Lemma~\ref{lem:morph-Xiwedge-Pi} there exists a canonical isomorphism
 \[
  \Phi_{U,U}(\sfC^0_m(\nabla_w^\wedge)) \cong \Hom_{\sfP^{\wedge}_{U,U}}(\Xi^\wedge_!, \sfC_m(\nabla_w^\wedge)).
 \]
By projectivity of $\Xi^\wedge_!$ (see Lemma~\ref{lem:Xi-projective}), the right-hand side identifies with
\[
 \Hom_{\sfP^{\wedge}_{U,U}}(\Xi^\wedge_!, \nabla_w^\wedge) \otimes_{\scO(T^\vee_\bk)} \bigl( \scO(T^\vee_\bk) / \cJ^m \cdot \scO(T^\vee_\bk) \bigr).
\]
Finally, it is known that the standard object $\Delta^\wedge_w$ appears with multiplicity $1$ in $\Xi^\wedge_!$, see~\cite[\S 9.1]{bezr}; we deduce an isomorphism of $\scO(\FN_{T^\vee_\bk \times_{T^\vee_\bk / \Wf} T^\vee_\bk}(\{(e,e)\}))$-modules $\Hom_{\sfP^{\wedge}_{U,U}}(\Xi^\wedge_!, \nabla_w^\wedge) \cong \scM^\wedge_w$, which finishes the proof.
\end{proof}

\section{Truncation of perverse sheaves}
\label{sec:truncation}

We continue with the setting and assumptions of Section~\ref{sec:Perv-G/U}.
In this section we prove a number of technical statements regarding the functors $\sfC_m$ and $\sfC_m^0$ introduced in~\S\ref{ss:truncation-ps}. These results will be used in the next section in our study of 
the category $\sfP^0_{\Iwu,\Iwu}$.

\subsection{Flatness of standard, costandard and Wakimoto sheaves}
\label{ss:flatness-tilting}

In~\S\ref{ss:flatness-in-cat} we recall what it means for a module in a category to be flat. Here we will be more specifically interested in the case of $\scO(T^\vee_\bk)$-modules in $\mathsf{P}^\wedge_{\Iwu,\Iwu}$. We will say that an object of $\sfP^\wedge_{\Iwu,\Iwu}$ is $\scO(T^\vee_\bk)$-flat if its image under~\eqref{eqn:monodromy-Mod} is flat; in other words, $\scF \in \sfP^\wedge_{\Iwu,\Iwu}$ is $\scO(T^\vee_\bk)$-flat iff the functor
\[
(-) \otimes_{\scO(T^\vee_\bk)} \scF : \Mof(\scO(T^\vee_\bk)) \to \mathsf{P}^\wedge_{\Iwu,\Iwu}
\]
is exact.

Our goal in this subsection is to show that standard perverse sheaves, costandard perverse sheaves and free-monodromic Wakimoto sheaves are $\scO(T^\vee_\bk)$-flat. We start with standard and costandard sheaves.


\begin{lem}
\label{lem:flatness-D-N}
For any $w \in W$, the objects $\Delta^\wedge_w$ and $\nabla^\wedge_w$ are $\scO(T^\vee_\bk)$-flat. As a consequence, every object in $\mathsf{P}^\wedge_{\Iwu,\Iwu}$ which admits a standard or a costandard filtration (in particular, any tilting object) is $\scO(T^\vee_\bk)$-flat.
\end{lem}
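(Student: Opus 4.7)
The plan is to reduce everything to a concrete flatness statement about the pro-local system $\scL^\wedge_T$ on $T$, and then use that the construction of $\Delta^\wedge_w$ and $\nabla^\wedge_w$ out of $\scL^\wedge_T$ involves only $t$-exact functors.

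First I would reduce to flatness over the completion. By Lemma~\ref{lem:monodromy-DN}, the monodromy morphism $\scO(T^\vee_\bk) \to \End(\Delta^\wedge_w)$ factors canonically as
\[
\scO(T^\vee_\bk) \to \scO((T^\vee_\bk)^\wedge) \xrightarrow{\sim} \End(\Delta^\wedge_w),
\]
and similarly for $\nabla^\wedge_w$. Since $\scO(T^\vee_\bk) \to \scO((T^\vee_\bk)^\wedge)$ is flat (completion of a noetherian ring at a maximal ideal), the canonical identification $M \otimes_{\scO(T^\vee_\bk)} \scF \cong \bigl(M \otimes_{\scO(T^\vee_\bk)} \scO((T^\vee_\bk)^\wedge)\bigr) \otimes_{\scO((T^\vee_\bk)^\wedge)} \scF$ reduces the problem to showing that $\Delta^\wedge_w$ and $\nabla^\wedge_w$ are flat as $\scO((T^\vee_\bk)^\wedge)$-modules in $\sfP^\wedge_{\Iwu,\Iwu}$.

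Next, I would use the explicit construction of $\Delta^\wedge_w$ as the pro-object
$``\varprojlim_n" (\widetilde{\jmath}_w)_!(p_w^* \scL_{T,n})[\dim T + \ell(w)]$. The key structural input is that the pro-local system $\scL^\wedge_T$ on $T$ is \emph{free of rank one} over $\scO((T^\vee_\bk)^\wedge)$, in the sense that the functor
\[
M \mapsto \scL^\wedge_{T,M} := M \otimes_{\scO((T^\vee_\bk)^\wedge)} \scL^\wedge_T
\]
from $\Mof(\scO((T^\vee_\bk)^\wedge))$ to pro-local systems on $T$ is exact. This is built into the definition of $\scL^\wedge_T$ (see~\cite[\S 10.1]{bezr}): each $\scL_{T,n}$ is obtained from the rank-one free $\scO(T^\vee_\bk)/\cK^n$-module by the natural construction, and tensoring with a module killed by $\cK^n$ preserves short exact sequences. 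Since $p_w : \tFl_{G,w} \to T$ is a smooth morphism (so $p_w^*[\dim T]$ is $t$-exact) and $\widetilde{\jmath}_w$ is an affine locally closed embedding (so $(\widetilde{\jmath}_w)_![\ell(w)]$ is $t$-exact by Artin vanishing), composing these functors with $M \mapsto \scL^\wedge_{T,M}$ yields an exact functor which by construction computes $M \otimes_{\scO((T^\vee_\bk)^\wedge)} \Delta^\wedge_w$, establishing the flatness of $\Delta^\wedge_w$. The argument for $\nabla^\wedge_w$ is identical, replacing $(\widetilde{\jmath}_w)_!$ by $(\widetilde{\jmath}_w)_*$, which is again $t$-exact up to shift for affine embeddings.

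Finally, the consequence for objects admitting a standard or costandard filtration follows by induction on the length of the filtration, using the standard fact that extensions of flat objects are flat: for a short exact sequence $0 \to \scF' \to \scF \to \scF'' \to 0$ with $\scF'$ and $\scF''$ flat, the long exact sequence of derived tensor products (which makes sense since $\Mof(\scO(T^\vee_\bk))$ has enough projectives) shows that $\Tor_1^{\scO(T^\vee_\bk)}(M,\scF) = 0$ for every $M$, whence $\scF$ is flat. The main technical obstacle I anticipate is making precise the description of $M \otimes_{\scO((T^\vee_\bk)^\wedge)} \Delta^\wedge_w$ as a pro-object in the perverse category compatibly with the presentation $\Delta^\wedge_w = ``\varprojlim_n" \scL_{T,n}$-constructions; this is a formal bookkeeping issue about compatibility of the tensor product bifunctor of~\S\ref{ss:truncation-ps} with the Yun-style pro-category, but the underlying geometry --- pulling back free pro-local systems through $t$-exact functors --- is transparent.
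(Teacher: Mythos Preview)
Your approach is essentially the paper's: both reduce to the single stratum via the affine embedding and then use that the pro-local system on $\tFl_{G,w}$ is ``free of rank one'' over $\scO((T^\vee_\bk)^\wedge)$. The paper packages the single-stratum step more cleanly through the equivalence $\phi_w : \Db\Mof(\scO((T^\vee_\bk)^\wedge)) \simto \sfD^\wedge_{\Iwu}(\tFl_{G,w},\bk)$ of~\cite[Proposition~4.5]{bezr}, which yields directly the formula
\[
N \otimes_{\scO(T^\vee_\bk)} \Delta^\wedge_w \cong (\widetilde{\jmath}_w)_!\,\phi_w\bigl(N \otimes_{\scO(T^\vee_\bk)} \scO((T^\vee_\bk)^\wedge)\bigr)
\]
and thereby dissolves your ``formal bookkeeping issue.'' Two points to tighten: (i) $t$-exactness of $(\widetilde{\jmath}_w)_!$ in the \emph{completed} category is not literally Artin vanishing for affine morphisms---the paper cites Proposition~\ref{prop:exactness-pushforward}, proved in an appendix, for exactly this; (ii) for the filtration step the paper invokes Lemma~\ref{lem:flatness}\eqref{it:flatness-2}, proved by direct diagram chasing, since derived tensor products and $\Tor$ have not been set up in the abstract framework of $R$-modules in a category. (Also, the relative dimension of $p_w$ is $\ell(w)$, not $\dim T$; the total shift $\dim T + \ell(w)$ is split accordingly.)
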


\begin{proof}
We will prove the claim for the objects $\Delta_w^\wedge$; the case of the objects $\nabla^\wedge_w$ can be treated similarly, and the claim about objects with a standard or costandard filtration then follows in view of Lemma~\ref{lem:flatness}\eqref{it:flatness-2}.

By Proposition~\ref{prop:exactness-pushforward}, the functor
\[
(\widetilde{\jmath}_w)_! : \sfD^\wedge_{\Iwu}(\tFl_{G,w},\bk) \to \sfD^\wedge_{\Iwu,\Iwu}
\]
(where $(\widetilde{\jmath}_w)_!$ is as in~\S\ref{ss:completed-perverse})
is t-exact, and by~\cite[Proposition~4.5]{bezr} we have a canonical equivalence of triangulated categories
\[
\phi_w : \Db \Mof(\scO(\FN_{T^\vee_\bk}(\{e\}))) \simto \sfD^\wedge_{\Iwu}(\tFl_{G,w},\bk).
\]
By definition, under this identification the perverse t-structure on $\sfD^\wedge_{\Iwu}(\tFl_{G,w},\bk)$ corresponds to the tautological t-structure on $\Db \Mof(\scO(\FN_{T^\vee_\bk}(\{e\})))$. The monodromy construction also provides an $\scO(T^\vee_\bk)$-module structure on any object in $\sfD^\wedge_{\Iwu}(\tFl_{G,w},\bk)$, which under the equivalence $\phi_w$ corresponds to the obvious $\scO(T^\vee_\bk)$-module structure on a complex of $\scO(\FN_{T^\vee_\bk}(\{e\}))$-modules. From these remarks we obtain that for any $N$ in $\Mof(\scO(T^\vee_\bk))$ we have a canonical isomorphism
\[
N \otimes_{\scO(T^\vee_\bk)} \Delta^\wedge_w \cong (\widetilde{\jmath}_w)_! \phi_w(N \otimes_{\scO(T^\vee_\bk)} \scO(\FN_{T^\vee_\bk}(\{e\}))).
\]
The functor on the right-hand side is t-exact by flatness of $\scO(\FN_{T^\vee_\bk}(\{e\}))$ over $\scO(T^\vee_\bk)$ and t-exactness of $(\widetilde{\jmath}_w)_!$ and $\phi_w$, which proves that $\Delta^\wedge_w$ is flat.
\end{proof}

\begin{lem}
\label{lem:Wak-flat-prelim}
For any finitely generated $\scO(T^\vee_\bk)$-module $M$ and any $w,y \in W$, the convolution
\[
\Delta^\wedge_w \hatstar (M \otimes_{\scO(T^\vee_\bk)} \nabla^\wedge_y)
\]
belongs to the heart of the perverse t-structure on $\sfD^\wedge_{\Iwu,\Iwu}$.
\end{lem}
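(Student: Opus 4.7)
My approach is induction on the projective dimension of $M$ as an $\scO(T^\vee_\bk)$-module, using that $\scO(T^\vee_\bk) \cong \bk[X_*(T)]$ is a Laurent polynomial ring, hence regular Noetherian of finite Krull dimension, so any finitely generated module has a finite resolution by finitely generated free modules.

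For the base case, suppose $M$ is free, i.e.\ $M \cong \scO(T^\vee_\bk)^{\oplus n}$ for some $n \geq 0$. Then $M \otimes_{\scO(T^\vee_\bk)} \nabla^\wedge_y \cong (\nabla^\wedge_y)^{\oplus n}$, and convolution with $\Delta^\wedge_w$ commutes with direct sums, so
\[
\Delta^\wedge_w \hatstar (M \otimes_{\scO(T^\vee_\bk)} \nabla^\wedge_y) \cong (\Delta^\wedge_w \hatstar \nabla^\wedge_y)^{\oplus n},
\]
which is a perverse sheaf by Lemma~\ref{lem:convolution-stand-costand}\eqref{it:convolution-stand-costand-3}.

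For the induction step, assume the claim is known for all finitely generated $\scO(T^\vee_\bk)$-modules of projective dimension $< d$, and let $M$ have projective dimension $d \geq 1$. Choose a short exact sequence $0 \to N \to F \to M \to 0$ with $F$ finitely generated free; then $N$ has projective dimension $\leq d-1$. By Lemma~\ref{lem:flatness-D-N} the object $\nabla^\wedge_y$ is $\scO(T^\vee_\bk)$-flat, so tensoring this sequence with $\nabla^\wedge_y$ preserves exactness and yields a short exact sequence in $\sfP^\wedge_{\Iwu,\Iwu}$:
\[
0 \to N \otimes_{\scO(T^\vee_\bk)} \nabla^\wedge_y \to F \otimes_{\scO(T^\vee_\bk)} \nabla^\wedge_y \to M \otimes_{\scO(T^\vee_\bk)} \nabla^\wedge_y \to 0.
\]
Applying the triangulated functor $\Delta^\wedge_w \hatstar (-)$ yields a distinguished triangle in $\sfD^\wedge_{\Iwu,\Iwu}$. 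By the induction hypothesis (and the base case), the first two terms of this triangle lie in the heart $\sfP^\wedge_{\Iwu,\Iwu}$. Taking the long exact sequence of perverse cohomology, we find that $\pH^n(\Delta^\wedge_w \hatstar (M \otimes_{\scO(T^\vee_\bk)} \nabla^\wedge_y))$ vanishes for all $n \geq 1$ (it is squeezed between two zero terms) and also for $n \leq -1$ (by left t-exactness of $\Delta^\wedge_w \hatstar (-)$, Corollary~\ref{cor:exactness-convolution-D-N}\eqref{it:exactness-convolution-D}). Thus $\Delta^\wedge_w \hatstar (M \otimes_{\scO(T^\vee_\bk)} \nabla^\wedge_y)$ is perverse, completing the induction.

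The steps are all fairly mechanical once the induction is set up; there is no real obstacle, but the one ingredient worth verifying carefully is that $\scO(T^\vee_\bk)$ has finite global dimension, which reduces to the fact that it is a localization of a polynomial ring in finitely many variables over a field. The left t-exactness from Corollary~\ref{cor:exactness-convolution-D-N} is what makes the LES argument clean: without it one would have to handle possibly nonvanishing negative perverse cohomology, whereas here left t-exactness kills these for free.
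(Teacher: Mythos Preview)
Your proof is correct and takes a genuinely different route from the paper. The paper's argument unwinds the definition of $M \otimes_{\scO(T^\vee_\bk)} \nabla^\wedge_y$ via the equivalence $\phi_y$ of~\cite[Proposition~4.5]{bezr}, writes it as an explicit pro-system of perverse sheaves (each an iterated extension of copies of $\pi^\dag \For^{\Iw}_{\Iwu}(\nabla^\Iw_y)$), convolves termwise with $\Delta^\wedge_w$, and then invokes Proposition~\ref{prop:perverse-t-str-completed-cat} to conclude that the pro-limit is perverse. Your approach instead stays entirely at the categorical level: it uses finite global dimension of $\scO(T^\vee_\bk)$ to set up an induction, and then combines flatness of $\nabla^\wedge_y$ (Lemma~\ref{lem:flatness-D-N}), the known perversity of $\Delta^\wedge_w \hatstar \nabla^\wedge_y$ (Lemma~\ref{lem:convolution-stand-costand}\eqref{it:convolution-stand-costand-3}), and left t-exactness of $\Delta^\wedge_w \hatstar (-)$ (Corollary~\ref{cor:exactness-convolution-D-N}\eqref{it:exactness-convolution-D}) to run a long exact sequence argument. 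Your route is arguably cleaner in that it avoids the pro-object machinery of Appendix~D, at the cost of depending on the (mild) fact that $\scO(T^\vee_\bk)$ has finite global dimension; the paper's route is more hands-on and would work even without that hypothesis.
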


\begin{proof}
As in the proof of Lemma~\ref{lem:flatness-D-N} we have
\[
M \otimes_{\scO(T^\vee_\bk)} \nabla^\wedge_y \cong (\widetilde{\jmath}_y)_* \phi_y(M \otimes_{\scO(T^\vee_\bk)} \scO(\FN_{T^\vee_\bk}(\{e\}))).
\]
By construction of the equivalence $\phi_y$ in~\cite[Proposition~4.5]{bezr} and Lemma~\ref{lem:pro-obj-RA}, if for $m \geq 1$ we denote by $\scF_m$ the $\bk$-local system on $\tFl_{G,y}$ corresponding the $\scO(T^\vee_\bk)$-module $M/\cK^m M$ (where $\cK$ is as in~\S\ref{ss:completions}), then we deduce that
\[
M \otimes_{\scO(T^\vee_\bk)} \nabla^\wedge_y \cong ``\varprojlim_m" (\widetilde{\jmath}_y)_* \scF_m [\ell(y)+\dim(T)],
\]
hence that
\[
\Delta^\wedge_w \hatstar (M \otimes_{\scO(T^\vee_\bk)} \nabla^\wedge_y) \cong ``\varprojlim_m" \Delta^\wedge_w \hatstar \bigl( (\widetilde{\jmath}_y)_* \scF_m [\ell(y)+\dim(T)] \bigr).
\]
Each $\scF_m$ is an extension of copies of the constant local system, so that $\Delta^\wedge_w \hatstar \bigl( (\widetilde{\jmath}_y)_* \scF_m [\ell(y)+\dim(T)] \bigr)$ is an extension (in the sense of triangulated categories) of copies of
\[
\Delta^\wedge_w \hatstar \pi^\dag \For^{\Iw}_{\Iwu} (\nabla^{\Iw}_y) \cong \pi^\dag( \Delta^{\Iw}_w \star_{\Iw} \nabla^{\Iw}_y),
\]
where the isomorphism follows from~\eqref{eqn:conv-formula-2}--\eqref{eqn:conv-formula-3}. Here the right-hand side is perverse (by~\cite[Lemma~4.1.7]{ar-book} and t-exactness of $\pi^\dag$), hence each $\Delta^\wedge_w \hatstar \bigl( (\widetilde{\jmath}_y)_* \scF_m [\ell(y)+\dim(T)] \bigr)$ is perverse. By Proposition~\ref{prop:perverse-t-str-completed-cat}, this implies the lemma.
\end{proof}

\begin{cor}
\label{cor:Wak-flat}
For any $\lambda \in X_*(T)$, the free-monodromic Wakimoto sheaf $\scW^\wedge_\lambda$ is $\scO(T^\vee_\bk)$-flat. As a consequence, every object of $\sfP^\wedge_{\Iwu,\Iwu}$ which admits a Wakimoto filtration is flat.
\end{cor}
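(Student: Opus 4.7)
The plan is to reduce $\scO(T^\vee_\bk)$-flatness of $\scW^\wedge_\lambda$ to the already-established flatness of the costandard sheaf $\nabla^{\wedge,\can}_{\st(\mu)}$ (Lemma~\ref{lem:flatness-D-N}) by exploiting the factorization of a Wakimoto sheaf through a convolution of a standard and a costandard object, together with the perversity result of Lemma~\ref{lem:Wak-flat-prelim}. First I would fix $\mu,\nu \in X_*^+(T)$ with $\lambda = \mu - \nu$ and an isomorphism $\scW^\wedge_\lambda \cong \Delta^{\wedge,\can}_{\st(-\nu)} \hatstar \nabla^{\wedge,\can}_{\st(\mu)}$, then observe, using~\eqref{eqn:monodromy-convolution-1}, that the right monodromy action of $\scO(T^\vee_\bk)$ on $\scW^\wedge_\lambda$ (which is the action defining the tensor product bifunctor of~\S\ref{ss:truncation-ps}) is identified with $\id_{\Delta^{\wedge,\can}_{\st(-\nu)}} \hatstar \mu_{\nabla^{\wedge,\can}_{\st(\mu)}}(1 \otimes -)$, i.e.~it is transported under convolution from the corresponding action on $\nabla^{\wedge,\can}_{\st(\mu)}$.

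Next I would establish, for any $M$ in $\Mof(\scO(T^\vee_\bk))$, a functorial isomorphism
\[
M \otimes_{\scO(T^\vee_\bk)} \scW^\wedge_\lambda \cong \Delta^{\wedge,\can}_{\st(-\nu)} \hatstar \bigl( M \otimes_{\scO(T^\vee_\bk)} \nabla^{\wedge,\can}_{\st(\mu)} \bigr).
\]
Presenting $M$ as the cokernel of a morphism $\scO(T^\vee_\bk)^{\oplus r} \xrightarrow{A} \scO(T^\vee_\bk)^{\oplus s}$, and using the flatness of $\nabla^{\wedge,\can}_{\st(\mu)}$ to conclude that $M \otimes_{\scO(T^\vee_\bk)} \nabla^{\wedge,\can}_{\st(\mu)}$ is the cokernel (in $\sfP^\wedge_{\Iwu,\Iwu}$) of the induced morphism $A \otimes \id : (\nabla^{\wedge,\can}_{\st(\mu)})^{\oplus r} \to (\nabla^{\wedge,\can}_{\st(\mu)})^{\oplus s}$, one applies the triangulated functor $\Delta^{\wedge,\can}_{\st(-\nu)} \hatstar (-)$. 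Lemma~\ref{lem:Wak-flat-prelim} guarantees perversity of all three resulting objects, so the right-hand side is likewise a cokernel in $\sfP^\wedge_{\Iwu,\Iwu}$, of the morphism $\id \hatstar (A \otimes \id)$. By the compatibility of actions from the first step, this coincides with the cokernel presentation defining $M \otimes_{\scO(T^\vee_\bk)} \scW^\wedge_\lambda$, giving the isomorphism, which is seen to be natural in $M$.

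Flatness of $\scW^\wedge_\lambda$ then follows directly: given a short exact sequence $0 \to M' \to M \to M'' \to 0$ in $\Mof(\scO(T^\vee_\bk))$, Lemma~\ref{lem:flatness-D-N} yields a short exact sequence in $\sfP^\wedge_{\Iwu,\Iwu}$ after tensoring with $\nabla^{\wedge,\can}_{\st(\mu)}$, hence a distinguished triangle in $\sfD^\wedge_{\Iwu,\Iwu}$; applying the triangulated functor $\Delta^{\wedge,\can}_{\st(-\nu)} \hatstar (-)$ and invoking Lemma~\ref{lem:Wak-flat-prelim} once more to secure perversity of each vertex converts this into a short exact sequence in $\sfP^\wedge_{\Iwu,\Iwu}$. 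Transporting through the isomorphism of the previous paragraph produces the desired exact sequence $0 \to M' \otimes \scW^\wedge_\lambda \to M \otimes \scW^\wedge_\lambda \to M'' \otimes \scW^\wedge_\lambda \to 0$. For the consequence, any object admitting a Wakimoto filtration is a finite iterated extension of objects $\scW^\wedge_\lambda$, each of which is flat by the first part, hence flat by Lemma~\ref{lem:flatness}\eqref{it:flatness-2}.

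The main obstacle will be the cokernel identification in the second step, namely verifying that the two \emph{a priori} distinct right exact constructions (the abstract tensor product $(-) \otimes_{\scO(T^\vee_\bk)} \scW^\wedge_\lambda$ on one hand, and convolution of $\Delta^{\wedge,\can}_{\st(-\nu)}$ with $(-) \otimes_{\scO(T^\vee_\bk)} \nabla^{\wedge,\can}_{\st(\mu)}$ on the other) agree functorially; this rests on a careful match of the $\scO(T^\vee_\bk)$-module structures via~\eqref{eqn:monodromy-convolution-1} and on the fact that the left convolution $\Delta^{\wedge,\can}_{\st(-\nu)} \hatstar (-)$, though only left t-exact in general (Corollary~\ref{cor:exactness-convolution-D-N}), becomes exact when restricted to the sequences produced by tensoring with $\nabla^{\wedge,\can}_{\st(\mu)}$ thanks to Lemma~\ref{lem:Wak-flat-prelim}.
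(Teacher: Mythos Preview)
Your proposal is correct and follows the same strategy as the paper: identify $M \otimes_{\scO(T^\vee_\bk)} \scW^\wedge_\lambda$ with a convolution involving $M$ tensored into one factor, then combine flatness of that factor (Lemma~\ref{lem:flatness-D-N}) with the perversity result of Lemma~\ref{lem:Wak-flat-prelim}. The only difference is the choice of factorization. You use $\scW^\wedge_\lambda \cong \Delta^{\wedge,\can}_{\st(-\nu)} \hatstar \nabla^{\wedge,\can}_{\st(\mu)}$ and tensor into $\nabla$, which matches Lemma~\ref{lem:Wak-flat-prelim} exactly as stated; the price is that $\Delta \hatstar (-)$ is only left t-exact, so the cokernel identification needs the extra argument you flag at the end (it works, e.g.\ by noting that the kernel of $(\nabla)^{\oplus r}\to(\nabla)^{\oplus s}$ is again of the form $K'\otimes\nabla$ with $K'$ finitely generated, so Lemma~\ref{lem:Wak-flat-prelim} applies to it too). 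The paper instead uses $\scW^\wedge_\lambda \cong \nabla^{\wedge}_{\st(\mu)} \hatstar \Delta^{\wedge}_{\st(-\nu)}$ and tensors into $\Delta$; this makes the cokernel step immediate via the right exactness of $\nabla \,\phatstar\, (-)$ (Corollary~\ref{cor:exactness-convolution-D-N}\eqref{it:exactness-convolution-N}), at the cost of implicitly invoking the mirror form of Lemma~\ref{lem:Wak-flat-prelim} (that $\nabla^\wedge_w \hatstar (M \otimes \Delta^\wedge_y)$ is perverse), which holds by the same proof. The two routes are mirror images of one another.
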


\begin{proof}
Once again, using Lemma~\ref{lem:flatness}\eqref{it:flatness-2} it suffices to prove the first claim. Let $\lambda \in X_*(T)$, and
choose $\mu,\nu \in X_*^+(T)$ such that $\lambda = \mu-\nu$, so that $\scW^\wedge_\lambda \cong \nabla^\wedge_{\st(\mu)} \hatstar \Delta^\wedge_{\st(-\nu)}$. First, we claim that for any finitely generated $\scO(T^\vee_\bk)$-module $M$ we have
\begin{equation}
\label{eqn:Wak-tensor}
M \otimes_{\scO(T^\vee_\bk)} \scW^\wedge_\lambda \cong \nabla^\wedge_{\st(\mu)} \hatstar (M \otimes_{\scO(T^\vee_\bk)} \Delta^\wedge_{\st(-\nu)}).
\end{equation}
Indeed, choose a presentation $\scO(T^\vee_\bk)^{\oplus r} \to \scO(T^\vee_\bk)^{\oplus s} \to M \to 0$. We deduce an exact sequence
\[
(\Delta^\wedge_{\st(-\nu)})^{\oplus r} \to (\Delta^\wedge_{\st(-\nu)})^{\oplus s} \to M \otimes_{\scO(T^\vee_\bk)} \Delta^\wedge_{\st(-\nu)} \to 0.
\]
By right exactness of the functor $\nabla^\wedge_{\st(\mu)} \phatstar (-)$ (see Corollary~\ref{cor:exactness-convolution-D-N}\eqref{it:exactness-convolution-N}), we deduce an exact sequence
\[
(\nabla^\wedge_{\st(\mu)} \phatstar\Delta^\wedge_{\st(-\nu)})^{\oplus r} \to (\nabla^\wedge_{\st(\mu)} \phatstar \Delta^\wedge_{\st(-\nu)})^{\oplus s}
\to \nabla^\wedge_{\st(\mu)} \phatstar (M \otimes_{\scO(T^\vee_\bk)} \Delta^\wedge_{\st(-\nu)}) \to 0.
\]
Here the first two terms identify with $(\scW^\wedge_\lambda)^{\oplus r}$ and $(\scW^\wedge_\lambda)^{\oplus s}$ respectively, and by Lemma~\ref{lem:Wak-flat-prelim} one can replace $\phatstar$ by $\hatstar$ in the third term. We deduce~\eqref{eqn:Wak-tensor}.

Now, consider an exact sequence $M_1 \hookrightarrow M_2 \twoheadrightarrow M_3$ of finitely generated $\scO(T^\vee_\bk)$-module. 
By Lemma~\ref{lem:flatness-D-N}, the sequence
\[
0 \to M_1 \otimes_{\scO(T^\vee_\bk)} \nabla^\wedge_{\st(\mu)} \to M_2 \otimes_{\scO(T^\vee_\bk)} \nabla^\wedge_{\st(\mu)} \to M_3 \otimes_{\scO(T^\vee_\bk)} \nabla^\wedge_{\st(\mu)} \to 0
\]
is exact. Applying the triangulated functor $\nabla^\wedge_{\st(\mu)} \hatstar (-)$ we deduce a distinguished triangle
\begin{multline*}
\nabla^\wedge_{\st(\mu)} \hatstar (M_1 \otimes_{\scO(T^\vee_\bk)} \nabla^\wedge_{\st(\mu)}) \to \nabla^\wedge_{\st(\mu)} \hatstar (M_2 \otimes_{\scO(T^\vee_\bk)} \nabla^\wedge_{\st(\mu)}) \\
\to \nabla^\wedge_{\st(\mu)} \hatstar (M_3 \otimes_{\scO(T^\vee_\bk)} \nabla^\wedge_{\st(\mu)} ) \xrightarrow{[1]}.
\end{multline*}
By~\eqref{eqn:Wak-tensor} all terms here are in the heart of the perverse t-structure, and this triangle corresponds to an exact sequence
\[
0 \to M_1 \otimes_{\scO(T^\vee_\bk)} \scW^\wedge_\lambda \to M_2 \otimes_{\scO(T^\vee_\bk)} \scW^\wedge_\lambda \to M_3 \otimes_{\scO(T^\vee_\bk)} \scW^\wedge_\lambda \to 0
\]
in $\sfP^\wedge_{\Iwu,\Iwu}$, which finishes the proof.
\end{proof}

The main consequence of these results that we will use below is the following.

\begin{prop}
\label{prop:exactness-Cm}
Consider an exact sequence
\[
0 \to \scF_1 \to \scF_2 \to \scF_3 \to 0
\]
in $\sfP^\wedge_{\Iwu,\Iwu}$. If $\scF_3$ admits either a standard filtration, or a costandard filtration, or a Wakimoto filtration, then for any $m \geq 1$ the induced sequence
\[
0 \to \sfC_m(\scF_1) \to \sfC_m(\scF_2) \to \sfC_m(\scF_3) \to 0
\]
is exact.
\end{prop}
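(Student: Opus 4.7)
The plan is to reduce the statement to the fact that $\scF_3$ is $\scO(T^\vee_\bk)$-flat, which in all three cases has already been established: for standard or costandard filtrations this is Lemma~\ref{lem:flatness-D-N}, and for Wakimoto filtrations it is Corollary~\ref{cor:Wak-flat}. Once this reduction is made, the argument becomes a formal imitation of the classical derivation of the $\Tor$ long exact sequence from flatness of the third term.

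Explicitly, since the bifunctor $(-) \otimes_{\scO(T^\vee_\bk)} (-)$ is right exact in each variable, applying $\sfC_m$ to the given short exact sequence already gives exactness on the right, so the only point remaining is injectivity of $\sfC_m(\scF_1) \to \sfC_m(\scF_2)$. To obtain this I would fix a resolution $P_\bullet \to M := \scO((T^\vee_\bk)^{(m)})$ by finitely generated free $\scO(T^\vee_\bk)$-modules (available since $\scO(T^\vee_\bk)$ is noetherian). Because each $P_i$ is a finite direct sum of copies of $\scO(T^\vee_\bk)$, tensoring the short exact sequence $0 \to \scF_1 \to \scF_2 \to \scF_3 \to 0$ with $P_i$ yields a short exact sequence of objects of $\sfP^\wedge_{\Iwu,\Iwu}$; assembling these one obtains a termwise short exact sequence of complexes, and hence a long exact sequence of homology in $\sfP^\wedge_{\Iwu,\Iwu}$, whose low-degree part reads
\[
H_1(P_\bullet \otimes_{\scO(T^\vee_\bk)} \scF_3) \to \sfC_m(\scF_1) \to \sfC_m(\scF_2) \to \sfC_m(\scF_3) \to 0,
\]
using right exactness of the tensor product to identify $H_0(P_\bullet \otimes_{\scO(T^\vee_\bk)} \scF_i)$ with $\sfC_m(\scF_i)$. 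Since $\scF_3$ is flat, the functor $(-) \otimes_{\scO(T^\vee_\bk)} \scF_3$ is exact, so applied to the resolution $P_\bullet \to M \to 0$ it yields a resolution of $\sfC_m(\scF_3)$; in particular $H_1(P_\bullet \otimes_{\scO(T^\vee_\bk)} \scF_3) = 0$, and the desired injectivity follows.

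The main point to verify carefully is that this $\Tor$-style formalism does make sense in our categorical module setting: one needs the tensor bifunctor $(-) \otimes_{\scO(T^\vee_\bk)} (-)$ to commute with finite direct sums in the first variable and be right exact in each variable, and one needs a termwise short exact sequence of bounded-above complexes in $\sfP^\wedge_{\Iwu,\Iwu}$ to produce a long exact sequence of homology. The second statement is standard in any abelian category, and the first is provided by the general framework for modules in a category developed in the appendix; granted these formalities, the argument above is purely mechanical and no further geometric input is required.
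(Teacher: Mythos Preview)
Your proof is correct and follows essentially the same approach as the paper: reduce to $\scO(T^\vee_\bk)$-flatness of $\scF_3$ via Lemma~\ref{lem:flatness-D-N} and Corollary~\ref{cor:Wak-flat}, then use that flatness of the third term in a short exact sequence forces exactness after tensoring. The paper simply cites this last step from the appendix (Lemma~\ref{lem:flatness}\eqref{it:flatness-1}, proved there with a one-step free presentation and the snake lemma), whereas you reprove it inline using a full free resolution and the associated long exact sequence; both arguments are standard and equivalent.
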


\begin{proof}
In view of the definition of $\sfC_m$, the claim follows from Lemma~\ref{lem:flatness}\eqref{it:flatness-1} and either Lemma~\ref{lem:flatness-D-N} (in the first two cases) or Corollary~\ref{cor:Wak-flat} (in the third case).
\end{proof}

\subsection{Truncation functors for perverse sheaves: monoidality}
\label{ss:truncation-functor-mon}

We will now study some monoidality properties of the functors $\sfC_m$.

Given $\scF,\scG$ in $\mathsf{P}^\wedge_{\Iwu,\Iwu}$ we have canonical maps
\[
 \scF \to \mathsf{C}_m(\scF), \qquad \scG \to \mathsf{C}_m(\scG),
\]
which give rise to a natural morphism
\[
 \pH^0(\scF \hatstar \scG) \to \pH^0(\mathsf{C}_m(\scF) \star_{\Iwu} \mathsf{C}_m(\scG)).
\]
It follows from~\eqref{eqn:monodromy-convolution-1} that the action of $\scO(T^\vee_\bk)$ on the right-hand side vanishes on $\mathcal{J}^m$; this morphism therefore factors uniquely though a morphism
\[
 \mathsf{C}_m(\pH^0(\scF \hatstar \scG)) \to \pH^0(\mathsf{C}_m(\scF) \star_{\Iwu} \mathsf{C}_m(\scG)).
\]
Finally, applying the functor $\Pi^0_{\Iwu,\Iwu}$ and using the definition of the bifunctor $\pstar^0_{\Iwu}$ (see~\S\ref{ss:mon-reg-quotient}), we obtain a canonical morphism
\begin{equation}
\label{eqn:monoidality-C0-morphism}
  \mathsf{C}_m^0(\pH^0(\scF \hatstar \scG)) \to \mathsf{C}_m^0(\scF) \pstar^0_{\Iwu} \mathsf{C}_m^0(\scG).
\end{equation}

\begin{lem}
\label{lem:monoidality-C0}
 Assume that one of functors
 \[
  \pH^0(\scF \hatstar (-)), \, \pH^0((-) \hatstar \scG) : \mathsf{P}^\wedge_{\Iwu,\Iwu} \to \mathsf{P}^\wedge_{\Iwu,\Iwu}
 \]
 is right exact. Then~\eqref{eqn:monoidality-C0-morphism} is an isomorphism.
\end{lem}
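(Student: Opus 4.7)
The plan is to reduce by symmetry to the case where $\pH^0(\scF \hatstar (-))$ is right exact (the other case being symmetric via~\eqref{eqn:monodromy-convolution-1}), and then to identify both sides of~\eqref{eqn:monoidality-C0-morphism} with the single object $\scF \phatstar^0 \mathsf{C}^0_m(\scG)$ defined via the bifunctor $\hatstar^0$ of~\S\ref{ss:actions-Dwedge-D}.

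First I would fix generators $f_1, \dots, f_r$ of the ideal $\mathcal{J}^m \subset \scO(T^\vee_\bk / \Wf)$, which also generate $\mathcal{J}^m \cdot \scO(T^\vee_\bk)$ as an $\scO(T^\vee_\bk)$-module, so that the map $(\mu_\scG(1 \otimes f_i))_i : \scG^{\oplus r} \to \scG$ has image $\mathcal{J}^m \cdot \scG$ and cokernel $\mathsf{C}_m(\scG)$. Applying the (by hypothesis) right-exact functor $\pH^0(\scF \hatstar (-))$ to the short exact sequence $\mathcal{J}^m \cdot \scG \hookrightarrow \scG \twoheadrightarrow \mathsf{C}_m(\scG)$, and using~\eqref{eqn:monodromy-convolution-1} to recognise the resulting maps $\pH^0(\scF \hatstar \scG)^{\oplus r} \to \pH^0(\scF \hatstar \scG)$ as right monodromy by the $f_i$, I would conclude that $\pH^0(\scF \hatstar \mathsf{C}_m(\scG)) \cong \mathsf{C}_m(\pH^0(\scF \hatstar \scG))$. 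Applying $\Pi^0_{\Iwu,\Iwu}$ then identifies the LHS $\mathsf{C}^0_m(\pH^0(\scF \hatstar \scG))$ of~\eqref{eqn:monoidality-C0-morphism} with $\scF \phatstar^0 \mathsf{C}^0_m(\scG)$.

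Next, to identify the RHS, I would apply the right-exact functor $(-) \phatstar^0 \mathsf{C}^0_m(\scG) : \sfP^\wedge_{\Iwu,\Iwu} \to \sfP^0_{\Iwu,\Iwu}$ (right exact by Lemma~\ref{lem:t-exactness-conv-tFl}) to the short exact sequence $\mathcal{J}^m \cdot \scF \hookrightarrow \scF \twoheadrightarrow \mathsf{C}_m(\scF)$, obtaining a right-exact sequence whose third term is exactly $\mathsf{C}^0_m(\scF) \pstar^0_\Iwu \mathsf{C}^0_m(\scG)$. The crucial vanishing is that $(\mu_\scF(1 \otimes f_i))_i : \scF^{\oplus r} \to \scF$ becomes zero after convolution with $\mathsf{C}_m(\scG)$: by~\eqref{eqn:monodromy-convolution-2} and Lemma~\ref{lem:monodromy-fiber-prod} one has
\[
\mu_\scF(1 \otimes f_i) \hatstar \id_{\mathsf{C}_m(\scG)} = \id_\scF \hatstar \mu_{\mathsf{C}_m(\scG)}(f_i \otimes 1) = \id_\scF \hatstar \mu_{\mathsf{C}_m(\scG)}(1 \otimes f_i) = 0,
\]
since $\mathcal{J}^m$ acts trivially on $\mathsf{C}_m(\scG)$ by construction. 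Because $\scF^{\oplus r} \twoheadrightarrow \mathcal{J}^m \cdot \scF$ is surjective in $\sfP^\wedge_{\Iwu,\Iwu}$, this forces the first map $(\mathcal{J}^m \cdot \scF) \phatstar^0 \mathsf{C}^0_m(\scG) \to \scF \phatstar^0 \mathsf{C}^0_m(\scG)$ of the right-exact sequence to vanish, and therefore the sequence yields an isomorphism $\scF \phatstar^0 \mathsf{C}^0_m(\scG) \simto \mathsf{C}^0_m(\scF) \pstar^0_\Iwu \mathsf{C}^0_m(\scG)$.

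The main subtlety, and the step requiring the most care, will be verifying that under these two identifications the composite isomorphism actually coincides with the morphism~\eqref{eqn:monoidality-C0-morphism} as it was constructed; this is a naturality diagram chase using the functoriality of $\pH^0$ and of monodromy, the universal property of cokernels, and the compatibility relations~\eqref{eqn:monodromy-convolution-1}--\eqref{eqn:monodromy-convolution-2} for monodromy with convolution.
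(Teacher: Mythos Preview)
Your proposal is correct and follows essentially the same approach as the paper's proof: fix generators of $\mathcal{J}^m$, use the right exactness of $\pH^0(\scF \hatstar (-))$ on the resulting presentation of $\mathsf{C}_m(\scG)$ to identify the left-hand side with $\scF \phatstar^0 \mathsf{C}_m^0(\scG)$, then use the right exactness of $(-) \phatstar^0 \mathsf{C}_m^0(\scG)$ from Lemma~\ref{lem:t-exactness-conv-tFl} together with the vanishing of the monodromy maps to identify this with the right-hand side. The paper invokes~\eqref{eqn:monodromy-convolution-1} directly for the vanishing (exploiting that $\mathcal{J}^m$ is $\Wf$-invariant so the action on the convolution is already unambiguous), whereas you route through~\eqref{eqn:monodromy-convolution-2} and Lemma~\ref{lem:monodromy-fiber-prod}; these are equivalent, and your care about the compatibility with the morphism~\eqref{eqn:monoidality-C0-morphism} is warranted but routine.
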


\begin{proof}
 We will write the proof in case the functor $\pH^0(\scF \hatstar (-))$ is right exact; the other case is similar. Choosing a family of $r$ generators of the ideal $\mathcal{J}^m$, we obtain an exact sequence
 \[
  \scG^{\oplus r} \to \scG \to \mathsf{C}^m(\scG) \to 0.
 \]
Applying the right-exact functor $\pH^0(\scF \hatstar (-))$, we deduce an exact sequence
 \[
  \pH^0(\scF \hatstar \scG)^{\oplus r} \to \pH^0(\scF \hatstar \scG) \to \pH^0(\scF \hatstar \mathsf{C}^m(\scG)) \to 0,
 \]
 which provides a canonical isomorphism
 \[
  \mathsf{C}_m(\pH^0(\scF \hatstar \scG)) \cong \pH^0(\scF \hatstar \mathsf{C}_m(\scG)).
 \]
Applying $\Pi^0_{\Iwu,\Iwu}$, we deduce a canonical isomorphism
\[
  \mathsf{C}_m^0(\pH^0(\scF \hatstar \scG)) \cong \scF \phatstar^0 \mathsf{C}_m^0(\scG).
 \]
 At this point, to conclude it suffices to show that the morphism
 \[
  \scF \phatstar^0 \mathsf{C}_m^0(\scG) \to \mathsf{C}_m^0(\scF) \pstar^0_{\Iwu} \mathsf{C}_m^0(\scG)
 \]
induced by the morphism $\scF \to \mathsf{C}^m(\scF)$ is an isomorphism.
 
Our choice of generators for $\mathcal{J}^m$ also provides an exact sequence
 \[
  \scF^{\oplus r} \to \scF \to \mathsf{C}^m(\scF) \to 0.
 \]
By Lemma~\ref{lem:t-exactness-conv-tFl} the functor
$(-) \phatstar^0 \mathsf{C}_m^0(\scG) : \mathsf{P}^\wedge_{\Iwu,\Iwu} \to \mathsf{P}^0_{\Iwu,\Iwu}$
is right exact; we therefore deduce an exact sequence
\[
  (\scF \phatstar^0 \mathsf{C}_m^0(\scG))^{\oplus r} \to \scF \phatstar^0 \mathsf{C}_m^0(\scG) \to \mathsf{C}^m(\scF) \pstar_{\Iwu}^0 \mathsf{C}_m^0(\scG) \to 0.
 \]
By~\eqref{eqn:monodromy-convolution-1}, $\mathcal{J}^m$ acts trivially on $\scF \phatstar^0 \mathsf{C}_m^0(\scG)$, hence the first map in this sequence vanishes; we deduce that the second arrow is an isomorphism, as desired.
\end{proof}

\begin{prop}
\label{prop:monoidality-C0}
 Assume that we are in one of the following settings:
 \begin{enumerate}
  \item either $\scF$ or $\scG$ admits a costandard filtration;
  \item $\scF=\Delta_w^\wedge$ and $\scG=\Delta_y^\wedge$ for some $w,y \in W$ such that $\ell(wy)=\ell(w)+\ell(y)$.
 \end{enumerate}
Then~\eqref{eqn:monoidality-C0-morphism} is an isomorphism.
\end{prop}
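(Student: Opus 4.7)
For case (1), I plan to apply Lemma~\ref{lem:monoidality-C0} directly. Suppose $\scF$ admits a costandard filtration $0=\scF_0\subset\scF_1\subset\cdots\subset\scF_n=\scF$ with $\scF_i/\scF_{i-1}\cong\nabla^\wedge_{w_i}$. By Corollary~\ref{cor:exactness-convolution-D-N}\eqref{it:exactness-convolution-N}, each functor $\nabla^\wedge_{w_i}\hatstar(-)$ is right t-exact on $\sfD^\wedge_{\Iwu,\Iwu}$; applying this to the long exact sequences associated to the distinguished triangles $\scF_{i-1}\hatstar\scH\to\scF_i\hatstar\scH\to\nabla^\wedge_{w_i}\hatstar\scH\xrightarrow{[1]}$ and arguing by induction shows that $\scF\hatstar(-)$ is right t-exact, so $\pH^0(\scF\hatstar(-))$ is right exact on $\sfP^\wedge_{\Iwu,\Iwu}$. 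The case where $\scG$ admits a costandard filtration is entirely symmetric. In either situation the hypothesis of Lemma~\ref{lem:monoidality-C0} holds, yielding the claim.

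For case (2), neither convolution functor is right t-exact (standard sheaves yield only left t-exact functors, by Corollary~\ref{cor:exactness-convolution-D-N}\eqref{it:exactness-convolution-D}), so Lemma~\ref{lem:monoidality-C0} does not apply directly. Instead I plan to compute the convolution $\sfC_m(\Delta^\wedge_w)\hatstar\sfC_m(\Delta^\wedge_y)$ explicitly and identify it with $\sfC_m(\Delta^\wedge_{wy})$ compatibly with~\eqref{eqn:monoidality-C0-morphism}. The hypothesis $\ell(wy)=\ell(w)+\ell(y)$ ensures that the multiplication map $\widetilde{m}:\tFl_{G,w}\,\widetilde{\times}\,\tFl_{G,y}\to\tFl_G$ restricts to an isomorphism onto $\tFl_{G,wy}$ (this is what underlies the isomorphism $\Delta^\wedge_w\hatstar\Delta^\wedge_y\cong\Delta^\wedge_{wy}$ of Lemma~\ref{lem:convolution-stand-costand}\eqref{it:convolution-stand-costand-2}). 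Since $\mathcal{J}^m$ contains a power of the maximal ideal $\mathcal{K}$ (by Lemma~\ref{lem:Dw-fiber-prod}), the truncation $\sfC_m(\Delta^\wedge_w)$ is representable as $(\widetilde{\jmath}_w)_!\scL^{(m)}_w[\dim T+\ell(w)]$ for the finite-rank local system $\scL^{(m)}_w$ on $\tFl_{G,w}$ corresponding (via Yun's equivalence of~\cite[Proposition~4.5]{bezr}) to the $\scO(T^\vee_\bk)$-module $\scO((T^\vee_\bk)^{(m)})$; the same description applies to $\sfC_m(\Delta^\wedge_y)$ and $\sfC_m(\Delta^\wedge_{wy})$.

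Because $\widetilde{m}$ is an isomorphism onto $\tFl_{G,wy}$, the convolution $\sfC_m(\Delta^\wedge_w)\hatstar\sfC_m(\Delta^\wedge_y)$ is simply a $!$-pushforward from $\tFl_{G,wy}$ of the exterior product of the two local systems, and is in particular a genuine perverse sheaf. It remains to identify the resulting local system with $\scL^{(m)}_{wy}$. For this I plan to exploit~\eqref{eqn:monodromy-convolution-1}--\eqref{eqn:monodromy-convolution-2}: the right monodromy action of $\scO(T^\vee_\bk)$ on the first factor agrees, after convolution, with the left monodromy action on the second factor, while the overall left (resp.\ right) monodromy of the convolution equals the left (resp.\ right) monodromy of the first (resp.\ second) factor. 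Since $\mathcal{J}^m\subseteq\scO(T^\vee_\bk/\Wf)$, Lemma~\ref{lem:monodromy-fiber-prod} together with Lemma~\ref{lem:monodromy-DN}\eqref{lem:monodromy-Dwedge} shows that all these actions coincide on $\mathcal{J}^m$, so both factors being killed by $\mathcal{J}^m$ forces the convolution to be killed by $\mathcal{J}^m$ (on either side), yielding the required identification with $\sfC_m(\Delta^\wedge_{wy})$ and the fact that the morphism~\eqref{eqn:monoidality-C0-morphism} (before applying $\Pi^0_{\Iwu,\Iwu}$) is already an isomorphism in $\sfP^\wedge_{\Iwu,\Iwu}$.

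The main obstacle is the bookkeeping in this last step: one must verify that the canonical morphism constructed in~\S\ref{ss:truncation-functor-mon} matches the identification coming from the geometric isomorphism $\widetilde{m}$ and the Yun-type description of local systems. This is routine but delicate, requiring a careful unwinding of the construction of $\Delta^\wedge_w\hatstar\Delta^\wedge_y\cong\Delta^\wedge_{wy}$ at the level of pro-local systems and of the naturality of the ``multiplication'' of monodromies dictated by~\eqref{eqn:monodromy-convolution-1}--\eqref{eqn:monodromy-convolution-2}.
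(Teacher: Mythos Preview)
Your treatment of case~(1) is correct and matches the paper's argument: the right t-exactness of $\scF\hatstar(-)$ for $\scF$ with a costandard filtration follows from Corollary~\ref{cor:exactness-convolution-D-N}\eqref{it:exactness-convolution-N}, and then Lemma~\ref{lem:monoidality-C0} applies.

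In case~(2), however, there is a genuine error. The map $\widetilde{m}:\tFl_{G,w}\,\widetilde{\times}\,\tFl_{G,y}\to\tFl_{G,wy}$ is \emph{not} an isomorphism: it is a $T$-torsor. (The twisted product is taken over $\Iwu$, not over $\Iw$; a dimension count shows the source has dimension $\ell(w)+\ell(y)+2\dim T$ while the target has dimension $\ell(wy)+\dim T$.) Consequently $\sfC_m(\Delta^\wedge_w)\star_{\Iwu}\sfC_m(\Delta^\wedge_y)$ is not a perverse sheaf: the pushforward $(\widetilde{m}_{w,y})_!$ introduces the compactly supported cohomology $\mathsf{H}^\bullet_c(T;\bk)$ of the fibres, exactly as in Lemma~\ref{lem:convolution-Iw-Iwu}. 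Your identification of the resulting local system with $\scL^{(m)}_{wy}$ therefore does not go through, and the subsequent monodromy argument, while correct in spirit, is not enough on its own to control this pushforward.

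The paper avoids this by an asymmetric computation: it shows $\Delta^\wedge_w\hatstar\sfC_m(\Delta^\wedge_y)\cong\sfC_m(\Delta^\wedge_{wy})$, keeping the \emph{first} factor as the full pro-object. The point is that \cite[Lemma~3.4]{bezr} gives $``\varprojlim_n" (m_T)_!(\scL_{T,n}\boxtimes\scL)\cong\scL[-2\dim T]$ for any finite local system $\scL$, so the inverse limit in the first variable collapses the $T$-direction; this is precisely what fails when both factors are truncated. Once the asymmetric isomorphism is established, the passage from $\Delta^\wedge_w\hatstar^0\sfC^0_m(\Delta^\wedge_y)$ to $\sfC^0_m(\Delta^\wedge_w)\pstar^0_{\Iwu}\sfC^0_m(\Delta^\wedge_y)$ proceeds exactly as in the final paragraph of the proof of Lemma~\ref{lem:monoidality-C0}, using that $\mathcal{J}^m$ acts trivially on the second factor.
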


\begin{proof}
To treat the first case,
 in view of Lemma~\ref{lem:monoidality-C0} it suffices to show that if $\scF$ admits a costandard filtration the functors
  \[
  \pH^0(\scF \hatstar (-)), \, \pH^0((-) \hatstar \scF) : \mathsf{P}^\wedge_{\Iwu,\Iwu} \to \mathsf{P}^\wedge_{\Iwu,\Iwu}
 \]
 are right exact. For that, it suffices to remark that the functors
  \[
  \scF \hatstar (-), \, (-) \hatstar \scF : \sfD^\wedge_{\Iwu,\Iwu} \to \sfD^\wedge_{\Iwu,\Iwu}
 \]
 are right t-exact, as follows from Corollary~\ref{cor:exactness-convolution-D-N}\eqref{it:exactness-convolution-N}.
 
 Now, let us assume that $\scF=\Delta_w^\wedge$ and $\scG=\Delta_y^\wedge$ for some $w,y \in W$ such that $\ell(wy)=\ell(w)+\ell(y)$. Then by Lemma~\ref{lem:convolution-stand-costand}\eqref{it:convolution-stand-costand-2} we have $\scF \hatstar \scG \cong \Delta^\wedge_{wy}$. Recall from the proof of Lemma~\ref{lem:flatness-D-N} that for any $x \in W$ we have
 \[
  \mathsf{C}_m(\Delta^\wedge_x) \cong (\widetilde{\jmath}_x)_! \phi_x(\scO(T^\vee_\bk)/\mathcal{J}^m \scO(T^\vee_\bk)).
 \]
Now, using~\cite[Lemma~3.4]{bezr} and considerations similar to those encountered in the proof of Lemma~\ref{lem:DN-weights},
it is not difficult to check that we have an isomorphism
\[
 \Delta^\wedge_w \hatstar \bigl( (\widetilde{\jmath}_y)_! \phi_y(\scO(T^\vee_\bk)/\mathcal{J}^m \scO(T^\vee_\bk)) \bigr) \simto (\widetilde{\jmath}_{wy})_! \phi_{wy}(\scO(T^\vee_\bk)/\mathcal{J}^m \scO(T^\vee_\bk)),
\]
i.e.~an isomorphism
\[
 \Delta^\wedge_w \hatstar \mathsf{C}_m(\Delta^\wedge_y) \cong \mathsf{C}_m(\Delta^\wedge_{wy}).
 \]
 Once this is known, the same arguments as in the final part of the proof of Lemma~\ref{lem:monoidality-C0} show the desired claim.
\end{proof}

Recall from~\S\ref{ss:tilting-perv} that the subcategory $\mathsf{T}^\wedge_{\Iwu,\Iwu} \subset \sfD^\wedge_{\Iwu,\Iwu}$ is closed under the convolution product $\hatstar$.
Proposition~\ref{prop:monoidality-C0} implies in particular that for any $m \geq 1$, the functor $\mathsf{C}_m^0$ induces a monoidal functor
\[
 (\mathsf{T}^\wedge_{\Iwu,\Iwu},\hatstar) \to (\sfP^0_{\Iwu,\Iwu},\pstar^0_{\Iwu}).
\]

\subsection{Truncation functors for perverse sheaves: fully faithfulness}
\label{ss:truncation-functor-ff}


We now prove a statement that will allow us to describe tilting objects from their images under the functors $\sfC_m^0$.

\begin{lem}
\label{lem:Cm-properties}
Let $\scT,\scT'$ in $\mathsf{T}^\wedge_{\Iwu,\Iwu}$.
\begin{enumerate}
\item
\label{it:Cm-tilting-2}
The functors $\mathsf{C}_m$ induce an isomorphism
\[
\Hom_{\mathsf{P}^\wedge_{\Iwu,\Iwu}}(\scT,\scT') \simto \varprojlim_m \Hom_{\sfP_{\Iwu,\Iwu}}(\mathsf{C}_m(\scT), \mathsf{C}_m(\scT')).
\]
\item
\label{it:Cm-tilting-3}
For any $m \geq 1$, the functor $\Pi_{\Iwu,\Iwu}^0$ induces an isomorphism
\[
\Hom_{\sfP_{\Iwu,\Iwu}}(\mathsf{C}_m(\scT), \mathsf{C}_m(\scT')) \simto \Hom_{\sfP_{\Iwu,\Iwu}^0}(\mathsf{C}^0_m(\scT), \mathsf{C}^0_m(\scT')).
\]
\end{enumerate}
\end{lem}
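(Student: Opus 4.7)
The plan is to treat the two parts separately, with part (1) being essentially formal from the pro-category structure, and part (2) being the substantive geometric statement.

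For part (1), the strategy is to exploit the identification $\scT' \cong ``\varprojlim_m" \sfC_m(\scT')$ in $\sfP^\wedge_{\Iwu,\Iwu}$, which should be established first. This will follow from the description of the perverse t-structure on the completed category (Proposition~\ref{prop:perverse-t-str-completed-cat}) together with the fact that $\sfC_m(\scT') = \scT'/\cJ^m\scT'$ and the transition maps $\sfC_{m+1}(\scT') \twoheadrightarrow \sfC_m(\scT')$ form a cofinal surjective system; note that each $\sfC_m(\scT')$ lies in $\sfP_{\Iwu,\Iwu}$ by Lemma~\ref{lem:subcat-completion-monodromy} since $\cJ^m$ annihilates it. Given this, the standard formula for morphisms between pro-objects gives
\[
\Hom_{\sfP^\wedge_{\Iwu,\Iwu}}(\scT,\scT') = \varprojlim_m \varinjlim_{m'} \Hom_{\sfP_{\Iwu,\Iwu}}(\sfC_{m'}(\scT), \sfC_m(\scT')).
\]
Now since $\cJ^m$ acts trivially on $\sfC_m(\scT')$, every morphism $\sfC_{m'}(\scT) \to \sfC_m(\scT')$ for $m' \geq m$ factors uniquely through the surjection $\sfC_{m'}(\scT) \twoheadrightarrow \sfC_m(\scT)$, so the inner colimit collapses to $\Hom_{\sfP_{\Iwu,\Iwu}}(\sfC_m(\scT), \sfC_m(\scT'))$, yielding the claim. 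No Wakimoto/tilting input beyond the existence of a standard filtration of $\scT$ (to handle flatness via Lemma~\ref{lem:flatness-D-N}) is required here.

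For part (2), I would show that $\sfC_m(\scT')$ is $\sfP^+_{\Iwu,\Iwu}$\emph{-closed}, in the sense that $\Hom_{\sfP_{\Iwu,\Iwu}}(\scG,\sfC_m(\scT'))=0$ and $\Ext^1_{\sfP_{\Iwu,\Iwu}}(\scG,\sfC_m(\scT'))=0$ for every $\scG$ in $\sfP^+_{\Iwu,\Iwu}$; by the general properties of Serre quotients recalled in~\S\ref{ss:quotient-cat} this will force $\Pi^0_{\Iwu,\Iwu}$ to induce the desired isomorphism on Hom-spaces. A dévissage in $\scG$ reduces to the case $\scG = \pi^\dag\For^{\Iw}_{\Iwu}(\IC_w)$ with $\ell(w)>0$. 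On the other hand, since $\scT'$ carries a costandard filtration, Proposition~\ref{prop:exactness-Cm} furnishes a filtration of $\sfC_m(\scT')$ with subquotients $\sfC_m(\nabla^\wedge_y)$, so by the long exact sequences associated with this filtration it suffices to prove
\[
\Hom_{\sfD_{\Iwu,\Iwu}}\bigl(\pi^\dag\For^{\Iw}_{\Iwu}(\IC_w), \sfC_m(\nabla^\wedge_y)[i]\bigr) = 0 \quad \text{for } \ell(w)>0 \text{ and } i \in \{0,1\}.
\]
These vanishings I would establish via the adjunction $(\pi^\dag, \pi_*[-\dim T])$, reducing the question to
\[
\Hom_{\sfD_{\Iwu,\Iw}}\bigl(\For^{\Iw}_{\Iwu}(\IC_w), \pi_*\sfC_m(\nabla^\wedge_y)[i-\dim T]\bigr),
\]
and then computing $\pi_*\sfC_m(\nabla^\wedge_y)$ using the factorization $\pi \circ \widetilde{\jmath}_y = j_y \circ \pi_y$ together with the explicit description $\sfC_m(\nabla^\wedge_y) = (\widetilde{\jmath}_y)_*\phi_y(\scO(T^\vee_\bk)/\cJ^m\scO(T^\vee_\bk))$ from the proof of Lemma~\ref{lem:flatness-D-N}. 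Since $\pi_y$ is a (trivializable) $T$-torsor over an affine space, $\pi_*\sfC_m(\nabla^\wedge_y)$ identifies with a complex of iterated extensions of $\For^{\Iw}_{\Iwu}(\nabla^\Iw_y)$ (shifted by the cohomology of $T$), and then the vanishing reduces to the well-known fact that in the non-monodromic $\sfD_{\Iwu,\Iw}$ one has $\Hom(\IC_w, \nabla^\Iw_y[i])=0$ for $i=0,1$ when $\ell(w)>0$ (the first is because $\IC_w$ has no nontrivial maps into a costandard other than via its label, the second follows from the Ext-vanishing between simples and costandards in a highest weight category).

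The main obstacle will be the careful bookkeeping in this last step: a priori the pushforward $\pi_*\sfC_m(\nabla^\wedge_y)$ is a cohomologically nontrivial complex (it involves $H^*(T,\bk)$), so one must track the cohomological shifts and verify that the simple $\IC_w$ cannot map into any of the relevant cohomology sheaves in degrees $0$ or $1$ after the shift by $-\dim T$. An alternative, more self-contained route, if the direct computation becomes unwieldy, is to factor through the intermediate category $\sfP_{\Iwu,\Iw}$ using~\eqref{eqn:pidag-0-For} and the fact, recalled in the introduction, that the analogous quotient functor $\sfP_{\Iwu,\Iw} \to \sfP^0_{\Iwu,\Iw}$ is fully faithful on objects admitting standard or costandard filtrations (which is in turn a standard application of highest weight category theory); one then only needs to transfer this fully faithfulness across the $T$-torsor $\pi$ via the adjunction above, which circumvents explicit computation of $H^*(T,\bk)$ contributions.
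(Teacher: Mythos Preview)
Your approach to part~(1) is valid and arguably cleaner than the paper's. The paper proves the isomorphism $\Hom(\scT,\scT') \simto \varprojlim_m \Hom(\scT,\sfC_m(\scT'))$ by a double induction on the lengths of a standard filtration of $\scT$ and a costandard filtration of $\scT'$, with the base case $(\Delta^\wedge_w,\nabla^\wedge_y)$ computed directly and the induction steps handled via Proposition~\ref{prop:exactness-Cm}, an $\Ext^1$-vanishing, and a Mittag--Leffler argument. Your route via the identification $\scT' \cong ``\varprojlim_m"\,\sfC_m(\scT')$ in the pro-category is more formal and uses no tilting-specific input; the identification itself is not quite a consequence of Proposition~\ref{prop:perverse-t-str-completed-cat} as you suggest, but follows from a direct Yoneda argument (for any $\scG$ in $\sfD_{\Iwu,\Iwu}$ the monodromy action on $\scG$ is nilpotent, so every morphism $\scT'\to\scG$ factors through some $\sfC_M(\scT')$, whence $\varinjlim_m\Hom(\sfC_m(\scT'),\scG)\simto\Hom(\scT',\scG)$).

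Your approach to part~(2), however, has a genuine gap: the object $\sfC_m(\scT')$ is \emph{not} $\sfP^+_{\Iwu,\Iwu}$-closed in your sense. Already for $\scT'=\delta^\wedge$ one has $\Ext^1_{\sfP_{\Iwu,\Iwu}}(\pi^\dag\For^{\Iw}_{\Iwu}(\IC_s),\sfC_1(\delta^\wedge))\neq 0$ for $s\in S$, since $\sfC_1(\delta^\wedge)$ contains $\pi^\dag\For^{\Iw}_{\Iwu}(\IC_e)$ as a subobject and the extension class coming from $0\to\IC_e\to\Delta^\Iw_s\to\IC_s\to 0$ survives. The more immediate error is in your d\'evissage: you filter $\sfC_m(\scT')$ by the \emph{costandard} pieces $\sfC_m(\nabla^\wedge_y)$ and then claim $\Hom(\pi^\dag\IC_w,\sfC_m(\nabla^\wedge_y))=0$ for $\ell(w)>0$. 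But $\sfC_m(\nabla^\wedge_y)$ is an iterated extension of $\pi^\dag\For^{\Iw}_{\Iwu}(\nabla^\Iw_y)$, whose socle is $\IC_y$; so whenever $\ell(y)>0$ the choice $w=y$ gives a nonzero map. (Your parenthetical ``other than via its label'' is exactly the case that bites.) The paper avoids both problems by using a \emph{two-sided} criterion in Gabriel's description of $\Hom$ in the Serre quotient: it uses the costandard filtration of the \emph{source} $\scT$ to show $\sfC_m(\scT)$ has no quotient in $\sfP^+_{\Iwu,\Iwu}$ (head of $\nabla^\Iw_y$ has length-$0$ label, by~\cite[Lemma~4.5]{brr}), and the standard filtration of the \emph{target} $\scT'$ to show $\sfC_m(\scT')$ has no subobject in $\sfP^+_{\Iwu,\Iwu}$ (socle of $\Delta^\Iw_y$ has length-$0$ label). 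No $\Ext^1$ vanishing is needed.
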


\begin{proof}

\eqref{it:Cm-tilting-2}
By definition of the tensor product (and functoriality of monodromy), for any $m \geq 1$ we have
\[
\Hom_{\sfP_{\Iwu,\Iwu}}(\mathsf{C}_m(\scT), \mathsf{C}_m(\scT')) \cong \Hom_{\sfP^\wedge_{\Iwu,\Iwu}}(\scT, \mathsf{C}_m(\scT')).
\]
The isomorphism we have to prove can therefore by written as
\[
\Hom_{\mathsf{P}^\wedge_{\Iwu,\Iwu}}(\scT,\scT') \simto \varprojlim_m \Hom_{\sfP^\wedge_{\Iwu,\Iwu}}(\scT, \mathsf{C}_m(\scT')).
\]
There is an obvious (bifunctorial) map from the left-hand side to the right-hand side, for any $\scT,\scT'$ in $\sfP^\wedge_{\Iwu,\Iwu}$;
we will prove that this map is an isomorphism when $\scT$ has a standard filtration and $\scT'$ has a costandard filtration, by induction on the sum of the lengths of these filtrations.

First, if $\scT = \Delta^\wedge_w$ and $\scT'=\nabla^\wedge_y$ for some $w,y \in W$, then we have
\[
\Hom_{\mathsf{P}^\wedge_{\Iwu,\Iwu}}(\scT,\scT') \cong
\begin{cases}
\scO(\FN_{T^\vee_\bk}(\{e\})) & \text{if $w=y$;} \\
0 & \text{otherwise.}
\end{cases}
\]
On the other hand, from the description of $\mathsf{C}_m(\nabla^\wedge_y)$ in the proof of Lemma~\ref{lem:flatness-D-N} we deduce that
\[
\Hom_{\mathsf{P}^\wedge_{\Iwu,\Iwu}}(\scT,\mathsf{C}_m(\scT')) \cong
\begin{cases}
\scO(T^\vee_\bk) / \mathcal{J}^m \cdot \scO(T^\vee_\bk) & \text{if $w=y$;} \\
0 & \text{otherwise.}
\end{cases}
\]
The claim is therefore clear in this case.

Now, assume that the object $\scT$ admits a standard filtration, that we have an exact sequence
\[
0 \to \nabla^\wedge_y \to \scT' \to \scT'' \to 0
\]
where $\scT''$ has a costandard filtration, and that the claim is known for the pairs $(\scT,\nabla^\wedge_y)$ and $(\scT,\scT'')$. By Proposition~\ref{prop:exactness-Cm},
for any $m \geq 1$ we then have an exact sequence
\[
0 \to \mathsf{C}_m(\nabla^\wedge_y) \to \mathsf{C}_m(\scT') \to \mathsf{C}_m(\scT'') \to 0.
\]
The description of $\mathsf{C}_m(\nabla^\wedge_y)$ in the proof of Lemma~\ref{lem:flatness-D-N} shows that
\[
\Ext^1_{\mathsf{P}^\wedge_{\Iwu,\Iwu}}(\scT,\mathsf{C}_m(\nabla^\wedge_y))=0;
\]
we therefore obtain an exact sequence
\begin{multline*}
0 \to \Hom_{\mathsf{P}^\wedge_{\Iwu,\Iwu}}(\scT,\mathsf{C}_m(\nabla^\wedge_y)) \to \Hom_{\mathsf{P}^\wedge_{\Iwu,\Iwu}}(\scT,\mathsf{C}_m(\scT')) \\
\to \Hom_{\mathsf{P}^\wedge_{\Iwu,\Iwu}}(\scT,\mathsf{C}_m(\scT'')) \to 0.
\end{multline*}
The inverse system $(\Hom_{\mathsf{P}^\wedge_{\Iwu,\Iwu}}(\scT,\mathsf{C}_m(\nabla^\wedge_y)) : m \geq 1)$ is an inverse system of finite-dimensional $\bk$-vector spaces; it therefore automatically satisfies the Mittag-Leffler condition, which implies that the sequence
\begin{multline*}
0 \to \varprojlim_m \Hom_{\mathsf{P}^\wedge_{\Iwu,\Iwu}}(\scT,\mathsf{C}_m(\nabla^\wedge_y)) \to \varprojlim_m \Hom_{\mathsf{P}^\wedge_{\Iwu,\Iwu}}(\scT,\mathsf{C}_m(\scT')) \\
\to \varprojlim_m \Hom_{\mathsf{P}^\wedge_{\Iwu,\Iwu}}(\scT,\mathsf{C}_m(\scT'')) \to 0
\end{multline*}
is exact. Similarly we have an exact sequence
\[
0 \to \Hom_{\mathsf{P}^\wedge_{\Iwu,\Iwu}}(\scT,\nabla^\wedge_y) \to \Hom_{\mathsf{P}^\wedge_{\Iwu,\Iwu}}(\scT,\scT') \to \Hom_{\mathsf{P}^\wedge_{\Iwu,\Iwu}}(\scT,\scT'') \to 0.
\]
Our maps for the pairs $(\scT,\nabla^\wedge_y)$, $(\scT,\scT')$ and $(\scT,\scT'')$ define a morphism of exact sequences; since the first and third maps are isomorphisms by assumption, the second one is also an isomorphism by the five lemma.

Finally, very similar arguments show that if the object $\scT'$ admits a costandard filtration, if we have an exact sequence
\[
0 \to \scT'' \to \scT \to \Delta^\wedge_w \to 0
\]
such that $\scT''$ has a standard filtration, and if the claim is known for the pairs $(\scT'',\scT')$ and $(\Delta^\wedge_w, \scT')$, then it follows for the pair $(\scT,\scT')$, which finishes the proof.

\eqref{it:Cm-tilting-3}
The claim will follow from the description of morphisms in a Serre quotient category provided we prove that $\mathsf{C}_m(\scT)$ does not admit a nonzero morphism to an object of the form $\pi^\dag \For^{\Iw}_{\Iwu}(\IC_w)$ with $\ell(w)>0$ and $\mathsf{C}_m(\scT')$ does not admit a nonzero morphism from such an object. 
Using Proposition~\ref{prop:exactness-Cm}
we obtain that if $\scF$ admits a standard, resp.~costandard, filtration in $\mathsf{P}^\wedge_{\Iwu,\Iwu}$ then $\mathsf{C}_m(\scF)$ admits a filtration whose subquotients have the form $\mathsf{C}_m(\Delta^\wedge_w)$, resp.~$\mathsf{C}_m(\nabla^\wedge_w)$, with $w \in W$.
The desired claim will therefore follow if we prove that for $y \in W$ the object $\mathsf{C}_m(\nabla^\wedge_y)$ does not admit a nonzero morphism to an object of the form $\pi^\dag \For^{\Iw}_{\Iwu}(\IC_w)$ with $\ell(w)>0$, and $\mathsf{C}_m(\Delta^\wedge_y)$ does not admit a nonzero morphism from such an object. However, 
from the proof of Lemma~\ref{lem:flatness-D-N} we know that
$\mathsf{C}_m(\nabla^\wedge_y)$ is an extension of copies of $\pi^\dag \For^{\Iw}_{\Iwu}(\nabla^\Iw_y)$ and $\mathsf{C}_m(\Delta^\wedge_y)$ is an extension of copies of $\pi^\dag \For^{\Iw}_{\Iwu}(\Delta^\Iw_y)$; the claim therefore follows from the fact that the head of $\nabla^\Iw_y$ and the socle of $\Delta^\Iw_y$ are both of the form $\IC_z$ with $\ell(z)=0$, see~\cite[Lemma~4.5]{brr}.
\end{proof}

\section{Perverse sheaves on \texorpdfstring{$\tFl_G$}{Fl}}
\label{sec:construction}


We continue with the setting of Section~\ref{sec:Perv-G/U}, and make the following assumptions:
\begin{enumerate}
 \item the quotient of $X^*(T)$ by the root lattice of $(G,T)$ is free;
 \item the quotient of $X_*(T)$ by the coroot lattice of $(G,T)$ has no $\ell$-torsion;
 \item for any indecomposable factor in the root system of $(G,T)$, $\ell$ is strictly bigger than the corresponding value in Figure~\ref{fig:bounds}.
\end{enumerate}
(As in~\S\ref{ss:regular-quotient-coh}, we expect that the third assumption can be weakened. What will be used below is that the main result of~\cite{brr} holds.)
Our goal is to prove the first main result of the paper, Theorem~\ref{thm:main-intro-1}.

\subsection{Statement}

We will use the constructions of Section~\ref{sec:Coh-St}--\ref{sec:Hecke-cat}, for the group $\bG=G^\vee_\bk$, its Borel subgroup $B^\vee_\bk$, and its maximal torus $T^\vee_\bk$. In particular, we
fix a Steinberg section $\Sigma \subset G^\vee_\bk$ as in~\S\ref{ss:Steinberg-section}. Then we have the universal centralizer $\mathbb{J}_\Sigma \subset G^\vee_\bk \times \Sigma$, a smooth affine group scheme over $\Sigma$. 
We have a canonical morphism $T^\vee_\bk \times_{T^\vee_\bk / \Wf} T^\vee_\bk \to \Sigma$ obtained by composing the obvious projection $T^\vee_\bk \times_{T^\vee_\bk / \Wf} T^\vee_\bk \to T^\vee_\bk / \Wf$ with the inverse of the isomorphism $\Sigma \simto T^\vee_\bk / \Wf$, and the smooth affine group scheme
\[
\mathbb{I}_\Sigma = (T^\vee_\bk \times_{T^\vee_\bk / \Wf} T^\vee_\bk) \times_{\Sigma} \mathbb{J}_\Sigma
\]
over the affine scheme $T^\vee_\bk \times_{T^\vee_\bk / \Wf} T^\vee_\bk$. Consider as in~\S\ref{ss:representations-Iadj}
the abelian category $\Rep(\mathbb{I}_\Sigma)$ of representations of $\mathbb{I}_\Sigma$ which are finitely generated over $\scO(T^\vee_\bk \times_{T^\vee_\bk / \Wf} T^\vee_\bk)$, and its monoidal product $\circledast$. Recall that this product is right exact on both sides, and has as unit object $\scO_{\Delta T^\vee_\bk}$ where $\Delta T^\vee_\bk \subset T^\vee_\bk \times_{T^\vee_\bk / \Wf} T^\vee_\bk$ is the diagonal copy of $T^\vee_\bk$, with the trivial structure as a representation. We will also denote by
\[
\Rep_0(\mathbb{I}_\Sigma)
\]
the full subcategory of $\Rep(\mathbb{I}_\Sigma)$ whose objects are the representations which are set-theoretically supported on the base point $(e,e) \in T^\vee_\bk \times_{T^\vee_\bk / \Wf} T^\vee_\bk$ (i.e., whose restriction to the open complement vanishes). Using the notation of~\S\ref{ss:completions}, the objects in this subcategory can also be described as those on which the ideal $\cI$ acts nilpotently, or equivalently as those on which $\cJ$ acts nilpotently.

The subcategory $\Rep_0(\mathbb{I}_\Sigma) \subset \Rep(\mathbb{I}_\Sigma)$ is a nonunital monoidal subcategory. If $\Coh_{\{(e,e)\}}(T^\vee_\bk \times_{T^\vee_\bk / \Wf} T^\vee_\bk)$ is as in~\S\ref{ss:statement-G/U}, we have a fully faithful exact monoidal functor
\begin{equation}
\label{eqn:Coh-Rep-JD}
\Coh_{\{(e,e)\}}(T^\vee_\bk \times_{T^\vee_\bk / \Wf} T^\vee_\bk) \to \Rep_0(\mathbb{I}_\Sigma)
\end{equation}
sending a coherent sheaf to itself with the trivial structure as a representation. (This justifies our choice of notation for the monoidal products.)

Let now $\su \in \Sigma \subset G^\vee_\bk$ be the point corresponding to the image of $e \in T^\vee_\bk$ in $T^\vee_\bk/\Wf$. Then $\su$ is a regular unipotent element, so that as explained in~\S\ref{ss:regular-quotient-coh} the constructions of~\cite{brr} provide an equivalence of abelian monoidal categories
\[
\Phi_{\Iw,\Iw} : \left( \mathsf{P}^0_{\Iw,\Iw}, \star_\Iw^0 \right) \simto \left( \Rep(\rmZ_{G^\vee_\bk}(\su)),\otimes \right).
\]
The fiber of $\mathbb{I}_\Sigma$ over $(e,e)$ is by definition the fiber of $\mathbb{J}_\Sigma$ over $\su$, which identifies with $\rmZ_{G^\vee_\bk}(\su)$. The functor of pushforward along the closed embedding $\{(e,e)\} \hookrightarrow T^\vee_\bk \times_{T^\vee_\bk / \Wf} T^\vee_\bk$ therefore defines an exact fully faithful functor
\begin{equation}
\label{eqn:functor-statement-Rep}
\Rep(\rmZ_{G^\vee_\bk}(\su)) \to \Rep_0(\mathbb{I}_\Sigma),
\end{equation}
which is easily seen to admit a canonical monoidal structure. (The essential image of this functor consists of representations on which the ideal $\cI$ acts trivially.) 

On the other hand, recall the exact fully faithful monoidal functor
\[
\pi_0^\dag : \left( \mathsf{P}_{\Iw,\Iw}^0,\star_{\Iw}^0 \right) \to \left( \mathsf{P}_{\Iwu,\Iwu}^0,\pstar_{\Iwu}^0 \right)
\]
considered in~\S\ref{ss:relation-reg-quotient}, and the exact monoidal functor~\eqref{eqn:embedding-PUU-PII}. In this section we will prove the following theorem, which is a more precise version of Theorem~\ref{thm:main-intro-1}.

\begin{thm}
\label{thm:main}
There exists an equivalence of abelian monoidal categories
\[
\Phi_{\Iwu,\Iwu} : \left( \mathsf{P}_{\Iwu,\Iwu}^0,\pstar_{\Iwu}^0 \right) \simto \left( \Rep_0(\mathbb{I}_\Sigma), \circledast \right)
\]
such that the diagrams
\begin{equation}
\label{eqn:diag-compatibility-Phi}
\vcenter{
\xymatrix@C=1.5cm{
\mathsf{P}^0_{\Iw,\Iw} \ar[r]_-{\sim}^-{\Phi_{\Iw,\Iw}} \ar[d]_-{\pi_0^\dag} & \Rep(\rmZ_{G^\vee_\bk}(\su)) \ar[d]^-{\eqref{eqn:functor-statement-Rep}} \\
\mathsf{P}_{\Iwu,\Iwu}^0 \ar[r]_-{\sim}^-{\Phi_{\Iwu,\Iwu}} & \Rep_0(\mathbb{I}_\Sigma).
}
}
\end{equation}
and
\begin{equation}
\label{eqn:diag-compatibility-Phi-2}
\vcenter{
\xymatrix@C=1.5cm{
\mathsf{P}^0_{U,U} \ar[r]_-{\sim}^-{\Phi_{U,U}} \ar[d]_-{\eqref{eqn:embedding-PUU-PII}} & \Coh_{\{(e,e)\}}(T^\vee_\bk \times_{T^\vee_\bk / \Wf} T^\vee_\bk) \ar[d]^-{\eqref{eqn:Coh-Rep-JD}} \\
\mathsf{P}_{\Iwu,\Iwu}^0 \ar[r]_-{\sim}^-{\Phi_{\Iwu,\Iwu}} & \Rep_0(\mathbb{I}_\Sigma).
}
}
\end{equation}
commute up to isomorphism.
\end{thm}

The proof of this theorem will occupy the whole section. Our strategy will be to define an appropriate ``deformation'' of the functor $\Phi_{\Iw,\Iw}$ as described in~\S\ref{ss:regular-quotient-coh}, and check that this functor has the required properties by reducing most of them to the similar properties of $\Phi_{\Iw,\Iw}$ or $\Phi_{U,U}$.

\subsection{Truncation and completion of representations}
\label{ss:truncation-Rep}

As in~\S\ref{ss:completions} we consider the scheme $\FN_{T^\vee_\bk \times_{T^\vee_\bk / \Wf} T^\vee_\bk}(\{(e,e)\})$ 
and the smooth affine group scheme
\begin{multline*}
\bbI_\Sigma^\wedge = \FN_{T^\vee_\bk \times_{T^\vee_\bk / \Wf} T^\vee_\bk}(\{(e,e)\}) \times_{T^\vee_\bk \times_{T^\vee_\bk / \Wf} T^\vee_\bk} \bbI_\Sigma \\
\cong \FN_{T^\vee_\bk \times_{T^\vee_\bk / \Wf} T^\vee_\bk}(\{(e,e)\}) \times_{T^\vee_\bk / \Wf} \mathbb{J}_\Sigma
\end{multline*}
over the affine scheme $\FN_{T^\vee_\bk \times_{T^\vee_\bk / \Wf} T^\vee_\bk}(\{(e,e)\})$. Recall that the category $\Rep(\bbI_\Sigma^\wedge)$ of representations of this group scheme which are of finite type over the ring $\scO(\FN_{T^\vee_\bk \times_{T^\vee_\bk / \Wf} T^\vee_\bk}(\{(e,e)\}))$ admits a natural monoidal product $\oast$. We have a natural fully faithful exact monoidal functor
\begin{equation}
\label{eqn:functor-Rep-wedge}
(\Rep_0(\bbI_\Sigma), \oast) \to (\Rep(\bbI_\Sigma^\wedge), \oast)
\end{equation}
whose essential image consists of modules on which $\cI$ (equivalently, $\cJ$) acts nilpotently.


We also have ``truncation'' operations which produce objects in $\Rep_0(\bbI_\Sigma)$ out of objects in $\Rep(\bbI_\Sigma^\wedge)$. Namely, for $m \geq 1$ we can consider the functor
\[
\mathsf{D}_m : \Rep(\bbI_\Sigma^\wedge) \to \Rep_0(\bbI_\Sigma)
\]
given by restriction to the closed subscheme
\[
\bigl( T^\vee_\bk \times_{T^\vee_\bk / \Wf} T^\vee_\bk \bigr)^{(m)} \subset \FN_{T^\vee_\bk \times_{T^\vee_\bk / \Wf} T^\vee_\bk}(\{(e,e)\})
\]
from~\S\ref{ss:truncation-PUU}.
The following claim is clear from definitions.

\begin{lem}
\label{lem:truncation-Rep-mon}
Let $M,M' \in \Rep(\bbI_\Sigma^\wedge)$.
For any $m \geq 1$ we have
\[
\mathsf{D}_m(M \circledast M') \cong \mathsf{D}_m(M) \circledast \mathsf{D}_m(M').
\]
\end{lem}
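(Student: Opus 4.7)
The plan is to prove Lemma~\ref{lem:truncation-Rep-mon} by unpacking the definitions and reducing everything to a standard commutative algebra identity. Set $R := \scO((T^\vee_\bk)^\wedge)$ and $R^\Wf := \scO((T^\vee_\bk / \Wf)^\wedge)$, which by Lemma~\ref{lem:Dw-fiber-prod}\eqref{it:Dw-fiber-prod-2} is the ring of $\Wf$-invariants in $R$. Using Lemma~\ref{lem:Dw-fiber-prod}\eqref{it:Dw-fiber-prod-1}, an $\scO(\bD^\wedge)$-module is an $R$-bimodule whose two restrictions to $R^\Wf$ coincide, and the monoidal product $\circledast$ is computed as $M \circledast M' = M \otimes_R M'$ (the middle $R$-action being the right $R$-action on $M$ and the left $R$-action on $M'$, which both restrict to the common $R^\Wf$-action in the middle).

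With this dictionary, the functor $\mathsf{D}_m$ is simply $M \mapsto M / \mathcal{J}^m \cdot M$, where $\mathcal{J} \subset R^\Wf$ is the maximal ideal at the image of $e$, acting on $M$ via either the left or the right $R^\Wf$-structure (these agree by definition). The first key observation is that the action of $\mathcal{J}$ on $M \otimes_R M'$ factors through either factor: for $a \in \mathcal{J}$, $x \in M$, $y \in M'$, the equality $a(x \otimes y) = (xa) \otimes y = x \otimes (ay)$ follows because $a \in R^\Wf$ sits in the middle of the tensor product. Consequently,
\[
\mathcal{J}^m(M \otimes_R M') = (\mathcal{J}^m M) \otimes_R M' = M \otimes_R (\mathcal{J}^m M'),
\]
so $\mathsf{D}_m(M \circledast M') \cong \mathsf{D}_m(M) \otimes_R M' \cong M \otimes_R \mathsf{D}_m(M')$.

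The second key observation is the elementary identity: if $N$ is an $R$-module annihilated by an ideal $I$ and $N'$ is any $R$-module, then the natural map $N \otimes_R N' \to N \otimes_R (N'/IN')$ is an isomorphism (because $x \otimes (iy) = (ix) \otimes y = 0$ for $x \in N$, $i \in I$, $y \in N'$). Applying this with $N = \mathsf{D}_m(M)$, $N' = M'$, and $I = \mathcal{J}^m$ yields
\[
\mathsf{D}_m(M) \otimes_R M' \cong \mathsf{D}_m(M) \otimes_R \mathsf{D}_m(M') = \mathsf{D}_m(M) \circledast \mathsf{D}_m(M').
\]
Combining with the previous step gives the desired isomorphism, which is visibly functorial and respects the $\bbJ_\bD$-action since everything in sight is constructed from the underlying $\scO(\bD^\wedge)$-module structure. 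There is no real obstacle here — the statement is essentially a formal consequence of the definitions, which is why the excerpt labels the claim as clear.
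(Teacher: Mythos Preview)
Your proof is correct and is precisely the unpacking of what the paper means by ``clear from definitions.'' The paper offers no argument beyond that phrase, and your reduction to the elementary identity $(M \otimes_R M') \otimes_{R^\Wf} (R^\Wf/\mathcal{J}^m) \cong (M/\mathcal{J}^m M) \otimes_R (M'/\mathcal{J}^m M')$ is exactly the content behind it.
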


\subsection{Extension of \texorpdfstring{$\tsZ$}{Z} to coherent sheaves and definition of \texorpdfstring{$\scR^\wedge$}{R}}
\label{ss:extension-Z-wedge}

Recall the category $\Coh^{G^\vee_\bk}_{\mathrm{fr}}(G^\vee_\bk)$ considered in~\S\ref{ss:extension-Z}. (Here the action of $G^\vee_\bk$ on itself is the adjoint action.) Applying Lemma~\ref{lem:extension-functor} to the monoidal functor
\[
\tsZ : \Rep(G^\vee_\bk) \to \D^\wedge_{\Iwu,\Iwu}
\]
and its automorphism $\widehat{\sm}_{(-)}$ (see~\S\ref{ss:fm-central-sheaves}) we obtain a canonical monoidal functor
\[
\sZ^{\wedge,\Coh} : \Coh^{G^\vee_\bk}_{\mathrm{fr}}(G^\vee_\bk) \to \D^\wedge_{\Iwu,\Iwu}
\]
taking values in the subcategory $\mathsf{P}^\wedge_{\Iwu,\Iwu}$. Recall that for any $V$ in $\Rep(G^\vee_\bk)$ and $\scF$ in $\D^\wedge_{\Iwu,\Iwu}$ the isomorphism $\widehat{\sigma}_{\Sat^{-1}(V),\scF}$ from Theorem~\ref{thm:gaitsgory-mon}\eqref{it:gaitsgory-mon-4} provides a canonical isomorphism
\begin{equation}
\label{eqn:commutation-Z-Satake}
\tsZ(V) \hatstar \scF \simto \scF \hatstar \tsZ(V),
\end{equation}
or in other words a canonical isomorphism
\begin{equation}
\label{eqn:commutation-ZCoh-Satake}
\sZ^{\wedge,\Coh}(V \otimes \scO_{G^\vee_\bk}) \hatstar \scF \simto \scF \hatstar \sZ^{\wedge,\Coh}(V \otimes \scO_{G^\vee_\bk}).
\end{equation}
We note the following property for later use.

\begin{lem}
\label{lem:commutation-ZCoh-Satake}
The isomorphisms~\eqref{eqn:commutation-ZCoh-Satake} define an isomorphism of bifunctors from $\Coh^{G^\vee_\bk}_{\mathrm{fr}}(G^\vee_\bk) \times \D^\wedge_{\Iwu,\Iwu}$ to $\D^\wedge_{\Iwu,\Iwu}$.
\end{lem}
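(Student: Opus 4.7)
The plan is to reduce both naturality conditions for $\widehat{\sigma}$, viewed as a transformation of bifunctors on $\Coh^{G^\vee_\bk}_{\mathrm{fr}}(G^\vee_\bk) \times \D^\wedge_{\Iwu,\Iwu}$, to assertions already packaged into Theorem~\ref{thm:gaitsgory-mon}\eqref{it:gaitsgory-mon-4}. Naturality in the second variable $\scF$, for fixed $V \in \Rep(G^\vee_\bk)$, is literally the naturality of the half-braiding $\widehat{\sigma}_{\mathcal{S}^{-1}(V),-}$ recorded there, so the substance lies in naturality in the first variable.

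For the first-variable naturality, I would invoke the uniqueness statement of Lemma~\ref{lem:extension-functor} applied to $\imath : \Rep(G^\vee_\bk) \to \Coh^{G^\vee_\bk}_{\mathrm{fr}}(G^\vee_\bk)$: since the monoidal functor $\tsZ^\Coh$ is completely determined by its restriction $\tsZ$ to $\Rep(G^\vee_\bk)$ together with the prescription $\tsZ^\Coh(\sm^{\mathrm{taut}}_V) = \widehat{\sm}_V$, verification of a naturality square for a general morphism $\alpha : V_1 \otimes \scO_{G^\vee_\bk} \to V_2 \otimes \scO_{G^\vee_\bk}$ reduces, by the same compositional and monoidal bookkeeping that produces $\tsZ^\Coh$ in the first place, to the two elementary classes: morphisms of the form $\imath(f)$ for $f$ in $\Rep(G^\vee_\bk)$, and the tautological automorphisms $\sm^{\mathrm{taut}}_V$. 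For the former, naturality of $\widehat{\sigma}$ along $\imath(f)$ is the naturality of $\widehat{\sigma}$ along $f$, contained in Theorem~\ref{thm:gaitsgory-mon}\eqref{it:gaitsgory-mon-4}. For the latter, the required identity
\[
\widehat{\sigma}_{V,\scF} \circ (\widehat{\sm}_V \hatstar \id_\scF) = (\id_\scF \hatstar \widehat{\sm}_V) \circ \widehat{\sigma}_{V,\scF}
\]
is the compatibility of $\widehat{\sigma}$ with monodromy, also explicitly stated in Theorem~\ref{thm:gaitsgory-mon}\eqref{it:gaitsgory-mon-4}.

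The main delicate point is spelling out the ``universal property'' appeal just made, namely that a natural transformation between the two candidate bifunctors, defined on $\Rep(G^\vee_\bk) \times \D^\wedge_{\Iwu,\Iwu}$ and intertwining the tautological automorphisms, extends uniquely to $\Coh^{G^\vee_\bk}_{\mathrm{fr}}(G^\vee_\bk) \times \D^\wedge_{\Iwu,\Iwu}$. The cleanest way to package this, which I would adopt, is to restate the whole lemma as the assertion that $\tsZ^\Coh$ lifts to a monoidal functor into the Drinfeld center $\mathcal{Z}(\D^\wedge_{\Iwu,\Iwu},\hatstar)$: the central structure of $\tsZ$ realizes it as a braided monoidal functor into $\mathcal{Z}(\D^\wedge_{\Iwu,\Iwu})$, the compatibility of $\widehat{\sm}_V$ with $\widehat{\sigma}$ recalled above shows that $\widehat{\sm}_V$ is an endomorphism of $\tsZ(V)$ in the center, and Lemma~\ref{lem:extension-functor} then applies in $\mathcal{Z}(\D^\wedge_{\Iwu,\Iwu})$ and produces the desired lift. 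Composing with the forgetful functor recovers $\tsZ^\Coh$ by uniqueness, and the naturality of $\widehat{\sigma}$ on $\Coh^{G^\vee_\bk}_{\mathrm{fr}}(G^\vee_\bk)$ is then built into the structure of the center.
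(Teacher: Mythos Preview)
Your proposal is correct and rests on the same two ingredients as the paper's proof: Lemma~\ref{lem:extension-functor} and the compatibility of $\widehat{\sigma}$ with $\widehat{\sm}$ from Theorem~\ref{thm:gaitsgory-mon}\eqref{it:gaitsgory-mon-4}. The packaging differs slightly: the paper applies Lemma~\ref{lem:extension-functor} with target the monoidal category $\mathsf{A}$ of $\bk$-linear endofunctors of $\D^\wedge_{\Iwu,\Iwu}$, once for $V \mapsto \tsZ(V)\hatstar(-)$ and once for $V \mapsto (-)\hatstar\tsZ(V)$, and then invokes the uniqueness clause to conclude that $\widehat{\sigma}$ identifies the two extensions; your final approach applies the lemma once with target the Drinfeld center $\mathcal{Z}(\D^\wedge_{\Iwu,\Iwu})$, where the half-braiding is built into the objects. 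Your route requires checking that $\widehat{\sm}_V$ is a morphism in the center (exactly the monodromy compatibility) and that the center-valued functor is monoidal (part of the central functor package), while the paper's route needs only the monoidal structure on left and right convolution separately plus the same monodromy compatibility; in practice both reduce to the same verifications, and neither is materially simpler than the other.
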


\begin{proof}
Consider the monoidal $\bk$-linear additive category $\mathsf{A}$ of $\bk$-linear endofunctors of $\D^\wedge_{\Iwu,\Iwu}$ (with monoidal structure given by composition). We have two $\bk$-linear monoidal functors from $\Rep(G^\vee_\bk)$ to $\mathsf{A}$, sending respectively $V$ to the endofunctors $\tsZ(V) \hatstar (-)$ and $(-) \hatstar \tsZ(V)$. Each of these functors admits an automorphism, given by $\widehat{\sm}_V \hatstar (-)$ and $(-) \hatstar \widehat{\sm}_V$ respectively. Lemma~\ref{lem:extension-functor} provides extensions of these functors determined by the corresponding automorphism, which are given by $V \otimes \scO_{G^\vee_\bk} \mapsto \sZ^{\wedge,\Coh}(V) \hatstar (-)$ and $V \otimes \scO_{G^\vee_\bk} \mapsto (-) \hatstar \sZ^{\wedge,\Coh}(V)$ respectively. Now the isomorphism~\eqref{eqn:commutation-Z-Satake} intertwines $\widehat{\sm}_V \hatstar \id_{\scF}$ and $\id_\scF \hatstar \widehat{\sm}_V$, see Theorem~\ref{thm:gaitsgory-mon}\eqref{it:gaitsgory-mon-4}. By unicity in Lemma~\ref{lem:extension-functor}, this means that~\eqref{eqn:commutation-ZCoh-Satake} is an isomorphism of bifunctors, as desired.
\end{proof}

Recall (see~~\eqref{eqn:adj-quotient}) that the adjoint quotient $G^\vee_\bk / G^\vee_\bk$ identifies with $T^\vee_\bk / \Wf$.
The quotient morphism $G^\vee_\bk \to G^\vee_\bk / G^\vee_\bk$ provides, for any $\scF$ in $\Coh^{G^\vee_\bk}(G^\vee_\bk)$, a canonical algebra morphism
\[
\scO(G^\vee_\bk / G^\vee_\bk) \to \End_{\Coh^{G^\vee_\bk}(G^\vee_\bk)}(\scF),
\]
and therefore an algebra morphism $\scO(T^\vee_\bk / \Wf) \to \End(\scF)$. With these morphisms, the category $\Coh^{G^\vee_\bk}(G^\vee_\bk)$ becomes an $\scO(T^\vee_\bk / \Wf)$-linear category. 

\begin{lem}
\label{lem:monodromy-tZ-F}
 For any $V$ in $\Rep(G^\vee_\bk)$, the composition
 \[
  \scO(T^\vee_\bk / \Wf) \to \End_{\Coh^{G^\vee_\bk}(G^\vee_\bk)}(V \otimes_\bk \scO_{G^\vee_\bk}) \to \End_{\mathsf{P}^{\wedge}_{\Iwu,\Iwu}}(\tsZ(V))
 \]
 (where the second map is induced by $\sZ^{\wedge,\Coh}$)
coincides with the restriction of the morphism $\mu_V$ (see~\eqref{eqn:muV}) to $\scO(T^\vee_\bk / \Wf)$.
\end{lem}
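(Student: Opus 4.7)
My plan is to verify the equality of the two algebra morphisms on a convenient $\bk$-spanning set of $\scO(T^\vee_\bk/\Wf)$. By the discussion in~\S\ref{ss:Steinberg-section} (applied with $\bG = G^\vee_\bk$), the characters $\chi_W := \mathrm{ch}(W)$ of finite-dimensional $G^\vee_\bk$-representations $W$ span $\scO(G^\vee_\bk/G^\vee_\bk) \cong \scO(T^\vee_\bk/\Wf)$ as a $\bk$-vector space; so by $\bk$-linearity of both maps it will suffice to check equality on each $\chi_W$.

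The first step is to reinterpret the action of $\chi_W$ on $V \otimes \scO_{G^\vee_\bk}$ categorically. This action is $\id_V \otimes m_{\chi_W}$, where $m_{\chi_W} \in \End(\scO_{G^\vee_\bk})$ is multiplication by $\chi_W$. A direct fiberwise computation (using that $\Coh^{G^\vee_\bk}_{\mathrm{fr}}(G^\vee_\bk)$ is rigid with $(W \otimes \scO_{G^\vee_\bk})^\vee = W^* \otimes \scO_{G^\vee_\bk}$) identifies $m_{\chi_W}$ with the left quantum trace $\trL(\sm^{\taut}_W)$: indeed, at a point $g \in G^\vee_\bk$ the defining composition yields exactly $\mathrm{tr}(g|_W) = \chi_W(g)$.

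Since $\tsZ^\Coh$ is a $\bk$-linear monoidal functor, and $\trL$ is an intrinsic construction in any left-rigid monoidal category, $\tsZ^\Coh$ will carry left quantum traces to left quantum traces. Together with the defining property $\tsZ^\Coh(\sm^{\taut}_W) = \hat{\sm}_W$ from Lemma~\ref{lem:extension-functor}, this gives
\[
\tsZ^\Coh(m_{\chi_W}) = \trL(\hat{\sm}_W),
\]
so the image of $\chi_W$ under the composition in the lemma is $\id_{\tsZ(V)} \hatstar \trL(\hat{\sm}_W)$, viewed inside $\End(\tsZ(V) \hatstar \delta^\wedge) = \End(\tsZ(V))$. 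By Lemma~\ref{lem:trace} and the isomorphism $\scO((T^\vee_\bk)^\wedge) \simto \End(\delta^\wedge)$ of Lemma~\ref{lem:monodromy-DN}\eqref{lem:monodromy-Dwedge} (applied with $w = e$), this endomorphism equals $\id_{\tsZ(V)} \hatstar \mu_{\delta^\wedge}(1 \otimes \chi_W)$. Applying the compatibility~\eqref{eqn:monodromy-convolution-1} of monodromy with convolution, along with the canonical isomorphism $\tsZ(V) \hatstar \delta^\wedge \cong \tsZ(V)$, we will rewrite this as $\mu_{\tsZ(V)}(1 \otimes \chi_W)$, which by the factorization~\eqref{eqn:muV} (a consequence of Corollary~\ref{cor:Wak-filtr-monodromy} applied to $\tsZ(V)$, which admits a Wakimoto filtration by Theorem~\ref{thm:gaitsgory-mon}\eqref{it:fm-central-Wak}) equals $\mu_V(\chi_W)$ --- exactly the image of $\chi_W$ under the restriction of $\mu_V$.

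The delicate point to verify will be that $\tsZ^\Coh$ indeed preserves left quantum traces in the sense above; this amounts to checking that the duality data on $W \otimes \scO_{G^\vee_\bk}$ lie in the image of the monoidal embedding $\imath : \Rep(G^\vee_\bk) \to \Coh^{G^\vee_\bk}_{\mathrm{fr}}(G^\vee_\bk)$, so that their images under $\tsZ^\Coh$ coincide with the duality data on $\tsZ(W) = \tsZ^\Coh(W \otimes \scO_{G^\vee_\bk})$ coming from the monoidality of $\tsZ$. This should follow directly from the construction of $\tsZ^\Coh$ via Lemma~\ref{lem:extension-functor}, since on the image of $\imath$ the extended functor reduces to the original monoidal functor $\tsZ$.
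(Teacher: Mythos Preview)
Your proof is correct and follows essentially the same approach as the paper: both arguments reduce to checking the claim on characters $\chi_W$, interpret multiplication by $\chi_W$ as the left quantum trace $\trL(\sm^{\taut}_W)$, use that monoidal functors preserve quantum traces to identify $\tsZ^\Coh(m_{\chi_W})$ with $\trL(\hat{\sm}_W)$, and then invoke Lemma~\ref{lem:trace} together with the monodromy-convolution compatibility. The only organizational difference is that the paper first treats the case $V = \bk$ (where $\tsZ(\bk) = \delta^\wedge$) and then deduces general $V$ by observing that the morphism factors as $\scO(T^\vee_\bk/\Wf) \to \End(\scO_{G^\vee_\bk}) \to \End(V \otimes \scO_{G^\vee_\bk})$ via tensoring on the left, whereas you carry general $V$ through the whole computation; since your endomorphism is $\id_{\tsZ(V)} \hatstar \trL(\hat{\sm}_W)$ anyway, the two presentations amount to the same thing.
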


\begin{proof}
 First we consider the case when $V=\bk$ is the trivial $G^\vee_\bk$-module. In this case, $\tsZ(\bk)$ is the unit object $\delta^\wedge$ in $\D^{\wedge}_{\Iwu,\Iwu}$. If $M$ is a finite-dimensional $G^\vee_\bk$-module, then we denote by $\mathrm{ch}_M \in \scO(G^\vee_\bk / G^\vee_\bk)=\scO(T^\vee_\bk / \Wf)$ the associated character. It is well known that these elements generate $\scO(T^\vee_\bk / \Wf)$ as a vector space, so that to prove the desired claim it suffices to check that our maps coincide on such elements. Now $\mathrm{ch}_M$ can be interpreted as the composition
 \[
  \scO(G^\vee_\bk) \to M \otimes \scO(G^\vee_\bk) \otimes M^* \to M \otimes \scO(G^\vee_\bk) \otimes M^* \to \scO(G^\vee_\bk)
 \]
where the first (resp.~third) morphism is induced by the canonical map $\bk \to M \otimes M^*$ (resp.~$M \otimes M^* \to \bk$), and the middle one is $\sm^{\mathrm{taut}}_M \otimes \id_{M^*}$. (See~\S\ref{ss:extension-Z} for the definition of $\sm^{\mathrm{taut}}_M$.) Therefore, its image in $\End(\delta^\wedge)$ is the composition
\[
 \delta^\wedge \to \tsZ(M) \hatstar \tsZ(M^*) \xrightarrow{\widehat{\sm}_M \hatstar \id_{\tsZ(M^*)}} \tsZ(M) \hatstar \tsZ(M^*) \to \delta^\wedge,
 \]
 where the first and third morphisms are the images of the maps considered above.
 This map has been computed in Lemma~\ref{lem:trace}, and is known to equal $\mu_{\delta^\wedge}(\mathrm{ch}_M \otimes 1)=\mu_\bk(\mathrm{ch}_M)$; this proves the desired claim in this case.
 
Now we deduce the general case. It is clear that the canonical morphism
 \[
  \scO(T^\vee_\bk / \Wf) \to \End_{\Coh^{G^\vee_\bk}(G^\vee_\bk)}(V \otimes_\bk \scO_{G^\vee_\bk})
 \]
is the composition
\[
\scO(T^\vee_\bk / \Wf) \to \End_{\Coh^{G^\vee_\bk}(G^\vee_\bk)}(\scO_{G^\vee_\bk}) \to \End_{\Coh^{G^\vee_\bk}(G^\vee_\bk)}(V \otimes_\bk \scO_{G^\vee_\bk})
\]
where the first map is the canonical morphism associated with the object $\scO_{G^\vee_\bk}$, and the second one is induced by the tensor product (on the left) with $V \otimes_\bk \scO_{G^\vee_\bk}$. Since $\sZ^{\wedge,\Coh}$ is monoidal, using the case already treated, it follows that its composition with the morphism induced by $\tsZ$ is the composition
\[
 \scO(T^\vee_\bk / \Wf) \xrightarrow{\mu_\bk} \End_{\mathsf{P}^{\wedge}_{\Iwu,\Iwu}}(\tsZ(\bk)) \to \End_{\mathsf{P}^{\wedge}_{\Iwu,\Iwu}}(\tsZ(V))
\]
where the second map is induced by convolution on the left with $\tsZ(V)$. Now, interpreting $\mu_V$ in terms of right monodromy, it is clear that $\mu_V=\id_{\tsZ(V)} \hatstar \mu_\bk$, which completes the proof.
\end{proof}

\begin{rmk}
\label{rmk:action-adjquo-Z}
Theorem~\ref{thm:gaitsgory-mon}\eqref{it:gaitsgory-mon-0} and the unicity in Lemma~\ref{lem:extension-functor} imply that we have $\pi_\dag \circ \sZ^{\wedge,\Coh} \cong \sZ^\Coh$.
Hence Lemma~\ref{lem:monodromy-tZ-F} implies that the composition of~\eqref{eqn:ZCoh-morph} with the natural morphism
\[
\scO(T^\vee_\bk/\Wf) \to \End_{\Coh^{G^\vee_\bk}(G^\vee_\bk)}(V \otimes_\bk \scO_{G^\vee_\bk})
\]
factors through the quotient $\scO(T^\vee_\bk/\Wf) \to \scO(T^\vee_\bk/\Wf) / \mathcal{J} = \bk$.
\end{rmk}

Since the complement of the open subset $G^\vee_{\bk,\reg} \subset G^\vee_\bk$ is known to have codimension at least $3$ (see~\cite[\S 4.13]{humphreys}), restriction induces an isomorphism $\scO(G^\vee_\bk) \simto \scO(G^\vee_{\bk,\reg})$, hence a fully faithful monoidal functor
\[
\Coh^{G^\vee_\bk}_{\mathrm{fr}}(G^\vee_\bk) \to \Coh^{G^\vee_\bk}(G^\vee_{\bk,\reg}).
\]
Recall that restriction to $\Sigma$ induces an equivalence of monoidal categories
\[
\Coh^{G^\vee_\bk}(G^\vee_{\bk,\reg}) \simto \Rep(\mathbb{J}_\Sigma),
\]
see Proposition~\ref{prop:rest-Steinberg-section}. We use this equivalence and the functor above to see the category $\Coh^{G^\vee_\bk}_{\mathrm{fr}}(G^\vee_\bk)$ as a full subcategory in $\Rep(\mathbb{J}_\Sigma)$. In these terms, the canonical functor $\Rep(G^\vee_\bk) \to \Rep(\mathbb{J}_\Sigma)$ is given by $V \mapsto V \otimes_\bk \scO_\Sigma$, and the $\scO(T^\vee_\bk/\Wf)$-linear structure on $\Coh^{G^\vee_\bk}_{\mathrm{fr}}(G^\vee_\bk)$ corresponds to the natural $\scO(\Sigma)$-linear structure on $\Rep(\mathbb{J}_\Sigma)$ via the identification $\Sigma \simto T^\vee_\bk/\Wf$.

Let us consider $\scO(G^\vee_\bk)$ with the $G^\vee_\bk$-module structure induced by multiplication on the left.
In~\S\ref{ss:extension-Z} we have considered a morphism
\begin{equation}
\label{eqn:morph-action-OG}
\scO(G^\vee_\bk) \to \End_{{\ind}\Coh^{G^\vee_\bk}(G^\vee_\bk)}(\scO(G^\vee_\bk) \otimes \scO_{G^\vee_\bk})
\end{equation}
constructed using the morphism
\[
\scO(G^\vee_\bk) \to \Hom_{{\ind}\Coh^{G^\vee_\bk}(G^\vee_\bk)}(\scO_{G^\vee_\bk}, \scO(G^\vee_\bk) \otimes \scO_{G^\vee_\bk})
= \scO(G^\vee_\bk \times G^\vee_\bk)^{G^\vee_\bk}
\]
induced by the map $G^\vee_\bk \times G^\vee_\bk \to G^\vee_\bk$ given by $(g,h) \mapsto g^{-1}hg$. In the terms above the right-hand side identifies with the space $\scO(G^\vee_\bk \times \Sigma)^{\mathbb{J}_\Sigma}$ of $\mathbb{J}_\Sigma$-invariant functions on $G^\vee_\bk \times \Sigma$, and our morphism
\[
\scO(G^\vee_\bk) \to \scO(G^\vee_\bk \times \Sigma)^{\mathbb{J}_\Sigma}
\]
is induced by the morphism $G^\vee_\bk \times \Sigma \to G^\vee_\bk$ given again by $(g,h) \mapsto g^{-1}hg$.

We now consider the ind-object $\tsZ(\scO(G^\vee_\bk)) \in {\ind}\D^\wedge_{\Iwu,\Iwu}$. This object is a ring ind-object, and taking the images of the morphisms above we obtain a ring morphism
\[
\scO(G^\vee_\bk) \to \Hom_{{\ind}\D^\wedge_{\Iwu,\Iwu}}(\delta^\wedge, \tsZ(\scO(G^\vee_\bk))) \subset \End_{{\ind}\D^\wedge_{\Iwu,\Iwu}}(\tsZ(\scO(G^\vee_\bk))),
\]
where the embedding is as in~\S\ref{ss:extension-Z}.
We can therefore consider the tensor product
\[
\scR^\wedge := \tsZ(\scO(G^\vee_\bk)) \otimes_{\scO(G^\vee_\bk)} \scO(\Sigma)
\]
in $\sfP^\wedge_{\Iwu,\Iwu}$ (based on the general construction recalled in~\S\ref{ss:mod-cat}),
which is the quotient of $\tsZ(\scO(G^\vee_\bk))$ by a left ideal. The same considerations as for $\sZ(\scO(G^\vee_\bk))$ based on the fact that $\tsZ$ is a central functor (see~\cite[p.~73]{bez} for details) imply that any left ideal in $\tsZ(\scO(G^\vee_\bk))$ is also a right ideal, so that $\scR^\wedge$ also has a canonical structure of ring object in ${\ind}\D^\wedge_{\Iwu,\Iwu}$, such that the surjection
\begin{equation}
\label{eqn:surj-OH}
\tsZ(\scO(G^\vee_\bk)) \to \scR^\wedge
\end{equation}
is a ring morphism.

\begin{rmk}
\label{rmk:action-Sigma-2}
From the definition we see that the restriction of~\eqref{eqn:morph-action-OG} to the subalgebra $\scO(G^\vee_\bk/G^\vee_\bk) \cong \scO(T^\vee_\bk/\Wf)$ coincides with the morphism considered in Lemma~\ref{lem:monodromy-tZ-F}; by this lemma, it therefore coincides with the restriction of monodromy. As a consequence, the action of $\scO(\Sigma)$ on $\scR^\wedge$ induced by the obvious action on $\scO(\Sigma)$ coincides, via the identification $\scO(\Sigma) \cong \scO(T^\vee_\bk/\Wf)$, with the monodromy action of $\scO(T^\vee_\bk/\Wf)$.
\end{rmk}

\subsection{Some properties of \texorpdfstring{$\scR^\wedge$}{Rwedge}}

In this subsection we prove a number of properties of the object $\scR^\wedge$.

\begin{lem}
\label{lem:tsZ-R}
 For any $V \in \Rep(G^\vee_\bk)$ we have a canonical isomorphism
 \[
  \scR^\wedge \hatstar \tsZ(V) \cong \scR^\wedge \otimes_\bk V
 \]
 in ${\ind}\D^\wedge_{\Iwu,\Iwu}$.
\end{lem}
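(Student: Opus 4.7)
The strategy is to combine the classical ``translation trick'' at the level of $\Rep(G^\vee_\bk)$ with the monoidality of $\tsZ$, and then to propagate the resulting isomorphism through the quotient by $\scO(\Sigma)$.

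First I would construct a canonical isomorphism of ind-objects in $\Rep(G^\vee_\bk)$,
\[
\tau_V : \scO(G^\vee_\bk) \otimes V \;\simto\; \scO(G^\vee_\bk) \otimes_\bk V,
\]
where on the source both tensor factors carry their natural $G^\vee_\bk$-module structures (left-regular on $\scO(G^\vee_\bk)$, given action on $V$) and on the target $V$ is viewed as a trivial $G^\vee_\bk$-module, so that the structure is only through the left-regular action on $\scO(G^\vee_\bk)$. Viewing $\scO(G^\vee_\bk) \otimes V$ as the space of regular $V$-valued functions on $G^\vee_\bk$, one sets $\tau_V(F)(g) = g^{-1} \cdot F(g)$; $G^\vee_\bk$-equivariance is a one-line check, and $\tau_V$ is natural in $V$. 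Moreover $\tau_V$ is $\scO(G^\vee_\bk)$-linear when $\scO(G^\vee_\bk)$ acts on both sides by multiplication on the first tensor factor; in other words, source and target are module objects over the ring object $\scO(G^\vee_\bk)$ in ${\ind}\Rep(G^\vee_\bk)$, and $\tau_V$ is a morphism of such.

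Applying the monoidal functor $\tsZ$ (extended canonically to ind-objects) and its additivity, I obtain a canonical chain of isomorphisms
\[
\tsZ(\scO(G^\vee_\bk)) \hatstar \tsZ(V) \;\simto\; \tsZ\bigl( \scO(G^\vee_\bk) \otimes V \bigr) \;\xrightarrow{\tsZ(\tau_V)}\; \tsZ\bigl( \scO(G^\vee_\bk) \otimes_\bk V \bigr) \;\simto\; \tsZ(\scO(G^\vee_\bk)) \otimes_\bk V,
\]
the first isomorphism being monoidality of $\tsZ$ (see Theorem~\ref{thm:gaitsgory-mon}\eqref{it:gaitsgory-mon-2}) and the last using additivity on the isomorphism $\scO(G^\vee_\bk) \otimes_\bk V_{\mathrm{triv}} \cong \scO(G^\vee_\bk)^{\oplus \dim V}$ in $\Rep(G^\vee_\bk)$. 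Since $\tsZ$ is monoidal it sends rings to rings and modules to modules, so this composite is an isomorphism of $\tsZ(\scO(G^\vee_\bk))$-modules in ${\ind}\sfP^\wedge_{\Iwu,\Iwu}$. The delicate point is then to check that this $\tsZ(\scO(G^\vee_\bk))$-module structure on $\tsZ(\scO(G^\vee_\bk)) \hatstar \tsZ(V)$ agrees, after restriction along the ring morphism $\scO(G^\vee_\bk) \to \Hom(\delta^\wedge, \tsZ(\scO(G^\vee_\bk)))$ constructed in \S\ref{ss:extension-Z-wedge}, with the action obtained by $\hatstar$-convolving endomorphisms of $\tsZ(\scO(G^\vee_\bk))$ by $\id_{\tsZ(V)}$. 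This reduces to a purely ``coherent'' compatibility between two natural $\scO(G^\vee_\bk)$-actions on $(\scO(G^\vee_\bk) \otimes \scO_{G^\vee_\bk}) \otimes (V \otimes \scO_{G^\vee_\bk})$ in ${\ind}\Coh^{G^\vee_\bk}_{\mathrm{fr}}(G^\vee_\bk)$, which follows from the very definition of the action \eqref{eqn:morph-action-OG} via the morphism $(g,h) \mapsto g^{-1}hg$ together with the functoriality of $\tsZ^\Coh$ provided by Lemma~\ref{lem:extension-functor} and Lemma~\ref{lem:commutation-ZCoh-Satake}; the analogous compatibility was used tacitly in \S\ref{ss:regular-quotient-coh} for the functor $\sZ$, and the same argument applies verbatim here before passing to the regular quotient.

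To conclude, I apply $(-) \otimes_{\scO(G^\vee_\bk)} \scO(\Sigma)$ to the $\scO(G^\vee_\bk)$-linear isomorphism above. On the right-hand side one obtains $\scR^\wedge \otimes_\bk V$ immediately (the two tensor products commute since the one against $V$ is just a direct sum). On the left-hand side one uses t-exactness of the functor $(-) \hatstar \tsZ(V)$ on $\sfD^\wedge_{\Iwu,\Iwu}$ (Theorem~\ref{thm:gaitsgory-mon}\eqref{it:gaitsgory-mon-5}), and in particular its exactness on the abelian category $\sfP^\wedge_{\Iwu,\Iwu}$, to commute convolution past the cokernel defining $(-) \otimes_{\scO(G^\vee_\bk)} \scO(\Sigma)$ from a finite presentation of $\scO(\Sigma)$ over $\scO(G^\vee_\bk)$; this yields $\scR^\wedge \hatstar \tsZ(V)$, as required.

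The main obstacle will be the linearity check described above: namely the identification of the $\scO(G^\vee_\bk)$-action on $\tsZ(\scO(G^\vee_\bk)) \hatstar \tsZ(V)$ coming from \S\ref{ss:extension-Z-wedge} with the one deduced from the $\tsZ(\scO(G^\vee_\bk))$-module structure on $\tsZ(\scO(G^\vee_\bk) \otimes V_{\mathrm{diag}})$ produced by monoidality. Once this consistency has been established, the remainder of the argument is essentially formal.
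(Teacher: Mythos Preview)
Your proposal is correct and follows essentially the same approach as the paper: the translation trick $\tau_V$ in $\Rep(G^\vee_\bk)$, monoidality of $\tsZ$, the $\scO(G^\vee_\bk)$-equivariance check via $\tsZ^\Coh$, and exactness of $(-)\hatstar\tsZ(V)$ to pass through the tensor product with $\scO(\Sigma)$. The paper handles your ``delicate point'' slightly more directly by simply observing that the isomorphism \eqref{eqn:isom-tsZ-R} is the image under $\tsZ^\Coh$ of an explicit isomorphism in $\Coh^{G^\vee_\bk}_{\mathrm{fr}}(G^\vee_\bk)$ that is visibly $\scO(G^\vee_\bk)$-equivariant for the action on the first factor, so no appeal to Lemma~\ref{lem:commutation-ZCoh-Satake} is needed.
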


\begin{proof}
By exactness of the functor $(-) \hatstar \tsZ(V)$ (see Theorem~\ref{thm:gaitsgory-mon}\eqref{it:gaitsgory-mon-5}) we have
\begin{equation}
 \label{eqn:isom-tsZ-R-0}
 \scR^\wedge \hatstar \tsZ(V) \cong \bigl( \tsZ(\scO(G^\vee_\bk)) \hatstar \tsZ(V) \bigr) \otimes_{\scO(G^\vee_\bk)} \scO(\Sigma),
\end{equation}
where $\scO(G^\vee_\bk)$ acts on $\tsZ(\scO(G^\vee_\bk)) \hatstar \tsZ(V)$ via its action on $\tsZ(\scO(G^\vee_\bk))$.
Now by monoidality of $\tsZ$ (see Theorem~\ref{thm:gaitsgory-mon}\eqref{it:gaitsgory-mon-2}) we have
 \[
  \tsZ(\scO(G^\vee_\bk)) \hatstar \tsZ(V) \cong \tsZ(\scO(G^\vee_\bk) \otimes_\bk V)
 \]
where $G^\vee_\bk$ acts diagonally on $\scO(G^\vee_\bk) \otimes_\bk V$. By standard arguments this tensor product identifies canonically with the similar tensor product where $G^\vee_\bk$ acts trivially on $V$, which provides an isomorphism
 \begin{equation}
 \label{eqn:isom-tsZ-R}
  \tsZ(\scO(G^\vee_\bk)) \hatstar \tsZ(V) \cong \tsZ(\scO(G^\vee_\bk)) \otimes_\bk V.
 \end{equation}
 This isomorphism is the image under $\sZ^{\wedge,\Coh}$ of an isomorphism
 \[
  \bigl( \scO(G^\vee_\bk) \otimes_\bk \scO_{G^\vee_\bk} \bigr) \otimes_{\scO_{G^\vee_\bk}} \bigl( V \otimes_\bk \scO_{G^\vee_\bk} \bigr) \cong \bigl( \scO(G^\vee_\bk) \otimes_\bk \scO_{G^\vee_\bk} \bigr) \otimes_\bk V
 \]
in $\Coh^{G^\vee_\bk}_{\mathrm{fr}}(G^\vee_\bk)$, which is easily seen to be $\scO(G^\vee_\bk)$-equivariant where on each side $\scO(G^\vee_\bk)$ acts via its action on $\scO(G^\vee_\bk) \otimes_\bk \scO_{G^\vee_\bk}$. Hence~\eqref{eqn:isom-tsZ-R} is $\scO(G^\vee_\bk)$-equivariant where on each side $\scO(G^\vee_\bk)$ acts via its action on $\tsZ(\scO(G^\vee_\bk))$. Combining this with~\eqref{eqn:isom-tsZ-R-0} we deduce the desired isomorphism.
\end{proof}


Since $\scR^\wedge$ is defined as a quotient of $\tsZ(\scO(G^\vee_\bk))$, the following claim follows from the similar property of $\tsZ(\scO(G^\vee_\bk))$ proved at the end of~\S\ref{ss:fm-central-sheaves}.

\begin{lem}
\label{lem:monodromy-Rwedge}
The monodromy morphism $\mu_{\scR^\wedge}$ factors through the multiplication morphism $\scO(T^\vee_\bk \times T^\vee_\bk) = \scO(T^\vee_\bk) \otimes \scO(T^\vee_\bk) \to \scO(T^\vee_\bk)$.
\end{lem}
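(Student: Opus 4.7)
The plan is to transport the factorization property established for $\tsZ$ on finite-dimensional representations through the ind-object limit defining $\tsZ(\scO(G^\vee_\bk))$, and then push it to its quotient $\scR^\wedge$ using functoriality of monodromy. The key input, noted at the end of Subsection~\ref{ss:fm-central-sheaves}, is that for every finite-dimensional $V \in \Rep(G^\vee_\bk)$ the map $\mu_{\tsZ(V)} : \scO(T^\vee_\bk \times T^\vee_\bk) \to \End(\tsZ(V))$ factors through the multiplication morphism $m : \scO(T^\vee_\bk) \otimes \scO(T^\vee_\bk) \to \scO(T^\vee_\bk)$, yielding a canonical $\mu_V : \scO(T^\vee_\bk) \to \End(\tsZ(V))$. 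This factorization came from the existence of a Wakimoto filtration on $\tsZ(V)$ (Theorem~\ref{thm:gaitsgory-mon}\eqref{it:fm-central-Wak}) combined with Corollary~\ref{cor:Wak-filtr-monodromy}.

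First, I would extend this statement from finite-dimensional representations to the ind-object $\tsZ(\scO(G^\vee_\bk))$. Recall that $\scO(G^\vee_\bk)$, with its left regular $G^\vee_\bk$-action, is naturally the filtered colimit of its finite-dimensional $G^\vee_\bk$-submodules $V_\alpha$, and $\tsZ(\scO(G^\vee_\bk))$ is the corresponding ind-object $``\varinjlim_\alpha" \tsZ(V_\alpha)$ in $\sfP^\wedge_{\Iwu,\Iwu}$. By functoriality of monodromy (see~\S\ref{ss:completed-category}), for each inclusion $V_\alpha \hookrightarrow V_\beta$ the associated map $\tsZ(V_\alpha) \to \tsZ(V_\beta)$ intertwines $\mu_{\tsZ(V_\alpha)}$ and $\mu_{\tsZ(V_\beta)}$; consequently the factorizations $\mu_{V_\alpha}$ assemble into a morphism $\mu_{\scO(G^\vee_\bk)} : \scO(T^\vee_\bk) \to \End_{{\ind}\sfP^\wedge_{\Iwu,\Iwu}}(\tsZ(\scO(G^\vee_\bk)))$ whose composition with $m$ is $\mu_{\tsZ(\scO(G^\vee_\bk))}$. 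In other words, the ind-monodromy of $\tsZ(\scO(G^\vee_\bk))$ factors through $m$.

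Finally, I would invoke the surjection $\tsZ(\scO(G^\vee_\bk)) \twoheadrightarrow \scR^\wedge$ from~\eqref{eqn:surj-OH}. By the basic functoriality of the monodromy construction, any morphism $f : \scF \to \scG$ in ${\ind}\sfP^\wedge_{\Iwu,\Iwu}$ satisfies $f \circ \mu_\scF(x) = \mu_\scG(x) \circ f$ for all $x \in \scO(T^\vee_\bk \times T^\vee_\bk)$. Applying this to the quotient map and noting that the composition $\scO(T^\vee_\bk \times T^\vee_\bk) \xrightarrow{m} \scO(T^\vee_\bk) \xrightarrow{\mu_{\scO(G^\vee_\bk)}} \End(\tsZ(\scO(G^\vee_\bk)))$ preserves this quotient, we deduce that $\mu_{\scR^\wedge}$ vanishes on $\ker(m)$ and therefore factors through $m : \scO(T^\vee_\bk \times T^\vee_\bk) \to \scO(T^\vee_\bk)$, as desired.

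There is no real obstacle here: the statement is a clean consequence of the Wakimoto-filtration input together with two compatibilities (with filtered colimits in the first variable of $\tsZ$, and with quotient maps) of the monodromy construction. The only mild subtlety is to make sure the monodromy formalism, developed for objects, is applied in the ind-category; but this is routine since monodromy automatically extends componentwise to pro- and ind-objects and commutes with all transition morphisms.
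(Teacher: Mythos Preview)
Your proposal is correct and follows exactly the approach of the paper: the paper's one-line proof cites the factorization through multiplication for $\tsZ(\scO(G^\vee_\bk))$ established at the end of \S\ref{ss:fm-central-sheaves} (itself a consequence of the Wakimoto filtration on each $\tsZ(V)$ via Corollary~\ref{cor:Wak-filtr-monodromy}) and then passes to the quotient $\scR^\wedge$ by functoriality of monodromy. You have simply made the passage to the ind-object and the quotient step explicit.
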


Recall that
since the category $\mathsf{P}^0_{\Iwu,\Iwu}$ is defined as a quotient of $\mathsf{P}_{\Iwu,\Iwu}$, any object in this category admits a canonical action of $\scO(T^\vee_\bk \times T^\vee_\bk)$ which factors through an action of $\scO(T^\vee_\bk \times_{T^\vee_\bk / \Wf} T^\vee_\bk)$ (see Lemma~\ref{lem:monodromy-fiber-prod}), and these actions commute with any morphism in $\mathsf{P}^0_{\Iwu,\Iwu}$. From Lemma~\ref{lem:monodromy-Rwedge} and~\eqref{eqn:monodromy-convolution-1}--\eqref{eqn:monodromy-convolution-2}
one obtains that for any $\scF$ in $\mathsf{P}^0_{\Iwu,\Iwu}$ we have
\begin{equation}
\label{eqn:monodromy-conv-R}
\mu_{\scR^\wedge \hatstar^0 \scF} = \id_{\scR^\wedge} \hatstar^0 \mu_{\scF}.
\end{equation}

Let consider the object $\pi_\dag(\scR^\wedge)$ in ${\ind}\D_{\Iwu,\Iw}$. The category ${\ind}\D_{\Iwu,\Iw}$ is not triangulated in any obvious way, nor can we consider any form of ``perverse'' t-structure on it. However, for any $n$ the perverse cohomology functor $\pH^n : \D_{\Iwu,\Iw} \to \sfP_{\Iwu,\Iw}$ induces a functor on ind-objects; we can therefore consider the object $\pH^0(\pi_\dag(\scR^\wedge))$ in ${\ind}\sfP_{\Iwu,\Iw}$. Recall also the object $\scR$ in ${\ind}\sfP_{\Iw,\Iw}$ considered in~\eqref{eqn:defn-R}.

\begin{lem}
\label{lem:pidag-Rwedge}
We have a canonical isomorphism
\[
\pH^0(\pi_\dag(\scR^\wedge)) \cong \For^{\Iw}_{\Iwu}(\scR)
\]
in ${\ind}\sfP_{\Iwu,\Iw}$.
\end{lem}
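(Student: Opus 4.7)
The plan is to exhibit both sides as the same cokernel in $\sfP_{\Iwu,\Iw}$. First I would choose generators $f_1, \ldots, f_r$ of the ideal of the closed subscheme $\Sigma \hookrightarrow G^\vee_\bk$, giving a right-exact sequence of $\scO(G^\vee_\bk)$-modules
\[
\scO(G^\vee_\bk)^{\oplus r} \xrightarrow{(f_1, \ldots, f_r)} \scO(G^\vee_\bk) \to \scO(\Sigma) \to 0.
\]
Tensoring $\tsZ(\scO(G^\vee_\bk))$ over $\scO(G^\vee_\bk)$ with this sequence (via the construction recalled in~\S\ref{ss:mod-cat} and the action~\eqref{eqn:morph-action-OG}), and using right exactness of this tensor product, I obtain a right-exact sequence
\[
\tsZ(\scO(G^\vee_\bk))^{\oplus r} \to \tsZ(\scO(G^\vee_\bk)) \to \scR^\wedge \to 0
\]
in ${\ind}\sfP^\wedge_{\Iwu,\Iwu}$, whose first arrow is induced by multiplication by $f_1, \ldots, f_r$.

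Next, I would apply the functor $\pH^0 \circ \pi_\dag$. Since $\pi_\dag$ is triangulated and right t-exact (see~\S\ref{ss:completed-perverse}), applying $\pH^0$ to the distinguished triangle associated with any short exact sequence of perverse sheaves yields a right-exact sequence; thus the extension of $\pH^0 \circ \pi_\dag$ to ind-objects is right exact. We get
\[
\pH^0 \pi_\dag \tsZ(\scO(G^\vee_\bk))^{\oplus r} \to \pH^0 \pi_\dag \tsZ(\scO(G^\vee_\bk)) \to \pH^0 \pi_\dag \scR^\wedge \to 0.
\]
By Theorem~\ref{thm:gaitsgory-mon}\eqref{it:gaitsgory-mon-0} the canonical isomorphism $\pi_\dag \tsZ \cong \For^{\Iw}_{\Iwu} \sZ$ identifies $\pi_\dag \widehat{\sm}$ with $\For^{\Iw}_{\Iwu} \sm$; the unicity in Lemma~\ref{lem:extension-functor} (cf.~Remark~\ref{rmk:action-adjquo-Z}) then upgrades this to a canonical isomorphism of monoidal functors $\pi_\dag \tsZ^\Coh \cong \For^{\Iw}_{\Iwu} \sZ^\Coh$ on $\Coh^{G^\vee_\bk}_{\mathrm{fr}}(G^\vee_\bk)$. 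In particular the left arrow identifies with $\For^{\Iw}_{\Iwu}$ of the map $\sZ(\scO(G^\vee_\bk))^{\oplus r} \xrightarrow{(f_1, \ldots, f_r)} \sZ(\scO(G^\vee_\bk))$ defined from $\sZ^\Coh$. Since $\For^{\Iw}_{\Iwu}$ is exact, this yields a canonical isomorphism
\[
\pH^0(\pi_\dag(\scR^\wedge)) \cong \For^{\Iw}_{\Iwu}\bigl(\sZ(\scO(G^\vee_\bk)) \otimes_{\scO(G^\vee_\bk)} \scO(\Sigma)\bigr).
\]

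Finally I would show that $\sZ(\scO(G^\vee_\bk)) \otimes_{\scO(G^\vee_\bk)} \scO(\Sigma) \cong \scR$. Since $\Sigma$ is a section of the adjoint quotient, the composition
\[
\scO(G^\vee_\bk/G^\vee_\bk) \simto \scO(T^\vee_\bk/\Wf) \hookrightarrow \scO(G^\vee_\bk) \twoheadrightarrow \scO(\Sigma)
\]
is the canonical isomorphism $\scO(G^\vee_\bk/G^\vee_\bk) \simto \scO(\Sigma)$. Consequently the $\scO(\Sigma)$-action on $\sZ(\scO(G^\vee_\bk)) \otimes_{\scO(G^\vee_\bk)} \scO(\Sigma)$ coincides, under the Steinberg identification, with the $\scO(T^\vee_\bk/\Wf)$-action on $\sZ(\scO(G^\vee_\bk))$ induced by $\sZ^\Coh$; by Remark~\ref{rmk:action-adjquo-Z} this latter action factors through $\scO(T^\vee_\bk/\Wf)/\cJ = \bk$ (evaluation at $\su$). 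Hence $\cJ \cdot \scO(\Sigma)$ annihilates the tensor product, so
\[
\sZ(\scO(G^\vee_\bk)) \otimes_{\scO(G^\vee_\bk)} \scO(\Sigma) = \sZ(\scO(G^\vee_\bk)) \otimes_{\scO(G^\vee_\bk)} \scO(\{\su\}) = \scR,
\]
completing the proof.

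The main subtlety will be verifying that the identification $\pi_\dag \tsZ^\Coh \cong \For^{\Iw}_{\Iwu} \sZ^\Coh$ holds canonically on all of $\Coh^{G^\vee_\bk}_{\mathrm{fr}}(G^\vee_\bk)$ (not just on the subcategory $\Rep(G^\vee_\bk)$), so that the leftmost map in the right-exact sequence is genuinely identified with the corresponding map on the non-completed side; this relies on the unicity of the extension in Lemma~\ref{lem:extension-functor} combined with the monodromy compatibility of Theorem~\ref{thm:gaitsgory-mon}\eqref{it:gaitsgory-mon-0}. A minor further check is that the resulting isomorphism is independent of the choice of generators $f_1, \ldots, f_r$; this follows because the intermediate cokernels involved are intrinsic, and the comparison map itself is induced by the canonical natural transformation $\pi_\dag \tsZ^\Coh \cong \For^{\Iw}_{\Iwu} \sZ^\Coh$.
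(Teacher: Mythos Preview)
Your proof is correct and follows essentially the same route as the paper's: choose a presentation of $\scO(\Sigma)$ over $\scO(G^\vee_\bk)$, apply the right-exact functor $\pH^0 \circ \pi_\dag$ to the resulting exact sequence defining $\scR^\wedge$, identify the first map via $\pi_\dag \tsZ^\Coh \cong \For^{\Iw}_{\Iwu} \sZ^\Coh$, and then reduce $\scO(\Sigma)$ to $\scO(\{\su\})$ using that the $\scO(T^\vee_\bk/\Wf)$-action on $\sZ(\scO(G^\vee_\bk))$ factors through $\bk$ (Remarks~\ref{rmk:action-adjquo-Z} and~\ref{rmk:action-Sigma-2}). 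The paper phrases the last step as a base-change identity $\Sigma \times_{T^\vee_\bk/\Wf} \Spec(\bk) = \{\su\}$, but the content is identical to your argument.
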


\begin{proof}
Choose a presentation of $\scO(G^\vee_\bk)$-modules
\[
\scO(G^\vee_\bk)^{\oplus r} \to \scO(G^\vee_\bk) \to \scO(\Sigma) \to 0,
\]
and consider the induced exact sequence
\[
\tsZ(\scO(G^\vee_\bk))^{\oplus r} \to \tsZ(\scO(G^\vee_\bk)) \to \scR^\wedge \to 0
\]
in ${\ind}\sfP^\wedge_{\Iwu,\Iwu}$. Since $\pi_\dag$ is right t-exact (see~\S\ref{ss:completed-perverse}), the functor
\[
\pH^0(\pi_\dag(-)) : \sfP^\wedge_{\Iwu,\Iwu} \to \sfP_{\Iwu,\Iw}
\]
is right exact. By~\cite[Corollary~8.6.8]{ks} it follows that the induced functor on ind-objects is also right exact, and using Theorem~\ref{thm:gaitsgory-mon}\eqref{it:gaitsgory-mon-0} we deduce an exact sequence
\[
\For^{\Iw}_{\Iwu} (\sZ(\scO(G^\vee_\bk)))^{\oplus r} \to \For^{\Iw}_{\Iwu} (\sZ(\scO(G^\vee_\bk))) \to \pH^0(\pi_\dag \scR^\wedge) \to 0,
\]
which shows that
\[
\pH^0(\pi_\dag \scR^\wedge) \cong \For^{\Iw}_{\Iwu} (\sZ(\scO(G^\vee_\bk)) \otimes_{\scO(G^\vee_\bk)} \scO(\Sigma))
\]
where the action of $\scO(G^\vee_\bk)$ on $\sZ(\scO(G^\vee_\bk))$ is as in~\S\ref{ss:extension-Z}, or equivalently is obtained from the action on $\tsZ(\scO(G^\vee_\bk))$ by application of $\pi_\dag$. By Remarks~\ref{rmk:action-adjquo-Z} and~\ref{rmk:action-Sigma-2}, the restriction of this action to $\scO(T^\vee_\bk / \Wf)$ factors through the quotient morphism $\scO(T^\vee_\bk / \Wf) \to \scO(T^\vee_\bk / \Wf)/\cJ=\bk$, so that the action of $\scO(G^\vee_\bk)$ on $\sZ(\scO(G^\vee_\bk))$ factors through an action of the subscheme
\[
G^\vee_\bk \times_{T^\vee_\bk / \Wf} \Spec(\bk),
\]
where the morphism $\Spec(\bk) \to T^\vee_\bk / \Wf$ corresponds to the image of $e \in T^\vee_\bk$. We deduce that
\[
\sZ(\scO(G^\vee_\bk)) \otimes_{\scO(G^\vee_\bk)} \scO(\Sigma) = \sZ(\scO(G^\vee_\bk)) \otimes_{\scO(G^\vee_\bk \times_{T^\vee_\bk / \Wf} \Spec(\bk))} \scO(\Sigma \times_{T^\vee_\bk / \Wf} \Spec(\bk)).
\]
Now since the morphism $\Sigma \to T^\vee_\bk/\Wf$ is an isomorphism we have
\[
\Sigma \times_{T^\vee_\bk / \Wf} \Spec(\bk) = \{\su\},
\]
so that finally
\[
\sZ(\scO(G^\vee_\bk)) \otimes_{\scO(G^\vee_\bk)} \scO(\Sigma) = \sZ(\scO(G^\vee_\bk)) \otimes_{\scO(G^\vee_\bk)} \scO(\{\su\}) = \scR,
\]
which finishes the proof.
\end{proof}

\subsection{The coaction morphism}

Consider the comultiplication morphism~\eqref{eqn:comult-OG}. As in~\S\ref{ss:regular-quotient-coh}
this morphism defines a morphism of ind-objects
\begin{equation}
\label{eqn:comult-Zwedge}
\tsZ(\scO(G^\vee_\bk)) \to \tsZ(\scO(G^\vee_\bk)) \otimes_\bk \scO(G^\vee_\bk).
\end{equation}
Here we can interpret the right-hand side as the tensor product
\[
\tsZ(\scO(G^\vee_\bk)) \otimes_{\scO(G^\vee_\bk)} \scO(G^\vee_\bk \times G^\vee_\bk)
\]
where $\scO(G^\vee_\bk)$ acts on $\tsZ(\scO(G^\vee_\bk))$ as in the definition of $\scR^\wedge$ and the morphism $\scO(G^\vee_\bk) \to \scO(G^\vee_\bk \times G^\vee_\bk)$ is induced by the first projection. Hence using the morphism 
$\scO(G^\vee_\bk \times G^\vee_\bk) \to \scO(\bbJ_\Sigma)$ induced by the composition
\[
\bbJ_\Sigma \hookrightarrow \Sigma \times G^\vee_\bk \hookrightarrow G^\vee_\bk \times G^\vee_\bk
\]
we obtain a canonical morphism
\[
\tsZ(\scO(G^\vee_\bk)) \to \tsZ(\scO(G^\vee_\bk)) \otimes_{\scO(G^\vee_\bk)} \scO(\bbJ_\Sigma)
\]
where the morphism $\scO(G^\vee_\bk) \to \scO(\bbJ_\Sigma)$ is induced by the composition
\[
\bbJ_\Sigma \hookrightarrow \Sigma \times G^\vee_\bk \to \Sigma \hookrightarrow G^\vee_\bk
\]
where the second morphism is the obvious projection.
Now, using~\eqref{eqn:tensor-product-bimodule} we obtain a canonical isomorphism
\[
\tsZ(\scO(G^\vee_\bk)) \otimes_{\scO(G^\vee_\bk)} \scO(\bbJ_\Sigma) \cong \scR^\wedge  \otimes_{\scO(\Sigma)} \scO(\mathbb{J}_\Sigma),
\]
where the morphism $\scO(\Sigma) \to \scO(\mathbb{J}_\Sigma)$ is induced by the natural projection $\bbJ_\Sigma \to \Sigma$. We can therefore consider our morphism as a morphism
\begin{equation}
\label{eqn:coprod-Rwedge}
\tsZ(\scO(G^\vee_\bk)) \to \scR^\wedge \otimes_{\scO(\Sigma)} \scO(\mathbb{J}_\Sigma).
\end{equation}

\begin{lem}
\label{lem:coprod-Rwedge}
The morphism~\eqref{eqn:coprod-Rwedge} factors (uniquely) through a morphism
\[
\mathrm{coact}_{\scR^\wedge} : \scR^\wedge \to \scR^\wedge \otimes_{\scO(\Sigma)} \scO(\mathbb{J}_\Sigma).
\]
\end{lem}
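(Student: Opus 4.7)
The strategy is to exhibit the morphism~\eqref{eqn:coprod-Rwedge} as being $\scO(G^\vee_\bk)$-linear with respect to suitable actions on source and target, and then to observe that $I_\Sigma := \ker(\scO(G^\vee_\bk) \to \scO(\Sigma))$ acts trivially on the target. Since $\scR^\wedge = \tsZ(\scO(G^\vee_\bk)) \otimes_{\scO(G^\vee_\bk)} \scO(\Sigma)$ is by definition the quotient of $\tsZ(\scO(G^\vee_\bk))$ by $I_\Sigma \cdot \tsZ(\scO(G^\vee_\bk))$ (with respect to the canonical $\scO(G^\vee_\bk)$-action from~\S\ref{ss:extension-Z-wedge}), this will supply the desired factorization by the universal property.

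First I would establish the equivariance. The comultiplication morphism
\[
\scO(G^\vee_\bk) \otimes_\bk \scO_{G^\vee_\bk} \to (\scO(G^\vee_\bk) \otimes_\bk \scO_{G^\vee_\bk}) \otimes \scO(G^\vee_\bk)
\]
in ${\ind}\Coh^{G^\vee_\bk}_{\mathrm{fr}}(G^\vee_\bk)$ is $\scO(G^\vee_\bk)$-linear, where on the target the action is the restriction of the natural $\scO(G^\vee_\bk) \otimes \scO(G^\vee_\bk)$-action along the ring map $c^* : \scO(G^\vee_\bk) \to \scO(G^\vee_\bk \times G^\vee_\bk)$ dual to $c(g,h) = h^{-1}gh$ (see~\S\ref{ss:regular-quotient-coh}). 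Applying $\tsZ^\Coh$ and its extension to ind-objects, then pushing through the ring morphism $\iota^* : \scO(G^\vee_\bk \times G^\vee_\bk) \to \scO(\bbJ_\Sigma)$ induced by $\iota : \bbJ_\Sigma \hookrightarrow G^\vee_\bk \times G^\vee_\bk$, and using the base-change identification~\eqref{eqn:tensor-product-bimodule}, I obtain that~\eqref{eqn:coprod-Rwedge} is $\scO(G^\vee_\bk)$-linear where the target action is multiplication on the $\scO(\bbJ_\Sigma)$-factor via the composition $\iota^* \circ c^* : \scO(G^\vee_\bk) \to \scO(\bbJ_\Sigma)$.

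The crucial geometric step is then to compute $c \circ \iota : \bbJ_\Sigma \to G^\vee_\bk$ explicitly. For $(g,s) \in \bbJ_\Sigma$ one has $g \in Z_{G^\vee_\bk}(s)$, and $\iota$ sends $(g,s)$ to $(s,g) \in G^\vee_\bk \times G^\vee_\bk$; therefore $c(\iota(g,s)) = g^{-1}sg = s$, i.e.\ $c \circ \iota$ equals the structure morphism $\bbJ_\Sigma \to \Sigma \hookrightarrow G^\vee_\bk$. Consequently the ring map $\iota^* \circ c^*$ factors as $\scO(G^\vee_\bk) \twoheadrightarrow \scO(\Sigma) \hookrightarrow \scO(\bbJ_\Sigma)$, so every element of $I_\Sigma$ acts as zero on $\scR^\wedge \otimes_{\scO(\Sigma)} \scO(\bbJ_\Sigma)$. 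Combined with the $\scO(G^\vee_\bk)$-equivariance, this forces~\eqref{eqn:coprod-Rwedge} to annihilate $I_\Sigma \cdot \tsZ(\scO(G^\vee_\bk))$, producing the unique factorization $\mathrm{coact}_{\scR^\wedge}$.

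The main technical obstacle is the bookkeeping for the equivariance assertion: one needs to track carefully how the $\scO(G^\vee_\bk) \otimes \scO(G^\vee_\bk)$-action on the target of the comultiplication, which is described in~\S\ref{ss:regular-quotient-coh} in terms of the canonical action on one factor and multiplication on the other, survives the passage through $\tsZ^\Coh$ and the two subsequent identifications. This is the type of verification where Lemma~\ref{lem:extension-functor} (uniqueness of the extension given the monodromy automorphism) and the monoidality of $\tsZ^\Coh$ do the real work; once the equivariance has been pinned down, the geometric identity $c \circ \iota = \mathrm{pr}_\Sigma$ closes the argument.
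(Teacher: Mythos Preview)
Your proposal is correct and follows essentially the same approach as the paper: establish $\scO(G^\vee_\bk)$-equivariance of~\eqref{eqn:coprod-Rwedge} via the conjugation-twisted action, then use the key observation that on $\bbJ_\Sigma$ the map $(s,h) \mapsto h^{-1}sh$ collapses to the projection to $\Sigma$ (since $h$ centralizes $s$), forcing the action to factor through $\scO(\Sigma)$. The paper carries out the equivariance check by a direct Hopf-algebra computation in $\Coh^{G^\vee_\bk}(G^\vee_\bk)$ before applying $\tsZ^\Coh$, which is exactly the verification you flag as the main technical point.
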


\begin{proof}
Consider the action of  $\scO(G^\vee_\bk) \otimes \scO(G^\vee_\bk)$ on
\[
\tsZ(\scO(G^\vee_\bk)) \otimes_\bk \scO(G^\vee_\bk)
\]
where the left copy acts via the action on $\tsZ(\scO(G^\vee_\bk))$ used in the definition of $\scR^\wedge$, and the right copy acts via multiplication on $\scO(G^\vee_\bk)$. Of course, via the identification
\[
\tsZ(\scO(G^\vee_\bk)) \otimes_\bk \scO(G^\vee_\bk) \cong \tsZ(\scO(G^\vee_\bk)) \otimes_{\scO(G^\vee_\bk)} \scO(G^\vee_\bk \times G^\vee_\bk)
\]
this action corresponds to the action on the right-hand side induced by the obvious action of $\scO(G^\vee_\bk \times G^\vee_\bk)$ on itself.

An explicit computation using the Hopf algebra operations in $\scO(G^\vee_\bk)$ shows that
the morphism~\eqref{eqn:comult-Zwedge} is $\scO(G^\vee_\bk)$-linear, where the action on the left-hand side is as in the definition of $\scR^\wedge$ and that on the right-hand side is obtained from the action of $\scO(G^\vee_\bk) \otimes \scO(G^\vee_\bk)$ considered above via the morphism $\scO(G^\vee_\bk) \to \scO(G^\vee_\bk) \otimes \scO(G^\vee_\bk)$ induced by the map
\[
G^\vee_\bk \times G^\vee_\bk \to G^\vee_\bk
\]
given by $(g,h) \mapsto h^{-1}gh$. Hence our morphism
\[
\tsZ(\scO(G^\vee_\bk)) \to \tsZ(\scO(G^\vee_\bk)) \otimes_{\scO(G^\vee_\bk)} \scO(G^\vee_\bk \times G^\vee_\bk)
\]
is $\scO(G^\vee_\bk)$-linear where the action on the right-hand side is induced by the same morphism $\scO(G^\vee_\bk) \to \scO(G^\vee_\bk) \otimes \scO(G^\vee_\bk)$ and the obvious action of $\scO(G^\vee_\bk) \otimes \scO(G^\vee_\bk)$. If follows that the morphism
\[
\tsZ(\scO(G^\vee_\bk)) \to \tsZ(\scO(G^\vee_\bk)) \otimes_{\scO(G^\vee_\bk)} \scO(\bbJ_\Sigma)
\]
used to define~\eqref{eqn:coprod-Rwedge} is $\scO(G^\vee_\bk)$-linear where the action on the right-hand side is obtained from the obvious action of $\scO(\bbJ_\Sigma)$ on itself via the morphism $\scO(G^\vee_\bk) \to \scO(\bbJ_\Sigma)$ induced by the map
\[
\bbJ_\Sigma \to G^\vee_\bk
\]
given by $(s,h) \mapsto h^{-1} s h$. By definition of $\bbJ_\Sigma$ this morphism coincides with the projection $\bbJ_\Sigma \to \Sigma$; in particular, this action of $\scO(G^\vee_\bk)$ on $\tsZ(\scO(G^\vee_\bk)) \otimes_{\scO(G^\vee_\bk)} \scO(\bbJ_\Sigma)$ factors through the restriction morphism $\scO(G^\vee_\bk) \to \scO(\Sigma)$, which implies the desired property for~\eqref{eqn:coprod-Rwedge}.
%
\end{proof}

It is clear by construction that the morphism $\mathrm{coact}_{\scR^\wedge}$ from Lemma~\ref{lem:coprod-Rwedge} is ``counital" in the sense that the composition 
\[
\scR^\wedge \xrightarrow{\mathrm{coact}_{\scR^\wedge}} \scR^\wedge \otimes_{\scO(\Sigma)} \scO(\mathbb{J}_\Sigma) \to \scR^\wedge
\]
(where the second map is induced by restriction to the identity section of $\mathbb{J}_\Sigma$) is $\id_{\scR^\wedge}$, and
``coassociative" in the sense that the composition
\[
\scR^\wedge \xrightarrow{\mathrm{coact}_{\scR^\wedge}} \scR^\wedge \otimes_{\scO(\Sigma)} \scO(\mathbb{J}_\Sigma) \to \scR^\wedge \otimes_{\scO(\Sigma)} \scO(\mathbb{J}_\Sigma) \otimes_{\scO(\Sigma)} \scO(\mathbb{J}_\Sigma)
\]
(where the second map is induced by the comultiplication map for the group scheme $\bbJ_\Sigma$) coincides with the composition
\[
\scR^\wedge \xrightarrow{\mathrm{coact}_{\scR^\wedge}} \scR^\wedge \otimes_{\scO(\Sigma)} \scO(\mathbb{J}_\Sigma) \xrightarrow{\mathrm{coact}_{\scR^\wedge} \otimes \id} \scR^\wedge \otimes_{\scO(\Sigma)} \scO(\mathbb{J}_\Sigma) \otimes_{\scO(\Sigma)} \scO(\mathbb{J}_\Sigma).
\]

\subsection{A monoidality morphism}
\label{ss:monoidality-morph}

In this subsection we explain the construction of a morphism which will be an ingredient in the construction of the monoidal structure on the functor $\Phi_{\Iwu,\Iwu}$.

Recall that the product $\tsZ(V) \hatstar \scF$ is perverse for any $V \in \Rep(G^\vee_\bk)$ and $\scF \in \sfP_{\Iwu,\Iwu}$, see Theorem~\ref{thm:gaitsgory-mon}\eqref{it:gaitsgory-mon-5}. In view of the construction of the product $\hatstar^0$ (see~\S\ref{ss:actions-Dwedge-D}), it follows that for any $V \in \Rep(G^\vee_\bk)$ and $\scF \in \sfP_{\Iwu,\Iwu}^0$ the product $\tsZ(V) \hatstar^0 \scF$ belongs to $\sfP_{\Iwu,\Iwu}^0$.

\begin{lem}
\label{lem:convolution-R-0}
 Let $\scF \in \sfP^0_{\Iwu,\Iwu}$.
  We have a canonical identification
 \[
 \scR^\wedge \phatstar^0 \scF = (\tsZ(\scO(G^\vee_\bk)) \hatstar^0 \scF) \otimes_{\scO(G^\vee_\bk)} \scO(\Sigma),
 \]
where the tensor product in the right-hand side is taken in the abelian category ${\ind}\sfP^0_{\Iwu,\Iwu}$, and the action of $\scO(G^\vee_\bk)$ is induced by that on $\tsZ(\scO(G^\vee_\bk))$.
\end{lem}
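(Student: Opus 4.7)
The plan is to unwind both sides of the asserted identification as cokernels in $\mathrm{ind}\,\sfP^0_{\Iwu,\Iwu}$ of the same morphism, and to conclude via right exactness of $(-) \phatstar^0 \scF$.

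First I would fix a two-step free presentation of $\scO(\Sigma)$ as an $\scO(G^\vee_\bk)$-module, say
\[
\scO(G^\vee_\bk)^{\oplus r} \xrightarrow{f} \scO(G^\vee_\bk)^{\oplus s} \to \scO(\Sigma) \to 0.
\]
Using the action of $\scO(G^\vee_\bk)$ on $\tsZ(\scO(G^\vee_\bk))$ constructed in \S\ref{ss:extension-Z-wedge}, this presentation induces a morphism
\[
\tilde f : \tsZ(\scO(G^\vee_\bk))^{\oplus r} \to \tsZ(\scO(G^\vee_\bk))^{\oplus s}
\]
in $\mathrm{ind}\,\sfP^\wedge_{\Iwu,\Iwu}$, and by the very definition of the tensor product recalled in~\S\ref{ss:mod-cat}, the object $\scR^\wedge$ identifies with the cokernel of $\tilde f$.

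Next I would apply the functor $(-) \phatstar^0 \scF$, which is right exact by Lemma~\ref{lem:t-exactness-conv-tFl} (extended to ind-objects via~\cite[Corollary~8.6.8]{ks}). This yields an exact sequence
\[
\tsZ(\scO(G^\vee_\bk))^{\oplus r} \phatstar^0 \scF \xrightarrow{\tilde f \phatstar^0 \id} \tsZ(\scO(G^\vee_\bk))^{\oplus s} \phatstar^0 \scF \to \scR^\wedge \phatstar^0 \scF \to 0.
\]
Now the observation made just before the lemma (combining Theorem~\ref{thm:gaitsgory-mon}\eqref{it:gaitsgory-mon-5} with the construction of $\hatstar^0$ in~\S\ref{ss:actions-Dwedge-D}) shows that for each $V \in \Rep(G^\vee_\bk)$ the object $\tsZ(V) \hatstar^0 \scF$ already lies in the heart $\sfP^0_{\Iwu,\Iwu}$, so that the truncation $\pH^0$ defining $\phatstar^0$ acts trivially here. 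Passing to ind-objects, the above exact sequence becomes
\[
(\tsZ(\scO(G^\vee_\bk)) \hatstar^0 \scF)^{\oplus r} \to (\tsZ(\scO(G^\vee_\bk)) \hatstar^0 \scF)^{\oplus s} \to \scR^\wedge \phatstar^0 \scF \to 0.
\]

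Finally I would observe that the $\scO(G^\vee_\bk)$-action on $\tsZ(\scO(G^\vee_\bk))$ induces, by functoriality of $\hatstar^0$, an $\scO(G^\vee_\bk)$-action on $\tsZ(\scO(G^\vee_\bk)) \hatstar^0 \scF$, and that under this action the map $\tilde f \phatstar^0 \id$ is precisely the morphism induced by $f$ in the tensor product construction. Consequently the cokernel above is by definition $(\tsZ(\scO(G^\vee_\bk)) \hatstar^0 \scF) \otimes_{\scO(G^\vee_\bk)} \scO(\Sigma)$, giving the desired canonical identification. The only point requiring a genuine check—rather than mere unwinding—is that this identification is independent of the chosen presentation; this is standard and follows from the naturality of $\hatstar^0$ applied to a morphism between two presentations. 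No single step here should be a serious obstacle: the content of the lemma is essentially that the right-exact operation $(-) \phatstar^0 \scF$ commutes with the right-exact tensor product $(-) \otimes_{\scO(G^\vee_\bk)} \scO(\Sigma)$.
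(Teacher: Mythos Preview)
Your proof is correct and follows essentially the same approach as the paper: choose a free presentation of $\scO(\Sigma)$ over $\scO(G^\vee_\bk)$, apply the right-exact functor $(-)\phatstar^0\scF$ (via Lemma~\ref{lem:t-exactness-conv-tFl} and~\cite[Corollary~8.6.8]{ks}), and identify the resulting cokernel with the tensor product. The paper simply takes $s=1$ in the presentation (since $\scO(\Sigma)$ is a quotient of $\scO(G^\vee_\bk)$) and is somewhat terser, but the argument is the same.
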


\begin{proof}
By definition we have $\scR^\wedge = \tsZ(\scO(G^\vee_\bk)) \otimes_{\scO(G^\vee_\bk)} \scO(\Sigma)$. Choosing a 
presentation as in the proof of Lemma~\ref{lem:pidag-Rwedge} we obtain
an exact sequence
\[
 \tsZ(\scO(G^\vee_\bk))^{\oplus r} \to \tsZ(\scO(G^\vee_\bk)) \to \scR^\wedge \to 0.
\]
By Lemma~\ref{lem:t-exactness-conv-tFl} the functor
$(-) \phatstar^0 \scF : \sfP^\wedge_{\Iwu,\Iwu} \to \sfP^0_{\Iwu,\Iwu}$
is right exact. By~\cite[Corollary~8.6.8]{ks}, it follows that the same is true for the extension of this functor to ind-objects, and we deduce an exact sequence
\[
 (\tsZ(\scO(G^\vee_\bk)) \hatstar^0 \scF)^{\oplus r} \to \tsZ(\scO(G^\vee_\bk)) \hatstar^0 \scF \to \scR^\wedge \phatstar^0 \scF \to 0
\]
in ${\ind}\sfP^0_{\Iwu,\Iwu}$. It follows that
\[
 \scR^\wedge \phatstar^0 \scF = (\tsZ(\scO(G^\vee_\bk)) \hatstar^0 \scF) \otimes_{\scO(G^\vee_\bk)} \scO(\Sigma),
\]
as desired. 
%
\end{proof}

Using this lemma, we will now explain how to construct, for $\scF,\scG$ in $\sfP^0_{\Iwu,\Iwu}$, a canonical morphism
\begin{equation}
\label{eqn:morph-conv-R}
 (\scR^\wedge \phatstar^0 \scF) \pstar^0_{\Iwu} (\scR^\wedge \phatstar^0 \scG) \to \scR^\wedge \phatstar^0 (\scF \pstar^0_{\Iwu} \scG)
\end{equation}
in ${\ind}\sfP^0_{\Iwu,\Iwu}$.
First, from Lemma~\ref{lem:convolution-R-0} and the right t-exactness of $\pstar^0_{\Iwu}$ (see~\S\ref{ss:mon-reg-quotient}) we deduce a canonical isomorphism
\begin{multline*}
 (\scR^\wedge \phatstar^0 \scF) \pstar^0_{\Iwu} (\scR^\wedge \phatstar^0 \scG) = \\
 \bigl( (\tsZ(\scO(G^\vee_\bk)) \hatstar^0 \scF) \pstar^0_{\Iwu} (\tsZ(\scO(G^\vee_\bk)) \hatstar^0 \scG) \bigr) \otimes_{\scO(G^\vee_\bk \times G^\vee_\bk)} \scO(\Sigma \times \Sigma),
\end{multline*}
where $\scO(G^\vee_\bk \times G^\vee_\bk)=\scO(G^\vee_\bk) \otimes \scO(G^\vee_\bk)$ acts via its action on the two factors $\tsZ(\scO(G^\vee_\bk))$. Hence to construct~\eqref{eqn:morph-conv-R} it suffices to construct a morphism
\begin{equation}
\label{eqn:morph-conv-R-2}
 (\tsZ(\scO(G^\vee_\bk)) \hatstar^0 \scF) \pstar^0_{\Iwu} (\tsZ(\scO(G^\vee_\bk)) \hatstar^0 \scG) \to \scR^\wedge \phatstar^0 (\scF \pstar^0_{\Iwu} \scG)
\end{equation}
which is annihilated by the ideal of $\Sigma \times \Sigma \subset G^\vee_\bk \times G^\vee_\bk$. Now from the definition of $\pstar^0_{\Iwu}$,~\eqref{eqn:commutation-Z-0} and~\eqref{eqn:compatibility-hatstar-0} we obtain isomorphisms
\begin{multline*}
 (\tsZ(\scO(G^\vee_\bk)) \hatstar^0 \scF) \pstar^0_{\Iwu} (\tsZ(\scO(G^\vee_\bk)) \hatstar^0 \scG) = \\
 \pH^0 \bigl( (\tsZ(\scO(G^\vee_\bk)) \hatstar^0 \scF) \star^0_{\Iwu} (\tsZ(\scO(G^\vee_\bk)) \hatstar^0 \scG) \bigr) \cong \\
 \pH^0 \bigl( (\scF \hatstar^0 \tsZ(\scO(G^\vee_\bk))) \star^0_{\Iwu} (\tsZ(\scO(G^\vee_\bk)) \hatstar^0 \scG) \bigr) \cong \\
 \pH^0 \bigl( (\scF \hatstar^0 ( \tsZ(\scO(G^\vee_\bk)) \hatstar \tsZ(\scO(G^\vee_\bk)) )) \star^0_{\Iwu} \scG\bigr).
\end{multline*}
By Lemma~\ref{lem:commutation-ZCoh-Satake} these isomorphisms commute with the actions of $\scO(G^\vee_\bk \times G^\vee_\bk)$ induced by the actions on the factors $\tsZ(\scO(G^\vee_\bk))$. Now multiplication in $\scO(G^\vee_\bk)$ induces a morphism
\[
 \tsZ(\scO(G^\vee_\bk)) \hatstar \tsZ(\scO(G^\vee_\bk)) \to \tsZ(\scO(G^\vee_\bk))
\]
which is $\scO(G^\vee_\bk \times G^\vee_\bk)$-equivariant, where the action on the right-hand side is the composition of the product morphism $\scO(G^\vee_\bk) \otimes \scO(G^\vee_\bk) \to \scO(G^\vee_\bk)$ with the given action of $\scO(G^\vee_\bk)$. We deduce a canonical morphism
\[
 (\tsZ(\scO(G^\vee_\bk)) \hatstar^0 \scF) \pstar^0_{\Iwu} (\tsZ(\scO(G^\vee_\bk)) \hatstar^0 \scG) \to \pH^0((\scF \hatstar^0 \tsZ(\scO(G^\vee_\bk))) \star^0_{\Iwu} \scG).
\]
Using~\eqref{eqn:commutation-Z-0} and~\eqref{eqn:compatibility-hatstar-other-0}, the right-hand side identifies with
\[
 \tsZ(\scO(G^\vee_\bk)) \hatstar^0 (\scF \pstar^0_{\Iwu} \scG);
\]
it therefore admits a canonical morphism to $\scR^\wedge \phatstar^0 (\scF \pstar^0_{\Iwu} \scG)$. Combining these morphisms we obtain a morphism~\eqref{eqn:morph-conv-R-2}, and from the construction and the comments above on equivariance one can check that this morphism is indeed annihilated by the ideal of $\Sigma \times \Sigma \subset G^\vee_\bk \times G^\vee_\bk$; it therefore induces the whished-for morphism~\eqref{eqn:morph-conv-R}.

\subsection{Exactness}

This (technical) subsection is devoted to the proof of the following claim, which will be crucial for our considerations below.

\begin{lem}
\label{lem:convolution-0-exact}
The functor
\begin{equation}
\label{eqn:convolution-R}
\scR^\wedge \phatstar^0 (-) : \mathsf{P}^0_{\Iwu,\Iwu} \to {\ind}\mathsf{P}^0_{\Iwu,\Iwu}.
\end{equation}
is exact. Moreover, for $\scG$ in $\sfP_{\Iw,\Iw}^0$ we have a canonical isomorphism
\begin{equation}
\label{eqn:convolution-R-pidag}
\scR^\wedge \phatstar^0 (\pi_0^\dag \scG) \cong \pi_0^\dag(\scR^0 \star^0_\Iw \scG).
\end{equation}
\end{lem}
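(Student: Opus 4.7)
The plan is to start from Lemma~\ref{lem:convolution-R-0}, which provides a canonical identification
\[
\scR^\wedge \phatstar^0 \scF \cong \bigl( \tsZ(\scO(G^\vee_\bk)) \hatstar^0 \scF \bigr) \otimes_{\scO(G^\vee_\bk)} \scO(\Sigma)
\]
in ${\ind}\sfP^0_{\Iwu,\Iwu}$, where the $\scO(G^\vee_\bk)$-action on $\tsZ(\scO(G^\vee_\bk)) \hatstar^0 \scF$ is the one induced by the action on $\tsZ(\scO(G^\vee_\bk))$ constructed in~\S\ref{ss:extension-Z-wedge}. This factors $\scR^\wedge \phatstar^0 (-)$ as $\tsZ(\scO(G^\vee_\bk)) \hatstar^0 (-)$ followed by $(-) \otimes_{\scO(G^\vee_\bk)} \scO(\Sigma)$, and I will prove exactness of each factor separately.

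For the first factor, fix $V \in \Rep(G^\vee_\bk)$. Theorem~\ref{thm:gaitsgory-mon}\eqref{it:gaitsgory-mon-5} asserts that $\tsZ(V) \hatstar (-) : \sfD^\wedge_{\Iwu,\Iwu} \to \sfD^\wedge_{\Iwu,\Iwu}$ is t-exact; by the discussion in~\S\ref{ss:completed-category} it restricts to a t-exact endofunctor of $\sfD_{\Iwu,\Iwu}$, hence to an exact endofunctor of $\sfP_{\Iwu,\Iwu}$. A computation using~\eqref{eqn:conv-formula-2},~\eqref{eqn:conv-formula-3}, Theorem~\ref{thm:gaitsgory-mon}\eqref{it:gaitsgory-mon-0}, and~\cite[Lemma~5.1]{brr} (as in the proof of Lemma~\ref{lem:convolution-quotient}) will show that this functor preserves $\sfP^+_{\Iwu,\Iwu}$, so it descends to an exact endofunctor $\tsZ(V) \hatstar^0 (-)$ of $\sfP^0_{\Iwu,\Iwu}$. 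Writing $\scO(G^\vee_\bk)$ as the filtered colimit of its finite-dimensional $G^\vee_\bk$-subrepresentations and invoking exactness of filtered colimits of exact functors in abelian ind-categories (\cite[Theorem~8.6.5]{ks}), I deduce exactness of $\tsZ(\scO(G^\vee_\bk)) \hatstar^0 (-)$ as a functor into ${\ind}\sfP^0_{\Iwu,\Iwu}$.

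The second factor is an exactness statement for $(-) \otimes_{\scO(G^\vee_\bk)} \scO(\Sigma)$ on the category of $G^\vee_\bk$-equivariant $\scO(G^\vee_\bk)$-module ind-objects in $\sfP^0_{\Iwu,\Iwu}$; I will deduce it from Corollary~\ref{cor:Tor-vanishing}. The underlying geometric mechanism---namely the factorization of $\Sigma \hookrightarrow G^\vee_\bk$ through the smooth surjection $G^\vee_\bk \times \Sigma \to G^\vee_\bk$ of Proposition~\ref{prop:smoothness-conjugation}, together with the equivalence $\QCoh(\Sigma) \simto \QCoh^{G^\vee_\bk}(G^\vee_\bk \times \Sigma)$---is purely abelian-categorical and transports to the ind-category ${\ind}\sfP^0_{\Iwu,\Iwu}$. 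The requisite $G^\vee_\bk$-equivariant structure on $\tsZ(\scO(G^\vee_\bk)) \hatstar^0 \scF$ is inherited from the $G^\vee_\bk$-action by left translation on $\scO(G^\vee_\bk)$, whose compatibility with convolution follows from Lemma~\ref{lem:commutation-ZCoh-Satake}. Making this equivariance argument fully rigorous in the ind-category is the main technical obstacle, as one must carefully track the equivariant structure through every step of the construction.

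For the isomorphism~\eqref{eqn:convolution-R-pidag}, I will write $\scG = \Pi^0_{\Iw,\Iw}(\scG')$ with $\scG' \in \sfP_{\Iw,\Iw}$. Successive application of~\eqref{eqn:conv-formula-3},~\eqref{eqn:conv-formula-2}, Theorem~\ref{thm:gaitsgory-mon}\eqref{it:gaitsgory-mon-0}, and the identity $\For^{\Iw}_{\Iwu}(\scH) \star_\Iw \scG' = \For^{\Iw}_{\Iwu}(\scH \star_\Iw \scG')$ will yield a canonical isomorphism
\[
\tsZ(\scO(G^\vee_\bk)) \hatstar \pi^\dag \For^{\Iw}_{\Iwu}(\scG') \cong \pi^\dag \For^{\Iw}_{\Iwu} \bigl( \sZ(\scO(G^\vee_\bk)) \star_\Iw \scG' \bigr),
\]
which after application of $\Pi^0_{\Iwu,\Iwu}$ becomes $\tsZ(\scO(G^\vee_\bk)) \hatstar^0 \pi_0^\dag(\scG) \cong \pi_0^\dag(\sZ^0(\scO(G^\vee_\bk)) \star_\Iw^0 \scG)$. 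The $\scO(G^\vee_\bk)$-actions on the two sides are compatible (the right-hand one commuting with right convolution by $\scG$ via the central structure on $\sZ$), so tensoring with $\scO(\Sigma)$ and using exactness of $\pi_0^\dag$ together with the first part of the lemma will give
\[
\scR^\wedge \phatstar^0 (\pi_0^\dag \scG) \cong \pi_0^\dag \bigl( \sZ^0(\scO(G^\vee_\bk)) \star_\Iw^0 \scG \bigr) \otimes_{\scO(G^\vee_\bk)} \scO(\Sigma) \cong \pi_0^\dag (\scR^0 \star_\Iw^0 \scG),
\]
completing the proof.
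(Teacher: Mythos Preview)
Your approach is genuinely different from the paper's, and the gap you flag in the second factor is real. You want to prove that $(-) \otimes_{\scO(G^\vee_\bk)} \scO(\Sigma)$ is exact on the essential image of $\scF \mapsto \tsZ(\scO(G^\vee_\bk)) \hatstar^0 \scF$ by transporting the proof of Corollary~\ref{cor:Tor-vanishing} to ${\ind}\sfP^0_{\Iwu,\Iwu}$. This requires, after flat pullback along $G^\vee_\bk \times \Sigma \to G^\vee_\bk$, a descent statement: a ``$G^\vee_\bk$-equivariant $\scO(G^\vee_\bk \times \Sigma)$-module object'' in ${\ind}\sfP^0_{\Iwu,\Iwu}$ is of the form $\scO(G^\vee_\bk) \otimes_\bk N$. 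This is an instance of the fundamental theorem of Hopf modules, whose proof is formal enough that it likely goes through in any cocomplete $\bk$-linear abelian category, but you have not supplied that argument, and it is not in the paper either. Moreover, you must first verify that the right-regular coaction on $\tsZ(\scO(G^\vee_\bk))$ (obtained by applying $\tsZ$ to comultiplication) is compatible with the $\scO(G^\vee_\bk)$-action defined via $\tsZ^{\Coh}$, where $G^\vee_\bk$ acts on the algebra by conjugation; this compatibility holds at the level of $\Coh^{G^\vee_\bk}_{\mathrm{fr}}(G^\vee_\bk)$ and is preserved by the functor $\tsZ^{\Coh}$, but this needs to be said.

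The paper bypasses this entirely. It does not try to prove a categorified Tor-vanishing; instead it first establishes the isomorphism~\eqref{eqn:convolution-R-pidag} and the vanishing of $\pH^n(\scR^\wedge \hatstar^0 \pi_0^\dag \scG)$ for $n>0$ and $n=-1$ by reducing, via $\pi_\dag$ and the formulas~\eqref{eqn:conv-formula-2}--\eqref{eqn:conv-formula-3}, to the computation of $\pH^n(\Pi^0_{\Iwu,\Iw}(\pi_\dag \scR^\wedge))$ (Lemma~\ref{lem:pi-dag-R}). That computation in turn uses the already-known equivalence $\Phi_{\Iw,\Iw} : \sfP^0_{\Iw,\Iw} \simto \Rep(Z_{G^\vee_\bk}(\su))$ to transport the problem to ordinary $Z_{G^\vee_\bk}(\su)$-modules, where Corollary~\ref{cor:Tor-vanishing} applies directly without any categorification (Lemma~\ref{lem:exactness-piR}). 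Once the vanishing is known for objects $\pi_0^\dag \scG$, the paper bootstraps to all $\scF \in \sfP^0_{\Iwu,\Iwu}$ via long exact sequences, using that every such object is a finite iterated extension of objects of this form. Your approach, if completed, would be more direct and avoid the appeal to $\Phi_{\Iw,\Iw}$; the paper's approach trades that directness for working only with ordinary modules at the crucial step.

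Your derivation of~\eqref{eqn:convolution-R-pidag} is essentially correct and independent of the exactness claim (you only need exactness of $\pi_0^\dag$ and of $(-) \star^0_\Iw \scG$ to commute the tensor product inward); note however that the final identification $\sZ^0(\scO(G^\vee_\bk)) \otimes_{\scO(G^\vee_\bk)} \scO(\Sigma) \cong \scR^0$ requires the observation (see the proof of Lemma~\ref{lem:pidag-Rwedge}) that the $\scO(G^\vee_\bk)$-action on $\sZ^0(\scO(G^\vee_\bk))$ factors through $\scO(G^\vee_\bk \times_{T^\vee_\bk/\Wf} \Spec(\bk))$, so that tensoring with $\scO(\Sigma)$ is the same as tensoring with $\scO(\{\su\})$.
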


To prove this lemma we will need some preliminary results. Let us choose a complex of $\scO(G^\vee_\bk)$-modules
\begin{equation}
\label{eqn:resolution-OSigma}
 \cdots \to 0 \to P^{-2} \xrightarrow{a} P^{-1} \xrightarrow{b} P^0 \to 0 \to \cdots
\end{equation}
where each $P^j$ is free of finite rank (and placed in degree $j$), the natural morphism $\mathrm{Im}(a) \to \ker(b)$ is an isomorphism (in other words, our complex is exact in degree $-1$) and $\mathrm{coker}(b) \cong \scO(\Sigma)$. Tensoring with $\sZ^0(\scO(G^\vee_\bk))$ we deduce a complex
\begin{multline*}
  \cdots \to 0 \to \sZ^0(\scO(G^\vee_\bk)) \otimes_{\scO(G^\vee_\bk)} P^{-2} \xrightarrow{\tilde{a}} \sZ^0(\scO(G^\vee_\bk)) \otimes_{\scO(G^\vee_\bk)} P^{-1} \\
  \xrightarrow{\tilde{b}} \sZ^0(\scO(G^\vee_\bk)) \otimes_{\scO(G^\vee_\bk)} P^0 \to 0 \to \cdots
\end{multline*}
of objects in ${\ind}\sfP^0_{\Iw,\Iw}$.

\begin{lem}
\label{lem:exactness-piR}
 The natural morphism $\mathrm{Im}(\tilde{a}) \to \ker(\tilde{b})$ is an isomorphism.
\end{lem}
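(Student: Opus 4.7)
The plan is to transport the question to the coherent side via the equivalence $\Phi_{\Iw,\Iw}$ of Theorem~\ref{thm:brr-intro} and reduce it to a $\mathrm{Tor}$-vanishing assertion that can be handled by flat and smooth base change. As recalled in~\S\ref{ss:regular-quotient-coh}, the natural extension of $\Phi_{\Iw,\Iw}$ to ind-objects sends $\sZ^0(\scO(G^\vee_\bk))$ to the left regular representation of $Z_{G^\vee_\bk}(\su)$ on $\scO(G^\vee_\bk)$, and translates the $\scO(G^\vee_\bk)$-action~\eqref{eqn:action-OG-Z} into restriction along the orbit morphism
\[
\phi : G^\vee_\bk \to G^\vee_\bk, \qquad g \mapsto g^{-1}\su g.
\]
After applying $\Phi_{\Iw,\Iw}$, the complex in the statement becomes a complex of $Z_{G^\vee_\bk}(\su)$-representations and its exactness can be tested on underlying vector spaces; moreover, since each $P^j$ is free of finite rank, the tensored terms identify with finite direct sums of copies of $\scO(G^\vee_\bk)$ endowed with the $\phi^{*}$-module structure. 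Writing $M$ for this module, the lemma becomes equivalent to the vanishing
\[
\mathrm{Tor}_1^{\scO(G^\vee_\bk)}(M, \scO(\Sigma)) = 0.
\]

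For this vanishing I would factor $\phi$ as
\[
G^\vee_\bk \twoheadrightarrow Z_{G^\vee_\bk}(\su) \backslash G^\vee_\bk \simto \mathcal{O}(\su) \hookrightarrow G^\vee_{\bk,\reg} \hookrightarrow G^\vee_\bk
\]
and peel off the pieces. Lemma~\ref{lem:smoothness-centralizers} ensures that $Z_{G^\vee_\bk}(\su)$ is smooth, so the first arrow is a smooth $Z_{G^\vee_\bk}(\su)$-torsor and $M$ is flat over $\scO(\mathcal{O}(\su))$; the last embedding is flat as a localization, and the inclusion $\Sigma \subset G^\vee_{\bk,\reg}$ gives $\scO(G^\vee_{\bk,\reg}) \otimes_{\scO(G^\vee_\bk)} \scO(\Sigma) = \scO(\Sigma)$. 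Together these observations reduce the problem to proving
\[
\mathrm{Tor}_n^{\scO(G^\vee_{\bk,\reg})}(\scO(\mathcal{O}(\su)),\scO(\Sigma)) = 0 \quad \text{for all } n \geq 1.
\]
Proposition~\ref{prop:adj-quot-smooth} realizes $\mathcal{O}(\su)$ as the fiber $\chi_\reg^{-1}(\chi(\su))$ of the \emph{smooth} morphism $\chi_\reg : G^\vee_{\bk,\reg} \to T^\vee_\bk/\Wf$, while Proposition~\ref{prop:Sigma} provides an isomorphism $\chi|_\Sigma : \Sigma \simto T^\vee_\bk/\Wf$. Smooth base change then yields
\[
\scO(\mathcal{O}(\su)) \otimes^L_{\scO(G^\vee_{\bk,\reg})} \scO(\Sigma) \;\cong\; \bk_{\chi(\su)} \otimes^L_{\scO(T^\vee_\bk/\Wf)} \scO(\Sigma) \;\cong\; \bk_{\chi(\su)},
\]
which is concentrated in degree zero, and completes the argument.

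The most delicate point of the plan will be the very first step: justifying cleanly that the $\scO(G^\vee_\bk)$-tensor products computed in the abelian ind-category ${\ind}\sfP^0_{\Iw,\Iw}$ transport, after $\Phi_{\Iw,\Iw}$ and after forgetting the $Z_{G^\vee_\bk}(\su)$-equivariance, to ordinary $\scO(G^\vee_\bk)$-tensor products of vector spaces, and that exactness of the resulting complex can indeed be verified there. Once this bookkeeping is set up cleanly, the remainder is a purely formal flat/smooth base change computation relying only on Lemma~\ref{lem:smoothness-centralizers}, Proposition~\ref{prop:adj-quot-smooth} and Proposition~\ref{prop:Sigma} from Section~\ref{sec:Coh-St}.
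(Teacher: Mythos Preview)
Your proposal matches the paper's proof in its first half: both transport the question via $\Phi_{\Iw,\Iw}$ to ${\ind}\Rep(Z_{G^\vee_\bk}(\su)) \cong \Rep^\infty(Z_{G^\vee_\bk}(\su))$, identify $\sZ^0(\scO(G^\vee_\bk))$ with the left regular representation, translate the $\scO(G^\vee_\bk)$-action into pullback along $\phi:g\mapsto g^{-1}\su g$, and reduce the statement to the vanishing of $\mathrm{Tor}_1^{\scO(G^\vee_\bk)}(M,\scO(\Sigma))$ for $M=\scO(G^\vee_\bk)$ with module structure via $\phi^*$.

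For the Tor vanishing the two arguments diverge. The paper observes that $M$ carries a $G^\vee_\bk$-equivariant structure (the right regular action makes it equivariant for conjugation on the base) and then invokes Corollary~\ref{cor:Tor-vanishing}, whose proof rests on the smoothness of the orbit map $G^\vee_\bk\times\Sigma\to G^\vee_{\bk,\reg}$ from Proposition~\ref{prop:smoothness-conjugation}. You instead exploit the factorization of $\phi$ through $\mathcal{O}(\su)$ and argue directly that $\Sigma$ and $\mathcal{O}(\su)$ meet transversally at $\su$; this is precisely the decomposition~\eqref{eqn:tangent-spaces}, deduced from Propositions~\ref{prop:Sigma} and~\ref{prop:adj-quot-smooth}. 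Both routes draw on the same circle of smoothness results from Section~\ref{sec:Coh-St}, but the paper's packaging via equivariance is more economical and applies to arbitrary equivariant modules rather than the specific $M$ at hand.

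One point in your write-up needs repair: neither $G^\vee_{\bk,\reg}$ nor $\mathcal{O}(\su)$ is affine (indeed $\scO(G^\vee_{\bk,\reg})=\scO(G^\vee_\bk)$ by normality and the codimension-$\geq 3$ argument, and the regular unipotent orbit is quasi-affine but not affine), so expressions such as $\mathrm{Tor}_n^{\scO(G^\vee_{\bk,\reg})}(\scO(\mathcal{O}(\su)),\scO(\Sigma))$ and the displayed ``smooth base change'' identity are not well-posed as written. The fix is to phrase the argument sheaf-theoretically: write $L\phi^*\scO_\Sigma=(q')^* Lj^*\scO_\Sigma$ with $q':G^\vee_\bk\to\mathcal{O}(\su)$ flat (by Lemma~\ref{lem:smoothness-centralizers}) and $j:\mathcal{O}(\su)\hookrightarrow G^\vee_\bk$, and compute $Lj^*\scO_\Sigma$ locally in the open $G^\vee_{\bk,\reg}$, where the transversality~\eqref{eqn:tangent-spaces} shows it is concentrated in degree~$0$. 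With this correction your argument goes through.
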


\begin{proof}
 Recall from~\S\ref{ss:regular-quotient-coh} the equivalence of categories $\Phi_{\Iw,\Iw} :\sfP^0_{\Iw,\Iw} \simto \Rep(\rmZ_{G^\vee_\bk}(\su))$. Passing to ind-objects we deduce an equivalence ${\ind}\sfP^0_{\Iw,\Iw} \simto {\ind}\Rep(\rmZ_{G^\vee_\bk}(\su))$. Now the category ${\ind}\Rep(\rmZ_{G^\vee_\bk}(\su))$ identifies with the category $\Rep^\infty(\rmZ_{G^\vee_\bk}(\su))$ of all algebraic $\rmZ_{G^\vee_\bk}(\su)$-modules (see~\cite[\S 6.3]{ks}), and under the equivalence
 \[
  {\ind}\sfP^0_{\Iw,\Iw} \simto \Rep^\infty(\rmZ_{G^\vee_\bk}(\su))
 \]
the object $\sZ^0(\scO(G^\vee_\bk))$ corresponds to $\scO(G^\vee_\bk)$, with the structure of $\rmZ_{G^\vee_\bk}(\su)$-module induced by multiplication on the left on $G^\vee_\bk$. Through these identifications, the action of $\scO(G^\vee_\bk)$ on $\sZ^0(\scO(G^\vee_\bk))$ corresponds to the action on $\scO(G^\vee_\bk)$ where $\varphi \in \scO(G^\vee_\bk)$ acts by multiplication by the function $g \mapsto \varphi(g^{-1} \su g)$ (see Remark~\ref{rmk:extension-restriction}). To prove our claim, it therefore suffices to prove that the complex of $\rmZ_{G^\vee_\bk}(\su)$-modules
\begin{multline*}
 \cdots \to 0 \to \scO(G^\vee_\bk) \otimes_{\scO(G^\vee_\bk)} P^{-2} \to \scO(G^\vee_\bk) \otimes_{\scO(G^\vee_\bk)} P^{-1} \\
 \to \scO(G^\vee_\bk) \otimes_{\scO(G^\vee_\bk)} P^{0} \to 0 \to \cdots
\end{multline*}
has no cohomology in degree $-1$. Now, the cohomology in degree $-1$ of this complex is
\[
 \mathrm{Tor}_1^{\scO(G^\vee_\bk)}(\scO(G^\vee_\bk), \scO(\Sigma)).
\]
If we let $G^\vee_\bk$ act on $\scO(G^\vee_\bk)$ via the right regular action, then for the action above $\scO(G^\vee_\bk)$ becomes a $G^\vee_\bk$-equivariant $\scO(G^\vee_\bk)$-module (where $G^\vee_\bk$ acts on the algebra $\scO(G^\vee_\bk)$ via conjugation). The desired claim therefore follows from Corollary~\ref{cor:Tor-vanishing}.
\end{proof}

Now we can come to the main step towards Lemma~\ref{lem:convolution-0-exact}. Here, as for Lem\-ma~\ref{lem:pidag-Rwedge} we will use the fact that it makes sense to apply a functor $\pH^n$ to an object in ${\ind}\D^0_{\Iwu,\Iw}$, even though there is no ``perverse t-structure'' on this category.

\begin{lem}
\label{lem:pi-dag-R}
We have
\[
\pH^n(\Pi^0_{\Iwu,\Iw}(\pi_\dag \scR^\wedge)) = \begin{cases}
\For^{\Iw,0}_{\Iwu}(\scR^0) & \text{if $n=0$;} \\
0 & \text{if $n>0$ or $n=-1$.}
\end{cases}
\]
\end{lem}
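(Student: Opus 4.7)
The plan is as follows. The case $n>0$ is immediate: right t-exactness of $\pi_\dag$ (\S\ref{ss:completed-perverse}) places $\pi_\dag\scR^\wedge$ in the nonpositive part of the perverse t-structure, so $\pH^n(\pi_\dag\scR^\wedge)=0$ for $n>0$, and then one applies the t-exact functor $\Pi^0_{\Iwu,\Iw}$. The case $n=0$ is Lemma~\ref{lem:pidag-Rwedge} combined with the identity $\Pi^0_{\Iwu,\Iw} \circ \For^{\Iw}_{\Iwu} = \For^{\Iw,0}_{\Iwu} \circ \Pi^0_{\Iw,\Iw}$ which defines $\For^{\Iw,0}_{\Iwu}$. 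The substantive case is $n=-1$.

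For this I will reuse the 3-term complex of free $\scO(G^\vee_\bk)$-modules $P^{-2}\xrightarrow{a}P^{-1}\xrightarrow{b}P^0$ fixed in Lemma~\ref{lem:exactness-piR}, characterized by $\mathrm{Im}(a)=\ker(b)$ and $\mathrm{coker}(b)\cong\scO(\Sigma)$. Setting $A:=\scO(G^\vee_\bk)$ and letting $I^\wedge \subset \tsZ(A)\otimes_A P^0$ denote the image of $\widehat{b}:=\tsZ(A)\otimes_A b$, the defining surjection $\tsZ(A)\otimes_A P^0\twoheadrightarrow\scR^\wedge$ fits into a short exact sequence
\[
0 \to I^\wedge \to \tsZ(A)\otimes_A P^0 \to \scR^\wedge \to 0
\]
in ${\ind}\sfP^\wedge_{\Iwu,\Iwu}$. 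Applying $\pi_\dag$, and using that $\pi_\dag(\tsZ(A)\otimes_A P^0)\cong\For^{\Iw}_{\Iwu}(\sZ(A))\otimes_A P^0$ is perverse (by Theorem~\ref{thm:gaitsgory-mon}\eqref{it:gaitsgory-mon-0}), the long exact sequence of perverse cohomology identifies $\pH^{-1}(\pi_\dag\scR^\wedge)$ with $\ker\bigl(\pH^0(\pi_\dag I^\wedge)\to\pH^0(\pi_\dag(\tsZ(A)\otimes_A P^0))\bigr)$. Applying the exact functor $\Pi^0_{\Iwu,\Iw}$ reduces the desired vanishing to the injectivity of the induced map
\[
\Pi^0_{\Iwu,\Iw}\pH^0(\pi_\dag I^\wedge) \to \For^{\Iw,0}_{\Iwu}(\sZ^0(A))\otimes_A P^0.
\]

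To establish this injectivity I will apply the same $\Pi^0_{\Iwu,\Iw}\pH^0\pi_\dag$ machinery to the short exact sequence $0\to\ker(\widehat{b})\to\tsZ(A)\otimes_A P^{-1}\to I^\wedge\to 0$. Right t-exactness of $\pi_\dag$ annihilates the corresponding $\pH^1$ term, so $\Pi^0_{\Iwu,\Iw}\pH^0(\pi_\dag I^\wedge)$ is identified with the quotient of $\For^{\Iw,0}_{\Iwu}(\sZ^0(A))\otimes_A P^{-1}$ by the image of $\Pi^0_{\Iwu,\Iw}\pH^0(\pi_\dag\ker(\widehat{b}))$, and the map to $\For^{\Iw,0}_{\Iwu}(\sZ^0(A))\otimes_A P^0$ is induced by $\For^{\Iw,0}_{\Iwu}(\widetilde{b})$ in the notation of Lemma~\ref{lem:exactness-piR}. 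Injectivity thus becomes the statement that this image equals $\ker(\For^{\Iw,0}_{\Iwu}(\widetilde{b}))$. The inclusion ``$\subset$'' is automatic from the vanishing of the composition $\ker(\widehat{b})\hookrightarrow\tsZ(A)\otimes_A P^{-1}\xrightarrow{\widehat{b}}\tsZ(A)\otimes_A P^0$. For ``$\supset$'', the relation $b\circ a=0$ provides a canonical lift $\tsZ(A)\otimes_A P^{-2}\to\ker(\widehat{b})$ of $\widehat{a}$, so after applying $\Pi^0_{\Iwu,\Iw}\pH^0\pi_\dag$ the image in question contains $\mathrm{Im}(\For^{\Iw,0}_{\Iwu}(\widetilde{a}))$, which equals $\ker(\For^{\Iw,0}_{\Iwu}(\widetilde{b}))$ by Lemma~\ref{lem:exactness-piR} and the exactness of $\For^{\Iw,0}_{\Iwu}$. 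The main obstacle is precisely this bookkeeping at $n=-1$; the crucial input is Lemma~\ref{lem:exactness-piR}, whose Tor-vanishing content (itself relying on Corollary~\ref{cor:Tor-vanishing} and the smoothness of the conjugation map $G^\vee_\bk\times\Sigma\to G^\vee_{\bk,\reg}$) is what ultimately controls the image of $\Pi^0_{\Iwu,\Iw}\pH^0(\pi_\dag\ker(\widehat{b}))$ inside $\For^{\Iw,0}_{\Iwu}(\sZ^0(A))\otimes_A P^{-1}$.
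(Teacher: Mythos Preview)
Your proof is correct and takes essentially the same approach as the paper: both arguments handle $n>0$ by right t-exactness of $\pi_\dag$, handle $n=0$ via Lemma~\ref{lem:pidag-Rwedge}, and reduce the $n=-1$ case to Lemma~\ref{lem:exactness-piR} by chasing through the three-term complex $\tsZ(A)\otimes_A P^\bullet$. The paper's version is more explicit about one technical point you elide---extracting long exact sequences of perverse cohomology from short exact sequences of \emph{ind}-objects after applying $\Pi^0_{\Iwu,\Iw}\circ\pi_\dag$; it does this by invoking~\cite[Proposition~8.6.6]{ks} to represent everything termwise and then working with distinguished triangles in $\sfD^\wedge_{\Iwu,\Iwu}$ via the auxiliary complexes $\scS_i,\scS'_i,\scS''_i$, whereas you work directly with $I^\wedge=\mathrm{Im}(\widehat{b})$ and $\ker(\widehat{b})$ and invoke long exact sequences without comment.
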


\begin{proof}
By definition of the perverse t-structure on $\D^0_{\Iwu,\Iw}$ the functor $\Pi^0_{\Iwu,\Iw} : \D_{\Iwu,\Iw} \to \D^0_{\Iwu,\Iw}$ is t-exact with respect to the perverse t-structures; this functor therefore commutes with the functor $\pH^n$, and then the same property holds for the extensions to ind-objects. Using Lemma~\ref{lem:pidag-Rwedge}, we deduce the case $n=0$. Similarly, since $\scR^\wedge$ belongs to ${\ind}\sfP^\wedge_{\Iwu,\Iwu}$, and since $\pi_\dag$ is right t-exact (see~\S\ref{ss:completed-perverse}), we have $\pH^n(\pi_\dag \scR^\wedge)=0$ for any $n>0$, which implies the claim in this case.

It remains to treat the case $n=-1$.
Consider again the complex~\eqref{eqn:resolution-OSigma}, and the complex
\begin{multline}
\label{eqn:resolution-Rwedge}
 \cdots \to 0 \to \tsZ(\scO(G^\vee_\bk)) \otimes_{\scO(G^\vee_\bk)} P^{-2} \xrightarrow{f} \tsZ(\scO(G^\vee_\bk)) \otimes_{\scO(G^\vee_\bk)} P^{-1} \\
 \xrightarrow{g} \tsZ(\scO(G^\vee_\bk)) \otimes_{\scO(G^\vee_\bk)} P^0 \to 0 \to \cdots
\end{multline}
in ${\ind}\sfP^\wedge_{\Iwu,\Iwu}$ obtained by tensoring with $\tsZ(\scO(G^\vee_\bk))$. Let us denote by $\mathsf{A}$ the full subcategory of $\sfP^\wedge_{\Iwu,\Iwu}$ whose objects are the perverse sheaves $\scF$ such that $\Pi^0_{\Iwu,\Iw} (\pi_\dag(\scF))$ belongs to the heart of the perverse t-structure. Then $\mathsf{A}$ is an additive category, and the natural functor ${\ind}\mathsf{A} \to {\ind}\sfP^\wedge_{\Iwu,\Iwu}$ is fully faithful by~\cite[Proposition~6.1.10]{ks}. The object $\tsZ(\scO(G^\vee_\bk))$ belongs to the essential image of this functor, see Theorem~\ref{thm:gaitsgory-mon}\eqref{it:gaitsgory-mon-0}; the same is therefore true for any term of our complex~\eqref{eqn:resolution-Rwedge}.

For an additive category $\sfB$, let us denote by $C^{[-2,0]}(\sfB)$ the category of complexes of objects of $\sfB$ whose components are zero in all degrees except possibly $-2$, $-1$ and $0$.
By~\cite[Lemma~15.4.1]{ks}, the natural functor
\[
{\ind} C^{[-2,0]}(\mathsf{A}) \to C^{[-2,0]}({\ind}\mathsf{A})
\]
is an equivalence of categories. This implies that there exist a filtrant category $I$, inductive systems $(\scM_i^{-2} : i \in I)$, $(\scM_i^{-1} : i \in I)$ and $(\scM_i^{0} : i \in I)$ of objects of $\mathsf{A}$, and morphisms of inductive systems $(f_i : \scM_i^{-2} \to \scM_i^{-1})_{i \in I}$, $(g_i : \scM_i^{-1} \to \scM_i^{0})_{i \in I}$ such that $g_i \circ f_i=0$ for any $i$, and such that~\eqref{eqn:resolution-Rwedge} is isomorphic to
\[
\cdots \to 0 \to ``\varinjlim_{i \in I}{}" \scM_i^{-2} \xrightarrow{``\varinjlim{}" f_i} ``\varinjlim_{i \in I}{}" \scM_i^{-1} \xrightarrow{``\varinjlim{}" g_i} ``\varinjlim_{i \in I}{}" \scM_i^0 \to 0 \to \cdots
\]
(as a complex of objects in ${\ind}\sfP^\wedge_{\Iwu,\Iwu}$).
For any $i \in I$ we set $\scQ_i := \mathrm{coker}(g_i)$; these objects define in a natural way an inductive system of objects in $\sfP^\wedge_{\Iwu,\Iwu}$. The object $\scR^\wedge$ is isomorphic to the cokernel of $g$; in view of the description of cokernels in ind-objects in an abelian category (see~\cite[Lemma~8.6.4(ii)]{ks}), we therefore have
\[
\scR^\wedge \cong ``\varinjlim_{i \in I}{}" \scQ_i,
\]
so that what we have to prove is that
\begin{equation}
\label{eqn:vanishing-Qi}
``\varinjlim_{i \in I}{}" \pH^{-1} \Pi^0_{\Iwu,\Iw} \pi_\dag( \scQ_i ) = 0.
\end{equation}
Note that with these notations, Lemma~\ref{lem:exactness-piR} (combined with the t-exactness of $\For^{\Iw,0}_{\Iwu}$) says that the complex
\begin{multline}
\label{eqn:piR-exact-complex}
\cdots \to 0 \to ``\varinjlim_{i \in I}{}" \Pi^0_{\Iwu,\Iw} \pi_\dag (\scM_i^{-2}) \xrightarrow{``\varinjlim{}" \Pi^0_{\Iwu,\Iw} \pi_\dag(f_i)} ``\varinjlim_{i \in I}{}" \Pi^0_{\Iwu,\Iw} \pi_\dag(\scM_i^{-1}) \\
\xrightarrow{``\varinjlim{}" \Pi^0_{\Iwu,\Iw} \pi_\dag(g_i)} ``\varinjlim_{i \in I}{}" \Pi^0_{\Iwu,\Iw} \pi_\dag(\scM_i^0) \to 0 \to \cdots
\end{multline}
of objects in ${\ind}\sfP^0_{\Iwu,\Iw}$
has no cohomology in degree $-1$.

Recall from~\eqref{eqn:equiv-KbTilt-wedge} the equivalence of triangulated categories
\[
 \Db \sfP^\wedge_{\Iwu,\Iwu} \simto \sfD^\wedge_{\Iwu,\Iwu}
\]
provided by the ``realization functor.'' For any $i \in I$ we consider the complex
\[
\widetilde{\scS}_i = (\cdots \to 0 \to \scM^{-2}_i \xrightarrow{f_i} \scM^{-1}_i \xrightarrow{g_i} \scM_i^0 \to 0 \to \cdots)
\]
of objects in $\sfP^\wedge_{\Iwu,\Iwu}$ (seen as an object in $\Db \sfP^\wedge_{\Iwu,\Iwu}$), and denote by $\scS_i$ its image in $\sfD^\wedge_{\Iwu,\Iwu}$. If we set
\[
\widetilde{\scS}'_i = (\cdots \to 0 \to \scM^{-2}_i \xrightarrow{f_i} \ker(g_i) \to 0 \to \cdots)
\]
(seen as an object in $\Db \sfP^\wedge_{\Iwu,\Iwu}$)
where $\scM^{-2}_i$ is in degree $-2$, and denote by $\scS'_i$ its image in $\sfD^\wedge_{\Iwu,\Iwu}$ then we have a distinguished triangle
\[
 \widetilde{\scS}'_i \to \widetilde{\scS}_i \to \scQ_i \xrightarrow{[1]}
\]
in $\Db \sfP^\wedge_{\Iwu,\Iwu}$, hence a distinguished triangle
\[
 \scS_i' \to \scS_i \to \scQ_i \xrightarrow{[1]}
\]
in $\sfD^\wedge_{\Iwu,\Iwu}$. Applying the triangulated functor $\Pi^0_{\Iwu,\Iw} \pi_\dag$, then taking the long exact sequence in perverse cohomology, and finally formal direct limits, we deduce an exact sequence
\[ 
 ``\varinjlim_{i \in I}{}" \pH^{-1} \Pi^0_{\Iwu,\Iw} \pi_\dag (\scS_{i}) \to ``\varinjlim_{i \in I}{}" \pH^{-1} \Pi^0_{\Iwu,\Iw} \pi_\dag(\scQ_i)
 \to ``\varinjlim_{i \in I}{}" \pH^{0} \Pi^0_{\Iwu,\Iw} \pi_\dag(\scS_i')
 \]
of objects in ${\ind}\sfP_{\Iwu,\Iw}^0$. By right t-exactness of the functor $\pi_\dag$, and since each $\scS_i'$ is concentrated in negative perverse degrees, the third term in this sequence vanishes. As a consequence, to prove~\eqref{eqn:vanishing-Qi} it suffices to prove that
\begin{equation}
\label{eqn:vanishing-Si}
``\varinjlim_{i \in I}{}" \pH^{-1} \Pi^0_{\Iwu,\Iw} \pi_\dag( \scS_i ) = 0.
\end{equation}

Now we set
\[
\widetilde{\scS}''_i = (\cdots \to 0 \to \scM^{-2}_i \xrightarrow{f_i} \scM_i^{-1} \to 0 \to \cdots)
\]
(seen as an object in $\Db \sfP^\wedge_{\Iwu,\Iwu}$)
where $\scM^{-2}_i$ is in degree $-2$, and denote by $\scS''_i$ its image in $\sfD^\wedge_{\Iwu,\Iwu}$. We have distinguished triangles
\[
 \scM_i^0 \to \scS_i \to \scS''_i \xrightarrow{[1]}, \qquad \scM_i^{-1}[1] \to \scS_i'' \to \scM_i^{-2}[2] \xrightarrow{[1]}
\]
where in the second triangle the morphism $\scM_i^{-2}[2] \to \scM_i^{-1}[2]$ is $f_i[2]$, and in the first triangle the morphism $\scS_i'' \to \scM_i^0[1]$ is the unique morphism whose composition with the map $\scM_i^{-1}[1] \to \scS_i''$ appearing in the second triangle is $g_i[1]$. (The existence and unicity of this morphism is guaranteed by the long exact sequence obtained by applying $\Hom(-,\scM_i^0[1])$ to the second triangle.) Applying the triangulated functor $\Pi^0_{\Iwu,\Iw} \pi_\dag$, we obtain distinguished triangles
\begin{gather*}
 \Pi^0_{\Iwu,\Iw} \pi_\dag(\scM_i^0) \to \Pi^0_{\Iwu,\Iw} \pi_\dag(\scS_i) \to \Pi^0_{\Iwu,\Iw} \pi_\dag(\scS''_i) \xrightarrow{[1]}, \\
 \Pi^0_{\Iwu,\Iw} \pi_\dag(\scM_i^{-1})[1] \to \Pi^0_{\Iwu,\Iw} \pi_\dag(\scS_i'') \to \Pi^0_{\Iwu,\Iw} \pi_\dag(\scM_i^{-2})[2] \xrightarrow{[1]}.
\end{gather*}
Since 
$\Pi^0_{\Iwu,\Iw} \pi_\dag(\scM_i^{-2})$ is perverse by definition of $\sfA$, taking the long exact sequence of perverse cohomology associated with the second triangle we obtain an exact sequence
\begin{equation}
\label{eqn:piR-es}
\Pi^0_{\Iwu,\Iw} \pi_\dag(\scM_i^{-2}) \to \Pi^0_{\Iwu,\Iw} \pi_\dag(\scM_i^{-1}) \to \pH^{-1}  \Pi^0_{\Iwu,\Iw} \pi_\dag(\scS_i'') \to 0,
\end{equation}
which identifies $\pH^{-1}  \Pi^0_{\Iwu,\Iw} \pi_\dag(\scS_i'')$ with $\mathrm{coker}(\Pi^0_{\Iwu,\Iw} \pi_\dag(f_i))$. On the other hand, the same procedure applied to the first distinguished triangle produces an exact sequence
\[
0 \to \pH^{-1}  \Pi^0_{\Iwu,\Iw} \pi_\dag(\scS_i) \to \pH^{-1}  \Pi^0_{\Iwu,\Iw} \pi_\dag(\scS_i'') \to \Pi^0_{\Iwu,\Iw} \pi_\dag(\scM_i^0).
\]
Here, by construction the composition of the right morphism with the surjection $ \Pi^0_{\Iwu,\Iw} \pi_\dag(\scM_i^{-1}) \to \pH^{-1}  \Pi^0_{\Iwu,\Iw} \pi_\dag(\scS_i'')$ from~\eqref{eqn:piR-es} is $\Pi^0_{\Iwu,\Iw} \pi_\dag(g_i)$. Hence, through the identification $\pH^{-1}  \Pi^0_{\Iwu,\Iw} \pi_\dag(\scS_i'') \cong \mathrm{coker}(\Pi^0_{\Iwu,\Iw} \pi_\dag(f_i))$, this exact sequence identifies $\pH^{-1}  \Pi^0_{\Iwu,\Iw} \pi_\dag(\scS_i)$ with the kernel of the morphism
\[
\mathrm{coker}(\Pi^0_{\Iwu,\Iw} \pi_\dag(f_i)) \to \Pi^0_{\Iwu,\Iw} \pi_\dag(\scM_i^0)
\]
induced by $\Pi^0_{\Iwu,\Iw} \pi_\dag(g_i)$. Passing to formal direct limits and then using the description of kernels and cokernels in ind-objects in an abelian category (see~\cite[Lemma~8.6.4(ii)]{ks}), we deduce that $``\varinjlim_i{}" \pH^{-1}  \Pi^0_{\Iwu,\Iw} \pi_\dag(\scS_i)$ identifies with the kernel of the morphism
\[
\mathrm{coker}(``\varinjlim_i{}" \Pi^0_{\Iwu,\Iw} \pi_\dag(f_i)) \to ``\varinjlim_i{}" \Pi^0_{\Iwu,\Iw} \pi_\dag( \scM_i^0)
\]
induced by $``\varinjlim_i{}" \Pi^0_{\Iwu,\Iw} \pi_\dag(g_i)$. The exactness of the complex~\eqref{eqn:piR-exact-complex} in degree $-1$ exactly says that this morphism is injective, which shows~\eqref{eqn:vanishing-Si} and finishes the proof.
\end{proof}

\begin{proof}[Proof of Lemma~\ref{lem:convolution-0-exact}]
By construction, the bifunctor
\[
\sfD^\wedge_{\Iwu,\Iwu} \times \sfD^0_{\Iw,\Iw} \to \sfD^0_{\Iwu,\Iwu}
\]
given by $(\scF,\scG) \mapsto \scF \hatstar^0 (\pi_0^\dag \scG)$
is the unique bifunctor through which the bifunctor
\[
\sfD^\wedge_{\Iwu,\Iwu} \times \sfD_{\Iw,\Iw} \to \sfD^0_{\Iwu,\Iwu}
\]
given by $(\scF,\scG) \mapsto \Pi^0_{\Iwu,\Iwu}( \scF \hatstar (\pi^\dag \For^{\Iw}_{\Iwu}( \scG)))$ factors. Now, by~\eqref{eqn:conv-formula-2}--\eqref{eqn:conv-formula-3}, for $\scF$ in $\sfD^\wedge_{\Iwu,\Iwu}$ and $\scG$ in $\sfD_{\Iw,\Iw}$ we have a canonical isomorphism
\[
\scF \hatstar (\pi^\dag \For^{\Iw}_{\Iwu}(\scG)) \cong \pi^\dag((\pi_\dag \scF) \star_\Iw \scG).
\]
We deduce, for $\scF$ in $\sfD^\wedge_{\Iwu,\Iwu}$ and $\scG$ in $\sfD_{\Iw,\Iw}^0$, a bifunctorial isomorphism
\[
\scF \hatstar^0 (\pi_0^\dag \scG) \cong \pi^{\dag,0} \bigl( (\Pi^0_{\Iwu,\Iw} \pi_\dag \scF) \star^0_{\Iw} \scG \bigr)
\]
(where the bifunctor $\star^0_\Iw$ here is that defined in~\S\ref{ss:another-convolution}, and the functor $\pi^{\dag,0}$ is defined in~\S\ref{ss:relation-reg-quotient})
and then, by t-exactness of $\pi^{\dag,0}$ and $\star^0_{\Iw}$, for any $n \in \Z$ we deduce an isomorphism
\[
\pH^n(\scF \hatstar^0 (\pi_0^\dag \scG)) \cong \pi^{\dag,0} \bigl( \pH^n(\Pi^0_{\Iwu,\Iw} \pi_\dag \scF) \star^0_{\Iw} \scG \bigr).
\]
This isomorphism extends to ind-objects, and provides for $\scG$ in $\sfP_{\Iw,\Iw}^0$ and $n \in \Z$ an isomorphism
\begin{equation}
\label{eqn:isom-convolution-0-exact}
\pH^n(\scR^\wedge \hatstar^0 (\pi_0^\dag \scG)) \cong \pi^{\dag,0} \bigl( \pH^n(\Pi^0_{\Iwu,\Iw} \pi_\dag \scR^\wedge) \star^0_{\Iw} \scG \bigr).
\end{equation}

Applying~\eqref{eqn:isom-convolution-0-exact} in case $n=0$ and using Lemma~\ref{lem:pi-dag-R}, we obtain an isomorphism
\[
\scR^\wedge \phatstar^0 (\pi_0^\dag \scG) \cong \pi^{\dag,0}((\For^{\Iw,0}_{\Iwu} \scR^0) \star^0_\Iw \scG).
\]
Using~\eqref{eqn:For-conv-0} and~\eqref{eqn:pidag-0-For}, we deduce the isomorphism~\eqref{eqn:convolution-R-pidag}.

The isomorphism~\eqref{eqn:isom-convolution-0-exact} and Lemma~\ref{lem:pi-dag-R} also imply that for any $\scG$ in $\sfP_{\Iw,\Iw}^0$ we have
\[
\pH^n(\scR^\wedge \hatstar^0 (\pi_0^\dag \scG))=0 \quad \text{if $n>0$ or $n=-1$.}
\]
We claim that in fact, for any $\scF$ in $\sfP_{\Iwu,\Iwu}^0$ we have
\begin{equation}
\label{eqn:vanishing-conv-Rwedge}
\pH^n(\scR^\wedge \hatstar^0 \scF)=0 \quad \text{if $n>0$ or $n=-1$.}
\end{equation}
Indeed, write $\scR^\wedge = ``\varinjlim_{i}" \scR^\wedge_i$ with each $\scR^\wedge_i$ in $\sfP_{\Iwu,\Iwu}^\wedge$. Given an exact sequence
\[
\scF_1 \hookrightarrow \scF_2 \twoheadrightarrow \scF_3
\]
in $\sfP_{\Iwu,\Iwu}^0$, for any $i$ we have a distinguished triangle
\[
\scR^\wedge_i \hatstar^0 \scF_1 \to \scR^\wedge_i \hatstar^0 \scF_2 \to \scR^\wedge_i \hatstar^0 \scF_3 \xrightarrow{[1]}
\]
in $\sfD_{\Iwu,\Iwu}^0$, and then a long exact sequence
\begin{multline*}
\cdots \to \pH^{n-1}(\scR^\wedge_i \hatstar^0 \scF_3) \to \pH^n(\scR^\wedge_i \hatstar^0 \scF_1) \to \pH^n(\scR^\wedge_i \hatstar^0 \scF_2) \\
\to \pH^n(\scR^\wedge_i \hatstar^0 \scF_3) \to \pH^{n+1}(\scR^\wedge_i \hatstar^0 \scF_1) \to \cdots
\end{multline*}
in $\sfP_{\Iwu,\Iwu}^0$. Taking the formal inductive limit, we deduce a long exact sequence
\begin{multline*}
\cdots \to \pH^{n-1}(\scR^\wedge \hatstar^0 \scF_3) \to \pH^n(\scR^\wedge \hatstar^0 \scF_1) \to \pH^n(\scR^\wedge \hatstar^0 \scF_2) \\
\to \pH^n(\scR^\wedge \hatstar^0 \scF_3) \to \pH^{n+1}(\scR^\wedge \hatstar^0 \scF_1) \to \cdots
\end{multline*}
in ${\ind}\sfP_{\Iwu,\Iwu}^0$. This exact sequence shows that if~\eqref{eqn:vanishing-conv-Rwedge} is true for two objects, then it follows for any extension between them. Since this statement is known for any object of the form $\pi_0^\dag \scG$ with $\scG$ in $\sfP_{\Iw,\Iw}^0$, and since any object in $\sfP_{\Iwu,\Iwu}^0$ is a successive extension of objects of this form, this proves~\eqref{eqn:vanishing-conv-Rwedge} for all $\scF$.

Now that~\eqref{eqn:vanishing-conv-Rwedge} is known, the same long exact sequence as above shows that the functor~\eqref{eqn:convolution-R} transforms exact sequences in $\sfP_{\Iwu,\Iwu}^0$ into exact sequences in ${\ind}\sfP_{\Iwu,\Iwu}^0$, i.e.~is exact.
\end{proof}

\subsection{Definition of the functor}
\label{ss:definition-Phi-Iwu}

We start with the following observation: consider a category $\mathsf{A}$, a pro-object
\[
X = ``\varprojlim_{i \in I}" X_i,
\]
in $\mathsf{A}$, and an ind-object
\[
Y = ``\varinjlim_{j \in J}" Y_j
\]
in $\mathsf{A}$. Then $\mathsf{A}$ embeds in the category ${\ind}\mathsf{A}$ of ind-objects in $\sfA$, and also in the category ${\pro}\mathsf{A}$ of pro-objects in $\mathsf{A}$. Using the induced functors on categories of pro-objects and ind-objects respectively, we can see $X$ and $Y$ either as objects in ${\pro}{\ind}\mathsf{A}$, or as objects in ${\ind}{\pro}\mathsf{A}$. The spaces of morphisms from $X$ to $Y$ in these two categories coincide: they both canonically identify with
\[
\varinjlim_{i \in I} \varinjlim_{j \in J} \Hom_{\mathsf{A}}(X_i, Y_j) = \varinjlim_{(i,j) \in I \times J} \Hom_{\mathsf{A}}(X_i, Y_j) = \varinjlim_{j \in J} \varinjlim_{i \in I} \Hom_{\mathsf{A}}(X_i, Y_j),
\]
where the equalities follow from~\cite[Proposition~2.1.7]{ks}.
This space will simply be denoted $\Hom_{\mathsf{A}}(X,Y)$.


In the present setting we have the pro-object $\Xi^\wedge_!$ in $\D_{\Iwu,\Iwu}$; applying (the functor on pro-objects induced by) $\Pi_{\Iwu,\Iwu}^0$ we deduce a pro-object $\Pi_{\Iwu,\Iwu}^0(\Xi^\wedge_!)$ in $\D^0_{\Iwu,\Iwu}$. (In other words, $\Pi_{\Iwu,\Iwu}^0(\Xi^\wedge_!)$ is the image of the pro-object $\Pi_{U,U}^0(\Xi^\wedge_!)$ considered in~\S\ref{ss:morphisms-PiXi} under the functor on pro-objects induced by~\eqref{eqn:D-U-Iu}.) On the other hand, given $\scF$ in $\mathsf{P}_{\Iwu,\Iwu}^0$ we have the ind-object $\scR^\wedge \phatstar^0 \scF$ in $\mathsf{P}_{\Iwu,\Iwu}^0$. Now we have a fully faithful functor ${\ind}\mathsf{P}_{\Iwu,\Iwu}^0 \to {\ind}\mathsf{D}_{\Iwu,\Iwu}^0$, see~\cite[Proposition~6.1.10]{ks}; $\scR^\wedge \phatstar^0 \scF$ can therefore also be seen as an ind-object in $\D_{\Iwu,\Iwu}^0$. Using the notation above we can therefore consider the vector space
\[
\Phi_{\Iwu,\Iwu}(\scF) := \Hom_{\D_{\Iwu,\Iwu}^0} \bigl( \Pi_{\Iwu,\Iwu}^0(\Xi^\wedge_!), \scR^\wedge \phatstar^0 \scF \bigr).
\]

For $\scF$ in $\mathsf{P}_{\Iwu,\Iwu}^0$,
monodromy endows $\Phi_{\Iwu,\Iwu}(\scF)$ with a canonical action of $\scO(T^\vee_\bk \times T^\vee_\bk)$, which by Lemma~\ref{lem:monodromy-fiber-prod} factors through an action of $\scO(T^\vee_\bk \times_{T^\vee_\bk/\Wf} T^\vee_\bk)$. (In view of Remark~\ref{rmk:action-Sigma-2}, the restriction of this action to $\scO(T^\vee_\bk/\Wf)$ coincides with the action of $\scO(\Sigma)$ induced by the action on $\scR^\wedge$.) In this way, $\Phi_{\Iwu,\Iwu}$ can be seen as a functor
\[
 \sfP^0_{\Iwu,\Iwu} \to \Mod(\scO(T^\vee_\bk \times_{T^\vee_\bk/\Wf} T^\vee_\bk)).
\]

Again for $\scF$ in $\mathsf{P}_{\Iwu,\Iwu}^0$,
using the right exactness of the functor $(-) \phatstar^0 \scF$ (see Lemma~\ref{lem:t-exactness-conv-tFl}),
the morphism of Lemma~\ref{lem:coprod-Rwedge} provides a canonical morphism
\[
\scR^\wedge \phatstar^0 \scF \to \bigl( \scR^\wedge \phatstar^0 \scF \bigr) \otimes_{\scO(\Sigma)} \scO(\mathbb{J}_\Sigma)
\]
in ${\ind}\mathsf{P}^0_{\Iwu,\Iwu}$. (Here the action of $\scO(\Sigma)$ on $\scR^\wedge \phatstar^0 \scF$ is induced by that on $\scR^\wedge$, or equivalently by monodromy.)

\begin{lem}
\label{lem:morph-coaction-Psi}
For any $\scF$ in $\mathsf{P}^0_{\Iwu,\Iwu}$, there exists a canonical isomorphism
\[
\Phi_{\Iwu,\Iwu}(\scF) \otimes_{\scO(\Sigma)} \scO(\mathbb{J}_\Sigma) \simto \Hom_{\sfD^0_{\Iwu,\Iwu}}\bigl( \Pi^0_{\Iwu,\Iwu}(\Xi^\wedge_!), (\scR^\wedge \phatstar^0 \scF) \otimes_{\scO(\Sigma)} \scO(\mathbb{J}_\Sigma) \bigr).
\]
\end{lem}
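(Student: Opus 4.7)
The plan is to construct the canonical map explicitly and then show it is an isomorphism using the flatness of $\scO(\bbJ_\Sigma)$ over $\scO(\Sigma)$, which holds by Lemma~\ref{lem:J-smooth}\eqref{it:J-smooth-1}. First I would construct the natural morphism: given a representative $f \colon \Pi^0_{\Iwu,\Iwu}(\Xi^\wedge_!) \to \scR^\wedge \phatstar^0 \scF$ of a class in $\Phi_{\Iwu,\Iwu}(\scF)$ and an element $s \in \scO(\bbJ_\Sigma)$, I would send $[f] \otimes s$ to the composition of $f$ with the morphism $x \mapsto x \otimes s$ from $\scR^\wedge \phatstar^0 \scF$ into $(\scR^\wedge \phatstar^0 \scF) \otimes_{\scO(\Sigma)} \scO(\bbJ_\Sigma)$. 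Well-definedness over $\scO(\Sigma)$ uses the compatibility between the monodromy action of $\scO(\Sigma) \cong \scO(T^\vee_\bk/\Wf)$ on $\Phi_{\Iwu,\Iwu}(\scF)$ and the $\scO(\Sigma)$-action coming from $\scR^\wedge$, recorded in Remark~\ref{rmk:action-Sigma-2} together with the identity~\eqref{eqn:monodromy-conv-R}.

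To show this morphism is bijective, I would invoke Lazard's theorem to write $\scO(\bbJ_\Sigma) = \varinjlim_{k \in K} L_k$ as a filtered colimit of finitely generated free $\scO(\Sigma)$-modules $L_k \cong \scO(\Sigma)^{n_k}$. For each such $L_k$ the statement is tautological, since both sides reduce to $\Phi_{\Iwu,\Iwu}(\scF)^{n_k}$, respectively $\Hom_{\sfD^0_{\Iwu,\Iwu}}(\Pi^0_{\Iwu,\Iwu}(\Xi^\wedge_!), (\scR^\wedge \phatstar^0 \scF)^{n_k})$, which agree using additivity of Hom. It then remains to argue that both sides of the desired isomorphism commute with the filtered colimit indexed by $K$.

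The left-hand side commutes with this colimit because tensor product over $\scO(\Sigma)$ always commutes with filtered colimits in the module argument. For the right-hand side, the point is that the functor $\Hom_{\sfD^0_{\Iwu,\Iwu}}(\Pi^0_{\Iwu,\Iwu}(\Xi^\wedge_!), -)$, viewed as a functor from ${\ind}\sfP^0_{\Iwu,\Iwu}$ to $\bk$-vector spaces, commutes with filtered colimits; this is a formal consequence of the explicit formula
\[
\Hom \bigl( ``\varprojlim_i" X_i, ``\varinjlim_j" Y_j \bigr) = \varinjlim_{(i,j)} \Hom(X_i, Y_j)
\]
recalled at the beginning of~\S\ref{ss:definition-Phi-Iwu}, together with the fact that iterated filtered colimits are filtered colimits.

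The main technical point will be to justify that the tensor product $(\scR^\wedge \phatstar^0 \scF) \otimes_{\scO(\Sigma)} \scO(\bbJ_\Sigma)$, computed inside ${\ind}\sfP^0_{\Iwu,\Iwu}$ according to the formalism of~\S\ref{ss:mod-cat}, identifies with $\varinjlim_k (\scR^\wedge \phatstar^0 \scF)^{n_k}$; this should follow from the cokernel description of the tensor product together with right exactness and with the compatibility of all colimits with the passage to ind-objects. Flatness of $\scO(\bbJ_\Sigma)$ over $\scO(\Sigma)$ can also be used here to avoid any derived issues when commuting the tensor product past the colimit presentations of objects in $\sfP^0_{\Iwu,\Iwu}$.
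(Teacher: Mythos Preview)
Your proposal is correct and follows essentially the same approach as the paper: construct the comparison map and prove it is an isomorphism by writing $\scO(\bbJ_\Sigma)$ as a filtered colimit of finite free $\scO(\Sigma)$-modules via Lazard's theorem (using the flatness from Lemma~\ref{lem:J-smooth}), reducing to the tautological case. The paper organizes the bookkeeping slightly differently---it unwinds $\Pi^0_{\Iwu,\Iwu}(\Xi^\wedge_!)$ and $\scR^\wedge \phatstar^0 \scF$ as explicit pro- and ind-systems and applies Lemma~\ref{lem:morphism-tensor-prod} termwise---but the content is the same.
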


\begin{proof}
The object $\Xi^\wedge_!$ is perverse; by Proposition~\ref{prop:perverse-t-str-completed-cat} it can therefore be written as $``\varprojlim_n" \scA_n$ for some projective system $(\scA_n : n \geq 0)$ of objects in $\pD^{\leq 0}_{\Iwu,\Iwu}$. On the other hand, $\scR^\wedge \phatstar^0 \scF$ is an ind-object in $\sfP^0_{\Iwu,\Iwu}$; it can therefore be written as $``\varinjlim_i" \scG_i$ for some objects $\scG_i$ in $\sfP^0_{\Iwu,\Iwu}$. 
We then have
\[
 \Phi_{\Iwu,\Iwu}(\scF) = \varinjlim_{n,i} \Hom_{\sfD^0_{\Iwu,\Iwu}}(\scA_n, \scG_i) = \varinjlim_{n,i} \Hom_{\sfP^0_{\Iwu,\Iwu}}(\pH^0(\scA_n), \scG_i).
\]
If we write $\scO(\bbJ_\Sigma) = \varinjlim_j M_j$ for some finitely generated $\scO(\Sigma)$-modules $M_j$, then we similarly have
\begin{multline*}
 \Hom_{\sfD^0_{\Iwu,\Iwu}}\bigl( \Pi^0_{\Iwu,\Iwu}(\Xi^\wedge_!), (\scR^\wedge \phatstar^0 \scF) \otimes_{\scO(\Sigma)} \scO(\mathbb{J}_\Sigma) \bigr) \\
 = \varinjlim_{n,i,j} \Hom_{\sfP^0_{\Iwu,\Iwu}}(\pH^0(\scA_n), \scG_i \otimes_{\scO(\Sigma)} M_j),
\end{multline*}
where $\scO(\Sigma)$ acts on $\scG_i$ via monodromy.
By Lemma~\ref{lem:morphism-tensor-prod}, for any $n,i,j$ we have a canonical morphism
\begin{equation}
\label{eqn:morph-coaction-Psi}
 \Hom_{\sfP^0_{\Iwu,\Iwu}}(\pH^0(\scA_n), \scG_i) \otimes_{\scO(\Sigma)} M_j \to \Hom_{\sfP^0_{\Iwu,\Iwu}}(\pH^0(\scA_n), \scG_i \otimes_{\scO(\Sigma)} M_j),
\end{equation}
which defines the desired morphism.


To prove that this morphism is an isomorphism, we observe that
since $\scO(\mathbb{J}_\Sigma)$ is flat over $\scO(\Sigma)$ (see Lemma~\ref{lem:J-smooth}), by Lazard's theorem (see e.g.~\cite[\href{https://stacks.math.columbia.edu/tag/058G}{Tag 058G}]{stacks-project}) the objects $M_j$ can be chosen to be finite free $\scO(\Sigma)$-modules. Then~\eqref{eqn:morph-coaction-Psi} is an isomorphism for any $n,i,j$, which concludes the proof.
\end{proof}

This lemma shows that the morphism of Lemma~\ref{lem:coprod-Rwedge} induces, for any $\scF$ in $\mathsf{P}^0_{\Iwu,\Iwu}$, a canonical morphism
\[
\Phi_{\Iwu,\Iwu}(\scF) \to \Phi_{\Iwu,\Iwu}(\scF) \otimes_{\scO(\Sigma)} \scO(\mathbb{J}_\Sigma),
\]
which is easily seen to define a structure of $\scO(\mathbb{J}_\Sigma)$-comodule on $\Phi_{\Iwu,\Iwu}(\scF)$. Combining these structures, we see that $\Phi_{\Iwu,\Iwu}$ defines a functor from $\mathsf{P}^0_{\Iwu,\Iwu}$ to the category of $\scO(\bbI_\Sigma)$-comodules.

We now explain how to construct, for $\scF,\scG$ in $\sfP^0_{\Iwu,\Iwu}$, a bifunctorial morphism
\begin{equation}
\label{eqn:monoidality-morphism}
 \Phi_{\Iwu,\Iwu}(\scF) \circledast  \Phi_{\Iwu,\Iwu}(\scG) \to \Phi_{\Iwu,\Iwu}(\scF \pstar^0_{\Iwu} \scG).
\end{equation}
First, the same construction as for~\eqref{eqn:morph-monoidality-DUU} provides, for any $\scF,\scG$ in $\sfP^0_{\Iwu,\Iwu}$, a canonical morphism
\begin{multline*}
 \Hom_{\sfD^0_{\Iwu,\Iwu}}(\Pi^0_{\Iwu,\Iwu}(\Xi^\wedge_!), \scF) \otimes_{\scO(\Sigma)} \Hom_{\sfD^0_{\Iwu,\Iwu}}(\Pi^0_{\Iwu,\Iwu}(\Xi^\wedge_!), \scG) \\
 \to
 \Hom_{\sfD^0_{\Iwu,\Iwu}}(\Pi^0_{\Iwu,\Iwu}(\Xi^\wedge_!), \scF \pstar^0_{\Iwu} \scG).
\end{multline*}
We then deduce a similar morphism for ind-objects, which provides a canonical morphism
\[
 \Phi_{\Iwu,\Iwu}(\scF) \circledast  \Phi_{\Iwu,\Iwu}(\scG) \to \Hom_{\sfD^0_{\Iwu,\Iwu}}(\Pi^0_{\Iwu,\Iwu}(\Xi^\wedge_!), (\scR^\wedge \phatstar^0 \scF) \pstar^0_{\Iwu} (\scR^\wedge \phatstar^0 \scG)).
\]
Composing this morphism with the morphism
\begin{multline*}
 \Hom_{\sfD^0_{\Iwu,\Iwu}}(\Pi^0_{\Iwu,\Iwu}(\Xi^\wedge_!), (\scR^\wedge \phatstar^0 \scF) \pstar^0_{\Iwu} (\scR^\wedge \phatstar^0 \scG)) \to \\
 \Hom_{\sfD^0_{\Iwu,\Iwu}}(\Pi^0_{\Iwu,\Iwu}(\Xi^\wedge_!), \scR^\wedge \phatstar^0 (\scF \pstar^0 \scG))
\end{multline*}
induced by~\eqref{eqn:morph-conv-R}, we deduce the whished-for morphism~\eqref{eqn:monoidality-morphism}.

We will later see that~\eqref{eqn:monoidality-morphism} is an isomorphism for any $\scF,\scG$ in $\sfP^0_{\Iwu,\Iwu}$, but this will require proving first some other properties of $\Phi_{\Iwu,\Iwu}$.

\subsection{Image of monodromy}

Recall the monodromy construction with respect to the loop rotation action, see Remark~\ref{rmk:loop-rot}. This morphism provides, for any $\scF$ in $\sfP^0_{\Iwu,\Iwu}$, a functorial automorphism $\mu_\scF^{\mathrm{rot}}(x) : \scF \simto \scF$.

On the other hand, consider the category $\Rep^\infty(\bbI_\Sigma)$ of representations of the group scheme $\bbI_\Sigma$, or in other words of $\scO(\bbI_\Sigma)$-comodules. As explained in Remark~\ref{rmk:section-J}, the group scheme $\bbJ_\Sigma$ admits a canonical section, hence so does $\bbI_\Sigma$. This implies that any $M$ in $\Rep^\infty(\bbI_\Sigma)$ admits a ``tautological'' automorphism, defined as the composition
\[
M \to M \otimes_{\scO(T^\vee_\bk \times_{T^\vee_\bk/\Wf} T^\vee_\bk)} \scO(\bbI_\Sigma) \to M \otimes_{\scO(T^\vee_\bk \times_{T^\vee_\bk/\Wf} T^\vee_\bk)} \scO(T^\vee_\bk \times_{T^\vee_\bk/\Wf} T^\vee_\bk) = M
\]
where the first morphism is the coaction, and the second one is induced by restriction to the canonical section. Here again this automorphism is functorial (in the sense that it defines an automorphism of the identity functor).

\begin{lem}
\label{lem:image-monodromy}
For any $\scF$ in $\sfP^0_{\Iwu,\Iwu}$, $\Phi_{\Iwu,\Iwu}(\mu_\scF^{\mathrm{rot}}(x)^{-1})$ is the tautological automorphism of $\Phi_{\Iwu,\Iwu}(\scF)$.
\end{lem}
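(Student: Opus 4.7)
The strategy is to reduce the claim to an identity between two explicit endomorphisms of the ind-object $\scR^\wedge$.

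First, I would observe that the loop-rotation monodromy is trivial on $\Pi^0_{\Iwu,\Iwu}(\Xi^\wedge_!)$. Indeed, $\Xi^\wedge_!$ lies in $\sfD^\wedge_{U,U}$, which is generated from the objects $\Delta^\wedge_w$ with $w\in\Wf$; for such $w$ one has $w=v\st(0)$, so by Lemma~\ref{lem:monodromy-DN} $\mu^{\mathrm{rot}}_{\Delta^\wedge_w}(x)=\mu_{\Delta^\wedge_w}(1\otimes e^0)=\id$, and the claim follows by passing to pro-objects and to the quotient $\Pi^0_{\Iwu,\Iwu}$. Combining this with the centrality of monodromy (which makes post-composition with $\mu^{\mathrm{rot}}_{\scR^\wedge\phatstar^0\scF}(x)^{\pm1}$ on the Hom-space $\Phi_{\Iwu,\Iwu}(\scF)$ agree with pre-composition with the inverse monodromy on $\Pi^0_{\Iwu,\Iwu}(\Xi^\wedge_!)$, hence act as the identity) and the multiplicativity formula \eqref{eqn:monodromy-convolution-3} for $\hatstar^0$, I would write
\[
\id_{\scR^\wedge}\phatstar^0\mu^{\mathrm{rot}}_\scF(x)^{-1}=\bigl(\mu^{\mathrm{rot}}_{\scR^\wedge}(x)\phatstar^0\id_\scF\bigr)\circ\mu^{\mathrm{rot}}_{\scR^\wedge\phatstar^0\scF}(x)^{-1},
\]
so that $\Phi_{\Iwu,\Iwu}(\mu^{\mathrm{rot}}_\scF(x)^{-1})$ equals post-composition with $\mu^{\mathrm{rot}}_{\scR^\wedge}(x)\phatstar^0\id_\scF$.

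On the other hand, by Remark~\ref{rmk:section-J} applied to $\bbJ_\fD=\fD\times_\Sigma\bbJ_\Sigma$, together with Lemma~\ref{lem:morph-coaction-Psi}, the tautological automorphism of $\Phi_{\Iwu,\Iwu}(\scF)$ is post-composition with $\tau\phatstar^0\id_\scF$, where $\tau:\scR^\wedge\to\scR^\wedge$ is the composition of $\mathrm{coact}_{\scR^\wedge}$ with the map induced by restriction of functions along the canonical section $\Sigma\to\bbJ_\Sigma$, $s\mapsto(s,s)$. The lemma therefore reduces to the identity
\[
\tau\;=\;\mu^{\mathrm{rot}}_{\scR^\wedge}(x)
\]
of endomorphisms of $\scR^\wedge$.

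To prove this identity, I would use Theorem~\ref{thm:gaitsgory-mon}\eqref{it:gaitsgory-mon-6} (extended to ind-objects) to identify $\mu^{\mathrm{rot}}_{\scR^\wedge}(x)$ with the automorphism induced on the quotient $\scR^\wedge$ of $\tsZ(\scO(G^\vee_\bk))$ by $\hat\sm_{\scO(G^\vee_\bk)}^{-1}=\tsZ^\Coh(\sm^{\mathrm{taut}}_{\scO(G^\vee_\bk)})^{-1}$. On the other side, I would trace the construction of $\mathrm{coact}_{\scR^\wedge}$ in \S\ref{ss:extension-Z-wedge}--\S\ref{ss:monoidality-morph} through the section restriction: starting from $\tsZ(\Delta):\tsZ(\scO(G^\vee_\bk))\to\tsZ(\scO(G^\vee_\bk))\otimes_\bk\scO(G^\vee_\bk)$ (with trivial $G^\vee_\bk$-action on the second factor), interpreting this target as $\tsZ(\scO(G^\vee_\bk))\otimes_{\scO(G^\vee_\bk)}\scO(G^\vee_\bk\times G^\vee_\bk)$ via the Hopf-algebra shearing isomorphism of \S\ref{ss:extension-Z-wedge}, and pulling back along the embedding $\Sigma\hookrightarrow\bbJ_\Sigma\subset G^\vee_\bk\times G^\vee_\bk$, $s\mapsto(s,s)$, one obtains (after descending to $\scR^\wedge$) an endomorphism induced by $\tsZ^\Coh$ applied to the restriction of $(\sm^{\mathrm{taut}}_{\scO(G^\vee_\bk)})^{-1}$ to $\Sigma$, via the functor of Remark~\ref{rmk:taut-autom}. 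The main obstacle is the bookkeeping of this last identification: matching the explicit Hopf-algebra formula for $\sm^{\mathrm{taut}}_{\scO(G^\vee_\bk)}$ recalled in \S\ref{ss:extension-Z} with the map induced by $\Delta$ followed by section restriction, making careful use of the antipode (which enters via the coaction for the left regular action on $\scO(G^\vee_\bk)$) and of the fact that we work modulo the ideal of $\Sigma$ in $\scO(G^\vee_\bk)$, i.e.~on the quotient $\scR^\wedge$ rather than on $\tsZ(\scO(G^\vee_\bk))$ itself.
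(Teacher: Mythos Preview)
Your approach is essentially the same as the paper's: both reduce to comparing two endomorphisms of $\scR^\wedge$ (using triviality of loop rotation on $G/U$ and multiplicativity~\eqref{eqn:monodromy-convolution-3}), lift each to $\tsZ(\scO(G^\vee_\bk))$ via Theorem~\ref{thm:gaitsgory-mon}\eqref{it:gaitsgory-mon-6} and the construction of $\mathrm{coact}_{\scR^\wedge}$, and conclude by a Hopf-algebra identity in $\Coh^{G^\vee_\bk}_{\mathrm{fr}}(G^\vee_\bk)$. The paper in fact shows that the two lifted automorphisms of $\scO(G^\vee_\bk)\otimes\scO_{G^\vee_\bk}$ already coincide (so there is no need to work modulo the ideal of $\Sigma$), and dismisses the final verification as ``an easy exercise of manipulation with the Hopf algebra $\scO(G^\vee_\bk)$''; one small slip in your writeup is that centrality gives post-composition with $\mu^{\mathrm{rot}}(x)$ equal to pre-composition with $\mu^{\mathrm{rot}}(x)$ itself, not its inverse, but the conclusion is unaffected.
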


\begin{proof}
Since the loop rotation action is trivial on $G/U$, for any $\scG$ in $\sfP_{U,U}^0$, seen as an object in $\sfP^0_{\Iwu,\Iwu}$, we have $\mu_\scG^{\mathrm{rot}}(x)=\id$. On the other hand, by~\eqref{eqn:monodromy-convolution-3} we have
\[
\mu^{\mathrm{rot}}_{\scR^\wedge \phatstar^0 \scF}(x)=\mu^{\mathrm{rot}}_{\scR^\wedge}(x) \phatstar^0 \mu_\scF^{\mathrm{rot}}(x)
\]
where $\mu^{\mathrm{rot}}_{\scR^\wedge}$ is the automorphism induced by $\mu^{\mathrm{rot}}_{\tsZ(\scO(G^\vee_\bk))}$. Since any morphism in $\sfD_{\Iwu,\Iwu}^0$ commutes with monodromy, it follows that $\Phi_{\Iwu,\Iwu}(\mu_\scF^{\mathrm{rot}}(x)^{-1})$ coincides with the automorphism of $\Phi_{\Iwu,\Iwu}(\scF)$ induced by $\mu^{\mathrm{rot}}_{\scR^\wedge}(x)$. By definition, this automorphism is obtained by passage to the quotient from $\mu^{\mathrm{rot}}_{\tsZ(\scO(G^\vee_\bk))}(x)$, which by Theorem~\ref{thm:gaitsgory-mon}\eqref{it:gaitsgory-mon-6} coincides with $(\hat{\sm}_{\scO(G^\vee_\bk)})^{-1}$. By construction of this functor (see~\S\ref{ss:extension-Z-wedge}), the latter automorphism is the image under $\sZ^{\wedge,\Coh}$ of the inverse of the tautological automorphism of $\scO(G^\vee_\bk) \otimes \scO_{G^\vee_\bk}$ as considered in~\S\ref{ss:extension-Z}.

On the other hand, by definition of the coaction on $\Phi_{\Iwu,\Iwu}(\scF)$, the tautological automorphism of this representation is induced by the automorphism of $\scR^\wedge$ given by the composition
\[
\scR^\wedge \xrightarrow{\mathrm{coact}_{\scR^\wedge}} \scR^\wedge \otimes_{\scO(\Sigma)} \scO(\bbJ_\Sigma) \to \scR^\wedge \otimes_{\scO(\Sigma)} \scO(\Sigma) = \scR^\wedge
\]
where the second morphism is induced by restriction to the canonical section. This automorphism is obtained by passage to the quotient from the automorphism of $\tsZ(\scO(G^\vee_\bk))$ given by the composition
\begin{multline*}
\tsZ(\scO(G^\vee_\bk)) \to \tsZ(\scO(G^\vee_\bk)) \otimes_\bk \scO(G^\vee_\bk) = \tsZ(\scO(G^\vee_\bk)) \otimes_{\scO(G^\vee_\bk)} \scO(G^\vee_\bk \times G^\vee_\bk) \\
\to \tsZ(\scO(G^\vee_\bk)) \otimes_{\scO(G^\vee_\bk)} \scO(G^\vee_\bk) = \tsZ(\scO(G^\vee_\bk))
\end{multline*}
where the first morphism and the first identification are as in the construction of $\mathrm{coact}_{\scR^\wedge}$, and the second morphism is induced by restriction to the diagonal. The latter morphism is the image under $\sZ^{\wedge,\Coh}$ of the automorphism of $\scO(G^\vee_\bk) \otimes \scO_{G^\vee_\bk}$ given by the composition
\begin{multline*}
\scO(G^\vee_\bk) \otimes \scO_{G^\vee_\bk} \to \scO(G^\vee_\bk) \otimes \scO(G^\vee_\bk) \otimes \scO_{G^\vee_\bk} = \bigl( \scO(G^\vee_\bk) \otimes \scO_{G^\vee_\bk} \bigr) \otimes_{\scO(G^\vee_\bk)} \scO(G^\vee_\bk \times G^\vee_\bk) \\
\to \bigl( \scO(G^\vee_\bk) \otimes \scO_{G^\vee_\bk} \bigr) \otimes_{\scO(G^\vee_\bk)} \scO(G^\vee_\bk) = \scO(G^\vee_\bk) \otimes \scO_{G^\vee_\bk}
\end{multline*}
where the first morphism is induced by the comultiplication in $\scO(G^\vee_\bk)$ and the second one by restriction to the diagonal.

These remarks show that the claim will follow if we check that the two given automorphisms of $\scO(G^\vee_\bk) \otimes \scO_{G^\vee_\bk}$ coincide. This is an easy exercise of manipulation with the Hopf algebra $\scO(G^\vee_\bk)$.
\end{proof}

\subsection{Exactness and compatibility with \texorpdfstring{$\Phi_{\Iw,\Iw}$}{PhiII} and \texorpdfstring{$\Phi_{U,U}$}{PhiUU}}


Our goal in this subsection is to show that the functor $\Phi_{\Iwu,\Iwu}$ constructed in~\S\ref{ss:definition-Phi-Iwu} factors through an exact functor
\[
\mathsf{P}^0_{\Iwu,\Iwu} \to \Rep_0(\bbI_\Sigma),
\]
which is moreover compatible with the functor $\Phi_{\Iw,\Iw}$ from~\S\ref{ss:regular-quotient-coh} and the functor $\Phi_{U,U}$ from Theorem~\ref{thm:reg-quotient-finite} in the appropriate sense.



\begin{lem}
\label{lem:exactness-Phi}
For any $\scF$ in $\sfP_{\Iw,\Iw}^0$, we have a canonical isomorphism of $\scO(\bbI_\Sigma)$-comodules
\[
\Phi_{\Iwu,\Iwu}(\pi_0^\dag \scF) \cong \Phi_{\Iw,\Iw}(\scF)
\]
(where the coaction on the right-hand side is provided by the functor~\eqref{eqn:functor-statement-Rep}),
and moreover
\[
\Hom_{\mathsf{D}^0_{\Iwu,\Iwu}} \bigl( \Pi_{\Iwu,\Iwu}^0(\Xi^\wedge_!), \scR^\wedge \phatstar^0 (\pi_0^\dag \scF) [n] \bigr) = 0
\]
if $n \neq 0$.
\end{lem}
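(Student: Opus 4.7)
The plan is to reduce the computation to statements in $\sfP^0_{\Iw,\Iw}$, where the equivalence $\Phi_{\Iw,\Iw}$ of~\cite{brr} is available. The starting point is Lemma~\ref{lem:convolution-0-exact}, which gives a canonical isomorphism
\[
\scR^\wedge \phatstar^0 (\pi_0^\dag \scF) \cong \pi_0^\dag(\scR^0 \star^0_\Iw \scF),
\]
so the $\Hom$ space in the statement becomes $\Hom_{\sfD^0_{\Iwu,\Iwu}}(\Pi^0_{\Iwu,\Iwu}(\Xi^\wedge_!), \pi_0^\dag(\scR^0 \star^0_\Iw \scF)[n])$, i.e.~a morphism space into an object pulled back from $\sfD^0_{\Iw,\Iw}$ via $\pi_0^\dag$.

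The next step is to transfer this morphism space to one in $\sfD^0_{\Iw,\Iw}$ via adjunction. Since $\pi$ is a $T$-torsor of relative dimension $\dim T$, the functor $\pi^\dag$ admits the left adjoint $\pi_\dag[-2\dim T]$, and the forgetful functor $\For^{\Iw}_{\Iwu}$ admits a left adjoint given by a $T$-averaging construction; both adjunctions descend to the Serre/Verdier quotients thanks to the t-exactness of the quotient functors. Using $\pi_\dag(\Xi^\wedge_!) \cong \Xi_! \cong \scT_{w_\circ}$ and the fact that $\scT_{w_\circ}$ is supported on $G/B$ with a standard filtration whose only length-zero subquotient is $\Delta^{\Iw}_e = \delta$, I would identify the image of $\Pi^0_{\Iwu,\Iwu}(\Xi^\wedge_!)$ under the composition of these left adjoints with $\delta^0$ (up to shift), so that the $\Hom$ space becomes $\Hom_{{\ind}\sfP^0_{\Iw,\Iw}}(\delta^0, \scR^0 \star^0_\Iw \scF) = \Phi_{\Iw,\Iw}(\scF)$ for $n=0$ and vanishes for $n \neq 0$. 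The projectivity of $\Xi^\wedge_!$ in $\sfP^\wedge_{U,U}$ (Lemma~\ref{lem:Xi-projective}), combined with the analogue of Lemma~\ref{lem:morph-Xiwedge-Pi} for $\sfD^0_{\Iwu,\Iwu}$, should control the passage between the pro-object $\Xi^\wedge_!$ and its image in $\sfD^0_{\Iwu,\Iwu}$.

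For the comodule structure, the coaction on $\Phi_{\Iwu,\Iwu}(\pi_0^\dag \scF)$ is induced by $\mathrm{coact}_{\scR^\wedge}$ of Lemma~\ref{lem:coprod-Rwedge}, which itself comes from the comultiplication on $\scO(G^\vee_\bk)$. Since the fiber of $\bbJ_\Sigma$ over $\su$ is $Z_{G^\vee_\bk}(\su)$ and the functor~\eqref{eqn:functor-statement-Rep} is pushforward along $\{(e,e)\} \hookrightarrow \fD$, tracing the definitions through the adjunction should show that the coaction is set-theoretically concentrated on $\{(e,e)\}$ and matches the natural $Z_{G^\vee_\bk}(\su)$-action on $\Phi_{\Iw,\Iw}(\scF)$. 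The main obstacle will be the adjunction step: executing it cleanly at the level of the quotient categories, and correctly identifying the composition of left adjoints applied to $\Pi^0_{\Iwu,\Iwu}(\Xi^\wedge_!)$ as a pro-object isomorphic to $\delta^0$ (up to shift), so that both the vanishing of the higher $\Hom$s and the compatibility of comodule structures come out exactly as stated.
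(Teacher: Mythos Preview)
Your first two moves match the paper: the identification $\scR^\wedge \phatstar^0 (\pi_0^\dag \scF) \cong \pi_0^\dag(\scR^0 \star^0_\Iw \scF)$ from Lemma~\ref{lem:convolution-0-exact}, and the adjunction $(\pi_{\dag,0},\pi^{\dag,0})$ which sends $\Pi^0_{\Iwu,\Iwu}(\Xi^\wedge_!)$ to $\Pi^0_{\Iwu,\Iw}(\Xi_!)$. After that, however, your plan diverges and runs into a real gap.

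You propose to pass from $\sfD^0_{\Iwu,\Iw}$ to $\sfD^0_{\Iw,\Iw}$ using a $T$-averaging left adjoint to $\For^{\Iw,0}_{\Iwu}$, hoping that this sends $\Xi_!$ to $\delta^0$ up to shift. This is where the argument breaks. The $T$-averaging functor produces objects tensored with $\mathsf{H}^\bullet_c(T;\bk)$ and does not yield a clean identification with $\delta^0$; and even if it did, you would then need to compute $\Hom$'s in $\sfD^0_{\Iw,\Iw}$, for which there is no available statement that this Verdier quotient is the derived category of its heart. The paper instead routes through the Iwahori--Whittaker category $\sfD^0_{\IW,\Iw}$: writing $\Xi_! = \Av_{\Iwu,!} \circ \Av_{\IW} \circ \For^{\Iw}_{\Iwu}(\delta)$ and using the adjunction $(\Av^0_{\Iwu,!},\Av^0_{\IW})$, one lands in $\sfD^0_{\IW,\Iw}$, which \emph{is} $\Db\sfP^0_{\IW,\Iw}$ because $\sfP_{\IW,\Iw}$ is highest weight and Proposition~\ref{prop:miyachi} applies. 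One then uses that $\Av^0_{\IW} \circ \For^{\Iw,0}_{\Iwu}$ is an equivalence $\sfP^0_{\Iw,\Iw} \simto \sfP^0_{\IW,\Iw}$ (from~\cite[Corollary~9.2]{brr}) to transfer back to $\Db\sfP^0_{\Iw,\Iw}$.

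There is a second gap. Even after landing in $\Db\sfP^0_{\Iw,\Iw}$, the vanishing of $\Hom(\delta^0,\scR^0 \star^0_\Iw \scF[n])$ for $n\neq 0$ is not immediate: this is an $\Ext$-computation in the ind-derived category. The paper applies the equivalence $\Phi_{\Iw,\Iw}$ to identify this with $\mathsf{H}^n(Z_{G^\vee_\bk}(\su), \scO(Z_{G^\vee_\bk}(\su)) \otimes_\bk \Phi_{\Iw,\Iw}(\scF))$, and then uses that $\scO(Z_{G^\vee_\bk}(\su)) \otimes_\bk V$ is injective in $\Rep^\infty(Z_{G^\vee_\bk}(\su))$. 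Your proposal assumes the vanishing follows directly from the adjunction, which it does not.
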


\begin{proof}
By Lemma~\ref{lem:convolution-0-exact} and~\eqref{eqn:pidag-0-For}, for $\scF$ in $\sfP^0_{\Iw,\Iw}$ we have
\begin{equation}
\label{eqn:isom-exactness-Phi}
\scR^\wedge \phatstar^0 (\pi_0^\dag \scF) \cong \pi_0^\dag (\scR^0 \star^0_{\Iw} \scF) \cong \pi^{\dag,0} \For^{\Iw,0}_{\Iwu} (\scR^0 \star^0_{\Iw} \scF).
\end{equation}
On the other hand, it is easily seen that the functor
\[
\Pi_{\Iwu,\Iw}^0 \circ \pi_\dag : \mathsf{D}_{\Iwu,\Iwu} \to \mathsf{D}_{\Iwu,\Iw}^0
\]
factors through a triangulated functor
\[
\pi_{\dag,0} : \mathsf{D}_{\Iwu,\Iwu}^0 \to \mathsf{D}_{\Iwu,\Iw}^0
\]
which is left adjoint to $\pi^{\dag,0}$. Moreover, since $\pi_\dag(\Xi^\wedge_!) = \Xi_!$ we have
\begin{equation}
\label{eqn:pidag-Xi}
\pi_{\dag,0} \circ \Pi^0_{\Iwu,\Iwu}(\Xi^\wedge_!) = \Pi_{\Iwu,\Iw}^0(\Xi_!).
\end{equation}
(In particular, this pro-object in $\mathsf{D}_{\Iwu,\Iw}^0$ in fact belongs to $\mathsf{D}_{\Iwu,\Iw}^0$.)
We can now compute using these considerations: for $n \in \Z$ we have
\begin{multline*}
\Hom_{\D^0_{\Iwu,\Iwu}} \bigl( \Pi^0_{\Iwu,\Iwu}(\Xi^\wedge_!), \scR^\wedge \phatstar^0 \pi_0^\dag \scF [n] \bigr) \\
\cong \Hom_{\D^0_{\Iwu,\Iwu}} \bigl( \Pi^0_{\Iwu,\Iwu}(\Xi^\wedge_!), \pi^{\dag,0} \For^{\Iw,0}_{\Iwu} (\scR^0 \star^0_{\Iw} \scF)[n] \bigr) \\
\cong \Hom_{\D^0_{\Iwu,\Iw}} \bigl( \pi_{\dag,0} \Pi^0_{\Iwu,\Iwu}(\Xi^\wedge_!), \For^{\Iw,0}_{\Iwu} (\scR^0 \star^0_{\Iw} \scF)[n] \bigr) \\
\cong \Hom_{\mathsf{D}^0_{\Iwu,\Iw}} \bigl( \Pi_{\Iwu,\Iw}^0(\Xi_!), \For^{\Iw,0}_{\Iwu} (\scR^0 \star^0_{\Iw} \scF)[n] \bigr).
\end{multline*}
(Here, the last space is simply a space of morphisms in the category ${\ind}\mathsf{D}^0_{\Iwu,\Iw}$.)

To proceed further,
consider the ``Iwahori--Whittaker'' category $\mathsf{D}_{\IW,\Iw}$ of sheaves on $\Fl_G$ considered in~\cite[\S 7.1]{brr}. As for the ``finite" flag variety in~\S\ref{ss:G/B-tilting}, we have ``averaging'' functors
\[
\Av_{\Iwu,!} : \mathsf{D}_{\IW,\Iw} \to \mathsf{D}_{\Iwu,\Iw}, \qquad
\Av_{\IW} : \mathsf{D}_{\Iwu,\Iw} \to \mathsf{D}_{\IW,\Iw}
\]
such that $\Av_{\Iwu,!}$ is left adjoint to $\Av_{\IW}$ and both functors are t-exact;
moreover, by construction we have
\[
\Xi_! = \Av_{\Iwu,!} \circ \Av_{\IW} \circ \For^{\Iw}_{\Iwu}(\delta_{\Fl}).
\]

If we denote by $\sfP_{\IW,\Iw}$ the heart of the perverse t-structure on $\D_{\IW,\Iw}$, then the simple objects in $\sfP_{\IW,\Iw}$ are parametrized in a natural way by the subset of $W$ consisting of elements $w$ which have minimal length in their coset $\Wf w$. We can therefore consider the Serre and triangulated subcategories of $\mathsf{P}_{\IW,\Iw}$ and $\D_{\IW,\Iw}$ respectively generated by the simple objects labelled by elements of positive length, and then
the corresponding Serre quotient $\mathsf{P}_{\IW,\Iw}^0$ and Verdier quotient $\mathsf{D}_{\IW,\Iw}^0$, and the quotient functor $\Pi^0_{\IW,\Iw} : \mathsf{D}_{\IW,\Iw} \to \mathsf{D}_{\IW,\Iw}^0$. By Lemma~\ref{lem:quotient-t-str} there exists a unique t-structure on $\sfD_{\IW,\Iw}^0$ such that $\Pi^0_{\IW,\Iw}$ is t-exact; this t-structure is bounded, and its heart identifies with $\sfP_{\IW,\Iw}^0$. The methods of~\cite[\S\S 3.2--3.3]{bgs} can be used to show that the realization functor
\[
\Db \mathsf{P}_{\IW,\Iw} \to \sfD_{\IW,\Iw}
\]
is an equivalence of categories; combining this with Proposition~\ref{prop:miyachi}, we obtain an equivalence of triangulated categories
\begin{equation}
\label{eqn:real-D0-IW}
\Db \sfP^0_{\IW,\Iw} \simto \sfD^0_{\IW,\Iw}
\end{equation}
whose restriction to $\sfP^0_{\IW,\Iw}$ is the obvious embedding.

One can easily check that the functor $\Pi^0_{\Iwu,\Iw} \circ \Av_{\Iwu,!}$, resp.~$\Pi^0_{\IW,\Iw} \circ \Av_{\IW}$, factors uniquely through a t-exact triangulated functor
\[
\Av^0_{\Iwu,!} : \mathsf{D}^0_{\IW,\Iw} \to \mathsf{D}^0_{\Iwu,\Iw}, \quad \text{resp.} \quad \Av^0_{\IW} : \mathsf{D}^0_{\Iwu,\Iw} \to \mathsf{D}^0_{\IW,\Iw},
\]
and that $\Av^0_{\Iwu,!}$ is left adjoint to $\Av^0_{\IW}$.
We then obtain that
\begin{multline*}
\Hom_{\mathsf{D}^0_{\Iwu,\Iw}} \bigl( \Pi_{\Iwu,\Iw}^0(\Xi_!), \For^{\Iw,0}_{\Iwu} (\scR^0 \star^0_{\Iw} \scF)[n] \bigr) \\
\cong \Hom_{\mathsf{D}^0_{\Iwu,\Iw}} \bigl( \Av^0_{\Iwu,!} \Av^0_{\IW} \For^{\Iw,0}_{\Iwu} (\delta^0), \For^{\Iw,0}_{\Iwu} (\scR^0 \star^0_{\Iw} \scF)[n] \bigr) \\
\cong \Hom_{\mathsf{D}^0_{\IW,\Iw}} \bigl( \Av^0_{\IW} \For^{\Iw,0}_{\Iwu} (\delta^0), \Av^0_{\IW} \For^{\Iw,0}_{\Iwu} (\scR^0 \star^0_{\Iw} \scF)[n] \bigr) \\
\cong \Hom_{\Db\mathsf{P}^0_{\IW,\Iw}} \bigl( \Av^0_{\IW} \For^{\Iw,0}_{\Iwu} (\delta^0), \Av^0_{\IW} \For^{\Iw,0}_{\Iwu} (\scR^0 \star^0_{\Iw} \scF)[n] \bigr),
\end{multline*}
where the last step uses~\eqref{eqn:real-D0-IW}.
(Following our conventions, the last space means morphisms in the category ${\ind}\Db\mathsf{P}^0_{\IW,\Iw}$.)

It follows from~\cite[Corollary~9.2]{brr} and the ``transitivity" of the Serre quotient that the functor
\[
\Av_{\IW}^0 \circ \For^{\Iw,0}_{\Iwu} : \mathsf{P}^0_{\Iw,\Iw} \to \mathsf{P}^0_{\IW,\Iw}
\]
is an equivalence of categories. We deduce an equivalence of categories
\[
\Db\sfP^0_{\Iw,\Iw} \to \Db\sfP^0_{\IW,\Iw},
\]
and finally an isomorphism
\[
\Hom_{\D^0_{\Iwu,\Iwu}} \bigl( \Pi_{\Iwu,\Iwu}^0(\Xi^\wedge_!), \scR^\wedge \phatstar^0 (\pi_0^\dag \scF) [n] \bigr) \cong \Hom_{\Db\sfP^0_{\Iw,\Iw}}( \delta^0, \scR^0 \star^0_{\Iw} \scF[n] ),
\]
where in the right-hand side we mean morphisms in ${\ind}\Db\sfP^0_{\Iw,\Iw}$.

Now consider the case $n=0$. Since the natural functor ${\ind}\sfP^0_{\Iw,\Iw} \to {\ind}\Db\sfP^0_{\Iw,\Iw}$ is fully faithful (see~\cite[Proposition~6.1.10]{ks}), we can compute the morphism space above in ${\ind}\sfP^0_{\Iw,\Iw}$; by definition we recover $\Phi_{\Iw,\Iw}(\scF)$, which proves the isomorphism of the lemma.

If $n \neq 0$, we use the monoidal equivalence
\[
\Phi_{\Iw,\Iw} : \sfP_{\Iw,\Iw}^0 \simto \Rep(\rmZ_{G^\vee}(\su))
\]
that sends $\scR^0$ to $\scO(\rmZ_{G^\vee_\bk}(\su))$ (see~\S\ref{ss:regular-quotient-coh}) to obtain an isomorphism
\begin{multline*}
\Hom_{{\ind}\Db\sfP^0_{\Iw,\Iw}}( \delta^0, \scR^0 \star^0_{\Iw} \scF[n] ) \cong \\
\Hom_{{\ind}\Db \Rep(\rmZ_{G^\vee_\bk}(\su))}(\bk, \scO(\rmZ_{G^\vee_\bk}(\su)) \otimes_\bk \Phi_{\Iw,\Iw}(\scF)[n]).
\end{multline*}
If we write
\[
\scO(\rmZ_{G^\vee_\bk}(\su)) = ``\varinjlim_{i \in I}" M_i
\]
where $I$ is filtrant and each $M_i$ belongs to $\Rep(\rmZ_{G^\vee_\bk}(\su))$, then we have
\begin{multline*}
\Hom_{{\ind}\Db \Rep(\rmZ_{G^\vee_\bk}(\su))}(\bk, \scO(\rmZ_{G^\vee_\bk}(\su)) \otimes_\bk \Phi_{\Iwu,\Iwu}(\scF)[n]) = \\
\varinjlim_{i \in I} \Hom_{\Db \Rep(\rmZ_{G^\vee_\bk}(\su))}(\bk, M_i \otimes_\bk \Phi_{\Iwu,\Iwu}(\scF)[n]).
\end{multline*}
It is known that the natural functor
$\Db \Rep(\rmZ_{G^\vee_\bk}(\su)) \to \Db \Rep^\infty(\rmZ_{G^\vee_\bk}(\su))$
is fully faithful. (This follows e.g.~from the much more general results in~\cite[Corollary~2.11 and its proof]{ab}.) We deduce that for any $i \in I$ we have
\[
\Hom_{\Db \Rep(\rmZ_{G^\vee_\bk}(\su))}(\bk, M_i \otimes_\bk \Phi_{\Iw,\Iw}(\scF)[n]) \cong \mathsf{H}^n(\rmZ_{G^\vee_\bk}(\su), M_i \otimes_\bk \Phi_{\Iw,\Iw}(\scF)),
\]
and then using the fact that cohomology commutes with filtrant direct limits (see~\cite[Lemma~I.4.17]{jantzen}) that
\begin{multline*}
\Hom_{{\ind}\Db \Rep(\rmZ_{G^\vee_\bk}(\su))}(\bk, \scO(\rmZ_{G^\vee_\bk}(\su)) \otimes_\bk \Phi_{\Iw,\Iw}(\scF)[n]) =\\
 \mathsf{H}^n(\rmZ_{G^\vee_\bk}(\su), \scO(\rmZ_{G^\vee_\bk}(\su)) \otimes_\bk \Phi_{\Iw,\Iw}(\scF)).
\end{multline*}
Finally, we use the fact that $\scO(\rmZ_{G^\vee_\bk}(\su)) \otimes_\bk \Phi_{\Iw,\Iw}(\scF)$ is injective in $\Rep^\infty(\rmZ_{G^\vee_\bk}(\su))$ (see~\cite[\S\S I.3.9--I.3.10]{jantzen}) to conclude that this space vanishes.
\end{proof}


As a consequence of Lemma~\ref{lem:exactness-Phi} we obtain the following properties.

\begin{prop}
\label{prop:Phi-exact}
The functor $\Phi_{\Iwu,\Iwu}$ is exact, and takes values in $\Rep_0(\bbI_\Sigma)$. Moreover, the diagram~\eqref{eqn:diag-compatibility-Phi} commutes.
\end{prop}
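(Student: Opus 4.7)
The plan is to derive all three assertions from the two statements of Lemma~\ref{lem:exactness-Phi}, by a formal devissage argument. The technical heart of the proof has already been carried out in that lemma, so what remains here is essentially bookkeeping and an induction on Jordan--H\"older length.

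I will begin with the commutativity of the left diagram in~\eqref{eqn:diag-compatibility-Phi}, which is essentially the first statement of Lemma~\ref{lem:exactness-Phi}. The only additional point to check is that the identification $\Phi_{\Iwu,\Iwu}(\pi_0^\dag \scF) \cong \Phi_{\Iw,\Iw}(\scF)$ exhibited there is compatible with comodule structures, the right-hand side being endowed with its $\scO(\bbJ_\fD)$-coaction via~\eqref{eqn:functor-statement-Rep}. I will verify this by tracking the coaction constructed in Lemma~\ref{lem:coprod-Rwedge} through the identification, using that (by the proof of Lemma~\ref{lem:pidag-Rwedge}) the $\scO(G^\vee_\bk)$-action on $\scR^0$ factors through restriction to $\{\su\} \subset \Sigma$; this will force the coaction on $\Phi_{\Iwu,\Iwu}(\pi_0^\dag \scF)$ to take values in the subcomodule corresponding to the fibre $Z_{G^\vee_\bk}(\su)$ of $\bbJ_\Sigma$ over $\su$, which is precisely the image of~\eqref{eqn:functor-statement-Rep}.

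For the remaining assertions I will set
\[
\Phi^n_{\Iwu,\Iwu}(\scF) := \Hom_{\sfD^0_{\Iwu,\Iwu}}(\Pi^0_{\Iwu,\Iwu}(\Xi^\wedge_!), \scR^\wedge \phatstar^0 \scF[n])
\]
for $\scF \in \sfP^0_{\Iwu,\Iwu}$ and $n \in \Z$, so that $\Phi^0_{\Iwu,\Iwu} = \Phi_{\Iwu,\Iwu}$. Any short exact sequence in $\sfP^0_{\Iwu,\Iwu}$ produces, via the exactness of $\scR^\wedge \phatstar^0 (-)$ supplied by Lemma~\ref{lem:convolution-0-exact}, a short exact sequence in ${\ind}\sfP^0_{\Iwu,\Iwu}$; a standard derived-functor argument (using that $\Pi^0_{\Iwu,\Iwu}(\Xi^\wedge_!)$ is a pro-object in $\sfD^0_{\Iwu,\Iwu}$ and that filtered colimits of exact sequences of $\bk$-modules are exact) then yields a long exact sequence linking the $\Phi^n_{\Iwu,\Iwu}$. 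The second part of Lemma~\ref{lem:exactness-Phi} gives $\Phi^n_{\Iwu,\Iwu}(\pi_0^\dag \scG) = 0$ for all $\scG \in \sfP^0_{\Iw,\Iw}$ and $n \neq 0$. Since $\sfP^0_{\Iwu,\Iwu}$ is of finite length with simple objects $\pi_0^\dag(\delta^0_\omega)$ for $\omega \in \Omega$, induction on Jordan--H\"older length combined with the long exact sequence will give $\Phi^n_{\Iwu,\Iwu}(\scF) = 0$ for all $\scF \in \sfP^0_{\Iwu,\Iwu}$ and $n \geq 1$; exactness of $\Phi_{\Iwu,\Iwu}$ then drops out of the degree-zero portion of the long exact sequence.

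Finally, $\Phi_{\Iwu,\Iwu}$ will take values in $\Rep_0(\bbJ_\fD)$ because this subcategory is closed under extensions (and subquotients) in the category of $\scO(\bbJ_\fD)$-comodules: on every object of the form $\pi_0^\dag(\delta^0_\omega)$ the image is already seen to be a finite-dimensional $Z_{G^\vee_\bk}(\su)$-module pushed forward along $\{(e,e)\} \hookrightarrow \fD$, and a further length induction (using the exactness of $\Phi_{\Iwu,\Iwu}$ just proved) extends the property to arbitrary $\scF$. The only substantive input in the whole argument is Lemma~\ref{lem:exactness-Phi}, which is already available; the principal subtlety to watch is the justification of the long exact sequence in a mixed pro/ind setting, but this is routine once one reduces to the exact functor $\scR^\wedge \phatstar^0(-)$ on $\sfP^0_{\Iwu,\Iwu}$.
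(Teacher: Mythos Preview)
Your approach is essentially the same as the paper's: set up a long exact sequence for the functors $\Phi^n_{\Iwu,\Iwu}$ using the exactness of $\scR^\wedge \phatstar^0(-)$ from Lemma~\ref{lem:convolution-0-exact}, then induct on length starting from the vanishing for $\pi_0^\dag(\scG)$ supplied by Lemma~\ref{lem:exactness-Phi}. Two remarks are in order.

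First, the vanishing you extract by induction, $\Phi^n_{\Iwu,\Iwu}(\scF)=0$ for $n\geq 1$, only yields \emph{right} exactness of $\Phi_{\Iwu,\Iwu}$: from the long exact sequence you still have an incoming term $\Phi^{-1}_{\Iwu,\Iwu}(\scF_3)$ obstructing injectivity of $\Phi^0_{\Iwu,\Iwu}(\scF_1)\to\Phi^0_{\Iwu,\Iwu}(\scF_2)$. The same induction works for all $n\neq 0$ (Lemma~\ref{lem:exactness-Phi} already gives vanishing for every such $n$), so you should simply replace ``$n\geq 1$'' by ``$n\neq 0$''; this is what the paper does, treating $n=-1$ and $n=1$ simultaneously. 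Alternatively one could argue left exactness directly by writing $\Xi^\wedge_!$ as a pro-system of objects in ${}^{\mathrm{p}}\sfD^{\leq 0}_{\Iwu,\Iwu}$ (Proposition~\ref{prop:perverse-t-str-completed-cat}) and using the t-structure axioms, but there is no need once the induction is stated correctly.

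Second, your proposed check of comodule compatibility in the first paragraph is unnecessary: the statement of Lemma~\ref{lem:exactness-Phi} already asserts that the isomorphism $\Phi_{\Iwu,\Iwu}(\pi_0^\dag\scF)\cong\Phi_{\Iw,\Iw}(\scF)$ is one of $\scO(\bbJ_\fD)$-comodules, with the right-hand side equipped via~\eqref{eqn:functor-statement-Rep}. The paper accordingly dispatches the commutativity of the left diagram in~\eqref{eqn:diag-compatibility-Phi} in one sentence.
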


\begin{proof}
Recall that $\sfP^0_{\Iwu,\Iwu}$ is a finite-length category, and that its simple objects are the objects $\pi^\dag_0(\scF)$ with $\scF$ simple in $\sfP^0_{\Iw,\Iw}$. Given a short exact sequence
\[
\scF_1 \hookrightarrow \scF_2 \twoheadrightarrow \scF_3
\]
in $\sfP^0_{\Iwu,\Iwu}$, by Lemma~\ref{lem:convolution-0-exact} we have a short exact sequence
\begin{equation}
\label{eqn:ses-conv-R}
\scR^\wedge \phatstar^0 \scF_1 \hookrightarrow \scR^\wedge \phatstar^0 \scF_2 \twoheadrightarrow \scR^\wedge \phatstar^0 \scF_3
\end{equation}
in ${\ind}\sfP^0_{\Iwu,\Iwu}$. By~\cite[Proposition~8.6.6(1)]{ks}, there exists a filtrant category $I$ and an inductive system of short exact sequences
\[
\scM_i^1 \hookrightarrow \scM_i^2 \twoheadrightarrow \scM_i^3
\]
in $\sfP^0_{\Iwu,\Iwu}$
from which~\eqref{eqn:ses-conv-R} is obtained by taking formal direct limits. Write also 
\[
\Pi_{\Iwu,\Iwu}^0(\Xi^\wedge_!) = ``\varprojlim_n{}" \scG_n
\]
for some objects $\scG_n$ in $\sfD^0_{\Iwu,\Iwu}$.
Then for any $n$ and $i$ we have an exact sequence
\begin{multline*}
\Hom_{\D^0_{\Iwu,\Iwu}}(\scG_n, \scM_i^1[-1]) \to \Hom_{\D^0_{\Iwu,\Iwu}}(\scG_n, \scM_i^2[-1]) \to \Hom_{\D^0_{\Iwu,\Iwu}}(\scG_n, \scM_i^3[-1]) \\
\to \Hom_{\D^0_{\Iwu,\Iwu}}(\scG_n, \scM_i^1) \to \Hom_{\D^0_{\Iwu,\Iwu}}(\scG_n, \scM_i^2)
\to \Hom_{\D^0_{\Iwu,\Iwu}}(\scG_n, \scM_i^3) \\
\to \Hom_{\D^0_{\Iwu,\Iwu}}(\scG_n, \scM_i^1[1]) \to \Hom_{\D^0_{\Iwu,\Iwu}}(\scG_n, \scM_i^2[1]) \to \Hom_{\D^0_{\Iwu,\Iwu}}(\scG_n, \scM_i^3[1]).
\end{multline*}
By exactness of filtrant direct limits we deduce an exact sequence
\begin{multline*}
\Hom_{\mathsf{D}^0_{\Iwu,\Iwu}} \bigl( \Pi_{\Iwu,\Iwu}^0(\Xi^\wedge_!), \scR^\wedge \phatstar^0 \scF_1[-1]) \to \Hom_{\mathsf{D}^0_{\Iwu,\Iwu}} \bigl( \Pi_{\Iwu,\Iwu}^0(\Xi^\wedge_!), \scR^\wedge \phatstar^0 \scF_2[-1]) \\
\to \Hom_{\mathsf{D}^0_{\Iwu,\Iwu}} \bigl( \Pi_{\Iwu,\Iwu}^0(\Xi^\wedge_!), \scR^\wedge \phatstar^0 \scF_3[-1]) \to \Phi_{\Iwu,\Iwu} (\scF_1) \to \Phi_{\Iwu,\Iwu} (\scF_2) \to \Phi_{\Iwu,\Iwu} (\scF_3) \\
\to \Hom_{\mathsf{D}^0_{\Iwu,\Iwu}} \bigl( \Pi_{\Iwu,\Iwu}^0(\Xi^\wedge_!), \scR^\wedge \phatstar^0 \scF_1[1])
\to \Hom_{\mathsf{D}^0_{\Iwu,\Iwu}} \bigl( \Pi_{\Iwu,\Iwu}^0(\Xi^\wedge_!), \scR^\wedge \phatstar^0 \scF_2[1]) \\
\to \Hom_{\mathsf{D}^0_{\Iwu,\Iwu}} \bigl( \Pi_{\Iwu,\Iwu}^0(\Xi^\wedge_!), \scR^\wedge \phatstar^0 \scF_3[1]).
\end{multline*}
Using these exact sequences and Lemma~\ref{lem:exactness-Phi} one proves by induction on the length that for any $\scF$ in $\sfP_{\Iwu,\Iwu}^0$ the module $\Phi_{\Iwu,\Iwu} (\scF)$ is finite-dimensional and annihilated by a power of $\cJ$, and that
\[
\Hom_{\mathsf{D}^0_{\Iwu,\Iwu}} \bigl( \Pi_{\Iwu,\Iwu}^0(\Xi^\wedge_!), \scR^\wedge \phatstar^0 \scF[-1])=0=\Hom_{\mathsf{D}^0_{\Iwu,\Iwu}} \bigl( \Pi_{\Iwu,\Iwu}^0(\Xi^\wedge_!), \scR^\wedge \phatstar^0 \scF[1]). 
\]
The first property shows that $\Phi_{\Iwu,\Iwu}$ takes values in $\Rep_0(\bbI_\Sigma)$, and the second one implies (using again the exact sequence above) the exactness of $\Phi_{\Iwu,\Iwu}$.

The commutativity of the diagram~\eqref{eqn:diag-compatibility-Phi} has been established in Lem\-ma~\ref{lem:exactness-Phi}.
\end{proof}

We can now prove the compatibility of $\Phi_{\Iwu,\Iwu}$ with the equivalence $\Phi_{U,U}$ of Theorem~\ref{thm:reg-quotient-finite}.

\begin{prop}
\label{prop:isom-Phi-U-Iu}
The diagram~\eqref{eqn:diag-compatibility-Phi-2} commutes; in other words,
 for any $\scF$ in $\sfP_{U,U}^0$ there exists a canonical isomorphism of $\bbI_\Sigma$-modules
 \[
  \Phi_{U,U}(\scF) \simto \Phi_{\Iwu,\Iwu}(\scF)
 \]
 where the left-hand side is endowed with the trivial structure as a representation.
\end{prop}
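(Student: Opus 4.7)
The plan is to construct an explicit natural transformation from $\Phi_{U,U}$ (viewed as a functor on $\sfP^0_{U,U} \subset \sfP^0_{\Iwu,\Iwu}$ via the embedding~\eqref{eqn:embedding-PUU-PII}) to the restriction of $\Phi_{\Iwu,\Iwu}$ to this subcategory, and to show it is an isomorphism of $\bbJ_\bD$-modules when the target is equipped with the coaction induced from $\scR^\wedge$ and the source carries the trivial coaction. First I would use the unit map $\bk \to \scO(G^\vee_\bk)$ (a $G^\vee_\bk$-module homomorphism from the trivial representation to the left regular one), apply $\tsZ$ and use $\tsZ(\bk) = \delta^\wedge$ (Theorem~\ref{thm:gaitsgory-mon}\eqref{it:gaitsgory-mon-2}) and the surjection~\eqref{eqn:surj-OH} to produce a morphism $\eta : \delta^\wedge \to \scR^\wedge$ in ${\ind}\sfP^\wedge_{\Iwu,\Iwu}$. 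Convolving $\eta$ with $\scF \in \sfP^0_{\Iwu,\Iwu}$ via $\hatstar^0$, and using that $\delta^\wedge \hatstar^0 \scF = \scF$, yields a natural morphism $\scF \to \scR^\wedge \hatstar^0 \scF$, which (since $\scF$ is perverse) factors through the canonical $\scF \to \pH^0(\scR^\wedge \hatstar^0 \scF) = \scR^\wedge \phatstar^0 \scF$. Applying $\Hom_{\sfD^0_{\Iwu,\Iwu}}(\Pi^0_{\Iwu,\Iwu}(\Xi^\wedge_!),-)$ and using Lemma~\ref{lem:morph-Xiwedge-Pi} (so that the source identifies with $\Phi_{U,U}(\scF)$ when $\scF \in \sfP^0_{U,U}$) produces the desired natural transformation.

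Next I would verify the $\bbJ_\bD$-equivariance, which reduces to the commutativity of
\[
\xymatrix@C=1.2cm{
\delta^\wedge \ar[r]^-{\eta} \ar[d]_-{x \mapsto x \otimes 1} & \scR^\wedge \ar[d]^-{\mathrm{coact}_{\scR^\wedge}} \\
\delta^\wedge \otimes_{\scO(\Sigma)} \scO(\bbJ_\Sigma) \ar[r]^-{\eta \otimes \id} & \scR^\wedge \otimes_{\scO(\Sigma)} \scO(\bbJ_\Sigma)
}
\]
(using Lemma~\ref{lem:morph-coaction-Psi} to commute $\Hom$ with the tensor product by $\scO(\bbJ_\Sigma)$). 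By construction, $\mathrm{coact}_{\scR^\wedge}$ comes from applying $\tsZ$ to the comultiplication $\scO(G^\vee_\bk) \to \scO(G^\vee_\bk) \otimes \scO(G^\vee_\bk)$ followed by restriction to $\bbJ_\Sigma$, and $\eta$ comes from applying $\tsZ$ to the unit $\bk \to \scO(G^\vee_\bk)$. The two compositions in the diagram therefore both arise from $\tsZ$ applied to the morphism $\bk \to \scO(G^\vee_\bk) \otimes \scO(G^\vee_\bk)$, the first via comultiplication composed with the unit, the second via $1 \mapsto 1 \otimes 1$ composed with $\eta$. These coincide because the unit of a bialgebra is group-like.

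Then I would show the natural transformation is an isomorphism. Both $\Phi_{U,U}$ (Theorem~\ref{thm:reg-quotient-finite}) and $\Phi_{\Iwu,\Iwu}$ (Proposition~\ref{prop:Phi-exact}) are exact; combined with the exact embedding $\sfP^0_{U,U} \hookrightarrow \sfP^0_{\Iwu,\Iwu}$, this exhibits our natural transformation as one between exact functors on $\sfP^0_{U,U}$. Since $\sfP^0_{U,U}$ has the unique simple object $\delta^0$ (corresponding to the unit under $\Phi_{U,U}$), and every object of $\sfP^0_{U,U}$ is a finite iterated extension of copies of $\delta^0$, it suffices to check the morphism is an isomorphism at $\delta^0$. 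We have $\Phi_{U,U}(\delta^0) = \bk$ (unit under $\Phi_{U,U}$) and, by Lemma~\ref{lem:exactness-Phi} applied to $\delta^0_{\Iw} \in \sfP^0_{\Iw,\Iw}$, also $\Phi_{\Iwu,\Iwu}(\delta^0) \cong \Phi_{\Iw,\Iw}(\delta^0_{\Iw}) = \bk$; hence our map is a morphism $\bk \to \bk$ and we only need to show it is nonzero.

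For this final nonvanishing I would identify, via~\eqref{eqn:convolution-R-pidag} and naturality, the morphism $\delta^0 \to \scR^\wedge \phatstar^0 \delta^0 \cong \pi^\dag_0(\scR^0)$ with the image under $\pi^\dag_0$ of the unit $\delta^0_{\Iw} \to \scR^0$. Since $\scR^0$ is a nonzero ring object in $\sfP^0_{\Iw,\Iw}$, its unit is a monomorphism, and exactness of $\pi^\dag_0$ preserves this. A nonzero element $f \in \Hom(\Pi^0(\Xi^\wedge_!), \delta^0) = \bk$ is surjective onto $\delta^0$, and composing it with a monomorphism yields a nonzero morphism, so the induced map $\Phi_{U,U}(\delta^0) \to \Phi_{\Iwu,\Iwu}(\delta^0)$ is nonzero, hence an isomorphism. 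The main obstacle here is the verification in the final step that the map constructed by convolution with $\eta$ actually agrees, after the identification~\eqref{eqn:convolution-R-pidag}, with $\pi^\dag_0$ applied to the unit of $\scR^0$; this will require unpacking the proof of Lemma~\ref{lem:convolution-0-exact} (and of Lemma~\ref{lem:pidag-Rwedge} behind it) to make the naturality statement precise, but is a matter of careful bookkeeping rather than a genuine difficulty.
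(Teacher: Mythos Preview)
Your proposal is correct and follows essentially the same route as the paper: construct the natural transformation from the unit map $\delta^\wedge \to \scR^\wedge$, reduce to the unique simple object by exactness of both functors and the five-lemma, and check the isomorphism there. The paper's proof is much terser (it simply cites Lemma~\ref{lem:exactness-Phi} for the simple object and omits the equivariance check), whereas you spell out both the $\bbJ_\bD$-compatibility via the group-like property of the unit and the nonvanishing at $\delta^0$ via the identification~\eqref{eqn:convolution-R-pidag}; these elaborations are correct and arguably fill in details the paper leaves implicit.
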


\begin{proof}
 From the definition of the functors we see that there exists a functorial morphism
  \[
  \Phi_{U,U}(\scF) \to \Phi_{\Iwu,\Iwu}(\scF)
 \]
 induced by the unit morphism $\delta^{\wedge} \to \scR^\wedge$ and the functor~\eqref{eqn:D-U-Iu}. 
 Using Lemma~\ref{lem:exactness-Phi} we see that this morphism is an isomorphism when $\scF$ is the unique simple object in $\sfP_{U,U}^0$, namely $\pi^\dag(\For^{\Iw}_{\Iwu} \IC_e)$. Since both functors are exact, the five-lemma implies that this morphism is invertible for any $\scF$ in $\sfP_{U,U}^0$, which finishes the proof.
\end{proof}

\subsection{Images of some truncated Wakimoto sheaves}

Recall the representations of $\bbI_\Sigma^\wedge$ introduced in~\S\ref{ss:completions}, and the ``truncation" functors introduced in~\S\ref{ss:truncation-ps} and~\S\ref{ss:truncation-Rep}. By construction, given $\scF \in \sfP^\wedge_{\Iwu,\Iwu}$, resp.~$M \in \Rep(\bbI_\Sigma^\wedge)$, these functors provide a projective system $(\mathsf{C}_m^0(\Wak^\wedge_{w_\circ(\lambda)}) : m \geq 1)$ of objects in $\sfP^0_{\Iwu,\Iwu}$, resp.~a projective system $(\mathsf{D}_m(M) : m \geq 1)$ of objects in $\Rep_0(\bbI_\Sigma)$.
Our goal in this subsection is to prove the following claims.

\begin{prop}
\phantomsection
\label{prop:Phi-Wak-antidom}
\begin{enumerate}
\item
\label{it:Phi-Wak-antidom}
For any $\lambda \in X_*^+(T)$, there exists an isomorphism of projective systems
 \[
  \bigl( \Phi_{\Iwu,\Iwu}(\mathsf{C}_m^0(\Wak^\wedge_{w_\circ(\lambda)})) : m \geq 1 \bigr) \cong \bigl( \mathsf{D}_m(\mathscr{M}^\wedge_{\st(w_\circ(\lambda))}) : m \geq 1 \bigr).
 \]
\item
\label{it:Phi-Delta-omega}
For any $\omega \in \Omega$, there exists an isomorphism of projective systems
 \[
  \bigl( \Phi_{\Iwu,\Iwu}(\mathsf{C}_m^0(\Delta^\wedge_{\omega})) : m \geq 1 \bigr) \cong \bigl( \mathsf{D}_m(\mathscr{M}^\wedge_{\omega}) : m \geq 1 \bigr).
 \]
\end{enumerate}
 \end{prop}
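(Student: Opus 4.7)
The approach is to first handle the base case $\omega=e$ (equivalently $\lambda=0$), then establish a module compatibility of $\Phi_{\Iwu,\Iwu}$ with convolution by free-monodromic central sheaves $\tsZ(V)$, and finally deduce Part~\eqref{it:Phi-Wak-antidom} via the Wakimoto filtration of $\tsZ(V)$ combined with Lemma~\ref{lem:kernel-end-free-coh}. Part~\eqref{it:Phi-Delta-omega} is reduced to Part~\eqref{it:Phi-Wak-antidom} using the decomposition $\mathscr{M}^\wedge_\omega\cong\mathscr{M}^\wedge_x\circledast\mathscr{M}^\wedge_{\st(\mu)}$ for $\omega=x\st(\mu)\in\Omega$. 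The base case $\Phi_{\Iwu,\Iwu}(\mathsf{C}_m^0(\delta^\wedge))\cong\mathsf{D}_m(\mathscr{M}^\wedge_e)$ is immediate from Proposition~\ref{prop:isom-Phi-U-Iu} (reducing to $\Phi_{U,U}$) and the $w=e$ case of Lemma~\ref{lem:PhiUU-cost}, since $\delta^\wedge$ is supported on $\tFl_{G,e}\subset G/U$ and coincides there with $\nabla^\wedge_e$.

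The key module compatibility to establish is that for $V\in\Rep(G^\vee_\bk)$ and $\scF\in\sfP^0_{\Iwu,\Iwu}$, there is a canonical isomorphism of $\bbJ_\fD$-modules
\[
\Phi_{\Iwu,\Iwu}(\tsZ(V)\hatstar^0\scF) \cong V\otimes_\bk\Phi_{\Iwu,\Iwu}(\scF),
\]
where $\bbJ_\fD$ acts on the right diagonally via the existing action on $\Phi_{\Iwu,\Iwu}(\scF)$ and the restriction of the $G^\vee_\bk$-action on $V$ along $\bbJ_\fD\to\bbJ_\Sigma\to G^\vee_\bk$. This would follow by unwinding the definition of $\Phi_{\Iwu,\Iwu}$, applying Lemma~\ref{lem:tsZ-R} (extended to ind-objects), the commutation~\eqref{eqn:commutation-Z-0}, and the compatibilities~\eqref{eqn:compatibility-hatstar-0}--\eqref{eqn:compatibility-hatstar-other-0}; verifying that the $\bbJ_\fD$-coactions match reduces to a diagram chase using Lemma~\ref{lem:coprod-Rwedge} and the construction of the coaction on $\scR^\wedge$ via the comultiplication on $\scO(G^\vee_\bk)$.

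For Part~\eqref{it:Phi-Wak-antidom}, I would take $V$ to be a $G^\vee_\bk$-module of highest weight $\lambda$ with $\dim V_{w_\circ(\lambda)}=1$ (e.g., the Weyl module). Theorem~\ref{thm:gaitsgory-mon}\eqref{it:fm-central-Wak} provides an embedding $\iota\colon\Wak^\wedge_{w_\circ(\lambda)}\hookrightarrow\tsZ(V)$ as the bottom of the Wakimoto filtration. Using Proposition~\ref{prop:exactness-Cm} (exactness of $\mathsf{C}_m$ on Wakimoto-filtered objects), the t-exactness of $\tsZ(V)\hatstar(-)$ (Theorem~\ref{thm:gaitsgory-mon}\eqref{it:gaitsgory-mon-5}), and Proposition~\ref{prop:monoidality-C0}, one identifies $\mathsf{C}_m^0(\tsZ(V))\cong\tsZ(V)\hatstar^0\mathsf{C}_m^0(\delta^\wedge)$, so that applying $\Phi_{\Iwu,\Iwu}$ to $\mathsf{C}_m^0(\iota)$ and invoking the module compatibility and base case gives an embedding
\[
\Phi_{\Iwu,\Iwu}(\mathsf{C}_m^0(\Wak^\wedge_{w_\circ(\lambda)})) \hookrightarrow V\otimes_\bk\mathsf{D}_m(\mathscr{M}^\wedge_e).
\]
In parallel, Lemma~\ref{lem:kernel-end-free-coh} supplies an embedding $V_{w_\circ(\lambda)}\otimes\scO_\Groth(w_\circ(\lambda))\hookrightarrow V\otimes\scO_\Groth$, which via Proposition~\ref{prop:rest-Steinberg-section}, Lemma~\ref{lem:line-bundles-J}, and completion becomes $\mathsf{D}_m(\mathscr{M}^\wedge_{\st(w_\circ(\lambda))})\hookrightarrow V\otimes_\bk\mathsf{D}_m(\mathscr{M}^\wedge_e)$. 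One then identifies the two subobjects via their common monodromy characterization as ``$w_\circ(\lambda)$-eigenspaces,'' using Lemma~\ref{lem:properties-Wak}\eqref{it:properties-Wak-5} on the perverse side and Lemma~\ref{lem:kernel-end-free-coh} on the coherent side, together with the fact that the module-compatibility isomorphism respects monodromy by construction (Lemma~\ref{lem:monodromy-Rwedge} and Remark~\ref{rmk:action-Sigma-2}).

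For Part~\eqref{it:Phi-Delta-omega}, I would decompose $\omega=x\st(\mu)\in\Omega$ with $x\in\Wf$ and $\mu\in X_*(T)$; the length-zero condition constrains $(x,\mu)$ via~\eqref{eqn:formula-length}, and one obtains a compatible decomposition of $\mathsf{C}_m^0(\Delta^\wedge_\omega)$ using Lemma~\ref{lem:convolution-stand-costand}\eqref{it:convolution-stand-costand-2}. Part~\eqref{it:Phi-Wak-antidom} applies to the $\st(\mu)$-factor (for a suitable dominant $\lambda$ with $w_\circ(\lambda)=\mu$), while an analogue of Lemma~\ref{lem:PhiUU-cost} for $x\in\Wf$ combined with Proposition~\ref{prop:isom-Phi-U-Iu} handles the finite factor. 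The monoidality morphism~\eqref{eqn:monoidality-morphism} then assembles the pieces. The main technical obstacle is the subobject identification at the end of Part~\eqref{it:Phi-Wak-antidom}: while both sides are characterized by the same monodromy eigenvalue, matching them under the module-compatibility isomorphism (which is a priori only an object-level isomorphism) requires carefully tracking monodromy through the explicit construction of $\scR^\wedge$ as a quotient of $\tsZ(\scO(G^\vee_\bk))$, and verifying that the bottom Wakimoto layer on the perverse side corresponds to the kernel described by Lemma~\ref{lem:kernel-end-free-coh} on the coherent side under the identification $\Phi_{\Iwu,\Iwu}(\tsZ(V)\hatstar^0\mathsf{C}_m^0(\delta^\wedge))\cong V\otimes\mathsf{D}_m(\mathscr{M}^\wedge_e)$.
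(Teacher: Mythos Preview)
Your overall strategy coincides with the paper's: the module compatibility you propose is exactly Lemma~\ref{lem:Phi-ZV}, the base case is handled via Proposition~\ref{prop:isom-Phi-U-Iu} and Lemma~\ref{lem:PhiUU-cost}, and both the embedding $\Wak^\wedge_{w_\circ(\lambda)}\hookrightarrow\tsZ(V)$ and the coherent kernel identification via Lemma~\ref{lem:kernel-end-free-coh} appear in the paper's argument. Part~\eqref{it:Phi-Delta-omega} is likewise reduced to Part~\eqref{it:Phi-Wak-antidom} and the finite-flag case via a product decomposition and the morphism~\eqref{eqn:monoidality-morphism}.

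However, there is a genuine gap at the step you flag as the ``main technical obstacle.'' Your proposed resolution---that both subobjects are characterized as the $w_\circ(\lambda)$-eigenspace---only gives an \emph{inclusion}: the monodromy characterization shows that $\Phi_{\Iwu,\Iwu}(\mathsf{C}_m^0(\Wak^\wedge_{w_\circ(\lambda)}))$ lands in the kernel of the relevant endomorphism, while Lemma~\ref{lem:kernel-end-free-coh} says the coherent side \emph{equals} that kernel. Nothing you have said rules out a strict inclusion. The paper closes this gap by passing to the inverse limit $K_\lambda:=\varprojlim_m\Phi_{\Iwu,\Iwu}(\mathsf{C}_m^0(\Wak^\wedge_{w_\circ(\lambda)}))$, obtaining an injection $K_\lambda\hookrightarrow\mathscr{M}^\wedge_{w_\circ(\lambda)}$ of finitely generated $\scO(\fD^\wedge)$-modules, and then proving surjectivity by Nakayama together with a dimension count at the closed fiber $m=1$ (where $\mathsf{C}_1^0(\Wak^\wedge_{w_\circ(\lambda)})$ is an iterated extension of $\dim(\scO(T^\vee_\bk)/\cJ\cdot\scO(T^\vee_\bk))$ copies of a simple object). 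The same issue recurs in Part~\eqref{it:Phi-Delta-omega}: at this point in the paper the monoidality morphism~\eqref{eqn:monoidality-morphism} is only a \emph{morphism}, not yet known to be an isomorphism (that is established later), so ``assembling the pieces'' via~\eqref{eqn:monoidality-morphism} again yields only a map, which the paper upgrades to an isomorphism by another Nakayama argument reducing to the known monoidality of $\Phi_{\Iw,\Iw}$ at the closed fiber.

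A secondary point: the endomorphism whose kernel you need is $\hat{\sm}_V-\mu_{\tsZ(V)}(e^{w_\circ(\lambda)}\otimes 1)$, and matching it with the coherent endomorphism of Lemma~\ref{lem:kernel-end-free-coh} goes through Lemma~\ref{lem:image-monodromy} (which identifies $\Phi_{\Iwu,\Iwu}(\mu^{\mathrm{rot}}(x^{-1}))$ with the tautological automorphism) combined with Theorem~\ref{thm:gaitsgory-mon}\eqref{it:gaitsgory-mon-6}. The references you cite (Lemma~\ref{lem:monodromy-Rwedge} and Remark~\ref{rmk:action-Sigma-2}) concern the $\scO(T^\vee_\bk/\Wf)$-monodromy action, which is a different ingredient.
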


We will need some preliminaries.

\begin{lem}
\label{lem:Phi-ZV}
For any $V \in \Rep(G^\vee_\bk)$ and $\scF$ in $\sfP^0_{\Iwu,\Iwu}$ we have a canonical isomorphism of $\bbI_\Sigma$-modules
\[
\Phi_{\Iwu,\Iwu} \left( \tsZ(V) \phatstar^0 \scF \right) \cong V \otimes_\bk \Phi_{\Iwu,\Iwu}(\scF).
\]
\end{lem}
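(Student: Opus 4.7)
The plan is to reduce the computation of $\Phi_{\Iwu,\Iwu}(\tsZ(V) \phatstar^0 \scF)$ to a manipulation of $\scR^\wedge \phatstar^0 \scF$ by exploiting the key isomorphism $\scR^\wedge \hatstar \tsZ(V) \cong \scR^\wedge \otimes_\bk V$ of Lemma~\ref{lem:tsZ-R}. The idea is that convolving with $\tsZ(V)$ on the right can be ``absorbed'' into $\scR^\wedge$ and then converted into a plain tensor product with the vector space $V$, which factors out of the $\Hom$ defining $\Phi_{\Iwu,\Iwu}$.

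First I would construct an isomorphism
\[
\scR^\wedge \phatstar^0 \bigl( \tsZ(V) \phatstar^0 \scF \bigr) \cong V \otimes_\bk \bigl( \scR^\wedge \phatstar^0 \scF \bigr)
\]
in ${\ind}\sfP^0_{\Iwu,\Iwu}$. The t-exactness of $\tsZ(V) \hatstar (-)$ from Theorem~\ref{thm:gaitsgory-mon}\eqref{it:gaitsgory-mon-5} and the exactness of $\scR^\wedge \hatstar^0 (-)$ from Lemma~\ref{lem:convolution-0-exact} guarantee that the successive convolutions stay in perverse degree zero, so $\phatstar^0$ can be replaced by $\hatstar^0$ throughout. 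The associativity constraint~\eqref{eqn:compatibility-hatstar-0} then yields
\[
\scR^\wedge \hatstar^0 \bigl( \tsZ(V) \hatstar^0 \scF \bigr) \cong \bigl( \scR^\wedge \hatstar \tsZ(V) \bigr) \hatstar^0 \scF,
\]
and Lemma~\ref{lem:tsZ-R} identifies the right-hand side with $(\scR^\wedge \otimes_\bk V) \hatstar^0 \scF$, which in turn is naturally isomorphic to $V \otimes_\bk (\scR^\wedge \hatstar^0 \scF)$ since $V$ is finite-dimensional. Applying $\Hom_{\sfD^0_{\Iwu,\Iwu}}(\Pi^0_{\Iwu,\Iwu}(\Xi^\wedge_!), -)$ and pulling $V$ out (again using finite-dimensionality) gives the desired isomorphism of $\bk$-vector spaces. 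The identifications \eqref{eqn:monodromy-convolution-1}--\eqref{eqn:monodromy-convolution-2} combined with Remark~\ref{rmk:action-Sigma-2} ensure that this isomorphism intertwines the $\scO(\fD)$-module structures, where $\scO(\fD)$ acts on $V \otimes_\bk \Phi_{\Iwu,\Iwu}(\scF)$ only through the second factor.

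The main obstacle will be the verification of $\bbJ_\fD$-equivariance. The coaction on the left-hand side arises from $\mathrm{coact}_{\scR^\wedge}$ of Lemma~\ref{lem:coprod-Rwedge}, while on the right-hand side it must be the diagonal coaction, combining the coaction on $\Phi_{\Iwu,\Iwu}(\scF)$ with the $\bbJ_\fD$-module structure on $V$ pulled back from the natural projection $\bbJ_\fD \hookrightarrow G^\vee_\bk \times \fD \to G^\vee_\bk$. To verify that these coincide, I would trace through the construction of the isomorphism in Lemma~\ref{lem:tsZ-R}: it ultimately comes from the canonical $G^\vee_\bk$-equivariant isomorphism between $\scO(G^\vee_\bk) \otimes_\bk V$ equipped with the diagonal $G^\vee_\bk$-action and the same object with the action only on the first factor, implemented by the Hopf-theoretic formula $f \otimes v \mapsto \sum f_{(1)} \otimes f_{(2)} \cdot v$. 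Combining this with the defining formula for $\mathrm{coact}_{\scR^\wedge}$ (itself built from the coproduct of $\scO(G^\vee_\bk)$) reduces the check to a standard Hopf algebra identity, entirely analogous to the one implicit in the proof of Lemma~\ref{lem:coprod-Rwedge}. This is where the bulk of the bookkeeping will lie, but no genuinely new input beyond the constructions of~\S\ref{ss:extension-Z-wedge} is needed.
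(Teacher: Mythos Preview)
Your proposal is correct and follows essentially the same route as the paper: reduce to the isomorphism $\scR^\wedge \phatstar^0 (\tsZ(V) \phatstar^0 \scF) \cong (\scR^\wedge \phatstar^0 \scF) \otimes_\bk V$ via Lemma~\ref{lem:tsZ-R}, then apply $\Hom(\Pi^0_{\Iwu,\Iwu}(\Xi^\wedge_!),-)$ and pull out the finite-dimensional $V$. The paper's proof is terser and does not spell out the $\bbJ_\fD$-equivariance check you outline, but your argument for it is sound and fills in exactly what the paper leaves implicit.
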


\begin{proof}
If we write $\scF=\Pi^0_{\Iwu,\Iwu}(\scG)$ with $\scG \in \mathsf{P}_{\Iwu,\Iwu}$ then by exactness of the functor $\tsZ(V) \hatstar (-)$ (see Theorem~\ref{thm:gaitsgory-mon}\eqref{it:gaitsgory-mon-5}) we have
\[
 \tsZ(V) \phatstar^0 \scF = \Pi^0_{\Iwu,\Iwu}(\tsZ(V) \hatstar \scG).
\]
Using Lemma~\ref{lem:tsZ-R} we deduce an isomorphism
\[
 \scR^\wedge \phatstar^0 (\tsZ(V) \phatstar^0 \scF) \cong (\scR^\wedge \phatstar^0 \scF) \otimes_\bk V,
\]
and then an isomorphism
\[
 \Phi_{\Iwu,\Iwu}(\tsZ(V) \phatstar^0 \scF) \cong \Phi_{\Iwu,\Iwu}(\scF) \otimes_\bk V,
\]
as desired.
\end{proof}

By exactness of the functor $\tsZ(V) \hatstar (-)$ (see Theorem~\ref{thm:gaitsgory-mon}\eqref{it:gaitsgory-mon-5}), for any $m \geq 1$ we have
\[
 \mathsf{C}_m^0(\tsZ(V)) \cong \tsZ(V) \hatstar^0 \mathsf{C}_m^0(\delta^\wedge).
\]
In view of Lemma~\ref{lem:Phi-ZV} and Proposition~\ref{prop:isom-Phi-U-Iu}, we deduce a canonical isomorphism
\[
 \Phi_{\Iwu,\Iwu}(\mathsf{C}_m^0(\tsZ(V))) \cong V \otimes_\bk \Phi_{U,U}(\mathsf{C}_m^0(\delta^\wedge)).
\]
Now if we denote by $\Delta (T^\vee_\bk)^{(m)}$ the spectrum of $\scO(T^\vee_\bk)/(\mathcal{J}^m \cdot \scO(T^\vee_\bk))$, embedded diagonally as a closed subscheme of $T^\vee_\bk \times_{T^\vee_\bk / \Wf} T^\vee_\bk$, it is clear that $\Phi_{U,U}(\mathsf{C}_m^0(\delta^\wedge)) \cong \scO_{\Delta (T^\vee_\bk)^{(m)}}$, so that we finally obtain an isomorphism
\begin{equation}
 \label{eqn:image-Phi-Z}
 \Phi_{\Iwu,\Iwu}(\mathsf{C}_m^0(\tsZ(V))) \cong V \otimes_\bk \scO_{\Delta (T^\vee_\bk)^{(m)}}.
\end{equation}
By Theorem~\ref{thm:gaitsgory-mon}\eqref{it:gaitsgory-mon-6} and Lemma~\ref{lem:image-monodromy}, under this identification the automorphism of the left-hand side induced by $\hat{\sm}_V$ corresponds to the tautological automorphism of the $\bbI_\Sigma$-module $V \otimes_\bk \scO_{\Delta (T^\vee_\bk)^{(m)}}$. By Remark~\ref{rmk:taut-autom}, this automorphism can also be obtained from the tautological automorphism of $V \otimes \scO_{G^\vee_\bk} \in \Coh^{G^\vee_\bk}(G^\vee_\bk)$ by pullback under the composition
\[
\Delta (T^\vee_\bk)^{(m)} \hookrightarrow T^\vee_\bk \times_{T^\vee_\bk / \Wf} T^\vee_\bk \hookrightarrow \mathrm{St}_{\mathrm{m}} \to G^\vee_\bk.
\]
(Here, $\mathrm{St}_{\mathrm{m}}$ is the multiplicative Steinberg variety of $G^\vee_\bk$, and the second embedding is provided by~\eqref{eqn:section-Stm}.)

%
%

\begin{proof}[Proof of Proposition~\ref{prop:Phi-Wak-antidom}]
\eqref{it:Phi-Wak-antidom}
We fix $\lambda \in X_*^+(T)$, and consider $V \in \Rep(G^\vee_\bk)$ which has highest weight $\lambda$ (in the sense of~\S\ref{ss:coh-Groth}). Recall from Theorem~\ref{thm:gaitsgory-mon}\eqref{it:fm-central-Wak} that the perverse sheaf $\tsZ(V)$ admits a Wakimoto filtration whose subquotients have as labels the weights of $V$. In particular there exists an embedding $\Wak^\wedge_{w_\circ(\lambda)} \hookrightarrow \tsZ(V)$ whose cokernel admits a Wakimoto filtration. By Proposition~\ref{prop:exactness-Cm}, the induced morphism
\begin{equation}
\label{eqn:embeddings-Wak-Z}
 \mathsf{C}^0_m(\Wak^\wedge_{w_\circ(\lambda)}) \to \mathsf{C}^0_m(\tsZ(V))
\end{equation}
is injective for any $m \geq 1$.
By Lemma~\ref{lem:properties-Wak}\eqref{it:properties-Wak-5} and Theorem~\ref{thm:gaitsgory-mon}\eqref{it:gaitsgory-mon-6}, the endomorphism $f:=\hat{\sm}_V-\mu_{\tsZ(V)}(e^{w_\circ(\lambda)} \otimes 1)$ of $\tsZ(V)$ vanishes on the image of $\Wak^\wedge_{w_\circ(\lambda)}$. We deduce that, for any $m \geq 1$, $\mathsf{C}^0_m(f)$ vanishes on the image of $\mathsf{C}^0_m(\Wak^\wedge_{w_\circ(\lambda)})$.

For any $m \geq 1$ we can consider the finitely generated $\scO((T^\vee_\bk \times_{T^\vee_\bk / \Wf} T^\vee_\bk)^{(m)})$-module 
\[
\Phi_{\Iwu,\Iwu}(\mathsf{C}_m^0(\Wak^\wedge_{w_\circ(\lambda)})),
\]
and, by exactness of $\Phi_{\Iwu,\Iwu}$, for any $m' \geq m$ we have
\[
 \scO((T^\vee_\bk \times_{T^\vee_\bk / \Wf} T^\vee_\bk)^{(m)}) \otimes_{\scO(T^\vee_\bk \times_{T^\vee_\bk / \Wf} T^\vee_\bk)} \Phi_{\Iwu,\Iwu}(\mathsf{C}_{m'}^0(\Wak^\wedge_{w_\circ(\lambda)})) \cong \Phi_{\Iwu,\Iwu}(\mathsf{C}_m^0(\Wak^\wedge_{w_\circ(\lambda)})).
\]
By~\cite[Proposition~7.2.9]{ega1}, it follows that
\[
 K_\lambda := \varprojlim_m \Phi_{\Iwu,\Iwu}(\mathsf{C}_m^0(\Wak^\wedge_{w_\circ(\lambda)}))
\]
is a finitely generated $\scO(\FN_{T^\vee_\bk \times_{T^\vee_\bk / \Wf} T^\vee_\bk}(\{(e,e)\}))$-module such that
\begin{multline}
\label{eqn:reduction-Klambda}
 \scO((T^\vee_\bk \times_{T^\vee_\bk / \Wf} T^\vee_\bk)^{(m)}) \otimes_{\scO(\FN_{T^\vee_\bk \times_{T^\vee_\bk / \Wf} T^\vee_\bk}(\{(e,e)\}))} K_\lambda \\
  \cong \Phi_{\Iwu,\Iwu}(\mathsf{C}_m^0(\Wak^\wedge_{w_\circ(\lambda)}))
\end{multline}
for any $m \geq 1$. From the embeddings~\eqref{eqn:embeddings-Wak-Z}, and by exactness of $\Phi_{\Iwu,\Iwu}$, we obtain an embedding
\[
 K_\lambda \hookrightarrow \varprojlim_m \Phi_{\Iwu,\Iwu}(\mathsf{C}^0_m(\tsZ(V))),
\]
i.e., using~\eqref{eqn:image-Phi-Z}, an embedding
\[
 K_\lambda \hookrightarrow V \otimes \scO(\Delta \FN_{T^\vee_\bk}(\{e\}))
\]
where $\Delta \FN_{T^\vee_\bk}(\{e\})$ is the image of the diagonal embedding of $\FN_{T^\vee_\bk}(\{e\})$ in the scheme $\FN_{T^\vee_\bk \times_{T^\vee_\bk / \Wf} T^\vee_\bk}(\{(e,e)\})$.

Now, consider the multiplicative Grothendieck resolution $\widetilde{G^\vee_\bk}$ of $G^\vee_\bk$, and
recall from Lemma~\ref{lem:kernel-end-free-coh}
that $V \otimes \scO_{\Delta \widetilde{G^\vee_\bk}}$ has a canonical endomorphism whose kernel identifies with $\scO_{\Delta \widetilde{G^\vee_\bk}}(w_\circ(\lambda))$. Restricting to the regular part, using the equivalence of Proposition~\ref{prop:rest-Steinberg-section} and Lemma~\ref{lem:line-bundles-J}, and then completing, we deduce that $V \otimes \scO(\Delta \FN_{T^\vee_\bk}(\{e\}))$ has a canonical endomorphism whose kernel is isomorphic to $\mathscr{M}^\wedge_{w_\circ(\lambda)}$. By construction and the comments just before the proof, this endomorphism vanishes on the image of $K_\lambda$, which provides an embedding
\[
 K_\lambda \hookrightarrow \mathscr{M}_{w_\circ(\lambda)}^\wedge.
\]
We will prove that this embedding is also surjective, hence an isomorphism, which will conclude the proof in view of~\eqref{eqn:reduction-Klambda}.

By~\cite[Corollaire~7.1.14]{ega1}, to prove this surjectivity it suffices to prove that the induced morphism
\[
 K_\lambda / (\mathcal{J} \cdot K_\lambda) \to \mathscr{M}^\wedge_{w_\circ(\lambda)} / (\mathcal{J} \cdot \mathscr{M}^\wedge_{w_\circ(\lambda)})
\]
is surjective. Now, by construction this morphism is injective, so that to conclude it suffices to prove that
\[
 \dim(K_\lambda / (\mathcal{J} \cdot K_\lambda))=\dim(\mathscr{M}^\wedge_{w_\circ(\lambda)} / (\mathcal{J} \cdot \mathscr{M}^\wedge_{w_\circ(\lambda)})).
\]
It is clear that the right-hand side equals $\dim(\scO(T^\vee_\bk)/(\mathcal{J} \cdot \scO(T^\vee_\bk)))$. For the left-hand side, we remark that
\[
 K_\lambda / (\mathcal{J} \cdot K_\lambda) = \Phi_{\Iwu,\Iwu}(\mathsf{C}_1^0(\Wak^\wedge_{w_\circ(\lambda)})).
\]
From the proof of Lemma~\ref{lem:flatness-D-N} one sees that the object $\mathsf{C}_1^0(\Wak^\wedge_{w_\circ(\lambda)})$ is an extension of $\dim(\scO(T^\vee_\bk)/(\mathcal{J} \cdot \scO(T^\vee_\bk)))$ many copies of the simple object $\Pi^0_{\Iw,\Iw}(\Delta^\Iw_{\st(w_\circ(\lambda))})$. (The fact that this object is simple follows from~\cite[Lemma~4.5]{brr}.) We deduce that
\[
 \dim(K_\lambda / (\mathcal{J} \cdot K_\lambda))=\dim(\scO(T^\vee_\bk)/(\mathcal{J} \cdot \scO(T^\vee_\bk))),
 \]
 as desired.
 
 \eqref{it:Phi-Delta-omega}
 Since $\tFl_{G,\omega}$ is closed the natural morphism $\Delta^\wedge_\omega \to \nabla^\wedge_\omega$ is an isomorphism; in the statement one can therefore replace $\Delta^\wedge_\omega$ by $\nabla^\wedge_\omega$. Let us write $\omega=w \st(\lambda)$ with $w \in \Wf$ and $\lambda \in X_*(T)$; then we have $\omega \st(-\lambda)=w$ with $\ell(\omega \st(\lambda)) = \ell(\omega) + \ell(\st(-\lambda))$, hence by Lemma~\ref{lem:convolution-stand-costand}\eqref{it:convolution-stand-costand-1}--\eqref{it:convolution-stand-costand-2} we have
 \[
  \nabla^\wedge_\omega \cong \nabla^\wedge_w \hatstar \Delta^\wedge_{\st(\lambda)}.
 \]
After fixing such an isomorphism, using Proposition~\ref{prop:monoidality-C0} we obtain for any $m \geq 1$ an isomorphism
\[
 \sfC_m^0(\nabla^\wedge_\omega) \cong \sfC_m^0(\nabla^\wedge_w) \pstar^0_{\Iwu} \sfC_m^0(\Delta^\wedge_{\st(\lambda)}).
\]
Applying~\eqref{eqn:monoidality-morphism}, one deduces a canonical morphism
\[
 \Phi_{\Iwu}(\sfC_m^0(\nabla^\wedge_w)) \oast \Phi_{\Iwu,\Iwu}(\sfC_m^0(\Delta^\wedge_{\st(\lambda)})) \to \Phi_{\Iwu,\Iwu}(\nabla^\wedge_\omega).
\]
  From~\eqref{eqn:formula-length} one sees that $\lambda$ is antidominant; using~\eqref{it:Phi-Wak-antidom}, we deduce isomorphisms
  \[
  \Phi_{\Iwu,\Iwu}(\mathsf{C}^0_m(\Delta^\wedge_{\st(\lambda)})) \cong \Phi_{\Iwu,\Iwu}(\mathsf{C}^0_m(\scW^\wedge_{\lambda})) \cong \mathsf{D}_m(\mathscr{M}^\wedge_{\st(\lambda)}).
  \]
  On the other hand, using Lemma~\ref{lem:PhiUU-cost} and Proposition~\ref{prop:isom-Phi-U-Iu} one obtains isomorphisms
  \[
   \Phi_{\Iwu,\Iwu}(\mathsf{C}^0_m(\Delta^\wedge_{w})) \cong \mathsf{D}_m(\mathscr{M}^\wedge_w)
  \]
  for any $m \geq 1$.
Using Lemma~\ref{lem:truncation-Rep-mon} and~\eqref{eqn:convolution-D}, we deduce that our construction provides morphisms
\[
 \mathsf{D}_m(\mathscr{M}^\wedge_\omega) \to \Phi_{\Iwu,\Iwu}(\mathsf{C}^0_m(\Delta^\wedge_\omega))
\]
defining a morphism of projective systems,
and to conclude it suffices to prove that these morphisms are isomorphisms. Computing as in~\eqref{it:Phi-Wak-antidom} one sees that the two modules involved have the same dimension, so that it suffices to prove that these morphisms are surjective. Now $\scO((T^\vee_\bk \times_{T^\vee_\bk / \Wf} T^\vee_\bk)^{(m)})$ is a local ring, with maximal ideal the image of the ideal $\cI$ considered in~\S\ref{ss:completions}. By Nakayama's lemma, to prove surjectivity it therefore suffices to prove that the induced morphism
\[
 \mathsf{D}_m(\mathscr{M}^\wedge_\omega) / \cI \cdot \mathsf{D}_m(\mathscr{M}^\wedge_\omega) \to \Phi_{\Iwu,\Iwu}(\mathsf{C}^0_m(\Delta^\wedge_\omega)) / \cI \cdot (\Phi_{\Iwu,\Iwu}(\mathsf{C}^0_m(\Delta^\wedge_\omega)))
\]
is an isomorphism. Here by exactness of $\Phi_{\Iwu,\Iwu}$ the right-hand side identifies with $\Phi_{\Iw,\Iw}(\Pi^0_{\Iw,\Iw}(\pi^\dag \Delta_\omega^{\Iw}))$, and the morphism is an isomorphism by monoidality of $\Phi_{\Iw,\Iw}$.
\end{proof}


\subsection{Fully faithfulness}
\label{ss:fully-faithfulness}

We will now prove that the functor $\Phi_{\Iwu,\Iwu}$ is fully faithful. The proof will rely on the following easy lemma.

\begin{lem}
\label{lem:ff-criterion}
 Let $\mathsf{A}$, $\mathsf{A}'$ be abelian categories, and let $F : \mathsf{A} \to \mathsf{A}'$ be an exact functor. Assume that every object in $\mathsf{A}$ has finite length, and that for any simple objects $M,M'$ in $\mathsf{A}$:
 \begin{itemize}
  \item the morphism
  \[
  \Hom_{\mathsf{A}}(M,M') \to \Hom_{\mathsf{A}'}(F(M),F(M'))
  \]
  induced by $F$ is an isomorphism;
  \item the morphism
  \[
  \Ext^1_{\mathsf{A}}(M,M') \to \Ext^1_{\mathsf{A}'}(F(M),F(M'))
  \]
  induced by $F$ is injective.
 \end{itemize}
Then $F$ is fully faithful.
\end{lem}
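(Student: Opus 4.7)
The plan is to argue by induction on the total length $\ell(M)+\ell(N)$ of $M,N \in \mathsf{A}$, proving \emph{simultaneously} the two statements
\begin{itemize}
\item[(i)] the canonical map $\Hom_{\mathsf{A}}(M,N) \to \Hom_{\mathsf{A}'}(F(M),F(N))$ is an isomorphism;
\item[(ii)] the canonical map $\Ext^1_{\mathsf{A}}(M,N) \to \Ext^1_{\mathsf{A}'}(F(M),F(N))$ is injective.
\end{itemize}
(It is (i) that gives fully faithfulness; propagating (ii) through the induction is necessary in order to carry out the five-lemma step for (i).) The base case, where both $M$ and $N$ are simple, is exactly the two hypotheses of the lemma.

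For the inductive step, assume (without loss of generality, after exchanging the roles of $M$ and $N$ which is handled symmetrically) that $\ell(N) \geq 2$. I would pick a short exact sequence $0 \to N_1 \to N \to N_2 \to 0$ in $\mathsf{A}$ with both $N_1$ and $N_2$ of length strictly smaller than $\ell(N)$, apply $\Hom_{\mathsf{A}}(M,-)$ to obtain the long exact sequence
\[
0 \to \Hom(M,N_1) \to \Hom(M,N) \to \Hom(M,N_2) \to \Ext^1(M,N_1) \to \Ext^1(M,N) \to \Ext^1(M,N_2),
\]
apply the same to $F(M)$ and $F(N_i)$ in $\mathsf{A}'$ (using exactness of $F$ to identify $F(0 \to N_1 \to N \to N_2 \to 0)$ with a short exact sequence in $\mathsf{A}'$), and compare the two sequences via the natural transformation induced by $F$. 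For (i), the five lemma applied to the first five terms uses that $\Hom(M,N_1)$ and $\Hom(M,N_2)$ induce isomorphisms (by (i) at smaller length) and that $\Ext^1(M,N_1)$ induces an injection (by (ii) at smaller length); this yields the isomorphism at $\Hom(M,N)$. For (ii), the four lemma applied to the last four terms uses that $\Hom(M,N_2)$ is iso and that both $\Ext^1(M,N_1)$ and $\Ext^1(M,N_2)$ are injective (all three by induction) to conclude injectivity at $\Ext^1(M,N)$.

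The only subtlety—and what I would consider the main technical point rather than a real obstacle—is making the simultaneous induction self-supporting: one must check that at each invocation the cited statements involve strictly smaller length, which is clear because $N_1$ and $N_2$ both have length $< \ell(N)$ while $M$ is unchanged, so $\ell(M)+\ell(N_i)<\ell(M)+\ell(N)$. The symmetric case $\ell(M)\geq 2$ is handled by the dual argument, choosing a short exact sequence $0 \to M_1 \to M \to M_2 \to 0$ and applying $\Hom(-,N)$ in place of $\Hom(M,-)$. No further input from the categories $\mathsf{A},\mathsf{A}'$ beyond the hypotheses of the lemma is needed, so the argument is entirely formal.
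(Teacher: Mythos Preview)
Your proposal is correct and follows exactly the approach sketched in the paper: simultaneous induction on the total length, using the five-lemma for the $\Hom$ statement and the four-lemma for the $\Ext^1$ injectivity. You have in fact spelled out the details that the paper leaves implicit.
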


\begin{proof}
 One proves, by induction on the sum of the lengths of the objects involved and using the four- and five-lemmas, that for any objects $M,M' \in \mathsf{A}$ the morphism
\[ 
 \Hom_{\mathsf{A}}(M,M') \to \Hom_{\mathsf{A}'}(F(M),F(M')),
 \]
 resp.
 \[
 \Ext^1_{\mathsf{A}}(M,M') \to \Ext^1_{\mathsf{A}'}(F(M),F(M')),
 \]
 is an isomorphism, resp.~is injective, which implies the claim.
\end{proof}

In order to use Lemma~\ref{lem:ff-criterion} we will need to describe some groups of extensions in $\sfP^0_{\Iwu,\Iwu}$, which we will relate to some groups of extensions in $\sfP^0_{\Iwu,\Iw}$.
First, the forgetful functor
\[
\For^{\Iw}_{\Iwu} : \D_{\Iw,\Iw} \to \D_{\Iwu,\Iw}
\]
induces a fully faithful functor $\sfP_{\Iw,\Iw}^0 \to \sfP_{\Iwu,\Iw}^0$; we deduce a canonical injective morphism
\begin{equation}
\label{eqn:Ext1-1}
\Ext^1_{\sfP_{\Iw,\Iw}^0}(\delta^0,\delta^0) \to \Ext^1_{\sfP_{\Iwu,\Iw}^0}(\For^{\Iw,0}_{\Iwu}(\delta^0),\For^{\Iw,0}_{\Iwu}(\delta^0)).
\end{equation}
On the other hand, we also have the similar quotient category $\sfP^0_{U,B}$ constructed out of the category $\sfP_{U,B}$ considered in the proof of Lemma~\ref{lem:Xi-projective}. The closed embedding $G/B \to \Fl_G$ induces a fully faithful functor $\sfP^0_{U,B} \to \sfP^0_{\Iwu,\Iw}$, and $\For^{\Iw,0}_{\Iwu}(\delta^0)$ is the image under this functor of a canonical object of $\sfP^0_{U,B}$ denoted in the same way. We deduce another canonical injective morphism
\begin{equation}
\label{eqn:Ext1-2}
\Ext^1_{\sfP_{U,B}^0}(\For^{\Iw,0}_{\Iwu}(\delta^0),\For^{\Iw,0}_{\Iwu}(\delta^0)) \to \Ext^1_{\sfP_{\Iwu,\Iw}^0}(\For^{\Iw,0}_{\Iwu}(\delta^0),\For^{\Iw,0}_{\Iwu}(\delta^0)).
\end{equation}

\begin{lem}
\label{lem:Ext1-DIuI}
The morphisms~\eqref{eqn:Ext1-1} and~\eqref{eqn:Ext1-2} induce an isomorphism of $\bk$-vector spaces
\[
\Ext^1_{\sfP_{\Iw,\Iw}^0}(\delta^0,\delta^0) \oplus \Ext^1_{\sfP_{U,B}^0}(\For^{\Iw,0}_{\Iwu}(\delta^0),\For^{\Iw,0}_{\Iwu}(\delta^0)) \simto \Ext^1_{\sfP_{\Iwu,\Iw}^0}(\For^{\Iw,0}_{\Iwu}(\delta^0),\For^{\Iw,0}_{\Iwu}(\delta^0)).
\]
\end{lem}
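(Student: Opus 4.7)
Since the preceding discussion establishes that each of \eqref{eqn:Ext1-1} and \eqref{eqn:Ext1-2} is already injective, the plan is to prove that the two images are disjoint and that their sum exhausts the right-hand side. The strategy will exploit the recollement associated to the closed embedding $i \colon G/B \hookrightarrow \Fl_G$ with open complement $j$, together with the known equivalence $\Phi_{\Iw,\Iw}$ relating $\sfP^0_{\Iw,\Iw}$ to $\Rep(Z_{G^\vee_\bk}(\su))$.

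For disjointness of the images, the first step is to observe that an extension in the image of~\eqref{eqn:Ext1-2}, being supported on $G/B$, becomes trivial after application of $j^*$. Conversely, via $\Phi_{\Iw,\Iw}$ the group $\Ext^1_{\sfP^0_{\Iw,\Iw}}(\delta^0,\delta^0)$ identifies with $\mathsf{H}^1(Z_{G^\vee_\bk}(\su), \bk)$, and a nonzero class there can be detected by its monodromy action on the image of a central sheaf $\tsZ(V)$ or on a free-monodromic Wakimoto sheaf $\Wak^\wedge_\lambda$ with $\lambda$ nonzero, both of which have support on orbits outside $G/B$. Hence $j^*$ distinguishes these two sources of extensions, and the images intersect trivially.

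For surjectivity, given an extension class $\xi \in \Ext^1_{\sfP^0_{\Iwu,\Iw}}(\For^{\Iw,0}_\Iwu(\delta^0),\For^{\Iw,0}_\Iwu(\delta^0))$ represented by a perverse sheaf $\scF$, we extract its $G/B$-component by applying a perverse variant of $i^*$, obtaining a class $\xi_{G/B}$ in the image of~\eqref{eqn:Ext1-2}. The residual class $\xi - \iota(\xi_{G/B})$ then has vanishing $i^*$-contribution; a dévissage using the monodromy structure of $\sfD^\wedge_{\Iwu,\Iwu}$ (Lemma~\ref{lem:subcat-completion-monodromy}), together with the characterization of the essential image of $\For^{\Iw,0}_\Iwu$ in terms of trivial left $T$-monodromy, should show that this residual class lifts to a genuinely $\Iw$-equivariant extension, hence comes from~\eqref{eqn:Ext1-1}.

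The main obstacle is the surjectivity step, particularly showing that the residual class lifts to $\sfP^0_{\Iw,\Iw}$. A priori, on the affine simple reflection orbits $\Fl_{G,s}$ for $s \in S \smallsetminus \Sf$, the $\Iwu$-equivariant framework permits nontrivial left $T$-monodromy that is absent in the $\Iw$-equivariant setting. Overcoming this will require showing that any such potential monodromy is automatically annihilated upon passing to the quotient by $\sfP^+_{\Iwu,\Iw}$, essentially because the simple objects $\For^{\Iw}_{\Iwu}(\IC_w)$ with $\ell(w) > 0$ that would carry the nontrivial monodromy information are precisely those killed in the quotient.
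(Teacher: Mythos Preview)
Your proposal has genuine gaps in both the disjointness and surjectivity steps.

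For disjointness, the recollement functor $j^*$ does not interact well with the Serre quotient $\sfP^0_{\Iwu,\Iw}$: the $\Ext^1$ classes under consideration live in the Verdier quotient $\sfD^0_{\Iwu,\Iw}$, and a class in $\Hom_{\sfD^0_{\Iwu,\Iw}}(\delta^0,\delta^0[1])$ is represented by a roof whose middle object need not have any controlled support. Your claim that a nonzero class coming from $\sfP^0_{\Iw,\Iw}$ can be ``detected by monodromy on a Wakimoto sheaf with $\lambda$ nonzero'' is not a well-defined operation on $\Ext^1$ classes in the quotient. Note also that in the unquotiented category $\sfP_{\Iwu,\Iw}$, the object $\For^{\Iw}_{\Iwu}(\delta)$ is both standard and costandard for $e$, so $\Ext^1_{\sfP_{\Iwu,\Iw}}(\delta,\delta)=0$; all the extensions you are trying to split arise only \emph{after} passage to the quotient, so support arguments on $\Fl_G$ are not directly applicable.

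For surjectivity, you yourself identify the main obstacle and do not overcome it. The characterization of the essential image of $\For^{\Iw,0}_{\Iwu}$ in terms of monodromy is delicate in the quotient, and you give no mechanism to show that the residual class has trivial left $T$-monodromy.

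The paper avoids both difficulties by using the Iwahori--Whittaker averaging functor $\Av_{\IW}^0 : \sfP^0_{\Iwu,\Iw} \to \sfP^0_{\IW,\Iw}$. Since $\Av_{\IW}^0 \circ \For^{\Iw,0}_{\Iwu}$ is an equivalence, the induced map on $\Ext^1$ is a retraction onto the image of~\eqref{eqn:Ext1-1}; this gives disjointness and reduces surjectivity to identifying the kernel of this retraction with the image of~\eqref{eqn:Ext1-2}. By adjunction $(\Av^0_{\Iwu,!},\Av^0_{\IW})$, the retraction is precomposition with the surjection $\Pi^0_{\Iwu,\Iw}(\Xi_!) \twoheadrightarrow \For^{\Iw,0}_{\Iwu}(\delta^0)$. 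Comparing the resulting long exact sequence in $\sfP^0_{\Iwu,\Iw}$ with its analogue in $\sfP^0_{U,B}$ (where $\Pi^0_{\Iwu,\Iw}(\Xi_!)$ is projective, so the $\Ext^1$ term vanishes) identifies the kernel with $\Ext^1_{\sfP^0_{U,B}}(\delta^0,\delta^0)$ as required. The key input you are missing is this Whittaker retraction: it replaces the vague support/monodromy heuristics with an actual functor that splits off the $\Iw$-equivariant part.
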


\begin{proof}
Recall from the proof of Lemma~\ref{lem:exactness-Phi} the exact functor
\[
\Av_\IW^0 : \sfP_{\Iwu,\Iw}^0 \to \sfP_{\IW,\Iw}^0
\]
such that $\Av_{\IW}^0 \circ \For^{\Iw,0}_{\Iwu}$ induces an equivalence $\sfP_{\Iw,\Iw}^0 \simto \sfP_{\IW,\Iw}^0$. The functor $\Av_\IW^0$ induces a morphism
\begin{multline}
\label{eqn:Ext1-3}
\Ext^1_{\sfP_{\Iwu,\Iw}^0}(\For^{\Iw,0}_{\Iwu}(\delta^0),\For^{\Iw,0}_{\Iwu}(\delta^0)) \to \\
 \Ext^1_{\sfP_{\IW,\Iw}^0}(\Av_\IW^0(\For^{\Iw,0}_{\Iwu}(\delta^0)),\Av_\IW^0(\For^{\Iw,0}_{\Iwu}(\delta^0)))
\end{multline}
whose composition with~\eqref{eqn:Ext1-1} is an isomorphism; to prove the lemma it therefore suffices to prove that its kernel is the image of~\eqref{eqn:Ext1-2}.

Now, recall also (from the same proof) the functor
\[
\Av^0_{\Iwu,!} : \D_{\IW,\Iw}^0 \to \D_{\Iwu,\Iw}^0
\]
which is left adjoint to $\Av_\IW^0 : \D_{\Iwu,\Iw}^0 \to \D_{\IW,\Iw}^0$, and which satisfies
\[
\Av^0_{\Iwu,!} \circ \Av_\IW^0 \circ \For^{\Iw,0}_{\Iwu} (\delta^0) = \Pi^0_{\Iwu,\Iw}(\Xi_!).
\]
By adjunction and standard properties of t-structures we have
\begin{multline*}
\Ext^1_{\sfP_{\IW,\Iw}^0}(\Av_\IW^0(\For^{\Iw,0}_{\Iwu}(\delta^0)),\Av_\IW^0(\For^{\Iw,0}_{\Iwu}(\delta^0))) = \\
\Hom_{\D_{\IW,\Iw}^0}(\Av_\IW^0(\For^{\Iw,0}_{\Iwu}(\delta^0)),\Av_\IW^0(\For^{\Iw,0}_{\Iwu}(\delta^0))[1]) \cong \\
\Hom_{\D_{\Iwu,\Iw}^0}(\Pi^0_{\Iwu,\Iw}(\Xi_!),\For^{\Iw,0}_{\Iwu}(\delta^0)[1]),
\end{multline*}
and the morphism~\eqref{eqn:Ext1-3} identifies with the morphism
\[
\Hom_{\sfD_{\Iwu,\Iw}^0}(\For^{\Iw,0}_{\Iwu}(\delta^0),\For^{\Iw,0}_{\Iwu}(\delta^0)[1]) \to \Hom_{\D_{\Iwu,\Iw}^0}(\Pi^0_{\Iwu,\Iw}(\Xi_!),\For^{\Iw,0}_{\Iwu}(\delta^0)[1])
\]
induced by the natural surjection $\Pi^0_{\Iwu,\Iw}(\Xi_!) \twoheadrightarrow \For^{\Iw,0}_{\Iwu}(\delta^0)$. If we denote by $\mathscr{K}$ the kernel of this morphism, we have a long exact sequence
\begin{multline}
\label{eqn:les-Ext1}
0 \to \Hom_{\sfP_{\Iwu,\Iw}^0}(\For^{\Iw,0}_{\Iwu}(\delta^0),\For^{\Iw,0}_{\Iwu}(\delta^0)) \to \Hom_{\sfP_{\Iwu,\Iw}^0}(\Pi^0_{\Iwu,\Iw}(\Xi_!),\For^{\Iw,0}_{\Iwu}(\delta^0)) \\
\xrightarrow{f} \Hom_{\sfP_{\Iwu,\Iw}^0}(\mathscr{K},\For^{\Iw,0}_{\Iwu}(\delta^0))
\to \Ext^1_{\sfP_{\Iwu,\Iw}^0}(\For^{\Iw,0}_{\Iwu}(\delta^0),\For^{\Iw,0}_{\Iwu}(\delta^0)) \\
 \to \Ext^1_{\sfP_{\Iwu,\Iw}^0}(\Pi^0_{\Iwu,\Iw}(\Xi_!),\delta^0) \to \cdots,
\end{multline}
which identifies the kernel of~\eqref{eqn:Ext1-3} with $\mathrm{coker}(f)$.
But since all the objects involved belong to the full subcategory $\sfP_{U,B}^0$, we also have a similar long exact sequence
\begin{multline*}
0 \to \Hom_{\sfP_{U,B}^0}(\For^{\Iw,0}_{\Iwu}(\delta^0),\For^{\Iw,0}_{\Iwu}(\delta^0)) \to \Hom_{\sfP_{U,B}^0}(\Pi^0_{\Iwu,\Iw}(\Xi_!),\For^{\Iw,0}_{\Iwu}(\delta^0)) \\
 \xrightarrow{g} \Hom_{\sfP_{U,B}^0}(\mathscr{K},\For^{\Iw,0}_{\Iwu}(\delta^0))
\to \Ext^1_{\sfP_{U,B}^0}(\For^{\Iw,0}_{\Iwu}(\delta^0),\For^{\Iw,0}_{\Iwu}(\delta^0)) \\
 \to \Ext^1_{\sfP_{U,B}^0}(\Pi^0_{\Iwu,\Iw}(\Xi_!),\For^{\Iw,0}_{\Iwu}(\delta^0)) \to \cdots.
\end{multline*}
Since $\Xi_!$ is the projective cover of $\For^{\Iw}_{\Iwu}(\delta_{\Fl})$ in $\sfP_{U,B}$ (see the proof of Lemma~\ref{lem:Xi-projective}), $\Pi^0_{\Iwu,\Iw}(\Xi_!)$ is the projective cover of $\For^{\Iw,0}_{\Iwu}(\delta^0)$ in $\sfP^0_{U,B}$, which implies that
\[
\Ext^1_{\sfP_{U,B}^0}(\Pi^0_{\Iwu,\Iw}(\Xi_!),\For^{\Iw,0}_{\Iwu}(\delta^0))=0,
\]
and therefore allows to identify $\Ext^1_{\sfP_{U,B}^0}(\For^{\Iw,0}_{\Iwu}(\delta^0),\For^{\Iw,0}_{\Iwu}(\delta^0))$ with $\mathrm{coker}(g)$. By fully faithfulness of the functor $\sfP_{U,B}^0 \to \sfP_{\Iwu,\Iw}^0$ the domains and codomains of $f$ and $g$ identify, in a way compatible with these morphisms, so that their cokernels are canonically isomorphic.

Gathering these identifications we obtain an identification of the kernel of~\eqref{eqn:Ext1-3} with $\Ext^1_{\sfP_{U,B}^0}(\For^{\Iw,0}_{\Iwu}(\delta^0),\For^{\Iw,0}_{\Iwu}(\delta^0))$; it is clear by construction that this identification is induced by the morphism~\eqref{eqn:Ext1-2}, which finishes the proof.
\end{proof}

The other ingredient we will need is a way to ``pass from $\delta^0$ to $\delta^0_\omega$,'' which will be provided by the following lemma. (See~\S\ref{ss:regular-quotient} for the definition of the object $\delta^0_\omega$.) Here, given a simple object $\scF$ in $\sfP^0_{\Iwu,\Iwu}$ we will denote by $\langle \scF \rangle_{\mathrm{Serre}}$ the Serre subcategory generated by $\scF$, i.e.~the full subcategory whose objects are those all of whose composition factors are isomorphic to $\scF$.

\begin{lem}
\label{lem:convolution-delta-omega}
 For any $\omega \in \Omega$, the equivalence
 \[
  \Delta^\wedge_{\omega} \hatstar^0 (-) : \sfD^0_{\Iwu,\Iwu} \simto \sfD^0_{\Iwu,\Iwu}
 \]
restricts to an equivalence $\langle \pi^\dag_0 \delta^0 \rangle_{\mathrm{Serre}} \simto \langle \pi^\dag_0 \delta_\omega^0 \rangle_{\mathrm{Serre}}$. Moreover the following diagram commutes:
\[
 \xymatrix@C=2cm{
 \langle \pi^\dag_0 \delta^0 \rangle_{\mathrm{Serre}} \ar[d]_-{\Phi_{\Iwu,\Iwu}} \ar[r]^-{\Delta^\wedge_{\omega} \hatstar^0 (-)} & \langle \pi^\dag_0 \delta_\omega^0 \rangle_{\mathrm{Serre}} \ar[d]^-{\Phi_{\Iwu,\Iwu}} \\
 \Rep_{0}(\bbI_\Sigma) \ar[r]^{\mathscr{M}^\wedge_\omega \circledast (-)} & \Rep_{0}(\bbI_\Sigma).
 }
\]
\end{lem}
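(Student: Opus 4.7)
For the first statement, note that $\ell(\omega)=0$ implies the stratum $\Fl_{G,\omega}$ is a single closed point, so $\widetilde{\jmath}_\omega$ is a closed immersion and $\Delta^\wedge_\omega \cong \nabla^\wedge_\omega$; consequently, by Corollary~\ref{cor:exactness-convolution-D-N} together with Lemma~\ref{lem:convolution-stand-costand}\eqref{it:convolution-stand-costand-1}, the functor $\Delta^\wedge_\omega \hatstar (-)$ is a t-exact equivalence of $\sfD^\wedge_{\Iwu,\Iwu}$ with t-exact quasi-inverse $\Delta^\wedge_{\omega^{-1}} \hatstar (-)$, and the induced functor $\Delta^\wedge_\omega \hatstar^0 (-)$ on $\sfD^0_{\Iwu,\Iwu}$ inherits these properties. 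Combining~\eqref{eqn:conv-formula-2}, \eqref{eqn:conv-formula-3} and~\eqref{eqn:pi-DN-wedge}, together with $\Delta^\Iw_\omega = \IC_\omega$ (since $\ell(\omega)=0$), one obtains
\[
\Delta^\wedge_\omega \hatstar \pi^\dag(\For^{\Iw}_{\Iwu}(\IC_e)) \cong \pi^\dag(\For^{\Iw}_{\Iwu}(\IC_\omega)),
\]
so that $\Delta^\wedge_\omega \hatstar^0 \pi^\dag_0(\delta^0) \cong \pi^\dag_0(\delta^0_\omega)$. The t-exact equivalence on hearts thus sends the simple generator of $\langle \pi^\dag_0 \delta^0 \rangle_{\mathrm{Serre}}$ to that of $\langle \pi^\dag_0 \delta^0_\omega \rangle_{\mathrm{Serre}}$, and hence restricts to the asserted equivalence.

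The commutativity of the diagram rests on promoting centrality of $\tsZ$ to a canonical isomorphism $\scR^\wedge \hatstar \Delta^\wedge_\omega \simto \Delta^\wedge_\omega \hatstar \scR^\wedge$ in ${\ind}\sfP^\wedge_{\Iwu,\Iwu}$. I would derive this by taking the isomorphisms $\widehat{\sigma}_{\cA,\Delta^\wedge_\omega}$ of Theorem~\ref{thm:gaitsgory-mon}\eqref{it:gaitsgory-mon-4}, natural in $\cA \in \sfD_{L^+G,L^+G}$, extending them to the ind-object $\scO(G^\vee_\bk)$, and passing to the quotient by the left ideal defining $\scR^\wedge$ from $\tsZ(\scO(G^\vee_\bk)) \otimes_{\scO(G^\vee_\bk)} \scO(\Sigma)$; this descent is permitted since $\Delta^\wedge_\omega \hatstar (-)$ is exact. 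Combining this commutation with the associativity isomorphisms~\eqref{eqn:compatibility-hatstar-0}--\eqref{eqn:compatibility-hatstar-other-0} and the t-exactness of $\Delta^\wedge_\omega \hatstar^0 (-)$ produces, for every $\scF \in \sfP^0_{\Iwu,\Iwu}$, a natural isomorphism
\[
\scR^\wedge \phatstar^0 (\Delta^\wedge_\omega \hatstar^0 \scF) \simto \Delta^\wedge_\omega \hatstar^0 (\scR^\wedge \phatstar^0 \scF).
\]
Applying $\Hom_{\sfD^0_{\Iwu,\Iwu}}(\Pi^0_{\Iwu,\Iwu}(\Xi^\wedge_!), -)$ then identifies $\Phi_{\Iwu,\Iwu}(\Delta^\wedge_\omega \hatstar^0 \scF)$ canonically with the vector space $\Hom(\Pi^0_{\Iwu,\Iwu}(\Xi^\wedge_!), \Delta^\wedge_\omega \hatstar^0 (\scR^\wedge \phatstar^0 \scF))$.

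The remaining and most delicate point is to match this vector space with $\mathscr{M}^\wedge_\omega \circledast \Phi_{\Iwu,\Iwu}(\scF)$ as $\bbJ_\fD$-modules. My strategy is to observe first that both expressions are exact functors of $\scF \in \langle \pi^\dag_0 \delta^0 \rangle_{\mathrm{Serre}}$ (using Lemma~\ref{lem:convolution-0-exact} and Proposition~\ref{prop:Phi-exact} on one side, and invertibility of $\mathscr{M}^\wedge_\omega$ in $\Rep(\bbJ_\fD^\wedge)$ combined with exactness of $\Phi_{\Iwu,\Iwu}$ on the other), so that it suffices to construct the isomorphism on the unique simple object $\pi^\dag_0 \delta^0$ and check naturality. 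In the simple case, the left-hand side equals $\Phi_{\Iw,\Iw}(\delta^0_\omega)$ by Step~1 and Lemma~\ref{lem:exactness-Phi}, while the right-hand side equals the fiber $\mathscr{M}^\wedge_\omega \otimes_{\scO(\bD)} \bk_{(e,e)}$, regarded as a $Z_{G^\vee_\bk}(\su)$-module. The main obstacle is to canonically identify these two $Z_{G^\vee_\bk}(\su)$-representations, which I propose to do by invoking Proposition~\ref{prop:Phi-Wak-antidom}\eqref{it:Phi-Delta-omega} at $m=1$: the composition factors of $\sfC^0_1(\Delta^\wedge_\omega)$ (all copies of $\pi^\dag_0 \delta^0_\omega$) are matched under $\Phi_{\Iwu,\Iwu}$ with the composition factors of $\mathsf{D}_1(\mathscr{M}^\wedge_\omega) = \mathscr{M}^\wedge_\omega / \cJ \cdot \mathscr{M}^\wedge_\omega$, and isolating any one such factor provides the desired comparison at the fiber over $(e,e)$. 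The plan concludes by tracing the comultiplication of $\scO(G^\vee_\bk)$ through both the construction of $\mathrm{coact}_{\scR^\wedge}$ and that of the $\bbJ_\fD$-action on $\mathscr{M}_\omega$ in~\S\ref{ss:representations-Iadj}, verifying that this simple-case identification is compatible with the commutation isomorphism of the previous paragraph and hence propagates to all objects of $\langle \pi^\dag_0 \delta^0 \rangle_{\mathrm{Serre}}$.
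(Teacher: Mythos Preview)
Your treatment of the first claim is fine and matches the paper. The gap is in the second claim: you never construct a \emph{natural transformation} between the two functors $\scF \mapsto \Phi_{\Iwu,\Iwu}(\Delta^\wedge_\omega \hatstar^0 \scF)$ and $\scF \mapsto \mathscr{M}^\wedge_\omega \circledast \Phi_{\Iwu,\Iwu}(\scF)$. Your centrality argument yields an isomorphism $\scR^\wedge \phatstar^0 (\Delta^\wedge_\omega \hatstar^0 \scF) \simto \Delta^\wedge_\omega \hatstar^0 (\scR^\wedge \phatstar^0 \scF)$, hence an identification of $\Phi_{\Iwu,\Iwu}(\Delta^\wedge_\omega \hatstar^0 \scF)$ with $\Hom(\Pi^0_{\Iwu,\Iwu}(\Xi^\wedge_!), \Delta^\wedge_\omega \hatstar^0 (\scR^\wedge \phatstar^0 \scF))$; but there is no mechanism to pull $\Delta^\wedge_\omega \hatstar^0$ across $\Hom(\Pi^0_{\Iwu,\Iwu}(\Xi^\wedge_!), -)$ and obtain a functorial map to $\mathscr{M}^\wedge_\omega \circledast \Phi_{\Iwu,\Iwu}(\scF)$. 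Knowing that two exact functors agree on the unique simple is not enough to identify them, and ``checking naturality'' is vacuous when you have only defined the map on a single object.

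The paper avoids this by a truncation trick: for $\scF \in \langle \pi^\dag_0 \delta^0 \rangle_{\mathrm{Serre}}$, choose $m$ with $\cJ^m \cdot \scF = 0$, and observe (via the right exactness in Lemma~\ref{lem:t-exactness-conv-tFl}) that $\Delta^\wedge_\omega \hatstar^0 \scF \cong \mathsf{C}_m^0(\Delta^\wedge_\omega) \pstar^0_{\Iwu} \scF$, a convolution of objects \emph{inside} $\sfP^0_{\Iwu,\Iwu}$. Now the already-constructed monoidality morphism~\eqref{eqn:monoidality-morphism} furnishes a genuine natural transformation $\Phi_{\Iwu,\Iwu}(\mathsf{C}_m^0(\Delta^\wedge_\omega)) \circledast \Phi_{\Iwu,\Iwu}(\scF) \to \Phi_{\Iwu,\Iwu}(\Delta^\wedge_\omega \hatstar^0 \scF)$, and Proposition~\ref{prop:Phi-Wak-antidom}\eqref{it:Phi-Delta-omega} identifies the source with $\mathscr{M}^\wedge_\omega \circledast \Phi_{\Iwu,\Iwu}(\scF)$. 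Only then does the five-lemma reduction to the simple object (where monoidality of $\Phi_{\Iw,\Iw}$ applies) become legitimate.
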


\begin{proof}
 Using~\eqref{eqn:conv-formula-2}--\eqref{eqn:conv-formula-3} we see that
 \[
  \Delta^\wedge_{\omega} \hatstar^0 \pi^\dag_0 \delta^0 \cong \pi^\dag_0 \delta^0_\omega,
 \]
which implies the first claim. In order to prove the second claim, we consider some object $\scF$ in $\langle \pi^\dag_0 \delta^0 \rangle_{\mathrm{Serre}}$, and choose $m \geq 1$ such that $\cJ^m$ acts trivially on $\scF$.
Then using Lemma~\ref{lem:t-exactness-conv-tFl} we obtain a canonical isomorphism
\[
 \Delta^\wedge_{\omega} \hatstar^0 \scF \cong \mathsf{C}_m^0 (\Delta^\wedge_\omega) \pstar^0_{\Iwu} \scF.
\]
Now~\eqref{eqn:monoidality-morphism} provides a functorial morphism
\[
 \Phi_{\Iwu,\Iwu}(\mathsf{C}_m^0(\Delta_\omega^\wedge)) \circledast \Phi_{\Iwu,\Iwu}(\scF) \to \Phi_{\Iwu,\Iwu}(\mathsf{C}_m^0 (\Delta^\wedge_\omega) \pstar^0_{\Iwu} \scF).
\]
Using the identification above and Proposition~\ref{prop:Phi-Wak-antidom}\eqref{it:Phi-Delta-omega}, this morphism can be interpreted as a functorial morphism
\[
 \mathscr{M}_\omega^\wedge \circledast \Phi_{\Iwu,\Iwu}(\scF) \to \Phi_{\Iwu,\Iwu}(\Delta^\wedge_\omega \hatstar^0 \scF),
\]
which does not depend on the choice of $m$.
When $\scF=\pi^\dag_0 \delta^0$, this morphism is an isomorphism by Proposition~\ref{prop:Phi-exact} and monoidality of $\Phi_{\Iw,\Iw}$. By the five lemma it is then an isomorphism for any $\scF$, which concludes the proof.
\end{proof}

We can finally prove the desired property.

\begin{prop}
\label{prop:Phi-ff}
The functor $\Phi_{\Iwu,\Iwu}$ is fully faithful.
\end{prop}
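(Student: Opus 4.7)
The plan is to verify the hypotheses of Lemma~\ref{lem:ff-criterion} for the exact functor $\Phi_{\Iwu,\Iwu}$ provided by Proposition~\ref{prop:Phi-exact}. The simple objects of $\sfP^0_{\Iwu,\Iwu}$ are the perverse sheaves $\pi^\dag_0(\delta^0_\omega)$ for $\omega \in \Omega$. Using Lemma~\ref{lem:convolution-delta-omega}, the autoequivalences $\Delta^\wedge_\omega \hatstar^0 (-)$ on $\sfP^0_{\Iwu,\Iwu}$ correspond under $\Phi_{\Iwu,\Iwu}$ to $\scM^\wedge_\omega \circledast (-)$ on $\Rep_0(\bbJ_\fD)$, and both are autoequivalences since $\scM^\wedge_\omega \circledast \scM^\wedge_{\omega^{-1}} \cong \scM^\wedge_e$. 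This reduces both verifications to testing the induced maps on $\Hom$ and $\Ext^1$ between pairs of the form $(\pi^\dag_0 \delta^0, \pi^\dag_0 \delta^0_\omega)$ for $\omega \in \Omega$.

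The $\Hom$ condition follows immediately from the left diagram in~\eqref{eqn:diag-compatibility-Phi} and the fact that $\Phi_{\Iw,\Iw}$ is an equivalence sending the mutually non-isomorphic simples $\delta^0_\omega$ to mutually non-isomorphic simples of $\Rep(Z_{G^\vee_\bk}(\su))$: both $\Hom$-spaces are $\bk$ if $\omega=e$ and $0$ otherwise, and the induced map identifies the respective identity endomorphisms.

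The main obstacle is the injectivity on $\Ext^1$. My plan is to establish a decomposition of $\Ext^1_{\sfP^0_{\Iwu,\Iwu}}(\pi^\dag_0 \delta^0, \pi^\dag_0 \delta^0_\omega)$ into three pieces, parallel to but refining Lemma~\ref{lem:Ext1-DIuI}: one piece $A_\omega$ coming from $\Ext^1_{\sfP^0_{\Iw,\Iw}}(\delta^0,\delta^0_\omega)$ via $\pi^\dag_0$, one piece $B_\omega$ coming from $\Ext^1_{\sfP^0_{U,U}}$ via the embedding~\eqref{eqn:embedding-PUU-PII}, and a third piece $C_\omega$ controlled by the right-monodromy action of $\scO(T^\vee_\bk)$, which is the new ingredient relative to the $\Iw$-equivariant setting. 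On the coherent side, $\Ext^1_{\Rep_0(\bbJ_\fD)}$ between the corresponding simples admits a parallel decomposition: contributions from representation-theoretic extensions of $Z_{G^\vee_\bk}(\su)$-modules and from infinitesimal deformations along each of the two projections $\bD \to T^\vee_\bk$. I would then check that $\Phi_{\Iwu,\Iwu}$ sends each summand injectively to the corresponding coherent piece, using fully faithfulness of $\Phi_{\Iw,\Iw}$ for $A_\omega$, Theorem~\ref{thm:reg-quotient-finite} together with Proposition~\ref{prop:isom-Phi-U-Iu} for $B_\omega$, and Lemma~\ref{lem:image-monodromy} identifying the right monodromy with the tautological automorphism of $\bbJ_\fD$-representations for $C_\omega$.

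The hard part will be constructing this decomposition precisely and verifying exhaustion. The approach I would try first is to apply $\pi^{\dag,0}$ to Lemma~\ref{lem:Ext1-DIuI} and then account for the extra extensions introduced by the right $T$-torsor structure of $\pi : \tFl_G \to \Fl_G$ via a Leray-type spectral sequence comparing $\Ext$-s in $\sfP^0_{\Iwu,\Iwu}$ with those in $\sfP^0_{\Iwu,\Iw}$; the right monodromy by $\scO(T^\vee_\bk)$ produces exactly one extra line of extensions, matching $C_\omega$. A parallel alternative would be to compute the $\Ext^1$ groups using $\Xi^\wedge_!$ as a projective object (Lemma~\ref{lem:Xi-projective}) together with the morphism-space computation of Lemma~\ref{lem:morph-Xiwedge-Pi}, thereby reducing everything to the already-established equivalence of Proposition~\ref{prop:reg-quotient-finite-m} at the level of $G/U$ combined with the deformations encoded by the free-monodromic central sheaves.
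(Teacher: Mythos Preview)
Your overall strategy---apply Lemma~\ref{lem:ff-criterion}, reduce via the autoequivalences $\Delta^\wedge_\omega \hatstar^0(-)$, and handle $\Hom$ via the left diagram in~\eqref{eqn:diag-compatibility-Phi}---matches the paper. But the $\Ext^1$ analysis has a genuine miscount.

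The paper first disposes of the case $\omega\neq e$ by observing that nonisomorphic simples $\pi^\dag_0\delta^0$ and $\pi^\dag_0\delta^0_\omega$ live on different connected components of $\Fl_G$, so $\Ext^1$ vanishes. You should do this before attempting any decomposition. After reducing to $\omega=e$, the paper establishes a \emph{two}-piece decomposition
\[
\Ext^1_{\sfP_{\Iw,\Iw}^0}(\delta^0,\delta^0) \oplus \Ext^1_{\sfP_{U,U}^0}(\pi^\dag_0 \delta^0, \pi^\dag_0\delta^0) \simto \Ext^1_{\sfP_{\Iwu,\Iwu}^0}(\pi^\dag_0 \delta^0, \pi^\dag_0 \delta^0),
\]
obtained by applying the adjunction $(\pi_{\dag,0},\pi^{\dag,0})$ and comparing with Lemma~\ref{lem:Ext1-DIuI}: the extra $\mathsf{H}^{2r-1}_c(T;\bk)^*$ from the right $T$-torsor appears identically on both the $\Iwu,\Iwu$ side and inside the $U,U$ side, and cancels. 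Your $B_\omega$, coming from $\Ext^1_{\sfP^0_{U,U}}$, therefore already contains everything you call $C_\omega$; adding a separate right-monodromy piece overcounts. (If you intended $B_\omega$ to come from $\sfP^0_{U,B}$ instead, you would need an equivalence $\Phi_{U,B}$, which is not available at this point.) Note also that Lemma~\ref{lem:image-monodromy} concerns the \emph{loop-rotation} monodromy $\mu^{\mathrm{rot}}$, not the right $T$-monodromy, so it does not control your $C_\omega$.

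The paper's injectivity argument is then very short: given $c_1+c_2$ in the kernel with $c_1$ in the $\Iw,\Iw$ piece and $c_2$ in the $U,U$ piece, compose with the forgetful functor $\Rep_0(\bbJ_\fD)\to\Coh_0(\fD)$. This kills the image of the $\Iw,\Iw$ piece (trivial $\bbJ_\fD$-action forgets to zero extension data) and restricts on the $U,U$ piece to the isomorphism induced by $\Phi_{U,U}$; hence $c_2=0$, and then $c_1=0$ since $\Phi_{\Iw,\Iw}$ is an equivalence. This replaces your proposed Leray-type or $\Xi^\wedge_!$-based computation.
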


\begin{proof}
Recall that the simple objects in $\sfP_{\Iwu,\Iwu}^0$ are the images under $\pi_0^\dag$ of the simple objects in $\sfP_{\Iw,\Iw}^0$. In view of
Lemma~\ref{lem:ff-criterion}, to prove the proposition it therefore suffices to check that for any simple objects $\scF,\scG$ in $\sfP_{\Iw,\Iw}^0$ the functor $\Phi_{\Iwu,\Iwu}$ induces an isomorphism
\[
\Hom_{\sfP_{\Iwu,\Iwu}^0}(\pi^\dag_0 \scF,\pi^\dag_0 \scG) \simto \Hom_{\Rep_0(\bbI_\Sigma)}(\Phi_{\Iwu,\Iwu}(\pi^\dag_0 \scF),\Phi_{\Iwu,\Iwu}(\pi^\dag_0 \scG))
\]
and an injection
\[
\Ext^1_{\sfP_{\Iwu,\Iwu}^0}(\pi^\dag_0 \scF,\pi^\dag_0 \scG) \hookrightarrow \Ext^1_{\Rep_0(\bbI_\Sigma)}(\Phi_{\Iwu,\Iwu}(\pi^\dag_0 \scF),\Phi_{\Iwu,\Iwu}(\pi^\dag_0 \scG)).
\]
For the $\Hom$-spaces, this simply follows from the commutativity of~\eqref{eqn:diag-compatibility-Phi} (proved in Proposition~\ref{prop:Phi-exact}) since $\Phi_{\Iw,\Iw}$ is known to be an equivalence, hence in particular fully faithful.

To prove the claim about the $\Ext^1$ spaces, recall that nonisomorphic simple objects in $\sfP_{\Iw,\Iw}^0$ are supported on distinct connected components of $\Fl_G$; hence if $\scF \not\cong \scG$ then $\Ext^1_{\sfP_{\Iwu,\Iwu}^0}(\pi^\dag_0 \scF,\pi^\dag_0 \scG)=0$, and there is nothing to prove. Otherwise we can assume that $\scF=\scG=\delta^0_\omega$ for some $\omega \in \Omega$. In fact, Lemma~\ref{lem:convolution-delta-omega} reduces this case to the case when 
$\scF=\scG=\delta^0$. (Indeed the horizontal arrows in the diagram of Lemma~\ref{lem:convolution-delta-omega} are equivalences, hence induce isomorphisms on $\Ext^1$ spaces.) In this case we have $\Phi_{\Iwu,\Iwu}(\pi^\dag_0 \delta^0)=\bk$, seen as the skyscraper sheaf at $\{(e,e)\}$, endowed with the trivial structure as a representation.

The same considerations as for~\eqref{eqn:Ext1-1} and~\eqref{eqn:Ext1-2} provide canonical embeddings
\begin{align*}
\Ext^1_{\sfP_{\Iw,\Iw}^0}(\delta^0,\delta^0) &\to \Ext^1_{\sfP_{\Iwu,\Iwu}^0}(\pi^\dag_0 \delta^0, \pi^\dag_0 \delta^0), \\
\Ext^1_{\sfP_{U,U}^0}(\pi^\dag_0 \delta^0, \pi^\dag_0\delta^0) &\to \Ext^1_{\sfP_{\Iwu,\Iwu}^0}(\pi^\dag_0 \delta^0, \pi^\dag_0 \delta^0).
\end{align*}
We claim that these morphisms induce an isomorphism
\[
\Ext^1_{\sfP_{\Iw,\Iw}^0}(\delta^0,\delta^0) \oplus \Ext^1_{\sfP_{U,U}^0}(\pi^\dag_0 \delta^0, \pi^\dag_0\delta^0) \simto \Ext^1_{\sfP_{\Iwu,\Iwu}^0}(\pi^\dag_0 \delta^0, \pi^\dag_0 \delta^0).
\]
Indeed, using the adjunction $(\pi_{\dag,0},\pi^{\dag,0})$ (see the proof of Lemma~\ref{lem:exactness-Phi}) we obtain an isomorphism
\[
\Ext^1_{\sfP_{\Iwu,\Iwu}^0}(\pi^\dag_0 \delta^0, \pi^\dag_0 \delta^0) \cong \Hom_{\D_{\Iwu,\Iw}^0}(\pi_{\dag,0} \pi^{\dag,0} \delta^0,  \delta^0[1]) \cong \Ext^1_{\sfP_{\Iwu,\Iw}^0}(\delta^0, \delta^0) \oplus \mathsf{H}^{2r-1}_c(T;\bk)^*
\]
where $r=\dim(T)$,
since $\pi_{\dag,0} \pi^{\dag,0} \delta^0 \cong \bigoplus_n \delta^0 \otimes_\bk \mathsf{H}^n_c(T;\bk)[2r-n]$. Similarly we have
\[
\Ext^1_{\sfP_{U,U}^0}(\pi^\dag_0 \delta^0, \pi^\dag_0\delta^0) = \Ext^1_{\sfP_{U,B}^0}(\delta^0, \delta^0) \oplus \mathsf{H}^{2r-1}_c(T;\bk)^*,
\]
so that our claim follows from Lemma~\ref{lem:Ext1-DIuI}.

In view of this isomorphism, to conclude the proof we need to check that the morphism
\begin{equation}
\label{eqn:morph-Ext1-ff}
\Ext^1_{\sfP_{\Iw,\Iw}^0}(\delta^0,\delta^0) \oplus \Ext^1_{\sfP_{U,U}^0}(\pi^\dag_0 \delta^0, \pi^\dag_0\delta^0) \to \Ext^1_{\Rep_0(\bbI_\Sigma)}(\bk,\bk) 
\end{equation}
induced by $\Phi_{\Iwu,\Iwu}$ is injective. Note that on the first summand this morphism identifies with that induced by $\Phi_{\Iw,\Iw}$ (via the morphism induced by the fully faithful functor~\eqref{eqn:functor-statement-Rep}), and on the second summand it identifies with the morphism induced by $\Phi_{U,U}$ (via the morphism induced by the fully faithful functor~\eqref{eqn:Coh-Rep-JD}). Consider elements $c_1 \in \Ext^1_{\sfP_{\Iw,\Iw}^0}(\delta^0,\delta^0)$ and $c_2 \in \Ext^1_{\sfP_{U,U}^0}(\pi^\dag_0 \delta^0, \pi^\dag_0\delta^0)$ such that the image of $c_1 + c_2$ vanishes. We have a forgetful functor
\[
\Rep_0(\bbI_\Sigma) \to \Coh_{\{(e,e)\}}(T^\vee_\bk \times_{T^\vee_\bk / \Wf} T^\vee_\bk);
\]
the composition of the induced morphism
\[
\Ext^1_{\Rep_0(\bbI_\Sigma)}(\bk,\bk) \to \Ext^1_{\Coh_{\{(e,e)\}}(T^\vee_\bk \times_{T^\vee_\bk / \Wf} T^\vee_\bk)}(\bk,\bk) 
\]
with~\eqref{eqn:morph-Ext1-ff} vanishes on the first summand, and identifies with the isomorphism induced by $\Phi_{U,U}$ on the second summand. We deduce that $c_2=0$, and then that $c_1=0$ since $\Phi_{\Iw,\Iw}$ is an equivalence, which completes the proof.
\end{proof}

\subsection{Essential surjectivity}
\label{ss:essential-surjectivity}

We will now prove that the functor $\Phi_{\Iwu,\Iwu}$ is essentially surjective. For this we need a preliminary lemma. Given $V \in \Rep(G^\vee_\bk)$ and $M \in \Coh(T^\vee_\bk \times_{T^\vee_\bk / \Wf} T^\vee_\bk)$, the coherent sheaf $V \otimes_\bk M$ has a canonical structure of module for the group scheme $G^\vee_\bk \times (T^\vee_\bk \times_{T^\vee_\bk / \Wf} T^\vee_\bk)$ over $T^\vee_\bk \times_{T^\vee_\bk / \Wf} T^\vee_\bk$. Now by construction $\bbI_\Sigma$ is a subgroup scheme of $G^\vee_\bk \times (T^\vee_\bk \times_{T^\vee_\bk / \Wf} T^\vee_\bk)$, hence by restriction $V \otimes_\bk M$ has a natural structure of $\bbI_\Sigma$-module. If $M$ belongs to $\Coh_{\{(e,e)\}}(T^\vee_\bk \times_{T^\vee_\bk / \Wf} T^\vee_\bk)$, then this module belongs to $\Rep_0(\bbI_\Sigma)$. Recall also the closed subschemes $(T^\vee_\bk \times_{T^\vee_\bk / \Wf} T^\vee_\bk)^{(m)} \subset T^\vee_\bk \times_{T^\vee_\bk / \Wf} T^\vee_\bk$ considered in~\S\ref{ss:truncation-PUU}.

\begin{lem}
\label{lem:J-modules-quot}
 Any object in $\Rep_0(\bbI_\Sigma)$ is a quotient of a module of the form $V \otimes_\bk \scO_{(T^\vee_\bk \times_{T^\vee_\bk / \Wf} T^\vee_\bk)^{(m)}}$ with $V \in \Rep(G^\vee_\bk)$ and $m \geq 1$.
\end{lem}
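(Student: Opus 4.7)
Since $M \in \Rep_0(\bbJ_{\fD})$ is supported set-theoretically at $(e,e) \in \fD$, I would first fix $m \geq 1$ with $\cJ^m \cdot M = 0$; then $M$ is naturally a representation of the base-changed group scheme $\bbJ_{\fD^{(m)}} := \fD^{(m)} \times_{\fD} \bbJ_{\fD}$ on a coherent $\scO_{\fD^{(m)}}$-module. Setting $R := \scO(\fD^{(m)})$, which is an Artinian $\bk$-algebra of finite dimension, $M$ is in particular finite-dimensional as a $\bk$-vector space, and it suffices to construct a $\bbJ_{\fD^{(m)}}$-equivariant surjection $V \otimes_{\bk} \scO_{\fD^{(m)}} \twoheadrightarrow M$ for some $V \in \Rep(G^\vee_\bk)$.

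The key step is to first establish the following intermediate claim: every coherent representation $N$ of $\bbJ_{\fD^{(m)}}$ admits an embedding $N \hookrightarrow V|_{\bbJ_{\fD^{(m)}}}$ for some coherent representation $V$ of $G^\vee_\bk \times \fD^{(m)}$. My plan for this is to adapt the standard algebraic-induction construction~\cite[\S I.3]{jantzen} to the relative setting. Associated to the closed immersion of group schemes $\bbJ_{\fD^{(m)}} \hookrightarrow G^\vee_\bk \times \fD^{(m)}$ over $R$ is a right adjoint $\Ind$ to the restriction functor on the category of all (not necessarily coherent) representations; concretely, $\Ind N := (N \otimes_{R} \scO(G^\vee_\bk \times \fD^{(m)}))^{\bbJ_{\fD^{(m)}}}$. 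One then checks that the unit $N \to \For(\Ind N)$ is injective (which relies on $\bbJ_{\fD^{(m)}}$ being closed in $G^\vee_\bk \times \fD^{(m)}$) and that $\Ind N$ is a union of its coherent $(G^\vee_\bk \times \fD^{(m)})$-subrepresentations (by local finiteness of the $G^\vee_\bk$-action on $\scO(G^\vee_\bk)$). Since $N$ is finite-dimensional, its image in $\Ind N$ lies inside some coherent $V$, giving the claim.

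To pass from embeddings to quotients, I would invoke Matlis duality $D := \Hom_{\bk}(-, \bk)$; together with the antipodes of the Hopf algebras of $\bbJ_{\fD^{(m)}}$ and $G^\vee_\bk \times \fD^{(m)}$, it provides an exact anti-involution of each of the relevant categories of coherent equivariant modules, and this anti-involution intertwines the two restriction functors. Applying the intermediate statement to $D(M)$ and dualizing yields a coherent $V' \in \Rep(G^\vee_\bk \times \fD^{(m)})$ together with a surjection $V'|_{\bbJ_{\fD^{(m)}}} \twoheadrightarrow M$. Finally, any such $V'$ is itself a quotient of $V \otimes_{\bk} \scO_{\fD^{(m)}}$ for some $V \in \Rep(G^\vee_\bk)$: take $V$ to be the finite-dimensional underlying $\bk$-vector space of $V'$ equipped with its induced $G^\vee_\bk$-action, so that the scalar-multiplication map $V \otimes_{\bk} R \to V'$, $v \otimes r \mapsto r \cdot v$, is $(G^\vee_\bk \times \fD^{(m)})$-equivariant and surjective. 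Composing the maps gives the required surjection. The main technical obstacle is verifying in this relative setting that the induction functor has the adjointness and injectivity-of-unit properties stated above, but this follows by a straightforward adaptation of Jantzen's arguments, using that $R$ is finite over $\bk$ so that the category of locally finite $(G^\vee_\bk \times \fD^{(m)})$-representations behaves essentially as in the classical case.
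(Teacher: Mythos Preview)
Your key step has a genuine gap. With $\Ind$ taken as the right adjoint to restriction $\For$ (and your formula $\Ind N = (N \otimes_R \scO(G^\vee_\bk \times \fD^{(m)}))^{\bbJ_{\fD^{(m)}}}$ is the standard right-adjoint induction), the unit of the adjunction is a map $M \to \Ind(\For M)$ for $(G^\vee_\bk \times \fD^{(m)})$-modules $M$, while the counit goes $\For(\Ind N) \to N$ for $\bbJ_{\fD^{(m)}}$-modules $N$. There is no natural map $N \to \For(\Ind N)$, so the injection you assert does not exist. More to the point, the claim it was meant to establish---that every coherent $\bbJ_{\fD^{(m)}}$-representation embeds into the restriction of a $(G^\vee_\bk \times \fD^{(m)})$-representation---is exactly the condition that the subgroup be \emph{observable}, i.e.\ that the quotient be quasi-affine. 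This does not follow from closedness alone (it fails already for a Borel subgroup of $G^\vee_\bk$). It does hold in the case at hand, since over the closed point the quotient $G^\vee_\bk/Z_{G^\vee_\bk}(\su)$ is the regular unipotent orbit, open in the affine unipotent cone; but this requires an argument, and your Matlis-duality detour does not sidestep the issue, since the embedding and quotient versions of the claim are interchanged by duality and hence equivalent.

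The paper's proof takes a different, geometric route. It invokes the equivalence
\[
\Coh^{G^\vee_\bk}\bigl(T^\vee_\bk \times_{T^\vee_\bk/\Wf} (G^\vee_\bk)_\reg \times_{T^\vee_\bk/\Wf} T^\vee_\bk\bigr) \simto \Rep(\bbJ_\fD)
\]
of Proposition~\ref{prop:rest-Steinberg-section}: the equivariant coherent sheaf corresponding to $M$ is extended (by a general lemma on extending equivariant coherent sheaves across open immersions) to the \emph{affine} scheme $T^\vee_\bk \times_{T^\vee_\bk/\Wf} G^\vee_\bk \times_{T^\vee_\bk/\Wf} T^\vee_\bk$, where every equivariant coherent sheaf is a quotient of some $V \otimes \scO$ with $V \in \Rep(G^\vee_\bk)$; restricting back yields $V \otimes_\bk \scO_{\fD} \twoheadrightarrow M$, which factors through $V \otimes_\bk \scO_{\fD^{(m)}}$ for suitable $m$.
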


\begin{proof}
We will in fact prove that any $M$ in $\Rep(\bbI_\Sigma)$ is a quotient of a module $V \otimes_\bk \scO_{T^\vee_\bk \times_{T^\vee_\bk / \Wf} T^\vee_\bk}$ with $V \in \Rep(G^\vee_\bk)$; if $M$ is in $\Rep_0(\bbI_\Sigma)$ the corresponding surjection $V \otimes_\bk \scO_{T^\vee_\bk \times_{T^\vee_\bk / \Wf} T^\vee_\bk} \twoheadrightarrow M$ will necessarily factor through a surjection $V \otimes_\bk \scO_{(T^\vee_\bk \times_{T^\vee_\bk / \Wf} T^\vee_\bk)^{(m)}} \twoheadrightarrow M$ for some $m \geq 1$. Recall the equivalence of categories
\[
\Coh^{G^\vee_\bk}(T^\vee_\bk \times_{T^\vee_\bk/\Wf} (G^\vee_\bk)_\reg \times_{T^\vee_\bk/\Wf} T^\vee_\bk) \simto \Rep(\bbI_\Sigma)
\]
induced by restriction to the preimage of $\Sigma$,
see Proposition~\ref{prop:rest-Steinberg-section}. If $\scF$ is the equivariant coherent sheaf corresponding to $M$, then there exists a $G^\vee_\bk$-equivariant coherent sheaf $\scF'$ on $T^\vee_\bk \times_{T^\vee_\bk/\Wf} G^\vee_\bk \times_{T^\vee_\bk/\Wf} T^\vee_\bk$ whose restriction to the open subscheme $T^\vee_\bk \times_{T^\vee_\bk/\Wf} (G^\vee_\bk)_\reg \times_{T^\vee_\bk/\Wf} T^\vee_\bk$ is $\scF$; see e.g.~\cite[Lemma~2.12]{arinkin-bezrukavnikov}. Now since $T^\vee_\bk \times_{T^\vee_\bk/\Wf} G^\vee_\bk \times_{T^\vee_\bk/\Wf} T^\vee_\bk$ is affine, there exists $V \in \Rep(G^\vee_\bk)$ and a surjection of $G^\vee_\bk$-equivariant coherent sheaves
\[
V \otimes_\bk \scO_{T^\vee_\bk \times_{T^\vee_\bk/\Wf} G^\vee_\bk \times_{T^\vee_\bk/\Wf} T^\vee_\bk} \twoheadrightarrow \scF'.
\]
Restricting to $T^\vee_\bk \times_{T^\vee_\bk/\Wf} (G^\vee_\bk)_\reg \times_{T^\vee_\bk/\Wf} T^\vee_\bk$ and then to the preimage of $\Sigma$, we obtain the desired surjection $V \otimes_\bk \scO_{T^\vee_\bk \times_{T^\vee_\bk / \Wf} T^\vee_\bk} \twoheadrightarrow M$.
\end{proof}

\begin{prop}
\label{prop:Phi-es}
 The functor
 \[
  \Phi_{\Iwu,\Iwu} : \mathsf{P}_{\Iwu,\Iwu}^0 \to \Rep_0(\mathbb{I}_\Sigma)
 \]
 is essentially surjective.
\end{prop}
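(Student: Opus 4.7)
The plan is to first exhibit explicit preimages in $\mathsf{P}^0_{\Iwu,\Iwu}$ for the ``generators'' $V \otimes_\bk \scO_{\fD^{(m)}}$ of $\Rep_0(\bbJ_\fD)$ supplied by Lemma~\ref{lem:J-modules-quot}, and then to bootstrap to general objects using the exactness and fully faithfulness already proved.

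For the first step, I would consider, for $V \in \Rep(G^\vee_\bk)$ and $m \geq 1$, the object
\[
\scF_{V,m} := \tsZ(V) \phatstar^0 \mathsf{C}^0_m(\Xi^\wedge_!) \ \in \ \mathsf{P}^0_{\Iwu,\Iwu}.
\]
Lemma~\ref{lem:Phi-ZV} yields a canonical $\bbJ_\fD$-equivariant isomorphism
$\Phi_{\Iwu,\Iwu}(\scF_{V,m}) \cong V \otimes_\bk \Phi_{\Iwu,\Iwu}(\mathsf{C}^0_m(\Xi^\wedge_!))$; since $\mathsf{C}^0_m(\Xi^\wedge_!)$ belongs to $\sfP^0_{U,U}$, Proposition~\ref{prop:isom-Phi-U-Iu} identifies this with $V \otimes_\bk \Phi_{U,U}(\mathsf{C}^0_m(\Xi^\wedge_!))$ equipped with the trivial $\bbJ_\fD$-structure on the second factor. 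The proof of Proposition~\ref{prop:reg-quotient-finite-m} identifies $\mathsf{C}^0_m(\Xi^\wedge_!) = \Pi^0_{U,U}(\Xi^{(m)}_!)$ as the projective cover of the unique simple object of $\sfP^{(m),0}_{U,U}$, and the equivalence $\sfP^{(m),0}_{U,U} \simto \Coh(\fD^{(m)})$ sends this projective cover to the regular module $\scO_{\fD^{(m)}}$. Matching $\bbJ_\fD$-structures, one obtains $\Phi_{\Iwu,\Iwu}(\scF_{V,m}) \cong V \otimes_\bk \scO_{\fD^{(m)}}$ as objects of $\Rep_0(\bbJ_\fD)$, i.e.~exactly the module appearing in Lemma~\ref{lem:J-modules-quot}.

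For the second step, I would argue as follows. Given any $N \in \Rep_0(\bbJ_\fD)$, two applications of Lemma~\ref{lem:J-modules-quot} (first to $N$, then to the kernel of the resulting surjection) produce a presentation
\[
V' \otimes_\bk \scO_{\fD^{(m')}} \xrightarrow{\ f\ } V \otimes_\bk \scO_{\fD^{(m)}} \to N \to 0
\]
in $\Rep_0(\bbJ_\fD)$. By the first step, both the source and target of $f$ lie in the essential image of $\Phi_{\Iwu,\Iwu}$. By fully faithfulness (Proposition~\ref{prop:Phi-ff}), the morphism $f$ lifts uniquely to a morphism $\widetilde f : \scF_{V',m'} \to \scF_{V,m}$ in $\mathsf{P}^0_{\Iwu,\Iwu}$; then, by exactness of $\Phi_{\Iwu,\Iwu}$ (Proposition~\ref{prop:Phi-exact}),
\[
\Phi_{\Iwu,\Iwu}(\operatorname{coker}(\widetilde f)) \cong \operatorname{coker}(f) \cong N,
\]
placing $N$ in the essential image.

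No serious obstacle is anticipated: the heart of the matter is already encoded in Lemma~\ref{lem:Phi-ZV} and in the ``finite'' computation carried out in Proposition~\ref{prop:reg-quotient-finite-m}, while Lemma~\ref{lem:J-modules-quot} reduces arbitrary objects to the basic building blocks $V \otimes_\bk \scO_{\fD^{(m)}}$. Note also that this argument bypasses any further analysis of the monoidality morphism~\eqref{eqn:monoidality-morphism}: fully faithfulness and exactness of $\Phi_{\Iwu,\Iwu}$ are all that is needed to propagate the essential image from the generators to all of $\Rep_0(\bbJ_\fD)$.
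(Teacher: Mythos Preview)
Your proof is correct and follows essentially the same strategy as the paper's: reduce to the generators $V \otimes_\bk \scO_{\fD^{(m)}}$ via Lemma~\ref{lem:J-modules-quot}, hit these with $\tsZ(V) \phatstar^0 (-)$ applied to an object of $\sfP^0_{U,U}$ using Lemma~\ref{lem:Phi-ZV} and Proposition~\ref{prop:isom-Phi-U-Iu}, and then propagate to cokernels by exactness and fully faithfulness. The paper phrases the first step slightly more generally (allowing arbitrary $M \in \Coh_0(\fD)$ and invoking essential surjectivity of $\Phi_{U,U}$ rather than the specific identification of $\mathsf{C}^0_m(\Xi^\wedge_!)$), but this is a cosmetic difference.
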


\begin{proof}
We now know that $\Phi_{\Iwu,\Iwu}$ is exact (see Proposition~\ref{prop:Phi-exact}) and fully faithful (see Proposition~\ref{prop:Phi-ff}).
By Lemma~\ref{lem:J-modules-quot}, any object in $\Rep_0(\mathbb{I}_\Sigma)$ is isomorphic to the cokernel of a morphism between objects of the form 
$V \otimes_\bk M$ with $V \in \Rep(G^\vee_\bk)$ and $M \in \Coh_{\{(e,e)\}}(T^\vee_\bk \times_{T^\vee_\bk / \Wf} T^\vee_\bk)$; to prove the proposition it therefore suffices to prove that any such object is isomorphic to the image of an object of $\mathsf{P}_{\Iwu,\Iwu}^0$. Let us fix $V \in \Rep(G^\vee_\bk)$ and $M \in \Coh_{\{(e,e)\}}(T^\vee_\bk \times_{T^\vee_\bk / \Wf} T^\vee_\bk)$. Since $\Phi_{U,U}$ is essentially surjective (see
Theorem~\ref{thm:reg-quotient-finite}),
Proposition~\ref{prop:isom-Phi-U-Iu} implies that there exists $\scF$ in $\mathsf{P}_{\Iwu,\Iwu}^0$ such that $\Phi_{\Iwu,\Iwu}(\scF) \cong M$. Then Lemma~\ref{lem:Phi-ZV} implies that
\[
 \Phi_{\Iwu,\Iwu}(\tsZ(V) \phatstar^0 \scF) \cong V \otimes_\bk M,
\]
which finishes the proof.
\end{proof}

\subsection{Monoidality}
\label{ss:monoidality-PhiIuIu}

Combining Propositions~\ref{prop:Phi-exact},~\ref{prop:isom-Phi-U-Iu},~\ref{prop:Phi-ff} and~\ref{prop:Phi-es}, we have now pro\-ved that $\Phi_{\Iwu,\Iwu}$ is an equivalence of categories, and that the diagrams~\eqref{eqn:diag-compatibility-Phi}--\eqref{eqn:diag-compatibility-Phi-2} commute. To conclude the proof of Theorem~\ref{thm:main}, it therefore only remains to prove that $\Phi_{\Iwu,\Iwu}$ is monoidal. Recall that in~\eqref{eqn:monoidality-morphism} we have defined a canonical ``monoidality'' morphism; what we will now prove is that this morphism is an isomorphism for any $\scF,\scG$ in $\sfP^0_{\Iwu,\Iwu}$.

We start with a special case.

\begin{lem}
\label{lem:monoidality}
The morphism~\eqref{eqn:monoidality-morphism} is an isomorphism in case
\[
\scF=\tsZ(V) \hatstar^0 \scF' \quad \text{and} \quad \scG=\tsZ(V') \hatstar^0 \scG'
\]
for some $V,V' \in \Rep(G^\vee_\bk)$ and $\scF',\scG' \in \sfP^0_{U,U}$.
\end{lem}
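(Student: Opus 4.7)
The plan is to reduce the claim to the monoidality properties already established for $\Phi_{U,U}$ (via Theorem~\ref{thm:reg-quotient-finite}) and to the centrality and monoidality of the functor $\tsZ$. First, using Lemma~\ref{lem:Phi-ZV} twice and the fact that $\scF',\scG' \in \sfP^0_{U,U}$ so that Proposition~\ref{prop:isom-Phi-U-Iu} identifies $\Phi_{\Iwu,\Iwu}(\scF')$ with $\Phi_{U,U}(\scF')$ (and similarly for $\scG'$), I would obtain canonical isomorphisms $\Phi_{\Iwu,\Iwu}(\scF) \cong V \otimes_\bk \Phi_{U,U}(\scF')$ and $\Phi_{\Iwu,\Iwu}(\scG) \cong V' \otimes_\bk \Phi_{U,U}(\scG')$. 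Since $\Phi_{U,U}(\scF')$ and $\Phi_{U,U}(\scG')$ carry the trivial $\bbJ_\fD$-structure, a direct computation of $\circledast$ in $\Rep_0(\bbJ_\fD)$ then yields
\[
\Phi_{\Iwu,\Iwu}(\scF) \circledast \Phi_{\Iwu,\Iwu}(\scG) \cong (V \otimes V') \otimes_\bk \bigl( \Phi_{U,U}(\scF') \circledast \Phi_{U,U}(\scG') \bigr).
\]

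Next, I would rewrite the right-hand side $\scF \pstar^0_{\Iwu} \scG$ using the structure of central functor on $\tsZ$. Combining the centrality isomorphism $\widehat{\sigma}$ (Theorem~\ref{thm:gaitsgory-mon}\eqref{it:gaitsgory-mon-4}) in the form~\eqref{eqn:commutation-Z-0}, the monoidality isomorphism $\widehat{\phi}$ (Theorem~\ref{thm:gaitsgory-mon}\eqref{it:gaitsgory-mon-2}), and the compatibilities~\eqref{eqn:compatibility-hatstar-0}--\eqref{eqn:compatibility-hatstar-other-0}, one obtains a canonical identification
\[
(\tsZ(V) \hatstar^0 \scF') \pstar^0_{\Iwu} (\tsZ(V') \hatstar^0 \scG') \cong \tsZ(V \otimes V') \hatstar^0 (\scF' \pstar^0_{\Iwu} \scG').
\]
Applying Lemma~\ref{lem:Phi-ZV} once more, together with Proposition~\ref{prop:isom-Phi-U-Iu} and the monoidal structure of $\Phi_{U,U}$ from Theorem~\ref{thm:reg-quotient-finite}, I would then deduce that $\Phi_{\Iwu,\Iwu}(\scF \pstar^0_{\Iwu} \scG)$ is also canonically isomorphic to $(V \otimes V') \otimes_\bk (\Phi_{U,U}(\scF') \circledast \Phi_{U,U}(\scG'))$. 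So at the level of objects both sides of~\eqref{eqn:monoidality-morphism} are canonically identified with the same module.

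The remaining, and main, obstacle is to check that the morphism~\eqref{eqn:monoidality-morphism} constructed in~\S\ref{ss:definition-Phi-Iwu} actually \emph{coincides} with the isomorphism produced by the chain of identifications above, and hence is invertible. This is a diagram-chasing task: one must unwind the definition of~\eqref{eqn:monoidality-morphism} through the auxiliary morphism~\eqref{eqn:morph-conv-R-2} and then~\eqref{eqn:morph-conv-R}, and compare with the construction of Lemma~\ref{lem:Phi-ZV} (which itself relies on Lemma~\ref{lem:tsZ-R} and on the monoidality of $\tsZ$). The compatibility of these various structures is guaranteed by the centrality of $\tsZ$ together with Lemma~\ref{lem:commutation-ZCoh-Satake} (which ensures that~\eqref{eqn:commutation-ZCoh-Satake} defines an isomorphism of bifunctors), and by the commutativity of the right diagram in~\eqref{eqn:diag-compatibility-Phi} (Proposition~\ref{prop:isom-Phi-U-Iu}), which transports the monoidality of $\Phi_{U,U}$ into the relevant statement inside $\Rep_0(\bbJ_\fD)$. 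A careful bookkeeping of the identifications yields that~\eqref{eqn:monoidality-morphism} factors as a composition of the isomorphisms obtained above, and is therefore an isomorphism, as desired.
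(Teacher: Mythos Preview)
Your proposal is correct and follows essentially the same approach as the paper's proof: compute both sides via Lemma~\ref{lem:Phi-ZV} and Proposition~\ref{prop:isom-Phi-U-Iu}, use the centrality of $\tsZ$ (via~\eqref{eqn:commutation-Z-0}) to rewrite $\scF \pstar^0_{\Iwu} \scG$ as $\tsZ(V\otimes V') \hatstar^0 (\scF' \pstar^0_U \scG')$, and then observe that under these identifications the morphism~\eqref{eqn:monoidality-morphism} becomes the monoidality isomorphism of $\Phi_{U,U}$ from Theorem~\ref{thm:reg-quotient-finite}. The paper is somewhat more brisk about the final diagram chase you describe, simply asserting the identification rather than unwinding it.
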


\begin{proof}
Assume that $\scF$ and $\scG$ are as in the lemma. By Lemma~\ref{lem:Phi-ZV} and Proposition~\ref{prop:isom-Phi-U-Iu} we have
\[
\Phi_{\Iwu,\Iwu}(\scF) \cong V \otimes_\bk \Phi_{U,U}(\scF'), \quad \Phi_{\Iwu,\Iwu}(\scG) \cong V' \otimes_\bk \Phi_{U,U}( \scG').
\]
On the other hand, using~\eqref{eqn:commutation-Z-0} we obtain a canonical isomorphism
\[
\scF \pstar^0_{\Iwu} \scG \cong \tsZ(V \otimes_\bk V') \hatstar^0 (\scF' \star^0_U \scG'),
\]
so that similarly we have
\[
\Phi_{\Iwu,\Iwu}(\scF \star^0_{\Iwu} \scG) \cong (V \otimes_\bk V') \otimes_\bk \Phi_{U,U}(\scF' \star^0_U \scG').
\]
Under these identifications the morphism~\eqref{eqn:monoidality-morphism} is induced by the isomorphism
\[
\Phi_{U,U} (\scF' \star^0_U \scG') \cong \Phi_{U,U}(\scF') \circledast \Phi_{U,U}(\scG')
\]
defining the monoidal structure of $\Phi_{U,U}$ (see Theorem~\ref{thm:reg-quotient-finite}); it is therefore an isomorphism.
\end{proof}

\begin{prop}
The morphism~\eqref{eqn:monoidality-morphism} is an isomorphism for all $\scF,\scG$ in $\sfP^0_{\Iwu,\Iwu}$; in other words, the functor $\Phi_{\Iwu,\Iwu}$ admits a canonical monoidal structure.
\end{prop}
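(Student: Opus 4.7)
The plan is to reduce the general case to the special case already handled in Lemma~\ref{lem:monoidality} via a standard two-step resolution argument, using the right exactness of both $\pstar^0_{\Iwu}$ and $\circledast$ in each variable together with the exactness of $\Phi_{\Iwu,\Iwu}$ (established in Proposition~\ref{prop:Phi-exact}).

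First, I would observe that every object $\scH$ in $\sfP^0_{\Iwu,\Iwu}$ fits in an exact sequence
\[
\scH_1 \to \scH_0 \to \scH \to 0
\]
where each $\scH_i$ is of the form $\tsZ(V_i) \hatstar^0 \scH'_i$ for some $V_i \in \Rep(G^\vee_\bk)$ and $\scH'_i \in \sfP^0_{U,U}$. Indeed, by essential surjectivity of $\Phi_{\Iwu,\Iwu}$ (Proposition~\ref{prop:Phi-es}), combined with Lemma~\ref{lem:J-modules-quot} and the commutativity of the right diagram in~\eqref{eqn:diag-compatibility-Phi}, the image $\Phi_{\Iwu,\Iwu}(\scH)$ is a cokernel of a morphism between objects of the form $V \otimes_\bk M$ with $M \in \Coh_0(\fD)$; pulling back via the equivalence and using Lemma~\ref{lem:Phi-ZV} produces the desired resolution.

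Next, fix $\scF,\scG$ in $\sfP^0_{\Iwu,\Iwu}$ and choose such resolutions $\scF_1 \to \scF_0 \to \scF \to 0$ and $\scG_1 \to \scG_0 \to \scG \to 0$. Applying the right exact functor $(-) \pstar^0_{\Iwu} \scG$ (see~\S\ref{ss:mon-reg-quotient}) and then the exact functor $\Phi_{\Iwu,\Iwu}$, and similarly applying $\Phi_{\Iwu,\Iwu}(-) \circledast \Phi_{\Iwu,\Iwu}(\scG)$, which is right exact since $\circledast$ is right exact on each side (see~\S\ref{ss:representations-Iadj}), one obtains two exact sequences related by the morphisms~\eqref{eqn:monoidality-morphism}. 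By the naturality of the monoidality morphism (clear from its construction in~\S\ref{ss:definition-Phi-Iwu}), these diagrams commute, and an application of the five-lemma (in its cokernel form) reduces the claim to the case where $\scF$ is of the form $\tsZ(V) \hatstar^0 \scF'$. Repeating the same argument in the second variable (using the right exactness of $\scF \pstar^0_{\Iwu} (-)$ and of $\Phi_{\Iwu,\Iwu}(\scF) \circledast (-)$) reduces to the case where $\scG$ is of the form $\tsZ(V') \hatstar^0 \scG'$ as well. This case is precisely Lemma~\ref{lem:monoidality}, which completes the proof.

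The main subtlety, and the reason this argument is nontrivial, lies not in the formal five-lemma reduction but in the verification that the monoidality morphism~\eqref{eqn:monoidality-morphism} is truly functorial in both arguments and compatible with the right exact sequences produced above; this compatibility follows directly from unwinding its construction in~\S\ref{ss:monoidality-morph}, where each of the constituent morphisms (the one built from~\eqref{eqn:morph-monoidality-DUU}, the commutation isomorphisms from Lemma~\ref{lem:commutation-ZCoh-Satake}, and the multiplication morphism $\tsZ(\scO(G^\vee_\bk)) \hatstar \tsZ(\scO(G^\vee_\bk)) \to \tsZ(\scO(G^\vee_\bk))$) is visibly bifunctorial. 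Once this is noted, the argument proceeds without any further difficulty, and combined with the preceding propositions establishes Theorem~\ref{thm:main}.
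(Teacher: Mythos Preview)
Your proposal is correct and follows essentially the same approach as the paper: both reduce to Lemma~\ref{lem:monoidality} via a five-lemma argument exploiting the right exactness of $\pstar^0_{\Iwu}$ and $\circledast$, and both obtain the required presentation of an arbitrary object as a cokernel of a morphism between objects of the form $\tsZ(V)\hatstar^0\scF'$ by invoking Lemma~\ref{lem:J-modules-quot} together with the already-established equivalences $\Phi_{U,U}$ and $\Phi_{\Iwu,\Iwu}$. Your version is slightly more explicit about the two-step nature of the reduction (first in $\scF$, then in $\scG$) and about the bifunctoriality check, but the paper's proof is the same argument expressed more tersely.
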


\begin{proof}
By right exactness of the bifunctors $\star^0_{\Iwu}$ and $\circledast$ and the five lemma, if given $\scF,\scG,\scG'$ in $\sfP^0_{\Iwu,\Iwu}$ the claim is known for the pairs of objects $(\scF,\scG)$ and $(\scF,\scG')$, then it will hold for the pair $(\scF,\scG'')$ for any cokernel $\scG''$ of a morphism $\scG \to \scG'$. The same property holds when the order of the factors is switched. In view of Lemma~\ref{lem:monoidality}, to conclude the proof it therefore suffices to prove that any object in $\sfP^0_{\Iwu,\Iwu}$ is isomorphic to the cokernel of a morphism between objects of the form $\tsZ(V) \hatstar^0 \scF'$ with $V \in \Rep(G^\vee_\bk)$ and $\scF' \in \sfP^0_{U,U}$. In view of the computation in the proof of Lemma~\ref{lem:monoidality}, this follows from Lemma~\ref{lem:J-modules-quot} and the fact that $\Phi_{U,U}$ and $\Phi_{\Iwu,\Iwu}$ are equivalences of categories.
%
\end{proof}

\section{A Soergel-type description of tilting perverse sheaves}
\label{sec:Soergel-type}

We continue with the assumptions of Section~\ref{sec:construction}. In this section we explain how to deduce Theorem~\ref{thm:main-intro-2} from Theorem~\ref{thm:main-intro-1}.

\subsection{Tilting completed perverse sheaves}
\label{ss:Twedge}

Recall the category $\mathsf{P}^\wedge_{\Iwu,\Iwu}$ and its subcategory $\mathsf{T}^\wedge_{\Iwu,\Iwu}$ from~\S\ref{ss:tilting-perv}. Recall also that $\mathsf{T}^\wedge_{\Iwu,\Iwu}$ is stable under the monoidal product $\hatstar$. In~\S\ref{ss:G/B-tilting} we have defined, for any $s \in \Sf$, an object $\Xi^\wedge_{s,!}$ in $\mathsf{T}^\wedge_{\Iwu,\Iwu}$. On the other hand, in~\S\ref{ss:Waff} we have chosen, for any $s \in S \smallsetminus \Sf$, elements $w \in W$ and $s' \in \Sf$ such that $\ell(ws')=\ell(w)+1$ and $s=ws'w^{-1}$. We then have
\[
 \Delta^\wedge_w \hatstar \Delta^\wedge_{s'} \cong \Delta^\wedge_s \hatstar \Delta^\wedge_w \quad \text{and} \quad \nabla^\wedge_{w^{-1}} \hatstar \nabla^\wedge_s \cong \nabla^{\wedge}_{s'} \hatstar \nabla^\wedge_{w^{-1}}
\]
by Lemma~\ref{lem:convolution-stand-costand}\eqref{it:convolution-stand-costand-2}, hence
\[
 \Delta^\wedge_{s} \cong \Delta^\wedge_w \hatstar \Delta^\wedge_{s'} \hatstar \nabla^\wedge_{w^{-1}} \quad \text{and} \quad \nabla^{\wedge}_{s} \cong \Delta^\wedge_w \hatstar \nabla^\wedge_{s'} \hatstar \nabla^\wedge_{w^{-1}}
\]
by Lemma~\ref{lem:convolution-stand-costand}\eqref{it:convolution-stand-costand-1}. From these isomorphisms we deduce that the object
\[
 \Xi^\wedge_{s,!} := \Delta^\wedge_w \hatstar \Xi^\wedge_{s',!} \hatstar \nabla^\wedge_{w^{-1}}
\]
is a representative for the indecomposable tilting object $\mathscr{T}^\wedge_s$.
On the other hand, for any $\omega \in \Omega$ the natural morphism $\Delta^\wedge_\omega \to \nabla^\wedge_\omega$ is an isomorphism since $\tFl_{G,\omega}$ is closed, which shows that these perverse sheaves are tilting. 


Recall that the category $\mathsf{D}^\wedge_{\Iwu,\Iwu}$ is Krull--Schmidt, see~\S\ref{ss:completed-category}.
Standard arguments (see e.g.~\cite[Remark~7.9]{bezr}) show that any object in $\mathsf{T}^\wedge_{\Iwu,\Iwu}$ is a direct sum of direct summands of objects of the form
\[
\Delta^\wedge_\omega \hatstar \Xi^\wedge_{s_1,!} \hatstar \cdots \hatstar \Xi^\wedge_{s_i,!}
\]
with $\omega \in \Omega$ and $s_1, \cdots, s_i \in S$.

Let us recall also that
for any $V$ in $\Rep(G^\vee_\bk)$ which is tilting the object
\[
\Xi^\wedge_! \hatstar \tsZ(V) \cong \tsZ(V) \hatstar \Xi^\wedge_!
\]
belongs to $\mathsf{T}^\wedge_{\Iwu,\Iwu}$, see Proposition~\ref{prop:tilting-Z}.


\subsection{Soergel representations}
\label{ss:SRep}


Recall the category
\[
\SRep(\bbI^\wedge_\Sigma)
\]
defined in~\S\ref{ss:completions}. Note that any object in this category is finite and flat, i.e.~finite and projective, as an $\scO(\FN_{T^\vee_\bk}(\{e\}))$-module for the action on the right.

Let us note the following technical property for later use.

\begin{lem}
\label{lem:Iwedge-integral}
The scheme $\FN_{T^\vee_\bk}(\{e\}) \times_{T^\vee_\bk / \Wf} \mathbb{J}_\Sigma$ is integral.
\end{lem}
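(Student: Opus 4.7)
The plan is to reduce the claim to two intermediate statements by factoring the structural morphism $(T^\vee_\bk)^\wedge \to T^\vee_\bk / \Wf$ through $T^\vee_\bk$, which yields
\[
(T^\vee_\bk)^\wedge \times_{T^\vee_\bk / \Wf} \bbJ_\Sigma \cong (T^\vee_\bk)^\wedge \times_{T^\vee_\bk} \bbJ_\bT,
\]
where $\bbJ_\bT := T^\vee_\bk \times_{T^\vee_\bk/\Wf} \bbJ_\Sigma$ is the group scheme over $T^\vee_\bk$ introduced in~\S\ref{ss:univ-centralizer} (applied with $\bG=G^\vee_\bk$). I shall first show that $\bbJ_\bT$ itself is integral, and then that the further base change along the completion preserves integrality.

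For the first step, Lemma~\ref{lem:J-smooth}\eqref{it:J-smooth-1} and base change imply that $\bbJ_\bT \to T^\vee_\bk$ is smooth (hence flat), so every irreducible component of $\bbJ_\bT$ dominates $T^\vee_\bk$ and has dimension $2\dim(T^\vee_\bk)$. Writing $T^\vee_{\bk,\circ}$ for the regular semisimple locus (the subscheme denoted $\bT_\circ$ in~\S\ref{ss:rs}), the morphism~\eqref{eqn:morph-gp-schemes-Jreg-Sigma} identifies the open subscheme $\bbJ_{\bT,\circ} := \bbJ_\bT \times_{T^\vee_\bk} T^\vee_{\bk,\circ}$ with $T^\vee_{\bk,\circ} \times T^\vee_\bk$, which is irreducible of dimension $2\dim(T^\vee_\bk)$. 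Since $T^\vee_\bk \setminus T^\vee_{\bk,\circ}$ has strictly smaller dimension than $T^\vee_\bk$, flatness gives that $\bbJ_\bT \setminus \bbJ_{\bT,\circ}$ has strictly smaller dimension than $2\dim(T^\vee_\bk)$, so no irreducible component of $\bbJ_\bT$ can be contained in this complement. Every component therefore meets $\bbJ_{\bT,\circ}$, hence coincides with its closure, so $\bbJ_\bT = \overline{\bbJ_{\bT,\circ}}$ is irreducible. Smoothness over $\bk$ provides reducedness, so $\bbJ_\bT$ is integral.

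Now set $X := (T^\vee_\bk)^\wedge \times_{T^\vee_\bk} \bbJ_\bT$. Since $T^\vee_\bk$ is smooth at $e$, the scheme $(T^\vee_\bk)^\wedge$ is the spectrum of a formal power series ring in $\dim(T^\vee_\bk)$ variables, in particular a regular local ring; $X$ is smooth over it by base change, hence regular, so reduced and locally irreducible. The main obstacle is to verify that $X$ is connected, since completion can in principle disconnect an integral scheme, and one must exploit smoothness to control the generic fiber. For this, Krull's intersection theorem gives the injectivity of $\scO(T^\vee_\bk) \hookrightarrow \scO((T^\vee_\bk)^\wedge)$, so the generic point $\eta$ of the irreducible scheme $(T^\vee_\bk)^\wedge$ lies over the generic point of $T^\vee_\bk$; consequently $X_\eta$ is obtained from the generic fiber of $\bbJ_\bT \to T^\vee_\bk$ by a field extension, and this latter generic fiber is a split torus (using $\bbJ_{\bT,\circ} \cong T^\vee_{\bk,\circ} \times T^\vee_\bk$, since the generic point lies in $T^\vee_{\bk,\circ}$), hence geometrically connected, so $X_\eta$ itself is connected. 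If $X = X_1 \sqcup X_2$ were a nontrivial clopen decomposition, then openness of the smooth morphism $X \to (T^\vee_\bk)^\wedge$ would force the images of $X_1$ and $X_2$ to be nonempty open subsets of the irreducible scheme $(T^\vee_\bk)^\wedge$, hence both to contain $\eta$, yielding a nontrivial clopen decomposition of $X_\eta$---a contradiction.
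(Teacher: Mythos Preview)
Your proof is correct, and it shares with the paper the key geometric input, namely the identification $\bbJ_{\bT,\circ} \cong (T^\vee_\bk)_\circ \times T^\vee_\bk$ coming from Lemma~\ref{lem:morph-gp-schemes-Jreg}. The routes diverge in how the completion is handled. The paper does not prove the intermediate statement that $\bbJ_\bT$ is integral; instead it localizes $(T^\vee_\bk)^\wedge$ at the elements $\alpha-1$ to form $(T^\vee_\bk)^\wedge_\circ$, observes that flatness of $\bbJ_\Sigma$ over $\Sigma$ makes the restriction map
\[
\scO\bigl((T^\vee_\bk)^\wedge \times_{T^\vee_\bk/\Wf} \bbJ_\Sigma\bigr) \hookrightarrow \scO\bigl((T^\vee_\bk)^\wedge_\circ \times_{T^\vee_\bk/\Wf} \bbJ_\Sigma\bigr)
\]
injective, and then identifies the target with $\scO((T^\vee_\bk)^\wedge_\circ \times T^\vee_\bk)$, a split torus over the integral scheme $(T^\vee_\bk)^\wedge_\circ$, hence a domain. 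This is a one-step ring-theoretic argument. Your approach is more geometric: you first establish integrality of $\bbJ_\bT$ via a dimension argument, then separately argue that the base change along the completion stays integral using regularity together with connectedness of the generic fiber. Both are perfectly valid; the paper's is shorter, while yours makes the underlying picture (smooth with geometrically connected generic fiber over an irreducible base) more explicit, and your first step is a statement of some independent interest.
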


\begin{proof}
First, we note that since $T^\vee_\bk$ is isomorphic to a product of copies of the multiplicative group, the scheme $\FN_{T^\vee_\bk}(\{e\})$ is integral. Recall the open subscheme $(T^\vee_\bk)_\circ \subset T^\vee_\bk$ introduced in~\S\ref{ss:rs}; in concrete terms, $\scO((T^\vee_\bk)_\circ)$ is the spectrum of the localization of $\scO(T^\vee_\bk)$ with respect to the elements $\alpha-1$ where $\alpha$ runs over the coroots of $(G,T)$. We define similarly $\FN_{T^\vee_\bk}(\{e\})_\circ$ as the spectrum of the localization of $\scO(\FN_{T^\vee_\bk}(\{e\}))$ with respect to the elements $\alpha-1$ where $\alpha$ runs over the coroots of $(G,T)$. Then $\FN_{T^\vee_\bk}(\{e\})_\circ$ is integral, and $\scO(\FN_{T^\vee_\bk}(\{e\})_\circ)$ is also the similar localization of $\scO(\FN_{T^\vee_\bk}(\{e\}))$ viewed as an $\scO(T^\vee_\bk)$-module; in other words we have a canonical identification
\begin{equation}
\label{eqn:Trs}
(T^\vee_\bk)_\circ \times_{T^\vee_\bk} \FN_{T^\vee_\bk}(\{e\}) \simto \FN_{T^\vee_\bk}(\{e\})_\circ.
\end{equation}

Since $\mathbb{J}_\Sigma$ is flat over $T^\vee_\bk / \Wf$ (see Lemma~\ref{lem:J-smooth}), the natural morphism
\[
\scO(\FN_{T^\vee_\bk}(\{e\}) \times_{T^\vee_\bk / \Wf} \mathbb{J}_\Sigma) \to \scO(\FN_{T^\vee_\bk}(\{e\})_\circ \times_{T^\vee_\bk / \Wf} \mathbb{J}_\Sigma)
\]
is injective. To prove our claim it therefore suffices to prove that the right-hand side is a domain. However, in view of~\eqref{eqn:Trs} we have
\[
\FN_{T^\vee_\bk}(\{e\})_\circ \times_{T^\vee_\bk / \Wf} \mathbb{J}_\Sigma = \FN_{T^\vee_\bk}(\{e\})\times_{T^\vee_\bk} \left( (T^\vee_\bk)_\circ \times_{T^\vee_\bk / \Wf} \mathbb{J}_\Sigma \right).
\]
By Lemma~\ref{lem:morph-gp-schemes-Jreg} the right-hand side identifies with
\[
\FN_{T^\vee_\bk}(\{e\}) \times_{T^\vee_\bk} ((T^\vee_\bk)_\circ \times T^\vee_\bk) = \FN_{T^\vee_\bk}(\{e\})_\circ \times T^\vee_\bk,
\]
which is integral since it is a split torus over the integral scheme $\FN_{T^\vee_\bk}(\{e\})_\circ$.
\end{proof}

\subsection{Statement}
\label{ss:Soergel-type-statement}

Our main application of Theorem~\ref{thm:main} is the following statement, which establishes Theorem~\ref{thm:main-intro-2}. 

\begin{thm}
\label{thm:main-application}
There exists a canonical equivalence of additive monoidal categories
\[
\Phi_{\mathsf{T}} : (\mathsf{T}^\wedge_{\Iwu,\Iwu}, \hatstar) \simto (\SRep(\bbI^\wedge_\Sigma), \circledast)
\]
which satisfies the following properties:
\begin{enumerate}
\item
\label{it:main-application-1}
$\Phi_{\mathsf{T}}(\Xi^\wedge_{s,!}) \cong \mathscr{B}^\wedge_s$ for any $s \in S$;
\item
\label{it:main-application-2}
$\Phi_{\mathsf{T}}(\Delta^\wedge_\omega) \cong \mathscr{M}^\wedge_\omega$ for any $\omega \in \Omega$;
\item
\label{it:main-application-3}
$\Phi_{\mathsf{T}} ( \tsZ(V) \hatstar \Xi^\wedge_! ) \cong V \otimes_\bk \scO(\FN_{T^\vee_\bk \times_{T^\vee_\bk / \Wf} T^\vee_\bk}(\{(e,e)\}))$ for any $V \in \Rep(G^\vee_\bk)$ tilting.
\end{enumerate}
\end{thm}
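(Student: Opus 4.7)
The plan is to present both categories in the statement as suitable limits of the categories appearing in Theorem~\ref{thm:main}, via the truncation functors $\mathsf{C}^0_m$ on the perverse side (see~\S\ref{ss:truncation-ps}) and $\mathsf{D}_m$ on the coherent side (see~\S\ref{ss:truncation-Rep}), and then to induce $\Phi_{\mathsf{T}}$ term by term from $\Phi_{\Iwu,\Iwu}$. On the perverse side, Lemma~\ref{lem:Cm-properties} gives, for any two tilting objects $\scT, \scT'$, an isomorphism
\[
\Hom_{\mathsf{T}^\wedge_{\Iwu,\Iwu}}(\scT, \scT') \simto \varprojlim_m \Hom_{\mathsf{P}^0_{\Iwu,\Iwu}}(\mathsf{C}^0_m(\scT), \mathsf{C}^0_m(\scT')).
\]
On the coherent side, the analogous statement for $M, N \in \SRep(\bbJ^\wedge_\fD)$ comes from the fact that the morphism spaces are finitely generated over the complete noetherian local ring $\scO(\fD^\wedge)$, that Soergel representations are flat on the right over $\scO((T^\vee_\bk)^\wedge)$, and that Lemma~\ref{lem:Iwedge-integral} gives the integrality needed to apply~\cite[Proposition~7.2.9]{ega1}. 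Applying $\Phi_{\Iwu,\Iwu}$ term-wise should then identify the two limit systems, provided the individual terms correspond compatibly.

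I would first define $\Phi_{\mathsf{T}}$ on a ``Bott--Samelson-type'' subcategory $\BSTilt^\wedge \subset \mathsf{T}^\wedge_{\Iwu,\Iwu}$ whose objects are the $\Delta^\wedge_\omega \hatstar \Xi^\wedge_{s_1,!} \hatstar \cdots \hatstar \Xi^\wedge_{s_i,!}$; on such an object it sends the sequence $(\omega, s_1, \ldots, s_i)$ to its counterpart $\scM^\wedge_\omega \circledast \scB^\wedge_{s_1} \circledast \cdots \circledast \scB^\wedge_{s_i}$ in $\BSRep(\bbJ_\fD^\wedge)$. To promote this to a functor, the core claim to verify is
\[
\Phi_{\Iwu,\Iwu}(\mathsf{C}^0_m(\scT)) \cong \mathsf{D}_m(\Phi_{\mathsf{T}}(\scT)) \qquad \text{for every $\scT \in \BSTilt^\wedge$ and every $m \geq 1$,}
\]
compatibly with the transition maps in $m$. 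This reduces, by the monoidality of $\mathsf{C}^0_m$ on tilting objects (Proposition~\ref{prop:monoidality-C0}), the monoidality of $\mathsf{D}_m$ (Lemma~\ref{lem:truncation-Rep-mon}), and the monoidality of $\Phi_{\Iwu,\Iwu}$ (Theorem~\ref{thm:main}), to the two generator cases: $\scT = \Delta^\wedge_\omega$, which is settled by Proposition~\ref{prop:Phi-Wak-antidom}\eqref{it:Phi-Delta-omega}, and $\scT = \Xi^\wedge_{s,!}$.

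The main obstacle is the computation $\Phi_{\Iwu,\Iwu}(\mathsf{C}^0_m(\Xi^\wedge_{s,!})) \cong \mathsf{D}_m(\scB^\wedge_s)$. The plan for $s \in \Sf$ is to exploit the fact that $\Xi^\wedge_{s,!}$ is supported on $G/U \hookrightarrow \tFl_G$ and therefore to pass through Proposition~\ref{prop:isom-Phi-U-Iu} and the finite-variant Theorem~\ref{thm:reg-quotient-finite}. Concretely, the short exact sequence exhibiting $\Xi^\wedge_{s,!}$ as a quotient of $\Xi^\wedge_!$ (with kernel admitting a standard filtration), together with the projectivity of $\Xi^\wedge_!$ in $\sfP^\wedge_{U,U}$ (Lemma~\ref{lem:Xi-projective}), the identification $\Phi_{U,U}(\mathsf{C}^0_m(\Xi^\wedge_!)) \cong \scO(\fD^{(m)})$ that falls out of the proof of Proposition~\ref{prop:reg-quotient-finite-m}, and the standard/costandard filtrations of $\Xi^\wedge_{s,!}$ recalled in~\S\ref{ss:G/B-tilting}, should force the image to be $\scB^\wedge_s$; this is to be matched against the defining exact sequences~\eqref{eqn:exact-seq-Bs} of $\scB^\wedge_s$ together with Lemma~\ref{lem:PhiUU-cost}. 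For $s \in S \smallsetminus \Sf$, the $\Sf$-case combines with the conjugation descriptions of $\Xi^\wedge_{s,!}$ (in~\S\ref{ss:Twedge}) and of $\scB^\wedge_s$ (in~\eqref{eqn:def-Bs}), via Proposition~\ref{prop:Phi-Wak-antidom}\eqref{it:Phi-Delta-omega} and monoidality. I expect this step to be the principal technical obstacle, since it is the only new identification of objects on the perverse side not already subsumed by Theorem~\ref{thm:main}.

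With these identifications, the term-by-term application of $\Phi_{\Iwu,\Iwu}$ to the two limit systems yields a fully faithful functor $\BSTilt^\wedge \to \BSRep(\bbJ_\fD^\wedge)$; passing to additive and Karoubian envelopes, and using that both $\mathsf{T}^\wedge_{\Iwu,\Iwu}$ and $\SRep(\bbJ_\fD^\wedge)$ are Krull--Schmidt (see~\S\ref{ss:completed-category} and Lemma~\ref{lem:Krull-Schmidt-JD} respectively), produces the desired equivalence $\Phi_{\mathsf{T}}$. Properties~\eqref{it:main-application-1} and~\eqref{it:main-application-2} hold by construction; monoidality is inherited from the definition on Bott--Samelson objects together with Proposition~\ref{prop:monoidality-C0}, Lemma~\ref{lem:truncation-Rep-mon} and Theorem~\ref{thm:main}. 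Finally, property~\eqref{it:main-application-3} is obtained by combining Proposition~\ref{prop:tilting-Z} with Lemma~\ref{lem:Phi-ZV}, Proposition~\ref{prop:isom-Phi-U-Iu}, and the identification $\Phi_{U,U}(\mathsf{C}^0_m(\Xi^\wedge_!)) \cong \scO(\fD^{(m)})$, which together give
\[
\Phi_{\Iwu,\Iwu}(\mathsf{C}^0_m(\tsZ(V) \hatstar \Xi^\wedge_!)) \cong V \otimes_\bk \scO(\fD^{(m)})
\]
for every $m \geq 1$; reassembling in the limit yields $V \otimes_\bk \scO(\fD^\wedge)$, as required.
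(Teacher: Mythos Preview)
Your overall architecture is the same as the paper's: introduce ``sequential'' categories $\mathsf{P}^{\wedge,\mathrm{seq}}_{\Iwu,\Iwu}$ and $\Rep^{\mathrm{seq}}(\bbJ_\fD^\wedge)$, embed $\mathsf{T}^\wedge_{\Iwu,\Iwu}$ and $\SRep(\bbJ^\wedge_\fD)$ fully faithfully into them via the systems $(\mathsf{C}^0_m(-))_m$ and $(\mathsf{D}_m(-))_m$, use $\Phi_{\Iwu,\Iwu}$ to identify the ambient sequential categories, and then match generators. One minor remark: on the coherent side, the fully faithfulness is not a direct application of~\cite[Proposition~7.2.9]{ega1}; the point is that a morphism of $\scO(\fD^\wedge)$-modules recovered from the limit must also be a comodule morphism, and this is where infinitesimal flatness, integrality (Lemma~\ref{lem:Iwedge-integral}), and the passage through $\mathrm{Dist}$ enter (see Lemma~\ref{lem:truncation-Rep}).

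There is, however, a genuine gap in your treatment of $s \in S \smallsetminus \Sf$. The conjugation $\Xi^\wedge_{s,!} = \Delta^\wedge_w \hatstar \Xi^\wedge_{s',!} \hatstar \nabla^\wedge_{w^{-1}}$ from~\S\ref{ss:Twedge} uses a general $w \in W$ (the one fixed in~\S\ref{ss:Waff}), not an element of $\Omega$; Proposition~\ref{prop:Phi-Wak-antidom}\eqref{it:Phi-Delta-omega} therefore does not apply. What is needed is an identification of the projective systems $(\Phi_{\Iwu,\Iwu}(\mathsf{C}^0_m(\Delta^\wedge_w)))_m$ and $(\Phi_{\Iwu,\Iwu}(\mathsf{C}^0_m(\nabla^\wedge_{w^{-1}})))_m$ with $(\mathsf{D}_m(\scM^\wedge_w))_m$ and $(\mathsf{D}_m(\scM^\wedge_{w^{-1}}))_m$ for \emph{all} $w \in W$. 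The paper handles this in Lemma~\ref{lem:Phi-C-Delta}, whose proof is not immediate: it first treats simple reflections in $\Sf$ via Lemmas~\ref{lem:PhiUU-cost} and~\ref{lem:PhiUU-st}, then simple reflections outside $\Sf$ by writing $s=w\st(\lambda)$ with $\lambda$ antidominant and invoking Proposition~\ref{prop:Phi-Wak-antidom}\eqref{it:Phi-Wak-antidom}, and finally reaches general $w$ via reduced expressions and Proposition~\ref{prop:monoidality-C0}. Your sketch omits this entire computation.

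A smaller difference: for $s \in \Sf$ (and for $\Xi^\wedge_!$), the paper does not argue via exact sequences as you propose, but rather invokes the already-proven equivalence $\Phi_{\sfT,U}$ of~\cite[Theorem~11.8]{bezr}, which gives $\Phi_{\sfT,U}(\Xi^\wedge_{s,!}) \cong \scB^\wedge_s$ and $\Phi_{\sfT,U}(\Xi^\wedge_!) \cong \scO(\fD^\wedge)$ directly, and then compares with $\Phi_{\Iwu,\Iwu} \circ \mathsf{C}^0_m$ via Lemma~\ref{lem:PhiT-comparison-G/B}. Your exact-sequence route would also work, but is less economical.
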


Note that property~\eqref{it:main-application-3} shows in particular that $V \otimes_\bk \scO(\FN_{T^\vee_\bk \times_{T^\vee_\bk / \Wf} T^\vee_\bk}(\{(e,e)\}))$ belongs to $\SRep(\bbI^\wedge_\Sigma)$ for any $V \in \Rep(G^\vee_\bk)$ tilting, which is not clear from definitions.
The proof of Theorem~\ref{thm:main-application} occupies the rest of the section. The idea of the proof is to give ``sequential" descriptions of the categories $\mathsf{T}^\wedge_{\Iwu,\Iwu}$ and $\SRep(\bbI^\wedge_\Sigma)$, in terms of the categories $\mathsf{P}^0_{\Iwu,\Iwu}$ and $\Rep_0(\bbI_\Sigma)$ respectively, and then use Theorem~\ref{thm:main} to relate these two descriptions.

More specifically, using the notation introduced in~\S\ref{ss:truncation-ps}, on the perverse side, we will
denote by $\mathsf{P}^{\wedge,\mathrm{seq}}_{\Iwu,\Iwu}$ the monoidal additive $\bk$-linear category with:
\begin{itemize}
\item
objects the projective systems $(\scF_m : m \geq 1)$ of objects in $\mathsf{P}^0_{\Iwu,\Iwu}$ such that $\mathcal{J}^m$ acts trivially on $\scF_m$ for any $m \geq 1$, and for $m' \geq m$ the morphism
\[
\scO((T^\vee_\bk)^{(m)}) \otimes_{\scO(T^\vee_\bk)} \scF_{m'} \to \scF_m
\]
induced by the transition morphism $\scF_{m'} \to \scF_m$ is an isomorphism;
\item
morphisms from $(\scF_m : m \geq 1)$ to $(\scG_m : m \geq 1)$ the inverse limit
\[
\varprojlim_m \Hom_{\mathsf{P}^0_{\Iwu,\Iwu}}(\scF_m,\scG_m)
\]
where for $m' \geq m$ the morphism $\Hom(\scF_{m'},\scG_{m'}) \to \Hom(\scF_m,\scG_m)$ is induced by the functor $\scO((T^\vee_\bk)^{(m)}) \otimes_{\scO(T^\vee_\bk)} (-)$;
\item
monoidal product sending a pair of objects $((\scF_m : m \geq 1),(\scG_m : m \geq 1))$ to the object $(\scF_m \pstar^0_{\Iwu} \scG_m : m \geq 1)$. (This projective system is indeed an object of our category by right exactness of $\pstar^0_{\Iwu}$.)
\end{itemize}
With this definition, Proposition~\ref{prop:monoidality-C0} and Lemma~\ref{lem:Cm-properties} imply that the assignment
\[
\scT \mapsto (\mathsf{C}_m^0(\scT) : m \geq 1)
\]
defines a fully faithful monoidal functor
\begin{equation}
\label{eqn:seq-functor-Perv}
\mathsf{T}^\wedge_{\Iwu,\Iwu} \to \mathsf{P}^{\wedge,\mathrm{seq}}_{\Iwu,\Iwu}.
\end{equation}

\subsection{Truncation functors for representations: fully faithfulness}
\label{ss:truncation-Rep-ff}

We now consider the category $\Rep(\bbI_\Sigma^\wedge)$, and the functors $\sfD_m$ introduced in~\S\ref{ss:truncation-Rep}. 

\begin{lem}
\label{lem:truncation-Rep}
Let $M,M' \in \Rep(\bbI_\Sigma^\wedge)$.
If $M'$ is projective for the right action of $\scO(\FN_{T^\vee_\bk}(\{e\}))$, then
the functors $\mathsf{D}_m$ induce an isomorphism
\[
\Hom_{\Rep(\bbI_\Sigma^\wedge)}(M,M') \simto \varprojlim_m \Hom_{\Rep_0(\bbI_\Sigma)}(\mathsf{D}_m(M),\mathsf{D}_m(M')).
\]
\end{lem}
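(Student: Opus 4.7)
The plan is to reduce the equivariant identity to a standard inverse-limit statement for finitely generated modules over a complete Noetherian local ring, and then to promote it to equivariance via a $\cJ$-adic separation argument made essential by the projectivity hypothesis.

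Set $R := \scO(\fD^\wedge)$, which by Lemma~\ref{lem:Dw-fiber-prod} is a Noetherian complete local ring whose maximal ideal is cofinal with $\cJ \cdot R$. Every finitely generated $R$-module is then $\cJ$-adically complete, so commuting $\Hom_R(M,-)$ with the inverse limit $M' \cong \varprojlim M'/\cJ^m M'$ and applying the extension-of-scalars adjunction yields
\[
\Hom_R(M, M') \xrightarrow{\sim} \varprojlim_m \Hom_{R/\cJ^m R}(\sfD_m(M), \sfD_m(M')).
\]
The equivariant Hom is the equalizer of the two maps $\Hom_R(M, M') \rightrightarrows \Hom_R(M, M' \otimes_R \scO(\bbJ_\fD^\wedge))$ induced by the coactions on $M'$ and on $M$, and analogously at each level $\sfD_m$. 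Since equalizers commute with inverse limits, the lemma reduces to injectivity of the comparison map obtained by replacing $M'$ with $M' \otimes_R \scO(\bbJ_\fD^\wedge)$, which by the identity above is equivalent to $\cJ$-adic separation of $N := M' \otimes_R \scO(\bbJ_\fD^\wedge)$.

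This is exactly where the projectivity hypothesis enters. Since $\scO((T^\vee_\bk)^\wedge)$ is local, the finitely generated right-projective module $M'$ is in fact free as a right module; combining this with $\scO(\bbJ_\fD^\wedge) \cong \scO(\fD^\wedge) \otimes_{\scO(\Sigma)} \scO(\bbJ_\Sigma)$ and the freeness of $\scO((T^\vee_\bk)^\wedge)$ over $\scO((T^\vee_\bk/\Wf)^\wedge)$ (Lemma~\ref{lem:Dw-fiber-prod}) identifies $N$ with a finite direct sum of copies of $A := \scO(\bbJ_\Sigma) \otimes_{\scO(\Sigma)} \scO((T^\vee_\bk/\Wf)^\wedge)$. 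The problem is reduced to showing that $A$ is $\cJ$-adically separated as a module over itself.

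The main obstacle is precisely this separation, since $A$ is not a finitely generated $R$-module, which prevents applying Krull's theorem directly to $N$. The key input is Lemma~\ref{lem:Iwedge-integral}: together with the faithful flatness of $\scO((T^\vee_\bk)^\wedge)$ over $\scO((T^\vee_\bk/\Wf)^\wedge)$, it furnishes an injection of $A$ into the integral ring $\scO((T^\vee_\bk)^\wedge) \otimes_{\scO(T^\vee_\bk/\Wf)} \scO(\bbJ_\Sigma)$, forcing $A$ itself to be a Noetherian domain (Noetherianness comes from $A$ being a finitely generated algebra over the complete Noetherian ring $\scO((T^\vee_\bk/\Wf)^\wedge)$). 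Since $\cJ \cdot A$ is a proper ideal---its quotient contains the nonzero centralizer ring $\scO(Z_{G^\vee_\bk}(\su))$---Krull's intersection theorem gives $\bigcap_m \cJ^m \cdot A = 0$, and hence $N$ is $\cJ$-adically separated, completing the proof.
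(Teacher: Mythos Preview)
Your argument is correct and takes a genuinely different route from the paper's. Both proofs begin identically---establishing the isomorphism at the level of underlying $\scO(\fD^\wedge)$-modules via completeness---and both rely crucially on Lemma~\ref{lem:Iwedge-integral}. The divergence is in how equivariance is recovered. The paper passes to the group scheme $(T^\vee_\bk)^\wedge \times_{T^\vee_\bk/\Wf} \bbJ_\Sigma$, verifies it is infinitesimally flat (Corollary~\ref{cor:smooth-infflat} plus base change), integral, and noetherian, and invokes Lemma~\ref{lem:morph-Dist} to replace comodule morphisms by $\mathrm{Dist}$-module morphisms, where compatibility with the inverse system is immediate. You instead view the equivariant $\Hom$ as an equalizer and reduce to $\cJ$-adic separation of $M' \otimes_R \scO(\bbJ_\fD^\wedge)$; the projectivity hypothesis and Lemma~\ref{lem:Dw-fiber-prod} turn this into separation of the Noetherian domain $A = \scO(\bbJ_\Sigma) \otimes_{\scO(\Sigma)} \scO((T^\vee_\bk/\Wf)^\wedge)$, which follows from Krull's intersection theorem. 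Your approach is more elementary in that it sidesteps the distribution-algebra machinery of the appendix entirely; the paper's approach has the mild advantage that the projectivity of $M'$ over the base is used only through Lemma~\ref{lem:morph-Dist} and need not be unwound into an explicit freeness statement. One small imprecision: you say injectivity of the comparison map for $N$ is ``equivalent'' to $\cJ$-adic separation, but only the implication from separation to injectivity is needed (and proved).
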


\begin{proof}
For any $M,M'$ in $\Rep(\bbI_\Sigma^\wedge)$ and any $m \geq 1$ the forgetful functor $\Rep_0(\bbI_\Sigma) \to \Coh_{\{(e,e)\}}(T^\vee_\bk \times_{T^\vee_\bk / \Wf} T^\vee_\bk)$ induces an embedding
\[
\Hom_{\Rep_0(\bbI_\Sigma)}(\mathsf{D}_m(M),\mathsf{D}_m(M')) \hookrightarrow \Hom_{\Coh_{\{(e,e)\}}(T^\vee_\bk \times_{T^\vee_\bk / \Wf} T^\vee_\bk)}(\mathsf{D}_m(M),\mathsf{D}_m(M')).
\]
These embeddings give rise to a morphism fitting as the right vertical arrow in the following commutative diagram, where the other arrows are the obvious maps, and where in the lower line we consider morphisms in $\Coh(\FN_{T^\vee_\bk \times_{T^\vee_\bk / \Wf} T^\vee_\bk}(\{(e,e)\}))$ and in $\Coh_{\{(e,e)\}}(T^\vee_\bk \times_{T^\vee_\bk / \Wf} T^\vee_\bk)$ respectively:
\[
\xymatrix{
\Hom_{\Rep(\bbI_\Sigma^\wedge)}(M,M') \ar[r] \ar@{^{(}->}[d] & \varprojlim_m \Hom_{\Rep_0(\bbI_\Sigma)}(\mathsf{D}_m(M),\mathsf{D}_m(M')) \ar[d] \\
\Hom(M,M') \ar[r] & \varprojlim_m \Hom(\mathsf{D}_m(M),\mathsf{D}_m(M')).
}
\]
The lower horizontal arrow in this diagram is an isomorphism by~\cite[Chap.~0, Corollaire~7.2.10]{ega1}. It follows that the upper horizontal arrow, which is the morphism we need to study, is injective.

To prove surjectivity, we consider a projective system
\[
(f_m)_{m \geq 1} \in \varprojlim_m \Hom_{\Rep_0(\bbI_\Sigma)}(\mathsf{D}_m(M),\mathsf{D}_m(M')).
\]
Each $f_m$ is a morphism of $\scO(T^\vee_\bk \times_{T^\vee_\bk / \Wf} T^\vee_\bk)$-modules; this collection therefore defines a morphism $f : M \to M'$ of $\scO(\FN_{T^\vee_\bk \times_{T^\vee_\bk / \Wf} T^\vee_\bk}(\{(e,e)\}))$-modules, and what we have to show is that $f$ is also a morphism of $\scO(\bbI_\Sigma^\wedge)$-comodules. We consider the second projection $\FN_{T^\vee_\bk \times_{T^\vee_\bk / \Wf} T^\vee_\bk}(\{(e,e)\}) \to \FN_{T^\vee_\bk}(\{e\})$. Then we have
\[
\bbI_\Sigma^\wedge = \FN_{T^\vee_\bk \times_{T^\vee_\bk / \Wf} T^\vee_\bk}(\{(e,e)\}) \times_{\FN_{T^\vee_\bk}(\{e\})} \left( \FN_{T^\vee_\bk}(\{e\}) \times_{T^\vee_\bk / \Wf} \mathbb{J}_\Sigma \right),
\]
so that $M$ and $M'$ can be considered as representations of $\FN_{T^\vee_\bk}(\{e\}) \times_{T^\vee_\bk / \Wf} \mathbb{J}_\Sigma$. From this point of view, to prove the desired claim it suffices to show that $f$ is a morphism of $\scO(\FN_{T^\vee_\bk}(\{e\}) \times_{T^\vee_\bk / \Wf} \mathbb{J}_\Sigma)$-comodules. Now $\mathbb{J}_\Sigma$ is smooth over the smooth $\bk$-scheme $T^\vee_\bk / \Wf$, and $\FN_{T^\vee_\bk}(\{e\})$ is flat over $T^\vee_\bk / \Wf$ as the composition of flat morphisms
\[
\FN_{T^\vee_\bk}(\{e\}) \to \FN_{T^\vee_\bk/\Wf}(\{e\}) \to T^\vee_\bk / \Wf
\]
(see Lemma~\ref{lem:Dw-fiber-prod} for the first map). By Lemma~\ref{lem:infflat-base-change} and Corollary~\ref{cor:smooth-infflat}, this implies that $\FN_{T^\vee_\bk}(\{e\}) \times_{T^\vee_\bk / \Wf} \mathbb{J}_\Sigma$ is infinitesimally flat. This group scheme is also integral by Lemma~\ref{lem:Iwedge-integral}, and noetherian since it is of finite type over the noetherian scheme $\FN_{T^\vee_\bk}(\{e\})$.
We can therefore apply Lemma~\ref{lem:morph-Dist} which reduces the proof to checking that $f$ is a morphism of $\mathrm{Dist}(\FN_{T^\vee_\bk}(\{e\}) \times_{T^\vee_\bk / \Wf} \mathbb{J}_\Sigma)$-modules. However each $f_m$ is a morphism of $\mathrm{Dist}(\FN_{T^\vee_\bk}(\{e\}) \times_{T^\vee_\bk / \Wf} \mathbb{J}_\Sigma)$-modules, so that this claim is clear.
\end{proof}

Let us denote by $\Rep^{\mathrm{seq}}(\bbI_\Sigma^\wedge)$ the monoidal additive $\bk$-linear category with:
\begin{itemize}
\item
objects the projective systems $(M_m : m \geq 1)$ of objects of $\Rep_0(\bbI_\Sigma)$ such that $\mathcal{J}^m$ acts trivially on $M_m$ for any $m \geq 1$, and for $m' \geq m$ the morphism
\[
\mathsf{D}_m(M_{m'}) \to M_m
\]
induced by the transition morphism $M_{m'} \to M_m$ is an isomorphism;
\item
morphisms from $(M_m : m \geq 1)$ to $(N_m : m \geq 1)$ the inverse limit
\[
\varprojlim_m \Hom_{\Rep_0(\bbI_\Sigma)}(M_m, N_m)
\]
where for $m' \geq m$ the transition morphism
\[
\Hom_{\Rep_0(\bbI_\Sigma)}(M_{m'}, N_{m'}) \to \Hom_{\Rep_0(\bbI_\Sigma)}(M_m, N_m)
\]
is induced by the functor $\mathsf{D}_m$;
\item
monoidal product sending a pair of objects $((M_m : m \geq 1),(N_m : m \geq 1))$ to the object $(M_m \circledast N_m : m \geq 1)$.
\end{itemize}
With this definition, Lemma~\ref{lem:truncation-Rep-mon} and Lemma~\ref{lem:truncation-Rep} imply that the assignment
\[
M \mapsto (\mathsf{D}_m(M) : m \geq 1)
\]
defines a monoidal functor
\begin{equation}
\label{eqn:seq-functor-Rep}
\Rep(\bbI^\wedge_\Sigma) \to \Rep^{\mathrm{seq}}(\bbI_\Sigma^\wedge)
\end{equation}
which is fully faithful on the full subcategory of objects which are projective for the right action of $\scO(\FN_{T^\vee_\bk}(\{e\}))$.
In particular, objects of $\SRep(\bbI_\Sigma^\wedge)$ satisfy this condition (see~\S\ref{ss:SRep}), hence~\eqref{eqn:seq-functor-Rep} is fully faithful on $\SRep(\bbI_\Sigma^\wedge)$.

\subsection{Construction of \texorpdfstring{$\Phi_{\mathsf{T}}$}{PhiT}}

Comparing the definitions of the categories $\mathsf{P}^{\wedge,\mathrm{seq}}_{\Iwu,\Iwu}$ (see~\S\ref{ss:Soergel-type-statement}) and $ \Rep^{\mathrm{seq}}(\bbI_\Sigma^\wedge)$ (see~\S\ref{ss:truncation-Rep-ff}),
we see that Theorem~\ref{thm:main} provides a canonical equivalence of monoidal additive $\bk$-linear categories
\[
\Phi_{\Iwu,\Iwu}^{\mathrm{seq}} : \mathsf{P}^{\wedge,\mathrm{seq}}_{\Iwu,\Iwu} \simto \Rep^{\mathrm{seq}}(\bbI_\Sigma^\wedge).
\]
In the remaining subsections we will prove the following claim. 

\begin{lem}
\phantomsection
\label{lem:isom-PhiT}
\begin{enumerate}
\item
\label{it:isom-main-app-1}
For any $s \in S$ there exists an isomorphism of projective systems
\[
\left( \Phi_{\Iwu,\Iwu} \bigl( \mathsf{C}^0_m(\Xi^\wedge_{s,!}) \bigr) : m \geq 1 \right) \cong ( \mathsf{D}_m(\mathscr{B}^\wedge_s) : m \geq 1 ).
\]
\item
\label{it:isom-main-app-3}
There exists an isomorphism of projective systems
\[
\left( \Phi_{\Iwu,\Iwu} \bigl( \mathsf{C}_m^0(\Xi^\wedge_{!}) \bigr) : m \geq 1 \right) \cong ( \mathsf{D}_m(\scO(\FN_{T^\vee_\bk \times_{T^\vee_\bk / \Wf} T^\vee_\bk}(\{(e,e)\}))) : m \geq 1).
\]
\end{enumerate}
\end{lem}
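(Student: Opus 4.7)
The plan is to attack the two parts in parallel, in both cases reducing computation via the commutative diagrams of~\eqref{eqn:diag-compatibility-Phi} to assertions that either follow from $\Phi_{U,U}$ or can be assembled via monoidality of $\Phi_{\Iwu,\Iwu}$ (now established in~\S\ref{ss:monoidality-PhiIuIu}) and of $\mathsf{D}_m$ (Lemma~\ref{lem:truncation-Rep-mon}). For part~\eqref{it:isom-main-app-3}, I would exploit the fact that $\Xi^\wedge_!$ is supported on the closed subscheme $G/U \subset \tFl_G$. By Proposition~\ref{prop:isom-Phi-U-Iu}, the system $(\Phi_{\Iwu,\Iwu}(\mathsf{C}^0_m(\Xi^\wedge_!)))_m$ is identified (with trivial representation structure) with $(\Phi_{U,U}(\mathsf{C}^0_m(\Xi^\wedge_!)))_m$. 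Setting $\Xi^{(m)}_! := \mathsf{C}_m(\Xi^\wedge_!)$, the proof of Proposition~\ref{prop:reg-quotient-finite-m} (via the isomorphism~\eqref{eqn:End-Xi-m}) already identifies $\Phi_{U,U}(\Pi^0_{U,U}(\Xi^{(m)}_!))$ with the free rank-one module $\scO(\fD^{(m)})$; since $\mathsf{D}_m(\scO(\fD^\wedge)) = \scO(\fD^{(m)})$ tautologically, and since both projective systems have transition maps coming from reduction modulo powers of $\cJ$, the two systems agree.

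For part~\eqref{it:isom-main-app-1}, I would split according to whether $s \in \Sf$. When $s \in \Sf$, the object $\Xi^\wedge_{s,!}$ again lives on $G/U$ and the same reduction via Proposition~\ref{prop:isom-Phi-U-Iu} leaves us to compute $\Phi_{U,U}(\mathsf{C}^0_m(\Xi^\wedge_{s,!}))$. Recall from~\S\ref{ss:G/B-tilting} that $\Xi^\wedge_{s,!}$ sits in two short exact sequences
\[
\nabla^\wedge_e \hookrightarrow \Xi^\wedge_{s,!} \twoheadrightarrow \nabla^\wedge_s, \qquad \Delta^\wedge_s \hookrightarrow \Xi^\wedge_{s,!} \twoheadrightarrow \Delta^\wedge_e.
\]
Applying $\mathsf{C}^0_m$ (which preserves exactness, by Proposition~\ref{prop:exactness-Cm}) and then $\Phi_{U,U}$, and using Lemma~\ref{lem:PhiUU-cost} to identify the outer terms with the appropriate truncations of $\mathscr{M}^\wedge_e$ and $\mathscr{M}^\wedge_s$, one matches the resulting sequences in $\Coh_0(\fD)$ with the two exact sequences~\eqref{eqn:exact-seq-Bs} characterising $\mathscr{B}^\wedge_s$; a diagram chase yields the isomorphism $\Phi_{U,U}(\mathsf{C}^0_m(\Xi^\wedge_{s,!})) \cong \mathsf{D}_m(\mathscr{B}^\wedge_s)$ functorially in $m$.

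When $s \in S \smallsetminus \Sf$, I would use $\Xi^\wedge_{s,!} = \Delta^\wedge_w \hatstar \Xi^\wedge_{s',!} \hatstar \nabla^\wedge_{w^{-1}}$ with $s' \in \Sf$ and $s = ws'w^{-1}$. Since $\Xi^\wedge_{s',!}$ and $\nabla^\wedge_{w^{-1}}$ admit costandard filtrations, two applications of Proposition~\ref{prop:monoidality-C0} yield
\[
\mathsf{C}^0_m(\Xi^\wedge_{s,!}) \cong \mathsf{C}^0_m(\Delta^\wedge_w) \pstar^0_{\Iwu} \mathsf{C}^0_m(\Xi^\wedge_{s',!}) \pstar^0_{\Iwu} \mathsf{C}^0_m(\nabla^\wedge_{w^{-1}}).
\]
Monoidality of $\Phi_{\Iwu,\Iwu}$, combined with Lemma~\ref{lem:truncation-Rep-mon} and the definition $\mathscr{B}^\wedge_s = \mathscr{M}^\wedge_w \circledast \mathscr{B}^\wedge_{s'} \circledast \mathscr{M}^\wedge_{w^{-1}}$, reduces the problem to identifying $\Phi_{\Iwu,\Iwu}(\mathsf{C}^0_m(\Delta^\wedge_w))$ with $\mathsf{D}_m(\mathscr{M}^\wedge_w)$ for arbitrary $w \in W$ (and similarly for $\nabla^\wedge_{w^{-1}}$). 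This extends Proposition~\ref{prop:Phi-Wak-antidom}\eqref{it:Phi-Delta-omega} from $\Omega$ to all of $W$ by induction on $\ell(w)$: choose a reduced expression $w = \omega t_1 \cdots t_k$ with $\omega \in \Omega$ and $t_i \in S$, and use the second case of Proposition~\ref{prop:monoidality-C0} repeatedly together with~\eqref{eqn:convolution-D} to reduce to Proposition~\ref{prop:Phi-Wak-antidom}\eqref{it:Phi-Delta-omega} and to the simple-reflection cases $t \in S$, the latter handled via the $\Phi_{U,U}$ reduction (for $t \in \Sf$) or by conjugation back to $\Sf$ via Lemma~\ref{lem:simple-ref-conjugation} (for $t \in S \smallsetminus \Sf$).

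The main obstacle is organising the induction on $\ell(w)$ without circularity and treating the standard (as opposed to costandard) simple-reflection case on $G/U$: Lemma~\ref{lem:PhiUU-cost} directly gives $\Phi_{U,U}(\mathsf{C}^0_m(\nabla^\wedge_t)) \cong \mathsf{D}_m(\mathscr{M}^\wedge_t)$, but the analogous claim for $\Delta^\wedge_t$ requires identifying $\Pi^0_{U,U}(\Delta^\wedge_t)$ with $\Pi^0_{U,U}(\nabla^\wedge_t)$ (which should hold because their difference is an extension of copies of $\Pi^0_{U,U}(\IC_t)=0$, the top stratum having positive length). Once this identification is in place and the induction is run in the natural order, the entire argument assembles and Theorem~\ref{thm:main-application} follows by feeding the resulting compatibilities into the monoidal equivalence $\Phi^{\mathrm{seq}}_{\Iwu,\Iwu}$ furnished by Theorem~\ref{thm:main} together with the fully faithful functors~\eqref{eqn:seq-functor-Perv} and~\eqref{eqn:seq-functor-Rep}.
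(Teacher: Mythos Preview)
Your overall architecture matches the paper: part~\eqref{it:isom-main-app-3} and the finite case $s \in \Sf$ of part~\eqref{it:isom-main-app-1} are handled on $G/U$ via Proposition~\ref{prop:isom-Phi-U-Iu}, and the case $s \in S \smallsetminus \Sf$ is reduced via Proposition~\ref{prop:monoidality-C0} and monoidality of $\Phi_{\Iwu,\Iwu}$ to identifying $\Phi_{\Iwu,\Iwu}(\sfC^0_m(\Delta^\wedge_w))$ and $\Phi_{\Iwu,\Iwu}(\sfC^0_m(\nabla^\wedge_{w^{-1}}))$. But two steps do not go through as you wrote them.

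First, for $s \in \Sf$ the two exact sequences do not ``characterise'' $\mathscr{B}^\wedge_s$: any extension of $\scM_e$ by $\scM_s$ (including the split one) fits them, so a diagram chase alone cannot yield the identification. The paper avoids this by invoking the equivalence $\Phi_{\sfT,U}$ from~\cite{bezr} (see~\eqref{eqn:PhiTU-Xis}--\eqref{eqn:PhiTU-Xi}) and the comparison Lemma~\ref{lem:PhiT-comparison-G/B}, which gives $\Phi_{\Iwu,\Iwu}(\sfC^0_m(\Xi^\wedge_{s,!})) \cong \sfD_m(\Phi_{\sfT,U}(\Xi^\wedge_{s,!})) \cong \sfD_m(\scB^\wedge_s)$ directly. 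Second, your suggested identification $\sfC^0_m(\Delta^\wedge_t) \cong \sfC^0_m(\nabla^\wedge_t)$ via ``difference built from $\IC_t$'' is false: the cone of the natural map $(\widetilde{\jmath}_t)_! \to (\widetilde{\jmath}_t)_*$ is supported on the boundary stratum $\tFl_{G,e}$, not on $\tFl_{G,t}$, so it is \emph{not} killed by $\Pi^0_{U,U}$. Relatedly, your conjugation strategy for $t \in S \smallsetminus \Sf$ introduces a $w$ of possibly positive length, making the induction circular. The paper handles $\Delta^\wedge_t$ for $t \in \Sf$ in Lemma~\ref{lem:PhiUU-st} via the relation $\Delta^\wedge_t \hatstar \nabla^\wedge_{t^{-1}} \cong \delta^\wedge$ and monoidality, and for $t \in S \smallsetminus \Sf$ (inside Lemma~\ref{lem:Phi-C-Delta}) by writing $t = w\st(\lambda)$ with $w \in \Wf$ and $\lambda$ antidominant, so that $\Delta^\wedge_t \cong \nabla^\wedge_w \hatstar \Delta^\wedge_{\st(\lambda)}$; then Lemma~\ref{lem:PhiUU-cost} (finite case) and Proposition~\ref{prop:Phi-Wak-antidom}\eqref{it:Phi-Wak-antidom} (Wakimoto case) handle the two factors with no circularity.
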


For now, let us explain why Lemma~\ref{lem:isom-PhiT} allows to complete the proof of Theorem~\ref{thm:main-application}.

\begin{proof}[Proof of Theorem~\ref{thm:main-application}]
Lemma~\ref{lem:isom-PhiT}\eqref{it:isom-main-app-1} implies that the equivalence $\Phi_{\Iwu,\Iwu}^{\mathrm{seq}}$ matches the image of each $\Xi^\wedge_{s,!}$ ($s \in S$) under~\eqref{eqn:seq-functor-Perv} with the image of $\mathscr{B}^\wedge_s$ under~\eqref{eqn:seq-functor-Rep}. Similarly, Proposition~\ref{prop:Phi-Wak-antidom}\eqref{it:Phi-Delta-omega}
implies that $\Phi_{\Iwu,\Iwu}^{\mathrm{seq}}$ matches the image of each $\Delta^\wedge_\omega$ ($\omega \in \Omega$) under~\eqref{eqn:seq-functor-Perv} with the image of $\mathscr{M}^\wedge_\omega$ under~\eqref{eqn:seq-functor-Rep}.
In view of the discussion in~\S\ref{ss:Twedge} and the definition of $\SRep(\bbI_\Sigma^\wedge)$,
these properties and monoidality imply that $\Phi_{\Iwu,\Iwu}^{\mathrm{seq}}$ identifies the essential images of the functor~\eqref{eqn:seq-functor-Perv} with the essential image of $\SRep(\bbI_\Sigma^\wedge)$ under~\eqref{eqn:seq-functor-Rep}. We deduce the equivalence $\Phi_{\mathsf{T}}$, and also that this equivalence satisfies properties~\eqref{it:main-application-1}--\eqref{it:main-application-2}.

Finally, consider some $V \in \Rep(G^\vee_\bk)$ which is tilting. As explained in~\S\ref{ss:Twedge}, we have the object $\tsZ(V) \hatstar \Xi_!^\wedge$ in $\mathsf{T}^\wedge_{\Iwu,\Iwu}$.
Lemma~\ref{lem:isom-PhiT}\eqref{it:isom-main-app-3} and Lemma~\ref{lem:Phi-ZV} provide an isomorphism of projective systems
\begin{multline*}
\left( \Phi_{\Iwu,\Iwu} \bigl( \mathsf{C}^0_m(\tsZ(V) \hatstar \Xi^\wedge_{!}) \bigr) : m \geq 1 \right) \cong \\
( \mathsf{D}_m(V \otimes_\bk \scO(\FN_{T^\vee_\bk \times_{T^\vee_\bk / \Wf} T^\vee_\bk}(\{(e,e)\}))) : m \geq 1),
\end{multline*}
which allows to identify the images of the objects $\Phi_{\mathsf{T}}(\tsZ(V) \hatstar \Xi^\wedge_{!})$ and $V \otimes_\bk \scO(\FN_{T^\vee_\bk \times_{T^\vee_\bk / \Wf} T^\vee_\bk}(\{(e,e)\}))$ under~\eqref{eqn:seq-functor-Rep}. Now $\Phi_{\mathsf{T}}( \tsZ(V) \hatstar \Xi^\wedge_{!} )$ satisfies the assumption in Lemma~\ref{lem:truncation-Rep} because it belongs to $\SRep(\bbI_\Sigma^\wedge)$, and the module $V \otimes_\bk \scO(\FN_{T^\vee_\bk \times_{T^\vee_\bk / \Wf} T^\vee_\bk}(\{(e,e)\}))$ also does by Lemma~\ref{lem:Dw-fiber-prod}\eqref{it:Dw-fiber-prod-1}. We deduce that these objects are isomorphic, proving that $\Phi_{\mathsf{T}}$ satisfies property~\eqref{it:main-application-3}.
\end{proof}

\subsection{The case of \texorpdfstring{$G/U$}{G/U}}

Recall the category $\sfD^\wedge_{U,U}$ from~\S\ref{ss:G/B-tilting}, the heart $\sfP^\wedge_{U,U}$ of the perverse t-structure on this category, and the full subcategory $\sfT^\wedge_{U,U}$ of tilting perverse sheaves. Recall also that we denote by $\sfP_{U,U}$ the heart of the perverse t-structure on $\sfD_{U,U}$. Pushforward along the closed embedding $G/U \hookrightarrow \tFl_G$ defines a fully faithful monoidal functor $(\sfD^\wedge_{U,U}, \hatstar_U) \to (\sfD^\wedge_{\Iwu,\Iwu}, \hatstar)$, which restricts to a fully faithful monoidal functor $(\sfT^\wedge_{U,U}, \hatstar_U) \to (\sfT^\wedge_{\Iwu,\Iwu}, \hatstar)$. As noted in~\S\ref{ss:cat-sheaves-G/U}, for any $m \geq 1$ the restriction of the functor $\mathsf{C}_m$ to $\sfP^\wedge_{U,U}$ factors through a functor
$\sfP^\wedge_{U,U} \to \sfP_{U,U}$, which will be denoted similarly.

Consider also the category $\Coh(\FN_{T^\vee_\bk \times_{T^\vee_\bk / \Wf} T^\vee_\bk}(\{(e,e)\}))$ of coherent sheaves on the noetherian scheme $\FN_{T^\vee_\bk \times_{T^\vee_\bk / \Wf} T^\vee_\bk}(\{(e,e)\})$. We have a natural fully faithful functor
\begin{equation}
\label{eqn:Coh-Rep-Dwedge}
\Coh(\FN_{T^\vee_\bk \times_{T^\vee_\bk / \Wf} T^\vee_\bk}(\{(e,e)\})) \to \Rep(\bbI^\wedge_\Sigma)
\end{equation}
sending a coherent sheaf to itself with the trivial structure as a representation of $\bbI^\wedge_\Sigma$, whose essential image contains the objects $\mathscr{B}^\wedge_s$ for $s \in \Sf$. It is clear from definition that the monoidal product $\circledast$ on $\Rep(\bbI^\wedge_\Sigma)$ restricts to a monoidal product on $\Coh(\FN_{T^\vee_\bk \times_{T^\vee_\bk / \Wf} T^\vee_\bk}(\{(e,e)\}))$ (which will be denoted similarly). It is clear also that for any $m \geq 1$ the restriction of the functor $\mathsf{D}_m$ to $\Coh(\FN_{T^\vee_\bk \times_{T^\vee_\bk / \Wf} T^\vee_\bk}(\{(e,e)\}))$ factors through a functor
\[
\Coh(\FN_{T^\vee_\bk \times_{T^\vee_\bk / \Wf} T^\vee_\bk}(\{(e,e)\})) \to \Coh_{\{(e,e)\}}(T^\vee_\bk \times_{T^\vee_\bk / \Wf} T^\vee_\bk),
\]
which will be denoted similarly.
We will denote by
\[
\SCoh(\FN_{T^\vee_\bk \times_{T^\vee_\bk / \Wf} T^\vee_\bk}(\{(e,e)\}))
\]
the full monoidal additive subcategory of $\Coh(\FN_{T^\vee_\bk \times_{T^\vee_\bk / \Wf} T^\vee_\bk}(\{(e,e)\}))$ generated (under the monoidal product, direct sums and direct summands) by the unit object $\mathscr{M}^\wedge_e$ and the objects $\mathscr{B}^\wedge_s$ for $s \in \Sf$. With this definition,~\eqref{eqn:Coh-Rep-Dwedge} identifies $\SCoh(\FN_{T^\vee_\bk \times_{T^\vee_\bk / \Wf} T^\vee_\bk}(\{(e,e)\}))$ with a full subcategory of $\SRep(\bbI^\wedge_\Sigma)$.

In~\cite[Theorem~11.8]{bezr} we have shown that the functor $\Hom_{\sfP^\wedge_{U,U}}(\Xi_!^\wedge,-)$ defines an equivalence of monoidal categories
\[
\Phi_{\sfT,U} : (\sfT^\wedge_{U,U}, \hatstar_U) \simto (\SCoh(\FN_{T^\vee_\bk \times_{T^\vee_\bk / \Wf} T^\vee_\bk}(\{(e,e)\})), \circledast)
\]
which satisfies
\begin{equation}
\label{eqn:PhiTU-Xis}
\Phi_{\sfT,U}(\Xi^\wedge_{s,!}) \cong \mathscr{B}^\wedge_s \quad \text{for $s \in \Sf$}
\end{equation}
and
\begin{equation}
\label{eqn:PhiTU-Xi}
\Phi_{\sfT,U}(\Xi^\wedge_{!}) \cong \scO(\FN_{T^\vee_\bk \times_{T^\vee_\bk / \Wf} T^\vee_\bk}(\{(e,e)\})).
\end{equation}

\begin{lem}
\label{lem:PhiT-comparison-G/B}
For any $m \geq 1$ and any $\scT$ in $\sfT^\wedge_{U,U}$ we have a canonical isomorphism
\[
\Phi_{\Iwu,\Iwu}(\mathsf{C}^0_{m}(\scT)) \cong \mathsf{D}_{m}(\Phi_{\sfT,U}(\scT)),
\]
where the right-hand side is seen as an object of $\Rep_0(\bbI_\Sigma)$ via~\eqref{eqn:Coh-Rep-JD}.
\end{lem}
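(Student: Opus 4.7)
The plan is to peel off the trivial $\bbJ_\fD$-equivariance on both sides and reduce the claim to a computation in $\Coh_0(\fD)$, and then to collapse that computation to an exact-functor manipulation made possible by the projectivity of $\Xi^\wedge_!$. More precisely, since $\scT$ lies in $\sfT^\wedge_{U,U}$, the truncation $\mathsf{C}_m(\scT)$ belongs to $\sfP_{U,U}$ (apply Lemma~\ref{lem:subcat-completion-monodromy} using that $\cJ^m$ annihilates it), so $\mathsf{C}^0_m(\scT) = \Pi^0_{U,U}(\mathsf{C}_m(\scT))$ is an object of $\sfP^0_{U,U}$. First I would invoke Proposition~\ref{prop:isom-Phi-U-Iu} to obtain a canonical isomorphism
\[
\Phi_{\Iwu,\Iwu}(\mathsf{C}^0_m(\scT)) \cong \Phi_{U,U}(\mathsf{C}^0_m(\scT))
\]
of $\bbJ_\fD$-modules, where the right-hand side is viewed via~\eqref{eqn:Coh-Rep-JD} with its trivial structure. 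Since the right-hand side of the statement of the lemma is also given its trivial $\bbJ_\fD$-structure, this reduces the claim to constructing a canonical isomorphism of $\scO(\fD)$-modules between $\Phi_{U,U}(\mathsf{C}^0_m(\scT))$ and $\mathsf{D}_m(\Phi_{\sfT,U}(\scT))$.

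Next I would rewrite both sides as Hom spaces in the completed category $\sfP^\wedge_{U,U}$. By definition $\Phi_{U,U}(\mathsf{C}^0_m(\scT)) = \Hom_{\sfD^0_{U,U}}(\Pi^0_{U,U}(\Xi^\wedge_!), \mathsf{C}^0_m(\scT))$, and Lemma~\ref{lem:morph-Xiwedge-Pi} identifies this space with $\Hom_{\sfP^\wedge_{U,U}}(\Xi^\wedge_!, \mathsf{C}_m(\scT))$. On the other hand, by definition of $\Phi_{\sfT,U}$ we have $\Phi_{\sfT,U}(\scT) = \Hom_{\sfP^\wedge_{U,U}}(\Xi^\wedge_!,\scT)$, and $\mathsf{D}_m$ acts on this $\scO(\fD^\wedge)$-module by base change to $\scO(\fD^{(m)})$, i.e.~by taking the quotient modulo the image of $\cJ^m$.

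The key step is then to choose a finite set of generators $f_1, \dots, f_r$ of the ideal $\cJ^m \subset \scO(T^\vee_\bk/\Wf)$ and observe that by definition of $\mathsf{C}_m$ they produce an exact sequence
\[
\scT^{\oplus r} \to \scT \to \mathsf{C}_m(\scT) \to 0
\]
in $\sfP^\wedge_{U,U}$, where the first arrow is given by the monodromy action of the $f_i$. Applying the functor $\Hom_{\sfP^\wedge_{U,U}}(\Xi^\wedge_!,-)$, which is exact by projectivity of $\Xi^\wedge_!$ (Lemma~\ref{lem:Xi-projective}), yields the exact sequence
\[
\Phi_{\sfT,U}(\scT)^{\oplus r} \to \Phi_{\sfT,U}(\scT) \to \Hom_{\sfP^\wedge_{U,U}}(\Xi^\wedge_!, \mathsf{C}_m(\scT)) \to 0.
\]
Here, by construction of the $\scO(\fD^\wedge)$-module structure on $\Phi_{\sfT,U}(\scT)$ in~\cite{bezr}, the leftmost arrow is multiplication by the $f_i$, so the cokernel is canonically $\Phi_{\sfT,U}(\scT)/\cJ^m \cdot \Phi_{\sfT,U}(\scT) = \mathsf{D}_m(\Phi_{\sfT,U}(\scT))$.

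The one point that requires care is checking that the $\scO((T^\vee_\bk)^\wedge)$-action on $\Phi_{\sfT,U}(\scT)$ obtained from monodromy of $\scT$ agrees with the one used to define the $\scO(\fD^\wedge)$-module structure, so that the truncation in $\Coh(\fD^\wedge)$ really matches the one coming from $\mathsf{C}_m$; this is however built into the very definition of $\Phi_{\sfT,U}$ as a monoidal functor in~\cite{bezr}, and in particular is compatible with the left and right monodromies identified on $\scO((T^\vee_\bk/\Wf)^\wedge)$ via Lemma~\ref{lem:monodromy-fiber-prod}. Once this compatibility is recorded, naturality of every map in the chain is automatic, so the resulting isomorphism is canonical in $\scT$, completing the proof.
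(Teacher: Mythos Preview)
Your proof is correct and follows essentially the same approach as the paper: reduce to $\Phi_{U,U}$ via Proposition~\ref{prop:isom-Phi-U-Iu}, pass to $\Hom_{\sfP^\wedge_{U,U}}(\Xi^\wedge_!,\mathsf{C}_m(\scT))$ via Lemma~\ref{lem:morph-Xiwedge-Pi}, and then use projectivity of $\Xi^\wedge_!$ (Lemma~\ref{lem:Xi-projective}) to identify this with $\Hom_{\sfP^\wedge_{U,U}}(\Xi^\wedge_!,\scT) \otimes_{\scO(T^\vee_\bk)} \scO((T^\vee_\bk)^{(m)})$. The only difference is cosmetic: the paper states the last step as a single tensor-product identity, while you unwind it explicitly through a presentation of $\mathsf{C}_m(\scT)$ as a cokernel, which is precisely how that identity is proved.
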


\begin{proof}
The object $\mathsf{C}^0_m(\scT)$ belongs to the subcategory $\sfP^0_{U,U} \subset \sfP^0_{\Iwu,\Iwu}$. By Proposition~\ref{prop:isom-Phi-U-Iu}, this implies that
\[
\Phi_{\Iwu,\Iwu}(\mathsf{C}^0_m(\scT)) \cong \Phi_{U,U}(\mathsf{C}^0_m(\scT)) \cong \Hom_{\sfP^\wedge_{U,U}}(\Xi_!^\wedge,\mathsf{C}_m(\scT)),
\]
where the second isomorphism uses Lemma~\ref{lem:morph-Xiwedge-Pi}.
Now by projectivity of $\Xi^\wedge_!$ (see Lemma~\ref{lem:Xi-projective}) we have
\[
\Hom_{\sfP^\wedge_{U,U}}(\Xi_!^\wedge,\mathsf{C}_m(\scT)) \cong \Hom_{\sfP^\wedge_{U,U}}(\Xi_!^\wedge,\scT) \otimes_{\scO(T^\vee_\bk)} \scO((T^\vee_\bk)^{(m)}),
\]
which finishes the proof in view of the definitions of $\Phi_{\sfT,U}$ and $\mathsf{D}_{m}$.
\end{proof}


\subsection{Images of truncated standard and costandard objects}

We now need to describe the images under $\Phi_{\Iwu,\Iwu}$ of (images in $\sfP^0_{\Iwu,\Iwu}$ of) truncated standard and costandard free-monodromic perverse sheaves. Recall that the images of truncated costandard perverse sheaves associated with elements in $\Wf$ have been described in Lemma~\ref{lem:PhiUU-cost}. We next prove the analogous statement for standard objects.

\begin{lem}
\label{lem:PhiUU-st}
For any $w \in \Wf$, there exists an isomorphism of projective systems of $\scO(T^\vee_\bk \times_{T^\vee_\bk / \Wf} T^\vee_\bk)$-modules
 \[
  (\Phi_{U,U}(\sfC^0_m(\Delta_w^\wedge)) : m \geq 1) \cong
 (\scM_w / \cJ^m \cdot \scM_w : m \geq 1).
\]
\end{lem}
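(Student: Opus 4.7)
\emph{Plan.} The proof will mirror that of Lemma~\ref{lem:PhiUU-cost}, with the interchange of standard and costandard objects. The initial reduction proceeds identically: since $\Delta^\wedge_w$ (for $w \in \Wf$) is supported on $G/U$, the object $\sfC_m(\Delta^\wedge_w)$ lies in $\sfP_{U,U}$, so by Proposition~\ref{prop:isom-Phi-U-Iu} it suffices to compute $\Phi_{U,U}(\sfC_m^0(\Delta^\wedge_w))$ in place of $\Phi_{\Iwu,\Iwu}(\sfC_m^0(\Delta^\wedge_w))$. By Lemma~\ref{lem:morph-Xiwedge-Pi} this equals $\Hom_{\sfP^\wedge_{U,U}}(\Xi^\wedge_!, \sfC_m(\Delta^\wedge_w))$, and projectivity of $\Xi^\wedge_!$ in $\sfP^\wedge_{U,U}$ (Lemma~\ref{lem:Xi-projective}) yields
\[
\Hom_{\sfP^\wedge_{U,U}}(\Xi^\wedge_!, \sfC_m(\Delta^\wedge_w)) \cong \Hom_{\sfP^\wedge_{U,U}}(\Xi^\wedge_!, \Delta^\wedge_w) \otimes_{\scO(T^\vee_\bk)} \scO(T^\vee_\bk)/\cJ^m \scO(T^\vee_\bk),
\]
with the transition morphisms for varying $m$ induced functorially by base change. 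Thus the lemma reduces to producing a canonical isomorphism $\Hom_{\sfP^\wedge_{U,U}}(\Xi^\wedge_!, \Delta^\wedge_w) \cong \scM^\wedge_w$ of $\scO(\bD^\wedge)$-modules, compatibly in $w$.

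The identification of this Hom space, which is the main new input compared to Lemma~\ref{lem:PhiUU-cost}, I plan to establish by induction on $\ell(w)$. The base case $w=e$ is immediate since $\Delta^\wedge_e \cong \delta^\wedge$ is the monoidal unit, and $\Xi^\wedge_!$ is the projective cover of $\delta^\wedge$ in $\sfP^\wedge_{U,U}$, yielding $\Hom(\Xi^\wedge_!, \delta^\wedge) \cong \End_{\sfP^\wedge_{U,U}}(\delta^\wedge) \cong \scO((T^\vee_\bk)^\wedge) \cong \scM^\wedge_e$ as bimodules. For the inductive step, choose $s \in \Sf$ with $\ell(sw) = \ell(w) + 1$, so that Lemma~\ref{lem:convolution-stand-costand}\eqref{it:convolution-stand-costand-2} gives $\Delta^\wedge_{sw} \cong \Delta^\wedge_s \hatstar \Delta^\wedge_w$. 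Since convolution with $\Delta^\wedge_s$ is an equivalence with inverse convolution with $\nabla^\wedge_s$ (Remark~\ref{rmk:convolution-equiv}), the pair is an adjunction, and
\[
\Hom_{\sfP^\wedge_{U,U}}(\Xi^\wedge_!, \Delta^\wedge_{sw}) \cong \Hom_{\sfP^\wedge_{U,U}}(\nabla^\wedge_s \hatstar \Xi^\wedge_!, \Delta^\wedge_w).
\]
One then analyses $\nabla^\wedge_s \hatstar \Xi^\wedge_!$ via the standard filtration of $\Xi^\wedge_!$, using that $\nabla^\wedge_s \hatstar \Delta^\wedge_y \cong \Delta^\wedge_{sy}$ when $sy < y$ together with an extension triangle involving $\Xi^\wedge_{s,!}$ (and ultimately $\Delta^\wedge_{sy}$ for $sy > y$) in the other case. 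Combining with the inductive hypothesis and the isomorphism $\scM^\wedge_{sw} \cong \scM^\wedge_s \circledast \scM^\wedge_w$ (a consequence of~\eqref{eqn:convolution-D}), one obtains the desired identification.

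The main obstacle is twofold. First, one must control the $\Ext^1$ contributions in the inductive computation: unlike $\Ext^1(\Delta^\wedge_y, \nabla^\wedge_w)$, which vanishes by the highest-weight axioms, the groups $\Ext^1(\Delta^\wedge_y, \Delta^\wedge_w)$ need not vanish for arbitrary $y$. This is handled by ordering the standard filtration of $\nabla^\wedge_s \hatstar \Xi^\wedge_!$ so that factors $\Delta^\wedge_y$ with larger $\ell(y)$ (equivalently, those $y$ whose support covers $w$ from above) appear first, reducing the relevant Ext-vanishing to the standard statement for $y > w$ in a highest-weight category. Second, one must verify throughout the induction that the right $\scO((T^\vee_\bk)^\wedge)$-action on $\Hom(\Xi^\wedge_!, \Delta^\wedge_w)$ is $w$-twisted relative to the left one, matching the bimodule $\scM^\wedge_w$; this is tracked using Lemma~\ref{lem:monodromy-DN}\eqref{lem:monodromy-Dwedge} and the compatibilities~\eqref{eqn:monodromy-convolution-1}--\eqref{eqn:monodromy-convolution-2}, noting that conjugating the left monodromy through $\nabla^\wedge_s$ precisely accounts for the $s$-twist required when passing from $\scM^\wedge_w$ to $\scM^\wedge_s \circledast \scM^\wedge_w \cong \scM^\wedge_{sw}$.
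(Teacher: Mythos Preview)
Your initial reduction is correct and matches the spirit of Lemma~\ref{lem:PhiUU-cost}: projectivity of $\Xi^\wedge_!$ (Lemma~\ref{lem:Xi-projective}) and Lemma~\ref{lem:morph-Xiwedge-Pi} indeed reduce the claim to an identification $\Hom_{\sfP^\wedge_{U,U}}(\Xi^\wedge_!, \Delta^\wedge_w) \cong \scM^\wedge_w$ of $\scO(\bD^\wedge)$-modules. That identification is also true. However, your inductive plan for proving it has a gap: the object $\nabla^\wedge_s \hatstar \Xi^\wedge_!$ does \emph{not} admit a standard filtration. When $sy > y$ (for instance $y = e$), the factor $\nabla^\wedge_s \hatstar \Delta^\wedge_y$ is perverse by Lemma~\ref{lem:convolution-stand-costand}\eqref{it:convolution-stand-costand-3} but is not a standard object, so your proposed ``reordering of the standard filtration'' cannot be carried out, and the Ext-control argument you sketch does not apply as stated.

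The paper bypasses this direct computation entirely. Instead of computing $\Hom(\Xi^\wedge_!, \Delta^\wedge_w)$, it exploits the invertibility $\Delta^\wedge_w \hatstar \nabla^\wedge_{w^{-1}} \cong \delta^\wedge$ (Lemma~\ref{lem:convolution-stand-costand}\eqref{it:convolution-stand-costand-1}). Applying Proposition~\ref{prop:monoidality-C0} and the monoidality of $\Phi_{U,U}$ (Theorem~\ref{thm:reg-quotient-finite}) yields
\[
\Phi_{U,U}(\sfC^0_m(\Delta^\wedge_w)) \circledast \Phi_{U,U}(\sfC^0_m(\nabla^\wedge_{w^{-1}})) \cong \Phi_{U,U}(\sfC^0_m(\delta^\wedge)).
\]
Passing to inverse limits over $m$ (using~\cite[Chap.~0, Proposition~7.2.9]{ega1}) gives a finitely generated $\scO(\bD^\wedge)$-module $M_w = \varprojlim_m \Phi_{U,U}(\sfC^0_m(\Delta^\wedge_w))$ with $M_w \circledast \scM^\wedge_{w^{-1}} \cong \scM^\wedge_e$, where the second factor is identified by the already-proven costandard case (Lemma~\ref{lem:PhiUU-cost}). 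Since $\scM^\wedge_{w^{-1}}$ is invertible for $\circledast$ with inverse $\scM^\wedge_w$, one concludes $M_w \cong \scM^\wedge_w$ in one stroke. This avoids any filtration analysis on the perverse side and all Ext issues, trading them for the structural fact that the $\scM^\wedge_w$'s form a group under $\circledast$.
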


\begin{proof}
By Lemma~\ref{lem:convolution-stand-costand}\eqref{it:convolution-stand-costand-1} we have an isomorphism
\[
 \Delta^\wedge_w \hatstar \nabla^\wedge_{w^{-1}} \cong \delta^\wedge.
\]
After fixing such an isomorphism, using Proposition~\ref{prop:monoidality-C0} we deduce for any $m \geq 1$ an isomorphism
\[
 \sfC_m^0(\Delta^\wedge_w) \pstar^0_{\Iwu} \sfC^0_m(\nabla^\wedge_{w^{-1}}) \cong \sfC^0_m(\delta^\wedge),
\]
which by monoidality of $\Phi_{U,U}$ provides an isomorphism
\begin{equation}
\label{eqn:computation-Phi-Delta}
 \Phi_{U,U}(\sfC_m^0(\Delta^\wedge_w)) \oast \Phi_{U,U}(\sfC^0_m(\nabla^\wedge_{w^{-1}})) \cong \Phi_{U,U}(\sfC^0_m(\delta^\wedge)).
\end{equation}
Now, consider
\[
 M_w := \varprojlim_{m \geq 1} \Phi_{U,U}(\sfC_m^0(\Delta^\wedge_w)).
\]
For any $m \geq 1$, the $\scO(T^\vee_\bk \times_{T^\vee_\bk / \Wf} T^\vee_\bk)$-module $\Phi_{U,U}(\sfC_m^0(\Delta^\wedge_w))$ is finitely generated, and, by exactness of $\Phi_{U,U}$, for any $m' \geq m$ the natural morphism
\begin{multline*}
 \bigl( \scO(T^\vee_\bk \times_{T^\vee_\bk / \Wf} T^\vee_\bk) / \cJ^m \cdot \scO(T^\vee_\bk \times_{T^\vee_\bk / \Wf} T^\vee_\bk) \bigr) \otimes_{\scO(T^\vee_\bk \times_{T^\vee_\bk / \Wf} T^\vee_\bk)} \Phi_{U,U}(\sfC_{m'}^0(\Delta^\wedge_w)) \\
 \to \Phi_{U,U}(\sfC_m^0(\Delta^\wedge_w))
\end{multline*}
is an isomorphism. By~\cite[Chap.~0, Proposition~7.2.9]{ega1}, it follows that $M_w$ is a finitely generated $\scO(\FN_{T^\vee_\bk \times_{T^\vee_\bk / \Wf} T^\vee_\bk}(\{(e,e)\}))$-module, and that for any $m \geq 1$ the natural morphism
\[
 M_w / \cJ^m \cdot M_w \to \Phi_{U,U}(\sfC_m^0(\Delta^\wedge_w))
\]
is an isomorphism. To conclude the proof,
 it therefore suffices to construct an isomorphism of $\scO(\FN_{T^\vee_\bk \times_{T^\vee_\bk / \Wf} T^\vee_\bk}(\{(e,e)\}))$-modules $M_w \cong \scM_w^\wedge$. Now the isomorphisms~\eqref{eqn:computation-Phi-Delta} and the description of images of costandard objects in Lemma~\ref{lem:PhiUU-cost} provide an isomorphism
\[
 M_w \oast \scM_{w^{-1}}^\wedge \cong \scM_e^\wedge.
\]
Since $\scM_{w^{-1}}^\wedge$ is invertible for the product $\oast$, with inverse $\scM_w^\wedge$, we deduce the desired isomorphism $M_w \cong \scM_w^\wedge$.
\end{proof}

We can finally prove the desired general statement.

\begin{lem}
\label{lem:Phi-C-Delta}
 For any $w \in W$, there exist isomorphisms of projective systems
 \[
  \bigl( \Phi_{\Iwu,\Iwu}(\mathsf{C}_m^0(\Delta^\wedge_w)) : m \geq 1 \bigr) \cong \bigl( \mathsf{D}_m(\mathscr{M}_w^\wedge) : m \geq 1 \bigr)
 \]
 and
 \[
  \bigl( \Phi_{\Iwu,\Iwu}(\mathsf{C}_m^0(\nabla^\wedge_w)) : m \geq 1 \bigr) \cong \bigl( \mathsf{D}_m(\mathscr{M}_w^\wedge) : m \geq 1 \bigr).
 \]
\end{lem}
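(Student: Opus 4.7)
The plan is to establish the $\nabla^\wedge_w$ case first and then derive the $\Delta^\wedge_w$ case by invertibility, transporting convolution identities from $\sfP^\wedge_{\Iwu,\Iwu}$ to $\Rep_0(\bbJ_\bD)$ via the monoidality of $\Phi_{\Iwu,\Iwu}$ (established in~\S\ref{ss:monoidality-PhiIuIu}) and of $\sfD_m$ (Lemma~\ref{lem:truncation-Rep-mon}). The base case $\ell(w)=0$, where $w \in \Omega$, follows from Proposition~\ref{prop:Phi-Wak-antidom}\eqref{it:Phi-Delta-omega} together with the identity $\Delta^\wedge_\omega \cong \nabla^\wedge_\omega$ (since $\tFl_{G,\omega}$ is closed). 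For the inductive step when $\ell(w) \geq 2$, I would pick a reduced factorization $w = w' \cdot s$ with $\ell(w') = \ell(w)-1$ and $s \in S$, apply Lemma~\ref{lem:convolution-stand-costand}\eqref{it:convolution-stand-costand-2} to get $\nabla^\wedge_w \cong \nabla^\wedge_{w'} \hatstar \nabla^\wedge_s$, invoke Proposition~\ref{prop:monoidality-C0}(1) (both factors carry costandard filtrations) to compute $\sfC^0_m$ as a convolution, and transport through $\Phi_{\Iwu,\Iwu}$. The induction hypothesis handles $\nabla^\wedge_{w'}$, and $\nabla^\wedge_s$ with $\ell(s)=1$ is available since $1 < \ell(w)$. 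The formula $\scM^\wedge_{w'} \oast \scM^\wedge_s \cong \scM^\wedge_w$ via~\eqref{eqn:convolution-D} closes the step. Exactly the same recipe, using Proposition~\ref{prop:monoidality-C0}(2), handles $\Delta^\wedge_w$.

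The critical length-1 case is where all the work happens. Here $w = \omega \cdot s$, the factor $\omega$ is already known, and one must verify the lemma for $\nabla^\wedge_s$ (and $\Delta^\wedge_s$) for each $s \in S$. When $s \in \Sf$, the pushforward along $G/U \hookrightarrow \tFl_G$ and Proposition~\ref{prop:isom-Phi-U-Iu} transport Lemmas~\ref{lem:PhiUU-cost} and~\ref{lem:PhiUU-st} into the required statement. For $s \in S \setminus \Sf$, I invoke the conjugation $s = w s' w^{-1}$ fixed in~\S\ref{ss:Waff}, with $s' \in \Sf$ and $\ell(ws') = \ell(w)+1$. Length-additivity and Lemma~\ref{lem:convolution-stand-costand} produce
\[
\nabla^\wedge_s \cong \nabla^\wedge_w \hatstar \nabla^\wedge_{s'} \hatstar \Delta^\wedge_{w^{-1}}, \qquad \Delta^\wedge_s \cong \Delta^\wedge_w \hatstar \Delta^\wedge_{s'} \hatstar \nabla^\wedge_{w^{-1}}.
\]
Applying $\sfC^0_m$ via Proposition~\ref{prop:monoidality-C0} and then $\Phi_{\Iwu,\Iwu}$ with its monoidal structure, the outcome matches $\sfD_m(\scM^\wedge_w \oast \scM^\wedge_{s'} \oast \scM^\wedge_{w^{-1}}) \cong \sfD_m(\scM^\wedge_s)$ (using the multiplicativity of $\scM^\wedge_{(-)}$ recorded in~\S\ref{ss:representations-Iadj}), provided the lemma is already known for the conjugator $w$ and its inverse.

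The hard part is precisely this circularity: a length-1 case depends on cases of possibly larger length. I would resolve it by running a combined two-tier induction: first establish the lemma for all $w$ in the subcategory generated by $\Omega$ and $\Sf$ (reducing via Proposition~\ref{prop:isom-Phi-U-Iu} to the $G/U$ statements), then enlarge the class one element of $S \setminus \Sf$ at a time, each time choosing the conjugator to lie in the already-processed class and applying the conjugation decomposition above. That such a coherent choice is possible can be read off the proof of Lemma~\ref{lem:simple-ref-conjugation}, which reduces to the simply-connected semisimple case where the conjugators can be organized in a well-founded order; alternatively, one can work in parallel with the concrete description $\sfC_m(\nabla^\wedge_w) \cong (\widetilde{\jmath}_w)_*\phi_w(\scO(T^\vee_\bk)/\mathcal{J}^m)[\ell(w)+\dim T]$ from the proof of Lemma~\ref{lem:flatness-D-N} and transport the result through $\Phi_{\Iwu,\Iwu}$ using the structure of $\overline{\tFl_{G,s}}$. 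Finally, given the $\nabla^\wedge_w$ case, the $\Delta^\wedge_w$ case drops out of $\Delta^\wedge_w \hatstar \nabla^\wedge_{w^{-1}} \cong \delta^\wedge$ combined with the invertibility of $\sfD_m(\scM^\wedge_{w^{-1}})$ in $(\Rep_0(\bbJ_\bD),\circledast)$, mirroring the bootstrap already carried out in the proof of Lemma~\ref{lem:PhiUU-st}.
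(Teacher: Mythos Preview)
Your overall architecture---reduce to the length-$1$ case and then run the factorization~$w=\omega s_1\cdots s_r$ via Proposition~\ref{prop:monoidality-C0} and the monoidality of $\Phi_{\Iwu,\Iwu}$---matches the paper's proof, and your treatment of $s\in\Sf$ and of $\omega\in\Omega$ is exactly right. The gap is in your handling of $s\in S\smallsetminus\Sf$. The conjugation formula $s=ws'w^{-1}$ from~\S\ref{ss:Waff} produces a conjugator $w\in W$ of a priori arbitrary length, and nothing in the proof of Lemma~\ref{lem:simple-ref-conjugation} (which goes through the semisimple quotient and is nonconstructive) lets you organize these conjugators into a well-founded order compatible with your ``two-tier'' scheme. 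Your alternative suggestion via the explicit pushforward description is too vague to carry the argument.

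The paper sidesteps the circularity entirely by using a \emph{different} factorization for $s\in S\smallsetminus\Sf$: one writes $s=w\st(\lambda)$ with $w\in\Wf$ (the \emph{finite} Weyl group) and $\lambda$ antidominant, with $\ell(\st(\lambda))=\ell(w)+1$, so that $\Delta^\wedge_s\cong\nabla^\wedge_w\hatstar\Delta^\wedge_{\st(\lambda)}$. Now $\nabla^\wedge_w$ is supported on $G/U$ and is handled by Lemma~\ref{lem:PhiUU-cost} via Proposition~\ref{prop:isom-Phi-U-Iu}, while $\Delta^\wedge_{\st(\lambda)}=\Wak^\wedge_\lambda$ for antidominant $\lambda$ is handled by Proposition~\ref{prop:Phi-Wak-antidom}\eqref{it:Phi-Wak-antidom}. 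Both ingredients are already established, so no induction on $W$ is needed at this step. The $\nabla^\wedge_s$ case then follows by the same invertibility trick you describe (mirroring Lemma~\ref{lem:PhiUU-st}). The point you missed is that the antidominant Wakimoto computation in Proposition~\ref{prop:Phi-Wak-antidom}\eqref{it:Phi-Wak-antidom} is precisely what breaks the circularity.
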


\begin{proof}
First we prove the isomorphisms when $w=s \in S$. In case $s \in \Sf$, they follow from Lemma~\ref{lem:PhiUU-cost} and Lemma~\ref{lem:PhiUU-st}.
Now assume that $s \in S \smallsetminus \Sf$. Then there exist $w \in \Wf$ and $\lambda \in X_*(T)$ antidominant such that $s=w \st(\lambda)$ with $\ell(\st(\lambda)) = \ell(w)+1$. By Lemma~\ref{lem:convolution-stand-costand}\eqref{it:convolution-stand-costand-1}--\eqref{it:convolution-stand-costand-2} we then have
\[
 \Delta^\wedge_s \cong \nabla^\wedge_w \hatstar \Delta^\wedge_{\st(\lambda)}.
\]
Using Proposition~\ref{prop:monoidality-C0} and the monoidality of $\Phi_{\Iwu,\Iwu}$ we deduce for any $m \geq 1$ an isomorphism
\[
\Phi_{\Iwu,\Iwu}(\sfC^0_m(\Delta^\wedge_s)) \cong \Phi_{\Iwu,\Iwu}(\sfC^0_m(\nabla^\wedge_w)) \oast \Phi_{\Iwu,\Iwu}(\sfC^0_m(\Delta^\wedge_{\st(\lambda)})).
\]
Now $\Phi_{\Iwu,\Iwu}(\sfC^0_m(\nabla^\wedge_w))$ is described in Lemma~\ref{lem:PhiUU-cost}, and $\Phi_{\Iwu,\Iwu}(\sfC^0_m(\Delta^\wedge_{\st(\lambda)}))$ is described in Proposition~\ref{prop:Phi-Wak-antidom}\eqref{it:Phi-Wak-antidom}. Using these descriptions, Lemma~\ref{lem:truncation-Rep-mon} and~\eqref{eqn:convolution-D}, we deduce the desired isomorphism of projective systems
 \[
  \bigl( \Phi_{\Iwu,\Iwu}(\mathsf{C}_m^0(\Delta^\wedge_s)) : m \geq 1 \bigr) \cong \bigl( \mathsf{D}_m(\mathscr{M}_s^\wedge) : m \geq 1 \bigr).
 \]
 The isomorphism
  \[
  \bigl( \Phi_{\Iwu,\Iwu}(\mathsf{C}_m^0(\nabla^\wedge_s)) : m \geq 1 \bigr) \cong \bigl( \mathsf{D}_m(\mathscr{M}_s^\wedge) : m \geq 1 \bigr)
 \]
 follows, using the same arguments as in the proof of Lemma~\ref{lem:PhiUU-st}.
 
 Note that in case $w \in \Omega$, we have $\Delta^\wedge_w \cong \nabla^\wedge_w$, so that both isomorphisms follow from Proposition~\ref{prop:Phi-Wak-antidom}\eqref{it:Phi-Delta-omega}.
 
 Finally we treat the general case. Given a reduced expression $w = \omega s_1 \cdots s_r$ (with $\omega \in \Omega$, $s_1, \cdots, s_r \in S$ and $r=\ell(w)$), by Lemma~\ref{lem:convolution-stand-costand}\eqref{it:convolution-stand-costand-2} we have
 \[
 \Delta^\wedge_w \cong \Delta^\wedge_\omega \hatstar  \Delta^\wedge_{s_1} \hatstar \cdots \hatstar \Delta^\wedge_{s_r}, \quad
  \nabla^\wedge_w \cong \nabla^\wedge_\omega \hatstar \nabla^\wedge_{s_1} \hatstar \cdots \hatstar \nabla^\wedge_{s_r}.
 \]
Using Proposition~\ref{prop:monoidality-C0} and monoidality of $\Phi_{\Iwu,\Iwu}$ we deduce isomorphisms
 \begin{align*}
 \Phi_{\Iwu,\Iwu}(\sfC^0_m(\Delta^\wedge_w)) &\cong \Phi_{\Iwu,\Iwu}(\sfC^0_m(\Delta^\wedge_\omega)) \oast  \Phi_{\Iwu,\Iwu}(\sfC^0_m(\Delta^\wedge_{s_1})) \oast \cdots \oast \Phi_{\Iwu,\Iwu}(\sfC^0_m(\Delta^\wedge_{s_r})), \\
  \Phi_{\Iwu,\Iwu}(\sfC^0_m(\nabla^\wedge_w)) &\cong \Phi_{\Iwu,\Iwu}(\sfC^0_m(\nabla^\wedge_\omega)) \oast \Phi_{\Iwu,\Iwu}(\sfC^0_m(\nabla^\wedge_{s_1})) \oast \cdots \oast \Phi_{\Iwu,\Iwu}(\sfC^0_m(\nabla^\wedge_{s_r})).
 \end{align*}
 We deduce the desired isomorphisms using the case of the elements $\omega$ and $s_i$, together with Lemma~\ref{lem:truncation-Rep-mon} and~\eqref{eqn:convolution-D}.
 \end{proof}
 
 \begin{rmk}
 Since $\Phi_{\Iwu,\Iwu}$ is an equivalence, Lemma~\ref{lem:Phi-C-Delta} implies in particular that the projective systems $( \mathsf{C}_m^0(\Delta^\wedge_w) : m \geq 1 )$ and $( \mathsf{C}_m^0(\nabla^\wedge_w) : m \geq 1 )$ are isomorphic. In other words, in the quotient $\sfP_{\Iwu,\Iwu}^0$ one cannot see the difference between standard and costandard objects. (This is a standard phenomenon in Soergel theory.)
 \end{rmk}

\subsection{Completion of the proof}

We can finally complete the proof of Lem\-ma~\ref{lem:isom-PhiT}.

\begin{proof}[Proof of Lemma~\ref{lem:isom-PhiT}]
From Lemma~\ref{lem:PhiT-comparison-G/B} and~\eqref{eqn:PhiTU-Xis} we deduce~\eqref{it:isom-main-app-1} in case $s \in \Sf$. Similarly,~\eqref{it:isom-main-app-3} follows from Lemma~\ref{lem:PhiT-comparison-G/B} and~\eqref{eqn:PhiTU-Xi}.

Finally, consider some $s \in S \smallsetminus \Sf$, and recall the elements $w,s'$ chosen in~\S\ref{ss:Waff}. Then by definition we have
\[
 \Xi^\wedge_{s,!} = \Delta^\wedge_w \hatstar \Xi^\wedge_{s',!} \hatstar \nabla^\wedge_{w^{-1}}.
\]
Using Proposition~\ref{prop:monoidality-C0} and
monoidality of $\Phi_{\Iwu,\Iwu}$ we deduce for any $m \geq 1$ an isomorphism
\[
 \Phi_{\Iwu,\Iwu}(\mathsf{C}_m^0(\Xi^\wedge_{s,!})) \cong \Phi_{\Iwu,\Iwu}(\mathsf{C}_m^0(\Delta^\wedge_w)) \circledast \Phi_{\Iwu,\Iwu}(\mathsf{C}_m^0(\Xi^\wedge_{s',!})) \circledast \Phi_{\Iwu,\Iwu}(\mathsf{C}_m^0(\nabla^\wedge_{w^{-1}})).
\]
Here we know that $\Phi_{\Iwu,\Iwu}(\mathsf{C}_m^0(\Xi^\wedge_{s',!})) \cong \mathsf{D}_m(\mathscr{B}^\wedge_{s'})$. On the other hand, by Lem\-ma~\ref{lem:Phi-C-Delta}
 we have isomorphisms
\[
\Phi_{\Iwu,\Iwu}(\mathsf{C}_m^0(\Delta^\wedge_w)) \cong \mathsf{D}_m(\mathscr{M}^\wedge_{w}), \quad
\Phi_{\Iwu,\Iwu}(\mathsf{C}_m^0(\nabla^\wedge_{w^{-1}})) \cong \mathsf{D}_m(\mathscr{M}^\wedge_{w^{-1}}).
\]
We deduce isomorphisms
\[
 \Phi_{\Iwu,\Iwu}(\mathsf{C}_m^0(\Xi^\wedge_{s,!})) \cong \mathsf{D}_m(\mathscr{M}^\wedge_{w}) \circledast \mathsf{D}_m(\mathscr{B}^\wedge_{s'}) \circledast \mathsf{D}_m(\mathscr{M}^\wedge_{w^{-1}}).
 \]
 Using Lemma~\ref{lem:truncation-Rep-mon} and the definition (see~\eqref{eqn:def-Bs}), one sees that the right-hand side identifies with $\mathsf{D}_m(\mathscr{B}^\wedge_{s})$, which concludes the proof.
\end{proof}

\begin{rmk}
\label{rmk:canonical-Xis}
The objects $\Xi^\wedge_{s,!}$ considered above have been defined in a canonical way for $s \in \Sf$, and in this case the isomorphism in Theorem~\ref{thm:main-application}\eqref{it:main-application-1} is canonical. But we do not know any canonical construction in case $s \in S \smallsetminus \Sf$, and the objects $\Delta^\wedge_\omega$ ($\omega \in \Omega$) are defined only up to isomorphism. However, one can always fix some
objects $\Xi^{\wedge,\can}_{s}$ ($s \in S$) and $\Delta^{\wedge,\can}_\omega$ ($\omega \in \Omega$) in $\sfT^\wedge_{\Iwu,\Iwu}$ and identifications
\[
\Phi_{\sfT}(\Xi^{\wedge,\can}_{s}) = \scB^\wedge_s, \qquad \Phi_{\sfT}(\Delta^{\wedge,\can}_{\omega}) = \scM^\wedge_\omega
\]
for $s \in S \smallsetminus \Sf$ and $\omega \in \Omega$. 
Using Lemma~\ref{lem:conjugation-B-RepJ} and monoidality of $\Phi_{\sfT}$, we deduce canonical isomorphisms
\begin{equation}
\label{eqn:conjugation-Xi-can}
\Delta^{\wedge,\can}_{\omega} \hatstar \Xi^{\wedge,\can}_{s} \hatstar \Delta^{\wedge,\can}_{\omega^{-1}} \cong \Xi^{\wedge,\can}_{\omega s \omega^{-1}}
\end{equation}
for any $s \in S$ and $\omega \in \Omega$.
One can then define the category $\mathsf{T}^{\wedge,\mathrm{BS}}_{\Iwu,\Iwu}$ with
\begin{itemize}
\item
objects the collections $(\omega, s_1, \cdots, s_i)$ with $\omega \in \Omega$ and $s_1, \cdots, s_i \in S$;
\item
morphisms from $(\omega, s_1, \cdots, s_i)$ to $(\omega', s'_1, \cdots, s'_j)$ given by
\[
\Hom_{\mathsf{T}^\wedge_{\Iwu,\Iwu}}(\Delta^{\wedge,\can}_{\omega} \hatstar \Xi^{\wedge,\can}_{s_1} \hatstar \cdots \hatstar \Xi^{\wedge,\can}_{s_i}, \Delta^{\wedge,\can}_{\omega'} \hatstar \Xi^{\wedge,\can}_{s'_1} \hatstar \cdots \hatstar \Xi^{\wedge,\can}_{s'_j} \bigr).
\]
\end{itemize}
Using the isomorphisms~\eqref{eqn:conjugation-Xi-can} one can define on $\mathsf{T}^{\wedge,\mathrm{BS}}_{\Iwu,\Iwu}$ a canonical monoidal structure, such that we have a canonical equivalence of monoidal categories
\begin{equation}
\label{eqn:equiv-TBS-BSRep}
\sfT^{\wedge,\mathrm{BS}}_{\Iwu,\Iwu} \simto \BSRep(\bbI_\Sigma^\wedge)
\end{equation}
which is the identity on objects,
where $\BSRep(\bbI_\Sigma^\wedge)$ is as in~\S\ref{ss:completions} (for $\bG=G^\vee_\bk$).
We also have a canonical fully faithful monoidal functor
\begin{equation*}
\mathsf{T}^{\wedge,\mathrm{BS}}_{\Iwu,\Iwu} \to \mathsf{T}^\wedge_{\Iwu,\Iwu}
\end{equation*}
sending $(\omega, s_1, \cdots, s_i)$ to $\Delta^{\wedge,\can}_{\omega} \hatstar \Xi^{\wedge,\can}_{s_1} \hatstar \cdots \hatstar \Xi^{\wedge,\can}_{s_i}$, and $\mathsf{T}^{\wedge}_{\Iwu,\Iwu}$ identifies with the Karoubian closure of the additive hull of $\mathsf{T}^{\wedge,\mathrm{BS}}_{\Iwu,\Iwu}$.
\end{rmk}

\section{The case of \texorpdfstring{$\Fl_G$}{Fl}}
\label{sec:Fl}

We continue with the assumptions of Sections~\ref{sec:construction}--\ref{sec:Soergel-type}.
In this section we briefly indicate how the constructions of Sections~\ref{sec:Perv-G/U},~\ref{sec:construction} and~\ref{sec:Soergel-type} can be adapted to give a description of the more familiar category of tilting $\Iwu$-equivariant perverse sheaves on $\Fl_G$. (These results are not used in~\cite{br-pt3}.)

\subsection{The case of \texorpdfstring{$G/B$}{G/B}}

We start with the analogue of Theorem~\ref{thm:reg-quotient-finite}. Recall the categories $\sfP_{U,B}$ and $\sfP^0_{U,B}$ considered in~\S\ref{ss:fully-faithfulness}, and denote by $\Pi^0_{U,B} : \sfP_{U,B} \to \sfP_{U,B}^0$ the quotient functor. We can also consider the Verdier quotient of the category $\sfD_{U,B}$ from the proof of Lemma~\ref{lem:Xi-projective} by the full triangulated subcategory generated by the simple perverse sheaves $\For^{\Iw}_{\Iwu}(\IC_w)$ with $w \in \Wf \smallsetminus \{e\}$. The functor $\pi^{\dag,0}$ from~\S\ref{ss:relation-reg-quotient} is t-exact, and it restricts to a functor $\sfD^0_{U,B} \to \sfD_{U,U}^0$; it therefore restricts to an exact functor
\begin{equation}
\label{eqn:pullback-flag}
 \sfP_{U,B}^0 \to \sfP_{U,U}^0,
\end{equation}
which is moreover fully faithful (because so is the restriction of $\pi^\dag$ to perverse sheaves). The same construction as for the convolution product $\pstar^0_U$ (see~\S\ref{ss:cat-sheaves-G/U}) provides a left action of the category $\sfP_{U,U}^0$ on $\sfP_{U,B}^0$, via a bifunctor also denoted $\pstar^0_U$, and which is right exact on both sides.

On the other hand, 
consider the finite $\bk$-scheme
\[
(T^\vee_\bk \times_{T^\vee_\bk / \Wf} T^\vee_\bk) \times_{T^\vee_\bk} \{e\} = T^\vee_\bk \times_{T^\vee_\bk / \Wf} \{e\}
\]
where the morphism $T^\vee_\bk \times_{T^\vee_\bk / \Wf} T^\vee_\bk \to T^\vee_\bk$ is projection on the second factor and $e \in T^\vee_\bk$ is the unit element. The closed embedding
\[
T^\vee_\bk \times_{T^\vee_\bk / \Wf} \{e\} \hookrightarrow T^\vee_\bk \times_{T^\vee_\bk / \Wf} T^\vee_\bk
\]
induces an exact fully faithful functor
\begin{equation}
\label{eqn:functor-bE-bD}
 \Coh(T^\vee_\bk \times_{T^\vee_\bk / \Wf} \{e\}) \to \Coh_{\{(e,e)\}}(T^\vee_\bk \times_{T^\vee_\bk / \Wf} T^\vee_\bk).
\end{equation}
We also have a natural left action of $\Coh_{\{(e,e)\}}(T^\vee_\bk \times_{T^\vee_\bk / \Wf} T^\vee_\bk)$ on $\Coh(T^\vee_\bk \times_{T^\vee_\bk / \Wf} \{e\})$, via a bifunctor denoted $\circledast$.

\begin{thm}
\label{thm:reg-quotient-finite-B}
 There exists a canonical equivalence of abelian categories
 \[
  \Phi_{U,B} : \sfP_{U,B}^0 \simto \Coh(T^\vee_\bk \times_{T^\vee_\bk / \Wf} \{e\})
 \]
such that the following diagram commutes up to isomorphism:
\[
 \xymatrix@C=2cm{
 \sfP_{U,B}^0 \ar[r]^-{\eqref{eqn:pullback-flag}} \ar[d]_-{\Phi_{U,B}}^-{\wr} & \sfP_{U,U}^0 \ar[d]_-{\wr}^-{\Phi_{U,U}} \\
 \Coh(T^\vee_\bk \times_{T^\vee_\bk / \Wf} \{e\}) \ar[r]^-{\eqref{eqn:functor-bE-bD}} & \Coh_{\{(e,e)\}}(T^\vee_\bk \times_{T^\vee_\bk / \Wf} T^\vee_\bk).
 }
\]
Moreover, this equivalence intertwines the actions of the category $\sfP_{U,U}^0$ on $\sfP_{U,B}^0$ and of the category $\Coh_{\{(e,e)\}}(T^\vee_\bk \times_{T^\vee_\bk / \Wf} T^\vee_\bk)$ on $\Coh(T^\vee_\bk \times_{T^\vee_\bk / \Wf} \{e\})$ via the equivalence $\Phi_{U,U}$.
\end{thm}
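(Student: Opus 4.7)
\smallskip

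\noindent\textbf{Proof proposal.} The plan is to deduce Theorem~\ref{thm:reg-quotient-finite-B} directly from Theorem~\ref{thm:reg-quotient-finite} by identifying the essential image of the fully faithful functor $\eqref{eqn:pullback-flag}$ with the essential image of $\eqref{eqn:functor-bE-bD}$ via $\Phi_{U,U}$. The essential first step is to show that $\eqref{eqn:pullback-flag}$ is fully faithful and to intrinsically characterize its essential image inside $\sfP^0_{U,U}$. Since $\pi : \tFl_G \to \Fl_G$ is a Zariski-locally trivial $T$-torsor and $\pi^\dag = \pi^*[\dim T]$ is t-exact and fully faithful on perverse sheaves, the same holds for the induced functor $\sfP_{U,B} \to \sfP_{U,U}$; a standard argument with the description of morphisms in a Serre quotient (as in the proof of Lemma~\ref{lem:Cm-properties}\eqref{it:Cm-tilting-3}) then implies that $\eqref{eqn:pullback-flag}$ is fully faithful as well.

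To characterize the essential image, I would use the fact that any object of $\sfP_{U,U}$ of the form $\pi^\dag \scG$ with $\scG \in \sfP_{U,B}$ has trivial ``right'' $T$-monodromy: the monodromy algebra morphism $\scO(T^\vee_\bk) \to \End(\pi^\dag \scG)$ factors through the augmentation $\scO(T^\vee_\bk) \twoheadrightarrow \bk=\scO(\{e\})$ (this is the analogue in the non-completed setting of Lemma~\ref{lem:subcat-completion-monodromy}, and follows from the local triviality of $\pi$). Conversely, any object $\scF \in \sfP_{U,U}$ with this property is $\Iw$-equivariant and hence descends to some object in $\sfP_{U,B}$. Translating via the monodromy construction on $\Phi_{U,U}$ (which, by the right-hand version of Remark~\ref{rmk:action-Sigma-2} combined with Lemma~\ref{lem:monodromy-fiber-prod}, matches the right action of $\scO(T^\vee_\bk)$ on $\sfP^0_{U,U}$ with the action on $\Coh_0(\fD)$ via the second projection $\fD \to T^\vee_\bk$), this condition on $\scF$ corresponds to the condition that $\Phi_{U,U}(\scF) \in \Coh_0(\fD)$ is annihilated by the ideal $\cK \cdot \scO(\fD)$ coming from the second factor, i.e.~is set-theoretically supported on $\fE \subset \fD$. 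Thus $\Phi_{U,U}$ restricts to an equivalence between the essential image of $\eqref{eqn:pullback-flag}$ and $\Coh(\fE) \subset \Coh_0(\fD)$, defining $\Phi_{U,B}$ and making the displayed square commute.

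For the action compatibility, the key observation is that the action of $\sfP^0_{U,U}$ on $\sfP^0_{U,B}$ via $\pstar^0_U$ is compatible with the action of $\sfP^0_{U,U}$ on itself (via $\eqref{eqn:pullback-flag}$), so the statement reduces to the monoidality of $\Phi_{U,U}$ (Theorem~\ref{thm:reg-quotient-finite}) together with the observation that the action of $\Coh_0(\fD)$ on $\Coh(\fE)$ via $\circledast$ is similarly induced by the self-action of $\Coh_0(\fD)$ via $\eqref{eqn:functor-bE-bD}$. The main obstacle I expect is carefully justifying the intrinsic characterization of the essential image in terms of right monodromy (i.e.~the ``descent'' statement that trivial right $T$-monodromy implies the object comes from $\sfP_{U,B}$); this should reduce, via the $m$-truncated analysis of~\S\ref{ss:truncation-PUU} applied now with $\cK$ in place of $\cJ^m$, to a similar analysis as in the proof of Proposition~\ref{prop:reg-quotient-finite-m}, working with the projective object $\Xi^{(m)}_!$ and identifying the endomorphism algebra of its natural pullback to $\sfP_{U,B}$ with $\scO(\fE)$ in place of $\scO(\fD^{(m)})$.
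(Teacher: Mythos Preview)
Your approach is different from the paper's, and valid in outline, but it is longer and ultimately requires the same key input. The paper proceeds directly: since $\Xi_!$ is the projective cover of the unique simple object in $\sfP_{U,B}$, its image $\Pi^0_{U,B}(\Xi_!)$ is a projective generator of $\sfP^0_{U,B}$, so $\Hom(\Pi^0_{U,B}(\Xi_!),-)$ gives an equivalence $\sfP^0_{U,B}\simto\Mofr(\End(\Pi^0_{U,B}(\Xi_!)))$. The identification $\End_{\sfP_{U,B}}(\Xi_!)\cong\scO(\fE)$ is then quoted from \cite[Corollary~9.2]{bezr}, which finishes the construction in one line. Compatibility with $\Phi_{U,U}$ is checked afterwards via the adjunction $(\pi_{\dag,0},\pi^{\dag,0})$ and~\eqref{eqn:pidag-Xi}, and action compatibility by the same mechanism you sketch.

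Your alternative---restrict $\Phi_{U,U}$ to the essential image of~\eqref{eqn:pullback-flag}---would indeed make the commutative square automatic, which is a genuine simplification on that front. But the descent step you flag is a real gap: a fully faithful exact functor need not have essential image closed under extensions (consider $\Vect_\bk\hookrightarrow\Mod(\bk[\epsilon]/\epsilon^2)$), so knowing that the image of $\sfP^0_{U,B}$ lands in the subcategory of objects with trivial right monodromy does not by itself show the two coincide. The analogue of Lemma~\ref{lem:subcat-completion-monodromy} you invoke is not quite what that lemma says (it characterizes the non-completed category inside the completion via \emph{nilpotent} monodromy, not pullbacks from $G/B$ via \emph{trivial} monodromy); the latter statement is true but needs its own argument. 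And your own suggested fix at the end---compute the endomorphism algebra of the image of $\Xi_!$ as $\scO(\fE)$---is precisely the computation the paper uses to prove the theorem directly. Once you have that, the passage through Theorem~\ref{thm:reg-quotient-finite} is unnecessary.
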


\begin{proof}
Recall the object $\Xi_!$ from~\S\ref{ss:G/B-tilting}, which is a projective cover of the simple object $\delta_{\Fl}$ in $\sfP_{U,B}$. The object $\Pi_{U,B}^0(\Xi_!)$ is then the projective cover of the unique simple object in $\sfP^0_{U,B}$, so that the functor $\Hom_{\sfP^0_{U,B}}(\Pi_{U,B}^0(\Xi_!), -)$ induces an equivalence of abelian categories
\[
 \sfP^0_{U,B} \simto \Mofr(\End_{\sfP^0_{U,B}}(\Pi_{U,B}^0(\Xi_!))).
\]
Now, a much simplified version of the proof of Lemma~\ref{lem:morph-Xiwedge-Pi} shows that for any $\scF$ in $\sfP_{U,B}$ the morphism
\[
 \Hom_{\sfP_{U,B}}(\Xi_!, \scF) \to \Hom_{\sfP^0_{U,B}}(\Pi_{U,B}^0(\Xi_!), \Pi_{U,B}^0(\scF))
\]
induced by $\Pi_{U,B}^0$ is an isomorphism. In particular, we have an algebra isomorphism
\[
 \End_{\sfP_{U,B}}(\Xi_!) \simto \End_{\sfP_{U,B}^0}(\Pi^0_{U,B}(\Xi_!)).
\]
By~\cite[Corollary~9.2]{bezr}, the left-hand side identifies (via monodromy) with the algebra $\scO(T^\vee_\bk \times_{T^\vee_\bk / \Wf} \{e\})$, which provides the desired equivalence $\Phi_{U,B}$.

The compatibility with $\Phi_{U,U}$ can be checked using adjunction and~\eqref{eqn:pidag-Xi}. The compatibility with the action of $\sfP^0_{U,U}$ follows from (a simplified version of) the same arguments as for the monoidality of $\Phi_{U,U}$.
\end{proof}

\subsection{Description of the regular quotient}

We now set
\[
\bbI'_{\Sigma} := (T^\vee_\bk \times_{T^\vee_\bk / \Wf} \{e\}) \times_{T^\vee_\bk \times_{T^\vee_\bk / \Wf} T^\vee_\bk} \bbI_\Sigma.
\]
Since the fiber of $\bbJ_\Sigma$ over the image of $e$ in $T^\vee_\bk / \Wf$ is $\rmZ_{G^\vee_\bk}(\su)$, we in fact have
\[
\bbI'_{\Sigma} \cong (T^\vee_\bk \times_{T^\vee_\bk / \Wf} \{e\}) \times \rmZ_{G^\vee_\bk}(\su)
\]
as group schemes over $(T^\vee_\bk \times_{T^\vee_\bk / \Wf} \{e\})$. We will consider the abelian category $\Rep(\bbI'_\Sigma)$ of representations of $\bbI'_\Sigma$ on coherent $\scO_{T^\vee_\bk \times_{T^\vee_\bk / \Wf} \{e\}}$-modules. Pushforward along the embeddings
\[
 \{(e,e)\} \hookrightarrow T^\vee_\bk \times_{T^\vee_\bk / \Wf} \{e\} \hookrightarrow T^\vee_\bk \times_{T^\vee_\bk / \Wf} T^\vee_\bk
\]
provides exact and fully faithful functors
\begin{equation}
\label{eqn:functors-Rep-J}
 \Rep(\rmZ_{G^\vee_\bk}(\su)) \to \Rep(\bbI'_\Sigma) \to \Rep_0(\bbI_\Sigma)
\end{equation}
whose composition is~\eqref{eqn:functor-statement-Rep}.
We also have a canonical left action of $\Rep_0(\bbI_\Sigma)$ on $\Rep(\bbI'_\Sigma)$, and a canonical right action of $\Rep(\rmZ_{G^\vee_\bk}(\su))$ on $\Rep(\bbI'_\Sigma)$; the corresponding bifunctors will be denoted $\circledast$ and $\otimes$ respectively. (The latter bifunctor is exact on both sides, but the former is only right exact on both sides.) Finally, we have a natural exact fully faithful functor
\begin{equation}
\label{eqn:embedding-CohE}
 \Coh(T^\vee_\bk \times_{T^\vee_\bk / \Wf} \{e\}) \to \Rep(\bbI'_\Sigma)
\end{equation}
sending a coherent sheaf to itself with the trivial structure as a representation.

On the other hand, recall from~\S\ref{ss:another-convolution} the category $\sfD_{\Iwu,\Iw}^0$, and the heart $\sfP_{\Iwu,\Iw}^0$ of its perverse t-structure. We have canonical exact and fully faithful functors
\[
 \sfP_{\Iw,\Iw}^0 \xrightarrow{\For^{\Iw,0}_{\Iwu}} \sfP_{\Iwu,\Iw}^0 \xrightarrow{\pi^{\dag,0}} \sfP_{\Iwu,\Iwu}^0.
\]
As explained in~\S\ref{ss:another-convolution}, we also have a natural right action of $\sfP_{\Iw,\Iw}^0$ on $\sfP_{\Iwu,\Iw}^0$ (via a bifunctor denoted $\star^0_{\Iw}$, which is exact on both sides), and as for the construction of the convolution product $\pstar_{\Iwu}^0$ (see~\S\ref{ss:mon-reg-quotient}) we have a natural left action of $\sfP_{\Iwu,\Iwu}^0$ on $\sfP_{\Iwu,\Iw}^0$, via a bifunctor also denoted $\pstar_{\Iwu}^0$ (and which is also right exact on both sides). We have a natural embedding $G/B=\overline{\Fl_{G,w_\circ}} \subset \Fl_G$, which provides via pushforward an exact and fully faithful functor
\begin{equation}
\label{eqn:embedding-PUB}
 \sfP^0_{U,B} \to \sfP^0_{\Iwu,\Iw}.
\end{equation}

\begin{thm}
\label{thm:main-Fl}
 There exists a canonical equivalence of abelian categories
 \[
  \Phi_{\Iwu,\Iw} : \sfP_{\Iwu,\Iw}^0 \simto \Rep(\bbI'_\Sigma)
 \]
such that the diagrams
\[
 \xymatrix{
 \sfP_{\Iw,\Iw}^0 \ar[r]^-{\For^{\Iw,0}_{\Iwu}} \ar[d]_-{\Phi_{\Iw,\Iw}}^-{\wr} & \sfP_{\Iwu,\Iw}^0 \ar[r]^-{\pi^{\dag,0}} \ar[d]^-{\Phi_{\Iwu,\Iw}}_-{\wr} & \sfP_{\Iwu,\Iwu}^0 \ar[d]^-{\Phi_{\Iwu,\Iwu}}_-{\wr} \\
 \Rep(\rmZ_{G^\vee_\bk}(\su)) \ar[r] & \Rep(\bbI'_\Sigma) \ar[r] & \Rep_0(\bbI_\Sigma)
 }
 \]
(where on the left-hand side the lower horizontal row is as in~\eqref{eqn:functors-Rep-J}) and
 \[
 \xymatrix@C=1.5cm{
 \sfP^0_{U,B} \ar[r]^-{\eqref{eqn:embedding-PUB}} \ar[d]_-{\Phi_{U,B}}^-{\wr} & \sfP^0_{\Iwu,\Iw} \ar[d]_-{\wr}^-{\Phi_{\Iwu,\Iw}} \\
 \Coh(T^\vee_\bk \times_{T^\vee_\bk / \Wf} \{e\}) \ar[r]^-{\eqref{eqn:embedding-CohE}} & \Rep(\bbI'_\Sigma)
 }
\]
 commute (up to isomorphisms).
Moreover, this equivalence intertwines the actions of $\sfP_{\Iwu,\Iwu}^0$ and $\sfP_{\Iw,\Iw}^0$ on $\sfP_{\Iwu,\Iw}^0$ and of $\Rep(\rmZ_{G^\vee_\bk}(\su))$ and $\Rep_0(\bbI_\Sigma)$ on $\Rep(\bbI'_\Sigma)$ via the equivalences $\Phi_{\Iw,\Iw}$ and $\Phi_{\Iwu,\Iwu}$.
\end{thm}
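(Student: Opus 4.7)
The plan is to produce $\Phi_{\Iwu,\Iw}$ by identifying $\sfP^0_{\Iwu,\Iw}$ with a natural full subcategory of $\sfP^0_{\Iwu,\Iwu}$ via the functor $\pi^{\dag,0}$, and then transporting this subcategory through the equivalence $\Phi_{\Iwu,\Iwu}$ of Theorem~\ref{thm:main}. The target subcategory will turn out to be the essential image of $\Rep(\bbJ_\fE) \hookrightarrow \Rep_0(\bbJ_\fD)$, yielding the sought equivalence.

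First I will pin down the essential image of $\pi^{\dag,0}$. Since $\pi : \tFl_G \to \Fl_G$ is a Zariski-locally trivial $T$-torsor and $T$ is connected, the functor $\pi^\dag$ is fully faithful on perverse sheaves, with essential image consisting of those perverse sheaves in $\sfP_{\Iwu,\Iwu}$ on which the right $T$-monodromy morphism $\scO(T^\vee_\bk) \to \End(\scF)$ factors through the augmentation $\scO(T^\vee_\bk) \to \bk$. This subcategory is closed under subquotients, so by the same kind of argument invoked in~\S\ref{ss:another-convolution} for $\For^{\Iw,0}_{\Iwu}$, the induced functor $\pi^{\dag,0}$ remains fully faithful on hearts, and its essential image in $\sfP^0_{\Iwu,\Iwu}$ consists of those objects on which the right monodromy action of $\scO(T^\vee_\bk)$ (well defined after Lemma~\ref{lem:monodromy-fiber-prod}) still factors through $\bk$.

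Next I will identify this essential image under $\Phi_{\Iwu,\Iwu}$. By the construction of $\Phi_{\Iwu,\Iwu}$ (through $\scR^\wedge$, together with Lemma~\ref{lem:monodromy-Rwedge}, Remark~\ref{rmk:action-Sigma-2} and~\eqref{eqn:monodromy-conv-R}), the $\scO(\fD)$-module structure on $\Phi_{\Iwu,\Iwu}(\scF)$ is the one induced by monodromy, the two factors $\scO(T^\vee_\bk) \to \scO(\fD)$ corresponding respectively to left and right monodromy. Hence triviality of right monodromy on $\scF$ is equivalent to annihilation of $\Phi_{\Iwu,\Iwu}(\scF)$ by the augmentation ideal of the second factor, which in turn is precisely the condition that $\Phi_{\Iwu,\Iwu}(\scF)$ lies in the essential image of $\Rep(\bbJ_\fE) \to \Rep_0(\bbJ_\fD)$. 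This produces the equivalence $\Phi_{\Iwu,\Iw}$; the left square of the first diagram commutes by the left square of~\eqref{eqn:diag-compatibility-Phi}, and the right square commutes by the very definition of $\Phi_{\Iwu,\Iw}$. For the second diagram I will use that the closed embedding $G/B \hookrightarrow \Fl_G$ factors through $G/U \hookrightarrow \tFl_G \to \Fl_G$, so that the composition of~\eqref{eqn:embedding-PUB} with $\pi^{\dag,0}$ equals the composition of~\eqref{eqn:pullback-flag} with~\eqref{eqn:embedding-PUU-PII}, and then combine Proposition~\ref{prop:isom-Phi-U-Iu} with Theorem~\ref{thm:reg-quotient-finite-B} and the evident compatibility among~\eqref{eqn:functor-bE-bD},~\eqref{eqn:embedding-CohE} and~\eqref{eqn:Coh-Rep-JD}.

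Finally, compatibility with the left action of $\sfP^0_{\Iwu,\Iwu}$ on $\sfP^0_{\Iwu,\Iw}$ follows from the monoidality of $\Phi_{\Iwu,\Iwu}$ together with the compatibility of $\pi^{\dag,0}$ with convolution (a direct consequence of~\eqref{eqn:conv-formula-2}--\eqref{eqn:conv-formula-3}), and compatibility with the right action of $\sfP^0_{\Iw,\Iw}$ then follows from~\eqref{eqn:For-conv-0} together with the commutativity of the left square of the first diagram and the equivalence $\Phi_{\Iw,\Iw}$. The main obstacle in this plan is the identification in Step~2: tracking, at the level of the concrete construction in Section~\ref{sec:construction}, that the right monodromy on $\scF \in \sfP^0_{\Iwu,\Iwu}$ is faithfully transported to the action via the second projection $\fD \to T^\vee_\bk$ on $\Phi_{\Iwu,\Iwu}(\scF)$; once this is in place, the remainder amounts to bookkeeping with the already constructed equivalences $\Phi_{\Iw,\Iw}$, $\Phi_{U,U}$, $\Phi_{U,B}$ and $\Phi_{\Iwu,\Iwu}$.
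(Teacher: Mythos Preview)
Your plan is correct and leads to a valid proof, but it takes a somewhat different route from the paper's. The paper defines $\Phi_{\Iwu,\Iw}$ intrinsically by the same recipe as $\Phi_{\Iwu,\Iwu}$, namely
\[
\Phi_{\Iwu,\Iw}(\scF) = \Hom_{\sfD^0_{\Iwu,\Iw}}\bigl(\Pi^0_{\Iwu,\Iw}(\Xi_!),\ \scR^\wedge \phatstar^0 \scF\bigr),
\]
using a left action $\hatstar^0$ of $\sfD^\wedge_{\Iwu,\Iwu}$ on $\sfD^0_{\Iwu,\Iw}$, and then establishes the compatibility $\Phi_{\Iwu,\Iwu}\circ\pi^{\dag,0}\cong\Phi_{\Iwu,\Iw}$ via the adjunction $(\pi_{\dag,0},\pi^{\dag,0})$ and the identification $\pi_{\dag,0}\Pi^0_{\Iwu,\Iwu}(\Xi^\wedge_!)\cong\Pi^0_{\Iwu,\Iw}(\Xi_!)$. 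Exactness and full faithfulness are deduced from this comparison, and essential surjectivity is proved separately by showing that every object of $\Rep(\bbJ_\fE)$ is a quotient of some $V\otimes\scO(\fE)$ and exhibiting explicit preimages.

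Your approach instead defines $\Phi_{\Iwu,\Iw}$ as the restriction of $\Phi_{\Iwu,\Iwu}$ to the essential image of $\pi^{\dag,0}$, after characterizing that image (trivial right monodromy) and matching it with the essential image of $\Rep(\bbJ_\fE)\hookrightarrow\Rep_0(\bbJ_\fD)$ (trivial second-factor action). The key step you flag is indeed the crucial one, and the references you cite (\eqref{eqn:monodromy-conv-R} in particular) do the job. Your route is more economical: essential surjectivity comes for free, and the diagrams follow directly from Theorem~\ref{thm:main}. What the paper's route buys is an explicit intrinsic formula for $\Phi_{\Iwu,\Iw}$, parallel to the main construction. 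One small point: your argument for the right $\sfP^0_{\Iw,\Iw}$-action via~\eqref{eqn:For-conv-0} is incomplete as stated; what you actually need is an isomorphism $\pi^{\dag,0}(\scF\star^0_\Iw\scH)\cong\pi^{\dag,0}(\scF)\pstar^0_{\Iwu}\pi^\dag_0(\scH)$ for $\scF\in\sfP^0_{\Iwu,\Iw}$ and $\scH\in\sfP^0_{\Iw,\Iw}$, which follows by the same computation as in Lemma~\ref{lem:convolution-Iw-Iwu} but is not literally~\eqref{eqn:For-conv-0}.
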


\subsection{(Sketch of) proof of Theorem~\ref{thm:main-Fl}}


The same considerations as in~\S\ref{ss:actions-Dwedge-D} provide a left action of the category $\sfD^\wedge_{\Iwu,\Iwu}$ on $\sfD^0_{\Iwu,\Iw}$, via a bifunctor denoted $\hatstar^0$, and which satisfies
\begin{equation}
\label{eqn:convolution-pidag}
 \pi^{\dag,0}(\scF \hatstar^0 \scG) \cong \scF \hatstar^0 (\pi^{\dag,0} \scG)
\end{equation}
for any $\scF$ in $\sfD^{\wedge}_{\Iwu,\Iwu}$ and $\scG$ in $\sfD^0_{\Iwu,\Iw}$. Taking $0$-th perverse cohomology we then define $\phatstar^0$ as in the setting of $\sfP^0_{\Iwu,\Iwu}$.

One can deduce from Lemma~\ref{lem:monodromy-fiber-prod} that the monodromy action of $\scO(T^\vee_\bk)$ on any object in $\sfP_{\Iwu,\Iw}$ (induced by the $T$-action on $\Fl_G$ by multiplication on the left) factors through an action of $\scO(T^\vee_\bk \times_{T^\vee_\bk / \Wf} \{e\})$.
We can then define the functor $\Phi_{\Iwu,\Iw}$, initially with values in $\scO(T^\vee_\bk \times_{T^\vee_\bk / \Wf} \{e\})$-modules, by setting
\[
 \Phi_{\Iwu,\Iw}(\scF) = \Hom_{\sfD^0_{\Iwu,\Iw}}(\Pi^0_{\Iwu,\Iw}(\Xi_!), \scR^\wedge \phatstar^0 \scF).
\]
(Here $\Pi^0_{\Iwu,\Iw}(\Xi_!)$ is a honest object in $\sfD^0_{\Iwu,\Iw}$, and $\scR^\wedge \phatstar^0 \scF$ is an ind-object in $\sfD^0_{\Iwu,\Iw}$; morphisms are taken in the category ${\ind}\sfD^0_{\Iwu,\Iw}$.) Using~\eqref{eqn:convolution-pidag}, exactness of $\pi^{\dag,0}$, adjunction and~\eqref{eqn:pidag-Xi} one sees that for $\scF$ in $\sfP^0_{\Iwu,\Iw}$ we have a canonical isomorphism
\[
 \Phi_{\Iwu,\Iwu}(\pi^{\dag,0}(\scF)) \cong \Phi_{\Iwu,\Iw}(\scF).
\]
This implies that $\Phi_{\Iwu,\Iw}$ is exact and takes values in finite-dimensional $\scO(T^\vee_\bk \times_{T^\vee_\bk / \Wf} \{e\})$-modules. The same considerations as in~\S\ref{ss:definition-Phi-Iwu} can be used to endow $\Phi_{\Iwu,\Iw}(\scF)$ with the structure of a representation of $\bbI'_\Sigma$, so that we have in fact constructed an exact functor
\[
 \Phi_{\Iwu,\Iw} : \sfP^0_{\Iwu,\Iw} \to \Rep(\bbI'_\Sigma).
\]
The considerations above show that the left-hand diagram in Theorem~\ref{thm:main-Fl} commutes. One can next check commutativity of the right-hand diagram as in Lem\-ma~\ref{prop:isom-Phi-U-Iu}.

Using the comparison with $\Phi_{\Iwu,\Iwu}$ we see also that $\Phi_{\Iwu,\Iw}$ is fully faithful. Essential surjectivity can be checked as in~\S\ref{ss:essential-surjectivity}: namely, using pushforward to $T^\vee_\bk \times_{T^\vee_\bk / \Wf} T^\vee_\bk$, Lemma~\ref{lem:J-modules-quot}, and then pullback to $T^\vee_\bk \times_{T^\vee_\bk / \Wf} \{e\}$, one sees that any object in $\Rep(\bbI'_\Sigma)$ is a quotient of an object of the form $V \otimes \scO(T^\vee_\bk \times_{T^\vee_\bk / \Wf} \{e\})$ with $V$ in $\Rep(G^\vee_\bk)$. This allows to conclude since
\[
 \Phi_{\Iwu,\Iw}(\tsZ(V) \hatstar^0 \Pi^0_{\Iwu,\Iwu}(\Xi_!)) \cong V \otimes \scO(T^\vee_\bk \times_{T^\vee_\bk / \Wf} \{e\}).
\]

Finally, compatibility with the actions can be checked by considerations similar to those used to prove monoidality of $\Phi_{\Iwu,\Iwu}$ (see~\S\ref{ss:monoidality-morph} and~\S\ref{ss:monoidality-PhiIuIu}).

\subsection{Description of tilting perverse sheaves}
\label{ss:description-tiltings-Iu-I}

We finally explain how to adapt Theorem~\ref{thm:main-application} to the present setting. We will denote by $\sfT_{\Iwu,\Iw}$ the full subcategory of tilting objects in the highest weight category $\sfP_{\Iwu,\Iw}$. We have a natural functor
\[
 \pi_\dag : \sfT^\wedge_{\Iwu,\Iwu} \to \sfT_{\Iwu,\Iw},
\]
and an action of the category $\sfT^\wedge_{\Iwu,\Iwu}$ on $\sfT_{\Iwu,\Iw}$ (via $\hatstar$), from which the functor $\pi_\dag$ can be recoved as the action on the object $\For^{\Iw}_{\Iwu}(\delta_{\Fl})$. Standard considerations show that $\sfT_{\Iwu,\Iw}$ is the smallest additive full subcategory of $\sfP_{\Iwu,\Iw}$ that contains $\For^{\Iw}_{\Iwu}(\delta_{\Fl})$ and is stable under direct summands and the action of the objects $\Xi^\wedge_{s,!}$ ($s \in S$) and $\Delta^\wedge_\omega$ ($\omega \in \Omega$).


The following claim follows from~\cite[Proposition~5.9(2)]{bezr}.

\begin{lem}
\label{lem:Hom-tilting}
For any $\scF,\scG$ in $\sfT^\wedge_{\Iwu,\Iwu}$, the functor $\pi_\dag$ induces an isomorphism
\[
\Hom_{\sfT^\wedge_{\Iwu,\Iwu}}(\scF,\scG) \otimes_{\scO(T^\vee_\bk)} \bk \simto \Hom_{\sfT_{\Iwu,\Iw}}(\pi_\dag(\scF), \pi_\dag(\scG)),
\]
where the action of $\scO(T^\vee_\bk)$ is via monodromy associated with the right action of $T$ on $\tFl_G$, and $\bk$ is viewed as an $\scO(T^\vee_\bk)$-module via evaluation at $e \in T^\vee_\bk$.
\end{lem}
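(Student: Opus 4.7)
The plan is to reduce this statement, which concerns tilting perverse sheaves on $\tFl_G$, to the analogous statement on the finite flag variety proved in~\cite{bezr}. First I would observe that both sides of the desired isomorphism are biadditive functors of $(\scF,\scG)$ and that, by the discussion in~\S\ref{ss:Twedge}, every object of $\sfT^\wedge_{\Iwu,\Iwu}$ is a direct summand of a direct sum of objects of the form
\[
\Delta^\wedge_{\omega} \hatstar \Xi^\wedge_{s_1,!} \hatstar \cdots \hatstar \Xi^\wedge_{s_i,!}
\]
with $\omega \in \Omega$ and $s_j \in S$. So it suffices to check the isomorphism on such objects.

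Next I would use that convolution with $\Delta^\wedge_\omega$ is an autoequivalence of $\sfT^\wedge_{\Iwu,\Iwu}$ (Remark~\ref{rmk:convolution-equiv}) which, via~\eqref{eqn:conv-formula-1}, commutes with $\pi_\dag$ up to the corresponding autoequivalence of $\sfT_{\Iwu,\Iw}$ given by convolution with $\Delta_\omega$; moreover, this equivalence is compatible with the right-monodromy action of $\scO(T^\vee_\bk)$ by~\eqref{eqn:monodromy-convolution-1}--\eqref{eqn:monodromy-convolution-2}. Combined with the rewriting $\Xi^\wedge_{s,!} \cong \Delta^\wedge_w \hatstar \Xi^\wedge_{s',!} \hatstar \nabla^\wedge_{w^{-1}}$ for $s = wsw^{-1}$ with $s' \in \Sf$, this reduces the problem to the case where $\scF$ and $\scG$ are convolutions of objects $\Xi^\wedge_{s,!}$ with $s \in \Sf$ only, i.e.\ objects supported on the closed subscheme $G/U \subset \tFl_G$.

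For such objects the completed convolution $\hatstar$ coincides with its $G/U$-version $\hatstar_U$, and $\pi_\dag$ restricts to the corresponding functor for the finite flag variety $G/B$. The desired isomorphism therefore reduces to the statement that for $\scF',\scG' \in \sfT^\wedge_{U,U}$ the natural map
\[
\Hom_{\sfT^\wedge_{U,U}}(\scF',\scG') \otimes_{\scO(T^\vee_\bk)} \bk \to \Hom_{\sfT_{U,B}}(\pi_\dag \scF', \pi_\dag \scG')
\]
is an isomorphism, which is exactly the content of~\cite[Proposition~5.9(2)]{bezr}.

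The only real thing to check is the bookkeeping regarding which copy of $\scO(T^\vee_\bk)$ acts: the isomorphism of the lemma is for the right-monodromy action induced by the $T$-action on $\tFl_G$, while under the conjugation reductions above both the left and right actions get permuted. This is harmless because, by Lemma~\ref{lem:monodromy-fiber-prod} and the structure of Wakimoto shifts recorded in Lemma~\ref{lem:monodromy-DN}, the two actions of $\scO(T^\vee_\bk)$ on a tilting object agree on $\scO(T^\vee_\bk/\Wf)$ and differ only by a $\Wf$-translate; in particular, after reducing modulo the augmentation ideal at $e \in T^\vee_\bk$, this distinction disappears. I do not expect a serious obstacle here; the main conceptual input is the cited proposition of~\cite{bezr}, and the rest is a routine reduction using the monoidal structure and compatibility of $\pi_\dag$ with $\hatstar$.
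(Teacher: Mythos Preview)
Your reduction step has a genuine gap. You claim that by peeling off $\Delta^\wedge_\omega$ and then using the rewriting $\Xi^\wedge_{s,!} \cong \Delta^\wedge_w \hatstar \Xi^\wedge_{s',!} \hatstar \nabla^\wedge_{w^{-1}}$ one can reduce to Bott--Samelson objects built only from $\Xi^\wedge_{s,!}$ with $s \in \Sf$, i.e.\ to objects supported on $G/U$. This fails as soon as there are two or more affine simple reflections in the word. For instance, with $s=ws'w^{-1}$ and $t=vt'v^{-1}$ ($s',t'\in\Sf$) one has
\[
\Xi^\wedge_{s,!}\hatstar\Xi^\wedge_{t,!}\cong \Delta^\wedge_w\hatstar\Xi^\wedge_{s',!}\hatstar\nabla^\wedge_{w^{-1}}\hatstar\Delta^\wedge_v\hatstar\Xi^\wedge_{t',!}\hatstar\nabla^\wedge_{v^{-1}},
\]
and after stripping $\Delta^\wedge_w$ on the left and $\nabla^\wedge_{v^{-1}}$ on the right you are left with $\Xi^\wedge_{s',!}\hatstar\nabla^\wedge_{w^{-1}}\hatstar\Delta^\wedge_v\hatstar\Xi^\wedge_{t',!}$, which is neither supported on $G/U$ nor of the form to which your argument applies. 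General tilting objects on the affine flag variety have arbitrarily large support; they cannot be conjugated into the finite flag variety.

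In fact no such reduction is needed. The result you cite, \cite[Proposition~5.9(2)]{bezr}, is proved in the general framework of completed categories attached to a torus torsor with a suitable stratification (the setting of \cite[Part~I]{bezr}), not just for $G/U$. The affine flag variety $\tFl_G\to\Fl_G$ fits that framework, and the paper simply invokes the proposition directly. So the correct proof is a one-line citation; your detour through the finite case is both unnecessary and, as written, does not go through.
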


On the other hand, we have a natural functor
\[
 \Rep(\bbI_{\Sigma}^\wedge) \to \Rep(\bbI'_\Sigma)
\]
given by restriction along the closed immersion
\[
T^\vee_\bk \times_{T^\vee_\bk / \Wf} \{e\} \hookrightarrow \FN_{T^\vee_\bk \times_{T^\vee_\bk / \Wf} T^\vee_\bk}(\{(e,e)\}).
\]
We also have a left action of $\Rep(\bbI_{\Sigma}^\wedge)$ on $\Rep(\bbI'_{\Sigma})$ by convolution (the corresponding bifunctor will once again be denoted $\circledast$), and the functor above is given by the action 
on the skyscraper sheaf at the base point. We will denote by $\SRep(\bbI'_\Sigma)$ the full additive subcategory of $\Rep(\bbI'_\Sigma)$ generated under direct sums and direct summands by the image of $\SRep(\bbI_{\Sigma}^\wedge)$ under this functor. In other words, $\SRep(\bbI'_\Sigma)$ is the smallest Karoubian additive subcategory of $\Rep(\bbI'_\Sigma)$ containing the skyscraper sheaf at the base point and stable under action by the objects $\mathscr{B}^\wedge_s$ ($s \in S$) and $\mathscr{M}^\wedge_\omega$ ($\omega \in \Omega$).

\begin{thm}
\label{thm:main-application-Fl}
 There exists a canonical equivalence of additive categories
 \[
  \Psi_{\sfT} : \sfT_{\Iwu,\Iw} \simto \SRep(\bbI'_\Sigma)
 \]
which sends $\For^{\Iw}_{\Iwu}(\delta_{\Fl})$ to the skyscraper sheaf at the base point and intertwines the actions of $\sfT^\wedge_{\Iwu,\Iwu}$ on $\sfT_{\Iwu,\Iw}$ and $\SRep(\bbI_{\Sigma}^\wedge)$ on $\SRep(\bbI'_{\Sigma})$ via the equivalence $\Phi_{\sfT}$.
\end{thm}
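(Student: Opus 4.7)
The plan is to transfer the equivalence $\Phi_{\sfT}$ of Theorem~\ref{thm:main-application} along two parallel ``reduction'' functors: the pushforward $\pi_\dag : \sfT^\wedge_{\Iwu,\Iwu} \to \sfT_{\Iwu,\Iw}$ (which can be described as $\scT \mapsto \scT \hatstar \For^{\Iw}_{\Iwu}(\delta)$) and the convolution $(-) \circledast \scE : \SRep(\bbJ^\wedge_\fD) \to \SRep(\bbJ_\fE)$, where $\scE$ denotes the skyscraper at the base point of $\fE$. By construction $\sfT_{\Iwu,\Iw}$ is the additive Karoubian closure of the essential image of $\pi_\dag$, while $\SRep(\bbJ_\fE)$ is \emph{by definition} the additive Karoubian closure of the essential image of $(-)\circledast \scE$. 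The functor $\Psi_{\sfT}$ will be obtained by sending $\pi_\dag(\scT)$ to $\Phi_{\sfT}(\scT) \circledast \scE$ and extending to direct summands.

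The key point is matching the morphism spaces. Lemma~\ref{lem:Hom-tilting} gives on the perverse side
\[
\Hom_{\sfT^\wedge_{\Iwu,\Iwu}}(\scF,\scG) \otimes_{\scO(T^\vee_\bk)} \bk \simto \Hom_{\sfT_{\Iwu,\Iw}}(\pi_\dag \scF, \pi_\dag \scG).
\]
I would then establish the coherent analogue
\[
\Hom_{\Rep(\bbJ^\wedge_\fD)}(M,N) \otimes_{\scO(T^\vee_\bk)} \bk \simto \Hom_{\Rep(\bbJ_\fE)}(M \circledast \scE, N \circledast \scE)
\]
for $M,N \in \SRep(\bbJ^\wedge_\fD)$, relying on (a) the fact that each $\scB^\wedge_s$ is right-free of rank two and each $\scM^\wedge_\omega$ right-free of rank one over $\scO((T^\vee_\bk)^\wedge)$, so that any object of $\SRep(\bbJ^\wedge_\fD)$ is right-free of finite rank; and (b) the base-change statement of Lemma~\ref{lem:compatibilities-gp-schemes}\eqref{it:base-change-gp-schemes}, applied to the (flat) augmentation $\scO((T^\vee_\bk)^\wedge) \to \bk$, to move equivariant Homs through the base change defining $\bbJ_\fE$. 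Combining these two Hom formulas through $\Phi_{\sfT}$ (which induces isomorphisms on the completed Hom spaces by fully faithfulness) produces a fully faithful additive functor on the subcategory of $\sfT_{\Iwu,\Iw}$ generated by images of $\pi_\dag$; passing to Karoubian closures on both sides yields $\Psi_{\sfT}$, and essential surjectivity is automatic from the parallel ``Karoubian closure'' definitions of the target categories.

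Once $\Psi_{\sfT}$ is constructed, the remaining verifications are straightforward. The image of $\For^{\Iw}_{\Iwu}(\delta) = \pi_\dag(\delta^\wedge)$ is $\Phi_{\sfT}(\delta^\wedge) \circledast \scE = \scM^\wedge_e \circledast \scE$, which is precisely the skyscraper $\scE$. Intertwining of the actions is built into the construction, via the identity $\Psi_{\sfT}(\scT \hatstar \pi_\dag \scF) = \Phi_{\sfT}(\scT) \circledast \Psi_{\sfT}(\pi_\dag \scF)$, which extends to direct summands. The main obstacle is the coherent Hom formula above: one must carefully check that flat base change for $\Hom$ of $\bbJ$-representations behaves well in the completed setting and that the convolution $(-)\circledast \scE$ coincides with the obvious base-change functor along the finite embedding $\fE \hookrightarrow \fD^\wedge$. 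An alternative approach that sidesteps this technical computation would be to invoke the fully faithfulness of the quotient $\sfP_{\Iwu,\Iw} \to \sfP^0_{\Iwu,\Iw}$ on tilting objects (asserted in~\S\ref{ss:mon-reg-quot-intro}) and transport $\sfT_{\Iwu,\Iw}$ through $\Phi_{\Iwu,\Iw}$ from Theorem~\ref{thm:main-Fl}, identifying the image with $\SRep(\bbJ_\fE)$ by comparing the action of $\sfT^\wedge_{\Iwu,\Iwu}$ on both sides using the right-hand compatibility diagram of Theorem~\ref{thm:main-Fl} and Theorem~\ref{thm:main-application}\eqref{it:main-application-1}--\eqref{it:main-application-2}.
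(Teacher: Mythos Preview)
Your alternative approach (last sentence) is exactly what the paper does: it uses fully faithfulness of $\Pi^0_{\Iwu,\Iw}$ on $\sfT_{\Iwu,\Iw}$, transports through $\Phi_{\Iwu,\Iw}$, and checks the action compatibilities $\Phi_{\Iwu,\Iw}(\Xi^\wedge_{s,!} \phatstar^0 \scF) \cong \mathscr{B}^\wedge_s \circledast \Phi_{\Iwu,\Iw}(\scF)$ and $\Phi_{\Iwu,\Iw}(\Delta^\wedge_\omega \phatstar^0 \scF) \cong \mathscr{M}^\wedge_\omega \circledast \Phi_{\Iwu,\Iw}(\scF)$ via a truncation trick (since $\cJ$ acts trivially on any object of $\sfP^0_{\Iwu,\Iw}$, one can replace $\Xi^\wedge_{s,!}$ by $\mathsf{C}^0_1(\Xi^\wedge_{s,!})$ and then invoke the action compatibility of Theorem~\ref{thm:main-Fl}).

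Your primary approach, however, has a concrete gap. The augmentation $\scO((T^\vee_\bk)^\wedge) \to \bk$ is \emph{not} flat (its source is a regular local ring of positive dimension), so Lemma~\ref{lem:compatibilities-gp-schemes}\eqref{it:base-change-gp-schemes} does not apply to establish your coherent Hom formula. The formula is nonetheless true---indeed, the paper records it as a \emph{consequence} of Theorem~\ref{thm:main-application-Fl} in the remark immediately following the proof---but proving it directly requires more than flat base change: one would need that $\Hom_{\Rep(\bbJ^\wedge_\fD)}(M,N)$ has no higher Tor against $\bk$, which amounts to a freeness statement for these Hom modules that is not at all obvious a priori (and is essentially equivalent in difficulty to what you are trying to prove). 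So the paper's route, which you identified as an alternative, is the one that actually closes.
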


\begin{proof}
 It is a standard fact that the quotient functor $\Pi^0_{\Iwu,\Iw}$ is fully faithful on the subcategory $\sfT_{\Iwu,\Iw}$ (see~\cite[\S 2.1]{bbm} for the similar claim on $G/B$). Since the equivalence $\Phi_{\Iwu,\Iw}^0$ of Theorem~\ref{thm:main-Fl} sends $\For^{\Iw}_{\Iwu}(\delta_{\Fl})$ to the skyscraper sheaf at the base point, to prove the theorem it therefore suffices to check that this functor satisfies
 \[
  \Phi_{\Iwu,\Iw}(\Xi^\wedge_{s,!} \phatstar^0 \scF) \cong \mathscr{B}^\wedge_s \circledast \Phi_{\Iwu,\Iw}(\scF), \quad \Phi_{\Iwu,\Iw}(\Delta^\wedge_\omega \phatstar^0 \scF) \cong \mathscr{M}^\wedge_\omega \circledast \Phi_{\Iwu,\Iw}(\scF)
 \]
for $s \in S$, $\omega \in \Omega$ and $\scF$ in $\sfP^0_{\Iwu,\Iw}$. However, $\cJ$ acts trivially on $\scF$. As in the proof of Lemma~\ref{lem:monoidality-C0}, one can check that we have a canonical isomorphism
\[
 \Xi^\wedge_{s,!} \phatstar^0 \scF \cong \mathsf{C}_1^0(\Xi^\wedge_{s,!}) \pstar^0_{\Iwu} \scF.
\]
It follows that
\begin{multline*}
  \Phi_{\Iwu,\Iw}(\Xi^\wedge_{s,!} \phatstar^0 \scF) \cong \Phi_{\Iwu,\Iwu}(\mathsf{C}_1^0(\Xi^\wedge_{s,!})) \circledast \Phi_{\Iwu,\Iw}(\scF) \\
  \cong \mathsf{D}_1(\mathscr{B}^\wedge_s) \circledast \Phi_{\Iwu,\Iw}(\scF) \cong \mathscr{B}^\wedge_s \circledast \Phi_{\Iwu,\Iw}(\scF),
  \end{multline*}
  which proves the first isomorphism. The second one can be checked similarly.
\end{proof}

\begin{rmk}
 Comparing Theorem~\ref{thm:main-application} and Theorem~\ref{thm:main-application-Fl}, and using Lemma~\ref{lem:Hom-tilting}, we see that for any $M,N$ in $\SRep(\bbI_\Sigma^\wedge)$, there exists a canonical isomorphism
 \[
  \Hom_{\SRep(\bbI_\Sigma^\wedge)}(M,N) \otimes_{\scO(T^\vee_\bk)} \bk \simto \Hom_{\SRep(\bbI'_\Sigma)}(M_{|T^\vee_\bk \times_{T^\vee_\bk / \Wf} \{e\}},N_{|T^\vee_\bk \times_{T^\vee_\bk / \Wf} \{e\}}).
 \]
Such a property is standard in the usual theory of Soergel bimodules, see e.g.~\cite[Proposition~1.13]{riche-bourbaki}.
\end{rmk}

\appendix

\section{Verdier quotients, Serre quotients, and t-structures}

\subsection{Quotient categories}
\label{ss:quotient-cat}

If $\mathsf{A}$ is an abelian category, recall that a \emph{Serre subcategory} of $\mathsf{A}$ is a nonempty strictly full subcategory which is stable under subquotients and extensions. Given a Serre subcategory $\mathsf{B} \subset \mathsf{A}$, one can form the quotient category $\mathsf{A}/\mathsf{B}$ and the exact functor $Q : \mathsf{A} \to \mathsf{A}/\mathsf{B}$ which satisfy the universal property that given an abelian category $\mathsf{A}'$ and an exact functor $F : \mathsf{A} \to \mathsf{A}'$ such that $F(M)=0$ for any $M$ in $\mathsf{B}$, there exists a unique functor $G : \mathsf{A}/\mathsf{B} \to \mathsf{A}'$ such that $F=G \circ Q$. (Moreover, in this situation $G$ is exact.)

\begin{rmk}
 There exist at least two different constructions of $\mathsf{A}/\mathsf{B}$: the initial construction of Gabriel given in~\cite[\S III.1]{gabriel}, and the construction as a localization explained in~\cite[\href{https://stacks.math.columbia.edu/tag/02MN}{Tag 02MN}]{stacks-project}. Since these constructions provide categories satisfying the same universal property, the corresponding categories must be canonically equivalent.
\end{rmk}

Now, let $\mathsf{D}$ be a triangulated category, and $\mathsf{E} \subset \mathsf{D}$ a full triangulated subcategory. Then one can form the quotient category $\mathsf{D}/\mathsf{E}$ and the quotient functor $\Pi : \mathsf{D} \to \mathsf{D}/\mathsf{E}$ using a localization procedure as in~\cite[\href{https://stacks.math.columbia.edu/tag/05RA}{Tag 05RA}]{stacks-project}. This category satisfies the following universal properties (see~\cite[\href{https://stacks.math.columbia.edu/tag/05RJ}{Tag 05RJ}]{stacks-project}):
\begin{enumerate}
 \item for any triangulated category $\mathsf{D}'$ and any triangulated functor $F : \mathsf{D} \to \mathsf{D}'$ such that $F(M)=0$ for any $M$ in $\mathsf{E}$, there exists a unique functor $G : \mathsf{D}/\mathsf{E} \to \mathsf{D}'$ such that $F=G \circ \Pi$; moreover $G$ is triangulated;
 \item for any abelian category $\mathsf{A}$ and any cohomological functor $H : \mathsf{D} \to \mathsf{A}$ such that $H(M)=0$ for any $M$ in $\mathsf{E}$, there exists a unique functor $H' : \mathsf{D}/\mathsf{E} \to \mathsf{A}$ such that $H=H' \circ \Pi$; moreover $H'$ is a cohomological functor.
\end{enumerate}
Another property which is checked similarly is that given a triangulated bifunctor $F : \sfD \times \sfD \to \sfD'$ such that $F(X,Y)=0$ if either $X$ or $Y$ is in $\sfE$, there exists a unique bifunctor $G : (\sfD/\sfE) \times (\sfD/\sfE) \to \sfD'$ such that $F=G(\Pi(-),\Pi(-))$, and moreover $G$ is triangulated.

\subsection{Verdier quotient and t-structures}
\label{ss:Verdier-t-str}

Let $\mathsf{D}$ be a triangulated category equipped with a bounded t-structure $(\mathsf{D}^{\leq 0}, \mathsf{D}^{\geq 0})$. We will denote by $\mathsf{A}$ the heart of this t-structure, and by $(H^n : n \in \Z)$ the associated cohomology functors. Recall that a bounded t-structure is automatically non-degenerate; in particular we have
\begin{align*}
 \mathsf{D}^{\leq 0} &= \{X \in \mathsf{D} \mid \forall n \in \Z_{>0}, \, H^n(X)=0\}; \\
 \mathsf{D}^{\geq 0} &= \{X \in \mathsf{D} \mid \forall n \in \Z_{<0}, \, H^n(X)=0\},
\end{align*}
see~\cite[Proposition~1.3.7]{bbd}.

Let also $\mathsf{B} \subset \mathsf{A}$ be a Serre subcategory, and denote by $\mathsf{D}_{\mathsf{B}}$ the full triangulated subcategory of $\mathsf{D}$ generated by $\mathsf{B}$; it is easily seen that
\[
 \mathsf{D}_{\mathsf{B}} = \{X \in \mathsf{D} \mid \forall n \in \Z, \, H^n(X) \in \mathsf{B}\}.
\]
We set
\[
 \mathsf{E} := \mathsf{D} / \mathsf{D}_{\mathsf{B}},
\]
and denote the quotient functor by $\Pi : \sfD \to \sfE$.

\begin{lem}
\phantomsection
\label{lem:quotient-t-str}
\begin{enumerate}
 \item 
 \label{it:quot-t-str-unicity}
 There exists a unique t-structure on $\sfE$ such that $\Pi$ is t-exact.
 \item 
 \label{it:quot-t-str-description}
 This t-structure is bounded, and given explicitly by
 \begin{align*}
 \sfE^{\leq 0} &= \{X \in \sfE \mid \forall m \in \Z_{>0}, \, H^m(X) \in \sfB\}; \\
 \sfE^{\geq 0} &= \{X \in \sfZ \mid \forall m \in \Z_{<0}, \, H^m(X) \in \sfB\},
\end{align*}
where we identify the objects of $\sfD$ and $\sfE$.
\item If $\sfA'$ is the heart of this t-structure, then the restriction of $\Pi_\sfD$ to $\sfA$, seen as a functor $\sfA \to \sfA'$, factors through an equivalence of categories $\sfA/\sfB \simto \sfA'$.
\end{enumerate}
\end{lem}

\begin{proof}
 We define the full subcategories $\sfE^{\leq 0}$ and $\sfE^{\geq 0}$ as in~\eqref{it:quot-t-str-description}, and then set $\sfE^{\leq n} := \sfE^{\leq 0}[-n]$ and $\sfE^{\geq n} := \sfE^{\geq 0}[-n]$ for $n \in \Z$; we also have
  \begin{align*}
 \sfE^{\leq n} &= \{X \in \mathsf{D}' \mid \forall m \in \Z_{>n}, \, H^m(X) \in \sfB\}; \\
 \sfE^{\geq n} &= \{X \in \mathsf{D}' \mid \forall m \in \Z_{<n}, \, H^m(X) \in \sfB\}.
\end{align*}
 First, let us show that $(\sfE^{\leq 0},\sfE^{\geq 0})$ is a t-structure on $\sfE$. 
 
 The first axiom we have to check is that if $X \in \sfE^{\leq 0}$ and $Y \in \sfE^{\geq 1}$ we have $\Hom_{\sfE}(X,Y)=0$. Recall that a morphism $f : X \to Y$ in $\sfE$ is the equivalence class of a diagram
 \[
  X \xleftarrow{g} Z \xrightarrow{h} Y
 \]
where $Z \in \sfD$, $g,h$ are morphisms in $\sfD$, and the cone $C$ of $g$ belongs to $\sfD_{\sfB}$. Since $X \in \sfE^{\leq 0}$, the long exact sequence in cohomology associated with the distinguished triangle
\[
 Z \xrightarrow{g} X \to C \xrightarrow{[1]}
\]
shows that $H^n(Z) \in \sfB$ for any $n > 0$. It follows that the canonical morphism $\varphi : \tau_{\leq 0} Z \to Z$ is such that $\Pi(\varphi)$ is an isomorphism. (Here, $\tau_{\leq 0}$ is the truncation functor for our given t-structure on $\sfD$.) Similarly the canonical morphism $\psi : Y \to \tau_{\geq 1} Y$ is such that $\Pi(\psi)$ is an isomorphism. We deduce that $f$ can be written as the composition
\[
 X \xrightarrow{\Pi(g)^{-1}} Z \xrightarrow{\Pi(\varphi)^{-1}} \tau_{\leq 0} Z \xrightarrow{\Pi(\varphi)} Z \xrightarrow{\Pi(h)} Y \xrightarrow{\Pi(\psi)} \tau_{\geq 1} Y \xrightarrow{\Pi(\psi)^{-1}} Y.
\]
Now $\psi \circ h \circ \varphi=0$ since $(\mathsf{D}_{\leq 0}, \mathsf{D}_{\geq 0})$ is a t-structure on $\sfD$, hence $f=0$.

It is clear that we have $\sfE^{\leq 0} \subset \sfE^{\leq 1}$ and $\sfE^{\geq 0} \supset \sfE^{\geq 1}$, so that our data satisfy the second axiom of a t-structure. Finally, for any $X \in \sfE$, considered as an object in $\sfD$, the canonical triangle $\tau_{\leq 0} X \to X \to \tau_{\geq 1} X \xrightarrow{[1]}$ in $\sfD$ induces a distinguished triangle
\[
 \Pi(\tau_{\leq 0} X) \to \Pi(X) \to \Pi(\tau_{\geq 1} X) \xrightarrow{[1]}
\]
in $\sfE$, proving that the third axiom is also satisfied.

To prove the unicity claim in~\eqref{it:quot-t-str-unicity}, we consider another t-structure on $\sfE$ such that $\Pi$ is exact (which we will call ``new'' to distinguish from the one constructed above). Then if $X \in \sfE^{\leq 0}$, we consider $X$ as an object in $\sfD$ and the canonical map $\tau_{\leq 0} X \to X$. (Here, as above, $\tau_{\leq 0}$ is the truncation functor for our given t-structure on $\sfD$.) The image of this map under $\Pi$ is an isomorphism, which implies that $X$ belongs to the nonpositive part of the new t-structure. By similar arguments, $\sfE^{\geq 0}$ is contained in the nonnegative part of the new t-structure. We conclude using
the standard fact that two t-structures on a given triangulated category such that the nonpositive part of the first one is contained in the nonpositive part of the second one and the nonnegative part of the first one is contained in the nonnegative part of the second one must coincide.

Let us now denote by $\sfA'$ the heart of our t-structure on $\sfE$. Since $\Pi$ is t-exact it restricts to an exact functor $\sfA \to \sfA'$. It is clear that this functor sends all objects of $\sfB$ to $0$; by the universal property of the Serre quotient it therefore factors through an exact functor
\begin{equation}
\label{eqn:quotient-t-str-1}
\sfA/\sfB \to \sfA'.
\end{equation}
On the other hand, consider the quotient functor $Q : \sfA \to \sfA/\sfB$, and the cohomological functor $Q \circ H^0 : \sfD \to \sfA/\sfB$. It is clear that this functor sends all objects of $\sfD_\sfB$ to $0$; it therefore factors through a cohomological functor $\sfE \to \sfA/\sfB$, which by restriction to $\sfA'$ provides an exact functor
\begin{equation}
\label{eqn:quotient-t-str-2}
\sfA' \to \sfA/\sfB.
\end{equation}
It is clear that~\eqref{eqn:quotient-t-str-1} and~\eqref{eqn:quotient-t-str-2} are quasi-inverse to each other, so that~\eqref{eqn:quotient-t-str-1} is an equivalence of categories.
\end{proof}

\subsection{Quotients of derived categories}

Let now $\sfA$ be an abelian category, and $\sfB \subset \sfA$ be a Serre subcategory. We consider as in~\S\ref{ss:Verdier-t-str} the full triangulated subcategory $\sfD_\sfB$ of $\Db(\sfA)$ generated by $\sfB$. Consider the Serre quotient $\sfA/\sfB$, and the quotient functor $Q : \sfA \to \sfA/\sfB$. The following statement is~\cite[Theorem~3.2]{miyachi}.

\begin{prop}
\label{prop:miyachi}
 The functor
 \[
  \Db(Q) : \Db(\sfA) \to \Db(\sfA/\sfB)
 \]
factors through an equivalence of triangulated categories
\[
 \Db(\sfA) / \sfD_\sfB \simto \Db(\sfA/\sfB).
\]
\end{prop}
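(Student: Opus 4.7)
The plan is to construct the candidate equivalence by the universal property of the Verdier quotient, then verify t-exactness and essential surjectivity by soft arguments, and finally reduce fully faithfulness to a concrete computation of Ext groups. First, since $Q$ kills every object of $\sfB$, the functor $\Db(Q)$ kills $\sfB$, and because $\sfD_\sfB$ is the full triangulated subcategory generated by $\sfB$, it kills all of $\sfD_\sfB$. The universal property recalled in \S\ref{ss:quotient-cat} then provides a unique triangulated functor
\[
F : \Db(\sfA)/\sfD_\sfB \to \Db(\sfA/\sfB)
\]
such that $\Db(Q) = F \circ \Pi$, and it is this $F$ that I want to show is an equivalence.

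Next, I would invoke Lemma~\ref{lem:quotient-t-str} applied to $\Db(\sfA)$ with its standard t-structure: the quotient $\Db(\sfA)/\sfD_\sfB$ inherits a bounded t-structure for which $\Pi$ is t-exact, with heart canonically identified with $\sfA/\sfB$ in such a way that the composition $\sfA \hookrightarrow \Db(\sfA) \xrightarrow{\Pi} \Db(\sfA)/\sfD_\sfB$ restricted to the hearts becomes $Q$. Since both $\Pi$ and $\Db(Q)$ are t-exact for these t-structures, $F$ is t-exact, and its restriction to the hearts is canonically isomorphic to the identity of $\sfA/\sfB$. Essential surjectivity of $F$ is then immediate: the essential image is a triangulated subcategory of $\Db(\sfA/\sfB)$ containing $\sfA/\sfB$, and every object of $\Db(\sfA/\sfB)$ is a finite extension of objects of $\sfA/\sfB$ (shifted) by boundedness of the t-structure.

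The main obstacle is fully faithfulness. A standard dévissage, using the long exact sequences of Hom attached to the truncation triangles on both sides together with the five-lemma and induction on the cohomological amplitude of the arguments, reduces the claim to proving that for any $X, Y \in \sfA/\sfB$ and any $n \in \Z_{\geq 0}$, the map
\[
F_{X,Y}^n : \Hom_{\Db(\sfA)/\sfD_\sfB}(X, Y[n]) \longrightarrow \Hom_{\Db(\sfA/\sfB)}(X, Y[n]) = \Ext^n_{\sfA/\sfB}(X, Y)
\]
induced by $F$ is an isomorphism (where $X$ and $Y$ are lifted to objects of $\sfA$, which is possible since $Q$ is essentially surjective).

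To prove that $F_{X,Y}^n$ is an isomorphism, I would use the calculus of fractions for the multiplicative system $S$ of morphisms in $\Db(\sfA)$ whose cone lies in $\sfD_\sfB$: a morphism in $\Db(\sfA)/\sfD_\sfB$ from $X$ to $Y[n]$ is an equivalence class of roofs $X \xleftarrow{s} Z \xrightarrow{f} Y[n]$ with $s \in S$. Using the long exact sequence in cohomology attached to $s$ and the fact that $\operatorname{cone}(s) \in \sfD_\sfB$, one sees that the canonical morphism $\tau^{[-n,0]} Z \to Z$ also lies in $S$, so that every class is represented by a roof with $Z \in \Db(\sfA)^{[-n,0]}$; such complexes are precisely the objects that encode Yoneda $n$-extensions in $\sfA$, and standard arguments translate equivalence of roofs in $\Db(\sfA)/\sfD_\sfB$ into equivalence of Yoneda extensions in $\sfA/\sfB$. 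Surjectivity of $F_{X,Y}^n$ then amounts to lifting a Yoneda extension in $\sfA/\sfB$ to one in $\sfA$ modulo $S$, which follows from the construction of $\sfA/\sfB$ as a localization of $\sfA$; injectivity follows from the same principle applied to homotopies between representatives. This step, matching roofs in the Verdier quotient with Yoneda extensions in the Serre quotient, is the technical heart of the proof and the place where one really uses the compatibility between the two localization procedures.
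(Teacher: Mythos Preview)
The paper does not prove this proposition at all: it simply cites it as \cite[Theorem~3.2]{miyachi} and moves on. So your outline goes well beyond what the paper does, and the relevant comparison is with Miyachi's original argument rather than with anything in the paper itself.

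Your strategy is the standard one and is essentially how Miyachi proceeds: construct $F$ by the universal property, observe t-exactness via Lemma~\ref{lem:quotient-t-str}, get essential surjectivity for free, and reduce fully faithfulness by d\'evissage to the Ext groups between heart objects. The first three steps and the d\'evissage are fine as written. The final paragraph, however, is exactly where the substance lies, and your sketch there is too loose to count as a proof. Two points in particular: first, the truncation argument replacing $Z$ by $\tau^{[-n,0]}Z$ is not quite as you state it, since the natural map goes from $\tau^{\leq 0}Z$ \emph{to} $\tau^{[-n,0]}Z$ rather than the other way, so you are not simply precomposing the roof but must argue that the induced map $\tau^{[-n,0]}Z \to X$ exists, lies in $S$, and gives an equivalent roof; this works but needs care. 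Second, and more seriously, the phrase ``standard arguments translate equivalence of roofs \ldots\ into equivalence of Yoneda extensions in $\sfA/\sfB$'' hides the entire content of the theorem: one must show that the colimit over roofs on the Verdier side agrees with the iterated colimit defining $\Ext^n$ in the Serre quotient, and this is where Miyachi does genuine work. Rather than reconstructing this, you should cite \cite{miyachi} as the paper does.
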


By uniqueness, in this particular setting the t-structure obtained using Lem\-ma~\ref{lem:quotient-t-str} from the standard t-structure on $\Db(\sfA)$ is just the standard t-structure on $\Db(\sfA/\sfB)$.

\section{Flat modules in categories}

\subsection{Modules in categories}
\label{ss:mod-cat}

Let $k$ be a commutative ring, $R$ be a $k$-algebra, and $\mathsf{A}$ be a $k$-linear abelian category. Recall (see~\cite[\S 8.5]{ks}) that an $R$-module in $\mathsf{A}$ is a pair $(X,\xi_X)$ where $X$ is an object in $\mathsf{A}$ and $\xi_X : R \to \End_{\mathsf{A}}(X)$ is a $k$-algebra morphism. The $R$-modules in $\mathsf{A}$ are the objects in a $k$-linear abelian category $\Mod(R,\mathsf{A})$, where morphisms from $(X,\xi_X)$ to $(Y,\xi_Y)$ are defined as morphisms $f : X \to Y$ in $\mathsf{A}$ which satisfy $f \circ \xi_X(r) = \xi_Y(r) \circ f$ for any $r \in R$. 
Usually the morphism $\xi_X$ will be omitted from notation. Note that if $X \in \Mod(R,\mathsf{A})$ and $Y \in \mathsf{A}$ the $k$-module $\Hom_{\mathsf{A}}(X,Y)$ admits a natural structure of right $R$-module, where $r \in R$ acts on a morphism $f : X \to Y$ by sending it to $f \circ \xi_X(r)$.

Given a right $R$-module $M$ and an object $X \in \Mod(R,\mathsf{A})$, if the functor
\[
Y \mapsto \Hom_{R^{\mathrm{op}}}(M, \Hom_{\mathsf{A}}(X,Y))
\]
is representable we denote by $M \otimes_R X$ the object that represents it. This condition is automatic in the following cases:
\begin{enumerate}
\item
\label{it:tensor-product-case1}
$\mathsf{A}$ admits small inductive limits, see~\cite[Proposition~8.5.5(a)]{ks}. (In particular, this condition is satisfied in case $\mathsf{A}={\ind}\mathsf{C}$ with $\mathsf{C}$ a $k$-linear abelian category, see~\cite[Theorem~8.6.5(iii)]{ks}.)
\item
\label{it:tensor-product-case2}
$M$ is of finite presentation, see~\cite[Remark~8.5.7]{ks}.
\end{enumerate}
If we denote by $\Modr(R)$ the category of right $R$-modules, then
in case~\eqref{it:tensor-product-case1} we therefore obtain a canonical bifunctor
\begin{equation*}
(-) \otimes_R (-) : \Modr(R) \times \Mod(R,\mathsf{A}) \to \mathsf{A},
\end{equation*}
which is additive and right exact on both sides. It is clear that if $S$ is another $k$-algebra, $M$ is an $(S,R)$-bimodule and $N$ is a right $S$-module, then if $X \in \Mod(R,\mathsf{A})$ the object $M \otimes_R X$ has a canonical structure of $S$-module in $\mathsf{A}$, and moreover we have a canonical isomorphism
\begin{equation}
\label{eqn:tensor-product-bimodule}
N \otimes_S \bigl( M \otimes_R X \bigr) \cong (N \otimes_S M) \otimes_R X.
\end{equation}

If we assume that $R$ is right noetherian and denote by $\Mofr(R)$ the category of finitely generated right $R$-modules, then in view of~\eqref{it:tensor-product-case2} above we similarly have a canonical bifunctor
\begin{equation}
\label{eqn:otimes}
(-) \otimes_R (-) : \Mofr(R) \times \Mod(R,\mathsf{A}) \to \mathsf{A}
\end{equation}
which again is additive and right exact on both sides and satisfies~\eqref{eqn:tensor-product-bimodule} when $M$ is finitely generated as a right $R$-module and $N$ is finitely generated as a right $S$-module.
Concretely, given a free presentation
\[
R^{\oplus n} \xrightarrow{f} R^{\oplus m} \to M \to 0, 
\]
the morphism $f$, seen as a matrix with coefficients in $R$, defines via $\xi_X$ a morphism $X^{\oplus n} \to X^{\oplus m}$, and $M \otimes_R X$ is canonically isomorphic to the cokernel of this map. 

\begin{lem}
\label{lem:morphism-tensor-prod}
 Let $X \in \mathsf{A}$, $Y \in \Mod(R,\mathsf{A})$ and $M \in \Modr(R)$, and assume that the tensor product $M \otimes_R Y$ is defined. Then $\Hom_{\mathsf{A}}(X,Y)$ has a canonical structure of left $R$-module, and there exists a canonical morphism of $k$-modules
 \[
  M \otimes_R \Hom_{\mathsf{A}}(X,Y) \to \Hom_{\mathsf{A}}(X,M \otimes_R Y).
 \]
\end{lem}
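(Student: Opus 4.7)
The plan is to construct the morphism directly from the universal property of the tensor product and then verify the single compatibility condition that is needed. First, I would equip $\Hom_{\mathsf{A}}(X,Y)$ with the left $R$-module structure defined by $r \cdot f := \xi_Y(r) \circ f$. This is a legitimate $R$-action because $\xi_Y : R \to \End_{\mathsf{A}}(Y)$ is a $k$-algebra morphism, so postcomposition yields a $k$-algebra morphism $R \to \End_k(\Hom_{\mathsf{A}}(X,Y))$. Consequently $M \otimes_R \Hom_{\mathsf{A}}(X,Y)$ makes sense as an ordinary tensor product of a right $R$-module with a left $R$-module, and by its universal property, a $k$-linear morphism from it into $\Hom_{\mathsf{A}}(X, M \otimes_R Y)$ is the same as a $k$-bilinear $R$-balanced map
\[
\mu : M \times \Hom_{\mathsf{A}}(X,Y) \to \Hom_{\mathsf{A}}(X, M \otimes_R Y).
\]

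Next, for each $m \in M$, I would construct a morphism $\alpha_m : Y \to M \otimes_R Y$ in $\mathsf{A}$ as follows. The assignment $r \mapsto mr$ defines a morphism of right $R$-modules $\rho_m : R \to M$; applying $(-) \otimes_R Y$ gives a morphism $R \otimes_R Y \to M \otimes_R Y$ in $\mathsf{A}$, and composing with the canonical isomorphism $Y \simto R \otimes_R Y$ yields $\alpha_m$. I would then define $\mu(m,f) := \alpha_m \circ f$, which is visibly $k$-bilinear in $(m,f)$.

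The heart of the argument is to verify the $R$-balanced condition $\alpha_{mr} = \alpha_m \circ \xi_Y(r)$ for all $m \in M$ and $r \in R$. By the Yoneda lemma, it suffices to check that for every object $Z \in \mathsf{A}$ and every morphism $g : M \otimes_R Y \to Z$ we have $g \circ \alpha_{mr} = g \circ \alpha_m \circ \xi_Y(r)$. Let $\psi : M \to \Hom_{\mathsf{A}}(Y,Z)$ denote the right $R$-linear map corresponding to $g$ under the defining universal property of $M \otimes_R Y$; tracing through the construction of $\alpha_m$ (via naturality of this universal property with respect to the morphism of right $R$-modules $\rho_m : R \to M$ and the identification $\Hom_{R^{\mathrm{op}}}(R,\Hom_{\mathsf{A}}(Y,Z)) = \Hom_{\mathsf{A}}(Y,Z)$) shows that $g \circ \alpha_m = \psi(m)$. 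Right $R$-linearity of $\psi$ therefore gives $g \circ \alpha_{mr} = \psi(mr) = \psi(m) \cdot r$, and since the right $R$-action on $\Hom_{\mathsf{A}}(Y,Z)$ is given by $\phi \cdot r = \phi \circ \xi_Y(r)$, this equals $\psi(m) \circ \xi_Y(r) = g \circ \alpha_m \circ \xi_Y(r)$, as required. Applying the universal property of the tensor product then yields the sought-for morphism, and its canonicity (i.e., functoriality in $X$, $Y$ and $M$) is immediate from the construction. The only subtlety is really bookkeeping: one must consistently track the left-versus-right $R$-module conventions and translate the defining universal property of $M \otimes_R Y$ through Yoneda; once these conventions are pinned down, everything becomes formal.
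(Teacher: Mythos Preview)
Your proof is correct and follows essentially the same route as the paper: both constructions use the defining universal property $\Hom_{\mathsf{A}}(M \otimes_R Y, Z) = \Hom_{R^{\mathrm{op}}}(M, \Hom_{\mathsf{A}}(Y,Z))$ together with the Yoneda lemma, and your $\alpha_m \circ f$ is exactly the morphism the paper builds (their assignment $\varphi \mapsto \varphi(m) \circ f$ is precisely the Yoneda avatar of $\alpha_m \circ f$). Your version is in fact slightly more explicit, since you spell out the $R$-balancedness check that the paper leaves implicit.
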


\begin{proof}
The $R$-module structure on $\Hom_{\mathsf{A}}(X,Y)$ is defined so that an element $r \in R$ acts on a morphism $f$ by sending it to $\xi_Y(r) \circ f$.
 By the standard adjunction for tensor products of $R$-modules, to construct a morphism as in the lemma we need to define a morphism of right $R$-modules
 \[
  M \to \Hom_k(\Hom_{\mathsf{A}}(X,Y), \Hom_{\mathsf{A}}(X,M \otimes_R Y)).
 \]
In other words, given $m \in M$ and a morphism $f : X \to Y$, we need to construct a morphism $X \to M \otimes_R Y$. By the Yoneda lemma, to construct such a morphism one needs to define, for any $Z \in \mathsf{A}$, a morphism
\[
 \Hom_{\mathsf{A}}(M \otimes_R Y,Z) \to \Hom_{\mathsf{A}}(X,Z)
\]
functorial in $Z$. Now by definition we have
\[
 \Hom_{\mathsf{A}}(M \otimes_R Y,Z) = \Hom_{R^\op}(M,\Hom_{\mathsf{A}}(Y,Z)).
\]
To a morphism $\varphi : M \to \Hom_{\mathsf{A}}(Y,Z)$ one can associate the morphism
\[
 \varphi(m) \circ f : X \to Z,
\]
which provides the desired construction.
\end{proof}

\begin{rmk}
\phantomsection
\label{rmk:flatness-def}
\begin{enumerate}
\item
One can define in a similar way the notion of right $R$-module in $\mathsf{A}$, and the tensor product $X \otimes_R M$ if $X$ is a right $R$-module in $\mathsf{A}$ and $M$ is a left $R$-module. In practice, we will only consider this construction in case $R$ is commutative, so that left and right $R$-modules are the same. We will choose the most convenient notation among $X \otimes_R M$ and $M \otimes_R X$, depending on the context.
\item
\label{it:flatness-def}
In case $\sfA$ is the category of $k$-modules, then an $R$-module in $\sfA$ is nothing but a left $R$-module in the usual sense. Moreover, the above definition of the tensor product coincides with the usual definition.
\end{enumerate}
\end{rmk}

\subsection{Flatness}
\label{ss:flatness-in-cat}

Let again $k$ be a commutative ring, $\sfA$ a $k$-linear abelian category, and $R$ a $k$-algebra. We will assume in addition
that $R$ is right noetherian. We will say that an object $X \in \Mod(R,\mathsf{A})$ is $R$-flat (or simply flat if $R$ is clear from the context) if the functor
\[
(-) \otimes_R X : \Mofr(R) \to \mathsf{A}
\]
is exact, i.e.~if for any injection $M_1 \hookrightarrow M_2$ of finitely generated $R$-modules the induced morphism $M_1 \otimes_R X \to M_2 \otimes_R X$ is injective. 

\begin{rmk}
In view of the standard characterization of flatness in terms of morphisms between finitely generated modules (see e.g.~\cite[\href{https://stacks.math.columbia.edu/tag/00HD}{Tag 00HD}]{stacks-project} in the commutative case), 
this definition is equivalent to the usual definition of flatness for $R$-modules in case 
$\sfA$ is the category of $k$-modules (see Remark~\ref{rmk:flatness-def}\eqref{it:flatness-def}).
\end{rmk}

The next lemma states that this notion satisfies the usual properties of flat modules.

\begin{lem}
\phantomsection
\label{lem:flatness}
\begin{enumerate}
\item
\label{it:flatness-1}
Consider a short exact sequence
\[
0 \to X_1 \to X_2 \to X_3 \to 0
\]
in $\Mod(R,\mathsf{A})$. If $X_3$ is $R$-flat, then for any $M$ in $\Mofr(R)$ the induced sequence
\[
0 \to M \otimes_R X_1 \to M \otimes_R X_2 \to M \otimes_R X_3 \to 0
\]
is a short exact sequence.
\item
\label{it:flatness-2}
Consider a short exact sequence
\[
0 \to X_1 \to X_2 \to X_3 \to 0
\]
in $\Mod(R,\mathsf{A})$. If $X_1$ and $X_3$ are $R$-flat, then so is $X_2$.
\end{enumerate}
\end{lem}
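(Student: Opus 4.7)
The plan is to prove assertion (1) first, by a diagram chase in $\sfA$, and then derive (2) from (1) via the five lemma. Since $R$ is right noetherian and $M$ is finitely generated, I would begin by choosing a presentation $0 \to K \to F \to M \to 0$ of right $R$-modules with $F = R^{\oplus m}$ free of finite rank and $K$ finitely generated. Tensoring this presentation with each $X_i$ produces right-exact sequences that fit into a three-by-three commutative diagram whose columns are the right-exact sequences obtained by tensoring $0 \to X_1 \to X_2 \to X_3 \to 0$ with $K$, $F$, and $M$ in turn. The middle column $0 \to F \otimes_R X_1 \to F \otimes_R X_2 \to F \otimes_R X_3 \to 0$ is short exact because $F \otimes_R X_i \cong X_i^{\oplus m}$ is a direct sum, and the bottom row $0 \to K \otimes_R X_3 \to F \otimes_R X_3 \to M \otimes_R X_3 \to 0$ is short exact by the flatness hypothesis on $X_3$.

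Since right-exactness of the tensor functor is automatic, it remains to show that $M \otimes_R X_1 \to M \otimes_R X_2$ is a monomorphism. Given a map $u : Z \to M \otimes_R X_1$ whose composition into $M \otimes_R X_2$ vanishes, the argument proceeds in four steps: first, lift $u$ through the epimorphism $F \otimes_R X_1 \twoheadrightarrow M \otimes_R X_1$ after pulling back along an epimorphism $\tilde Z \twoheadrightarrow Z$; second, push the lift into $F \otimes_R X_2$, where it factors through the image of $K \otimes_R X_2 \to F \otimes_R X_2$, producing (after a further cover) a map $w$ into $K \otimes_R X_2$; third, observe that the image of $w$ in $K \otimes_R X_3$ maps to zero in $F \otimes_R X_3$ (because the middle column is a complex), and conclude that it is itself zero by the injectivity of $K \otimes_R X_3 \hookrightarrow F \otimes_R X_3$ supplied by flatness of $X_3$; fourth, use right-exactness of the left column to lift $w$ (after yet another cover) back to $K \otimes_R X_1$, and then compose with $K \otimes_R X_1 \to F \otimes_R X_1 \to M \otimes_R X_1$, which vanishes. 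The conclusion is that $u$ precomposed with an epimorphism is zero, hence $u = 0$. The main technical obstacle is that this chase must be executed without elements; the standard remedy is to replace elements by generalized elements together with the epimorphic covers that arise whenever one lifts through an epimorphism in $\sfA$, which is exactly what the proof does at each of the four steps.

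Assertion (2) follows quickly from (1). Given a monomorphism $M_1 \hookrightarrow M_2$ in $\Mofr(R)$, applying (1) to each $M_i$ (using the flatness of $X_3$) produces two short exact sequences $0 \to M_i \otimes_R X_1 \to M_i \otimes_R X_2 \to M_i \otimes_R X_3 \to 0$, linked vertically by the maps induced from $M_1 \hookrightarrow M_2$. The outer vertical maps $M_1 \otimes_R X_1 \to M_2 \otimes_R X_1$ and $M_1 \otimes_R X_3 \to M_2 \otimes_R X_3$ are monomorphisms by the flatness of $X_1$ and $X_3$, and the injectivity half of the five lemma then forces the middle vertical map $M_1 \otimes_R X_2 \to M_2 \otimes_R X_2$ to be a monomorphism, showing that $X_2$ is flat.
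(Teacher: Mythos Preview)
Your argument is correct and follows the same overall strategy as the paper: both set up the identical $3 \times 3$ diagram from a finite free presentation of $M$ and use exactness of the middle row/column (from freeness) together with flatness of $X_3$ to deduce injectivity of $M \otimes_R X_1 \to M \otimes_R X_2$, and both prove (2) by combining (1) with the four/five lemma. The only difference is cosmetic: where you carry out the element-free diagram chase by hand (which is essentially a proof of the relevant piece of the snake lemma), the paper simply invokes the snake lemma on the first two rows and reads off the injectivity from the cokernel sequence. One small point worth making explicit in your Step~4: after lifting $w$ to $K \otimes_R X_1$, you need the injectivity of $F \otimes_R X_1 \to F \otimes_R X_2$ (from the exact middle column) to identify the resulting map into $F \otimes_R X_1$ with your original lift $\tilde u$, before concluding that $u$ vanishes after precomposition with the cover.
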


\begin{proof}
\eqref{it:flatness-1}
Assume that $X_3$ is flat, fix $M$ in $\Mofr(R)$, and consider an exact sequence
\[
0 \to M_1 \to M_2 \to M \to 0
\]
in $\Mofr(R)$, where $M_2$ is free. Then we obtain a commutative diagram
\[
\xymatrix@R=0.5cm{
&&&0 \ar[d] & \\
& M_1 \otimes_R X_1 \ar[r] \ar[d] & M_1 \otimes_R X_2 \ar[r] \ar[d] & M_1 \otimes_R X_3 \ar[r] \ar[d] & 0 \\
0 \ar[r] & M_2 \otimes_R X_1 \ar[r] \ar[d] & M_2 \otimes_R X_2 \ar[r] \ar[d] & M_2 \otimes_R X_3 \ar[r] \ar[d] & 0 \\
& M \otimes_R X_1 \ar[r] \ar[d] & M \otimes_R X_2 \ar[r] \ar[d] & M \otimes_R X_3 \ar[d] \ar[r] & 0 \\
& 0 & 0 & 0 &
}
\]
in which all rows and columns are exact. Applying the snake lemma to the first two lines we obtain that the first map on the third line is injective, which proves the desired claim.

\eqref{it:flatness-2}
Assume that $X_1$ and $X_3$ are flat, and
consider an injective morphism $M_1 \hookrightarrow M_2$ in $\Mofr(R)$. Then using~\eqref{it:flatness-1} and our assumptions we obtain a commutative diagram
\[
\xymatrix@R=0.5cm{
 & 0 \ar[d] & & 0 \ar[d] & \\
 0 \ar[r] & M_1 \otimes_R X_1 \ar[r] \ar[d] & M_1 \otimes_R X_2 \ar[r] \ar[d] & M_1 \otimes_R X_3 \ar[r] \ar[d] & 0 \\
 0 \ar[r] & M_2 \otimes_R X_1 \ar[r] & M_2 \otimes_R X_2 \ar[r] & M_2 \otimes_R X_3 \ar[r] & 0 \\
}
\]
with exact rows and columns. The four-lemma implies that the morphism on the middle column is injective, proving that $X_2$ is flat.
\end{proof}

\section{Complements on the completed category}

\subsection{Statement}

In this section we consider the setting of~\cite[Part~I]{bezr}, in its \'etale variant. Namely, we consider an algebraically closed field $\F$, an $\F$-torus $A$, and a (Zariski locally trivial) $A$-torsor $\pi : X \to Y$ where $X,Y$ are algebraic varieties over $\F$. We assume we are given a finite stratification
\[
Y = \bigsqcup_{s \in \mathcal{S}} Y_s
\]
where each $Y_s$ is isomorphic to an affine space, and the restriction of $\pi$ to $X_s := \pi^{-1}(Y_s)$ is a trivial torsor. For any $s \in \mathcal{S}$ we denote by
\[
j'_s : Y_s \to Y, \quad j_s : X_s \to X
\]
the embeddings. 

We fix an algebraic closure $\bk$ of a finite field of characteristic different from $\mathrm{char}(\F)$.
We will assume that for any $s,t \in \mathcal{S}$ and any $n \in \Z$ the sheaf
\[
\scH^n((j'_t)^* (j'_s)_* \underline{\bk}_{Y_s})
\]
is constant. By base change this implies that each $\scH^n((j_t)^* (j_s)_* \underline{\bk}_{X_s})$ is constant too, and these conditions guarantee that the formalism of perverse sheaves applies in the category $\Db_{\mathcal{S}}(Y,\bk)$ of bounded complexes of $\bk$-sheaves $\scF$ on $Y$ such that $\scH^n((j'_s)^*\scF)$ is constant of finite rank for any $n$, $s$, and in the category $\Db_{\mathcal{S}}(X,\bk)$ of bounded complexes of $\bk$-sheaves $\scF$ on $X$ such that $\scH^n((j_s)^*\scF)$ is constant of finite rank for any $n$, $s$.

We consider the ``completed category'' $\widehat{D}_{\mathcal{S}}(X \quot A, \bk)$ as defined in~\cite[Definition~3.1]{bezr}; this category is a certain full subcategory in the category of pro-objects in $\Db_{\mathcal{S}}(X,\bk)$. (This construction, as well as those considered below, and the proofs of their properties, are initially due to Yun, see~\cite[Appendix~A]{by}.) This category is triangulated, and as explained in~\cite[\S 5.2]{bezr} it admits a (bounded) ``perverse'' t-structure $\bigl( {}^{\mathrm{p}} \hspace{-1pt} \widehat{D}_{\mathcal{S}}(X \quot A, \bk)^{\leq 0}, {}^{\mathrm{p}} \hspace{-1pt} \widehat{D}_{\mathcal{S}}(X \quot A, \bk)^{\geq 0} \bigr)$. We can of course consider the similar constructions for each variety $X_s$ endowed with the trivial stratification; the corresponding completed category will be denoted $\widehat{D}_{\mathcal{S}}(X_s \quot A, \bk)$. For each $s$, the (derived) functors $(j_s)_*$, $(j_s)_!$, $j_s^*$, $j_s^!$ induce triangulated functors
\begin{gather}
\label{eqn:push-functors}
(j_s)_*, (j_s)_! : \widehat{D}_{\mathcal{S}}(X_s \quot A, \bk) \to \widehat{D}_{\mathcal{S}}(X \quot A, \bk), \\
j_s^*, j_s^! : \widehat{D}_{\mathcal{S}}(X \quot A, \bk) \to \widehat{D}_{\mathcal{S}}(X_s \quot A, \bk).
\end{gather}

Our primary goal in this section is to prove the following claim.

\begin{prop}
\label{prop:exactness-pushforward}
The functors $(j_s)_*$, $(j_s)_!$ in~\eqref{eqn:push-functors} are t-exact with respect to the perverse t-structures.
\end{prop}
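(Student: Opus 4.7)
The strategy is to reduce the t-exactness claim on the completed categories to the standard t-exactness of $(j_s)_!$ and $(j_s)_*$ on the ordinary derived categories $\Db_{\mathcal{S}}(X_s,\bk)$ and $\Db_{\mathcal{S}}(X,\bk)$. The crucial tool is the description of the perverse t-structure on the completed category established in~\cite[\S 5.2]{bezr} and reformulated as Proposition~\ref{prop:perverse-t-str-completed-cat}: an object $\scF$ in $\widehat{D}_{\mathcal{S}}(X \quot A, \bk)$ lies in ${}^p\widehat{D}_{\mathcal{S}}(X \quot A, \bk)^{\leq 0}$, respectively ${}^p\widehat{D}_{\mathcal{S}}(X \quot A, \bk)^{\geq 0}$, if and only if it admits a representative as a pro-system $``\varprojlim_n" \scF_n$ in which each $\scF_n$ belongs to ${}^p\Db_{\mathcal{S}}(X,\bk)^{\leq 0}$, respectively ${}^p\Db_{\mathcal{S}}(X,\bk)^{\geq 0}$; the analogous description applies to $\widehat{D}_{\mathcal{S}}(X_s \quot A, \bk)$.

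With this description in hand, the argument proceeds in two further steps. First, I would verify that the functors $(j_s)_!$ and $(j_s)_*$ on the completed categories are computed termwise from the corresponding ordinary derived functors: for any representative $\scF \cong ``\varprojlim_n" \scF_n$ in $\widehat{D}_{\mathcal{S}}(X_s \quot A, \bk)$,
\[
(j_s)_!(\scF) \cong ``\varprojlim_n" (j_s)_! \scF_n, \qquad (j_s)_*(\scF) \cong ``\varprojlim_n" (j_s)_* \scF_n.
\]
This follows directly from the construction of these functors on pro-objects via the canonical extension~\cite[Proposition~6.1.9]{ks} applied to the corresponding triangulated functors on $\Db_{\mathcal{S}}$; one must also check that the resulting pro-systems satisfy the $\pi$-constancy and uniform boundedness conditions defining the completed category, which is immediate from the termwise character and the fact that $(j_s)_*$, $(j_s)_!$ commute with $\pi_!$ up to appropriate base change.

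Second, I would invoke the classical theorem of Be{\u\i}linson--Bernstein--Deligne asserting that, for the affine locally closed immersion $j_s : X_s \hookrightarrow X$, both $(j_s)_!$ and $(j_s)_*$ are t-exact for the ordinary perverse t-structures on $\Db_{\mathcal{S}}$. Affineness of $j_s$ is a standing assumption of the setting (and is satisfied in all the Schubert-type applications of interest in the main body of the paper, where $X_s$ is identified with $A \times Y_s \cong A \times \mathbb{A}^{n_s}$). Combining the termwise description of the perverse t-structure with termwise computation of the functors and the classical t-exactness on the ordinary categories immediately yields the proposition.

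The main obstacle is the first ingredient: while the description of the nonpositive part of the perverse t-structure in terms of pro-limits of ordinary nonpositive objects is transparent from the characterization of the t-structure by its generators (cf.~\cite[Lemma~5.6]{bezr}), the dual characterization for the nonnegative part requires genuine work, most naturally obtained by exhibiting the nonnegative part as a right orthogonal to a suitable family of generators and lifting the corresponding property from the ordinary category via a careful Mittag--Leffler-type argument. Once Proposition~\ref{prop:perverse-t-str-completed-cat} is in place, the remainder of the proof is essentially formal.
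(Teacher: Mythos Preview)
Your proposal is correct and follows essentially the same route as the paper: both reduce the claim to the classical affine t-exactness of $(j_s)_!$ and $(j_s)_*$ on the uncompleted categories via the termwise description of the perverse t-structure in Proposition~\ref{prop:perverse-t-str-completed-cat}. The only minor difference is that the paper first obtains the ``easy'' halves (right t-exactness of $(j_s)_!$, left t-exactness of $(j_s)_*$) from the recollement adjunctions and then uses the pro-object description only for the remaining halves, whereas you argue both halves uniformly through Proposition~\ref{prop:perverse-t-str-completed-cat}; either way works.
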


This proposition will be deduced from the fact that the similar functors
\begin{equation}
\label{eqn:push-functors-noncomp}
(j_s)_*, (j_s)_! : \Db_{\mathcal{S}}(X_s \quot A, \bk) \to \Db_{\mathcal{S}}(X \quot A, \bk)
\end{equation}
are t-exact with respect to the perverse t-structures, since $j_s$ is affine (see~\cite[Corollaire~4.1.3]{bbd}). The other ingredient is a result from~\cite[Appendix~A]{by}; since the proof of this claim is somewhat sketchy, and since the construction of the perverse t-structure in~\cite{bezr} is slightly different from the original construction in~\cite{by}, we provide an explicit proof of this result in our setting.

\subsection{Preliminaries on $R^\wedge_A$-modules}
\label{ss:preliminary-Rmod}

As in~\cite{bezr} we denote by $R_A$ the group algebra of the cocharacter lattice $X_*(A)$ over $\bk$, and by $R_A^\wedge$ the completion of $R_A$ with respect to the natural augmention ideal $\mathfrak{m}_A \subset R_A$. We will denote by 
$\Mod^{\mathrm{nil}}(R_A^\wedge)$ the full subcategory of $\Mod(R^\wedge_A)$ whose objects are the nilpotent $R_A^\wedge$-modules, i.e.~the modules such that any element is annihilated by a power of $\mathfrak{m}_A$. We will also denote 
by $\Mod^{\mathrm{fg},\mathrm{nil}}(R_A^\wedge)$ the full subcategory of $\Mof(R_A^\wedge)$ whose objects are the modules which are both finitely generated and nilpotent. (These modules are necessarily finite-dimensional.)

\begin{lem}
\label{lem:injective-resolutions-Rwedge}
For any $M$ in $\Mod^{\mathrm{nil}}(R_A^\wedge)$, there exists an injective $R_A^\wedge$-module $N$ which belongs to $\Mod^{\mathrm{nil}}(R_A^\wedge)$ and an embedding $M \hookrightarrow N$.
\end{lem}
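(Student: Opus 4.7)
The plan is to show that the injective envelope of $M$ in the ambient category $\Mod(R_A^\wedge)$ already lies in $\Mod^{\mathrm{nil}}(R_A^\wedge)$, i.e.~that passage to injective hulls preserves nilpotence over $R_A^\wedge$. First recall that $R_A^\wedge$ is a Noetherian commutative local ring: choosing a basis of $X_*(A)$ identifies $R_A = \bk[X_*(A)]$ with a Laurent polynomial algebra in $r := \mathrm{rk}(X_*(A))$ variables, and then $R_A^\wedge$ with the formal power series ring $\bk[[y_1,\ldots,y_r]]$ with maximal ideal $\mathfrak{m}_A$ and residue field $\bk$. In particular $\Mod(R_A^\wedge)$ has enough injectives, so the task reduces to producing an injective $R_A^\wedge$-module that contains $M$ and happens to be $\mathfrak{m}_A$-torsion.

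I would begin by analysing the socle $S := \{x \in M : \mathfrak{m}_A x = 0\}$, a $\bk$-vector space, say of dimension indexed by a set $I$. The key initial step is to check that $S$ is essential in $M$: for any nonzero $x \in M$ one has $\mathfrak{m}_A^n x = 0$ for some $n$, so $R_A^\wedge \cdot x$ is a nonzero module over the Artinian local ring $R_A^\wedge/\mathfrak{m}_A^n$ and therefore has nonzero socle, which lies in $S$. Consequently the injective envelope of $M$ in $\Mod(R_A^\wedge)$ coincides with that of $S \cong \bk^{(I)}$, and by Bass's theorem (direct sums of injectives over a Noetherian ring are injective) one may take
\[
 N := E(\bk)^{(I)},
\]
where $E(\bk)$ is the injective hull of the residue field. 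This $N$ is then injective in $\Mod(R_A^\wedge)$ and contains $M$.

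The remaining point, which I expect to be the main obstacle, is to verify that $N$ genuinely lies in $\Mod^{\mathrm{nil}}(R_A^\wedge)$; this reduces to showing that $E(\bk)$ itself is $\mathfrak{m}_A$-torsion. This is a standard output of Matlis theory: since $R_A^\wedge$ is local with maximal ideal $\mathfrak{m}_A$, one has $\mathrm{Ass}(E(\bk)) = \{\mathfrak{m}_A\}$, and hence every cyclic submodule $R_A^\wedge \cdot x$ of $E(\bk)$ has $\mathfrak{m}_A$-primary annihilator, so is killed by some $\mathfrak{m}_A^k$. Equivalently, one can bypass any reference to Matlis theory by constructing $E(\bk)$ concretely as the increasing union $\varinjlim_n \Hom_\bk(R_A^\wedge/\mathfrak{m}_A^n, \bk)$ (each term being $\bk$-dual to a finite free module over the self-injective Artinian local ring $R_A^\wedge/\mathfrak{m}_A^n$, hence injective over that ring) and checking injectivity over $R_A^\wedge$ directly via Baer's criterion, using the noetherianness of $R_A^\wedge$.
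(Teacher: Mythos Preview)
Your argument is correct but takes a different, more constructive route than the paper. The paper's proof is a two-line reduction to a general commutative-algebra fact: it picks any injective $R_A^\wedge$-module $N'$ containing $M$, takes the $\mathfrak{m}_A$-torsion submodule $N = \{n \in N' : \mathfrak{m}_A^k n = 0 \text{ for some } k\}$, observes that the embedding of $M$ factors through $N$, and then cites \cite[\href{https://stacks.math.columbia.edu/tag/08XW}{Tag 08XW}]{stacks-project} for the fact that the torsion submodule of an injective over a Noetherian ring is again injective. Your approach instead identifies the injective hull of $M$ explicitly as $E(\bk)^{(I)}$ via the socle and Bass's theorem, and then verifies that $E(\bk)$ is $\mathfrak{m}_A$-torsion using Matlis theory. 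Your version gives more structural information (the shape of the injective hull), at the cost of a longer argument; the paper's version is quicker because it avoids analysing $M$ or $E(\bk)$ at all and just black-boxes the preservation of injectivity under the torsion functor.
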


\begin{proof}
Choose an injective $R_A^\wedge$-module $N'$ and an embedding $M \hookrightarrow N'$. Since $M$ is nilpotent, this embedding necessarily factors through the submodule
\[
N = \{n \in N' \mid (\mathfrak{m}_A)^k \cdot n = 0 \text{ for } k \gg 0 \}.
\]
Now $N$ is an injective $R_A^\wedge$-module by~\cite[\href{https://stacks.math.columbia.edu/tag/08XW}{Tag 08XW}]{stacks-project}.
\end{proof}

As in~\cite[\S 3.1.7]{bmr}, this lemma implies that the canonical functor
\[
\Db \Mod^{\mathrm{nil}}(R_A^\wedge) \to \Db \Mod(R_A^\wedge)
\]
is fully faithful, and that its essential image consists of complexes all of whose cohomology objects belong to $\Mod^{\mathrm{nil}}(R_A^\wedge)$.

Now, we consider a bounded complex $M$ of finitely generated $R_A^\wedge$-modules. Then we can consider for any $n \geq 0$ the complex
\[
\bigl( R^\wedge_A / (\mathfrak{m}_A)^n \cdot R^\wedge_A \bigr) \otimes_{R^\wedge_A} M \quad \in \Db \Mod^{\mathrm{nil}}(R_A^\wedge),
\]
and also the derived tensor product
\[
\bigl( R^\wedge_A / (\mathfrak{m}_A)^n \cdot R^\wedge_A \bigr) \lotimes_{R^\wedge_A} M \quad \in \Db \Mod^{\mathrm{nil}}(R_A^\wedge).
\]
(Note that $R_A^\wedge$ has finite global dimension, so that this complex is indeed bounded.)

\begin{lem}
\label{lem:pro-obj-RA}
The pro-objects in $\Db \Mod^{\mathrm{nil}}(R_A^\wedge)$
\[
``\varprojlim_n" \bigl( R^\wedge_A / (\mathfrak{m}_A)^n \cdot R^\wedge_A \bigr) \otimes_{R^\wedge_A} M \quad \text{and} \quad
``\varprojlim_n" \bigl( R^\wedge_A / (\mathfrak{m}_A)^n \cdot R^\wedge_A \bigr) \lotimes_{R^\wedge_A} M
\]
are canonically isomorphic.
\end{lem}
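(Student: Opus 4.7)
The plan is to construct a canonical comparison morphism between the two pro-objects and show that its cone is pro-zero. Concretely, since $R_A^\wedge$ is regular of finite global dimension (a completed polynomial algebra on the rank of $A$ generators), I would choose, for any bounded complex $M$ of finitely generated $R_A^\wedge$-modules, a bounded resolution $P^\bullet \to M$ by finitely generated free $R_A^\wedge$-modules. Then for each $n$ the derived tensor product is represented by $(R_A^\wedge/(\mathfrak{m}_A)^n) \otimes_{R_A^\wedge} P^\bullet$, and the quasi-isomorphism $P^\bullet \to M$ induces, after tensoring, a morphism of complexes
\[
(R_A^\wedge/(\mathfrak{m}_A)^n) \otimes_{R_A^\wedge} P^\bullet \longrightarrow (R_A^\wedge/(\mathfrak{m}_A)^n) \otimes_{R_A^\wedge} M,
\]
functorial in $n$, hence a morphism of pro-objects.

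The next step is to analyze the cone. Reducing to the case where $M$ is a single finitely generated module $N$ by induction on the length of $M$ via stupid truncation, the cone is isomorphic to $\tau_{\leq -1}\bigl( (R_A^\wedge/(\mathfrak{m}_A)^n) \lotimes_{R_A^\wedge} N \bigr)$, whose cohomology is $\mathrm{Tor}_i^{R_A^\wedge}(R_A^\wedge/(\mathfrak{m}_A)^n, N)$ in degree $-i$ for $1 \leq i \leq \mathrm{gldim}(R_A^\wedge)$ and zero otherwise. Since the cohomological amplitude of the cone is bounded independently of $n$, to conclude it is enough to show that each cohomology pro-system is pro-zero in $\mathsf{Mod}^{\mathrm{nil}}(R_A^\wedge)$ and then peel off cohomology sheaves via truncation triangles.

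The key input is the following Artin--Rees argument: for any finitely generated $N$ and any $i \geq 1$, the pro-system $\bigl( \mathrm{Tor}_i^{R_A^\wedge}(R_A^\wedge/(\mathfrak{m}_A)^n, N) \bigr)_{n \geq 0}$ is pro-zero. Picking a resolution $F_\bullet \to N$ by finitely generated frees, a class in $\mathrm{Tor}_i(R_A^\wedge/(\mathfrak{m}_A)^{n'}, N)$ is represented by $x \in F_i$ with $d_i(x) \in (\mathfrak{m}_A)^{n'} F_{i-1} \cap d_i(F_i)$. By Artin--Rees applied to the submodule $d_i(F_i) \subset F_{i-1}$, there exists a constant $c$ (depending only on the resolution) such that
\[
(\mathfrak{m}_A)^{n'} F_{i-1} \cap d_i(F_i) \subset (\mathfrak{m}_A)^{n'-c}\, d_i(F_i)
\]
for $n' \geq c$. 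Hence $x$ can be modified by an element of $\mathrm{im}(d_{i+1})$ to lie in $(\mathfrak{m}_A)^{n'-c} F_i$, and if $n' \geq n + c$ its image in $\mathrm{Tor}_i(R_A^\wedge/(\mathfrak{m}_A)^n, N)$ vanishes; this gives the pro-zeroness.

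The main obstacle I expect is the bookkeeping needed to pass from pro-zeroness of the individual cohomology sheaves to vanishing of the cone viewed as a pro-object in the derived category. This uses uniform boundedness of the cone's amplitude (guaranteed by the finite global dimension of $R_A^\wedge$) together with a descending induction along truncation triangles, at each step invoking the fact that an extension of pro-zero systems remains pro-zero; the argument is standard once properly formulated, but requires some care to carry out cleanly at the level of pro-objects in $\Db \mathsf{Mod}^{\mathrm{nil}}(R_A^\wedge)$ rather than in the ambient derived category.
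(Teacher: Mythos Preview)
Your approach is correct, but it takes a genuinely different route from the paper's. The paper argues directly via the Yoneda lemma: to compare the two pro-objects it suffices to show that for every $N$ in $\Db\Mod^{\mathrm{nil}}(R_A^\wedge)$ the induced maps on $\varinjlim_n \Hom(-,N)$ agree. Choosing a bounded-below complex $N^\bullet$ of injective \emph{nilpotent} $R_A^\wedge$-modules representing $N$ (available by the preceding lemma in the paper), one observes that for any bounded complex $M'$ of finitely generated modules there is a canonical isomorphism
\[
\varinjlim_n \Hom^\bullet_{R_A^\wedge}\bigl((R_A^\wedge/(\mathfrak{m}_A)^n)\otimes_{R_A^\wedge} M',\,N^\bullet\bigr)\;\simto\;\Hom^\bullet_{R_A^\wedge}(M',N^\bullet),
\]
simply because any map from a finitely generated module into a nilpotent module has finitely generated, hence nilpotent, image and so factors through some $M'/(\mathfrak{m}_A)^n M'$. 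Applying this once with $M'=M$ and once with $M'$ a bounded projective resolution of $M$ identifies both pro-objects with the same functor $\Hom_{\Db\Mod(R_A^\wedge)}(M,-)$. No Artin--Rees, no cone analysis.

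Your Artin--Rees argument for the pro-vanishing of $\bigl(\mathrm{Tor}_i^{R_A^\wedge}(R_A^\wedge/(\mathfrak{m}_A)^n,N)\bigr)_n$ is the standard commutative-algebra approach and gives more explicit control over transition maps; the paper's argument is shorter here because it exploits the existence of enough nilpotent injectives. Incidentally, the ``obstacle'' you flag---lifting pro-zero cohomology to a pro-zero cone---is dispatched most cleanly exactly as the paper proceeds: test against $\Hom(-,Y)$ for $Y$ in the base category, apply the long exact sequence and exactness of filtered colimits, and induct on cohomological length. So even within your strategy, the Yoneda viewpoint simplifies the final step.
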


\begin{proof}
There exists for any $n$ a canonical morphism of complexes
\[
\bigl( R^\wedge_A / (\mathfrak{m}_A)^n \cdot R^\wedge_A \bigr) \lotimes_{R^\wedge_A} M \to \bigl( R^\wedge_A / (\mathfrak{m}_A)^n \cdot R^\wedge_A \bigr) \otimes_{R^\wedge_A} M;
\]
we will prove that these morphisms define an isomorphism between the pro-objects under consideration. For that, by definition we need to show that for any $N \in \Db \Mod^{\mathrm{nil}}(R_A^\wedge)$ the induced morphism
\[
\varinjlim_n \Hom \Bigl( \bigl( R^\wedge_A / (\mathfrak{m}_A)^n \cdot R^\wedge_A \bigr) \otimes_{R^\wedge_A} M, N \Bigr) \to \varinjlim_n \Hom \Bigl( \bigl( R^\wedge_A / (\mathfrak{m}_A)^n \cdot R^\wedge_A \bigr) \lotimes_{R^\wedge_A} M, N \Bigr)
\]
is an isomorphism.

Fix a bounded below complex $N^\bullet$ of injective and nilpotent $R_A^\wedge$-modules whose image in $D^+ \Mod^{\mathrm{nil}}(R_A^\wedge)$ is $N$. (Such a complex exists by Lemma~\ref{lem:injective-resolutions-Rwedge}.) Then for any bounded complex $M'$ of finitely generated $R^\wedge_A$-modules, we observe that we have a canonical isomorphism
\[
\varinjlim_n \Hom^\bullet_{R^\wedge_A} \Bigl( \bigl( R^\wedge_A / (\mathfrak{m}_A)^n \cdot R^\wedge_A \bigr) \otimes_{R^\wedge_A} M', N^\bullet \Bigr) \simto
\Hom^\bullet_{R^\wedge_A} \bigl( M', N^\bullet \bigr),
\]
where $\Hom^\bullet_{R^\wedge_A}$ denotes the complex of morphisms of $R^\wedge_A$-modules between two complexes of modules. Taking $0$-th cohomology, and by exactness of filtrant direct limits, we deduce a canonical isomorphism
\[
\varinjlim_n \Hom_{\Db \Mod(R^\wedge_A)} \Bigl( \bigl( R^\wedge_A / (\mathfrak{m}_A)^n \cdot R^\wedge_A \bigr) \otimes_{R^\wedge_A} M', N \Bigr) \simto
\Hom_{\Db \Mod(R^\wedge_A)} \bigl( M', N \bigr).
\]
Applying this with $M'=M$ we obtain an isomorphism
\[
\varinjlim_n \Hom_{\Db \Mod(R^\wedge_A)} \Bigl( \bigl( R^\wedge_A / (\mathfrak{m}_A)^n \cdot R^\wedge_A \bigr) \otimes_{R^\wedge_A} M, N \Bigr) \simto
\Hom_{\Db \Mod(R^\wedge_A)} \bigl( M, N \bigr).
\]
On the other hand, applying this isomorphism with $M'$ a bounded projective reso\-lution of $M$ we obtain an isomorphism
\[
\varinjlim_n \Hom_{\Db \Mod(R^\wedge_A)} \Bigl( \bigl( R^\wedge_A / (\mathfrak{m}_A)^n \cdot R^\wedge_A \bigr) \lotimes_{R^\wedge_A} M, N \Bigr) \simto
\Hom_{\Db \Mod(R^\wedge_A)} \bigl( M, N \bigr).
\]
This provides the desired identification.
\end{proof}

In~\cite[\S 4.1]{bezr} we consider a certain subcategory $\widehat{D}(R^\wedge_A)$ of the category of pro-objects in $\Db \Mod^{\mathrm{nil}}(R_A^\wedge)$. We show in~\cite[Proposition~4.5]{bezr} that the functor
\[
M \mapsto ``\varprojlim_n" \bigl( R^\wedge_A / (\mathfrak{m}_A)^n \cdot R^\wedge_A \bigr) \lotimes_{R^\wedge_A} M
\]
induces an equivalence of categories
\[
\Db \Mof(R_A^\wedge) \simto \widehat{D}(R^\wedge_A).
\]
Lemma~\ref{lem:pro-obj-RA} shows that the image of a complex $M$ of finitely generated $R^\wedge_A$-modules can also be described as
\[
``\varprojlim_n" \bigl( R^\wedge_A / (\mathfrak{m}_A)^n \cdot R^\wedge_A \bigr) \otimes_{R^\wedge_A} M.
\]
In particular, if for some $m$ the complex $M$ satisfies $\mathsf{H}^i(M)=0$ for $i > m$, resp.~for $i<m$, then its image in $\widehat{D}(R^\wedge_A)$ can be written as $``\varprojlim_n \hspace{-5pt} " M_n$ where each $M_n$ satisfies $\mathsf{H}^i(M)=0$ for $i > m$, resp.~for $i<m$.

\subsection{Proof of Proposition~\ref{prop:exactness-pushforward}}

The following statement is~\cite[Lemma~A.6.2]{by}.

\begin{prop}
\label{prop:perverse-t-str-completed-cat}
Let $\scF \in \widehat{D}_{\mathcal{S}}(X \quot A, \bk)$. Then $\scF$ belongs to ${}^{\mathrm{p}} \hspace{-1pt} \widehat{D}_{\mathcal{S}}(X \quot A, \bk)^{\leq 0}$, resp.~to ${}^{\mathrm{p}} \hspace{-1pt} \widehat{D}_{\mathcal{S}}(X \quot A, \bk)^{\geq 0}$, if and only if there exists a projective system $(\scF_n : n \geq 0)$ of objects in ${}^{\mathrm{p}} \hspace{-1pt} \Db_{\mathcal{S}}(X, \bk)^{\leq 0}$, resp.~in ${}^{\mathrm{p}} \hspace{-1pt} \Db_{\mathcal{S}}(X, \bk)^{\geq 0}$, and an isomorphism $\scF \cong ``\varprojlim_n \hspace{-5pt} " \scF_n$.
\end{prop}

\begin{proof}
The proof of the ``only if'' direction is given in~\cite[Lemma~A.6.2]{by}. It proceeds by induction on the number of strata in the support of $\scF$, the base case (one stratum) being given by the comments at the end of~\S\ref{ss:preliminary-Rmod}. The ``if'' direction can be deduced as follows. Assume given a projective system $(\scF_n : n \geq 0)$ of objects in ${}^{\mathrm{p}} \hspace{-1pt} \Db_{\mathcal{S}}(X, \bk)^{\leq 0}$ such that $``\varprojlim_n \hspace{-5pt} " \scF_n$ belongs to $\widehat{D}_{\mathcal{S}}(X \quot A, \bk)$. To prove that $\scF$ belongs to ${}^{\mathrm{p}} \hspace{-1pt} \widehat{D}_{\mathcal{S}}(X \quot A, \bk)^{\leq 0}$ it suffices to prove that for any $\scG$ in ${}^{\mathrm{p}} \hspace{-1pt} \widehat{D}_{\mathcal{S}}(X \quot A, \bk)^{\geq 1}$ we have $\Hom(\scF,\scG)=0$. Now, by the ``only if'' direction, there exists a projective system $(\scG_n : n \geq 0)$ of objects in ${}^{\mathrm{p}} \hspace{-1pt} \Db_{\mathcal{S}}(X, \bk)^{\geq 1}$ such that $\scG \cong ``\varprojlim_n \hspace{-5pt} " \scG_n$. We then have
\[
\Hom(\scF, \scG) = \varprojlim_n \varinjlim_m \Hom(\scF_m, \scG_n) = 0
\]
since $\Hom(\scF_m, \scG_n)=0$ for any $n,m$. The case of ${}^{\mathrm{p}} \hspace{-1pt} \widehat{D}_{\mathcal{S}}(X \quot A, \bk)^{\geq 0}$ is similar.
\end{proof}

Using this proposition we can give the proof of Proposition~\ref{prop:exactness-pushforward}.

\begin{proof}[Proof of Proposition~\ref{prop:exactness-pushforward}]
Formal properties of the ``recollement'' formalism show that $(j_s)_!$ is right t-exact and $(j_s)_*$ is left t-exact. The fact that $(j_s)_!$ is also left t-exact and that $(j_s)_*$ is also right t-exact follows from the t-exactness of the functors~\eqref{eqn:push-functors-noncomp},
together with Proposition~\ref{prop:perverse-t-str-completed-cat} (applied in the ``only if'' direction on $X_s$, and in the ``if'' direction on $X$).
\end{proof}

\section{Infinitesimal flatness}

Let $k$ be a commutative ring, and let $H$ be an affine group scheme over $\mathrm{Spec}(k)$. Let $I \subset \scO(H)$ be the ideal defining the unit in $H$, i.e.~the kernel of the augmentation morphism $\scO(H) \to k$ in the $k$-Hopf algebra $\scO(H)$. Following~\cite[\S I.7.9]{jantzen}, we will say that $H$ is \emph{infinitesimally flat} if the quotient $\scO(H)/I^n$ is a finite projective module (equivalently, is finitely presented and flat, see~\cite[\href{https://stacks.math.columbia.edu/tag/00NX}{Tag 00NX}]{stacks-project}) over $k$ for any $n \geq 1$. This notion behaves well under flat base change, as explained in the following lemma.

\begin{lem}
\label{lem:infflat-base-change}
If $H$ is infinitesimally flat, then for any flat morphism $k \to k'$ the group scheme $\mathrm{Spec}(k') \times_{\mathrm{Spec}(k)} H$ over $k'$ is infinitesimally flat.
\end{lem}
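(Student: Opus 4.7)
The plan is to verify the defining property of infinitesimal flatness directly, by reducing every ingredient to its counterpart over $k$ via the flat base change $k \to k'$. Set $H' := \mathrm{Spec}(k') \times_{\mathrm{Spec}(k)} H$, so that $\scO(H') = k' \otimes_k \scO(H)$, and let $I \subset \scO(H)$, resp.~$I' \subset \scO(H')$, be the augmentation ideals.

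First I would identify $I'$ with $k' \otimes_k I$. Applying the exact functor $k' \otimes_k (-)$ (exact by flatness of $k \to k'$) to the short exact sequence $0 \to I \to \scO(H) \to k \to 0$ yields $0 \to k' \otimes_k I \to \scO(H') \to k' \to 0$, and one checks that the third map is the augmentation morphism of $H'$; therefore $I' = k' \otimes_k I$ as submodules of $\scO(H')$. Next I would upgrade this to $(I')^n = k' \otimes_k I^n$ for all $n \geq 1$. Tensoring the exact sequence $0 \to I^n \to \scO(H) \to \scO(H)/I^n \to 0$ with $k'$ realizes $k' \otimes_k I^n$ as a submodule of $\scO(H')$. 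The inclusion $(I')^n \subset k' \otimes_k I^n$ follows from the fact that products of $n$ elements of $I$ lie in $I^n$, while the reverse inclusion follows from the fact that $k' \otimes_k I^n$ is generated by simple tensors of the form $1 \otimes (x_1 \cdots x_n)$ with $x_i \in I$, each of which is a product of $n$ elements of $I' = k' \otimes_k I$.

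Consequently, one obtains a canonical isomorphism
\[
\scO(H')/(I')^n \cong k' \otimes_k \bigl( \scO(H)/I^n \bigr).
\]
Since $\scO(H)/I^n$ is finitely generated projective over $k$ by the hypothesis of infinitesimal flatness, and finitely generated projective modules are preserved under arbitrary base change (being direct summands of finitely generated free modules), the right-hand side is finitely generated projective over $k'$. This gives infinitesimal flatness of $H'$, completing the argument. The proof is genuinely routine, with no real obstacle; the only subtlety worth naming is the verification that the powers of the augmentation ideal commute with the flat base change, but this follows from elementary manipulations with tensor products together with the flatness of $k \to k'$.
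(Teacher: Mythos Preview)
Your proof is correct and follows essentially the same approach as the paper's, which simply notes that the augmentation ideal of the base-changed group scheme is $k' \otimes_k I$ and declares the claim obvious from there. You have spelled out the details that the paper leaves implicit, in particular the identification $(I')^n = k' \otimes_k I^n$ and the resulting isomorphism $\scO(H')/(I')^n \cong k' \otimes_k (\scO(H)/I^n)$.
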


\begin{proof}
The claim is obvious from the fact that the ideal of the unit in the $k'$-group scheme $\mathrm{Spec}(k') \times_{\mathrm{Spec}(k)} H$ is $k' \otimes_k I \subset k' \otimes_k \scO(H)$.
\end{proof}

The main interest of this notion comes from the following statement, copied from~\cite[Lemma~I.7.16]{jantzen}, where we denote by $\mathrm{Dist}(H)$ the distribution algebra of $H$ (see~\cite[\S I.7.7]{jantzen}).

\begin{lem}
\label{lem:morph-Dist}
Assume that $H$ is infinitesimally flat, noetherian, and integral. Then for any $H$-modules $M,M'$ such that $M'$ is projective over $k$, the natural morphism
\[
\Hom_H(M,M') \to \Hom_{\mathrm{Dist}(H)}(M,M')
\]
is an isomorphism.
\end{lem}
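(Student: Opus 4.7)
The plan is to translate the problem into a question about the defect of an intertwining. Recall that an $H$-module $M$ is the same data as a (right) $\scO(H)$-comodule, with coaction $\Delta_M : M \to M \otimes_k \scO(H)$, and that the induced $\mathrm{Dist}(H)$-action is obtained by pairing $\Delta_M$ with elements of $\mathrm{Dist}(H) \subset \scO(H)^*$. For any $k$-linear map $f : M \to M'$, consider the ``defect''
\[
 \Phi(f) := \Delta_{M'} \circ f - (f \otimes \mathrm{id}) \circ \Delta_M : M \to M' \otimes_k \scO(H).
\]
Then $f$ is $H$-equivariant iff $\Phi(f) = 0$, and $f$ is $\mathrm{Dist}(H)$-linear iff $(\mathrm{id}_{M'} \otimes \xi) \circ \Phi(f) = 0$ for every $\xi \in \mathrm{Dist}(H)$. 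The morphism of the lemma is clearly injective (forgetting structure), so the task is to show that this apparently weaker condition forces $\Phi(f) = 0$.

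The second step uses infinitesimal flatness. Since $\scO(H)/I^{n+1}$ is finitely generated projective over $k$ by hypothesis, the natural evaluation
\[
 M' \otimes_k (\scO(H)/I^{n+1}) \longrightarrow \Hom_k\bigl((\scO(H)/I^{n+1})^*, M'\bigr)
\]
is an isomorphism, so pairing with all elements of $(\scO(H)/I^{n+1})^* \subset \mathrm{Dist}(H)$ separates points. The $\mathrm{Dist}(H)$-linearity of $f$ therefore implies that the image of $\Phi(f)(m)$ in $M' \otimes_k \scO(H)/I^{n+1}$ vanishes for every $m \in M$ and every $n \geq 0$. Because $M'$ is $k$-projective, hence $k$-flat, the kernel of $\mathrm{id}_{M'} \otimes \pi_n : M' \otimes_k \scO(H) \to M' \otimes_k \scO(H)/I^{n+1}$ is $M' \otimes_k I^{n+1}$, so we obtain
\[
 \Phi(f)(m) \in \bigcap_{n \geq 0} M' \otimes_k I^{n+1}
\]
for every $m \in M$.

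The final step is to show that this intersection is zero, and this is the main obstacle because intersections do not commute with tensor products in general. Here both remaining hypotheses enter: $\scO(H)$ is noetherian and a domain, so Krull's intersection theorem gives $\bigcap_n I^{n+1} = 0$; and $M'$ is projective, hence a direct summand of some free $k$-module $F = k^{(\Lambda)}$. For the free module, $F \otimes_k I^{n+1} = I^{n+1,(\Lambda)}$ and an element of $\bigcap_n I^{n+1,(\Lambda)}$ has finite support with each coordinate in $\bigcap_n I^{n+1} = 0$, hence vanishes. Passing to the direct summand $M'$ preserves this, giving $\Phi(f) = 0$ and therefore the desired $H$-equivariance.
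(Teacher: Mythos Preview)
Your argument is correct and is essentially the standard proof of this result (the paper does not supply its own proof but simply cites \cite[Lemma~I.7.16]{jantzen}, where the argument proceeds along exactly the lines you describe: reduce to showing the defect $\Phi(f)$ lies in $\bigcap_n M' \otimes_k I^{n+1}$ via infinitesimal flatness, then use projectivity of $M'$ together with Krull's intersection theorem on the noetherian domain $\scO(H)$ to conclude this intersection vanishes).
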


This notion is related to that of \emph{regular immersion} (see~\cite[\href{https://stacks.math.columbia.edu/tag/063J}{Tag 063J}]{stacks-project}) as follows.

\begin{lem}
\label{lem:inf-flat-reg-immersion}
If the embedding of the unit $\mathrm{Spec}(k) \to H$ is a regular immersion, then $H$ is infinitesimally flat.
\end{lem}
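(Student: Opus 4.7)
The plan is to prove by induction on $n \geq 1$ that the $k$-module $\scO(H)/I^n$ is finitely generated projective (equivalently, finitely presented and flat, by the cited Stacks reference). The base case $n = 1$ is immediate: the augmentation identifies $\scO(H)/I$ with $k$, which is free of rank one. For the inductive step I would use the short exact sequence of $k$-modules
\[
0 \to I^n/I^{n+1} \to \scO(H)/I^{n+1} \to \scO(H)/I^n \to 0.
\]
Granting that $I^n/I^{n+1}$ is finitely generated projective over $k$, the right-hand term is finitely generated projective by induction, so $\Ext^1_k(\scO(H)/I^n, I^n/I^{n+1}) = 0$ and the sequence splits; the middle term is then a direct sum of two finitely generated projectives, hence finitely generated projective itself. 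This reduces the lemma to showing that each $I^n/I^{n+1}$ is finitely generated projective over $k$.

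The latter is precisely the content of the regular (equivalently quasi-regular) immersion hypothesis. For such an immersion, the conormal module $I/I^2$ is finite locally free over $\scO(H)/I = k$, and the canonical surjection
\[
\mathrm{Sym}^n_{k}(I/I^2) \twoheadrightarrow I^n/I^{n+1}
\]
from the symmetric algebra onto the $n$-th graded piece of the $I$-adic associated graded ring is an isomorphism; see e.g.~\cite[\href{https://stacks.math.columbia.edu/tag/063D}{Tag 063D}]{stacks-project} and the surrounding tags for the relevant comparison. Since symmetric powers of a finite locally free $k$-module are again finite locally free, and hence finitely generated projective over $k$, this finishes the inductive step.

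The only (mild) obstacle is reconciling the fact that the notion of regular immersion is formulated Zariski-locally on the ambient scheme $H$ with the global projectivity statement over $k$ that we need; but this causes no difficulty, because the identification of the associated graded with the symmetric algebra is itself Zariski-local on $H$, and the property ``finite locally free over $k$'' for the quotient $I^n/I^{n+1}$ (which is supported on the image of the unit) can be checked after such a Zariski cover.
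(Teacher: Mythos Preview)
Your proposal is correct and follows essentially the same route as the paper: both arguments use the regular immersion hypothesis to identify $I^n/I^{n+1}$ with $\mathrm{Sym}^n_k(I/I^2)$, observe that this is finite projective, and then conclude by the short exact sequence that each $\scO(H)/I^n$ is an extension of finite projective modules, hence finite projective. The paper cites Tags 063K, 063M, and 01CK rather than 063D, and compresses your induction into a single sentence, but the content is identical.
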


\begin{proof}
If the embedding $\mathrm{Spec}(k) \to H$ is a regular immersion, then by~\cite[\href{https://stacks.math.columbia.edu/tag/063K}{Tag 063K}]{stacks-project} and~\cite[\href{https://stacks.math.columbia.edu/tag/063M}{Tag 063M}]{stacks-project} the quotient $I/I^2$ is a finite projective $k$-module, and for any $n \geq 1$ we have an isomorphism $\mathrm{Sym}^n_k(I/I^2) \simto I^n/I^{n+1}$. Now the left-hand side is finite and projective by~\cite[\href{https://stacks.math.columbia.edu/tag/01CK}{Tag 01CK}]{stacks-project}. Hence each $\scO(H)/I^n$ is an extension of finite projective modules, and is therefore finite and projective.
\end{proof}

We deduce the following property, in case $k$ is a $\bk$-algebra for some field $\bk$.

\begin{cor}
\label{cor:smooth-infflat}
Assume that $\Spec(k)$ is smooth over $\bk$, and that $H$ is smooth over $k$. Then $H$ is infinitesimally flat.
\end{cor}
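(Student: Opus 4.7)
The plan is to reduce the statement, via Lemma~\ref{lem:inf-flat-reg-immersion}, to showing that the unit immersion $e : \Spec(k) \to H$ is a regular immersion. Once regularity is established, the lemma immediately gives infinitesimal flatness, so the entire content of the corollary is this regularity statement.

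To prove regularity of $e$, I would exploit the fact that under our hypotheses everything is smooth over the base field $\bk$. Namely, since $H \to \Spec(k)$ and $\Spec(k) \to \Spec(\bk)$ are both smooth, so is their composition, so $H$ is a smooth $\bk$-scheme; and $\Spec(k)$ is likewise smooth over $\bk$ by assumption. Thus $e$ is a closed immersion between two smooth $\bk$-schemes, both of which are in particular locally Noetherian. I would then invoke the standard fact (e.g.~\cite[\href{https://stacks.math.columbia.edu/tag/0E9J}{Tag 0E9J}]{stacks-project}, or an elementary conormal-bundle argument) that any such closed immersion is a regular immersion. Concretely, the relative cotangent sequence for the composition $\Spec(k) \xrightarrow{e} H \to \Spec(\bk)$ yields a short exact sequence
\[
0 \to \cC_{e} \to e^*\Omega^1_{H/\bk} \to \Omega^1_{\Spec(k)/\bk} \to 0,
\]
in which the last two terms are locally free over $k$; by smoothness of $H \to \Spec(k)$ the sequence splits and $\cC_e \cong e^*\Omega^1_{H/k}$ is locally free of rank equal to the relative dimension of $H/k$, i.e.~to the codimension of the immersion. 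This exhibits $e$ as a quasi-regular immersion, hence (in the Noetherian setting, see~\cite[\href{https://stacks.math.columbia.edu/tag/063L}{Tag 063L}]{stacks-project}) as a regular immersion.

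There is essentially no obstacle here: the corollary is really just the juxtaposition of the lemma with the well-known fact that the unit section of a smooth group scheme (more generally any section of a smooth morphism between smooth schemes over a field) has the ideal of the section locally generated by a regular sequence. The only mild subtlety is to ensure the Noetherian hypothesis required to pass from quasi-regular to regular, which is guaranteed by the finite-type assumptions coming from smoothness over $\bk$.
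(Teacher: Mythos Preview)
Your proof is correct and follows essentially the same route as the paper: reduce via Lemma~\ref{lem:inf-flat-reg-immersion} to showing the unit section is a regular immersion, observe that both $\Spec(k)$ and $H$ are smooth over $\bk$ (hence regular), and invoke \cite[\href{https://stacks.math.columbia.edu/tag/0E9J}{Tag 0E9J}]{stacks-project}. The conormal-sequence argument you supply is additional detail that the paper omits, but the overall strategy is identical.
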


\begin{proof}
The schemes $\mathrm{Spec}(k)$ and $\mathrm{Spec}(H)$ are smooth over $\bk$, hence regular by~\cite[\href{https://stacks.math.columbia.edu/tag/056S}{Tag 056S}]{stacks-project}. Using~\cite[\href{https://stacks.math.columbia.edu/tag/0E9J}{Tag 0E9J}]{stacks-project} this implies that the immersion $\mathrm{Spec}(k) \to H$ is regular, so that $H$ is infinitesimally flat by Lemma~\ref{lem:inf-flat-reg-immersion}.
\end{proof}

\end{document}